\newtheorem{ccounter}{ccounter}[section]
\newtheorem{theorem}[ccounter]{Theorem}
\newtheorem{lemma}[ccounter]{Lemma}
\newtheorem{corollary}[ccounter]{Corollary}
\newtheorem{proposition}[ccounter]{Proposition}
\theoremstyle{definition}
\newtheorem{definition}[ccounter]{Definition}
\newtheorem{remark}[ccounter]{Remark}
\newcommand\bbD{\mathbb{D}}
\newcommand\E{\mathbb{E}}
\newcommand\N{\mathbb{N}}
\renewcommand\P{\mathbb{P}}
\newcommand\R{\mathbb{R}}
\newcommand\bfR{\boldsymbol{R}}
\newcommand\bbR{\mathbb{R}}
\newcommand\bbH{\mathbb{H}}
\renewcommand\L{\mathbf{L}}
\newcommand\bbT{\mathbb{T}}
\newcommand\V{\boldsymbol{V}}
\newcommand{\bbV}{\mathbb{V}}
\newcommand{\Zplus}{\mathbb{Z}_{\ge 1}}
\newcommand{\fourthirds}{1 - 1/\L}
\newcommand\be{\begin{equation}}
\newcommand\ee{\end{equation}}
\newcommand{\eps}{\varepsilon}
\renewcommand{\epsilon}{\varepsilon}
\newcommand{\iu}{\mathrm{i}}
\renewcommand{\log}{\ln}
\newcommand{\unn}[2]{[\![#1,#2]\!]}
\renewcommand{\phi}{\varphi}
\def\beq{\begin{equation}}
\def\eeq{\end{equation}}
\numberwithin{equation}{section}
\renewcommand{\L}{\mathfrak L}
\newcommand{\s}{\mathfrak s}
\newcommand{\bex}{\begin{equation*}}
\newcommand{\eex}{\end{equation*}}
\newcommand{\interior}{\operatorname{int}}
\renewcommand{\Re}{\operatorname{Re}}
\renewcommand{\Im}{\operatorname{Im}}
\newcommand{\Imaginary}{\operatorname{Im}}
\newcommand{\one}{\mathbbm{1}}
\newcommand{\hamiltonian}{\boldsymbol{H}_{K,t}}
\newcommand{\hamintro}{\boldsymbol{H}_t}
\newcommand{\adjacency}{\boldsymbol{A}_K}
\title{Mobility Edge for the Anderson Model on the Bethe Lattice}
\author{Amol Aggarwal}
\author{Patrick Lopatto}
\date{\today}
\begin{document}
\begin{abstract}
We pinpoint the spectral decomposition for the Anderson tight-binding model with an unbounded random potential on the Bethe lattice of sufficiently large degree. We prove that there exist a finite number of mobility edges separating intervals of pure-point spectrum from intervals of absolutely continuous spectrum, confirming a prediction of Abou-Chacra, Thouless, and Anderson  \cite{abou1973selfconsistent}. A central component of our proof is a monotonicity result for the leading eigenvalue of a certain transfer operator, which governs the decay rate of fractional moments for the tight-binding model's off-diagonal resolvent entries.
\end{abstract}
\maketitle

\setcounter{tocdepth}{1}
\tableofcontents

\section{Introduction}

\subsection{Background} 
In 1958, Anderson proposed that the diffusion of waves in a random environment should halt completely at sufficiently high disorder \cite{anderson1958absence}. He was  motivated by experimental results on electron spins in doped semiconductors indicating relaxation times far longer than those predicted by prevailing theories \cite{feher1959electron}.
However, the phenomenon uncovered by Anderson is not specific to electron wave functions, but instead a general feature of the dynamics of all waves in disordered media, both quantum and classical \cite{lagendijk2009fifty}. Consequently, his proposal, now known \emph{Anderson localization}, has become a foundational concept in condensed matter physics \cite{abrahams201050,sheng2006introduction,lee1985disordered, thouless1974electrons,LesHouches1978,anderson1978local}. 

Anderson's original analysis was based on the \emph{tight-binding model}, defined as follows. For any graph $\mathbb{G}$, let $\boldsymbol{A} $ denote the adjacency matrix of $\mathbb{G}$, and let $\boldsymbol{V}$ denote a diagonal operator whose non-zero entries are independent random variables, called the \emph{potential}. The tight-binding Hamiltonian is defined as $\hamintro =  -t \boldsymbol{A} + \boldsymbol V$, where $t \ge 0 $ represents the inverse of the disorder strength. The time evolution induced by this Hamiltonian models a single electron moving by nearest-neighbor hopping (the $\boldsymbol{A} $ term) through a lattice with impurities (the $\boldsymbol{V}$ term). 

A common choice for $\mathbb{G}$ is the $d$-dimensional lattice $\mathbb{Z}^d$, with the case $d=3$ being of obvious physical significance. 
Different phase diagrams have been proposed, depending on the dimension. For $d \le 2$, the system should exhibit Anderson localization at all energies and positive values of $t$. For $d \ge 3$, one should find localization at all energies when $t$ is sufficiently small. When $d \ge3$ and $t$ is large, a sharp transition between phases of localization and delocalization, called a \emph{mobility edge}, is predicted; see, for instance, the book \cite{aizenman2015random}.

Certain aspects of this picture have received rigorous proof. Localization at all energies in one dimension was demonstrated in
\cite{ishii1973localization,gol1977pure}. For $d\ge 2$, localization for large disorder, or large energies, was originally shown in \cite{frohlich1983absence} using multi-scale analysis (see also \cite{von1989new,spencer1988localization}). Shorter proofs, using the fractional moment method, were given in \cite{aizenman1993localization,aizenman1994localization}. Additionally,  \cite{aizenman1994localization} established exponential decay of the eigenfunction correlator, which was  inaccessible by multi-scale analysis (until the much more recent argument in \cite{imbrie2016multi}). Poisson spectral statistics in finite volumes, a signature of localization, were demonstrated in \cite{minami1996local}, and the technically challenging case of localization for Bernoulli potentials was addressed in \cite{ding2018localization,li2019anderson,carmona1987anderson} (see also \cite{imbrie2021localization}). 
However, the existence of a delocalized regime for $d \ge 3$ has not been rigorously established. As such, the more refined question of proving the presence of a mobility edge for the Anderson model on $\mathbb{Z}^d$ remains unresolved.

Another common choice for $\mathbb{G}$ is the infinite $K$-regular tree, where a similar phase diagram is expected. Also called the \emph{Bethe lattice},  it has served as an indispensable tool for studying Anderson localization (and the mobility edge) for decades. The Bethe lattice is believed to retain many of the essential features of the tight-binding model on $\mathbb{Z}^d$ (for $d \ge 3$), but its simpler topology permits more explicit calculations; see, for example, the physics works 
 \cite{abou1973selfconsistent,abou1974self,mirlin1991localization,miller1994weak}. Indeed, in \cite{abou1973selfconsistent,abou1974self}, the study of Anderson localization on $\mathbb{Z}^d$ is approached by perturbation theory applied to a certain self-consistent equation, which is exact on the Bethe lattice. Additionally, there has recently been significant interest in many-body localization, where the Anderson model on the Bethe lattice yields useful qualitative insights through  approximate mappings from certain many-body Hamiltonians 
 \cite{basko2006metal, gornyi2005interacting, altshuler1997quasiparticle}. This connection has prompted a renewed focus on Bethe lattice model in the physics community; for details, we refer to \cite{pascazio2023anderson} and the references therein.
 
 We now delineate what is known rigorously for the Bethe lattice. For unbounded potentials and every $t >0$, it was shown in \cite{aizenman1994localization,aizenman1993localization} that localization is present at sufficiently large energies. 
 Delocalization at small energies for both bounded and unbounded potentials, identified by the presence of  purely absolutely continuous spectrum, was shown in \cite{klein1998extended}; see also  \cite{klein1996spreading,aizenman2006stability,froese2007absolutely}. 
 More generally,  \cite{aizenman2013resonant} proved a sufficient condition for the existence of absolutely continuous spectrum;  this criterion is phrased in terms of a fractional moment bound for the off-diagonal entries of the resolvent of $\hamintro$ and almost complements the earlier Simon--Wolff criterion for pure-point spectrum (indicating localization) \cite{SCSRP,aizenman1993localization}.
They further applied it to prove several results about the absolutely continuous spectrum of $\hamintro$ under sufficiently small disorder; in particular, they showed that it extends beyond the spectrum of $\boldsymbol{A}$ if the potential $\boldsymbol{V}$ is unbounded, and that it exists near the spectral edges of $\hamintro$  if $\boldsymbol{V}$ is bounded. Later work of \cite{bapst2014large} used the criterion of [AW13] to provide upper and lower bounds on the endpoints of intervals of pure-point and absolutely continuous spectrum. These bounds converge towards each other as the degree $K$ of the graph tends to $\infty$ but do not coincide for any finite $K$; hence, the existence of mobility edges remained an open question.

In this article, we establish that mobility edges are present in the Anderson model on the Bethe lattice when the potential is unbounded and the degree $K$ is sufficiently large.

\subsection{Results} \label{s:tightbinding}

We let $K >1$ be an integer parameter and let $\bbT = \bbT_K$ be a rooted tree graph with fixed branching number $K$. 
In particular, every  vertex of $\bbT$ has degree $K+1$, except for the root, which has degree $K$. 
We let $\bbV = \bbV_{K}$ denote the vertex set of  $\bbT$.\footnote{
Our main result concerns $\bbT$, which differs from the Bethe lattice by having a root with degree $K$ instead of $K + 1$. This modification is a fairly standard one in the literature  \cite{aizenman2013resonant,bapst2014large} and is made only for notational convenience; it is straightforward to see that this directly implies the same result for the Bethe lattice itself.}

For any vertex $v \in \bbV$, let $\delta_v \in L^2 ( \bbV)$ denote the unit vector supported on $v$. We let $\adjacency$ denote the operator corresponding to the (infinite) adjacency matrix of $\bbT$, defined by stipulating that $\langle \delta_v , \adjacency \delta_w \rangle = 1$ if $w$ is a child of $v$, or $v$ is a child of $w$, and $\langle \delta_v , \adjacency \delta_w \rangle = 0$ otherwise. 
Let $(V_v)_{v \in \bbV}$ be a collection of independent, identically distributed (i.i.d.) 
random variables, and define the operator $\V$ densely on $L^2(\bbV)$ by $\langle \delta_v , \V \delta_w \rangle = V_v$ if $v=w$, and $0$ otherwise (then extending it by linearity). 
For any $t \in \R$ we define a random operator $\hamiltonian$ by
\be\label{e:hamiltonian}
\hamiltonian  = - t \adjacency + \V ,
\ee 
which is self-adjoint on the domain of $\V$. (See, e.g., \cite{kirsch2008invitation} for details.) 

In this work, we always suppose that the common distribution of the entries of $\V$ has a density, which we denote $\rho$. 
We introduce the following condition on probability densities, which our main result will assume holds for $\rho$. 
\begin{definition}\label{d:Lregular}
Let $p: \R \rightarrow [0, \infty)$ be a probability density function. Given $\L >0$, we say that $p$ is \emph{$\L$-regular} if the following conditions hold.
\begin{enumerate}
\item We have $p(x) \le \L ( 1 + x^2)^{-1}$ for all $x \in \R$.
\item We have $ p(x) \ge \L^{-1}$ for all $x\in [- \L^{-1}, \L^{-1}]$. 
\item 
The derivative $p'$ exists everywhere and 
$|p'(x) | \le \L ( 1 + |x| )^{-\fourthirds}$ for all $x \in \R$. In particular, the density $p$ is Lipschitz continuous with Lipschitz constant $\L$.
\item For every $v>0$, we have $\inf_{|x| < v} p(x) > 0$.
\item There are finitely many values of $x \in \R$ such that $p'(x) = 0$. 
\end{enumerate}
\end{definition}
\begin{remark}
The conclusions in all of our results will depend on $\rho$ only through $\L$. Further, the final two conditions in \Cref{d:Lregular} enter into the proof of our main result (\Cref{t:main} below) only through their use as hypotheses for the first part of \Cref{l:criteria}, a criterion for absolutely continuous spectrum proved in \cite{aizenman2013resonant}. They are used nowhere else. As alluded to in \cite{aizenman2013resonant}, it seems likely that a version of this criterion could be derived for compactly supported potential distributions, at the cost of additional technicalities. If such a result were available, then our main result could also be stated for such potentials. 
\end{remark}

We will distinguish between localization and delocalization for $\hamiltonian$ through its spectrum, understanding intervals with pure-point spectrum as localized regions, and intervals with absolutely continuous as delocalized regions; see, e.g., \cite[Chapter 3]{teschl2014mathematical} for definitions of the two spectral types.  These spectral properties are intimately related to the transport properties discussed above; see \cite[Section 7.3]{kirsch2008invitation} for details.

The following theorem establishes the existence of a mobility edge  for $\hamiltonian$ when $K$ is sufficiently large. We parameterize  $t$ as $t = g(K \log K)^{-1}$ with $g>0$ constant; this scaling places the mobility edge in a compact interval independently of the value of $K$ and has appeared numerous times before in the literature  \cite{bapst2014large, warzel2014surprises, abou1973selfconsistent}. The theorem states that for sufficiently large $K$, a mobility edge is present near every energy $E\in \R$ such that $\rho(E) = (4g)^{-1}$ and $\rho'(E) \neq 0$. This confirms a prediction of Abou-Chacra, Thouless, and Anderson 
for the location of the mobility edge \cite{abou1973selfconsistent}.

When $\rho$ is unimodal and symmetric, which is the case typically considered in the physics literature, our result implies the usual description of a region of delocalized energies at zero sandwiched between unbounded regions of localization. More precisely, there exists  $E_{c}(g)>0$ such that the spectrum is absolutely continuous in the interval $|E| < E_c$, and pure-point for $|E| >  E_c$.  When $\rho$ is not unimodal, interleaved regions of localization and delocalization, separated by multiple mobility edges, may occur (depending on the strength of the disorder). This scenario is illustrated in \Cref{f:figure1}. 

In the statement below, the set $\mathcal M$ represents the set of mobility edges for a given parameter $g$. The proof of this theorem is given in \Cref{s:mainproofintro}. 

\begin{theorem}\label{t:main}
For any real number $\L> 1$, there exists a constant $K_0 (\L) > 1$ such that the following holds for all $K \ge K_0$. 
Fix $g \in \R$ and an $\L$-regular probability density $\rho$ such that 
\be
 \frac{1}{ 4 \| \rho \|_{\infty}} + \frac{1}{\L}  < g <\L.
\ee 
Suppose further that for any point $E$ such that $\rho(E) = 1/ (4g)$, either $\rho'(E') \ge 1/ \L$ for all $E' \in [ E - 1/\L, E + 1/\L]$, or $\rho'(E') \le  - 1/ \L$ for all $E' \in [ E - 1/\L, E + 1/\L]$. Set $t = g (K \log K)^{-1}$, and consider the operator $\hamiltonian  = - t \adjacency + \V$, where the common distribution of the non-zero entries of $V$ has density $\rho$. 

There exists a finite set $\mathcal{M} = \{ \mathfrak{M}_1, \mathfrak{M}_2, \ldots , \mathfrak{M}_n\} \subset \mathbb{R}$ such that the following holds, where we label the $\mathfrak M_i$ in increasing order.  
We denote $\mathcal{M}^{\circ} = \mathbb{R} \setminus \mathcal{M}$ and let $\mathcal{E} = \{ E \in \mathbb{R} : \rho (E) = 1/(4g) \}$.
\begin{enumerate}
\item The cardinality of $\mathcal M$ equals the cardinality of $\mathcal E$. Moreover, labeling the elements of $\mathcal E$ in increasing order as $\mathfrak E_1, \dots, \mathfrak E_n$, we have 
$|\mathfrak{M}_k - \mathfrak{E}_k| < \L^{-1}$ for every $k \in \unn{1}{n}$.
\item  The set $\mathcal{M}^{\circ}$ is a finite disjoint union of open intervals. Let I be any such open interval in this union, and set $\ell = \inf I$.
\begin{enumerate}
\item 
If $\ell = -\infty$ or $\rho' (\ell) < 0$, then $\hamiltonian$ almost surely has pure-point spectrum on $I$.
\item
 If $\rho' (\ell) > 0$, then $\hamiltonian$ almost surely has absolutely continuous spectrum on $I$.
\end{enumerate}
\end{enumerate}
\end{theorem}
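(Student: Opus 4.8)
The plan is to reduce the spectral classification of $\hamiltonian$ to the behavior of the leading eigenvalue of a transfer operator, to analyze that eigenvalue asymptotically as $K\to\infty$, and then to control it monotonically in the energy.

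\emph{Step 1: reduction to a transfer operator.} I would begin from the two spectral criteria collected in \Cref{l:criteria}. For an open interval $I$: the Aizenman--Warzel criterion (its first part) says that a suitable fractional-moment bound on the off-diagonal resolvent entries $G_{o,v}(E+i\eta) = \langle \delta_o, (\hamiltonian - E - i\eta)^{-1} \delta_v\rangle$, uniform as $\eta \downarrow 0$, implies absolutely continuous spectrum on $I$, while exponential decay (again uniform in $\eta$) of the fractional moments $\E\big[|G_{o,v}(E+i\eta)|^s\big]$ in $\dist(o,v)$ implies, via a Simon--Wolff argument, pure-point spectrum on $I$. Using the tree recursion
\[
G_v(z) = \Big( V_v - z - t^2 \sum_{w \ \text{child of} \ v} G_w(z)\Big)^{-1}
\]
for the diagonal resolvent entry $G_v(z)$ of the Hamiltonian restricted to the subtree below $v$, the off-diagonal entry $G_{o,v}$ factorizes along the path from $o$ to $v$. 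Consequently, after summing over the $K^n$ vertices at distance $n$, both fractional-moment quantities are governed by the $n$-th power of a positive integral operator $\mathcal T_{s,z}$ acting on a suitable function space over the possible values of $G_v$ (Herglotz functions in the half-plane, or real values in the limit). By Perron--Frobenius theory and continuity in $(s,E,\eta)$ one associates to $\mathcal T_{s,E}$, in the limit $\eta \downarrow 0$, a leading eigenvalue $\lambda(s,E)$, normalized so that $\E\big[\sum_{\dist(o,v)=n}|G_{o,v}(E+i0)|^s\big] \asymp \lambda(s,E)^n$ with the factors $t^s$ and $K$ absorbed. This produces the dichotomy: $\inf_{s}\lambda(s,E) < 1$ on $I$ gives pure-point spectrum on $I$, whereas $\inf_{s}\lambda(s,E) > 1$ on $I$ gives, together with the hypotheses on $\rho$ feeding the Aizenman--Warzel criterion, absolutely continuous spectrum on $I$; this is precisely the ``almost complementary'' pair of conditions alluded to in the introduction.

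\emph{Step 2: asymptotics and monotonicity.} With $t = g(K\log K)^{-1}$, I would then show that as $K\to\infty$ the critical surface $\inf_{s}\lambda(s,E) = 1$ converges to the explicit relation $\rho(E) = 1/(4g)$, the optimizing $s$ tending to $1/2$. Quantitatively: for $K \ge K_0(\L)$ one has $\inf_s\lambda(s,E) < 1$ whenever $\rho(E) < 1/(4g) - \delta(K)$ and $\inf_s\lambda(s,E) > 1$ whenever $\rho(E) > 1/(4g) + \delta(K)$, with $\delta(K)\to 0$ and all estimates uniform over $\L$-regular densities; here the $\L$-regularity (tail decay, the lower bound near the origin, the Lipschitz bound on $\rho$) is what controls the integrals defining $\mathcal T_{s,E}$ and legitimizes the optimization in $s$. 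To pass from this to the stated structure I invoke the monotonicity result for the leading eigenvalue advertised in the abstract: on any interval where $\rho'$ has constant sign, $E\mapsto \inf_s\lambda(s,E)$ is monotone, with direction dictated by $\operatorname{sign}\rho'$, so it crosses the value $1$ at most once there. Since an $\L$-regular $\rho$ has only finitely many critical points, $\R$ splits into finitely many such monotone pieces, so there are finitely many mobility edges, forming $\mathcal M$; the hypothesis $|\rho'| \ge \L^{-1}$ on a neighborhood of each point of $\mathcal E$ then forces exactly one mobility edge near each element of $\mathcal E$ and pins its location within $\L^{-1}$, yielding the cardinality statement and $|\mathfrak M_k - \mathfrak E_k| < \L^{-1}$. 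Finally, on each component $I$ of $\mathcal M^{\circ}$ the spectral type is constant, and the asymptotics together with the monotonicity identify it through whether $\rho$ exceeds $1/(4g)$ just to the right of $\ell = \inf I$: when $\ell$ is finite this is governed by $\operatorname{sign}\rho'(\ell)$ (delocalized if $\rho'(\ell)>0$, localized if $\rho'(\ell)<0$), while when $\ell=-\infty$ one has $\rho \to 0 < 1/(4g)$ near $-\infty$ and $I$ is localized.

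\emph{Main obstacle.} The heart of the matter, and where I expect the real difficulty, is the combination of the two ingredients in Step 2: establishing the $K\to\infty$ asymptotics of $\lambda(s,E)$ with enough uniformity to localize the transition exactly at $\rho(E)=1/(4g)$, and, above all, proving the monotonicity of the leading eigenvalue \emph{at finite $K$}. It is this monotonicity that rules out spurious additional mobility edges and that makes the theorem hold for every large finite $K$ rather than merely in the limit; since a direct differentiation in $E$ of $\lambda(s,E)$ carries no evident sign, establishing it likely requires a non-obvious representation of, or a coupling or rearrangement argument for, the Perron eigenfunction of $\mathcal T_{s,E}$.
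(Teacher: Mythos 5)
Your overall architecture matches the paper's: reduce to a positive transfer operator acting on the distribution of the recursion variable, identify the transition via the large-$K$ asymptotics of its leading eigenvalue at the relation $\rho(E)=1/(4g)$, and use monotonicity of that eigenvalue in $E$ (which you correctly flag as the hard technical core, and which the paper isolates as \Cref{t:mainlambda}) to rule out extra crossings and pin each mobility edge within $\L^{-1}$ of the corresponding $\mathfrak E_k$. Invoking the monotonicity as a black box is consistent with how the paper's proof of \Cref{t:main} is organized.

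There is, however, a genuine gap in your Step 1: you treat the correspondence ``$\lambda<1$ on $I$ $\Rightarrow$ pure-point, $\lambda>1$ on $I$ $\Rightarrow$ absolutely continuous'' as an unconditional dichotomy. In the paper, the transfer operator $F_{s,E}$ is built from the density $\rho_E$ of a \emph{real-valued} solution of the $\eta=0$ recursive distributional equation \eqref{e:rde}, and its leading eigenvalue computes the free energy $\varphi(s;E)$ only \emph{conditionally}: \Cref{l:bootstrap} gives $\varphi(s;E)=\log K+\log\lambda_{s,E}$ only along sequences where $\Im R_{00}(E+\mathrm{i}\eta_j)\to 0$, i.e.\ precisely in the situation one is trying to establish. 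For the delocalized side this circularity is broken by a short contradiction argument (assume real boundary values, conclude $\varphi(1;E)>0$, and apply the first part of \Cref{l:criteria} together with \Cref{p:imvanish}); but for the localized side, $\lambda_{1^-,E}<1/K$ does \emph{not} by itself yield decay of the true $\eta\downarrow 0$ fractional moments, and hence does not feed Simon--Wolff directly. The paper closes this with a two-stage bootstrap absent from your sketch: a continuity estimate for $\varphi$ in $E$ (\Cref{l:phicontinuity}, proved via the resolvent-difference and Ward-identity estimates of Appendix A), an induction along a finite chain of energies $E_0,\dots,E_M$ propagating negativity of $\varphi$ from a region where it is known, and the base cases themselves, which require a separate bootstrap in the disorder parameter $t$ (\Cref{l:bulkphi}, Appendix B) on $D_-$ and a large-$|E|$ estimate (\Cref{l:bigE}) for the unbounded components of $\mathcal M^\circ$. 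Without some substitute for this machinery your localization claim on the components $I$ with $\rho'(\ell)<0$ (and on the unbounded components) does not follow. Two smaller points: the paper's criterion is the sign of $\varphi(1;E)$, i.e.\ the limit $s\to 1^-$, rather than an optimization over $s$ with optimizer near $1/2$; and the real RDE solution is only shown to exist (\Cref{l:pEexists}), not to be unique, so the argument must be phrased so as not to presuppose uniqueness, as the paper is careful to do.
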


\begin{figure}
\begin{tikzpicture}[scale=1.2, samples=100,emptycircle/.style={circle, draw=#1, fill=none},
    emptycircle/.default=black]
\draw[-] (-5.5,0) -- (5,0) node[right] {$E$};
\draw[smooth, domain=-5.5:5, color=black, thick] 
    plot (\x,{5*e^(- 2 * (\x-2.5)^2)+ 3*e^(- (1/4)*(\x+2.5)^2)  }) ; 
\node at (-5.2,2.2)[left] {$\displaystyle\frac{1}{4g}$};
\node at (3.5,4)[above] {$\rho(E)$};
\draw[smooth, very thin, domain=-5:5, color=black] plot (\x, 2.2 );
\node[fill,circle,label=below :$\mathfrak E_1$, color=gray, scale = .8] at (-3.6,0) {};
\draw [dashed] (-3.6,0) -- (-3.6,2.2);
\node[fill,circle,label=below :$\mathfrak E_2$, color=gray, scale = .8] at (-1.38,0) {};
\draw [dashed] (-1.38,0) -- (-1.38,2.2);
\node[fill,circle,label=below :$\mathfrak E_3$, color=gray, scale = .8] at (1.85,0) {};
\draw [dashed] (1.85,0) -- (1.85,2.2);
\node[fill,circle,label=below :$\mathfrak E_4$, color=gray, scale = .8] at (3.147,0) {};
\draw [dashed] (3.147,0) -- (3.147,2.2);
\draw[smooth,  domain=-4.2:-.75, color=black, line width=3pt] plot (\x, 0 );
\node[emptycircle, line width = 1pt, label=above :$\mathfrak M_1$, color=black] at (-4.2,0) {};
\node[emptycircle, line width = 1pt,label=above :$\mathfrak M_2$, color=black] at (-.8,0) {};
\draw[smooth,  domain=2.2:2.8, color=black, line width=3pt] plot (\x, 0 );
\node[emptycircle, line width = 1pt, label=above :$\mathfrak M_1$, color=black] at (2.25,0) {};
\node[emptycircle, line width = 1pt, label=above :$\mathfrak M_2$, color=black] at (2.75,0) {};
\end{tikzpicture}
\caption{Illustration of \Cref{t:main}. Shaded regions on the horizontal axis indicate regions of absolutely continuous spectrum, and unshaded regions indicate pure-point spectrum.}
\label{f:figure1}
\end{figure}

\Cref{t:main} states that for sufficiently large $K$, near each point $\mathfrak{E}_k  \in \mathcal E$, there exists a point $\mathfrak M_k$ separating an interval of spectral localization (on one side of $\mathfrak M_k$) from an interval of spectral delocalization (on the other side of $\mathfrak M_k$), under the assumption that $\rho'$ is bounded away from zero near \emph{every} point in $\mathcal E$. We have chosen this formulation for brevity. However, our proof actually shows the stronger, local statement that the existence of a sharp transition within a distance $\L^{-1}$ of any given $\mathfrak{E}_k  \in \mathcal E$ requires only that $\rho'$ be bounded away from zero on $[\mathfrak{E}_k  - \L^{-1}, \mathfrak{E}_k + \L^{-1}]$, and nowhere else.

\begin{remark}
It is quickly verified that our argument, in combination with \cite[Proposition 2.6]{aizenman2013resonant}, gives the stronger conclusion that the localized components of $\mathcal M^\circ$ also exhibit \emph{exponential dynamical localization} (see \cite[Definition 1.1]{aizenman2013resonant}). We omit this discussion for brevity. 
\end{remark}
\subsection{Related Works}
The existence of a localized regime on tree graphs with random branching numbers was demonstrated in \cite{damanik2020localization}. Recently, the existence of a delocalized regime was shown for a variant of the tight-binding model, in which the diagonal potential is replaced by a diagonal block matrix containing Gaussian entries \cite{yang2025delocalization}. The presence of a localized regime in a closely related model was shown earlier in \cite{peled2019wegner}.

A mobility edge is also predicted to exist in several random matrix models \cite{biroli2021levy,cizeau1994theory,tarquini2016level}. Band matrices are thought to exhibit a mobility edge in dimensions $d \ge 3$ when the band width is sufficiently large, but kept constant as the matrix dimension grows \cite{RBSM,DSMRM}. There has been significant progress in understanding these matrices when $d=1$ \cite{bourgade2018random,bourgade2019random,yang2018random,URBMSM,URBM,STM,yau2025delocalization,schenker2009eigenvector,cipolloni2024dynamical,chen2022random,dubova2024quantum,sodin2010spectral}, $d = 2$ \cite{dubova2025delocalization}, and $d\ge 7$ \cite{DQDRBMHD,DQDRBM,BUQURBM}. However, in each of the cases studied in the above works, the  spectrum is expected to be either all localized or all delocalized, and there is no mobility edge. 

Further, a mobility edge is also expected in the adjacency matrices of  Erd\H{o}s--R{\'e}nyi random graphs with constant average degree \cite{fyodorov1991localization,mirlin1991universality}. However, this prediction must be understood in the appropriate way. Such a graph will (with high probability) possess a single connected component that contains a positive fraction of the vertices, called the giant component. Each other component contains an asymptotically vanishing fraction of the vertices; they are typically treelike and give rise to eigenvalues throughout the spectrum accompanied by localized eigenvectors. However, if one restricts to the giant component, the traditional scenario of a mobility edge separating regimes of localization and delocalization should be restored \cite{cugliandolo2024multifractal}. The existence of both localized and delocalized eigenvectors in the spectrum has been rigorously established \cite{bordenave2011rank,coste2021emergence,bordenave2017mean,arras2021existence,chayes1986density,salez2015every}.
Stronger results quantifying the regions of (de)localization were established in \cite{alt2021delocalization,alt2021completely,alt2024localized,alt2023poisson} when  the average degree of the graph diverges slowly (logarithmically) in its number of vertices. When the edges of the graph are given complex Gaussian weights, a sharp transition in the second correlation function of the characteristic polynomial was shown in \cite{afanasiev2016correlation} through a rigorous supersymmetric argument. 

A mobility edge was also predicted to exist for symmetric random matrices with heavy-tailed entries, called L\'evy matrices, when the entries have infinite first moment \cite{cizeau1994theory,tarquini2016level}. This was recently proved in \cite{aggarwal2022mobility} when the entries are $\alpha$-stable variables with $\alpha$ close to $0$ to $1$; for further rigorous results on the phase diagram, see \cite{aggarwal2021eigenvector,aggarwal2021goe,bordenave2017delocalization,bordenave2013localization}. 

\subsection{Proof Ideas}
We now briefly (and informally) describe the main ideas of our proof. A more detailed outline with precise statements is given in \Cref{s:proof}.

\subsubsection{Reduction to a Transfer Operator}
We begin by reducing the proof of \Cref{t:main} to the analysis of a certain transfer operator. First, results of \cite{aizenman2013resonant} imply that the spectral type of $\hamiltonian$ at any $E \in \R$ is  determined by sign of a certain functional $\phi(1;E)$, which is expressed in terms of the resolvent $\bfR(z) = [R_{uv}(z)] = (\hamiltonian - z)^{-1}$ of $\hamiltonian$. Specifically, for any integer $L \ge 1$, define
	\begin{flalign}
		\label{e:phiLKLsketch}
		\Phi_{L} (s; z) = K^L \cdot \mathbb{E}  \Big[ \big| R_{0{v_L}} (z) \big|^s \Big], \qquad \varphi_L (s; z) = L^{-1} \log \Phi_L (s; z),
	\end{flalign}
	\noindent where $\mathbb{V}  (L)$ is the set of vertices at the $L$-th level of $\mathbb{T}$, and define the limits (when they exist)
	\begin{flalign}
		\label{phisketch} 
		\varphi (s; z) = \displaystyle\lim_{L \rightarrow \infty} \varphi_L (s; z), \qquad \varphi (s; E) = \displaystyle\lim_{\eta \rightarrow 0} \varphi (s; E + \mathrm{i} \eta).
	\end{flalign}
If $\phi(E) < 0$, then the Simon--Wolff criterion \cite{SCSRP} implies that $\hamiltonian$ has pure-point spectrum at $E$. If $\phi(1;E)>0$, the resonant delocalization criterion of Aizenman--Warzel \cite{aizenman2013resonant} implies that $\hamiltonian$ has absolutely continuous spectrum at $E$. Thus, we must pinpoint the sign of each $\phi(1;E)$. 

Next, since the off-diagonal resolvent entry $R_{0v}$ can be expressed as a product of diagonal resolvent entries (see \Cref{rproduct}), it can be shown that $K^{-L} \cdot \Phi_L (s; z)$ can be computed through  $L+1$ iterations of the \emph{transfer operator} $F_{s,z}$, acting on functions $u : \mathbb{R} \rightarrow \mathbb{R}$, defined by 
\be\label{e:Fdefz}
(F_{s,z} u)(x) = 
\frac{t^{2-s}}{|x|^{2-s}}
\int_{-\infty}^\infty
\rho_z \left( - y - \frac{t^2}{x} \right) u(y) \, dy, 
\ee

\noindent where $\rho_z$ denotes the density of the random variable $R_{00}(z)^{-1}$. Letting $L$ tend to $\infty$, it follows that $\varphi (s; z)$ can be expressed through the spectral radius $\lambda_{s,z}$ of $F_{s,z}$, namely, 
\be\label{e:philambdaintro}
\phi(s ; z) = \ln K + \ln \lambda_{s,z}.
\ee 

\noindent Thus, the spectral type of $\bm{H}_{K,t}$ at $E$ is determined by whether $\lambda_{1^-,z} > 1/K$ (absolutely continuous) or $\lambda_{1^-,z} < 1/K$ (pure-point). This formulation was essentially known; see, e.g., \cite{bapst2014large}. We explain the brief derivation of $F_{s,z}$ in \Cref{s:origintransfer}, and we formalize the functional analytic setup (defining the Banach space on which $F_{s,z}$ acts and applying the Krein--Rutman theorem) in \Cref{s:KRapp}.

A direct analysis of $F_{s,z}$ is complicated by the fact that $R_{00} (z)$ might be complex, with correlated real and imaginary parts. To circumvent this, we proceed under the assumption\footnote{A similar assumption appeared in the original self-consistent heuristics \cite{abou1973selfconsistent} of Abou-Chacra--Thouless--Anderson from over fifty years ago that correctly predicted the approximate location of the mobility edge.}  that $\lim_{\eta \rightarrow 0} R_{00}(E + \iu \eta)$ is real almost surely; this is equivalent to assuming that $\bm{H}_{K,t}$ has pure-point spectrum at $E$. Then, as $\eta \rightarrow 0$, the density $\rho_{E+\mathrm{i} \eta}$ in \eqref{e:Fdefz} is replaced by one  for a real-valued random variable, $\rho_E$. Denote the associated transfer operator by $F_{s,E}$ and spectral radius by $\lambda_{s,E}$. 

Although this $\lambda_{s,E}$ is only directly related to $\varphi (s; E)$ if $\lim_{\eta \rightarrow 0} R_{00} (E + \mathrm{i} \eta)$ is almost surely real, one can still prove that the spectral type of $\bm{H}_{K,t}$ at $E$ is determined by whether $\lambda_{1^-,E} > 1/K$ or $\lambda_{1^-,E} < 1/K$. Indeed, assume to the contrary that $\lambda_{1^-, E} > 1/K$ but $\bm{H}_{K,t}$ has pure-point spectrum at $E$. The latter implies that $\lim_{\eta \rightarrow 0} R_{00} (E + \mathrm{i}\eta)$ is almost surely real, justifying the assumption above. Hence, \eqref{e:philambdaintro} yields $\varphi (1; E) = \log K + \log \lambda_{1^-, E} > 1$, which implies $\bm{H}_{K,t}$ has absolutely continuous spectrum  at $E$, a contradiction. While this reasoning would not apply directly when $\lambda_{s,E} < 1/K$, it can be combined with a continuity argument from \cite[Section 12]{aggarwal2022mobility} (implemented in \Cref{s:lyapunovcontinuity} and \Cref{s:lyapunovcontinuity2}) to prove pure-point spectrum in this case. 

Hence, we must determine the sign of $\lambda_{1^-,E} - 1/K$. When $\rho(E)$ is bounded away from $(4g)^{-1}$, this was done by Bapst \cite{bapst2014large}, by analyzing the action of $F_{s,E}$ on certain well-chosen test functions. We reproduce these concise arguments in \Cref{s:evaluebounds} below (modifying them slightly to our needs), to provide rough upper and lower bounds on $\lambda_{s,E}$ for such $E$. It remains to understand the sign of $\lambda_{1^-, E} - 1/K$ when $\rho (E) \approx (4g)^{-1}$. This is the regime when one should expect the question to become more subtle, as here we have $\lambda_{1^-, E} - 1/K \approx 0$. 

The main contribution of this paper is \Cref{t:mainlambda}, stating that $\lambda_{s,E}$ is strictly monotonic around such $E$. The only example we know of for which such monotonicity has been proven is L\'{e}vy matrices \cite{aggarwal2022mobility}, where the proof was facilitated by an explicit (though fairly involved) formula for the associated $\lambda_{s,E}$. For the tight-binding model on the Bethe lattice with general potential, such a formula for $\lambda_{s,E}$ is not known (nor really expected to exist), so we proceed differently.

\subsubsection{Local Monotonicity for $\lambda_{s, E}$}

\label{LambdaMonotone}

The proof of \Cref{t:mainlambda} breaks into three main steps. In what follows, we let $E_0 \in \mathbb{R}$ satisfy $\rho(E_0) = (4g)^{-1}$ and $\rho'(E_0) < 0$ (the case when $\rho' (E_0) >0$ is analogous); we then analyze $\lambda_{s,E}$ for $E$ close to $E_0$. We also let $\Gamma$ denote a real-valued solution (which can be shown to exist by \Cref{l:pEexists}) to the recursive distributional equation
\be\label{e:rdeplusintro}
\Gamma  \overset{d}{=} \frac{1}{ V_0  - E - t^2 \sum_{i=1}^K \Gamma_i},
\ee 
where the $\Gamma_i$ are i.i.d. copies of $\Gamma$, and $V_0$ is an independent random variable with density $\rho$. Observe that any limit point of $(R_{00} (E+\mathrm{i}\eta))_{\eta \rightarrow 0}$ satisfies \eqref{e:rdeplusintro}, by the Schur complement identity (see \Cref{q12}). Thus, we let $\rho_E$ denote the density of $\Gamma^{-1}$. In view of \eqref{e:rdeplusintro}, we expect for $K$ large (and thus for $t \sim (K \log K)^{-1}$ small) that $\rho_E (x)$ is close to the density $\rho (x+E)$ of $V_0 - E$. This suggests the approximation $\rho_E (x) = \rho_E (x) + O(t)$, which will serve as a useful guide. However, it will not always be of direct use, since the small but constant order error of $t$ will often be larger than the scale on which we are operating.

{\textit{1. Estimating $\rho_{E_2} - \rho_{E_1}$}}: To understand how $\lambda_{s,E}$ varies with $E \approx E_0$, we begin by studying how the operator $F_{s,E}$ varies with $E$, to which end we must analyze how the density $\rho_E$ in its definition varies with $E$. We mention that a preliminary version of this question would be to show that it is unique for a given $E$, namely, that $(R_{00} (E+\mathrm{i}\eta))_{\eta \rightarrow 0}$ has a single limit point (or that \eqref{e:rdeplusintro} admits a unique real-valued solution); to our knowledge, this was not known previously. 

For $E_1 < E_2$ close to $E_0$, we in \Cref{s:selfenergydensityestimates} more generally prove two estimates on $\rho_{E_2} - \rho_{E_1}$. First, we show as \Cref{l:amoluniform} that
\begin{flalign}
	\label{rhoe1e2} 
\big| \rho_{E_2}(x) - \rho_{E_1}(x)  \big| \le C |E_1 - E_2| \big(|x|+1\big)^{-\fourthirds } ,
\end{flalign} 
which shows that $\rho_E$ is Lipschitz in $E$, with a Lipschitz constant decaying in $x$. Second, in the same proposition, we estimate the sign of $\rho_{E_2} - \rho_{E_1}$ near $0$, by showing that   
\be\label{e:le11finiteintro}
\rho_{E_2}(x) - \rho_{E_1} (x)  \le  - c(E_2-E_1)
\ee 
for $x$ close to zero. Observe since $\rho'(E) < 0$ for $E \approx E_0$ that this is suggested by the fact that $\rho_E (x) = \rho (x+E) + O(t)$, but not implied by it for $E_2-E_1 \ll t$. We prove \Cref{l:amoluniform}  through a Gronwall-type argument, where an integral representation for the difference $\rho_{E_2} - \rho_{E_1}$ is bounded in terms of itself.

\textit{2. Shape of the Eigenfunction $v_{s,E}$:} To precisely understand how $\lambda_{s,E}$ varies in $E$, we must not only understand the operator $F_{s,E}$ but also the associated eigenfunction $v_{s,E}$. This differs from prior work, such as \cite{bapst2014large}, which circumvented the analysis of $v_{s,E}$ by instead estimating the action of $F_{s,E}$ on certain test vectors (which loses careful control on $\lambda_{s,E}$ for $E \approx E_0$). We approximate the eigenfunction $v_{s,E}$ in \Cref{e:evectorbounds}, where we begin by defining an explicit function $\tilde{u}$ by
\be\label{e:tildeuintro}
\tilde u(x) 
 = (t^2 \ln K)^{-1} \cdot \one\{ |x| \le  Ct^2 \}
+  |x|^{-(2-s)} \cdot \one\{  Ct^2  \le  |x|  \}.
\ee 

\noindent Observe that, if $s$ is very close to $1$, the function $\tilde{u} (x)$ behaves as $(t^2 \log K)^{-1}$ for $x \le Ct^2$ and as $x^{-1}$ for $x \ge Ct^2$. This is the general behavior we expect for the eigenfunction $v_{s,E}$. However, the precise location and nature of the crossover around $x = O(t^2)$, at which $v_{s,E} (x)$ changes from behaving as $x^{-1} \sim t^{-2}$ to as $(t^2 \log K)^{-1}$, is not transparent to us. So, instead of comparing $v_{s,E}$ directly to $\tilde{u}$, we will compare $F_{s,E} v_{s,E}$ to  $F_{s,E} \tilde u$. Specifically, as \Cref{l:eigenvectorsandwich}, we show that 
\begin{flalign}
	\label{vuf} 
c (F_{E}\tilde u)(x) \le (F_{s,E} v_{s,E} )(x) \le C (F_{E}\tilde u)(x),
\end{flalign} 

\noindent where the constants $c$ and $C$ are uniform in $K$. While useful, this does not explain how $v_{s,E}$ varies with $E$ and, indeed, we do not know how to do this. 

We bypass this by making use of the identity
\be\label{e:computelnlambda}
\ln \lambda_{s, E} = \lim_{n \rightarrow \infty} \frac{ \ln \| F^n_{E} \tilde u \|_1 }{n}.
\ee 

\noindent The benefit of this representation is that it replaces $v_{s,E}$ with the function $\tilde{u}$ that is independent of $E$. However, it comes at the expense of having to compute high powers of $F_{s,E}$ when applied to $\tilde{u}$.

\textit{3. Estimating Iterates of $F_{s,E}$}: To deduce the monotonicity of $\lambda_{s,E}$, we analyze the difference $\ln \lambda_{s,E_{2}} - \ln \lambda_{s, E_1}$ using  \eqref{e:computelnlambda}. We have 
\be\label{e:Fnormdiff}
\| F^n_{s,E_2} \tilde u \|_1 - \| F^n_{s,E_1} \tilde u \|_1 = 
\int_{-\infty}^\infty   \big( (F^n_{s,E_2}   - F^n_{s,E_1}) \tilde u\big)(x) \, dx.
\ee
We note the identity
\be\label{e:Fdiffsumintro}
F^n_{s,E_2} -  F^n_{s,E_1} = \sum_{j=0}^{n-1} F_{s,E_2}^{n-j -1} (F^\circ) F_{s,E_1}^{j},
\ee 
where we define
\begin{flalign*}
(F^\circ u)(x) = (F_{s,E_2} - F_{s,E_1}) u (x) = \frac{t^{2-s}}{|x|^{2-s}}
\int_{-\infty}^\infty \big( \rho_{s,E_2} ( - y - t^2 x^{-1} )  - \rho_{s,E_1} ( - y - t^2 x^{-1}) \big)u(y) \, dy.
\end{flalign*}

When $|x| \ge C t^{2}$ and $y$ is close to $0$ (which is where the above integral will mainly be supported), \eqref{e:le11finiteintro} implies that $\rho_{s,E_2}(-y-t^2/x) - \rho_{s,E_1} (-y-t^2/x) < c(E_2-E_1)$. This suggests that $F^{\circ}$ should acts as a negative operator for functions supported outside of $[-Ct^2, Ct^2]$. 

In \Cref{s:testestimates}, we study $F^\circ$ by justifying this reasoning. In particular, as \Cref{l:pizza}, we confirm that $F^\circ F^j \tilde u(x)$ is negative for $|x| \ge C t^2$ and bound it away from zero. 
The contribution from $|x| \le C t^2$ might be positive, so we also provide a bound on this quantity in the same \Cref{l:pizza}, using \eqref{rhoe1e2}. Throughout these analyses, we make repeated use of \eqref{vuf}, to bound high powers of $F_{s,E}$ applied to $\tilde{u}$ by a multiple of $v_{s,E}$ (on which $F_{s,E}$ acts diagonally).

In \Cref{s:conclusion}, we use the estimates from \Cref{l:pizza} as inputs to show that the negative contributions from the regime $|x| \ge C t^2$ outweigh the positive ones from when $|x| \le Ct^2$. We therefore deduce that the $F_{s,E_2}^{n-j -1} (F^\circ) F_{s,E_1}^{j}$ from \eqref{e:Fdiffsumintro} essentially act as nonpositive operators on $\tilde u$, when $E_1 < E_2$ are close to $E_0$. Combining \eqref{e:Fnormdiff} and \eqref{e:Fdiffsumintro}, we find $\| F^n_{s,E_2}\|_1 -  \|F^n_{s,E_1}\|_1 \le 0$. Then \eqref{e:computelnlambda} shows that $\lambda_{s,E}$ is weakly decreasing near $E_0$, which is stated as \Cref{l:penultimate}. This argument can be refined, as in \Cref{lambdae120}, to show that $\lambda_{1^-, E}$ is strictly decreasing for $E$ close to $E_0$, which establishes the monotonicity stated in \Cref{t:mainlambda}.

\section{Proof Outline}\label{s:proof}
In \Cref{s:notation}, we set our notations and conventions, which will be in force throughout this article. 
\Cref{s:rez} recalls the definition of the resolvent of $\hamiltonian$. 
\Cref{e:eigenvalueconventions} recalls localization and delocalization criteria for $\hamiltonian$, which are given in terms of the associated free energy functional. 
\Cref{s:transferoperatorintro} introduces a transfer operator, whose leading eigenvalue we will use to characterize the behavior of the free energy, and states our monotonicity result for this eigenvalue, \Cref{t:mainlambda}; this result is the main input to the proof of \Cref{t:main}.
\Cref{s:bootstrapintro} presents preliminary lemmas necessary for our characterization of the localized phase. Finally, we present the proof of \Cref{t:main} in \Cref{s:mainproofintro}, assuming the lemmas stated in this section. 

\subsection{Notation and Conventions}\label{s:notation}

We let $\mathbb{H} = \{ z \in \mathbb{C} : \Imaginary z > 0 \}$. For $z \in \bbH$, we often write $z = E + \iu \eta$  with $E = \Re(z)$ and $\eta = \Im(z)$.  We denote the root of $\bbT$ by $0$. For any $a, b \in \R$ with $a<b$, we let $\unn{a}{b}$ denote the set $\{ d \in \mathbb{Z} : a \le d \le b\}$. For any set $ S \subset \R$, we let $S^c$ denote its complement in $\R$. Similarly, let $\Omega$ denote the probability space on which the disorder variables $(V_x)_{x \in \bbV}$ are defined, and for any event $\mathcal E \subset \Omega$, let $\mathcal E^c$ denote its complement. 

Throughout this work, we consider parameters $g,K, \alpha>0$, and we fix the values
\be\label{e:tKdef}
 t = \frac{g}{K \log K},\qquad \Delta = \frac{t^2}{\alpha},
\ee
unless otherwise noted. 
Intermediate results below will involve various constants, which will sometimes depend on $\L$ from \Cref{t:main}. We typically omit this dependence in this notation whenever it occurs. Throughout the rest of this work, we fix some $\L >0$ and suppose that $\rho$ is an $\L$-regular probability density function on $\R$. We will always assume that $\L> 10$ (as our assumptions become weaker as $\mathfrak{L}$ increases).

\subsection{Resolvent}\label{s:rez}
We now recall the definition of the resolvent of $\hamiltonian$, which plays a central role in the localization and delocalization criteria that follow.

\begin{definition} 
		
\label{d:resolvent} 
	
For every $z \in \mathbb{H}$, the \emph{resolvent} of $\hamiltonian$, denoted by $\bfR: L^2 (\mathbb{V}) \rightarrow L^2 (\mathbb{V})$, is defined by $\bfR= \bfR(z) = (\hamiltonian - z)^{-1}$. For  any vertices $v, w \in \mathbb{V}$, we denote the $(v, w)$-entry of $\boldsymbol{R}$ by $R_{v w} = R_{v w} (z) = \langle \delta_{v}, \bfR\delta_{w} \rangle$. 
	
\end{definition} 
	
\noindent For $z \in \bbH$, let $p_{z}(w) \, dw = p_{K, t, z}(w) \, dw$ denote the density of the complex random variable $R_{00}(z)$. It is a well known consequence of the Schur complement identity (stated as \eqref{qvv} below) that the distribution $p_{E+\iu\eta}(w) \, dw$ is a solution to 
 the recursive distributional equation
\be\label{e:rdeplus}
\Gamma  \overset{d}{=} \frac{1}{ V_0  - E - \iu \eta - t^2 \sum_{i=1}^K \Gamma_i},
\ee 
where $\Gamma$ and the $\Gamma_i$ are i.i.d., and $V_0$ is distributed as an entry of $V$ and independent from the $\Gamma_i$ (see, e.g., \cite[Proposition 3.1]{aizenman2013resonant}). 
We will also need to consider the $\eta=0$ analogue of \eqref{e:rdeplus}, given by
\be\label{e:rde}
\Gamma \overset{d}{=} \frac{1}{ V_0  - E - t^2 \sum_{i=1}^K \Gamma_i}.
\ee 
The following lemma, which shows the existence of at least one solution to \eqref{e:rde}, is proved in \Cref{s:pEexists}.
\begin{lemma}\label{l:pEexists}
For every $E \in \R$, there exists a real-valued solution to the recursive distributional equation \eqref{e:rde}. 
\end{lemma}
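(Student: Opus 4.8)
\textit{Proof plan.} The plan is to produce a real-valued solution \emph{directly}, as a fixed point of the distributional map underlying \eqref{e:rde}, rather than by passing to the $\eta \to 0$ limit of the resolvent laws $p_{E+\iu\eta}$. The latter route is unsuitable here: each $p_{E+\iu\eta}$ is supported on the closed upper half-plane $\overline{\bbH}$, and its limit points need only satisfy \eqref{e:rdeplus} (and are in general genuinely complex-valued), not \eqref{e:rde}. Instead, define a map $\Psi$ on the space $\mathcal P(\R)$ of Borel probability measures on $\R$ by letting $\Psi(\mu)$ be the law of $\big(V_0 - E - t^2\sum_{i=1}^K \Gamma_i\big)^{-1}$, where $\Gamma_1,\dots,\Gamma_K$ are i.i.d.\ with law $\mu$ and $V_0$ is an independent real random variable with density $\rho$. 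This is a well-defined element of $\mathcal P(\R)$: the denominator is real, and since $\rho$ is a density it vanishes with probability zero (conditionally on $(\Gamma_i)$, the event $\{V_0 = E + t^2\sum_i \Gamma_i\}$ has probability $0$), so the reciprocal is an almost surely finite real random variable. A real-valued solution of \eqref{e:rde} is precisely a fixed point of $\Psi$, so it suffices to exhibit one; I would do this via the Schauder--Tychonoff theorem.

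Three ingredients are needed. \emph{(i) A uniform fractional moment bound.} Fix any $s \in (0,1)$, say $s = \tfrac12$. Using $\rho(x) \le \L(1+x^2)^{-1} \le \L$ from \Cref{d:Lregular}, for every real $c$ one has
\[
\int_{\R} \rho(v)\,|v-c|^{-s}\,dv \le \L \int_{|v-c|\le 1} |v-c|^{-s}\,dv + \int_{|v-c|>1} \rho(v)\,dv \le \frac{2\L}{1-s} + 1 =: C_s .
\]
Applying this with $c = E + t^2\sum_{i=1}^K \Gamma_i$ and averaging over $(\Gamma_i)$ shows $\int_{\R} |x|^s\, d(\Psi\mu)(x) \le C_s$ for \emph{every} $\mu \in \mathcal P(\R)$. \emph{(ii) Compactness.} The set $\mathcal K = \{\mu \in \mathcal P(\R) : \int_{\R} |x|^s\, d\mu \le C_s\}$ is convex, tight (Markov's inequality), and closed under weak convergence (lower semicontinuity of $\mu \mapsto \int |x|^s\, d\mu$), hence compact in the topology of weak convergence; by (i) it satisfies $\Psi(\mathcal K) \subseteq \mathcal K$. \emph{(iii) Continuity.} If $\mu_n \to \mu$ weakly, then $(V_0, \Gamma_1^{(n)}, \dots, \Gamma_K^{(n)}) \to (V_0, \Gamma_1, \dots, \Gamma_K)$ in distribution on $\R^{K+1}$ (by independence), and the map $(v, \gamma_1, \dots, \gamma_K) \mapsto \big(v - E - t^2\sum_i \gamma_i\big)^{-1}$ is continuous off the hyperplane $\{v = E + t^2\sum_i \gamma_i\}$, which the limiting law charges with probability $0$ since $V_0$ has a density; the continuous mapping theorem then gives $\Psi(\mu_n) \to \Psi(\mu)$ weakly. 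Viewing $\mathcal K$ as a convex compact subset of the locally convex space of finite signed Borel measures on $\R$ with the weak topology (e.g.\ embedding $\mathcal P(\R)$ into $\mathcal P([-\infty,\infty])$), Schauder--Tychonoff produces $\mu^\ast \in \mathcal K$ with $\Psi(\mu^\ast) = \mu^\ast$, the desired real-valued solution.

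The only genuinely delicate point is ingredient (iii), together with the minor functional-analytic bookkeeping of (ii) in realizing $\mathcal K$ as a compact convex subset of a locally convex space on which $\Psi$ is continuous; the moment bound (i) is routine and uses only the first item of $\L$-regularity. I would also note that this argument yields \emph{some} real-valued solution of \eqref{e:rde}, not necessarily a limit point of $p_{E+\iu\eta}$ — which is precisely the content of the lemma, since such limit points are in general complex-valued. (Uniqueness of the solution is a separate and more delicate matter, established later in the paper for $E$ near the relevant points.)
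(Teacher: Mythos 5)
Your proof is correct and follows essentially the same route as the paper: the paper also realizes a real-valued solution as a fixed point of the map $\mu \mapsto \mathrm{Law}\big((V_0 - E - t^2\sum_{i=1}^K \Gamma_i)^{-1}\big)$, applying the Tychonoff fixed point theorem on a compact convex set of probability laws, with continuity of the map verified through weak-convergence arguments. The only immaterial differences are that the paper's invariant set is defined by the tail bound $\mu([-r,r]^c)\le 2\L r^{-1}$ (coming from anti-concentration of the denominator, since its density is bounded by $\L$) rather than your uniform fractional-moment bound, and its continuity step uses a truncation argument in place of your null-discontinuity-set version of the continuous mapping theorem.
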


\begin{definition}\label{d:pEdef}
We denote the density for the solution to \eqref{e:rde} by $p_E(w) \, dw$.\footnote{It is straightforward to show that such a density exists since $V_0$ is assumed to have a density.}
  (If there are multiple solutions, we select one arbitrarily.)
For any $E \in \R$, we let $\rho_{E}$ denote the density of $V_0 - E - t^2 \sum_{i=1}^{K-1} \Gamma_i$ and $p^{(M)}_E$ denote the density of $t^2 \sum_{i=1}^{M} \Gamma_i$ for any $M \in \N$, where the $\Gamma_i$ are mutually independent random variables, each with the density $p_E(w)\, dw$. For any $z \in \bbH$, we let $\rho_{z}$ denote the density of $V_0 - z - t^2 \sum_{i=1}^{K-1} \Gamma_i$, the denominator of \eqref{e:rdeplus}.
\end{definition}

\subsection{Localization and Delocalization Criteria}\label{e:eigenvalueconventions}
We begin by recalling the definition of the free energy functional associated with $\hamiltonian$, denoted by $\phi$ below.

	\begin{definition} 
		
		\label{moment1} 
	
	 For any integer $L \ge 1$, we define
	\begin{flalign}
		\label{sumsz1} 
		\Phi_{L} (s; z) = \mathbb{E} \Bigg[ \displaystyle\sum_{v \in \mathbb{V}  (L)} \big| R_{0v} (z) \big|^s \Bigg]; \qquad \varphi_L (s; z) = L^{-1} \log \Phi_L (s; z), 
	\end{flalign}
	
\noindent where $\mathbb{V}  (L)$ is the set of vertices of $\mathbb T$ at distance $L$ from the root $0$. 
	When they exist, we also define the limits  
	\begin{flalign}
		\label{szl} 
		\varphi (s; z) = \displaystyle\lim_{L \rightarrow \infty} \varphi_L (s; z); \qquad \varphi (s; E) = \displaystyle\lim_{\eta \rightarrow 0} \varphi (s; E + \mathrm{i} \eta).
	\end{flalign}
	
	\end{definition} 

The following lemma is an immediate consequence of \cite[Theorem 3.2]{aizenman2013resonant}, and states certain fundamental properties of $\phi$. It applies to the operator $\hamiltonian$, and the associated quantities in \eqref{sumsz1} and \eqref{szl}, defined using generic inverse disorder parameters $t>0$ (and not just the particular choice in \eqref{e:tKdef}). 

	 \begin{lemma}[{\cite[Theorem 3.2]{aizenman2013resonant}}] \label{l:aizenman} 	
	 	Fix $s, \epsilon \in (0,1)$ and compact interval $I\subset \R$. The following statements hold for all $t \in (\epsilon, 1)$.\begin{enumerate}
\item For all $z \in \bbH$, the  function $\varphi (s; z)$ is (weakly) convex and nonincreasing in $s \in (0, 1)$. 
\item For all $z\in \bbH$, the  limit $\varphi (s; z) = \lim_{L \rightarrow \infty} \varphi_L (s; z)$ exists and is finite. 
\item There exists a constant $C( s,\epsilon, I) > 1$ such that for all $z \in \bbH$ with $E \in I$ and $\eta \in (0,1)$, 
	 		\begin{flalign}
	 			\label{e:limitr0j2} 
	 			\big| \varphi_L (s; z) - \varphi (s; z) \big| \le \displaystyle\frac{C}{L}.
	 		\end{flalign}
\item We have 

\begin{equation}\label{e:crudephi}
s \cdot  \E \Big[\log \big| R_{0{0}} (z) \big|  \Big] + \log K \le \varphi(s ; z) \le -\frac{ s \log K}{2} + \log K.
\end{equation}
\item Fix $E\in \bbR$. If the limit $\varphi (s; E) = \lim_{\eta \rightarrow 0} \varphi (s; E + \mathrm{i} \eta)$ exists, then the limit $\varphi_L (s; E) = \lim_{\eta \rightarrow 0} \varphi_L (s; E + \mathrm{i} \eta)$ does also. In this case, the limits in $L$ and $\eta$ in the second equality of \eqref{szl} commute, namely, $\varphi (s; E) = \lim_{L\rightarrow\infty} \varphi_L (s; E )$.
\item For almost all $E\in \R$ (with respect to Lebesgue measure), the limit \[ \varphi (s; E) = \lim_{\eta \rightarrow 0} \varphi (s; E + \mathrm{i} \eta)\] exists for all $s\in (0,1)$. 
\end{enumerate}
	 \end{lemma}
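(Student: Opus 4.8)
\textit{Proof proposal.} The plan is to obtain the lemma directly from \cite[Theorem 3.2]{aizenman2013resonant}, which is proved there for general graphs and specializes to $\hamiltonian$ on $\bbT$; below I indicate the mechanism behind each item and where the real difficulty sits. The structural input is the identity (recorded later as \Cref{rproduct}) expressing the off-diagonal entry $R_{0v}(z)$, for $v\in\mathbb{V}(L)$, as a product of diagonal resolvent factors along the path from $0$ to $v$, each governed by the recursion \eqref{e:rdeplus}. Passing to the variables of that recursion turns $K^{-L}\Phi_L(s;z)=\E\big[|R_{0v}(z)|^s\big]$ into (essentially) the $L$-fold iterate of a positive integral operator---the $z$-analogue of $F_{s,z}$---applied to a fixed function, and this is the source of all the assertions.

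Items (1)--(4) are then comparatively soft. For a fixed disorder configuration, $s\mapsto\sum_{v\in\mathbb{V}(L)}|R_{0v}(z)|^s=\sum_v e^{s\log|R_{0v}(z)|}$ is a sum of log-convex functions, hence log-convex, and expectation preserves log-convexity by H\"older; so $\varphi_L(s;z)$ and $\varphi(s;z)$ are convex in $s$, which is the convexity in item (1). By the self-similar structure of $\bbT$ and the (conditional) independence of potentials in disjoint subtrees, $\log\Phi_L(s;z)$ is additive in $L$ up to a bounded error, so a Fekete-type argument yields both the existence of $\varphi(s;z)=\lim_L\varphi_L(s;z)$ in item (2) and the $O(1/L)$ rate in item (3), with constants uniform for $\Re z\in I$, $\Im z\in(0,1)$. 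For item (4), the upper bound follows by interpolating in $s$ (using convexity) between the trivial value $\varphi(0;z)=\log K$ and the estimate $\varphi(2;z)\le 0$, which in turn comes from the Ward identity $\sum_w|R_{0w}(z)|^2=\eta^{-1}\Im R_{00}(z)$; the lower bound, together with the monotonicity in $s$ completing item (1), follows from the a priori fractional-moment estimates in \cite{aizenman2013resonant}.

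Items (5)--(6) are where the genuine analysis lies, and it is complex-analytic in nature. Since $z\mapsto|R_{0v}(z)|^s$ equals $|f|^s$ for the analytic function $f=R_{0v}$, it is log-subharmonic on $\bbH$, and log-subharmonicity is stable under sums and integrals; hence $\log\Phi_L(s;\cdot)$, and then---by the uniform convergence from item (3)---$\varphi(s;\cdot)$, is subharmonic on $\bbH$ and bounded above by item (4). A subharmonic function on $\bbH$ that is bounded above has a radial limit at Lebesgue-a.e.\ boundary point; taking a countable intersection over a dense set of $s\in(0,1)$ and then using the convexity and the uniform-in-$s$ bound (items (1) and (4)) promotes this to existence for \emph{every} $s\in(0,1)$ at almost every $E$, which is item (6). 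For item (5), once $\varphi(s;E)=\lim_{\eta\to 0}\varphi(s;E+\iu\eta)$ is assumed to exist, item (3) pins every $\varphi_L(s;E+\iu\eta)$ to within $C/L$ of it for small $\eta$; combined with the continuity in $\eta$, down to $\eta=0$, of the fractional moments $\E[|R_{0v}(E+\iu\eta)|^s]$ for $s<1$ (the Cauchy-type singularities of the resolvent being integrable precisely because $s<1$), this forces $\lim_{\eta\to 0}\varphi_L(s;E+\iu\eta)$ to exist and the two limits to commute.

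I expect the main obstacle---and the reason I would quote \cite[Theorem 3.2]{aizenman2013resonant} rather than reprove it---to be exactly this boundary analysis underlying items (5) and (6): for a fixed $E$ the resolvent entries $R_{0v}(E+\iu\eta)$ need not converge as $\eta\downarrow 0$, and controlling $\Phi_L(s;z)$ up to the boundary requires the subharmonicity/Fatou machinery together with the integrability afforded by fractional ($s<1$) moments. The remaining items are routine once the tree factorization and the convexity in $s$ are in hand.
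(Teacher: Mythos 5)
Your proposal matches the paper exactly in its essential move: the paper gives no independent proof of this lemma but states it as an immediate consequence of \cite[Theorem 3.2]{aizenman2013resonant}, which is precisely the citation you fall back on. Your supplementary sketches of the mechanisms (log-convexity in $s$, approximate subadditivity in $L$ for the $O(1/L)$ rate, the Ward-identity interpolation for the upper bound in \eqref{e:crudephi}, and the subharmonicity/Fatou-type boundary analysis behind items (5)--(6)) are consistent with the source and harmless, but they are not part of the paper's argument, which consists of the citation alone.
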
 
	 
\begin{definition}
For all $z \in \overline{\bbH}$, we set
\bex
\varphi (1; z) = \lim_{s \rightarrow 1} \varphi (s; z),
\eex
when it exists.
The existence of the limit for all $z \in \bbH$ follows from the first part of  \Cref{l:aizenman}. The existence of the limit for almost all $z \in \R$ follows from additionally using the final part of \Cref{l:aizenman}.
\end{definition}

The next lemma provides criteria for spectral localization and delocalization in terms of $\phi$.
The first statement is \cite[Theorem 2.5]{aizenman2013resonant}. The second statement follows  from combining \cite[Proposition 2.2]{aizenman2013resonant} and \cite[Proposition 4.1]{aizenman2013resonant}.
The third statement is the well-known criterion of Simon and Wolff \cite{SCSRP}; the precise version we use is a consequence of \cite[Theorem 5.7]{aizenman2015random}. 
We observe that by the usual theory of the Stieltjes transform (see, e.g., \cite{aizenman2015random}), the limit 
\begin{equation}\label{e:stieltjeslimit}
R_{00}(E) = \lim_{\eta \rightarrow 0} R_{00} (E + \iu \eta)
\end{equation}
exists for almost all $E\in \R$, with respect to Lebesgue measure. In the lemma, the qualifier \emph{almost all} is with respect to Lebesgue measure.
\begin{lemma}[{\cite{aizenman2013resonant,aizenman2015random}}]\label{l:criteria}
Fix $K>1$. Consider the operator $\hamiltonian  = - t \adjacency + \V$, fix an interval $I \subset \R$ and suppose that the common density of the non-zero entries of $V$ is $\L$-regular. Then the following three statements hold.
\begin{enumerate}
\item  For almost all $E\in \R$ such that  $\phi(1;E) > 0$, we have $\Im R_{00}(E) > 0$ almost surely. 
\item  If for almost all $E\in I$, we have $\Im R_{00}(E) > 0$ with positive probability, then the spectrum of $\hamiltonian$ is almost surely absolutely continuous in $I$.
\item If for almost all $E\in I$, we have $\phi(1;E) < 0$, then the spectrum of $\hamiltonian$ is almost surely pure-point in $I$.
\end{enumerate}
\end{lemma}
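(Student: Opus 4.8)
The plan is to obtain all three statements from established criteria, so the real work lies not in new machinery but in identifying the correct inputs, in verifying that $\L$-regularity of $\rho$ supplies the standing regularity hypotheses those criteria require, and in tracking the $\eta \to 0$ boundary limits together with the associated Lebesgue-null exceptional sets of $E$. The structural facts about $\varphi$ recorded in \Cref{l:aizenman} — convexity and monotonicity in $s$, existence of the limit in $L$ with the rate \eqref{e:limitr0j2}, and existence of the boundary limit in $\eta$ for a.e.\ $E$ — will be used freely. I expect the genuine difficulty throughout to be this boundary-limit bookkeeping rather than any single estimate, and it is precisely there that the cited propositions do the heavy lifting.

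For part (1), I would invoke \cite[Theorem 2.5]{aizenman2013resonant} directly. Its mechanism is resonant delocalization: when $\varphi(1;E) > 0$, the generation sums $\mathbb{E}\sum_{v \in \mathbb{V}(L)} |R_{0v}(E + \iu\eta)|$ do not decay as $L \to \infty$, uniformly for small $\eta$, and this, combined with the Ward identity $\Im R_{00}(E + \iu\eta) = \eta \sum_{v} |R_{0v}(E + \iu\eta)|^2$ and a second-moment bound controlling the fluctuations of these sums, rules out $\Im R_{00}(E + \iu 0) = 0$ for a.e.\ such $E$. The point needing care is that the a priori density and fractional-moment estimates required by \cite[Theorem 2.5]{aizenman2013resonant} are available; this is where conditions (4) and (5) of \Cref{d:Lregular} enter, as flagged in the remark following \Cref{t:main}.

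For part (2), I would combine two ingredients. The first is the dichotomy of \cite[Proposition 2.2]{aizenman2013resonant}: for fixed $E$, either $\Im R_{00}(E) = 0$ almost surely or $\Im R_{00}(E) > 0$ almost surely; this follows from the self-similar structure of the recursive distributional equation \eqref{e:rde}, and is exactly what promotes the hypothesis ``$\Im R_{00}(E) > 0$ with positive probability'' to an almost-sure statement. The second is the standard Stieltjes-transform description of spectral type: the spectral measure $\mu_{\delta_0}$ of $\hamiltonian$ at $\delta_0$ has Radon--Nikodym derivative $\pi^{-1} \Im R_{00}(E + \iu 0)$ on its absolutely continuous part for a.e.\ $E$, so almost-sure positivity of $\Im R_{00}$ on a full-measure subset of $I$ yields a genuine a.c.\ component of $\mu_{\delta_0}$ throughout $I$. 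Promoting this to pure absolute continuity of the full operator on $I$ uses the homogeneity of $\bbT$ (so the same conclusion holds at every vertex), cyclicity of $\{\delta_v\}_{v \in \mathbb{V}}$, and a countable union; this packaging is \cite[Proposition 4.1]{aizenman2013resonant}.

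For part (3), I would apply the Simon--Wolff criterion at the root via rank-one perturbation theory: by \cite[Theorem 5.7]{aizenman2015random}, it suffices to show that for a.e.\ $E \in I$ one has $\sum_{v \in \mathbb{V}} |R_{0v}(E + \iu 0)|^2 < \infty$ almost surely (with the analogous statement rooted at every vertex, which follows from the homogeneity of $\bbT$). To get this from $\varphi(1;E) < 0$: since $\varphi(1;E) = \lim_{s \to 1}\varphi(s;E)$ by \Cref{l:aizenman}, the strict inequality $\varphi(1;E) < 0$ forces $\varphi(s;E) < 0$ for $s$ sufficiently close to $1$, hence $\Phi_L(s;E + \iu\eta) = \mathbb{E}\sum_{v \in \mathbb{V}(L)}|R_{0v}(E + \iu\eta)|^s$ decays geometrically in $L$ for such $s$ and small $\eta$. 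Passing $\eta \to 0$ by Fatou (legitimate for a.e.\ $E$ by \Cref{l:aizenman}) and summing over $L$ gives $\sum_{v} \mathbb{E}|R_{0v}(E + \iu 0)|^s = \sum_{L}\sum_{v \in \mathbb{V}(L)}\mathbb{E}|R_{0v}(E + \iu 0)|^s < \infty$, whence $\sum_{v}|R_{0v}(E + \iu 0)|^s < \infty$ almost surely; since this forces the summands to be bounded, the elementary estimate $|R_{0v}|^2 \le (\sup_w |R_{0w}|)^{2-s}\,|R_{0v}|^s$ upgrades $\ell^s$-summability to the required $\ell^2$-summability. As above, the one subtle point — that the fractional-moment decay survives the boundary limit and that the ``almost all $E$'' exceptional sets are mutually compatible — is exactly what is handled inside the cited results.
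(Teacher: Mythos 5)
Your proposal matches the paper's treatment exactly: the paper does not reprove this lemma but derives part (1) from \cite[Theorem 2.5]{aizenman2013resonant}, part (2) by combining \cite[Proposition 2.2]{aizenman2013resonant} with \cite[Proposition 4.1]{aizenman2013resonant}, and part (3) from the Simon--Wolff criterion as packaged in \cite[Theorem 5.7]{aizenman2015random} — precisely the sources and roles you assign them. Your added sketches of the underlying mechanisms (resonant delocalization, the zero-one dichotomy plus Stieltjes-transform description, and the fractional-moment/$\ell^s$-to-$\ell^2$ upgrade for Simon--Wolff) are accurate glosses of those cited results, so the proposal is correct and essentially identical in approach.
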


The following lemma complements the first part of the previous lemma. In particular, it implies that if $\phi(1;E) < 0$, then $\Im R_{00}(E + \iu \eta)$ vanishes as $\eta$ tends to zero. 
We establish it in 
\Cref{s:continuitypreliminary}.
Together with \Cref{l:criteria}, it shows that the sign of $\phi(1;E)$ determines both spectral (de)localization and the behavior of $R_{00}(E)$. 
\begin{lemma} \label{p:imvanish}
	Fix $E \in \mathbb{R}$ and $s, t\in(0, 1)$.
	 Let $\{\eta_j\}_{j=1}^\infty$ be a decreasing sequence such that $\lim_{j\rightarrow\infty} \eta_j =0$, and suppose that \bex\limsup_{j \rightarrow \infty} \varphi (s; E + \mathrm{i} \eta_j) < 0.\eex
	  Then $\lim_{j\rightarrow \infty} \Imaginary R_{00} (E + \mathrm{i} \eta_j) = 0$ in probability.
	\end{lemma}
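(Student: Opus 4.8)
The plan is to control $\Im R_{00}(E+\iu\eta_j)$ via a fractional moment of order $s/2$: I will show that $\E\big[\big(\Im R_{00}(E+\iu\eta_j)\big)^{s/2}\big]\to 0$, from which $\Im R_{00}(E+\iu\eta_j)\to 0$ in probability follows by Markov's inequality and the continuity of $t\mapsto t^{2/s}$ at the origin. The starting point is the Ward-type identity
\be
\Im R_{00}(z)=\eta\sum_{v\in\bbV}\big|R_{0v}(z)\big|^2=\eta\,\big\|\bfR(z)\delta_0\big\|_{L^2(\bbV)}^2,\qquad z=E+\iu\eta\in\bbH,
\ee
which follows from the resolvent identity $\bfR(z)-\bfR(\bar z)=2\iu\eta\,\bfR(z)\bfR(\bar z)$ together with $\bfR(\bar z)=\bfR(z)^*$ and the symmetry $R_{0v}=R_{v0}$ of the resolvent of the real self-adjoint operator $\hamiltonian$. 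Since $0<s/2\le 1$, subadditivity of $t\mapsto t^{s/2}$ on $[0,\infty)$ and Tonelli's theorem yield
\be
\E\big[\big(\Im R_{00}(z)\big)^{s/2}\big]\le \eta^{s/2}\sum_{v\in\bbV}\E\big[|R_{0v}(z)|^s\big]=\eta^{s/2}\Big(\E\big[|R_{00}(z)|^s\big]+\sum_{L\ge 1}\Phi_L(s;z)\Big),
\ee
with $\Phi_L$ as in \eqref{sumsz1}.

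It then suffices to bound the right-hand side at $z=E+\iu\eta_j$ uniformly for large $j$. For the sum over $L\ge 1$: the hypothesis $\limsup_{j\to\infty}\varphi(s;E+\iu\eta_j)<0$ (which is meaningful because $\varphi(s;z)$ exists and is finite for $z\in\bbH$ by \Cref{l:aizenman}(2)) provides $\delta>0$ and $J$ with $\varphi(s;E+\iu\eta_j)<-\delta$ and $\eta_j\in(0,1)$ for all $j\ge J$; applying \Cref{l:aizenman}(3) on the interval $[E-1,E+1]$ with any $\epsilon\in(0,t)$ gives $\varphi_L(s;E+\iu\eta_j)\le -\delta+C/L$, hence $\Phi_L(s;E+\iu\eta_j)=\exp\!\big(L\varphi_L(s;E+\iu\eta_j)\big)\le e^{C}e^{-\delta L}$, and therefore $\sum_{L\ge 1}\Phi_L(s;E+\iu\eta_j)\le e^C(1-e^{-\delta})^{-1}$ for all $j\ge J$. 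For the diagonal term $\E[|R_{00}(z)|^s]$ I will use the standard a priori fractional moment bound: by the Schur complement identity (see \eqref{e:rdeplus} and the surrounding discussion), $R_{00}(z)^{-1}=V_0-z-t^2\sum_{i=1}^{K}S_i$, where the $S_i\in\overline{\bbH}$ are independent of $V_0$; conditionally on $(S_i)$, the real part of $R_{00}(z)^{-1}$ is a translate of $V_0$ and so has a density bounded by $\|\rho\|_\infty\le\L$, and since $|R_{00}(z)|\le|\Re R_{00}(z)^{-1}|^{-1}$ and $s<1$ we obtain $\E\big[|R_{00}(z)|^s\mid (S_i)\big]\le \tfrac{2\L}{1-s}+1$, hence $\E[|R_{00}(z)|^s]\le\tfrac{2\L}{1-s}+1$ for every $z\in\bbH$. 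Combining, $\E\big[\big(\Im R_{00}(E+\iu\eta_j)\big)^{s/2}\big]\le C(\L,s)\,\eta_j^{s/2}\to 0$, which proves \Cref{p:imvanish}.

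There is no real obstacle in this argument; the point that requires care is the choice of the exponent $s/2$. Bounding $\E[\Im R_{00}(z)]$ directly would force one to estimate $|R_{0v}(z)|^{2}=|R_{0v}(z)|^s|R_{0v}(z)|^{2-s}$, and the only available bound $|R_{0v}(z)|^{2-s}\le\eta^{-(2-s)}$ contributes a divergent prefactor $\eta^{s-1}$; raising to the power $s/2$ instead makes the crude subadditivity inequality $\big(\sum_v a_v\big)^{s/2}\le\sum_v a_v^{s/2}$ land exactly on the fractional-moment quantities $\Phi_L(s;z)$ controlled by the hypothesis, at the negligible cost of the factor $\eta^{s/2}\to 0$. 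The only external input beyond the spectral identity and a geometric sum is the a priori bound on $\E[|R_{00}(z)|^s]$, which is where the boundedness of $\rho$ (contained in $\L$-regularity) is used.
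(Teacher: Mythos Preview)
Your proof is correct and follows essentially the same approach as the paper's: both use the Ward identity to write $\Im R_{00}(z)=\eta\sum_v|R_{0v}|^2$, apply subadditivity of $t\mapsto t^{s/2}$ to reduce to $\sum_{L\ge 0}\Phi_L(s;z)$, and then invoke the uniform bound $\varphi_L(s;z)\le -\delta+C/L$ from \Cref{l:aizenman}(3) to sum the resulting geometric series. The only difference is that you treat the diagonal term $\Phi_0=\E[|R_{00}|^s]$ explicitly via the Schur complement and the bounded-density argument, whereas the paper absorbs it into the constant without comment; your version is slightly more careful on this point.
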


\subsection{Transfer Operator}\label{s:transferoperatorintro}
We now introduce an integral operator whose leading eigenvalue determines the value of $\phi(1;E)$, under the assumption that $\Im R_{00}(E)$ is real. 
Following  \cite{bapst2014large}, we define the operator $F = F_{K,t,s,E}$ on functions $u : \R \rightarrow \R$ by
\be\label{e:Fdef}
(F u)(x) = 
\frac{t^{2-s}}{|x|^{2-s}}
\int_{-\infty}^\infty
\rho_E \left( - y - \frac{t^2}{x} \right) u(y) \, dy,
\ee
where $\rho_E$ is given by \Cref{d:pEdef}. 
We will consider $F$ as an operator a certain weighted $L^\infty$ space. 
\begin{definition}\label{d:X} Define $w \colon \R \rightarrow \R$ by $
w(x) = 1 + |x|^{2 - s }$. The Banach space $\mathcal X$ is defined as the set of functions $f \colon \R \rightarrow \R$ whose norm $\| f \| = \| f w \|_\infty$ is finite. 
\end{definition}
In what follows, $\| \cdot \|$ always denotes the norm defined in \Cref{d:X}. 

The next lemma, which shows that $F$ has a single positive eigenvector corresponding to a eigenvalue equal to its spectral radius, is shown in \Cref{s:krconclusion}.

\begin{lemma}\label{c:KR}
For every $s\in (0,1)$, $t>0$, $K>1$, and $E\in \R$, $F$ is a bounded operator $\mathcal X \rightarrow \mathcal X$, and there exists an eigenvector $v=v_{K,t,s,E}\in \mathcal X$ of $F$ whose corresponding eigenvalue $\lambda=\lambda_{K,t,s,E}$ is positive. Further, 
$v(x) > 0$ for all $ x\in \R$, $\lambda$ is the spectral radius of $F$, and $F$ has no other non-negative eigenvectors.
\end{lemma}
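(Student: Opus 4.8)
The plan is to deduce \Cref{c:KR} from the Krein--Rutman theorem applied to $F$ acting on the Banach space $\mathcal X$ of \Cref{d:X}, with the positive cone $\mathcal C = \{ f \in \mathcal X : f(x) \ge 0 \text{ for all } x \}$. First I would check that $\mathcal X$ with this cone is an ordered Banach space with a cone that is closed, has nonempty interior (the interior consists of $f$ bounded below by a positive multiple of $w^{-1}$), and is reproducing ($\mathcal X = \mathcal C - \mathcal C$, since $f = f_+ - f_-$ and both parts have finite norm). Then I would verify that $F$ is a bounded linear operator $\mathcal X \to \mathcal X$: one bounds $|(Fu)(x)| w(x)$ using the weight $\frac{t^{2-s}}{|x|^{2-s}} w(x) \le C(1 + t^{2-s}|x|^{-(2-s)})$ together with $\int \rho_E(-y - t^2/x)\, |u(y)|\, dy \le \|u\| \int \rho_E(-y-t^2/x) w(y)^{-1}\, dy$, and the last integral is finite because $\rho_E$ is a probability density and $w^{-1} \le 1$. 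So in fact $\|Fu\| \le C\|u\|$ with $C$ depending on $s,t,K,E$; this gives boundedness.

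Next I would establish compactness of $F$, which is the input needed to run Krein--Rutman in the form giving a positive eigenvalue equal to the spectral radius. The operator $F$ is an integral operator with kernel $k(x,y) = \frac{t^{2-s}}{|x|^{2-s}} \rho_E(-y - t^2/x)$; I would show $F$ maps bounded sets of $\mathcal X$ into sets that are equicontinuous on compact subsets of $\R$ (using that $\rho_E$ is Lipschitz, which follows from the analogous property of $\rho$ and the structure of the RDE \eqref{e:rde}, or at worst from continuity) and uniformly small at infinity after multiplying by $w$ (using the decay of the prefactor $|x|^{-(2-s)} w(x) \to 1$ and the tail decay of $\rho_E$, controlled by the $\L$-regularity bound $\rho(x) \le \L(1+x^2)^{-1}$ pushed through the RDE). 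An Arzel\`a--Ascoli argument on $\R$ with the weighted sup norm then gives that $F$ is compact. I would also note $F$ is a positive operator: $\rho_E \ge 0$ and $|x|^{-(2-s)} > 0$ imply $Fu \ge 0$ whenever $u \ge 0$, and moreover $F$ maps nonzero elements of $\mathcal C$ into the interior of $\mathcal C$ (strict positivity of $Fu$ follows because $\rho_E > 0$ everywhere on bounded sets by \Cref{d:Lregular}(4), so the integral is strictly positive for every $x$; combined with the weight bound this places $Fu$ in $\interior \mathcal C$). This strong positivity is what yields simplicity and uniqueness.

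With these ingredients, the Krein--Rutman theorem (in its strong form for compact operators that are strongly positive with respect to a cone with nonempty interior) gives: the spectral radius $\lambda = \lambda_{K,t,s,E}$ is strictly positive, is an eigenvalue of $F$, and has a corresponding eigenvector $v = v_{K,t,s,E} \in \interior \mathcal C$, so $v(x) > 0$ for all $x \in \R$; moreover $\lambda$ is a simple eigenvalue and it is the only eigenvalue with a nonnegative eigenvector. (To get $v > 0$ pointwise rather than merely $v \in \mathcal C$, I use $v = \lambda^{-1} F v$ together with the strong positivity established above.) That $\lambda$ is the unique eigenvalue admitting a nonnegative eigenvector, together with $v$ being its eigenvector, is precisely the statement that $F$ has no other nonnegative eigenvectors. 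Finally I would record that $\lambda$ being the spectral radius is part of the Krein--Rutman conclusion, completing the proof.

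I expect the main obstacle to be the compactness of $F$ on the weighted space $\mathcal X$ — in particular, controlling the kernel uniformly near $x = 0$, where the prefactor $|x|^{-(2-s)}$ blows up but is tamed only in concert with the weight $w(x) = 1 + |x|^{2-s}$ in the target norm, and controlling it at $x \to \infty$ and $y \to \pm\infty$ using the polynomial tail decay of $\rho_E$ inherited from $\L$-regularity of $\rho$ through the recursive distributional equation \eqref{e:rde}. A secondary subtlety is verifying that $\rho_E$ is continuous (ideally Lipschitz) so that the equicontinuity needed for Arzel\`a--Ascoli holds; this should follow from the smoothing effect of the convolution structure in \Cref{d:pEdef} and the regularity of $\rho$, but it needs to be stated carefully since $\rho_E$ is defined via a fixed point of \eqref{e:rde} rather than in closed form.
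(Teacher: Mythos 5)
Your overall strategy (Krein--Rutman on the weighted space $\mathcal X$ with the nonnegative cone) is the right family of ideas, but the two key hypotheses you assert for $F$ itself --- compactness and strong positivity --- are both false for $F$, and this is a genuine gap. Strong positivity fails because pointwise strict positivity of $Fu$ is not enough: interiority in $\mathcal X$ requires a \emph{uniform} bound $(Fu)(x)\,w(x) \ge c_u>0$, and this degenerates at $x\to 0$. Concretely, take $u\ge 0$ supported in $[1,2]$. For small $x>0$ the kernel evaluates $\rho_E$ at $-y-t^2/x\approx -t^2/x$, deep in its quadratic tail, so $\rho_E(-y-t^2/x)\asymp x^2/t^4$ and hence $(Fu)(x)\asymp t^{-2-s}|x|^{s}\int u \to 0$ while $w(x)\to 1$; thus $Fu$ is not in the interior of the cone. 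Compactness of $F$ fails by a related mechanism in which mass escaping to $y\to\infty$ is mapped to bumps concentrating at $x\approx 0$: with $u_n = n^{-(2-s)}\mathbbm 1_{[n,n+1]}$ one has $\|u_n\|\asymp 1$, and at $x_n=-t^2/(n+\tfrac12)$ the lower bound $\rho_E(z)\ge c(E,K)$ for $|z|\le \tfrac12$ gives $(Fu_n)(x_n)\ge c\,t^{-(2-s)}$, so $\|Fu_n\|\ge c\,t^{-(2-s)}$ uniformly in $n$, while $(Fu_n)(x)\to 0$ for every fixed $x\neq 0$. Since norm convergence in $\mathcal X$ implies pointwise convergence, no subsequence of $(Fu_n)$ converges, so $F$ is not compact, and neither the strong nor the weak form of Krein--Rutman can be applied to $F$ directly as you propose.

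This is precisely why the paper's proof is organized around $F^2$ rather than $F$: a single application of $F$ forces the image to have the fat tail $(Fu)(y)\approx t^{2-s}D_s(u)|y|^{-(2-s)}$ (\Cref{l:powerseries}), and it is only the \emph{second} application that converts this tail into a strictly positive, uniformly controlled value near $x=0$ (\Cref{l:powerseries2}, \Cref{l:Flowerbound}). With these inputs the paper proves compactness and strong positivity for $F^2$ (\Cref{l:Fcompact}, \Cref{l:Fstronglypositive}), applies the Krein--Rutman corollary \Cref{c:KRchang} to $F^2$, and then recovers the statement for $F$ itself by the algebraic argument comparing the unique positive eigenvectors of $F^2$, $F^3$, and $F^6$. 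To repair your proposal you would either have to reproduce this $F^2$ route (including the transfer step back to $F$, which your direct argument lets you skip but which becomes necessary once you work with a power of $F$), or restrict to a smaller invariant cone of functions with prescribed $|y|^{-(2-s)}$ tails on which $F$ is genuinely strongly positive and compact --- which amounts to proving the same tail asymptotics anyway.
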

We often abbreviate $v_{K,t,s,E}$ by $v_{s,E}$,  or even just $v_E$, and similarly for $\lambda_{K,t,s,E}$ and $F_{K,t,s,E}$. When we refer to $v_E$, we always normalize it so that 
\be\label{e:vnormalization}
\| v_E \|_1 = \frac{1}{1-s}.
\ee

The following result connects $\phi$ and $\lambda_{s,E}$. It is proved in \Cref{s:provebootstrap}.

	\begin{lemma}\label{l:bootstrap}
The following holds for almost every $E \in \R$.
Let $\{\eta_j\}_{j=1}^\infty$ be a decreasing sequence such that $\lim_{j\rightarrow\infty} \eta_j =0$. Suppose that
$\lim_{j \rightarrow\infty} \Im  R_{00}(E + \iu \eta_j) = 0$ in probability.
Then
\bex
\lim_{j \rightarrow \infty} \phi(s; E+ \iu \eta_j) =\log K +  \log \lambda_{s,E}.
\eex
\end{lemma}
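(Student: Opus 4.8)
\textbf{Proof proposal for \Cref{l:bootstrap}.}

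The plan is to show that, under the hypothesis $\Im R_{00}(E + \iu\eta_j) \to 0$ in probability, the limiting law of $R_{00}(E + \iu\eta_j)$ is a real-valued solution of the RDE \eqref{e:rde}, so that $\rho_{E + \iu\eta_j}$ converges (in a suitable sense) to the density $\rho_E$ used to define $F = F_{K,t,s,E}$; this identifies the density appearing in the transfer-operator picture for $z = E + \iu\eta_j$ with $\rho_E$ in the limit. The key structural input is the product representation of the off-diagonal resolvent entry (\Cref{rproduct}): for a vertex $v \in \mathbb{V}(L)$, $R_{0v}(z)$ equals (up to a sign and a factor $t^L$) a product of $L+1$ diagonal resolvent entries $R_{w_iw_i}^{(i)}(z)$ along the path from $0$ to $v$, taken in the successively truncated subtrees, and these factors are independent with laws converging to that of the RDE solution. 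Consequently $K^{-L}\Phi_L(s;z) = \E|R_{0v}(z)|^s$ is exactly $t^{(2-s)L}$ times an $(L{+}1)$-fold iterate of the transfer operator $F_{K,t,s,z}$ (the operator in \eqref{e:Fdef} but with $\rho_E$ replaced by $\rho_z$) applied to the initial density of $R_{00}(z)^{-1}$; this is the content of \Cref{s:origintransfer}. Taking $L \to \infty$ and invoking \Cref{c:KR} (Krein--Rutman) gives $\varphi(s;z) = \log K + \log\lambda_{s,z}$, where $\lambda_{s,z}$ is the spectral radius of $F_{K,t,s,z}$.

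Next I would pass to the limit $j \to \infty$. The first step is to upgrade ``$\Im R_{00}(E+\iu\eta_j)\to 0$ in probability'' to convergence in distribution of the full complex random variable $R_{00}(E+\iu\eta_j)$ to a real-valued limit $\Gamma$: tightness comes from the a priori fractional-moment bound \eqref{e:crudephi} (which controls $\E\log|R_{00}|$), and any subsequential weak limit must satisfy the $\eta = 0$ RDE \eqref{e:rde} by passing to the limit in \eqref{e:rdeplus} using the Schur complement identity and the assumed vanishing of the imaginary part; since we are told the limit is real, the limiting law is $p_E$ from \Cref{d:pEdef}, and (using \Cref{l:pEexists}, or the continuity arguments referenced in \Cref{s:lyapunovcontinuity}) one pins down that the density of $\Gamma^{-1}$ is $\rho_E$. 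This forces $\rho_{E+\iu\eta_j} \to \rho_E$ in a topology (e.g. $L^1$ together with the uniform tail bound from $\L$-regularity, Definition~\ref{d:Lregular}(1)) strong enough that $F_{K,t,s,E+\iu\eta_j} \to F_{K,t,s,E}$ in operator norm on $\mathcal X$, using the explicit kernel in \eqref{e:Fdef} and the weight $w(x) = 1 + |x|^{2-s}$ to absorb the $|x|^{-(2-s)}$ prefactor and the tails in $y$. Continuity of the spectral radius under norm-convergence of operators (with the isolated-simple-eigenvalue structure from Krein--Rutman) then yields $\lambda_{s,E+\iu\eta_j} \to \lambda_{s,E}$, and hence $\varphi(s; E+\iu\eta_j) = \log K + \log\lambda_{s,E+\iu\eta_j} \to \log K + \log\lambda_{s,E}$, which is the claim. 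The ``almost every $E$'' qualifier is inherited from the final part of \Cref{l:aizenman} (existence of the limit defining $\varphi(s;E)$) and from \eqref{e:stieltjeslimit}.

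I expect the main obstacle to be the convergence $\rho_{E+\iu\eta_j} \to \rho_E$ in a norm strong enough to give operator-norm (not merely strong) convergence of $F_{K,t,s,z}$ on the weighted space $\mathcal X$, and the attendant continuity of the spectral radius. Weak convergence of the RDE solutions does not automatically give convergence of their \emph{densities}, and the transfer operator sees the density pointwise (through $\rho_z(-y - t^2/x)$); one needs equicontinuity/uniform-tail control on the family $\{\rho_z\}_{\eta}$, which should follow from the $\L$-regularity of $\rho$ propagated through the RDE (the convolution structure $V_0 - z - t^2\sum_{i=1}^{K-1}\Gamma_i$ smooths and the Cauchy-type tail bound Definition~\ref{d:Lregular}(1) is stable), but making this quantitative is the delicate point. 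A secondary subtlety is justifying the interchange of the $L\to\infty$ and $\eta\to 0$ limits; this is precisely where part (5) of \Cref{l:aizenman} is used, so the argument should be arranged to invoke that commutation rather than re-prove it. If a clean operator-norm estimate proves awkward, an alternative is to bound $|\varphi_L(s;E+\iu\eta_j) - (\text{the } L\text{-step transfer quantity with }\rho_E)|$ directly for fixed $L$ via the product representation and the weak convergence of finitely many resolvent factors, then send $L \to \infty$ using the uniform rate \eqref{e:limitr0j2}; this trades the operator-theoretic continuity for a more hands-on but longer estimate.
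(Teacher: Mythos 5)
Your primary route has a genuine gap at its central step. You claim that for $z = E + \iu\eta_j \in \bbH$ one has $\varphi(s;z) = \log K + \log\lambda_{s,z}$ with $\lambda_{s,z}$ a Krein--Rutman spectral radius of an operator ``\eqref{e:Fdef} with $\rho_E$ replaced by $\rho_z$'', and that $F_{K,t,s,E+\iu\eta_j} \to F_{K,t,s,E}$ in operator norm on $\mathcal X$ forces $\lambda_{s,E+\iu\eta_j}\to\lambda_{s,E}$. But for $\eta>0$ the random variable $R_{00}(z)$ and the denominator in \eqref{e:rdeplus} are genuinely complex, so $\rho_z$ is a density on $\bbC$ and the associated transfer operator acts on functions of a complex variable, not on $\mathcal X$; the paper's Krein--Rutman statement (\Cref{c:KR}) is proved only for the real operator at $\eta=0$, and no analogue at complex $z$ is available (the product expansion in \eqref{productN3} is an integral over $\bbC^{L+2}$, and the change of variables producing \eqref{e:Fdef} is carried out only after the reality assumption). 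Consequently the ``operator-norm convergence plus continuity of the spectral radius'' step does not even typecheck: the two operators live on different spaces, and the $\eta\to 0$ limit is a degenerate collapse of a two-dimensional density onto the real line. Moreover, the hypothesis only gives $\Im R_{00}\to 0$ in probability, hence (together with \eqref{e:stieltjeslimit}) weak convergence of the law of $R_{00}$; as you yourself note, weak convergence does not yield convergence of densities in any norm, so the input you need for operator-norm convergence is simply not supplied by the hypothesis. The paper is structured precisely to avoid this: it never attaches a spectral radius to $F_{s,z}$ for $\eta>0$.

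Your fallback paragraph, however, is essentially the paper's proof, and is the route you should take as primary. Fix $L$; use weak convergence of the finitely many independent resolvent factors in the product expansion (\Cref{rproduct}, \Cref{srz}) together with uniform integrability from the fractional-moment bound \Cref{l:uniformlyintegrable} to pass $\eta_j\to 0$ inside $\Phi_L$ --- this is exactly \Cref{l:PhiEA}, and it needs only weak convergence to an a.s.\ real limit, no density convergence at all; identify the limit as $S(F^{L+1}\tilde w)$ with the real operator and the initial datum $\tilde w(x)=p_E(-x/t^2)$ via \Cref{c:transfer}; then send $L\to\infty$ and interchange with $\eta\to 0$ using part (5) of \Cref{l:aizenman} and the uniform rate \eqref{e:limitr0j2}. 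One ingredient you do not spell out: extracting $\log\lambda_{s,E}$ from $L^{-1}\log S(F^{L+1}\tilde w)$ is not a bare consequence of \Cref{c:KR}; one needs comparability of the initial datum and of the iterates with the positive eigenvector, which the paper obtains from the sandwich bounds $c\,\tilde u \le F^2\tilde w \le C\,F^2\tilde u$ (\Cref{l:initialdatabound}) together with \Cref{l:eigenvectorsandwich}, yielding $c\,\lambda_{s,E}^{L-1}S(v_E) \le S(F^{L+1}\tilde w) \le C\,\lambda_{s,E}^{L+1}S(v_E)$. With that supplement, your fallback argument closes the proof along the paper's lines.
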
 

In combination with \Cref{l:criteria}, the previous lemma suggests that regions where $\lambda_{s,E} < K^{-1}$ for all $s$ in some interval $(s_0,1)$ correspond to localization, and regions where $\lambda_{s,E} > K^{-1}$ for all $s$ in some interval $(s_0,1)$ correspond to delocalization. 
Additionally, from the statement of \Cref{t:main}, away from energies $E_0$ where $\rho(E_0) = (4g)^{-1}$, we expect delocalization when $\rho(E) >  (4g)^{-1}$ and localization when $\rho(E) < (4g)^{-1}$. The next lemma connects these two descriptions, and completely determines the phase diagram 
for all energies except those in small intervals bracketing the mobility edges. 
In particular, for $s$ sufficiently close to $1$, the eigenvalue $\lambda_{s,E}$ is greater than $K^{-1}$ when $\rho(E)$ is greater than $(4g)^{-1}$ and bounded away from it, and  $\lambda_{s,E}$ is less than $K^{-1}$ when $\rho(E)$ is less than $(4g)^{-1}$ and bounded away from it.

With the notation of \Cref{t:main}, we define the control parameter 
\be \label{e:varpi}
 \varpi = \frac{1}{10^3 \L^2}
 \ee 
 and set
\[
D_+ = \big\{ E \in [ - \L , \L] : \rho(E) \ge 1/(4g) + \varpi/\L^9  \big\}, \quad D_- = \big\{ E \in [ - \L , \L] : \rho(E) \le  1/(4g) - \varpi/\L^9 \big\}.
\]
We prove the following lemma in \Cref{s:testvectoranalysis}.
\begin{lemma}\label{l:bulklambda}
Adopt the notation and hypotheses of \Cref{t:main}. There exists $K_0(\L)>1$ such that the following holds for all $K \ge K_0$. There exists $s_0(K) \in (0,1)$ such that for all $s \in (s_0,1)$, we have $\lambda_{s,E} > (1 + \L^{-100})/K$ for all $E \in D_+$ and  $\lambda_{s,E} <  (1 - \L^{-100})/K$ for all $E \in D_-$.
\end{lemma}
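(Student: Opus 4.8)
By \Cref{c:KR}, $\lambda_{s,E}$ is the spectral radius of $F$ and admits a strictly positive eigenvector $v_{s,E}\in\mathcal X$; this yields the standard comparison principle: a nonnegative $u\in\mathcal X\setminus\{0\}$ with $Fu\ge\mu u$ pointwise forces $\lambda_{s,E}\ge\mu$, and a strictly positive $u$ with $Fu\le\mu u$ forces $\lambda_{s,E}\le\mu$ (compare $u$ with $v_{s,E}$ using that $v_{s,E}/u$ is bounded above, resp.\ below away from $0$, which follows from $v_{s,E}w$ bounded and $Fv_{s,E}\ge c\,t^{2-s}|x|^{-(2-s)}$). The plan is to exhibit a subsolution when $E\in D_+$ and a supersolution when $E\in D_-$, both of the form of a truncated power law $\asymp|x|^{-(2-s)}$ cut off at the scale $\Delta=t^2/\alpha$ --- as in Bapst \cite{bapst2014large} and \Cref{s:evaluebounds} --- and to show that the corresponding $\mu$ equals $\bigl(4g\rho(E)+o(1)\bigr)/K$. (For the supersolution, one must take some care with the flat piece near the origin: a cap at the height $\Delta^{-(2-s)}$ is too low for the pointwise inequality to survive the tails of $\rho_E$, so a slightly higher flat piece is needed, handled as in \cite{bapst2014large}.) On $\{|x|\ge\Delta\}$ each test function equals $|x|^{-(2-s)}$, so there $(Fu)(x)/u(x)=t^{2-s}\int_{\R}\rho_E(-y-t^2/x)\,u(y)\,dy$.

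\textbf{The integral.} The heart of the matter is that, uniformly for $|x|\ge\Delta$,
\[
\int_{\R}\rho_E\!\Bigl(-y-\tfrac{t^2}{x}\Bigr)\,u(y)\,dy=2\rho_E(0)\log(\delta_0/\Delta)+O\!\bigl(\L(\delta_0+\alpha)\log(1/t)\bigr)+O(1),
\]
for a small constant $\delta_0>0$. Indeed, on $\{|y|\ge\delta_0\}$ the test function is at most $\delta_0^{-2}$ and $\rho_E$ integrates to $1$, contributing $O(1)$, negligible against $\log(\delta_0/\Delta)\asymp\log K$; truncating at a \emph{fixed} scale is exactly what rules out the false blowup $\int_\Delta^\infty|y|^{-(2-s)}\,dy=\Delta^{-(1-s)}/(1-s)\to\infty$. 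On $\{|y|<\delta_0\}$ one uses $|{-y-t^2/x}|<\delta_0+\alpha$ together with the bound (inherited from $\rho$) that $\rho_E$ is $\L$-Lipschitz to write $\rho_E(-y-t^2/x)=\rho_E(0)+O(\L(\delta_0+\alpha))$, and $\int_{\Delta\le|y|\le\delta_0}|y|^{-(2-s)}\,dy=2\frac{\Delta^{-(1-s)}-\delta_0^{-(1-s)}}{1-s}\to 2\log(\delta_0/\Delta)$ as $s\to1$. Choosing $\delta_0=\alpha$ gives $2\log(\delta_0/\Delta)=4\log(1/t)-4\log(1/\alpha)=(1+o(1))\cdot4\log(1/t)$; combining with $t^{2-s}=(1+o(1))t$ (valid whenever $(1-s)\log(1/t)\to0$, e.g.\ for $1-s<(\log K)^{-2}$) and the exact identity $t\log(1/t)=\frac gK\cdot\frac{\log K+\log\log K-\log g}{\log K}=(1+o(1))\frac gK$, one obtains $(Fu)(x)/u(x)=\bigl(4g\rho_E(0)+o(1)\bigr)/K$, the error uniform once $\delta_0,\alpha$ are chosen as small enough powers of $\L^{-1}$. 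The regime $|x|<\Delta$ is treated with the same inputs plus the tail decay of $\rho_E$, and yields the same value.

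\textbf{Passing from $\rho_E(0)$ to $\rho(E)$.} Writing $\rho_E(0)=\E\bigl[\rho\bigl(E+t^2\sum_{i=1}^{K-1}\Gamma_i\bigr)\bigr]$ and using that $\rho$ is bounded and Lipschitz (hence $|\rho(E+h)-\rho(E)|\le C\L|h|^{1/2}$), together with subadditivity of $x\mapsto x^{1/2}$,
\[
|\rho_E(0)-\rho(E)|\le C\L\,\E\bigl[|t^2\textstyle\sum_{i=1}^{K-1}\Gamma_i|^{1/2}\bigr]\le C\L(K-1)t\,\E\bigl[|\Gamma|^{1/2}\bigr].
\]
The fractional moment is bounded uniformly in $E,K$: the denominator $V_0-E-t^2\sum_{i=1}^{K}\Gamma_i$ of \eqref{e:rde} is an independent additive perturbation of $V_0$, so its density is $\le\|\rho\|_\infty\le\L$, whence $\P(|\Gamma|>M)\le2\L/M$ and $\E[|\Gamma|^{1/2}]\le1+2\L$. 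Therefore $|\rho_E(0)-\rho(E)|\le C\L^2(K-1)t\le C\L^3/\log K\to0$, uniformly in $E\in[-\L,\L]$ and $s$. It is essential here to run the moment estimate at a fixed exponent rather than at $s$: one has $\E[|\Gamma|^s]\asymp(1-s)^{-1}$, which would reintroduce a blowup as $s\to1$. Combining the two steps, $\lambda_{s,E}\,K=4g\rho(E)+o(1)$, uniformly in $E\in[-\L,\L]$ and $s\in(s_0(K),1)$ for a suitable threshold $s_0(K)$.

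\textbf{Conclusion, and the main difficulty.} For $E\in D_+$, $\rho(E)\ge\frac1{4g}+\frac\varpi{\L^9}$ gives $4g\rho(E)\ge1+\frac{4g\varpi}{\L^9}\ge1+\frac{4\varpi}{\L^{10}}=1+4\cdot10^{-3}\L^{-12}$ (using $g>\L^{-1}$), which exceeds $1+2\L^{-100}$ for $\L>10$; taking $K_0(\L)$ large and $s_0(K)$ close to $1$ so the error above is below $\L^{-100}/2$ yields $\lambda_{s,E}>(1+\L^{-100})/K$. The case $E\in D_-$ is symmetric and gives $\lambda_{s,E}<(1-\L^{-100})/K$. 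The main obstacle is twofold and intertwined: pinning the leading constant to \emph{exactly} $4$ --- which forces one to track the near-origin shape of the test vector, the cancellation between the scales $\Delta$ and $\delta_0$, and the precise logarithmic bookkeeping in $t=g/(K\log K)$ --- and making \emph{every} error term vanish uniformly as $K\to\infty$ over $s\in(s_0(K),1)$, the crux of which is the uniform density comparison $\rho_E(0)=\rho(E)+o(1)$ made possible by the fixed-exponent fractional moment (an input that goes beyond what is needed for the cruder bounds in \cite{bapst2014large}).
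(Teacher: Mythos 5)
Your proposal follows essentially the same route as the paper: the paper's proof (in \Cref{s:evaluebounds}) also runs Bapst-style test functions at the scale $\Delta=t^2/\alpha$ through the two-sided comparison principle (\Cref{p:l1pf} plus the sandwich of $v_E$ between $u_2$ and $u_1+u_2+u_3$), arrives at $K\lambda_{s,E}=4g\rho_E(0)+o(1)$ via \Cref{l:rougheigenvector}, and then replaces $\rho_E(0)$ by $\rho(E)$. The one genuinely different ingredient is your last step: the paper imports Bapst's uniform approximation $\rho_E(0)=\rho(E)+o(1)$ (\Cref{l:bapstuniform}, i.e.\ (A21) of \cite{bapst2014large}), whereas you derive it directly from $\rho_E(0)=\E[\rho(E+t^2\sum_{i=1}^{K-1}\Gamma_i)]$, the Lipschitz/boundedness interpolation $|\rho(E+h)-\rho(E)|\le C\L|h|^{1/2}$, and the fixed-exponent moment bound $\E[|\Gamma|^{1/2}]\le 1+2\L$ (valid because the denominator in \eqref{e:rde} has density at most $\|\rho\|_\infty$). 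That argument is correct and buys a self-contained, quantitative rate $|\rho_E(0)-\rho(E)|\le C\L^3/\log K$ in place of a cited black box; your observation that one must not run the moment at exponent $s$ (to avoid the $(1-s)^{-1}$ blowup) is exactly the right point.

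One caution on the test-function step. You flag the flat piece only for the supersolution, but the same tail mechanism is fatal for a \emph{capped} subsolution: if $u=\min(\Delta^{-(2-s)},|x|^{-(2-s)})$, then for $|x|\ll t^2$ one has $(Fu)(x)\asymp t^{-(2-s)}$ while $\mu u(x)\asymp K^{-1}\alpha t^{-2}$, so $Fu\ge\mu u$ fails there by a factor of order $\alpha\log K/g$. The fix is what the paper (and Bapst) actually do: take the lower-bound test function to \emph{vanish} on $\{|x|<\Delta\}$ (and beyond a fixed scale), as in $u_2$ of \eqref{e:theus}, so the pointwise inequality is vacuous off $[\Delta,1]$ and is supplied on $[\Delta,1]$ by \Cref{l:lowerbound}. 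For the supersolution your "slightly higher cap" must be higher by a constant factor of order $g/\alpha$ (the worst ratio occurs for $|x|$ between $t^2$ and $\Delta$), and one should note that raising it only adds an $O(1)$ term to the bracket on $\{|x|\ge\Delta\}$, which is harmless against the $\log K$ main term; with these adjustments your argument closes as claimed.
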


The next result establishes the monotonicity of $\lambda_{s,E}$ in $E$ on the intervals not covered by the previous lemma. This is the most significant technical novelty of the paper, and the primary input in the proof of \Cref{t:main}. It is proved in \Cref{s:monotonicitysub}. To state it, we define, for any $E_0 \in \R$, 
 \be\label{e:istar}
  I_\star  = [E_0 - \varpi , E_0 + \varpi].
 \ee
\begin{theorem}\label{t:mainlambda}
For every real number $\L> 1$, there exists a constant $K_0 = K_0 (\L) > 1$ such that the following holds for every $\rho$ that is $\L$-regular. Fix $g \in \R$ such that 
\be
 \frac{1}{ 4 \| \rho \|_{\infty}} + \frac{1}{\L}  < g <\L,
\ee 
fix  $E_0 \in \R$ such that $|E_0|\le 2\L$ and $\rho(E_0) = (4g)^{-1}$, and set $I_\L  =  [E_0 - \L^{-1}, E_0 + \L^{-1}]$ and $t = g(K \log K)^{-1}$. 
Suppose that 
\be\label{e:geL}
\rho'(E)  \ge \frac{1}{\L}
\ee 
for all $E \in I_\L$, or 
\be\label{e:leL}
\rho'(E)  \le  - \frac{1}{\L}
\ee 
for all $E \in I_\L$. 
Then there exists a constant  $s_0(K, \L)\in(0,1)$ such that for all $s \in (s_0,1)$, the function $f_s(e)= \lambda_{K,t,s,e}$ satisfies the following property. 
If \eqref{e:geL} holds, then for all $e_1, e_2 \in I_\star$ such that $e_1 < e_2$, 
\be
f_s(e_2) \ge f_s(e_1), \qquad \liminf_{s\rightarrow 1^-} \big( f_s(e_2) - f_s(e_1) \big)> 0.
\ee
Further, if \eqref{e:leL} holds  for all  $e_1, e_2 \in I_\star$ such that $e_1 < e_2$, 
\be
f_s(e_1) \ge f_s(e_2), \qquad \liminf_{s\rightarrow 1^-}\big( f_s(e_1) - f_s(e_2)  \big)> 0.
\ee
\end{theorem}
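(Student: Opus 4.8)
The plan is to reduce the monotonicity of $e\mapsto \lambda_{K,t,s,e}$ on the small window $I_\star$ of \eqref{e:istar} (note $I_\star\subset I_\L$ since $\varpi<\L^{-1}$, so the hypotheses \eqref{e:geL}--\eqref{e:leL} are in force) to a quantitative sign analysis of the difference operator $F^\circ=F_{s,e_2}-F_{s,e_1}$, exploiting the identity $\ln\lambda_{s,e}=\lim_{n\to\infty}n^{-1}\ln\|F_{s,e}^n\tilde u\|_1$ from \eqref{e:computelnlambda} for the explicit, $E$-independent profile $\tilde u$ of \eqref{e:tildeuintro}. I treat $e_1<e_2$ in $I_\star$ under \eqref{e:leL} ($\rho'\le-1/\L$ on $I_\L$); the case \eqref{e:geL} is symmetric. \textbf{Step 1 (how $\rho_E$ varies with $E$).} The kernel of $F_{s,E}$ in \eqref{e:Fdef} is built from the density $\rho_E$ of $V_0-E-t^2\sum_{i=1}^{K-1}\Gamma_i$, where $\Gamma$ solves the recursive distributional equation \eqref{e:rde} (\Cref{l:pEexists}). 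I would first establish, on a neighborhood of $E_0$, a uniform Lipschitz bound $|\rho_{e_2}(x)-\rho_{e_1}(x)|\le C|e_2-e_1|(1+|x|)^{-\fourthirds}$ and a one-sided bound $\rho_{e_2}(x)-\rho_{e_1}(x)\le-c(e_2-e_1)$ for $|x|$ bounded, the latter reflecting $\rho'<0$ near $E_0$. The heuristic $\rho_E(x)\approx\rho(x+E)+O(t)$ suggests both, but only up to an $O(t)$ error that dominates when $e_2-e_1\ll t$, so instead I would iterate \eqref{e:rde} once to obtain an integral fixed-point identity for $\rho_{e_2}-\rho_{e_1}$: the ``source'' term is the difference $\rho(\cdot+e_2)-\rho(\cdot+e_1)$ of potential densities, whose sign and decay are controlled via \Cref{d:Lregular}, while the ``feedback'' is a convolution against the law of $t^2\sum\Gamma_i$, of total mass $<1$; a Gronwall-type argument then closes both estimates (and, as a byproduct, shows the real solution of \eqref{e:rde} is unique near $E_0$).

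\textbf{Step 2 (shape of the eigenfunction).} To see how $\lambda_{s,E}$ responds to $E$ I need sharp control of the Krein--Rutman eigenfunction $v_{s,E}$ of \Cref{c:KR}, not merely of $F_{s,E}$. Using Step 1 and the structure of \eqref{e:Fdef}, I would sandwich $v_{s,E}$ after one application of $F_{s,E}$, i.e. $c\,(F_{s,E}\tilde u)(x)\le(F_{s,E}v_{s,E})(x)=\lambda_{s,E}v_{s,E}(x)\le C\,(F_{s,E}\tilde u)(x)$ with $c,C$ independent of $K$. The profile $\tilde u$ is designed to match the expected two-scale behavior of $v_{s,E}$---flat at height $\sim(t^2\log K)^{-1}$ for $|x|\lesssim t^2$ and $\sim|x|^{-(2-s)}$ for $|x|\gtrsim t^2$---so that the delicate crossover near scale $t^2$, which I cannot pin down directly, is smoothed out by one application of $F_{s,E}$. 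This converts any statement about $F_{s,e}^n\tilde u$ into one about $\lambda_{s,e}^n v_{s,e}$, on which $F_{s,e}$ acts as multiplication by $\lambda_{s,e}$.

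\textbf{Step 3 (iterating $F^\circ$).} By \eqref{e:computelnlambda}, $\ln\lambda_{s,e_2}-\ln\lambda_{s,e_1}$ has the sign of $\|F_{s,e_2}^n\tilde u\|_1-\|F_{s,e_1}^n\tilde u\|_1$ for large $n$, and by the telescoping identity \eqref{e:Fdiffsumintro} this difference equals $\sum_{j=0}^{n-1}\int (F_{s,e_2}^{n-j-1}(F^\circ)F_{s,e_1}^j\tilde u)(x)\,dx$. Fixing $j$ and writing $g_j=(F^\circ)F_{s,e_1}^j\tilde u$, I use Step 2 to replace $F_{s,e_1}^j\tilde u$ by $\lambda_{s,e_1}^j$ times a profile comparable to $v_{s,e_1}$, then split $g_j=g_j^+-g_j^-$ according to $|x|\le Ct^2$ versus $|x|\ge Ct^2$. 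On $|x|\ge Ct^2$ the argument $-y-t^2/x$ of $\rho_{e_i}$ lands near $0$ on the bulk of the inner integral, so the one-sided bound of Step 1 makes $g_j^-(x)$ strictly positive of size $\gtrsim(e_2-e_1)\lambda_{s,e_1}^j|x|^{-(2-s)}$; on $|x|\le Ct^2$, $g_j^+$ is controlled by the Lipschitz bound of Step 1 and the height $\sim(t^2\log K)^{-1}$ of $v_{s,e_1}$ there. The key estimate is $\|g_j^+\|_1\le(1-\delta)\|g_j^-\|_1$ with $\delta>0$ uniform in $K,j,n$; combined with the $L^1$ mapping properties of $F_{s,e_2}$ (governed by $\lambda_{s,e_2}$ and the positive eigenfunction $v_{s,e_2}$, again via Step 2), this yields $\int F_{s,e_2}^{n-j-1}g_j\,dx\le -c(e_2-e_1)\lambda_{s,e_2}^{n-j-1}\lambda_{s,e_1}^j$. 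Summing over $j$, dividing by $n$, and letting $n\to\infty$ gives $\lambda_{s,e_2}\le\lambda_{s,e_1}$; tracking the explicit $c(e_2-e_1)$ and sending $s\to1^-$ upgrades this to $\liminf_{s\to1^-}(\lambda_{s,e_1}-\lambda_{s,e_2})>0$, which is \Cref{t:mainlambda} under \eqref{e:leL}.

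\textbf{Main obstacle.} The domination $\|g_j^+\|_1\le(1-\delta)\|g_j^-\|_1$ in Step 3 is the crux. The positive part $g_j^+$ is genuinely present---near the origin $\rho_{e_2}-\rho_{e_1}$ carries no favorable sign---and there $v_{s,E}$ is as large as $(t^2\log K)^{-1}$; the compensating $g_j^-$ comes with the decaying weight $|x|^{-(2-s)}$ and a decaying Lipschitz constant. Making $g_j^-$ win, uniformly in the iterate index $j$ and in $n$ and with constants independent of $K$, is precisely what forces the sharp eigenfunction control of Step 2 and uses the scaling $t=g(K\log K)^{-1}$; the absence (unlike the L\'evy matrix case) of any closed-form expression for $\lambda_{s,E}$ is what makes this indirect route, through high powers of $F_{s,E}$ applied to $\tilde u$, unavoidable.
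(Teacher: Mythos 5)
Your plan is essentially the paper's own proof: Step 1 is \Cref{l:amoluniform} (the Lipschitz and one-sided bounds on $\rho_{E_2}-\rho_{E_1}$, obtained from the self-consistent integral identity by a Gronwall-type argument), Step 2 is the eigenfunction sandwich of \Cref{l:eigenvectorsandwich} after one application of $F$, and Step 3 is the telescoping of $F^\circ$ with the sign analysis of \Cref{l:pizza} fed into \eqref{e:computelnlambda}, exactly as in \Cref{l:penultimate} (including the need to control how the positive bump near the origin propagates, which the paper handles by decomposing $F_{E_2}^k u_1$ into a rapidly decaying piece plus a small multiple of $\lambda_{E_2}^k v_{E_2}$ rather than by a raw $L^1$ domination of $g_j^+$ by $g_j^-$ at creation time). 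The only divergence is the strict inequality as $s\to 1^-$, which the paper obtains by a separate contradiction argument iterating $F_{E_2}$ in blocks on $v_{E_1}$ (\Cref{lambdae120}) rather than by tracking the explicit constant $c(e_2-e_1)$; your quantitative route appears viable since the constants in Steps 2--3 are uniform in $s$ near $1$, but it would need to be carried out against the growth rate $\lambda^n$ (not by dividing by $n$) to extract a gap of order $(e_2-e_1)/K$.
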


\subsection{Bootstrap for the Localized Regime}\label{s:bootstrapintro}
Our proof of the claim about spectral localization in \Cref{t:main} is facilitated through two distinct, but related, inductive arguments, which treat the bounded and unbounded components of $\mathcal M^\circ$ differently; we denote these components by $\mathcal M^\circ_1, \dots , \mathcal M^\circ_{n+1}$. For energies $E$ in a bounded component $\mathcal M^\circ_i$  where localization should occur, we first prove the claim for energies in $\mathcal M^\circ_i \cap D_-$, then extend it inductively to all energies in $\mathcal M^\circ_i$. The following lemma provides the initial bound for the induction. It is proven in \Cref{s:disorderbootstrap}. 

\begin{lemma}\label{l:bulkphi}
Adopt the notation and hypotheses of \Cref{t:main}. There exists $K_0(\L)>1$ such that the following holds for all $K \ge K_0$. For almost all $E \in D_-$, we have 
\be \limsup_{\eta \rightarrow 0} \phi(1;E+ \iu\eta) < 0.
\ee
\end{lemma}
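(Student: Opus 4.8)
The plan is to combine the uniform upper bound on the transfer operator's leading eigenvalue from \Cref{l:bulklambda} with the bootstrap lemmas of this section. Fix $K \ge K_0(\L)$ large enough for \Cref{l:bulklambda}, let $s_0(K)$ be as there, and fix any $s \in (s_0,1)$; the goal is to show $\limsup_{\eta \to 0} \phi(1;E+\iu\eta) < 0$ for almost every $E \in D_-$. Since $\varphi(s';z)$ is nonincreasing in $s'$ (\Cref{l:aizenman}), we have $\phi(1;E+\iu\eta) \le \phi(s;E+\iu\eta)$ for all $\eta > 0$, and since $\varphi(s;E) = \lim_{\eta\to0}\varphi(s;E+\iu\eta)$ exists for almost every $E$ (\Cref{l:aizenman}), it follows that $\limsup_{\eta\to0}\phi(1;E+\iu\eta) \le \varphi(s;E)$. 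Hence it suffices to prove $\varphi(s;E) < 0$ for almost every $E \in D_-$. By \Cref{l:bulklambda}, $\log K + \log \lambda_{s,E} < \log(1-\L^{-100}) < 0$ uniformly on $D_-$, so by \Cref{l:bootstrap} it is enough to show that, for almost every $E \in D_-$, we have $\Im R_{00}(E+\iu\eta) \to 0$ in probability as $\eta \to 0$.

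To prove this last statement I would argue by contradiction. Suppose the set $S$ of $E \in D_-$ for which $\Im R_{00}(E+\iu\eta) \not\to 0$ in probability has positive Lebesgue measure. For almost every $E \in S$ the Stieltjes limit $R_{00}(E)$ exists, so $R_{00}(E)$ has positive imaginary part with positive probability; then the contrapositive of \Cref{p:imvanish}, combined with the existence of $\lim_{\eta\to0}\varphi(s';E+\iu\eta)$, forces $\varphi(s';E) \ge 0$ for every $s' \in (0,1)$, whence $\phi(1;E) \ge 0$ on $S$. This must be reconciled with the strict bound $\lambda_{s,E} < (1-\L^{-100})/K$ that \Cref{l:bulklambda} provides throughout $D_- \supseteq S$. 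Here I would invoke the continuity-in-energy argument of \cite[Section 12]{aggarwal2022mobility}, carried out in \Cref{s:lyapunovcontinuity} and \Cref{s:lyapunovcontinuity2}: the map $E \mapsto \lambda_{s,E}$ is continuous (the Lipschitz bound on $\rho_E$ in \Cref{l:amoluniform}, applied with $E_1,E_2$ in a fixed compact set, shows the defining operators $F_{s,E}$ depend continuously on $E$), while on the complement of the absolutely continuous spectrum one has the identification $\varphi(s;E) = \log K + \log\lambda_{s,E}$. Propagating from large $|E|$, where the classical localization results \cite{aizenman1994localization,aizenman1993localization} already guarantee $\Im R_{00}(E+\iu\eta) \to 0$ and hence pure-point spectrum, one concludes that absolutely continuous spectrum cannot appear at an energy where $\lambda_{s,E}$ is bounded strictly below $1/K$; in particular it cannot appear on $D_-$, contradicting the definition of $S$.

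I expect the main obstacle to be exactly this reconciliation step. The bound of \Cref{l:bulklambda} concerns the \emph{real} operator $F_{s,E}$ built from a real solution of the recursive distributional equation \eqref{e:rde}, whereas for $\eta > 0$ the quantity $\phi(s;E+\iu\eta)$ is governed by the complex operator $F_{s,E+\iu\eta}$, and a priori the law of $R_{00}(E+\iu\eta)$ could converge as $\eta \to 0$ to a solution of \eqref{e:rde} with strictly positive imaginary part (which would signal delocalization). Ruling this out on $D_-$ — equivalently, showing that the limiting self-energy distribution there is the real one controlled by \Cref{l:bulklambda} — is the delicate point; once $\Im R_{00}(E+\iu\eta) \to 0$ in probability is established, \Cref{l:bootstrap} yields $\varphi(s;E) = \log K + \log\lambda_{s,E} < 0$ and the reduction of the first paragraph finishes the proof.
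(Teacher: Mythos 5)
Your first paragraph (reduce to showing $\Im R_{00}(E+\iu\eta)\to 0$ in probability on $D_-$, then conclude via \Cref{l:bootstrap}, \Cref{l:bulklambda} and monotonicity of $\varphi$ in $s$) matches the paper's reduction. But the core of the lemma is exactly the step you defer, and the route you sketch for it does not work. You propose to propagate localization in the \emph{energy} variable from large $|E|$ into $D_-$, using continuity of $\phi$ (or of $\lambda_{s,E}$) in $E$. The energy-continuity lemma \Cref{l:phicontinuity} only transports the bound $\limsup_{\eta\to0}\phi(s;E+\iu\eta)<-\kappa$ along a chain of energies on which $\phi$ stays negative; a bounded component of $D_-$ may be separated from the large-$|E|$ region by intervals of $D_+$, where $\lambda_{s,E}>1/K$ and $\phi(1;E)>0$, so the chain breaks and the induction cannot cross. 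This is precisely the situation \Cref{l:bulkphi} is needed for: in the proof of \Cref{t:main}, the energy bootstrap is run only \emph{inside} a single component of $\mathcal M^\circ$, with \Cref{l:bulkphi} supplying the base case on $D_-\cap I$ for bounded components (the unbounded ones use \Cref{l:bigE}). Your argument would thus be circular for exactly the components where the lemma has content. In addition, your appeal to continuity of $E\mapsto\lambda_{s,E}$ via \Cref{l:amoluniform} is unsupported: that proposition is proved only locally, on $I_\star$ around a point $E_0$ with $\rho(E_0)=(4g)^{-1}$ and $\rho'$ bounded away from zero, not on all of $D_-$; and the identification $\varphi(s;E)=\log K+\log\lambda_{s,E}$ requires the very vanishing of $\Im R_{00}$ you are trying to prove, a circularity you yourself flag but do not resolve.

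The paper resolves the delicate point by a bootstrap in the \emph{disorder strength} $t$ at fixed $E\in D_-$, which is why Appendix B (\Cref{s:lyapunovcontinuity2}) develops continuity of $\phi$ in $t$ rather than in $E$. Concretely: by \Cref{l:uniformlyintegrable}, for $t$ below some $t_0(K)$ one has $\phi(1/2;t;z)\le-1$ for \emph{all} $z\in\bbH$ (strong-disorder localization at every energy, so no delocalized obstruction arises); by \Cref{l:rougheigenvector} and \eqref{e:rhoEupper}, the eigenvalue bound $\lambda_{K,t,s,E}\le K^{-1}(1-\L^{-200})$ holds uniformly for all $t\in[0,t']$ at the fixed $E\in D_-$; and the continuity-in-$t$ lemma \Cref{l:phicontinuityT}, combined with \Cref{p:imvanish} and \Cref{l:bootstrap}, propagates the inequality $\limsup_{\eta\to0}\exp\phi(s;t_i;E+\iu\eta)\le K\lambda_{K,t_i,s,E}$ along a chain $t_0<t_1<\dots<t'=g(K\log K)^{-1}$, yielding $\Im R_{00}(t';E+\iu\eta)\to0$ and hence $\phi(1;t';E)<0$. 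If you want to salvage your write-up, replace the energy propagation by this $t$-propagation; as it stands, the proposal leaves the main step unproven.
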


The next lemma states the continuity lemma that we use for the induction. We prove it in \Cref{s:continuitylemmaA2}. 
\begin{lemma}\label{l:phicontinuity}
	Fix $s \in (0,1)$,
	$\kappa, \omega >0$,  and a compact interval $I \subset \bbR$. There exists a constant $\delta(s,\kappa, \omega, I) >0$ such that the following holds. For every $E_1, E_2 \in I$ such that $|E_1 - E_2 | \le \delta$ and 
\bex
\limsup_{\eta \rightarrow 0} \varphi (s; E_1 + \mathrm{i} \eta) < - \kappa,
\eex
we have	
\bex
\limsup_{\eta \rightarrow 0} \varphi (s; E_2 + \mathrm{i} \eta) 
< 
-\kappa + \omega.
\eex
\end{lemma}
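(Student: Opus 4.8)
The plan is to reduce the continuity of the limiting free energy $\limsup_{\eta\to0}\varphi(s;E+\mathrm i\eta)$ in $E$ to a uniform (in $\eta$ bounded away from $0$) continuity statement for $\varphi_L(s;E+\mathrm i\eta)$, and then control the passage to the limit $L\to\infty$ using the uniform rate of convergence \eqref{e:limitr0j2}. Concretely, fix the compact interval $I$, and let $C=C(s,\epsilon,I)>1$ be the constant from part~(3) of \Cref{l:aizenman} (applied with, say, $\epsilon=1/2$, which is fine since the statement of the present lemma is vacuous unless $t\in(\epsilon,1)$ — alternatively one carries $t$ through as a fixed parameter). Choose $L=L(\kappa,\omega)$ large enough that $2C/L<\omega/4$. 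The point is that for this \emph{fixed} $L$, the function $(E,\eta)\mapsto \varphi_L(s;E+\mathrm i\eta)=L^{-1}\log\Phi_L(s;E+\mathrm i\eta)$ is continuous on $I\times[0,1]$ away from the set where $\Phi_L$ vanishes, and in fact — since $\Phi_L$ is a finite sum of expectations of $|R_{0v}(z)|^s$ over the $K^L$ vertices at level $L$, and each $R_{0v}(z)$ is an analytic function of $z\in\mathbb H$ with good integrability — one can show $\Phi_L$ extends continuously to $\mathbb H\cup\mathbb R$ for a.e.\ $E$ and is locally Lipschitz in $E$ uniformly in $\eta\in[0,1]$. (This uses the fractional-moment bounds of \cite{aizenman2013resonant}, which give a priori control on $\mathbb E[|R_{0v}|^s]$ and its $E$-derivative via resolvent identities; this is precisely the type of input that makes $\varphi_L$, for fixed $L$, a tame object.)

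The key steps, in order, are: (i) fix $L$ with $2C/L<\omega/4$; (ii) by the hypothesis and part~(3) of \Cref{l:aizenman}, for every $E_1\in I$,
\[
\limsup_{\eta\to0}\varphi_L(s;E_1+\mathrm i\eta)\le \limsup_{\eta\to0}\varphi(s;E_1+\mathrm i\eta)+\frac{C}{L}<-\kappa+\frac{\omega}{4};
\]
(iii) establish a quantitative modulus of continuity: there is $\delta(s,\kappa,\omega,I)>0$ so that $|E_1-E_2|\le\delta$ implies $|\varphi_L(s;E_1+\mathrm i\eta)-\varphi_L(s;E_2+\mathrm i\eta)|<\omega/4$ for all $\eta\in[0,1]$ — this is the genuine work, carried out by differentiating $\Phi_L$ in $E$ and bounding the derivative uniformly in $\eta$ using resolvent perturbation identities and $\L$-regularity; (iv) combine (ii) and (iii) to get $\limsup_{\eta\to0}\varphi_L(s;E_2+\mathrm i\eta)<-\kappa+\omega/2$; (v) apply part~(3) of \Cref{l:aizenman} once more in the other direction, $\varphi(s;E_2+\mathrm i\eta)\le\varphi_L(s;E_2+\mathrm i\eta)+C/L$, and take $\limsup_{\eta\to0}$ to conclude $\limsup_{\eta\to0}\varphi(s;E_2+\mathrm i\eta)<-\kappa+\omega/2+\omega/4<-\kappa+\omega$.

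The main obstacle is step~(iii): proving that, for a \emph{fixed} finite $L$, the quantity $\varphi_L(s;E+\mathrm i\eta)$ has a modulus of continuity in $E$ that is uniform down to $\eta=0$. Naively $R_{0v}(E+\mathrm i\eta)$ can blow up as $\eta\to0$, so one cannot simply bound $\partial_E R_{0v}$ pathwise; instead one works with the fractional moments $\mathbb E[|R_{0v}(z)|^s]$ themselves, which \emph{are} bounded uniformly in $\eta$ (this is the a priori fractional moment estimate), and shows that $E\mapsto\mathbb E[|R_{0v}(E+\mathrm i\eta)|^s]$ is Lipschitz with an $\eta$-independent constant. The cleanest route is to use that shifting $E$ by a small amount is, in distribution, the same as shifting the potential variable $V_0$ at the root (and its analogues down the tree) by that amount, so the difference of fractional moments can be written as an integral against $\rho(\,\cdot\,+\Delta E)-\rho(\,\cdot\,)$; $\L$-regularity (the Lipschitz bound on $\rho$, and the tail bound $\rho(x)\le\L(1+x^2)^{-1}$) then yields a bound proportional to $|\Delta E|$ times a finite constant depending only on $s$, $L$, and $\L$. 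Since the $L$ we need is determined only by $\kappa,\omega$ (through $C/L$), this constant depends only on the allowed data, giving the required $\delta$.
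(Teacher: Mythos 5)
Your overall architecture (fix $L$ with $C/L\ll\omega$ via part (3) of \Cref{l:aizenman}, transfer the hypothesis to $\varphi_L$, move in $E$ at the level of $\varphi_L$, then transfer back) is reasonable, but it hinges entirely on your step (iii), and that step has a genuine gap: you assert that for fixed finite $L$ the map $E\mapsto\varphi_L(s;E+\mathrm{i}\eta)$ has a modulus of continuity that is uniform in $\eta$ all the way down to $\eta=0$, \emph{without using the negativity hypothesis}. No argument you sketch delivers this. The density-shift trick does not reduce to "an integral against $\rho(\cdot+\Delta E)-\rho(\cdot)$ with a constant depending only on $s,L,\L$": shifting $E$ shifts the potential at \emph{every} vertex of the infinite tree, not just along the path to level $L$, so the change of density must be telescoped over infinitely many sites, and the effect of perturbing a far-away potential on $\mathbb{E}[|R_{0v}|^s]$ involves sums of products $R_{0w}R_{wv}$ over all $w$. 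Controlling those requires either a Ward-identity bound, which costs a factor $\eta^{-1}$ (or $\eta^{-s}$ after taking fractional moments), or decay of the Green's function, which is exactly the kind of information encoded in $\varphi<-\kappa$. Indeed, the paper's own quantitative continuity estimate, \Cref{l:cutoffclose}, carries the factor $(\eta_1\eta_2)^{-s/2}$ and only partially compensates it with a gain $\eta_2^{c}$ that is extracted \emph{from} the hypothesis $\varphi(s;z_2)<-\delta$ via the bound $\mathbb{E}[(\Im R_{00})^{s/2}]\le C\eta^{s/2}$ of \Cref{l:qsexpectation}. So the uniform-in-$\eta$ equicontinuity you need in (iii) is precisely what is not available unconditionally, and your plan uses the negativity hypothesis only through (ii), where it cannot help with (iii).

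This is why the paper's proof has a different shape: horizontal continuity in $E$ is proved only at a \emph{fixed} positive height $\eta$, with the admissible step size $\mathfrak{a}$ depending on $\eta$ and with the negativity hypothesis at that height as an input (\Cref{l:goingsideways}); the passage between the real axis and that fixed height is then handled by two separate "vertical" bootstrap lemmas (\Cref{l:goingdown} and \Cref{l:goingup}), which propagate negativity along a sequence of scales $\eta_{k+1}=\eta_k^{1+c_2/2}$, again re-using the negativity at each scale to invoke \Cref{l:cutoffclose}. The final argument goes up from $E_1$ to height $\sim\delta_1$, sideways to $E_2$, and back down. To repair your plan you would either have to prove your step (iii) with the negativity hypothesis built in and uniform in $\eta$ (which essentially forces you to reprove something like \Cref{l:cutoffclose} plus the multiscale descent), or adopt the paper's three-move scheme. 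A minor additional point: applying part (3) of \Cref{l:aizenman} with "$\epsilon=1/2$" is not legitimate here since $t=g(K\log K)^{-1}$ is small, but as constants are allowed to depend on $K$ and $g$ this is only a bookkeeping issue, not the substantive gap.
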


For the unbounded components of $\mathcal M^\circ$, we use a similar induction argument with the base case provided by the following lemma, which asserts that $\phi$ is negative for sufficiently large $|E|$. It is shown in \Cref{s:bigEproof}. 
\begin{lemma}\label{l:bigE}
For all $K>1$,  there exists $\mathfrak B(K) >0$ such that for all $E \in \R$ satisfying $|E| > \mathfrak B$, we have 
\begin{equation}
\limsup_{\eta \rightarrow 0^+} \phi(1; \eta) < - 1.
\end{equation}
\end{lemma}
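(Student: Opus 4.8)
\emph{Proof proposal.} This is the classical fact that the Anderson model on the Bethe lattice with unbounded potential is localized at large energies \cite{aizenman1993localization,aizenman1994localization}, and the plan is to re-derive it, in the quantitative form stated, by the fractional moment method. Throughout write $z = E + \iu\eta$ with $\eta > 0$, so that the assertion is a bound on $\limsup_{\eta \to 0^+}\varphi(1;E+\iu\eta)$ (we read $\phi(1;\eta)$ in the statement as $\varphi(1;E+\iu\eta)$). Fix $s = 3/4$. Since $\varphi(\,\cdot\,;z)$ is nonincreasing on $(0,1)$ by \Cref{l:aizenman}(1), we have $\varphi(1;z) = \lim_{\sigma\to 1^-}\varphi(\sigma;z) \le \varphi(3/4;z)$ for all $z \in \bbH$; it therefore suffices to show that $\varphi(3/4;E+\iu\eta) < -1$ for all $\eta > 0$ once $|E| > \mathfrak B(K)$.

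\textbf{Step 1 (a single-site fractional moment bound).} Set $m(z) := \E\big[|R_{00}(z)|^{3/4}\big]$. Conditioning the recursion \eqref{e:rdeplus} on the self-energy $W := t^2\sum_{i=1}^K \Gamma_i$ and using $|v - z - W| \ge |v - E - \Re W|$, one gets $\E\big[|R_{00}(z)|^{3/4} \mid W\big] \le \int \rho(v)\,|v-a|^{-3/4}\,dv$ with $a := E + \Re W$. Using only $\rho(v) \le \L(1+v^2)^{-1}$ and $\int\rho = 1$, a split of this integral at $|v-a| = 1$ gives a \emph{uniform} a priori bound $\E\big[|R_{00}(z)|^{3/4}\mid W\big] \le C_1(\L) < \infty$ (so $m(z) \le C_1(\L)$ for all $z \in \overline{\bbH}$), while a split at $|v-a| = |a|/2$ gives, on the event $\{|a| \ge |E|/2\}$ and for $|E| \ge 2$, the improved bound $\E\big[|R_{00}(z)|^{3/4}\mid W\big] \le C_2(\L)\,|E|^{-3/4}$. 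On the complementary event, $\P\big(|a| < |E|/2\big) \le \P\big(t^2\sum_i |\Gamma_i| > |E|/2\big)$, and Markov's inequality applied to the (a priori finite) fractional moment $\E[|\Gamma_i|^{3/4}] = m(z) \le C_1(\L)$ yields $\P(|a|<|E|/2) \le C_3(K,\L)\,|E|^{-3/4}$. Combining these, for all $|E| \ge 2$ and $\eta > 0$,
\be\label{e:bigEms}
m(z) \;\le\; C_2(\L)\,|E|^{-3/4} + C_1(\L)\,C_3(K,\L)\,|E|^{-3/4} \;=:\; C_4(K,\L)\,|E|^{-3/4}.
\ee
The same argument gives the same bound for the diagonal Green's functions of $\bbT$ and of all its forward subtrees (these all satisfy an RDE of the form \eqref{e:rde} with at most $K$ summands, which is the only feature used).

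\textbf{Step 2 (from $m(z)$ to $\varphi$, and conclusion).} Expand $R_{0v}(z)$ for $|v| = L$ as the product of diagonal resolvent entries along the geodesic from $0$ to $v$ (\Cref{rproduct}), and estimate its $3/4$-th moment by the standard tree fractional moment recursion — peeling off one vertex at a time, at each stage conditioning on all but the current potential (a rank-one perturbation of the resolvent) and invoking the estimates of Step 1: crucially the bound $C_2(\L)|E|^{-3/4}$ on the main event, together with the a priori bound $C_1(\L)$ and the Markov estimate for the rare event that the exposed self-energy exceeds $|E|/2$. This produces $\E\big[|R_{0v}(z)|^{3/4}\big] \le C_5(K,\L)\,\big(t^{3/4}\,C_4(K,\L)\,|E|^{-3/4}\big)^{L}$. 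Summing over the $K^L$ vertices at level $L$ and using $\varphi(s;z) = \lim_L L^{-1}\log\Phi_L(s;z)$ from \Cref{l:aizenman},
\bex
\varphi(3/4;z) \;\le\; \log K + \tfrac34\log t + \log\!\big(C_4(K,\L)|E|^{-3/4}\big) \;=\; C_6(K,\L) - \tfrac34\log|E|,
\eex
after inserting $t = g(K\log K)^{-1}$ (all $|E|$-independent terms collapse into the finite constant $C_6(K,\L)$, which also depends on the fixed $g$). Choosing $\mathfrak B(K)$ large enough that $C_6(K,\L) - \tfrac34\log\mathfrak B(K) < -1$ gives $\varphi(3/4;E+\iu\eta) < -1$ for all $|E| > \mathfrak B(K)$ and all $\eta > 0$; by the monotonicity recalled at the outset, $\varphi(1;E+\iu\eta) \le \varphi(3/4;E+\iu\eta) < -1$, and taking $\limsup_{\eta\to 0^+}$ finishes the proof.

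\textbf{Main obstacle.} Conceptually the only delicate point is the rare event, at each step of the tree recursion, that the exposed self-energy $t^2\sum_i \Gamma_i$ is comparable in magnitude to $E$ (so that one cannot simply replace the resolvent denominator by $V - E$). This is controlled by Markov's inequality for the \emph{fractional} moment of the self-energy, whose finiteness — uniformly in $\eta$ — is exactly the crude a priori bound $m(z) \le C_1(\L)$; and it is essential that the improved conditional bound $C_2(\L)|E|^{-3/4}$, not merely the $O(1)$ bound, be fed back into the recursion at every step, since otherwise one only obtains $\varphi(3/4;z) \le \tfrac14\log K + O(1)$, which is not negative for large $K$. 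All constants are visibly independent of $\eta$, which is what permits the passage to the $\limsup$.
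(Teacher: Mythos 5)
Your Step 1 is essentially the paper's single-site estimate: the paper also fixes a fractional exponent, uses the uniform bound $\sup_a\E[|V_0-a|^{-s}]<\infty$, splits on the event that the (real part of the) self-energy is comparable to $|E|$, controls that event by Markov's inequality applied to an a priori fractional moment of $\Gamma$ (via \Cref{l:uniformlyintegrable}), and finishes with the monotonicity of $\varphi$ in $s$ from \Cref{l:aizenman} and a large choice of $\mathfrak B$. The divergence is in Step 2, and there your argument has a genuine gap. You claim that peeling one vertex at a time along the path and "invoking the estimates of Step 1" at each step yields the per-step factor $C_4(K,\L)|E|^{-3/4}$. But the good/bad dichotomy at a peeled vertex $v_j$ (whether the exposed self-energy $t^2\sum_{w\in\mathbb D(v_j)}R^{(v_j)}_{ww}$ exceeds $|E|/2$) is measurable with respect to exactly the forward-subtree potentials that also determine every remaining factor $R^{(v_{k-1})}_{v_kv_k}$, $k>j$, in the product. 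So after integrating out $V_{v_j}$ you face $\E\big[(\one_{\mathrm{good}_j}C_2|E|^{-3/4}+\one_{\mathrm{bad}_j}C_1)\prod_{k>j}|tR^{(v_{k-1})}_{v_kv_k}|^s\big]$, and you cannot simply multiply $\P(\mathrm{bad}_j)$ by the expectation of the remaining product: a H\"older (or other decoupling) step is required, which forces you to work with a smaller moment inside and degrades the decay exponent. This is precisely why the paper, even at the single site, pays H\"older with exponents $1/3,2/3$ (comparing $s=1/2$ against $s=3/4$ moments) and only extracts $|E|^{-1/6}$ per step, and why it then does \emph{not} iterate the improved bound by hand but instead invokes the Aizenman--Warzel submultiplicativity $\gamma_{m+n}\le C\gamma_m\gamma_n$ together with $\E[|R_{0v_L}|^s]\le C_0\gamma_L$ (\Cref{l:aizenman2}) to propagate the one-site decay along the path.

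The gap is repairable, and the conclusion is insensitive to the exponent (any per-step decay $|E|^{-c}$ with $c>0$ suffices once $\mathfrak B$ is taken large), so your overall plan is sound. The cleanest fixes are either (i) do as the paper does: prove only the one-site bound on $\gamma_0(s;z)=\E[|R^{(v_1)}_{00}|^s]$ (with the H\"older splitting on the bad event made explicit, at the cost of a weaker power of $|E|$) and then cite \eqref{e:submultiplicative}--\eqref{e:offdiagonalbound}; or (ii) carry out your peeling but insert an explicit H\"older decoupling at each vertex, tracking the resulting exponent loss and verifying the remaining product still has finite fractional moments uniformly in $\eta$ (via \Cref{l:uniformlyintegrable}). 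As written, the assertion of a clean contraction $t^{3/4}C_4|E|^{-3/4}$ per step is not justified.
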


\subsection{Proof of Main Result} \label{s:mainproofintro}

We now combine the previous results of this section to prove \Cref{t:main}.

\begin{proof}[Proof of \Cref{t:main}]
If necessary, we begin by increasing the value of $\L$ so that $\mathcal M \subset [-\L/2, \L/2]$. 
Note that all hypotheses of the theorem become weaker as $\L$ increases. Indeed, if they are satisfied for a given density $\rho$ and parameter $\L >0$, they are satisfied for $\rho$ and $\L'$ if $\L' > \L$; in particular, if $\rho$ is $\L$-regular, then it is also $\L'$-regular, by \Cref{d:Lregular}. 
Further, none of the conclusions get weaker when increase $\L$ to $\L'$. Hence, increasing the value of $\L$ is permissible. 

Throughout this proof, we let $s\in (0,1)$ be a parameter. We consider only values of $s$ such that $s \in (s_0(K), 1)$, where $s_0$ is taken large enough so that the conclusions of \Cref{l:bulklambda} and \Cref{t:mainlambda} hold. 
Set $f_s(E)= \lambda_{K,t,s,E}$. We define 
\begin{align*}
\begin{split}
\mathcal A_- &=\left \{E \in [-\L, \L] :  \liminf_{s \rightarrow 1^-} f_s(E) < K^{-1} \right\},\\
\mathcal A_+ &=\left \{E \in [-\L, \L] :  \liminf_{s \rightarrow 1^-} f_s(E) >K^{-1} \right\},\\
D_0 &= [ -\L, \L] \setminus \big( \interior(\mathcal A_-) \cup  \interior(\mathcal A_+)  \big),
\end{split}
\end{align*}
where $\interior(S)$ denotes the interior of a set $S \subset \R$.  
Additionally, we define
\[
\widetilde D_+ = \big\{ E \in [ - \L , \L] : \rho(E) \ge 1/(4g) + \varpi/\L^8  \big\}, \quad \widetilde D_- = \big\{ E \in [ - \L , \L] : \rho(E) \le  1/(4g) -  \varpi/\L^8  \big\}.
\]
We note that by \Cref{l:bulklambda}, we have $\widetilde D_+  \subset \interior ( \mathcal A_+)$, so $D_0 \cap \widetilde D_+ = \varnothing$. Similarly, $D_0 \cap \widetilde D_- = \varnothing$. Then
\be
[-\L, \L] \setminus \big( \widetilde D_+ \cup \widetilde D_-) \subset \bigcup_{\mathfrak E \in \mathcal E} [\mathfrak E - \varpi, \mathfrak E + \varpi],
\ee
since $\rho$ is $\L$-Lipschitz (by the third part of \Cref{d:Lregular}). This implies that 
\be
D_0
\subset \bigcup_{\mathfrak E \in \mathcal E} [\mathfrak E - \varpi, \mathfrak E + \varpi].
\ee 

Since $ \interior(\mathcal A_-) $ and $ \interior(\mathcal A_+)$ are open, for any $\mathfrak E \in \mathcal E$, the set 
\be
 [\mathfrak E - \varpi,  \mathfrak E + \varpi ] \cap \big( \interior(\mathcal A_-) \cup  \interior(\mathcal A_+)  \big)
\ee 
must have the form 
\be 
[\mathfrak E - \varpi , a ) \cup (b, \mathfrak E + \varpi].
\ee 
for some $a,b \in \R$ such that $a\le b$, by the definitions of $\mathcal A_-$ and $\mathcal A_+$ and the monotonicity of $f_s(E)$ on $ [\mathfrak E - \varpi, \mathfrak E + \varpi] $ provided by \Cref{t:mainlambda}. Further, we must have $a=b$, or $ \liminf_{s \rightarrow 1^-} f_s(E)$ would be equal to $K^{-1}$ on the open interval $(a,b)$, contradicting \Cref{t:mainlambda}. Hence $D_0$ consists of a set of isolated points, with exactly one point in each interval $[\mathfrak E - \varpi , \mathfrak E + \varpi ]$. Setting $\mathcal M = D_0$, this establishes the first claim of the theorem.

We note that the previous reasoning establishes that $\mathcal M^\circ$ is a disjoint union of a finite number of connected components. 

By increasing $\L$ to some value $\L'$, using \Cref{l:bulklambda},  and recalling that $\rho$ is $\L$-Lipschitz (and satisfies the assumptions of the theorem, so that it is monotonic at all points in $\mathcal E$), we may suppose that on components $I$ belonging to $\mathcal A_+$, we have that $I\cap D_+$ is a nonempty interval. Similarly, we may suppose that on components $I$ belonging to  $\mathcal A_-$, we have that $I \cap D_-$ is a nonempty interval.

For the second claim of the theorem, we begin by considering the bounded components of $\mathcal M^\circ$. We first consider the assertion regarding absolutely continuous spectrum.
Let $Y\subset \R$ be the measure zero set on which the first part of \Cref{l:criteria} does not hold, or the limit \eqref{e:stieltjeslimit} does not exist, or the limit defining $\phi(1;E)$ does not exist. 
Fix a bounded, connected component $I\subset \mathcal M^\circ$ for which $\rho'(\ell) > 0$. By the second part of \Cref{l:criteria}, it suffices to show that $\Im R_{00}(E_0) > 0$ with positive probability for every $E_0 \in I \setminus Y$. 
Suppose, for the sake of contradiction, that there exists $ E_0 \in I \setminus Y$ such that $\Im R_{00}(E_0) = 0$ almost surely.
By the definition of $R_{00}(E_0)$, we have
\be\label{contradict2}
\lim_{\eta \rightarrow 0} \Im  R_{00}(E_0 + \iu \eta) = 0
\ee
in probability. 
From the definition of $\phi$ from \eqref{szl}, and \Cref{l:bootstrap}, we have
\be\label{e:phismainproof}
\phi(s; E_0) =  \lim_{\eta\rightarrow 0}  \phi(s; E_0+ \iu \eta) = \ln K + \log \lambda_{s,E_0}
\ee
for all $s \in (s_0, 1)$. Note that $I$  lies in $\mathcal A_+$, by the assumption that $\rho'(\ell)>0$, \Cref{t:mainlambda}, and the fact that $\mathcal M$ consists of isolated points. 
Then $E_0 \in \mathcal A_+$, and by the definition of $\mathcal A_+$, 
\[
\liminf_{s \rightarrow 1^{-} } \ln \lambda_{s,E_0} > - \ln K.
\]
This implies that $\phi(1; E) > 0$ after taking $s \rightarrow 1$ in \eqref{e:phismainproof}. 
Then by the first part of \Cref{l:criteria}, we have $\Im R_{00} (E_0)>0$ almost surely, contradicting \eqref{contradict2}.  After using the second part of \Cref{l:criteria}, this completes the proof of the conclusion regarding absolutely continuous spectrum for the bounded components of $\mathcal M^\circ$.

We now consider the bounded components of $\mathcal M^\circ$ where $\rho'(\ell) < 0$. Fix such an open interval $I$, and note that $I \subset \mathcal A_-$ (by \Cref{t:mainlambda}). 
For any $s \in (s_0 ,1)$, set
\be\label{e:provided}
I_{\mathrm{loc},s } =  \{ E \in I : \lambda_{s,E}  <  K^{-1} \}.
\ee
By \Cref{l:bulklambda}, for all all $s \in ( s_0(K), 1)$, we have $D_- \cap I \subset I_{\mathrm{loc},s }$.  

We next establish that for every $E\in I \setminus Y$, we have $\phi(1;E) < 0$.
By \Cref{l:bulkphi}, we have $\limsup_{s\rightarrow 1^{-}} \phi(s;E)  < 0$ for all $E \in D_-$ so it remains to consider energies in $I \setminus  D_-$. 
Fix $E' \in I \setminus ( D_- \cup Y)$ and $s \in (s_0, 1)$ such that $E' \in I_{\mathrm{loc},s }$; this choice of $s$ is possible by the definition of $\mathcal A_-$. 
We recall that we may suppose that $ I \cap D_-$ is an interval. Then, without loss of generality, we assume that $E' < \inf ( D_- \cap I )$. Let $E_0 = \inf  (D_- \cap I )$.   

By the definition of $I_{\mathrm{loc},s }$, we have $\lambda_{s,E'} < K^{-1}$.
Set
\bex
\kappa' = \kappa'(E',s) = \inf_{E \in [E', E_0]} \big( 1 - K  \lambda_{s,E} \big),
\qquad \kappa = \kappa(E',s) = \frac{\min ( \kappa', 1  )}{2}.
\eex
Recall that $\ell = \inf I$. Since $E \mapsto \lambda_{s,E}$ monotonic and strictly less than $K^{-1}$ on $(\ell, \ell + \varpi]$, by \Cref{t:mainlambda}, we have $\kappa' >0$. (We are also using that $\inf D_-  \le  \ell + \varpi$, by the definition of $D_-$ and the Lipschitz continuity of $\rho$ provided by the third part of  \Cref{d:Lregular}.)

Using \Cref{l:phicontinuity} on the interval $[E', E_0]$, with $\omega$ in that statement equal to $\kappa$, let $\delta$ be the constant given by \Cref{l:phicontinuity}.  
Let $\{E_i\}_{i=1}^M$ be a set of real numbers such that $E_M = E'$ and 
\bex
 0<  E_{i} - E_{i+1}  < \delta
\eex
for all $i \in \unn{0}{M-1}$. 
We claim that for every $i\in \unn{0}{M}$, we have 
\be\label{bob}
\limsup_{\eta \rightarrow 0} \exp\big(\phi(s; E_i + \iu \eta)\big) =  K \lambda_{s,E_i} .
\ee
We will show the claim by induction on $i$. The base case $i=0$  follows from our assumption that $E_0 \in D_-$,  \Cref{l:bulkphi}, and \Cref{l:bootstrap}.  
Next, for the induction step, we assume that the induction hypothesis holds for some $i\in \unn{0}{M-1}$, and we will show it holds for $i+1$. Using the induction hypothesis at $i$ and the definition of $\kappa$, we have 
\bex
\limsup_{\eta \rightarrow 0} \exp\big(\phi(s; E_i + \iu \eta)\big)  =  K \lambda_{s,E_i} \le 1 - 2 \kappa.
\eex
By \Cref{l:phicontinuity} and the definition of $\delta$, 
\begin{align}\label{p19}
\limsup_{\eta \rightarrow 0} \varphi (s; E_{i+1} + \mathrm{i} \eta)
&< \limsup_{\eta \rightarrow 0} \varphi (s; E_i + \mathrm{i} \eta) + \kappa\le  \log (1 - 2 \kappa)  +  \kappa \le - \kappa,
\end{align}
where the last inequality follows from the elementary bound $\ln(1-x) < -x$. 
Using \Cref{p:imvanish}, we see that \eqref{p19} implies
\be\label{jcvg}
\lim_{\eta \rightarrow 0} \Im R_{00}(E_{i+1} + \iu \eta) = 0.
\ee
Let $\{\eta_j\}_{j=1}^\infty$ be a sequence such that
\be\label{otter}
\lim_{j \rightarrow \infty}
\exp \left( \phi(s; E_{i+1} + \iu \eta_j) \right)= \limsup_{\eta\rightarrow 0} \exp \left( \phi(s; E_{i+1} + \iu \eta) \right).
\ee
Then using \eqref{jcvg} and \Cref{l:bootstrap}, equation \eqref{otter} implies that 
\bex
\limsup_{\eta \rightarrow 0} \exp \big( \phi(s; E_{i+1} + \iu \eta) \big) = \lim_{j \rightarrow \infty}
\exp \big( \phi(s; E_{i+1} + \iu \eta_j) \big) = K \lambda_{E_{i+1},s}.   
\eex 
This completes the induction step and shows that \eqref{bob} holds for all $i\in \unn{0}{M}$. In particular, taking $i=M$, we have 
\be
\limsup_{\eta \rightarrow 0} \exp\big(\phi(s; E' + \iu \eta)\big) \le  K \lambda_{E',s}.
\ee
By \Cref{p:imvanish}, and the fact that $\lambda_{E', s} < K^{-1}$ (as $E' \in I_{\mathrm{loc},s }$),  this implies $ \lim_{\eta \rightarrow 0} \Im R_{00} (E' + \iu \eta) = 0$.
Then by \Cref{l:bootstrap}, we have 
\be\label{e:takeliminf}
\phi(s ;E') = 
 \lim_{\eta \rightarrow 0} \phi(s; E' + \iu \eta ) = \ln K + \lambda_{s,E'}.\ee 
Since $\phi(s;E')$ is nonincreasing in $s$ by the first part of \Cref{l:aizenman}, taking the limit infimum as $s \rightarrow 1^{-}$ in \eqref{e:takeliminf}  and recalling  $E' \in \mathcal A_-$ 
gives $\phi(1 ; E ) < 0$. 

Recall that the choice of $E' \in I \setminus (D_- \cup Y)$ was arbitrary (and we already dealt with the case of $E' \in D_-$). We conclude that for every $E \in I \setminus Y$, we have $\phi(1 ; E ) < 0$.
We deduce from the third part of \Cref{l:criteria} that the spectrum of $\hamiltonian$ is almost surely pure-point on $I$. This concludes the proof for the bounded components of $\mathcal M^\circ$.

It remains to handle the unbounded components of $\mathcal M^\circ$. The proof here uses a bootstrap argument similar to the proof for bounded components, except \Cref{l:bigE} is used instead of \Cref{l:bulkphi} for the starting point of the induction. Since the details are virtually identical except for this change, we omit them. This completes the proof of the theorem.
\end{proof}
\section{Miscellaneous Preliminaries}\label{s:misc}

This section provides a collection of miscellaneous identities and estimates that will be used later. 
In \Cref{s:treenot}, we provide some fundamental notation for vertices and paths on $\mathbb{T}$. In \Cref{s:resolventid}, we state a variety of identities and estimates concerning the resolvent $\boldsymbol{R}$ and analogous resolvent operators on subgraphs of $\mathbb{T}$. \Cref{s:integralbounds} records two integral bounds that will be used in the proofs for results in this section and the next.  \Cref{s:prelimself} contains estimates on the densities $p_E$ and  $\rho_E$ defined in \eqref{s:rez}. 
\subsection{Tree Notation}\label{s:treenot}
We start by defining notation for the tree $\mathbb{T}$. Recall that $0$ denotes the root of $\mathbb{T}$. The \emph{length} of any vertex $v \in \bbV$ is its distance from the root under the standard geodesic metric.  We let $\mathbb{V} (\ell)$ denote the set of vertices of length $\ell$. We write $v \sim w$ if there is an edge between $v, w \in \mathbb{V}$. The \emph{parent} of any vertex $v \in \mathbb{V}$ that is not the root is the unique vertex $v_- \in \mathbb{V}$ such that $v_- \sim v$ and $\ell (v_-) = \ell (v) - 1$.\footnote{If $v = 0$, then its parent $0_-$ is defined to be the empty set.} 
 A child of $v$ is any vertex $w \in \mathbb{V}$ whose parent is $v$. We let $\mathbb{D} (v) \subset \mathbb{V}$ denote the set of children of $v$. 
		
	A \emph{path} on $\mathbb{T}$ is a sequence of vertices $\mathfrak{p} = (v_0, v_1, \ldots , v_k)$ such that $v_i$ is the parent of $v_{i + 1}$ for each $i \in \unn{0}{k - 1}$. 
	We write $v \preceq w$ (equivalently, $w \succeq v$) if there exists a path (which may be empty) with starting and ending vertices $v$ and $w$, respectively. We  write $v \prec w$ (equivalently, $w \succ v$) if $v \preceq w$ and $v \ne w$. For any integer $k \ge 0$ and vertex $v \in \mathbb{V}$ with $\ell (v) = \ell$, we additionally let $\mathbb{D}_k (v) = \big\{ u \in \mathbb{V} (\ell + k) : v \preceq u \big\}$. Observe in particular that $\mathbb{D}_1 (v) = \mathbb{D} (v)$ and $\mathbb D_\ell(0) = \mathbb{V}(\ell)$.
\subsection{Identities for Resolvent Entries} \label{s:resolventid}

For any subset of vertices $\mathcal{U} \subseteq \mathbb{V}$, we let $\hamiltonian^{(\mathcal{U})}$ denote the operator obtained from $\hamiltonian$ by setting all entries corresponding to a vertex in $\mathcal{U}$ to $0$. More specifically, we let $\hamiltonian^{(\mathcal{U})} : L^2 (\mathbb{V}) \rightarrow L^2 (\mathbb{V})$ denote the self-adjoint operator defined by first setting, for any vertices $v, w \in \mathbb{V}$,
		\begin{flalign*}
			 \big\langle \delta_v, \hamiltonian^{(\mathcal{U})} \delta_w \big\rangle =\big\langle \delta_v, \hamiltonian \delta_w \big\rangle  , \quad \text{if $v, w \notin \mathcal{U}$}; \qquad  
			 \big\langle \delta_v, \hamiltonian^{(\mathcal{U})} \delta_w \big\rangle = 0, \qquad \text{otherwise};
		\end{flalign*}
	
		\noindent and extending to $L^2 (\mathbb{V})$ by linearity and using the density of finitely supported vectors in $L^2(\mathbb V)$. For any complex number $z \in \mathbb{H}$, we  denote the associated resolvent operator $\boldsymbol{R}^{(\mathcal{U})} : L^2 (\mathbb{V}) \rightarrow L^2 (\mathbb{V})$ and its entries by 
		\begin{flalign*} 
			\boldsymbol{R}^{(\mathcal{U})} = \boldsymbol{R}^{(\mathcal{U})} (z) = \big( \hamiltonian^{(\mathcal{U})} - z \big)^{-1}; \qquad R_{vw}^{(\mathcal{U})} = R_{vw}^{(\mathcal{U})} (z) = \big\langle \delta_v, \boldsymbol{R}^{(\mathcal{U})} \delta_w \big\rangle, \qquad \text{for any $v, w \in \mathbb{V}$}.
		\end{flalign*}

		The following lemma provides identities and estimates on the entries of $\boldsymbol{R}^{(\mathcal{U})}$. The first statement is \cite[Proposition 2.1]{klein1998extended}, the second is a consequence of the first, and the third is \cite[Equation (3.37)]{aizenman2013resonant}.

\label{EquationsResolvent}
		\begin{lemma}[{\cite{klein1998extended,aizenman2013resonant}}]
		
		\label{q12} 
		
		Fix $z = E + \mathrm{i} \eta \in \mathbb{H}$ and $\mathcal{U} \subset \mathbb{V}$.
		
		\begin{enumerate}
			\item For any vertex $v \in \mathbb{V}$, we have the \emph{Schur complement identity}
			\begin{flalign}
				\label{qvv}
				R_{vv}^{(\mathcal{U})} = R_{vv}^{(\mathcal{U})} (z) = - \Bigg( z + \displaystyle\sum_{w \sim v}t^2  R_{ww}^{(\mathcal{U}, v)} \Bigg)^{-1}.
			\end{flalign}
				
			\noindent Moreover, $R_{ww}^{(v)} (z)$ has the same law as $R_{00} (z)$, for any $w \in \mathbb{D}(v)$.
			
			\item For any vertices $v, w \in \mathbb{V}$, we deterministically have 
			\begin{flalign*} 
				\big| R_{vw}^{(\mathcal{U})} (z) \big| \le \eta^{-1}.
			\end{flalign*} 
		
			\item For any vertex $v \in \mathbb{V}$, we have the \emph{Ward identity} 
			\begin{flalign}
				\label{sumrvweta} 
				\displaystyle\sum_{w \in \bbV \setminus \mathcal{U}} \big| R_{vw}^{(\mathcal{U})} \big|^2 = \eta^{-1} \cdot \Imaginary R_{vv}^{(\mathcal{U})}.
			\end{flalign}
		
		\end{enumerate}
		
		\end{lemma}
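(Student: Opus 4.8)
All three assertions are classical properties of the resolvent of a densely defined self-adjoint operator, and the plan is to obtain them from the spectral theorem together with the tree structure of $\bbT$.

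For the Schur complement identity, I would fix $v \notin \mathcal{U}$ (the case $v \in \mathcal{U}$ being immediate, since then $\delta_v$ is a zero eigenvector of $\hamiltonian^{(\mathcal{U})}$) and write $L^2(\bbV) = \C\delta_v \oplus \mathcal{H}_v$ with $\mathcal{H}_v = \overline{\operatorname{span}}\{\delta_w : w \neq v\}$. In the associated block decomposition, $\hamiltonian^{(\mathcal{U})} - z$ has scalar block $V_v - z$; off-diagonal blocks equal to $-t$ times the indicator of the neighbors of $v$ not in $\mathcal{U}$ (using that $\adjacency$ has zero diagonal); and $\mathcal{H}_v$-block equal to the restriction of $\hamiltonian^{(\mathcal{U}\cup\{v\})} - z$. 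The standard formula for the $(v,v)$-entry of the inverse then gives
\be
R_{vv}^{(\mathcal{U})} = \Big( V_v - z - t^2 \sum_{w, w' \sim v} R_{ww'}^{(\mathcal{U},v)} \Big)^{-1}.
\ee
The decisive simplification is that, $\bbT$ being a tree, deleting $v$ splits the graph into one component per neighbor of $v$; hence $\hamiltonian^{(\mathcal{U},v)}$ is block diagonal across these components, so $R_{ww'}^{(\mathcal{U},v)} = 0$ unless $w = w'$, and the double sum collapses to $\sum_{w \sim v} R_{ww}^{(\mathcal{U},v)}$, which is the claimed identity. For the distributional statement I would observe that, for $w \in \bbD(v)$, the component of $\bbT \setminus \{v\}$ containing $w$ --- rooted at $w$ --- is graph-isomorphic to $\bbT$ rooted at $0$, with i.i.d.\ potentials of density $\rho$; since $R_{ww}^{(v)}$ depends only on $\hamiltonian$ restricted to that component, it has the same law as $R_{00}(z)$.

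The deterministic bound is then immediate: $\hamiltonian^{(\mathcal{U})}$ is self-adjoint, so its spectrum is real and the spectral theorem gives $\|(\hamiltonian^{(\mathcal{U})} - z)^{-1}\| \le \dist(z, \R)^{-1} = \eta^{-1}$; the bound $|R_{vw}^{(\mathcal{U})}| \le \|(\hamiltonian^{(\mathcal{U})} - z)^{-1}\|$ follows from Cauchy--Schwarz. For the Ward identity I would begin from the resolvent identity $\boldsymbol{R}^{(\mathcal{U})}(z) - \boldsymbol{R}^{(\mathcal{U})}(\bar z) = 2\iu\eta\, \boldsymbol{R}^{(\mathcal{U})}(z)\, \boldsymbol{R}^{(\mathcal{U})}(z)^*$ (using self-adjointness to identify $\boldsymbol{R}^{(\mathcal{U})}(\bar z)$ with the adjoint of $\boldsymbol{R}^{(\mathcal{U})}(z)$), take the $(\delta_v, \delta_v)$ matrix element, and rewrite the right side as $2\iu\eta\, \| \boldsymbol{R}^{(\mathcal{U})}(z)^* \delta_v \|^2 = 2\iu\eta \sum_w |R_{vw}^{(\mathcal{U})}|^2$ by expanding in the orthonormal basis $\{\delta_w\}_{w \in \bbV}$; comparing imaginary parts yields the identity. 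Finally, since $\hamiltonian^{(\mathcal{U})} - z$ preserves $\overline{\operatorname{span}}\{\delta_u : u \notin \mathcal{U}\}$, so does its inverse, whence $R_{vw}^{(\mathcal{U})} = 0$ for $v \notin \mathcal{U}$ and $w \in \mathcal{U}$, so the sum may be restricted to $w \in \bbV \setminus \mathcal{U}$.

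I do not expect any substantive obstacle here, as the statements are standard. The only points requiring care are that $\hamiltonian^{(\mathcal{U})}$ is unbounded, so one should phrase the resolvent and spectral estimates via the spectral theorem for unbounded self-adjoint operators (and mind the domains), and the bookkeeping in the Schur step: verifying that distinct neighbors of $v$ lie in distinct components of $\bbT \setminus \{v\}$, and identifying each child-subtree with a rooted copy of $\bbT$ for the law of $R_{ww}^{(v)}$.
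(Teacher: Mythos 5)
Your proposal is correct, but it is worth noting that the paper does not prove this lemma at all: it is imported by citation (part (1) from \cite[Proposition 2.1]{klein1998extended}, part (3) from \cite[Equation (3.37)]{aizenman2013resonant}, with part (2) asserted to follow from part (1)), so your argument is a self-contained reconstruction of the standard facts behind those references rather than a parallel of an in-paper proof. Your route is the expected one and all steps check out: the Feshbach/Schur block decomposition with the rank-$\le K+1$ coupling, the collapse of the double sum to a diagonal sum because distinct neighbors of $v$ lie in distinct components of $\bbT\setminus\{v\}$, the identification of each child subtree with a rooted copy of $\bbT$ for the equality in law, the norm bound $\|(\hamiltonian^{(\mathcal{U})}-z)^{-1}\|\le \eta^{-1}$ from self-adjointness (arguably cleaner than the paper's remark that (2) follows from (1)), and the Ward identity via the first resolvent identity together with $\boldsymbol{R}^{(\mathcal{U})}(\bar z)=\boldsymbol{R}^{(\mathcal{U})}(z)^*$, with the sum correctly restricted to $\bbV\setminus\mathcal{U}$ by block-diagonality (implicitly for $v\notin\mathcal{U}$, which is the case used in the paper). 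One small point in your favor: your derived identity retains the diagonal term $V_v-z$, i.e.\ $R_{vv}^{(\mathcal{U})}=\big(V_v-z-t^2\sum_{w\sim v}R_{ww}^{(\mathcal{U},v)}\big)^{-1}$, which is the form the paper actually uses later (e.g.\ in the recursion \eqref{e:rdeplus} and \Cref{srz}); the displayed \eqref{qvv} omits $V_v$, apparently a typo, so no correction to your argument is needed on that account.
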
 

We also require the following product expansion, which appears as \cite[(3.4)]{aizenman2013resonant}.
	\begin{lemma}[{\cite[Equation (3.4)]{aizenman2013resonant}}]
		
		\label{rproduct} 
		
		Fix $v, w \in \mathbb{V}$ with $v \preceq w$ that are connected by a path $\mathfrak{p}$ (with $v$ as the starting vertex and $w$ as the ending vertex) consisting of $m+1$ vertices. For any subset $\mathcal{U} \subset \mathbb{V}$, we have
		\begin{flalign*}
			R_{vw}^{(\mathcal{U})} = (-1)^m \cdot R_{vv}^{(\mathcal{U})} \cdot \displaystyle\prod_{v \prec u \preceq w} t R_{uu}^{(\mathcal{U}, u_-)} = (-1)^m \cdot R_{ww}^{(\mathcal{U})} \cdot \displaystyle\prod_{v \preceq u \prec w} t  R_{uu}^{(\mathcal{U}, u_+)}.
		\end{flalign*}
	
	\end{lemma}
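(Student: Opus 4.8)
The plan is to establish \Cref{rproduct} — essentially \cite[Equation (3.4)]{aizenman2013resonant} — by peeling the path $\mathfrak p = (u_0, u_1, \dots, u_m)$, with $u_0 = v$ and $u_m = w$, one vertex at a time starting from $v$, using a single-step resolvent identity together with the fact that deleting a vertex disconnects the tree $\bbT$. We may assume $\mathfrak p \cap \mathcal U = \varnothing$, as otherwise $R_{vw}^{(\mathcal U)} = 0$ and the right-hand sides also vanish. The first ingredient is the off-diagonal companion of the Schur complement identity \eqref{qvv}: for any $x \in \bbV \setminus \mathcal U$ and any $w' \in \bbV \setminus (\mathcal U \cup \{x\})$,
\be\label{e:onestepproposal}
R_{x w'}^{(\mathcal U)} = -\, R_{xx}^{(\mathcal U)} \sum_{\substack{y \sim x \\ y \notin \mathcal U \cup \{x\}}} \big\langle \delta_x, \hamiltonian \delta_y \big\rangle\, R_{y w'}^{(\mathcal U, x)},
\ee
which follows from the block decomposition $L^2(\bbV \setminus \mathcal U) = \bbC \delta_x \oplus L^2\big(\bbV \setminus (\mathcal U \cup \{x\})\big)$ and the off-diagonal entry of the $2 \times 2$ block-inverse formula, exactly as \eqref{qvv} arises from its diagonal entry (with $R_{xx}^{(\mathcal U)}$ the reciprocal of the Schur complement).

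Next I would bring in the tree structure. Fix $v \prec w$, so $m \ge 1$, and apply \eqref{e:onestepproposal} with $x = v$ and $w' = w$. Deleting $v$ splits $\bbT \setminus \mathcal U$ into connected components, and $\hamiltonian^{(\mathcal U, v)}$ leaves $L^2(C)$ invariant for each such component $C$ (there being no edges between distinct components, nor to deleted vertices), so $\boldsymbol R^{(\mathcal U, v)}$ is block diagonal over them. As $w$ is a descendant of $v$, it lies in the component of the child $u_1$ of $v$ pointing toward $w$, while every other neighbor $y$ of $v$ lies in a different component; hence $R_{y w}^{(\mathcal U, v)} = 0$ for $y \ne u_1$, and \eqref{e:onestepproposal} collapses to $R_{vw}^{(\mathcal U)} = -\langle \delta_v, \hamiltonian \delta_{u_1}\rangle\, R_{vv}^{(\mathcal U)}\, R_{u_1 w}^{(\mathcal U, v)}$. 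The same observation yields a locality principle that I would use repeatedly: $R_{xx}^{(\mathcal W)}$, and $R_{xy}^{(\mathcal W)}$ for $y$ in the component of $x$, depend only on the connected component of $\bbT \setminus \mathcal W$ containing $x$, so enlarging $\mathcal W$ by vertices outside that component leaves these entries unchanged.

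With these facts, \Cref{rproduct} follows by induction on $m$. The base case $m = 0$ is the tautology $R_{vv}^{(\mathcal U)} = R_{vv}^{(\mathcal U)}$. For the inductive step, the displayed one-step identity reduces $R_{vw}^{(\mathcal U)}$ to a multiple of $R_{u_1 w}^{(\mathcal U, v)}$; applying the inductive hypothesis to this entry along the $m$-vertex path $(u_1, \dots, u_m)$, and then using the locality principle to strip from the deleted set of each resulting tail factor the path vertices preceding its own parent (leaving $\mathcal U \cup \{u_{i-1}\}$ in the factor $R_{u_i u_i}$), assembles exactly the product $R_{vv}^{(\mathcal U)} \prod_{v \prec u \preceq w} t\, R_{uu}^{(\mathcal U, u_-)}$, up to the overall sign collecting the $m$ traversed off-diagonal entries; this is the first identity. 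The second identity is then immediate from the first: since $\hamiltonian^{(\mathcal U)}$ is real and self-adjoint, $\boldsymbol R^{(\mathcal U)}$ is symmetric, so $R_{vw}^{(\mathcal U)} = R_{wv}^{(\mathcal U)}$, and applying the first identity along the reversed path $(u_m, \dots, u_0)$ and relabeling — with $u_+$ now denoting the child of $u$ on the path toward $w$ — produces the claimed expression.

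The argument carries no serious difficulty; \Cref{rproduct} is the standard tree resolvent factorization. The only points demanding care are the bookkeeping of deleted vertex sets in the induction — the repeated use of the locality principle to truncate each deleted set to $\mathcal U$ together with the single preceding path vertex — and keeping the product indexing ($u_-$ versus $u_+$, depending on which endpoint one peels from) and the overall sign $(-1)^m$ consistent, exactly as in \cite[Equation (3.4)]{aizenman2013resonant}.
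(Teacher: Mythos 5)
The paper never proves \Cref{rproduct}: it is imported verbatim from \cite{aizenman2013resonant} (their Eq.\ (3.4)), so there is no internal proof to compare against. Your derivation — the off-diagonal block (Schur) identity at the peeled vertex, the observation that deleting a vertex of the tree separates the other neighbors of $v$ from $w$ so that the sum collapses to the single child on the path, the locality of resolvent entries within connected components, and induction along $\mathfrak{p}$ — is the standard argument for this factorization and is sound in the situation in which the lemma is actually used in the paper (deleted sets disjoint from the path).

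Two points in your write-up are not right as stated, though neither affects the applications. First, the reduction ``we may assume $\mathfrak{p} \cap \mathcal{U} = \varnothing$, as otherwise both sides vanish'' is false for the right-hand side: diagonal resolvent entries never vanish for $z \in \bbH$ (their imaginary part is $\eta \| (\hamiltonian^{(\mathcal{W})}-z)^{-1}\delta_u\|^2 > 0$), and with the paper's convention of zeroing entries rather than restricting the operator one even has $R_{uu}^{(\mathcal{U},u_-)} = -1/z$ when $u \in \mathcal{U}$. So if $\mathcal{U}$ meets the path, the left side is $0$ while the right side is a product of nonzero factors; the identity must simply be read with the implicit hypothesis $\mathcal{U} \cap \mathfrak{p} = \varnothing$ (which holds in every use in the paper), not justified by claiming the degenerate case is trivially true. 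Second, the sign bookkeeping: since $\hamiltonian = -t\adjacency + \V$, each peeling step contributes $-\langle \delta_{u_{i-1}}, \hamiltonian \delta_{u_i}\rangle = +t$, so your induction actually yields $R_{vw}^{(\mathcal{U})} = R_{vv}^{(\mathcal{U})} \prod_{v \prec u \preceq w} t\, R_{uu}^{(\mathcal{U},u_-)}$ with no factor $(-1)^m$; the $(-1)^m$ in the displayed statement is an artifact of the $+1$-hopping convention of \cite{aizenman2013resonant} and is harmless here because only $|R_{0v}|^s$ is ever used, but it does not ``collect'' from your computation as you assert. Finally, for the second product it is cleaner to run the same peeling induction from the $w$ end (the one-step identity and the separation argument are orientation-free) than to invoke the first identity ``along the reversed path,'' since that identity was only formulated for paths descending from $v$ to $w$; symmetry $R_{vw}^{(\mathcal{U})} = R_{wv}^{(\mathcal{U})}$ plus the mirrored induction gives it directly.
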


The following fractional moment bound for off-diagonal resolvent entries is \cite[(12)]{bapst2014large}.

\begin{lemma}[{\cite[Equation (12)]{bapst2014large}}]\label{l:uniformlyintegrable}
Fix an integer $L \ge 0$, and let $v_0, v_1, \dots, v_L$ be a path of vertices in $\bbV$ with $v_0 = 0$ and $v_j \in \bbV(j)$ for $j \in \unn{1}{L}$. Then for all $s , t\in (0,1)$ and $z \in \bbH$, 
\begin{equation}
\E\Big[ \big|R_{0v_L }(z) \big|^s \; \Big| \; \{V_w\}_{w \notin \{v_0, \dots , v_L\}} \Big] 
\le \left( \frac{ 2^s \| \rho \|_\infty^s }{1-s} \right)^{L+1} t^{sL}.
\end{equation}
\end{lemma}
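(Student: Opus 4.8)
The plan is to run the Aizenman--Molchanov fractional moment scheme: expand the off-diagonal entry $R_{0v_L}$ into a product of diagonal entries via \Cref{rproduct}, and then integrate out the path potentials $V_{v_0},\dots,V_{v_L}$ one at a time, each integration costing a factor $2^s\|\rho\|_\infty^s/(1-s)$ from a one-dimensional estimate. Concretely, applying \Cref{rproduct} with $\mathcal U=\varnothing$, $v=v_0=0$, and $w=v_L$, and noting that the connecting path has $L+1$ vertices, gives
\[
R_{0v_L}(z)=(-1)^L\,R_{00}(z)\prod_{j=1}^L t\,R_{v_jv_j}^{(v_{j-1})}(z).
\]
Writing $G_0:=R_{00}(z)$ and $G_j:=R_{v_jv_j}^{(v_{j-1})}(z)$ for $j\in\unn{1}{L}$, this yields $\big|R_{0v_L}(z)\big|^s=t^{sL}\prod_{j=0}^L|G_j|^s$, so it suffices to show $\E\big[\prod_{j=0}^L|G_j|^s\mid\mathcal G\big]\le\big(2^s\|\rho\|_\infty^s/(1-s)\big)^{L+1}$, where $\mathcal G:=\sigma\big(\{V_w:w\notin\{v_0,\dots,v_L\}\}\big)$ is the conditioning.

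The two ingredients are a one-dimensional bound and the Möbius structure of the $G_j$. For the first: for any $s\in(0,1)$ and $c\in\C$ one has $\int_\R\rho(x)\,|x-c|^{-s}\,dx\le 2^s\|\rho\|_\infty^s/(1-s)$, which follows from the layer-cake representation together with $\P_{X\sim\rho}(|X-c|<r)\le\min(1,2r\|\rho\|_\infty)$ (using $|x-c|\ge|x-\Re c|$) and an explicit optimization; this is among the estimates collected in \Cref{s:integralbounds}. For the second: by the Schur complement identity \eqref{qvv} applied at $v_j$ with its parent $v_{j-1}$ deleted (equivalently, the recursion \eqref{e:rde}), each $G_j$ is an affine Möbius function of $V_{v_j}$, namely $G_j=\big(V_{v_j}-\omega_j-t^2G_{j+1}\big)^{-1}$ for $j\le L-1$ and $G_L=(V_{v_L}-\omega_L)^{-1}$, where $\omega_j$ collects $z$ together with the contributions $t^2R_{ww}^{(v_j)}$ of the off-path children $w$ of $v_j$. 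Because deleting $v_{j-1}$ isolates the subtree rooted at $v_j$, whose only path-vertices are $v_j,v_{j+1},\dots,v_L$, the quantity $\omega_j$ is $\mathcal G$-measurable and $G_j$ depends only on $V_{v_j},\dots,V_{v_L}$ and off-path potentials; in particular $G_j$ does not depend on $V_{v_0},\dots,V_{v_{j-1}}$.

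With these in hand I would conclude by iterated conditioning. Conditionally on $\mathcal G$ the variables $V_{v_0},\dots,V_{v_L}$ are i.i.d.\ with density $\rho$; integrate them out in the order $V_{v_L},V_{v_{L-1}},\dots,V_{v_0}$. Set $A_k:=\E\big[\prod_{j=k}^L|G_j|^s\,\big|\,\mathcal G\vee\sigma(V_{v_0},\dots,V_{v_{k-1}})\big]$, with $A_{L+1}:=1$. Conditioning additionally on $V_{v_{k+1}},\dots,V_{v_L}$, the factors $G_{k+1},\dots,G_L$ become constants while $G_k=(V_{v_k}-c_k)^{-1}$ with $c_k:=\omega_k+t^2G_{k+1}$ a constant, so the one-dimensional bound gives $\E\big[|G_k|^s\mid\mathcal G,\{V_{v_i}\}_{i\ne k}\big]\le 2^s\|\rho\|_\infty^s/(1-s)$. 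Since $\prod_{j=k+1}^L|G_j|^s$ does not involve $V_{v_k}$, the tower property yields $A_k\le\tfrac{2^s\|\rho\|_\infty^s}{1-s}\,A_{k+1}$, hence $A_0\le\big(2^s\|\rho\|_\infty^s/(1-s)\big)^{L+1}$. Combined with the product expansion, $\E[|R_{0v_L}(z)|^s\mid\mathcal G]=t^{sL}A_0$ is bounded as required.

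The only genuinely delicate point — the step I expect to require the most care — is the measurability bookkeeping: one must invoke the tree structure to see that deleting $v_{j-1}$ severs off the subtree below $v_j$, so that each path potential $V_{v_k}$ enters exactly one of the factors $G_0,\dots,G_L$, and only through the affine denominator of a Möbius map. This is precisely what makes the one-dimensional integral bound applicable at every stage of the induction; once it is established, the estimate follows from the direct recursion $A_k\le\tfrac{2^s\|\rho\|_\infty^s}{1-s}A_{k+1}$ above.
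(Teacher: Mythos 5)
Your proof is correct and is essentially the standard fractional-moment argument for this bound: the paper itself does not reprove the lemma but imports it from \cite[Equation (12)]{bapst2014large}, whose derivation is exactly your route — the path expansion from \Cref{rproduct}, the Schur-complement structure making each $G_k$ an affine M\"obius function of the single potential $V_{v_k}$ with $\mathcal G$-measurable off-path data, and the one-dimensional estimate $\sup_{a\in\R}\E\big[|V_0-a|^{-s}\big]\le \tfrac{(2\|\rho\|_\infty)^s}{1-s}$ applied once per path vertex. Two harmless slips worth fixing: the announced integration order ($V_{v_L}$ first) is the reverse of what your recursion $A_k\le \tfrac{2^s\|\rho\|_\infty^s}{1-s}A_{k+1}$ actually implements (it consumes $G_0$ and integrates $V_{v_0}$ first, which is precisely why at each step only one remaining factor involves the variable being integrated), and the one-dimensional bound is not among the estimates of \Cref{s:integralbounds} — in this paper it enters only through \cite[(A5)]{aizenman2013resonant} as used in \Cref{s:bigEproof} — though your layer-cake derivation of it, via $|x-c|\ge|x-\Re c|$ and $\P(|V_0-a|<r)\le\min(1,2r\|\rho\|_\infty)$, is correct and yields exactly the stated constant.
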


\subsection{Integral Bounds}\label{s:integralbounds}
We now state two useful integral bounds. \Cref{l:32} is used in the proof of \Cref{l:rhoebound} later in this section, and both bounds are used in \Cref{s:KRapp}. Their proofs are given in \Cref{s:integralboundproofs}. 
\begin{lemma}\label{l:32}
For all $s \in [0,1]$ and $A\in \R$, define
\be
I(A) = 
\int_{-\infty}^\infty \frac{dx }{(1 + (x-A)^2)(1+|x|^{2-s} )}.
\ee
There exists a  constant $C>0$ (not depending on $s$ or $A$) such that 
\be \label{e:desiredA} 
I(A)
\le C  \min \big( 1, |A|^{-(2-s)} \big).
\ee 
\end{lemma}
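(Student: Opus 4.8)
\textbf{Proof proposal for \Cref{l:32}.}

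The plan is to split the integral according to the relative sizes of $|x|$ and $|A|$, and to treat the case $|A| \le 2$ separately from $|A| > 2$ (so that the bound $\min(1, |A|^{-(2-s)})$ is simply a constant in the first case). For $|A| \le 2$, I would bound $1 + |x|^{2-s} \ge 1$ and integrate the Cauchy kernel, giving $I(A) \le \int (1 + (x-A)^2)^{-1}\, dx = \pi$, which is $\le C \min(1, |A|^{-(2-s)})$ after adjusting $C$. So from now on assume $|A| > 2$.

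For $|A| > 2$, I would write $\R = \{|x| \le |A|/2\} \cup \{|x| > |A|/2\}$ and estimate the two pieces. On the region $|x| \le |A|/2$, we have $|x - A| \ge |A|/2$, so $1 + (x-A)^2 \ge 1 + |A|^2/4 \ge c|A|^2$; bounding $1 + |x|^{2-s} \ge 1$ and integrating over an interval of length $|A|$ gives a contribution of order $|A| \cdot |A|^{-2} = |A|^{-1} \le |A|^{-(2-s)}$ (using $s \le 1$). On the region $|x| > |A|/2$, we have $1 + |x|^{2-s} \ge c|A|^{2-s}$ (using $2 - s \ge 1 > 0$ and $|A| > 2$), so pulling this factor out leaves $c^{-1}|A|^{-(2-s)} \int (1 + (x-A)^2)^{-1}\, dx \le C|A|^{-(2-s)}$. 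Adding the two contributions yields $I(A) \le C|A|^{-(2-s)}$, which is the desired bound when $|A| > 2 \ge 1$. Combining with the $|A| \le 2$ case gives \eqref{e:desiredA}.

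The only mild subtlety is to make sure every constant is genuinely uniform in $s \in [0,1]$: the exponent $2 - s$ ranges over $[1,2]$, so $|A|^{-(2-s)} \in [|A|^{-2}, |A|^{-1}]$ for $|A| > 1$, and in each displayed inequality I would use monotonicity in $s$ (e.g.\ $|A|^{-1} \le |A|^{-(2-s)}$ and $|A|^{2-s} \ge |A|$ for $|A| > 1$) to absorb the $s$-dependence into a single absolute constant. I do not anticipate a real obstacle here; the argument is a routine dyadic splitting of a convolution-type integral, and the main thing to be careful about is tracking that the split point $|A|/2$ interacts correctly with the lower bounds on both kernels. One could alternatively phrase it as $I(A) \le \|(1+|\cdot|^{2-s})^{-1}\|_\infty \cdot \|(1+(\cdot)^2)^{-1}\|_1$ for the trivial bound and $I(A) \le \sup_{|x|>|A|/2}(1+|x|^{2-s})^{-1}\cdot \pi + |A|\cdot \sup_{|x|\le|A|/2}(1+(x-A)^2)^{-1}$ for the decaying bound, but the dyadic split is cleanest to write.
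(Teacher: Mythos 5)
Your decomposition and your treatment of the outer region $\{|x| > |A|/2\}$ are fine and match the paper's handling of its analogous pieces, but the estimate on the central region $\{|x| \le |A|/2\}$ has a genuine gap. After discarding the weight via $1 + |x|^{2-s} \ge 1$ you obtain a contribution of order $|A| \cdot |A|^{-2} = |A|^{-1}$, and you then invoke $|A|^{-1} \le |A|^{-(2-s)}$ to conclude. That inequality goes the wrong way: since $2-s \ge 1$ and $|A| > 1$, one has $|A|^{2-s} \ge |A|$, i.e.\ $|A|^{-(2-s)} \le |A|^{-1}$, with strict inequality for $s<1$ (indeed, in your final paragraph you assert both $|A|^{-1} \le |A|^{-(2-s)}$ and $|A|^{2-s} \ge |A|$, which contradict each other). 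Concretely, at $s=0$ the lemma demands decay $|A|^{-2}$, while your central-region bound only delivers $|A|^{-1}$, so the argument as written does not prove \eqref{e:desiredA}.

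The fix is that you cannot throw away the factor $(1+|x|^{2-s})^{-1}$ on the whole central region; its decay away from the origin must be retained. The paper splits $[-|A|/2,|A|/2]$ further into $[-1,1]$ and its complement: on $[-1,1]$ it uses $1+(x-A)^2 \ge (A/2)^2$ together with $(2/|A|)^2 \le (2/|A|)^{2-s}$, and on $1 \le |x| \le |A|/2$ it uses $|x| \le |x-A|$ to swap exponents, bounding the integrand by $|x-A|^{-(2-s)}|x|^{-2} \le (2/|A|)^{2-s}|x|^{-2}$, whose integral over $|x| \ge 1$ is an absolute constant; this produces the full $|A|^{-(2-s)}$ decay uniformly in $s$. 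Alternatively you could keep the weight and compute $\int_{1\le |x| \le |A|/2} |x|^{-(2-s)}\,dx$ directly, but then you must check (with a small case analysis in $1-s$) that the resulting factor, of size $\min\big((1-s)^{-1}, \ln |A|\big)$, is bounded by $C|A|^{s}$ uniformly in $s \in [0,1]$; the exponent-swapping trick sidesteps this uniformity issue entirely.
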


\begin{lemma}\label{l:integrallowerbound}
For all $s \in (0,1)$, $A\in \R$, and $B>0$, set $J(B) = [-B, B]^c$ and define
\be
I^\circ(A,B) = 
\int_{J(B)} \frac{dx }{(1 + (x-A)^2) |x|^{2-s} }.
\ee
There exists a  constant $c(B)>0$ such that for all $A\in \R$ and $s\in (0,1)$, 
\be  \frac{c}{1 + |A|^{2-s}} \le 
I^\circ(A, B).
\ee 
\end{lemma}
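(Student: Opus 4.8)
The plan is an elementary two-region estimate; the only point requiring care is keeping all constants uniform in $s \in (0,1)$. First, since the change of variables $x \mapsto -x$ carries $J(B)$ to itself and shows $I^\circ(A,B) = I^\circ(-A,B)$, I would assume without loss of generality that $A \ge 0$. I would then fix $M = \max(2, 2B)$ and split into the two cases $A \le M$ and $A > M$, in each case discarding most of $J(B)$ and integrating over a well-chosen unit interval on which both factors of the denominator are controlled.

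In the case $A \le M$, I would restrict the integral to $[2B, 2B+1] \subset J(B)$ (the inclusion holds because $B > 0$). On this interval $|x-A| \le 2B+1+M$, so $1+(x-A)^2$ is bounded above by a constant depending only on $B$; moreover $|x|^{2-s} \le \max(1,(2B+1)^2)$ since $0 < 2-s < 2$. Hence the integrand is bounded below by a constant $c_1(B) > 0$, so $I^\circ(A,B) \ge c_1(B)$, whereas $\tfrac{c}{1+|A|^{2-s}} \le c$; thus any $c \le c_1(B)$ suffices for $A \le M$.

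In the case $A > M$, one has $A > 1$ and $A > B$, so $[A,A+1] \subset J(B)$, and I would restrict the integral there. On this interval $1+(x-A)^2 \le 2$, and $|x| \le A+1 \le 2A$ yields $|x|^{2-s} \le (2A)^{2-s} = 2^{2-s}A^{2-s} \le 4A^{2-s}$ --- this is the step that uses $2-s \le 2$ to keep the bound independent of $s$, and it is the only place where $s$-uniformity is a real issue. Thus the integrand is at least $\tfrac{1}{8}A^{-(2-s)}$ on an interval of length one, giving $I^\circ(A,B) \ge \tfrac{1}{8}A^{-(2-s)} \ge \tfrac{1}{8}(1+A^{2-s})^{-1}$, where the last step uses $A^{2-s} \ge 1$. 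So $c = \tfrac18$ works here.

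Taking $c(B) = \min(c_1(B), \tfrac18)$ then handles all $A \in \R$ and all $s \in (0,1)$. I do not expect a genuine obstacle: the whole argument reduces to choosing a unit subinterval of $J(B)$ (a fixed one when $|A|$ is bounded, one centered at $|A|$ otherwise) together with the bookkeeping that all estimates on $|x|^{2-s}$ be made via $|x|^{2-s} \le \max(1,|x|)^2$ on a compact set, or via $(2A)^{2-s} \le 4A^{2-s}$ near $x = A$, so that no $s$-dependence enters the final constant.
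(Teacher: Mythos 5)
Your proof is correct and takes essentially the same approach as the paper: for large $|A|$ one restricts to a unit interval adjacent to $A$ (the paper uses $[A-1,A]$, you use $[A,A+1]$) where $1+(x-A)^2\le 2$ and $|x|^{2-s}\le CA^{2-s}$, and for bounded $|A|$ the estimate reduces to a constant depending only on $B$, uniform in $s$. The only cosmetic difference is that in the bounded case the paper compares $1+(x-A)^2$ with $1+x^2$ over all of $J(B)$ while you restrict to the fixed interval $[2B,2B+1]$; also, your final inequality $A^{-(2-s)}\ge (1+A^{2-s})^{-1}$ holds for all $A>0$, so the appeal to $A^{2-s}\ge 1$ there is unnecessary but harmless.
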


\subsection{Preliminary Estimates on the Self-Energy}\label{s:prelimself}
We now collect some useful results on the densities $p_E$, $p_E^{(M)}$, and $\rho_E$, which were defined in \Cref{s:rez}. Our first lemma follows immediately from the fact that $\rho_E$ is a sum of independent random variables, and hence can be represented as a convolution of probability densities; a corresponding representation for $p_E$ follows by a change of variables from its definition. Therefore, we omit the proof.
\begin{lemma}
For all $K > 1$ and all $E \in \R$, we have 
\be
\rho_E(x) = \int_{-\infty}^\infty  \label{e:bananav3}
p^{(K-1)}_{E}(y) \rho(x + y +E) \, dy,
\ee
and 
\be\label{e:loweraux1}
p_E(x) = \frac{1}{x^2} \int_{-\infty}^\infty
p^{(K)}_E (y) \rho\left( \frac{1}{x} + y +E \right) \,dy .
\ee 
\end{lemma}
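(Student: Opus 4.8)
The plan is to prove the two identities separately; each is a short exercise in unwinding \Cref{d:pEdef} together with the elementary rules for the density of a difference and of the reciprocal of independent random variables.

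First I would establish \eqref{e:bananav3}. By \Cref{d:pEdef}, $\rho_E$ is the density of $W := (V_0 - E) - S$, where $S := t^2 \sum_{i=1}^{K-1} \Gamma_i$ has density $p_E^{(K-1)}$, the $\Gamma_i$ are i.i.d.\ with density $p_E$, and $V_0$ (with density $\rho$) is independent of them. Since $V_0 - E$ has density $x \mapsto \rho(x+E)$ and is independent of $S$, the density of the difference $W$ is
\[
\rho_E(x) = \int_{-\infty}^\infty \rho(x + y + E)\, p_E^{(K-1)}(y)\, dy,
\]
which is exactly \eqref{e:bananav3}.

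Next I would turn to \eqref{e:loweraux1}. Let $\Gamma$ have density $p_E$ and solve the recursive distributional equation \eqref{e:rde} (such a solution exists by \Cref{l:pEexists}), so that $\Gamma \overset{d}{=} 1/Z$, where $Z := V_0 - E - t^2 \sum_{i=1}^{K} \Gamma_i$ and the $\Gamma_i$ are i.i.d.\ copies of $\Gamma$ independent of $V_0$. As in the previous step, $Z$ is the difference of the independent variables $V_0 - E$ and $t^2 \sum_{i=1}^{K} \Gamma_i$ (the latter with density $p_E^{(K)}$), so $Z$ has density
\[
f_Z(x) = \int_{-\infty}^\infty \rho(x + y + E)\, p_E^{(K)}(y)\, dy;
\]
in particular $f_Z$ is an honest probability density, hence $\P(Z = 0) = 0$ and $\Gamma = 1/Z$ is almost surely well defined. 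Since $z \mapsto 1/z$ is a $C^1$ diffeomorphism of $\R \setminus \{0\}$ with derivative $-z^{-2}$, the reciprocal $1/Z$ has density $w \mapsto w^{-2} f_Z(1/w)$. Since $p_E$ is the density of $\Gamma = 1/Z$, substituting the formula for $f_Z$ yields
\[
p_E(x) = \frac{1}{x^2} \int_{-\infty}^\infty \rho\!\left(\frac{1}{x} + y + E\right) p_E^{(K)}(y)\, dy,
\]
which is \eqref{e:loweraux1}.

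There is essentially no obstacle here, since the argument is bookkeeping with \Cref{d:pEdef}. The only two points worth a sentence of justification are: (i) that the sums involved possess densities, which is immediate because convolving the $L^1$ density $\rho$ against any probability measure again produces an $L^1$ density (this simultaneously reproves the existence of the densities $p_E^{(M)}$ and $\rho_E$ referred to in \Cref{d:pEdef}); and (ii) that the change of variables $w = 1/z$ is legitimate, which holds because $\P(Z = 0) = 0$.
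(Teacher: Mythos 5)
Your proof is correct and follows exactly the route the paper has in mind: the paper omits the proof, stating that \eqref{e:bananav3} follows from representing $\rho_E$ as a convolution of the densities of the independent summands in \Cref{d:pEdef}, and that \eqref{e:loweraux1} follows from the same convolution identity combined with the change of variables $w = 1/z$ applied to the defining equation \eqref{e:rde}. Your bookkeeping (including the remarks that the relevant densities exist and that $\P(Z=0)=0$ justifies the reciprocal change of variables) is accurate, so nothing further is needed.
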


We now collect several useful facts on the densities $p_E$ and $p_E^{(M)}$ that were shown in \cite{bapst2014large}. We recall our standing assumption that the density $\rho$ of the nonzero entries of $\V$ is $\L$-regular in the sense of \Cref{d:Lregular}. The first part of the lemma was shown in \cite[(A10)]{bapst2014large}. 
The second part is \cite[(A2)]{bapst2014large}. 
The third part follows from  \cite[(A7)]{bapst2014large}.\footnote{The assumption that $\rho$ is $\L$-regular allows us to take $\varsigma=1$ in that reference.}
The fourth part is a straightforward modification of \cite[(A4)] {bapst2014large}. 
The fifth part is shown in \cite{bapst2014large}  in the text following \cite[(A9)]{bapst2014large}.
\begin{lemma}[{\cite[Appendix A]{bapst2014large}}]\label{l:bapstcollection}
The following statements hold for all integers $K > 1$.
\begin{enumerate}
\item For all $E\in \R$, the function $\rho_E$ is Lipschitz on $\R$ with Lipschitz constant $\L$. 
\item For all $x, E \in \R$, we have 
\be\label{e:bapsta2}
p_{E}(x) \le \frac{ \| \rho \|_\infty}{x^2}.
\ee 
\item  For every $v >0$, there exists $C(v)>0$ such that for all $E \in [-v, v]$ and $x\in [-1,1]$, we have 
\be\label{e:bapsta7}
p_{E}(x) \le C ( 1 + K^3 t^2\| \rho\|_\infty). 
\ee
\item For all $M \in \Zplus$, $k\in \unn{0}{M}$ and $x, E_1, E_2 \in \R$, we have 
\begin{equation}\label{e:bapsta}
 \big(p_{E_1} ^{(k)} * p_{E_2} ^{(M-k)}\big)(x)  \le  \frac{M^3 t^2 \| \rho \|_\infty }{|x|^2}.
 \end{equation}
\item For all  $v >0$, we have 
\be\label{e:pelow}
\inf_{x \in [- v, v ] } p_E (x) > 0.
\ee 
\end{enumerate}
\end{lemma}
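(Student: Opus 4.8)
The plan is to follow the arguments of \cite[Appendix A]{bapst2014large}, to which all five statements are essentially due; the only new points are the minor modifications flagged in the statement, together with the observation that, since $\rho$ is $\L$-regular, the auxiliary exponent $\varsigma$ of that reference can be taken equal to $1$. The two tools are the representations \eqref{e:bananav3} and \eqref{e:loweraux1}. Parts (1) and (2) follow at once: for (1), one takes difference quotients in \eqref{e:bananav3} and uses that $\rho$ is $\L$-Lipschitz (third item of \Cref{d:Lregular}), so that $|\rho_E(x_1) - \rho_E(x_2)| \le \L |x_1 - x_2| \int p_E^{(K-1)}(y)\,dy = \L|x_1 - x_2|$; for (2), one bounds $\rho(x^{-1} + y + E) \le \|\rho\|_\infty$ in \eqref{e:loweraux1} and uses $\int p_E^{(K)}(y)\,dy = 1$.

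For part (4) I would use a pigeonhole over the summands. Write $f_j$ for the density of $t^2 \Gamma_i$ (a rescaling of $p_{E_1}$ or $p_{E_2}$), so that part (2) gives $f_j(w) \le t^2 \|\rho\|_\infty w^{-2}$. Decomposing the density of $t^2 \sum_{i=1}^{M} \Gamma_i$ at $x > 0$ according to which summand is largest, and using that on $\{\sum_i t_i = x\}$ this largest summand is at least $x/M$, yields
\[
\big(p_{E_1}^{(k)} * p_{E_2}^{(M-k)}\big)(x) \le \sum_{j=1}^{M} \int_{x/M}^{\infty} f_j(s)\, h_j(x - s)\, ds \le \sum_{j=1}^{M} \frac{t^2 \|\rho\|_\infty}{(x/M)^2} \int_{\R} h_j(u)\, du = \frac{M^3 t^2 \|\rho\|_\infty}{x^2},
\]
where $h_j$ is the density of the sum of the remaining $M - 1$ variables; this is the computation behind \cite[(A4)]{bapst2014large}, and it applies verbatim with two different $E$'s.

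For part (3) I would substitute $w = x^{-1} + y + E$ in \eqref{e:loweraux1} to get $p_E(x) = x^{-2} \int p_E^{(K)}(w - x^{-1} - E)\, \rho(w)\, dw$, then split the $w$-integral by the size of $|w|$ relative to $|x|^{-1}$. On the part where the argument of $p_E^{(K)}$ has size $O(1)$ one has $|w| \sim |x|^{-1}$, and the quadratic decay $\rho(w) \le \L(1 + w^2)^{-1}$ from \Cref{d:Lregular}(1) then supplies a factor $O_\L(x^2)$ that cancels the prefactor $x^{-2}$, leaving an $O_\L(1)$ contribution; on the complementary part one uses the bound $p_E^{(K)}(z) \le K^3 t^2 \|\rho\|_\infty z^{-2}$ from part (4) (with $k = M = K$) together with a further dyadic split of $|z|$ against $|x|^{-1}$, invoking the decay of $\rho$ (or merely its sup bound) wherever the argument of $\rho$ is large, to extract the term $O_v(K^3 t^2 \|\rho\|_\infty)$. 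This reproduces \cite[(A7)]{bapst2014large}; the care needed is purely organizational, namely to arrange the splits so that the singular prefactor $x^{-2}$ is absorbed in every region and every constant stays uniform for $K > 1$ and $E \in [-v, v]$ --- which works because each region places either $p_E^{(K)}$ or $\rho$ at a point of size $\gtrsim |x|^{-1} \ge 1$.

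For part (5) I would again start from \eqref{e:loweraux1}. When $|x|$ is bounded away from $0$ the point $x^{-1}$ lies in a fixed compact set, so $\rho(x^{-1} + y + E) \ge c > 0$ for $y$ in a fixed compact set by \Cref{d:Lregular}(4), while $\int_{|y| \le M} p_E^{(K)}(y)\, dy$ is bounded below by tightness of $p_E^{(K)}$ (from its $y^{-2}$ tail, part (4)); this gives $p_E(x) \ge c\, x^{-2} > 0$. When $|x|$ is near $0$ one needs a matching \emph{lower} bound on the tail of $p_E^{(K)}$: writing $\Gamma_i = Y_i^{-1}$ with $Y_i \overset{d}{=} V_0 - E - t^2 \sum_j \Gamma_j$, the density of $Y_i$ is bounded below near the origin (from \Cref{d:Lregular}(4) and the fact that $p_E^{(K)}$ puts positive mass near $0$), so $\P\big(t^2 \Gamma_1 \in (-w - 1, -w + 1)\big) \gtrsim t^2 w^{-2}$ for large $w$; bounding one summand from below and the rest away from $0$ gives $p_E^{(K)}$ a lower tail $\gtrsim t^2 w^{-2}$, and feeding this back into \eqref{e:loweraux1} together with $\rho \ge \L^{-1}$ near $0$ yields $p_E(x) \ge c\, x^{-2} \cdot t^2 x^2 = c t^2 > 0$; this is the argument after \cite[(A9)]{bapst2014large}. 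I expect the only real obstacle to be bookkeeping: in part (3) the splits must be arranged so the prefactor $x^{-2}$ is always absorbed, and part (5) in the regime $x \to 0$ requires a two-sided control on the tail of $p_E^{(K)}$ (the one-sided upper bound used everywhere else is not enough), which forces the short bootstrap from the lower bound on the density of $Y$ near the origin.
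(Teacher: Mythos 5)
Your reconstruction is correct and follows exactly the route the paper itself takes: the paper gives no independent proof of this lemma but cites \cite[Appendix A]{bapst2014large} (items (A2), (A4), (A7), and the text around (A9)--(A10)), and your arguments --- the convolution representations \eqref{e:bananav3} and \eqref{e:loweraux1}, the largest-summand pigeonhole for part (4) with two energies, the splitting that absorbs the $x^{-2}$ prefactor for part (3) with $\varsigma=1$, and the two-sided tail bootstrap via $\Gamma = Y^{-1}$ for part (5) --- are precisely those cited arguments with the minor modifications the paper flags.
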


The following lemma provides a quantitative approximation of $\rho_E(x)$ by $\rho(x+E)$. It was shown  in  \cite[(A21)]{bapst2014large}. 

\begin{lemma}[{\cite[Equation (A21)]{bapst2014large}}]\label{l:bapstuniform}
Fix $G, \delta, \epsilon>0$. There exists $K_0(G,\delta, \epsilon) >0$ such that for all $K \ge K_0$, $g \in [ 0, G]$, and $e, E \in \R$, 
\be
\inf_{e' \in [-\delta, \delta]} \rho(e + e' + E) - \eps \le \rho_E (e) \le \sup_{e' \in [-\delta, \delta]}  \rho(e + e' + E) + \eps
\ee 
\end{lemma}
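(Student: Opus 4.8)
\textbf{Proof proposal for \Cref{l:bapstuniform} (approximation of $\rho_E$ by $\rho(\cdot + E)$).}

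The plan is to use the convolution representation \eqref{e:bananav3}, namely $\rho_E(e) = \int p_E^{(K-1)}(y)\,\rho(e+y+E)\,dy$, and to show that the density $p_E^{(K-1)}$ of $t^2 \sum_{i=1}^{K-1}\Gamma_i$ concentrates almost entirely on a tiny interval $[-\delta,\delta]$ around the origin as $K \to \infty$. Once that concentration is established, one simply splits the integral into the part where $|y| \le \delta$ and the part where $|y| > \delta$: on the first part, $\rho(e+y+E)$ lies between $\inf_{e' \in [-\delta,\delta]} \rho(e+e'+E)$ and $\sup_{e' \in [-\delta,\delta]} \rho(e+e'+E)$, and the mass of $p_E^{(K-1)}$ there is at most $1$ (for the upper bound) and at least $1 - \epsilon$ (for the lower bound); on the second part, one uses $\|\rho\|_\infty \le \L$ and the small tail mass to bound the contribution by $\epsilon$, after possibly shrinking it by noting $\inf \rho \ge 0$. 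Choosing $K_0$ large enough to make the tail mass smaller than $\epsilon/(2\L+1)$ (uniformly over $g \in [0,G]$ and over $e, E \in \R$, which is automatic since the bound on $p_E^{(K-1)}$ will be $E$-independent) then yields the claimed two-sided bound.

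The key step, and the main obstacle, is the concentration estimate for $p_E^{(K-1)}$ near zero: one must show that $\int_{|y|>\delta} p_E^{(K-1)}(y)\,dy \to 0$ as $K \to \infty$, uniformly in $E$ and in $g \in [0,G]$. The natural route is to use \eqref{e:bapsta}, which gives $p_E^{(k)} * p_E^{(M-k)}(x) \le M^3 t^2 \|\rho\|_\infty / |x|^2$; taking $M = K-1$, $k = M$ (so the second factor is the delta-like trivial convolution — more precisely one should apply the bound in the form $p_E^{(K-1)}(x) \le (K-1)^3 t^2 \|\rho\|_\infty/|x|^2$, which is the $k = M$ case after interpreting $p_E^{(0)}$ appropriately, or simply cite the $M$-fold version directly). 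Since $t = g(K\log K)^{-1}$, we have $(K-1)^3 t^2 \le g^2 K^3/(K\log K)^2 = g^2 K /(\log K)^2$, so $p_E^{(K-1)}(x) \le g^2 K (\log K)^{-2} \|\rho\|_\infty |x|^{-2}$. This is not yet summable-in-tail because of the factor $K$; one must instead exploit that the \emph{total variance} (or first absolute moment) of $t^2\sum_{i=1}^{K-1}\Gamma_i$ is of order $(K-1) t^2 \E[\Gamma] \sim K \cdot (K \log K)^{-2} \to 0$ provided $\E|\Gamma|$ is bounded — but $\Gamma$ need not have a finite mean. The correct fix, as in \cite{bapst2014large}, is a truncation argument: bound $\int_{\delta < |y| \le R} p_E^{(K-1)}(y)\,dy$ by $g^2 K (\log K)^{-2}\|\rho\|_\infty \int_{\delta<|y|} |y|^{-2}\,dy = O(K(\log K)^{-2}/\delta) \to 0$, wait — that also has the stray $K$. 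So one really needs the sharper input that $p_E^{(K-1)}$ behaves, near a fixed scale, like a sum that has shrunk: the honest approach is to track that $\int_{|y|>\delta} p^{(K-1)}_E(y)\, dy$ is controlled by the large-deviation/Chebyshev bound for $t^2 \sum \Gamma_i$ using a \emph{truncated} second moment of $t\Gamma_i$ at scale $t^{-1}$, combined with the heavy-tail bound \eqref{e:bapsta2} to control the truncation error; this is precisely the content of the derivation surrounding \cite[(A21)]{bapst2014large}, which we invoke.

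Concretely, I would organize the write-up as follows. First, fix $G, \delta, \epsilon > 0$ and reduce to proving $\int_{|y| > \delta} p_E^{(K-1)}(y)\,dy \le \epsilon/(2\L+1)$ for all $K \ge K_0$, $g \in [0,G]$, $E \in \R$. Second, establish this tail bound: write $t^2\sum_{i=1}^{K-1}\Gamma_i$ as a sum of i.i.d.\ terms $X_i = t^2 \Gamma_i$, use \eqref{e:bapsta2} (with a change of variables) to see $\P(|X_i| > \tau) \le C t^2/\tau \cdot \|\rho\|_\infty $-type decay, split each $X_i = X_i' + X_i''$ into a part truncated at scale $\delta/K$ and a remainder, bound $\P(\text{some } X_i'' \ne 0)$ by a union bound that is $O(K \cdot t^2 \cdot \|\rho\|_\infty \cdot K/\delta) = O(K^2 t^2/\delta) = O(g^2/((\log K)^2 \delta)) \to 0$, and bound $\P(|\sum X_i'| > \delta/2)$ by Chebyshev using $\E[(X_i')^2] \le (\delta/K)\,\E|X_i'| \le (\delta/K) \cdot t^2 \E[|\Gamma_i| \wedge (\delta/(Kt^2))]$; the latter expectation is $O(t^2 \log(1/t) \|\rho\|_\infty)$ by the heavy-tail bound \eqref{e:bapsta2}, giving a variance of order $K \cdot (\delta/K) \cdot t^2 \log(1/t) = \delta t^2 \log(1/t) \to 0$. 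Third, plug back into the split of \eqref{e:bananav3} and conclude. The one genuinely delicate point is making the truncation bookkeeping uniform in $E$ — but since every estimate used (\eqref{e:bapsta2}, \eqref{e:bapsta}) is uniform in $E$, this comes for free. Rather than reproducing all of this, the cleanest exposition is to cite \cite[(A21)]{bapst2014large} directly, as the lemma statement already does, and merely remark that the $\L$-regularity hypothesis supplies the constant $\varsigma = 1$ needed there, with the footnote in the excerpt already handling this point.
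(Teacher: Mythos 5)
The paper offers no proof of this statement at all: it is imported verbatim from \cite[(A21)]{bapst2014large}, which is exactly where your write-up ends up, so on the bottom line you and the paper take the same route (the $\L$-regularity remark about $\varsigma=1$ is likewise already handled by the paper's footnote). Your additional standalone sketch --- split \eqref{e:bananav3} over $\{|y|\le\delta\}$ and $\{|y|>\delta\}$ and show that the tail mass of $p_E^{(K-1)}$ vanishes as $K\to\infty$, uniformly in $E\in\R$ and $g\in[0,G]$ --- is essentially sound, with two small repairs: the Chebyshev step should be applied to the centered truncated sum (or simply replaced by Markov on the first absolute moment, which by \eqref{e:bapsta2} is of order $Kt^2\log K = O\big(g^2/(K\log K)\big)$ and hence vanishes), and the first detour through \eqref{e:bapsta} is indeed a dead end, as you yourself noticed, since $M^3t^2\sim K/(\log K)^2$ diverges. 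A cleaner way to get the concentration step, already available in the paper and non-circular (its proof uses only \eqref{e:bapsta2}), is \Cref{l:QE1E2bound} with $E_1=E_2=E$, $r=1/4$, and $A=\delta t^{-2}$: it yields $\P\big(t^2\big|\sum_{i=1}^{K-1}\Gamma_i\big|\ge\delta\big)\le CK(t^2/\delta)^{3/4}\le C(G,\delta)\,K^{-1/2}$, uniformly in $E$ and $g\in[0,G]$, which is precisely the uniform tail bound your reduction requires.
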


We now provide upper and lower bounds on the densities $\rho_E$ and $p_E$. The proof of these bounds is deferred to \Cref{s:densityproof}. 
\begin{lemma}\label{l:rhoebound}
For all $E \in \R$ and $K>1$, there exist constants $C(E,K),c(E,K)>0$ such that the following claims hold.
\begin{enumerate}
\item For all $x \in \R$, 
\be\label{e:pEquadratic}
 \frac {c }{ 1 + |x|^2}\le  p_E(x) \le \frac {C }{ 1 + |x|^2},
\ee 
\item For all $x \in \R$,
\be\label{e:rhoequadratic}
\frac{ c }{1 + | x  + E  |^2}
\le 
\rho_E(x) \le \frac{ C }{1 + | x  + E  |^2},
\ee 
\item We have 
\be\label{e:rhoEupper}
\| \rho_E \|_\infty \le \| \rho \|_\infty.
\ee 
\end{enumerate}
\end{lemma}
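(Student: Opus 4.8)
The strategy is to prove the bounds for $p_E$ first (parts (1) and (3)), then bootstrap to two-sided quadratic-decay bounds for the convolution power $p^{(K-1)}_E$, and finally pass to $\rho_E$ through the representation \eqref{e:bananav3}. Throughout, $E$ and $K$ are fixed and all constants may depend on them. Part (3) is immediate: since $p^{(K-1)}_E$ is a probability density, \eqref{e:bananav3} gives $\rho_E(x) \le \|\rho\|_\infty \int p^{(K-1)}_E(y)\,dy = \|\rho\|_\infty$. For the upper bound in (1), I split on $|x| \ge 1$, where \eqref{e:bapsta2} yields $p_E(x) \le \|\rho\|_\infty x^{-2} \le 2\|\rho\|_\infty(1+x^2)^{-1}$, and on $|x| \le 1$, where \eqref{e:bapsta7} yields $p_E(x) \le C(|E|)(1 + K^3 t^2 \|\rho\|_\infty)$; combining the two ranges gives $p_E(x) \le C(1+x^2)^{-1}$. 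For the lower bound in (1), on $|x| \le 1$ I invoke \eqref{e:pelow} directly (so $p_E(x) \ge \inf_{|x'|\le 1} p_E(x') > 0$), and on $|x| > 1$ I use \eqref{e:loweraux1}: since $|1/x| \le 1$, restricting the integral to $|y| \le R$ forces the argument of $\rho$ into a fixed compact set, where $\rho$ is bounded below by a positive constant (an $\L$-regular density is continuous and strictly positive on $\R$), and choosing $R$ so that $\mathbb{P}\big(|t^2\sum_{i=1}^K \Gamma_i| \le R\big) \ge 1/2$ gives $p_E(x) \ge c x^{-2} \ge c(1+x^2)^{-1}$.

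Next I establish that $c(1+y^2)^{-1} \le p^{(K-1)}_E(y) \le C(1+y^2)^{-1}$ for all $y$. Part (1) shows $p_E$ is globally bounded, so $p^{(K-1)}_E = p_E \ast p^{(K-2)}_E$ is globally bounded too (and when $K = 2$, $p^{(K-1)}_E = p_E$, to which part (1) applies directly); together with the tail bound $p^{(K-1)}_E(y) \le C y^{-2}$ from \eqref{e:bapsta} (with $M = K-1$), this gives the upper bound. For the lower bound, restrict the convolution $p^{(K-1)}_E(y) = \int p_E(y-z) p^{(K-2)}_E(z)\,dz$ to $|z| \le 1$: there $p_E(y-z) \ge c(1 + (y-z)^2)^{-1} \ge c'(1+y^2)^{-1}$ by part (1), and $\int_{|z| \le 1} p^{(K-2)}_E(z)\,dz > 0$ because $p_E > 0$ everywhere (again \eqref{e:pelow}) forces $p^{(K-2)}_E > 0$ everywhere; this yields $p^{(K-1)}_E(y) \ge c(1+y^2)^{-1}$.

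It then remains to prove the two-sided bound on $\rho_E$. For the upper bound, I insert the just-obtained bound on $p^{(K-1)}_E$ and the $\L$-regularity estimate $\rho(x+y+E) \le \L(1 + (x+y+E)^2)^{-1}$ into \eqref{e:bananav3}, obtaining $\rho_E(x) \le C \int (1+y^2)^{-1}(1 + (y - A)^2)^{-1}\,dy$ with $A = -(x+E)$; this integral is exactly $I(A)$ from \Cref{l:32} at $s = 0$, which is at most $C(1+A^2)^{-1}$, as needed. For the lower bound, I restrict \eqref{e:bananav3} to $|y + x + E| \le \L^{-1}$: on this interval $\rho(x+y+E) \ge \L^{-1}$ by the second clause of \Cref{d:Lregular}, while $|y| \le |x+E| + \L^{-1}$ gives $p^{(K-1)}_E(y) \ge c(1+y^2)^{-1} \ge c'(1 + (x+E)^2)^{-1}$; integrating over this interval of length $2\L^{-1}$ yields $\rho_E(x) \ge c(1 + (x+E)^2)^{-1}$.

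The only genuine obstacle is that the cited estimates from \cite{bapst2014large} give good control on $p_E$ and on certain convolutions, but do not directly provide the two-sided quadratic-decay bound on $p^{(K-1)}_E$ that is required to run the integral estimate \Cref{l:32} (and its restricted lower-bound analogue). Assembling that bound from \eqref{e:bapsta}, \eqref{e:bapsta2}, \eqref{e:bapsta7}, \eqref{e:pelow}, and part (1) — together with a little care for the degenerate case $K = 2$ — is the crux; the remaining manipulations are elementary, relying only on inequalities of the form $1 + (a+b)^2 \le C(1 + a^2)$ valid whenever $|b|$ is bounded.
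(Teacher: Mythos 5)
Your proposal is correct and follows essentially the same route as the paper: two-sided quadratic bounds on $p_E$ from \eqref{e:bapsta2}, \eqref{e:bapsta7}, \eqref{e:pelow}, and \eqref{e:loweraux1}; a two-sided quadratic bound on $p^{(K-1)}_E$; and then the representation \eqref{e:bananav3} combined with \Cref{l:32} for the upper bound on $\rho_E$ and a restricted-integral argument for the lower bound (part (3) being immediate in both). Two minor deviations are worth flagging. For the lower bound on $p_E$ at large $|x|$ you invoke the fourth clause of \Cref{d:Lregular} (positivity of $\rho$ on compacts) together with tightness of $p^{(K)}_E$; the paper instead takes $|x|\ge 2\L$ and restricts to $|y+E|\le (2\L)^{-1}$ so that the argument of $\rho$ lies in $[-\L^{-1},\L^{-1}]$, using only the second clause — your version is valid under the definition, but it conflicts with the paper's remark that the last two clauses enter only through \Cref{l:criteria}; likewise, where the paper propagates the lower bound to $p^{(M)}_E$ via stability of Cauchy densities under convolution, your direct restriction of the convolution to $|z|\le 1$ works equally well. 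Second, a notational slip: $p^{(K-1)}_E$ is not $p_E * p^{(K-2)}_E$ but $q * p^{(K-2)}_E$ with $q(x) = t^{-2}p_E(x/t^2)$ the density of $t^2\Gamma_1$ (cf.\ \Cref{d:pEdef}); since $q$ satisfies the same two-sided quadratic bound with constants depending on $K$ and $t$, this does not affect the argument. Your explicit derivation of $p^{(K-1)}_E(y)\le C(1+y^2)^{-1}$ from \eqref{e:bapsta} plus boundedness of the convolution is in fact more careful than the corresponding step in the paper, which cites only the bound for $p_E$ itself.
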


\section{Origin of the Transfer Operator}\label{s:origintransfer}

The goal of this section is to explain the origin of the operator $F$ defined in \eqref{e:Fdef}, and how it can be used to compute the quantity $\Phi_L$ (from \eqref{sumsz1}) as $\Im z \rightarrow 0^+$, under the assumption that the imaginary part of the boundary value $R_{00}(E)$ vanishes.
We begin in \Cref{s:productexpansion} by deriving a useful product expansion for $\Phi_L (s; z)$. We take its limit as $\Imaginary z$ tends to $0$ in \Cref{EtaSmall}, and we show in \Cref{s:transferoperator} that this quantity can be written in terms of $F$.

\subsection{Product Expansion}\label{s:productexpansion}

Throughout this section, we fix a complex number ${z = E + \mathrm{i} \eta} \in \mathbb{H}$ and an integer $L \ge 1$.  We typically abbreviate $R_{vw} = R_{vw} (z)$ for $v, w \in \mathbb{V}$ and $R_{00}=R_{00}(z)$. We let $W = W(z)$ denote a complex random variable with law $\rho_z$ (as defined in \Cref{s:rez}). 
For each integer $j \in \unn{0}{L}$, let $W_j = W_j (z)$ denote a complex random variable with law $W$, with the variables $( W_j )_{j=0}^L$ mutually independent. We also let $t \in (0,1)$ be a free parameter; our results do not depend on the choice of $t$. 

Next, we fix some infinite path $\mathfrak{p} = (v_0, v_1, \ldots ) \in \mathbb{V}$ with $v_0 = 0$. The following definition recursively introduces a set of random variables $\{ R_i(z ; L; \mathfrak{p} )\}_{i=0}^L$. We often abbreviate $R_i(z ; L)  = R_i (z; L;  \mathfrak{p})$.

\begin{definition} 
	
\label{srz} 

For $i \in \unn{0}{L+1}$, we define the random variables 
$R_i(z;L)$ recursively, by setting 
\begin{flalign*}
R_{L+1} (z; L) = R_{v_{L+1} v_{L+1}}^{(v_{L})}, \quad \text{and} \quad R_i(z;L) =  \big( W_{i}(z) - t^2 R_{i+1}(z ; L) \big)^{-1}, \quad \text{for $i \in \unn{0}{L}$}.
\end{flalign*}
\end{definition} 

The  definition of $R_i$ is analogous in form to the Schur complement identity stated in \eqref{qvv}.  The term in the denominator represented here by $t^2 R_{i+1} (z; L)$ has been written separately from the rest of the sum, which is represented by $W_i(z)$.

\begin{lemma}\label{l:expansionz}

For all $v \in \mathbb{V}_L$ and $z \in \bbH$, 
\be\label{productN}
\E\big[ |R_{0v}|^s \big] 
=
\E \Bigg[
\big| R_L (z; L ) \big|^s \cdot 
\prod_{j=1}^{L}
|t|^s \big| R_{j-1}(z; L) \big|^s \Bigg] 
.
\ee
\end{lemma}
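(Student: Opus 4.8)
\textbf{Proof plan for Lemma \ref{l:expansionz}.}

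The plan is to fix a vertex $v = v_L \in \mathbb{V}_L$ lying on the path $\mathfrak{p} = (v_0, v_1, \ldots)$ (by vertex-transitivity of the relevant law, it suffices to treat $v$ on the fixed path) and to rewrite the off-diagonal resolvent entry $R_{0v}$ via the product expansion of \Cref{rproduct}. Applying that lemma with $\mathcal{U} = \varnothing$ to the geodesic path from $0$ to $v$, which consists of $L+1$ vertices, gives
\be
R_{0v} = (-1)^L \cdot R_{v_L v_L} \cdot \prod_{j=1}^{L} t\, R_{v_j v_j}^{(v_{j-1})},
\ee
so that $|R_{0v}|^s = |R_{v_Lv_L}|^s \prod_{j=1}^L |t|^s |R_{v_jv_j}^{(v_{j-1})}|^s$. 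Thus the task reduces to identifying, in distribution, the diagonal entries $R_{v_Lv_L}$ and $R_{v_jv_j}^{(v_{j-1})}$ (for $j = 1, \ldots, L$) with the recursively defined variables $R_i(z;L)$ from \Cref{srz}.

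The key step is to unfold the Schur complement identity \eqref{qvv} along the path. For each $j \in \unn{0}{L}$, apply \eqref{qvv} at the vertex $v_j$ with the deletion set $\mathcal{U} = \{v_{j-1}\}$ (interpreting $v_{-1} = \varnothing$ so that $\mathcal{U} = \varnothing$ when $j=0$). This expresses $R_{v_jv_j}^{(v_{j-1})}$ as $-\big(z + t^2 \sum_{w \sim v_j} R_{ww}^{(v_{j-1}, v_j)}\big)^{-1}$. Among the neighbors $w \sim v_j$, exactly one — namely $v_{j+1}$ — lies further from the root along $\mathfrak{p}$; the remaining $K-1$ neighbors are the other children of $v_j$, plus we have already deleted the parent $v_{j-1}$. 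So we split the sum as
\be
R_{v_jv_j}^{(v_{j-1})} = \Big( W_j - t^2 R_{v_{j+1}v_{j+1}}^{(v_{j-1}, v_j)} \Big)^{-1},
\ee
where $W_j := -\,z - t^2 \sum_{w \sim v_j,\ w \neq v_{j+1},\, w \neq v_{j-1}} R_{ww}^{(v_{j-1}, v_j)}$ is, by the subtree independence structure of the tree (the deleted vertices $v_{j-1}, v_j$ disconnect the subtrees hanging off the other children of $v_j$ from everything relevant), a random variable with law $\rho_z$, and moreover the collection $(W_j)_{j=0}^L$ is mutually independent because the corresponding subtrees are disjoint. Since $R_{v_{j+1}v_{j+1}}^{(v_{j-1},v_j)}$ has the same law as $R_{v_{j+1}v_{j+1}}^{(v_j)}$ (the additional deletion of $v_{j-1}$ is irrelevant once $v_j$ is deleted, as it disconnects that part of the tree), one checks inductively — descending from $j = L$ down to $j = 0$ — that the joint law of $\big(R_{v_Lv_L},\ (R_{v_jv_j}^{(v_{j-1})})_{j=1}^L\big)$ coincides with that of $\big(R_L(z;L),\ (R_{j-1}(z;L))_{j=1}^L\big)$; the base case is the identification $R_{v_Lv_L} = R_{v_Lv_L}^{(v_{L-1})}$'s continuation, matched against $R_{L+1}(z;L) = R_{v_{L+1}v_{L+1}}^{(v_L)}$ and the recursion at $i=L$. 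Taking $s$-th absolute moments and expectations then yields \eqref{productN}.

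I expect the main obstacle to be the bookkeeping of the conditional independence: one must verify carefully that, after deleting the path vertices, the ``remainder'' terms $W_j$ are genuinely i.i.d.\ with law $\rho_z$ and independent of the ``deeper'' resolvent entries $R_{i+1}(z;L)$, so that the recursive substitution is valid in joint distribution and not merely marginally. This is a standard consequence of the tree structure (deleting $v_{j-1}$ and $v_j$ splits $\mathbb{T}$ into independent subtrees, one of which contains $v_{j+1}$ and its descendants, the others containing the siblings of $v_{j+1}$), but making the independence precise across all levels simultaneously — rather than one level at a time — requires some care; the cleanest route is the downward induction on $j$ sketched above, at each stage conditioning on the disorder outside the already-processed portion of the path.
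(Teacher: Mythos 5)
Your overall strategy is the same as the paper's: factor $R_{0v}$ into diagonal entries along the path via \Cref{rproduct}, unfold each one with the Schur complement \eqref{qvv}, and identify the resulting joint law with that of the recursively defined $(R_i(z;L))$ from \Cref{srz}. However, your starting identity is wrong, and this is a genuine error rather than a cosmetic one. The formula
\[
R_{0v} = (-1)^L \, R_{v_L v_L} \prod_{j=1}^{L} t\, R_{v_j v_j}^{(v_{j-1})}
\]
is a hybrid of the two forms in \Cref{rproduct} and holds in neither: the first form anchors at the \emph{root} entry, $R_{0v} = (-1)^L R_{00} \prod_{j=1}^{L} t R_{v_j v_j}^{(v_{j-1})}$, while the second form anchors at $R_{v_L v_L}$ but then uses the \emph{upward}-conditioned entries $R_{v_j v_j}^{(v_{j+1})}$, $j=0,\dots,L-1$. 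With your hybrid, the collection you propose to match, $\big(R_{v_Lv_L},\, (R^{(v_{j-1})}_{v_jv_j})_{j=1}^{L}\big)$, contains the undeleted entry $R_{v_Lv_L}$, which does not play the role of $R_L(z;L)$: it is the full-tree diagonal entry at a degree-$(K+1)$ vertex (its Schur expansion has $K+1$ subtree terms, one of them reaching back up through $v_{L-1}$ to the root), so even its marginal law differs from that of $R_L(z;L)\overset{d}{=}R^{(v_{L-1})}_{v_Lv_L}$, and its joint dependence on the other factors is not the one produced by the recursion in \Cref{srz}. Consequently the claimed joint-law identification, and hence the final identity, does not follow from what you wrote.

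The repair is immediate and brings you exactly to the paper's proof: use the correct first form (anchor $R_{00}$, downward-conditioned entries $R^{(v_{j-1})}_{v_jv_j}$ for $j=1,\dots,L$), note that the right side of \eqref{productN} is just a relabeling since all of $R_0,\dots,R_L$ appear once, and then your unfolding argument — siblings contributing a variable $W_j$ with law $\rho_z$ (do include the potential $V_{v_j}$, which the display \eqref{qvv} suppresses but which is part of the definition of $\rho_z$), independence of the $W_j$ across levels from disjointness of the sibling subtrees, the identity $R^{(v_{j-1},v_j)}_{v_{j+1}v_{j+1}} = R^{(v_j)}_{v_{j+1}v_{j+1}}$, and the seed $R_{L+1}(z;L)=R^{(v_L)}_{v_{L+1}v_{L+1}}$ — is precisely the justification the paper gives (more tersely) for the statement that $(R^{(v_{i-1})}_{v_iv_i})_{i=0}^{L}$ and $(R_j)_{j=0}^{L}$ have the same joint distribution.
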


\begin{proof}
We may suppose without loss of generality that $v \in \mathfrak{p}$. 
From
\Cref{rproduct}, we obtain for any $i$ that
\[
\big| R^{(v_{i-1})}_{v_i v } \big| = \big| R^{(v_{i-1})}_{v_i v_i} \big| \cdot |t| \cdot \big| R^{(v_i)}_{v_{i+1} v} \big|.
\]
Iterating this expansion yields 
\[
\E\big[ |R_{0v}|^s \big] = \E\left[ |R_{00}|^s  \prod_{i=1}^L | t |^s |R^{(v_{i-1})}_{v_{i}v_i}|^s \right]
= 
\E\left[ |R_{v_L v_L}^{(v_{L-1})} |^s  \prod_{i=1}^{L} |t |^s |R^{(v_{i-2})}_{v_{i-1}v_{i-1}}|^s \right].
\]
This completes the proof after using the definition of the $R_j$ and expanding each resolvent entry $R_{v_i v_i}^{(v_{i-1})}$ using \eqref{qvv} to see that the joint distributions of $(R_j)_{j=0}^L$ and $(R^{(v_{i-1})}_{v_i v_i})_{i=0}^L$ are identical. \end{proof}

\subsection{The Small $\eta$ Limit of the Product Expansion} \label{EtaSmall}

In this section we analyze the limit of $\E[ |R_{0v}|^s] $ as $\Im z$ tends to $0$. To that end, we adopt the notation of \Cref{s:productexpansion}.  We start by fixing a complex number $r_{L+1} \in \overline{\mathbb{H}}$ and a sequence of complex numbers $\bm{w} = (w_0,w_1, \ldots , w_{L}) \in \overline{\mathbb{H}}^{L+1}$.  We define quantities $r_i \in \overline{\mathbb{H}}$ recursively for $i\in \unn{0}{L}$ by
\be\label{ui}
r_i =  \big( w_{i}  -  t^2 r_{i+1} \big)^{-1}.
\ee
Observe that $r_i$ is a function of the parameters $ \{ r_{L+1} \} \cup \{ w_{i}, \dots, w_{L} \}$ only. We additionally set
\bex
T(r_{L+1}, \bm{w})=  \left( \prod_{i=0}^L
  \frac{t}{w_i - t^2 r_{i+1}}  \right) .
\eex
Together with \eqref{productN} and the definition of $R_i (z; L)$ from \Cref{srz}, we have for any $z \in \bbH$ that
\begin{align}\label{productN3}
\begin{split}
\E\big[ |R_{0v_L}|^s \big] 
&=
\E \Bigg[
\big| R_L (z; L) \big|^s \cdot 
\prod_{j=1}^{L}
|t |^s \big| R_{j-1}(z; L) \big|^s \Bigg] \cdot
 \\
&=
\frac{1}{t^s} 
\int_{\mathbb{C}^{L+2}}
\big|T(r_{L+1} , \bm{w})\big|^s
p_z(r_{L+1} ) \, d r_{L+1} \prod_{i=0}^{L} \rho_z(w_i) \,d w_i  
 .
\end{split}
\end{align}

We next introduce an analog of the right side of \eqref{productN3}, defined on the real line. 

\begin{definition} 
	
	\label{amoment2} 
	
	For any  $E \in \bbR$, define 
\begin{flalign}\label{productN2}
\Upsilon_L(s;E) =\frac{1}{t^s} 
\int_{\mathbb{C}^{L+2}}
\big|T(r_{L+1} , \bm{w})\big|^s
p_E(r_{L+1}) \, d r_{L+1} \prod_{i=0}^{L} \rho_E(w_i) \,d w_i  . \end{flalign}
\end{definition}

The following lemma shows that $\Upsilon_L$ can be interpreted as a boundary value of \eqref{productN3}, supposing that the there is a subsequence $(\eta_j)_{j=1}^\infty$ tending to zero along which $R_{00}(E+\eta_j)$ has a purely imaginary limit (which is suggestive of spectral localization). 

\begin{lemma}\label{l:PhiEA}
Fix $s\in (0,1)$, $E \in \bbR$, and an integer $L \ge 1$.
Suppose there exists a decreasing sequence of positive real numbers $(\eta_1, \eta_2, \ldots ) \subset (0, 1)$ tending to $0$ and an almost surely real random variable $R_{00} (E)$ such that $\lim_{k \rightarrow \infty} R_{00}(E + \iu \eta_k) = R_{00}(E)$ weakly. Then, for any $v_L \in \bbV(L)$, 
\bex
\lim_{k \rightarrow \infty} \E\Big[ \big|R_{0v_L} (E + \iu \eta_k) \big|^s \Big]  =\Upsilon_L(s ; E ).
\eex
\end{lemma}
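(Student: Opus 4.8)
The plan is to pass to the limit $k \to \infty$ inside the integral representation \eqref{productN3} for $\E[|R_{0v_L}(E + \iu\eta_k)|^s]$ and check that it converges to $\Upsilon_L(s;E)$ as given in \eqref{productN2}. The strategy has three components: (i) identify the limiting joint law of all the relevant random variables, (ii) establish convergence in distribution of the integrand $|R_L(z;L)|^s \prod_{j=1}^L |t|^s |R_{j-1}(z;L)|^s$, and (iii) upgrade convergence in distribution to convergence of expectations via a uniform integrability argument.

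\emph{Step 1: Limiting laws.} By hypothesis, $R_{00}(E + \iu\eta_k) \Rightarrow R_{00}(E)$, where $R_{00}(E)$ is almost surely real. The variable $R_{L+1}(z;L) = R_{v_{L+1}v_{L+1}}^{(v_L)}$ has the same law as $R_{00}(z)$ by the last sentence of the first part of \Cref{q12}, so $R_{L+1}(E + \iu\eta_k;L) \Rightarrow R_{00}(E)$ as well. Similarly, $W_j(z)$ has law $\rho_z$, the density of $V_0 - z - t^2\sum_{i=1}^{K-1}\Gamma_i$ (\Cref{d:pEdef}), and since each $\Gamma_i \overset{d}{=} R_{00}(z) \Rightarrow R_{00}(E)$, one gets $W_j(E + \iu\eta_k) \Rightarrow W_j(E)$, a real random variable with density $\rho_E$. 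Moreover, the $(W_j)_{j=0}^L$ together with $R_{L+1}$ are mutually independent for every $\eta_k$, so the joint law converges to the product law. Then the recursion \eqref{ui}, which is a continuous map $\overline{\mathbb H}^{L+2} \to \overline{\mathbb H}^{L+1}$ away from the (measure-zero, since the limiting $w_i, r_{L+1}$ are real and the denominators $w_i - t^2 r_{i+1}$ would have to vanish — which has probability zero because $W_i$ has a density) zero set of any denominator, propagates this convergence to $(R_i(E + \iu\eta_k;L))_{i=0}^L \Rightarrow (R_i(E;L))_{i=0}^L$ with the $R_i(E;L)$ built from real data. Hence $|R_L(z;L)|^s \prod_{j=1}^L |t|^s|R_{j-1}(z;L)|^s \Rightarrow |R_L(E;L)|^s \prod_{j=1}^L |t|^s|R_{j-1}(E;L)|^s$ by the continuous mapping theorem.

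\emph{Step 2: Uniform integrability.} To conclude $\E[\cdot] \to \E[\cdot]$ from convergence in distribution, it suffices to show the family $\big\{ |R_{0v_L}(E + \iu\eta_k)|^s \big\}_{k \ge 1}$ is uniformly integrable. This is exactly the content of \Cref{l:uniformlyintegrable}: conditioning on $\{V_w\}_{w \notin \{v_0, \dots, v_L\}}$ gives a deterministic bound on $\E[|R_{0v_L}(z)|^s \mid \cdots]$ that is uniform in $z \in \mathbb H$, and since this bound is itself a constant (independent of the conditioning), it bounds the unconditional expectation $\E[|R_{0v_L}(E + \iu\eta_k)|^s] \le \big( 2^s\|\rho\|_\infty^s/(1-s)\big)^{L+1} t^{sL}$ uniformly in $k$. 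To get uniform \emph{integrability} rather than just uniform boundedness of the first moment, I would apply \Cref{l:uniformlyintegrable} with $s$ replaced by some $s' \in (s,1)$: this gives a uniform bound on $\E[|R_{0v_L}(E+\iu\eta_k)|^{s'}]$, and since $s'/s > 1$, the family $\{|R_{0v_L}(E+\iu\eta_k)|^s\}$ is bounded in $L^{s'/s}$, hence uniformly integrable. Combining with Step 1 and \eqref{productN3} (whose right-hand side is exactly $\E[|R_{0v_L}(E+\iu\eta_k)|^s]$) yields $\lim_k \E[|R_{0v_L}(E+\iu\eta_k)|^s] = \E\big[|R_L(E;L)|^s \prod_{j=1}^L |t|^s|R_{j-1}(E;L)|^s\big]$, which equals $\Upsilon_L(s;E)$ by the real-line analogue of the computation leading to \eqref{productN3} — i.e., unfolding the expectation over the independent densities $p_E(r_{L+1})\,dr_{L+1}$ and $\prod \rho_E(w_i)\,dw_i$ and recognizing the product $T(r_{L+1},\bm w)$.

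\emph{Main obstacle.} The delicate point is the passage from convergence in distribution of the $W_j$ and $R_{L+1}$ to convergence of the possibly-singular functional $|R_L(z;L)|^s\prod|R_{j-1}(z;L)|^s$: this functional is continuous only off the set where some denominator $w_i - t^2 r_{i+1}$ vanishes, and when the limiting variables are real this set is no longer automatically avoided by positivity of imaginary parts. I would handle this by noting that for each fixed real $r_{L+1}$ and real $(w_{i+1},\dots,w_L)$, the resulting $r_{i+1}$ is a fixed real number, so $w_i - t^2 r_{i+1} = 0$ defines a single point; since $W_i$ has the density $\rho_E$, this event has probability zero, so the limiting random vector lands in the continuity set almost surely. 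This makes the continuous mapping theorem applicable. A secondary technical wrinkle is that $|R_L(z;L)|^s$ itself could in principle be unbounded, but this is absorbed into the uniform integrability argument of Step 2, which controls the full product $|R_{0v_L}|^s$ directly rather than its factors.
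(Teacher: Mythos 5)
Your proposal is correct and follows essentially the same route as the paper's proof: weak convergence of the laws $p_{z_k}$ and $\rho_{z_k}$ induced by the hypothesis, uniform integrability obtained from \Cref{l:uniformlyintegrable} applied with an exponent $s' > s$, and passage to the limit inside the representation \eqref{productN3} to recognize $\Upsilon_L(s;E)$. Your explicit verification that the discontinuity set of the integrand (where some denominator $w_i - t^2 r_{i+1}$ vanishes) is null for the limiting product law is a point the paper leaves implicit, but the overall argument is the same.
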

\begin{proof}
 Setting $z_k = E + \mathrm{i} \eta$, we have $\lim_{k \rightarrow \infty} \rho_{z_k}(u) \, du  =  \rho_{E}(u) \, du$ from the hypothesis that $\lim_{k \rightarrow \infty} R_{00}(E + \iu \eta_k) = R_{00}(E)$ (applied to the definition of $\rho_E$ in \Cref{s:rez}, and using the continuous mapping theorem). This in turn implies the weak convergence
\begin{align}\label{replace!}
\lim_{k \rightarrow \infty} 
p_{z_k}(r_{L+1}) \, d r_{L+1}  \prod_{i=0}^{L} \rho_{z_k} (w_i) \,d w_i 
 =p_E(r_{L+1}) \, d r_{L+1} \prod_{i=0}^{L} \rho_E(w_i) \,d w_i 
.
\end{align}
 
Observe that  \eqref{productN3} may be viewed as the expectation of a random variable $|T(r_{L+1}, \bm{w}; z)|^s$, where the law of $T(r_{L+1}, \bm{w}; z )$ is induced by the $L+2$ mutually independent random variables $(r_{L+1}, \bm{w})$, with $r_{L+1}$ having law $p_{z}(r_{L+1}) \, d r_{L+1} $ and each $w_i$ having law $\rho_E(w_i) \,d w_i $.
Further observe that for any $s' \in (0,1)$ there exists a constant $C(s', K, L) >0$ such that 
\[
\E\Big[ \big|R_{0v_L} (z_k) \big|^{s'} \Big] \le C
\]
for all $k \in \Zplus$, by \Cref{l:uniformlyintegrable}. 
Choosing $s'>s$,  we have $ \E \big[ |T(r_{L+1}, \bm{w};  E + \iu \eta)|^{s_0} \big] < C$, where $C> 1$ is independent of $\eta \in (0,1)$. 
It follows that the sequence of random variables $( |T(r_{L+1}, \bm{w}; z_k)|^s)_{k=1}^\infty$ is uniformly integrable, which justifies the exchange of limits 
\begin{align*}
\lim_{k \rightarrow \infty}  \E\big[ |R_{0v_L}(z_k) |^s \big]  
&=\frac{1}{t^s} \int_{\mathbb{C}^{L+2} } \lim_{k \rightarrow \infty}
\big|T(r_{L+1} , \bm{w})\big|^s
p_{z_k}(r_{L+1}) \, d r_{L+1}  \prod_{i=0}^{L} \rho_{z_k} (w_i) \,d w_i 
\\
&=\frac{1}{t^s} \int_{\mathbb{R}^{L+2} }
\big|T(r_{L+1} , \bm{w})\big|^s
p_E(r_{L+1}) \, d r_{L+1} \prod_{i=0}^{L} \rho_E(w_i) \,d w_i 
.
\end{align*}
We used \eqref{replace!} for the second equality. 
The conclusion follows from the definition \eqref{productN2}.
\end{proof}

\subsection{Integral Operator}\label{s:transferoperator}
We next show that $\Upsilon_L(s;E)$ can be computed by iterating a certain integral operator.
We  first define a sequence of functions $\{g_i(x;E)\}_{i=0}^{L+1}$ in the following way. We set $g_{L+1}(x ; E) = p_E(-x/t^2)$, and for $i \in \unn{0}{L}$ we set
\be\label{gidef}
g_i(x;E) = \frac{t^{2-s}}{|x|^{2-s}}\int_{\mathbb{R}^2} g_{i+1}(y;E)\rho_E\left( - y - \frac{t^2}{x} \right) \, dy  .
\ee
\begin{lemma}\label{l:Erecursion}
Fix an integer $L \ge 1$ and real number and $E \in \bbR$. We have 
\begin{flalign}\label{lsbig}
\Upsilon_L(s;E)  = \frac{1}{t^{2+s}}\int_{-\infty}^{\infty} g_0(x ; E) \, dx.
\end{flalign}
\end{lemma}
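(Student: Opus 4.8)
The plan is to show that both sides of \eqref{lsbig} equal one and the same $(L+2)$-fold integral over $\R^{L+2}$, reached by unfolding the two recursions at play. Since $p_E$ and $\rho_E$ are densities of real-valued random variables, the integral in \eqref{productN2} is supported on $\R^{L+2}$, and since every integrand occurring below is nonnegative, Tonelli's theorem licenses all the interchanges of integration and the change-of-variables manipulations, the resulting identity holding a priori in $[0,+\infty]$.

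First I would rewrite $\Upsilon_L$. From \eqref{ui} one has $w_i - t^2 r_{i+1} = r_i^{-1}$, so $T(r_{L+1},\bm{w}) = \prod_{i=0}^{L} t r_i$ and $|T|^s = t^{s(L+1)}\prod_{i=0}^{L}|r_i|^s$; thus \eqref{productN2} reads $\Upsilon_L(s;E) = t^{sL}\int_{\R^{L+2}}\prod_{i=0}^{L}|r_i|^s\, p_E(r_{L+1})\prod_{i=0}^{L}\rho_E(w_i)\,dr_{L+1}\prod_{i=0}^{L}dw_i$. I would then change variables from $(w_0,\dots,w_L)$ to $(r_0,\dots,r_L)$ at fixed $r_{L+1}$ via $w_i = r_i^{-1}+t^2 r_{i+1}$; this map is a diffeomorphism off the Lebesgue-null set where some $w_i - t^2 r_{i+1}$ vanishes, and its Jacobian matrix is upper bidiagonal with diagonal entries $-r_i^{-2}$, so $\prod_{i=0}^L dw_i = \prod_{i=0}^L r_i^{-2}\,dr_i$. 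This gives
\[
\Upsilon_L(s;E)=t^{sL}\int_{\R^{L+2}}\prod_{i=0}^{L}|r_i|^{s-2}\; p_E(r_{L+1})\prod_{i=0}^{L}\rho_E\!\big(r_i^{-1}+t^2 r_{i+1}\big)\;dr_0\cdots dr_{L+1}.
\]

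Next I would compute $\int g_0\,dx$ by iterating \eqref{gidef} down to the boundary value $g_{L+1}(x;E)=p_E(-x/t^2)$ and then integrating in the outermost variable, obtaining
\[
\int_{-\infty}^{\infty} g_0(x;E)\,dx = t^{(2-s)(L+1)}\int_{\R^{L+2}}\prod_{i=0}^{L}|x_i|^{s-2}\;\Big(\prod_{i=0}^{L}\rho_E\big({-x_{i+1}}-t^2/x_i\big)\Big)\,p_E({-x_{L+1}}/t^2)\;dx_0\cdots dx_{L+1}.
\]
Applying the linear substitution $x_i=-t^2 r_i$ for every $i\in\unn{0}{L+1}$ turns $|x_i|^{s-2}$ into $(t^2)^{s-2}|r_i|^{s-2}$, the argument $-x_{i+1}-t^2/x_i$ into $r_i^{-1}+t^2 r_{i+1}$, and $-x_{L+1}/t^2$ into $r_{L+1}$, while each $dx_i$ becomes $t^2\,dr_i$; collecting the powers of $t$ (namely $(2-s)(L+1)+2(s-2)(L+1)+2(L+2)=(L+1)s+2$) shows that $\int g_0\,dx$ equals $t^{(L+1)s+2}$ times the same integral that appears in the formula for $\Upsilon_L(s;E)$ derived above. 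Since $t^{(L+1)s+2}=t^{s+2}\cdot t^{sL}$, this yields $\int_{-\infty}^{\infty} g_0(x;E)\,dx = t^{s+2}\,\Upsilon_L(s;E)$, which is \eqref{lsbig}.

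I expect the only delicate points to be bookkeeping: keeping the exponents of $t$ straight through the two substitutions, correctly unfolding \eqref{gidef} into an iterated integral (justified by Tonelli, since the integrand is nonnegative), and verifying that $(w_0,\dots,w_L)\mapsto(r_0,\dots,r_L)$ is a bijection with the stated triangular Jacobian away from a Lebesgue-null set. None of these presents a conceptual difficulty.
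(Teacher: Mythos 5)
Your proposal is correct and follows essentially the same route as the paper: unfold the recursion \eqref{gidef} into an iterated integral and relate it to \eqref{productN2} by the substitution $y_i=-t^2 r_i$, $w_i=r_i^{-1}+t^2 r_{i+1}$, with the triangular Jacobian and the power-of-$t$ bookkeeping you carried out. The only difference is organizational—you transform both sides to a common intermediate integral rather than passing directly from $\int g_0$ to $\Upsilon_L$—which changes nothing of substance.
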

\begin{proof}
Iterating \eqref{gidef}, we obtain that the right-hand side of \eqref{lsbig} equals
\be\label{e:bapstquestion1}
\frac{1}{t^{2+s}}\int_{\mathbb{R}^{L+2}}
p_E(-y_{L+1}/t^2)
\left|  \prod_{i=0}^{L}
\frac{t}{y_i} \right|^{2-s} 
  \prod_{i=0}^{L}  \rho_E\left( - y_{i+1} - \frac{t^2}{y_i} \right)\, dy_{i+1} \cdot dy_0 .
\ee
We now make the change of variables $r_{L+1} = - y_{L+1}/t^2$ and 
$w_i  = - y_{i+1} - \frac{t^2}{y_i}$ for $i\in\unn{0}{L}$.
We have 
\be
dy_{L+1} = -  t^2 \, d r_{L+1}, \qquad d w_i = \frac{t^2}{y_i^2} \, d y_i.
\ee 
Inserting these identities into \eqref{e:bapstquestion1}, we find that it equals 
\be\label{e:bapstquestion2}
t^{-s} \int_{\mathbb{R}^{L+2}}
p_E(r_{L+1})
\left|  \prod_{i=0}^{L}
\frac{t}{y_i} \right|^{-s} 
  \prod_{i=0}^{L}  \rho_E(w_i) \, dw_{i}  \cdot dr_{L+1} .
\ee
Recalling the definition of $r_i$ from \eqref{ui}, we find that $y_i = -t^2 r_i $ for all $i \in \unn {0}{L+1}$ by starting with our choice  $r_{L+1} = - y_{L+1}/t^2$  of the change of variables, and inducting backwards from $i=L+1$ to $i=0$ using the definition of $w_i$.
Then \eqref{e:bapstquestion2} becomes 
\be
t^{-s} \int_{\mathbb{R}^{L+2}}
\left|   \prod_{i=0}^L
  \frac{t}{w_i - t^2 r_{i+1}} \right|^s \prod_{i=0}^{L}  \rho_E\left( w_i  \right)\, dw_{i}  \cdot 
 p_E(r_{L+1}) \, d r_{L+1} .
\ee
This expression is \eqref{productN2}, which proves the lemma.
\end{proof}
\begin{definition}
Recall the definition of the Banach space $\mathcal X$ from \Cref{d:X}. 
We  define the linear operator $S = S_{s,E}$ on $\mathcal X$ by 
\bex
S(f) =  \frac{1}{t^{2+s}}\int_{-\infty}^\infty   f(y) \, dy.
\eex
\end{definition}
\begin{remark}\label{r:SFinite}
By the definition of the norm on $\mathcal X$, we have
\bex
\big|f(y) \big|
\le \| f \| \cdot \big(1 + |y|^{2-s} \big)^{-1}.
\eex
It follows that
\begin{align}\label{Sfupper}
\big| S(f) \big| 
&\le  \frac{ \| f \| }{t^{2+s}}
\int_{-\infty}^\infty \frac{dy}{1 + |y|^{2-s}}  =  \frac{C  \| f \|}{t^{2+s}},
\end{align}
so $S(f)$ is finite for any $f \in \mathcal X$. 
\end{remark}
The following corollary is an immediate consequence of \Cref{l:Erecursion} and the definition of $F$ in \eqref{e:Fdef}. We therefore omit its proof. 
\begin{corollary}\label{c:transfer}
Fix an integer $L \ge 1$ and real number and $E \in \bbR$. We have 
\begin{equation}
\Upsilon_L(s;E) = S \big( F^{L+1} (g_{L+1}) \big).
\end{equation}
\end{corollary}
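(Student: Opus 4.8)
The plan is to recognize the recursion \eqref{gidef} defining the functions $g_i(\cdot\,;E)$ as precisely the action of the transfer operator $F = F_{s,E}$, and then to invoke \Cref{l:Erecursion}.

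First I would compare the recursion \eqref{gidef} with the definition \eqref{e:Fdef} of $F$: for every $i \in \unn{0}{L}$ the two expressions coincide, so $g_i(\cdot\,;E) = F\big(g_{i+1}(\cdot\,;E)\big)$. Iterating this identity from $i = L$ down to $i = 0$ yields $g_0(\cdot\,;E) = F^{L+1}\big(g_{L+1}(\cdot\,;E)\big)$.

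Next I would check that this chain of identities is meaningful at the level of the Banach space $\mathcal X$. Since $g_{L+1}(x;E) = p_E(-x/t^2)$, the upper bound \eqref{e:pEquadratic} of \Cref{l:rhoebound} gives $g_{L+1}(x;E) \le C(1 + x^2/t^4)^{-1}$, which decays like $|x|^{-2}$ at infinity; as $w(x) = 1 + |x|^{2-s}$ with $2 - s < 2$, this forces $\| g_{L+1} w \|_\infty < \infty$, i.e. $g_{L+1} \in \mathcal X$. Because $F \colon \mathcal X \to \mathcal X$ is a bounded operator by \Cref{c:KR}, each iterate $g_i = F^{L+1-i}(g_{L+1})$ lies in $\mathcal X$ as well, so $S(g_0)$ is well-defined and finite by \Cref{r:SFinite}.

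Finally, \Cref{l:Erecursion} states that $\Upsilon_L(s;E) = \frac{1}{t^{2+s}}\int_{-\infty}^\infty g_0(x;E)\,dx$, and by the definition of $S$ the right-hand side is exactly $S(g_0) = S\big(F^{L+1}(g_{L+1})\big)$, which is the claim. There is no genuine obstacle: the corollary is a direct unwinding of definitions once \Cref{l:Erecursion} is available. The only point meriting a line of care is the membership $g_{L+1} \in \mathcal X$ (and hence of every intermediate $g_i$), which is what guarantees that each application of $F$ and of $S$ along the chain takes place within the appropriate domain.
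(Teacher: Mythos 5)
Your proposal is correct and follows essentially the same route as the paper, which states the corollary as an immediate consequence of \Cref{l:Erecursion} and the definition \eqref{e:Fdef} of $F$ (identifying the recursion \eqref{gidef} with iterated application of $F$) and omits the proof. The additional verification that $g_{L+1} \in \mathcal X$ via \eqref{e:pEquadratic}, so that each application of $F$ and of $S$ is well-defined, is a reasonable extra touch but not a departure from the paper's argument.
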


\section{Application of the Krein--Rutman Theorem}\label{s:KRapp}

The goal of this section is to establish \Cref{c:KR}, which states that the operator $F$ (see \eqref{e:Fdef}) has a positive leading eigenvalue with an associated positive eigenfunction. We show this by applying the Krein--Rutman theorem, after several preliminary estimates to establish that the necessary hypotheses hold. In \Cref{s:bounded}, we show that $F$ is bounded. In \Cref{s:Fasymptotics}, we provide asymptotic expansions for functions in the image of $F$ for large and small $x \in \R$. \Cref{s:compact} contains the proof of \Cref{l:Fcompact}, which shows that $F^2$ is compact. (The reason for working with $F^2$ will be explained below.) 
In \Cref{s:krconclusion}, we apply the Krein--Rutman theorem, with the previous estimates of this section as inputs, to prove \Cref{c:KR}. 

In this section, we let $t \in (0,1)$ be a free parameter (instead of fixing $t$ to the value given in \eqref{e:tKdef}).

\subsection{Boundedness}\label{s:bounded}
We recall the definition of the Banach space $\mathcal X$ from \Cref{d:X}. We first show that $F$ is bounded as an operator from $\mathcal X$ to $\mathcal X$. We recall that the weight function $
w(x) = 1 + |x|^{2 - s }$ was defined in \Cref{d:X}. 

\begin{lemma}\label{l:Fbounded}
For every $s\in (0,1)$, $t>0$, $K>1$, and $E\in \R$, there exists a constant $C(s,t,E, K) > 0$ such that for all $u\in\mathcal{X}$,  
\be
\| F u \| \le C \| u \|.
\ee 
\end{lemma}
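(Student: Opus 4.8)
The plan is to estimate $w(x) \cdot |(Fu)(x)|$ pointwise by $C\|u\|$ uniformly in $x$, which is exactly the statement $\|Fu\| \le C\|u\|$. Fix $u \in \mathcal X$, so that $|u(y)| \le \|u\| \cdot w(y)^{-1} = \|u\| (1+|y|^{2-s})^{-1}$ for all $y$. From the definition \eqref{e:Fdef},
\be
\big| (Fu)(x) \big| \le \frac{t^{2-s}}{|x|^{2-s}} \|u\| \int_{-\infty}^\infty \frac{\rho_E\!\left(-y - t^2/x\right)}{1 + |y|^{2-s}} \, dy.
\ee
First I would invoke \Cref{l:rhoebound}, specifically the upper bound \eqref{e:rhoequadratic}, which gives $\rho_E(w) \le C(E,K)\,(1 + |w+E|^2)^{-1} \le C(E,K)\,(1+|w|^2)^{-1} \cdot (\text{const depending on }E)$; more cleanly, $\rho_E(w) \le C'(E,K)(1+w^2)^{-1}$ after absorbing the shift by $E$ into the constant (using that $(1+|w|^2) \le (1+|E|)^2(1+|w+E|^2)$, say). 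Thus the integral is bounded by
\be
C'(E,K) \int_{-\infty}^\infty \frac{dy}{\big(1 + (y + t^2/x)^2\big)\big(1 + |y|^{2-s}\big)}.
\ee

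Now I would apply \Cref{l:32} with $A = -t^2/x$: this yields $\int \le C \min(1, |t^2/x|^{-(2-s)}) = C\min(1, |x/t^2|^{2-s})$. Multiplying through by the prefactor $t^{2-s}/|x|^{2-s}$, we obtain
\be
\big| (Fu)(x) \big| \le C''(s,t,E,K)\,\|u\| \cdot \frac{t^{2-s}}{|x|^{2-s}} \min\!\left(1, \frac{|x|^{2-s}}{t^{2(2-s)}}\right).
\ee
When $|x| \le t^2$, the minimum equals $|x|^{2-s}/t^{2(2-s)}$, so the bound becomes $C'' \|u\|\, t^{2-s}/t^{2(2-s)} = C''\|u\|\, t^{-(2-s)}$, a constant (absorbed into the $C(s,t,E,K)$ of the statement). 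When $|x| \ge t^2$, the minimum equals $1$, so the bound is $C''\|u\|\, t^{2-s}|x|^{-(2-s)} \le C''\|u\|$ since $|x| \ge t^2$ forces $t^{2-s}|x|^{-(2-s)} \le 1$. In both regimes $|(Fu)(x)| \le C(s,t,E,K)\|u\|$, and moreover in the regime $|x|\ge t^2$ we even have the stronger decay $|(Fu)(x)| \le C\|u\|\,|x|^{-(2-s)}$.

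To finish, I need $w(x)|(Fu)(x)| = (1+|x|^{2-s})|(Fu)(x)| \le C\|u\|$. For $|x| \le t^2$ this is immediate since $w(x) \le 1 + t^{2(2-s)}$ is bounded and $|(Fu)(x)| \le C t^{-(2-s)}\|u\|$. For $|x| \ge t^2$, combine $w(x) \le 2|x|^{2-s}$ (valid once $|x|\ge 1$; for $t^2 \le |x| \le 1$ simply note $w(x) \le 2$ and $|(Fu)(x)|\le Ct^{-(2-s)}\|u\|$) with the decay estimate $|(Fu)(x)| \le C\|u\| |x|^{-(2-s)}$ to get $w(x)|(Fu)(x)| \le 2C\|u\|$. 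Taking the maximum of the finitely many constants produced gives the claimed $C(s,t,E,K)$. The only mild subtlety — and the place to be careful — is bookkeeping the $t$-dependence of the constants (the $t^{-(2-s)}$ blowup as $t \to 0$ is harmless here since $t$ is fixed, but it must be explicitly allowed in the statement, which it is), together with the shift-by-$E$ absorption in applying \Cref{l:rhoebound} and the matching of the argument $-t^2/x$ with the parameter $A$ in \Cref{l:32}. No step is genuinely hard; the two integral lemmas do all the analytic work.
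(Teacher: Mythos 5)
Your proposal is correct and follows essentially the same route as the paper: a pointwise bound via the weighted norm, the quadratic decay of $\rho_E$ from \Cref{l:rhoebound}, and \Cref{l:32} applied with $A=-t^2/x$. The only cosmetic difference is that the paper splits at $|x| \gtrless \delta \sim t^2$ from the start and uses the cruder bound $\|\rho_E\|_\infty \le \|\rho\|_\infty$ in the large-$|x|$ regime, whereas you apply \Cref{l:32} uniformly and only split regimes at the end for bookkeeping; both yield the same $C(s,t,E,K)$.
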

\begin{proof}
By the definition of the norm for $\mathcal X$, we have that for all $x\in \R$,
\[
\big |u(x)\big|
\le \| u\| \cdot \big(1 + |x|^{2-s} \big)^{-1}.
\]
Inserting this bound in the definition of $F$ gives
\begin{align}
\begin{split}
(F u)(x) \cdot w(x) &= 
\frac{t^{2-s} (1 + |x|^{2 - s }) }{|x|^{2-s}}
\int_{-\infty}^\infty
\rho_E \left( - y - \frac{t^2}{x} \right) u(y) \, dy\\
&\le 
\| u\| \cdot \frac{t^{2-s} (1 + |x|^{2 - s }) }{|x|^{2-s}}
\int_{-\infty}^\infty
\rho_E \left( - y - \frac{t^2}{x} \right) \frac{1}{1 + |y|^{2-s}} \, dy.
\end{split}
\end{align}
Set \be
\delta = \min\left( \frac{t^2}{9|E|}, \frac{t^2}{9} \right).
\ee 
We first consider the regime $|x| \ge \delta$. Then there exists $C(s,t,E,K)>0$ such that 
\begin{align}
\begin{split}
&\frac{t^{2-s} (1 + |x|^{2 - s }) }{|x|^{2-s}}
\int_{-\infty}^\infty
\rho_E \left( - y - \frac{t^2}{x} \right) \frac{1}{1 + |y|^{2-s}} \, dy\\ &
\le  C \int_{-\infty}^\infty
\rho_E \left( - y - \frac{t^2}{x} \right) \frac{1}{1 + |y|^{2-s}} \, dy\\ 
&\le C \int_{-\infty}^\infty
\frac{1}{1 + |y|^{2-s}} \, dy \le C.
\end{split}
\end{align}
The first inequality follows from our assumed lower bound for $x$, the second inequality follows from \eqref{e:rhoEupper} and the assumption that $\rho$ is $\L$-regular, and the last inequality follows since the relevant integral is finite. 

Next, we consider the regime $|x| \le \delta$.
We  have 
\begin{align}
\begin{split}
&\frac{t^{2-s} (1 + |x|^{2 - s }) }{|x|^{2-s}}
\int_{-\infty}^\infty
\rho_E \left( - y - \frac{t^2}{x} \right) \frac{1}{1 + |y|^{2-s}} \, dy\\
&\le C |x|^{-(2-s)} 
\int_{-\infty}^\infty
\rho_E \left( - y - \frac{t^2}{x} \right) \frac{1}{1 + |y|^{2-s}} \, dy\\
&\le  
C |x|^{-(2-s)} 
\int_{-\infty}^\infty\frac{dy}{(1 +  (E - y - t^2 x^{-1} )^2 ) (1 + |y|^{2-s})} \\
&\le C |x|^{-(2-s)} |E - t^2 x^{-1}|^{-(2-s)}\le C.
\end{split}
\end{align}
The first inequality follows from the assumed upper bound on $x$, and  the second inequality uses  \eqref{e:rhoequadratic}. 
The third inequality follow  follows from \Cref{l:32}. 
In the fourth inequality, we used 
\be
\left| \frac{t^2}{x} \right| \ge \max(9, 9|E|)
\ee 
to ensure that
\be\label{e:t2lower}
|E - t^2 x^{-1}| \ge \frac{t^2}{2|x|}
\ee 
by considering the cases $|E| \le 1$ and $|E| > 1$ separately. 
\end{proof}

\subsection{Asymptotics}\label{s:Fasymptotics} 
We now prove an asymptotic expansions for $(Fu)(x)$ as $x\rightarrow \infty$ and $x \rightarrow 0$.

We begin with the expansion as $x \rightarrow \infty$. We define $D\colon \mathcal X \rightarrow \R$ by
\be\label{e:ds}
D_{s}(u) = \int_{-\infty}^\infty
\rho_E \left( r  \right) u(-r ) \, dr.
\ee
Note that 
\begin{equation}\label{e:Dbounded}
\big| D_s(u) \big| \le C \| u\|
\end{equation}
 for some constant $C(E,K, \L) >0$  (by, e.g., the reasoning in \Cref{r:SFinite} and the upper bound in \eqref{e:rhoequadratic}). 
\begin{lemma}\label{l:powerseries}
For every $s  \in (0,1)$,  $t>0$, $K>1$, $E\in \R$,  $u\in\mathcal{X}$, and $x\in \R \setminus \{0 \}$, there exists $C(s)>0$ such that 
\be
\left| (Fu)(x)  - \frac{t^{2-s} D_{s}(u)}{|x|^{2-s}} \right| \le \frac{C t^{4-s} \L \| u\|}{|x|^{3-s}}.
\ee 
\end{lemma}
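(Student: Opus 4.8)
The plan is to Taylor-expand $\rho_E$ around the point $-y$, exploiting the fact that the shift $-t^2/x$ in the argument of $\rho_E$ is small when $|x|$ is large. Writing the definition of $F$ from \eqref{e:Fdef} as
\be\label{e:psstart}
(Fu)(x) = \frac{t^{2-s}}{|x|^{2-s}} \int_{-\infty}^\infty \rho_E\!\left(-y - \frac{t^2}{x}\right) u(y)\, dy,
\ee
and recalling that $D_s(u) = \int_{-\infty}^\infty \rho_E(r)\, u(-r)\, dr = \int_{-\infty}^\infty \rho_E(-y)\, u(y)\, dy$ after the substitution $r = -y$, the quantity to bound is
\be\label{e:psdiff}
\left| (Fu)(x) - \frac{t^{2-s} D_s(u)}{|x|^{2-s}} \right| = \frac{t^{2-s}}{|x|^{2-s}} \left| \int_{-\infty}^\infty \left( \rho_E\!\left(-y - \frac{t^2}{x}\right) - \rho_E(-y) \right) u(y)\, dy \right|.
\ee
Since the first part of \Cref{l:bapstcollection} gives that $\rho_E$ is Lipschitz with constant $\L$, we have $|\rho_E(-y - t^2/x) - \rho_E(-y)| \le \L\, t^2/|x|$ pointwise. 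However, a crude application of this together with $|u(y)| \le \|u\|(1+|y|^{2-s})^{-1}$ would give $\int |u(y)|\,dy \le C\|u\|$ but only an error of order $t^{2-s} \cdot \L t^2 |x|^{-1} \cdot |x|^{-(2-s)} \cdot C\|u\| = C\L t^{4-s}\|u\| |x|^{-(3-s)}$, which is \emph{exactly} the claimed bound — so in fact the Lipschitz estimate alone suffices, without needing $\rho_E'$ or a genuine second-order Taylor expansion.

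Carrying this out precisely: by the Lipschitz bound on $\rho_E$ from \Cref{l:bapstcollection}(1),
\be\label{e:pslip}
\left| \rho_E\!\left(-y - \frac{t^2}{x}\right) - \rho_E(-y) \right| \le \L \left| \frac{t^2}{x} \right| = \frac{\L t^2}{|x|}
\ee
for every $y \in \R$. Inserting \eqref{e:pslip} into \eqref{e:psdiff} and using the norm bound $|u(y)| \le \|u\|\,(1 + |y|^{2-s})^{-1}$ gives
\be\label{e:psfinal}
\left| (Fu)(x) - \frac{t^{2-s} D_s(u)}{|x|^{2-s}} \right| \le \frac{t^{2-s}}{|x|^{2-s}} \cdot \frac{\L t^2}{|x|} \cdot \|u\| \int_{-\infty}^\infty \frac{dy}{1 + |y|^{2-s}} = \frac{C \L t^{4-s} \|u\|}{|x|^{3-s}},
\ee
where $C = \int_{-\infty}^\infty (1 + |y|^{2-s})^{-1}\, dy$ is finite (and depends only on $s$, staying bounded for $s \in (0,1)$ bounded away from $1$; since the statement allows the constant $C$ to depend on $s$, this is acceptable). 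This is the desired inequality, completing the proof.

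There is essentially no obstacle here — the only point requiring a modicum of care is confirming that $D_s(u)$ is well-defined and finite, which is \eqref{e:Dbounded}, already recorded in the excerpt, and that the integral $\int (1+|y|^{2-s})^{-1}\,dy$ converges, which holds precisely because $2 - s > 1$ for $s \in (0,1)$. One could alternatively phrase the argument via the fundamental theorem of calculus, writing $\rho_E(-y - t^2/x) - \rho_E(-y) = \int_0^{-t^2/x} \rho_E'(-y + \tau)\, d\tau$ and using the bound $|\rho_E'| \le \L$ (which follows from Lipschitzness), but this is merely a cosmetic variant and the Lipschitz estimate \eqref{e:pslip} is the cleanest route. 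I would present the short computation \eqref{e:psstart}–\eqref{e:psfinal} essentially verbatim as the proof.
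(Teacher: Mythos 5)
Your proposal is correct and is essentially the paper's own argument: both reduce the difference to the Lipschitz continuity of $\rho_E$ (the first part of \Cref{l:bapstcollection}) applied inside the integral, combined with the bound $|u(y)| \le \|u\|(1+|y|^{2-s})^{-1}$ and the convergence of $\int (1+|y|^{2-s})^{-1}\,dy$ for $s\in(0,1)$. The paper merely phrases the same estimate after a change of variables that moves the shift $t^2/x$ onto the argument of $\rho_E$, so the two write-ups differ only cosmetically.
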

\begin{proof}
We find
\begin{align}
\begin{split}
&\left|\int_{-\infty}^\infty
\rho_E \left( r  \right) u(-r ) \, dr
- \int_{-\infty}^\infty
\rho_E \left( r  \right) u(-r - t^2 x^{-1}) \, dr\right| \\
&=\left| \int_{-\infty}^\infty
\left( \rho_E \left( r  \right) 
- 
\rho_E ( r  - t^2 x^{-1} ) \right) u(-r ) \, dr\right|\\
&\le 
\int_{-\infty}^\infty
\big| \big( \rho_E \left( r  \right) 
- 
\rho_E ( r  - t^2 x^{-1} )  \big) u(-r )\big| \, dr\\
&\le \frac{t^2\L}{|x|} \int_{-\infty}^\infty \big| u(-r )\big| \, dr\\
&\le \frac{t^2\L}{|x|} \int_{-\infty}^\infty \| u \| \big( 1 + |r|^{2-s} \big)^{-1} \, dr  \le \frac{C t^2\L \| u\|}{|x|},
\end{split}
\end{align}
where $C=C(s)>0$ is a constant depending on $s$. 
The equality follows by a change of variables, the third line follows from the triangle inequality, the fourth line follows from the first part of \Cref{l:bapstcollection}, and the final equality follows from  the definition of the norm $\| \cdot \|$. 
This implies the conclusion after recalling the definition of $F$.
\end{proof}

We also have the following asymptotic expansion around $x=0$ for functions in the image of $F^2$. We consider $F^2$ since $F^2u = F(Fu)$, enabling the use of the previous lemma to control $Fu$ for large $x$ in the integral defining $F$. 

\begin{lemma}\label{l:powerseries2}
For every $s \in (0,1)$, $t>0$, $K>1$, and $E\in \R$, there exists a   constant $C(s, t,E, K)  > 0$ such that for all $u\in\mathcal{X}$ and $x\in \R$, 
\be
\left| (F^2u)(x) -  D_{s}(Fu) \right| \le C \| u \|  |x|^{s/4}.
\ee
\end{lemma}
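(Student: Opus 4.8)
The plan is to estimate $(F^2 u)(x) = (F(Fu))(x)$ directly from the definition of $F$ in \eqref{e:Fdef}, exploiting the fact that the inner function $Fu$ decays like $|y|^{-(2-s)}$ at infinity thanks to \Cref{l:powerseries}. Write
\bex
(F^2 u)(x) = \frac{t^{2-s}}{|x|^{2-s}} \int_{-\infty}^\infty \rho_E\!\left( -y - \frac{t^2}{x} \right) (Fu)(y) \, dy.
\eex
If the prefactor $t^{2-s}/|x|^{2-s}$ were harmless we could not hope for a bounded limit as $x \to 0$, so the point is that the behavior of $Fu$ for large $y$ must compensate. The strategy is: first substitute $w(x) := t^2/x$ and split the integral into the region where $|y|$ is comparable to $|w(x)|$ (which carries the main term) and the complementary region. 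On the main region, use \Cref{l:powerseries} to replace $(Fu)(y)$ by $t^{2-s} D_s(u) / |y|^{2-s}$ up to an error of order $t^{4-s}\L\|u\|/|y|^{3-s}$; then a change of variables $y = -w(x) - r$ together with $|w(x)|^{2-s} = t^{2(2-s)}/|x|^{2-s}$ will produce exactly $D_s(Fu)$ as the leading term (using the definition \eqref{e:ds}, since $Fu$ plays the role of the test function $u$ there) after recognizing that $D_s(Fu) = \int \rho_E(r)(Fu)(-r)\,dr$.

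More concretely: after the change of variables the main term becomes
\bex
\frac{t^{2-s}}{|x|^{2-s}} \int \rho_E(r)\, (Fu)\!\left( -\frac{t^2}{x} - r \right) dr,
\eex
and we want to compare this with $D_s(Fu) = \int \rho_E(r)(Fu)(-r)\,dr$. The two differ because of the shift by $t^2/x$ inside $Fu$ and because of the extra $t^{2-s}/|x|^{2-s}$ factor out front. Here is where \Cref{l:powerseries} does the work: for $r$ in the effective support of $\rho_E$ (which, by the quadratic upper bound \eqref{e:rhoequadratic}, is concentrated near $-E$ with polynomial tails), the argument $-t^2/x - r$ is large when $|x|$ is small, so $(Fu)(-t^2/x-r) = t^{2-s}D_s(u)|{-t^2/x-r}|^{-(2-s)} + O(t^{4-s}\L\|u\| \,|{-t^2/x-r}|^{-(3-s)})$. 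The factor $t^{2-s}\cdot t^{2-s}/|x|^{2-s} = t^{2(2-s)}/|x|^{2-s} = |t^2/x|^{2-s}$ then cancels against $|{-t^2/x-r}|^{-(2-s)}$ up to a relative error of order $|r x / t^2|$ which is $O(|x|)$; the $O(|x|)$ contributions, after integrating against $\rho_E(r)(1+|r|)^{\text{something}}$, are controlled using \eqref{e:rhoequadratic} and \eqref{e:Dbounded}. The residual region $|y| \lesssim 1$ in the original integral (where \Cref{l:powerseries} is not useful) is handled crudely: there $|(Fu)(y)| \le \|Fu\| \le C\|u\|$ by \Cref{l:Fbounded}, $\rho_E$ is bounded by \eqref{e:rhoEupper}, and the prefactor $t^{2-s}/|x|^{2-s}$ times the Lebesgue measure of a bounded $y$-interval is $O(|x|^{-(2-s)})$ — which looks bad, but one instead uses that for small $|x|$ the shift $t^2/x$ pushes the argument of $\rho_E$ far away, so $\rho_E(-y-t^2/x) \le C(1+|t^2/x|^2)^{-1} \le C x^2/t^4$ on that region, and $x^2 \cdot |x|^{-(2-s)} = |x|^{s} \le |x|^{s/4}$ for $|x| \le 1$. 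For $|x| > 1$ the bound is even easier since everything is uniformly bounded there. Tracking the exponents, the worst error is $O(|x|^{s/4})$, which is the claimed bound.

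The main obstacle I anticipate is bookkeeping the competition between the three small/large parameters — the outer prefactor $|x|^{-(2-s)}$, the shift $t^2/x$ inside the densities, and the polynomial tails of both $\rho_E$ and $Fu$ — so that the errors genuinely collapse to $O(|x|^{s/4})$ rather than something worse like $O(|x|^{s/2})$ or a bound that blows up. In particular one must be careful that the error term $O(t^{4-s}\L\|u\|/|y|^{3-s})$ from \Cref{l:powerseries} integrates to something $o(1)$ after multiplication by the prefactor, and that the region where $\rho_E$ and the tail estimate for $Fu$ are both simultaneously usable actually covers the bulk of the mass. Since the constant $C$ is allowed to depend on $s,t,E,K$, there is considerable slack, so the estimates can be carried out somewhat bluntly; the exponent $s/4$ (rather than, say, $s$) in the statement already signals that the authors are not trying to be sharp, which makes the plan above robust.
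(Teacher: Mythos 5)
Your plan follows essentially the same route as the paper's proof: rewrite $(F^2u)(x)$ as $\frac{t^{2-s}}{|x|^{2-s}}\int \rho_E(r)\,(Fu)(-r-t^2/x)\,dr$, isolate a main region of moderate $|r|$, expand the inner $(Fu)$ at large argument via \Cref{l:powerseries}, cancel the outer prefactor against $|r+t^2/x|^{-(2-s)}$ by a Taylor expansion, and dispose of the residual region (where the argument of $\rho_E$ has size of order $t^2/|x|$) using the quadratic decay \eqref{e:rhoequadratic} together with $\|Fu\|\le C\|u\|$, with the trivial bound for $|x|\ge 1$. The paper does exactly this with the cutoff $|r|\le |x|^{-(1-\kappa)}$, $\kappa=1/2+s/8$; note that your claimed relative error $O(|x|)$ from the shift is really $O(|r|\,|x|/t^2)$, so you need either such a cutoff or the decay of $\rho_E$ in $r$ to integrate it, but since the constant may depend on $s,t,E,K$ this is routine bookkeeping of the kind you already anticipate.

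The one substantive issue is the identification of the constant. The expansion you describe produces $D_s(u)$: by \Cref{l:powerseries}, $(Fu)(-r-t^2/x)\approx t^{2-s}D_s(u)\,|r+t^2/x|^{-(2-s)}$, and after the cancellation the main term is $D_s(u)\int\rho_E(r)\,dr=D_s(u)$, not $D_s(Fu)=\int\rho_E(r)(Fu)(-r)\,dr$; these two quantities differ in general, so your parenthetical assertion that the leading term is exactly $D_s(Fu)$ does not follow from the computation you outline. You are in good company: the paper's own displayed estimates \eqref{e:calc1}--\eqref{e:J2bound} likewise yield $D_s(u)$ as the limiting value at $x=0$, while the conclusion \eqref{e:thebound} and the lemma statement are written with $D_s(Fu)$, which appears to be a slip; in the downstream applications (equicontinuity and compactness of $F^2$) only the existence of some $x$-independent constant within $C\|u\|\,|x|^{s/4}$ of $(F^2u)(x)$ near $x=0$ is actually used. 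When you write this up, either prove the bound with $D_s(u)$ or justify the identification explicitly rather than asserting it.
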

\begin{proof}
Set $\kappa = 1/2 + s/8$. 
We assume that $x>0$; the case $x<0$ is similar. 
Set
\begin{equation*}
J_1 = \left(-\infty, -\frac{1 }{x^{1-\kappa}} \right),\quad 
J_2 = \left(-\frac{1}{x^{1-\kappa}} , \frac{1}{x^{1-\kappa}} \right),
\quad
J_3 =\left(\frac{1}{x^{1-\kappa}}, \infty \right).
\end{equation*}
We begin with the integral $J_2$, which is the leading-order contribution. Set $v = Fu$. For all $x>0$ such that $t^2 > 2 x^\kappa$ and $r\in J_2$,
\be\label{e:t2lower2}
|r + t^2 x^{-1}| \ge \frac{t^2}{2x}.
\ee 
Using \Cref{l:powerseries}, we have for $D_s = D_s(u)$ that 
\begin{align}\label{e:calc1}
\begin{split}
&\left|\int_{J_2}  \rho_E(r) \big( v( - r - t^2 x^{-1}) -  t^{2-s} D_{s} |-r - t^2 x^{-1}|^{-(2-s)} \big)\, dr\right| \\
&\le C \L t^{4-s} \| u \| \int_{J_2} \rho_E(r) |r + t^2 x^{-1}|^{-(3-s)} \, dr \\
&\le C \L t^{s-2} |x|^{3-s} \| u \| \int_{-\infty}^\infty \rho_E(r)  \, dr \\
&\le C |x|^{3-s} \| u \|.
\end{split}
\end{align}
The second inequality follows from \eqref{e:t2lower2}, and the third inequality follows because $\rho_E$ is a probability density. We also absorbed $\L$ and $t^{s-2}$ into the constant in the last inequality, noting that it now depends on $t$, $K$,  and $\L$. 

Note that for any $r \in J_2$ and $y \in \R$ such that $|y| \ge 2 x^{-1 + \kappa}$, we have by a Taylor expansion (using  $|r/y| \le 1/2$) that
\begin{align}
\begin{split}
\left|\frac{1}{|y + r  |^{(2-s)} }  - | y|^{-2 + s} \right|  &= \frac{1}{ | y|^{2-s} }\left|\frac{1}{ \big|1   + (r/y)\big |^{(2-s)} }  -1  \right|\\
&\le  \frac{1}{ | y|^{2-s} }  \left( \frac{C |r| }{ |y| } \right) \le C | r| | y|^{-3 + s}.
\end{split}
\end{align}
Applying this bound, we find that there exists $c(s,K)>0$ and $C(s,t,K)>0$ such that for all $x \in (0,c) $ and $r \in J_2$,
\be
\left|\frac{1}{|-r - t^2 x^{-1}|^{(2-s)} } - t^{-2(2-s)} |x|^{2-s} \right|
\le C|x|^{2-s + \kappa}.
\ee 
Then arguing as in \eqref{e:calc1}, and using that the length of $J_2$ is $2 |x|^{-1 + \kappa}$, we find 
\begin{align}\label{e:calc2feb}
\begin{split}
\left|\int_{J_2}  \rho_E(r)  D_{s} \big( t^{-(2-s)} |x|^{2-s} -   t^{2-s}|-r - t^2 x^{-1}|^{-(2-s)} \big)\, dr\right| 
&\le C|x|^{-1 + \kappa} \cdot  |x|^{2-s + \kappa} \| u \| \\ &= C  |x|^{1-s + 2\kappa} \| u \|,
\end{split}
\end{align}
where we used \eqref{e:Dbounded} to control $D_s$. 
Further, by \eqref{e:rhoequadratic}, 
\be
\left| 1 -  \int_{J_2} \rho_E(r) \,dr \right| = 
\left|  \int_{-\infty}^{\infty} \rho_E(r)\,dr -  \int_{J_2} \rho_E(r) \,dr  \right| \le C x^{1-\kappa}.
\ee 
Using \eqref{e:calc1}, \eqref{e:calc2feb}, \Cref{l:Fbounded}, the previous line, and our choice of $\kappa$, we find 
\be\label{e:J2bound}
\left|
\int_{J_2} \rho_E(r)  v( - r - t^2 x^{-1})
 - t^{s-2} D_{s} |x|^{2-s} \right| \le C \| u \|   \left(|x|^{1- s + 2\kappa} +  |x|^{3-s-\kappa}  \right).
\ee

We now consider the integrals over $J_1$ and $J_3$. Adjusting $c$ downward if necessary, our assumption that $x<c$ implies that 
\begin{equation}\label{e:xbounds}
\frac{ t^2}{2|x|} \le | t^2 x^{-1}  +E |,
\end{equation}
and
\begin{equation}\label{e:rbounds!}
| r + E | \ge \frac{ |r|}{2}
\end{equation}
for all $r \in J_1$.
We further define 
\[
\mathcal B = \big \{ r \in \R :   t^2 x^{-1}   \ge 2 |r|  \big\}.
\]
Then on $J_1$, we  have that 
that
\begin{align}\label{e:J1bound}
\begin{split}
\int_{J_1}  \rho_E(r)  v( - r - t^2 x^{-1}) \,dr 
&= 
\int_{J_1 \cap \mathcal B}  \rho_E(r)  v( - r - t^2 x^{-1}) \,dr \\
& \quad +\int_{J_1 \cap \mathcal B^c}  \rho_E(r)  v( - r - t^2 x^{-1}) \,dr   \\
&\le 
C  \int_{J_1 \cap \mathcal B}  \frac{1}{ |E +r |^2} \cdot  \frac{\| u\|}{ | - r - t^2 x^{-1}|^{2-s} } \,dr \\
& \quad +  C \int_{J_1 \cap \mathcal B^c}  \frac{ 1}{1 + | E + r |^2} \cdot  v( - r - t^2 x^{-1}) \,dr   \\
&\le 
C |x|^{2-s}  \| u \|\int_{J_1 }  \frac{dr }{ |E +r |^2}\\
& \quad + C x^2  \int_{J_1 \cap \mathcal B^c}   v( - r - t^2 x^{-1}) \,dr  \\
&\le C (  |x|^{3 - s-  \kappa}  + |x|^2 ) \| u \|.
\end{split}
\end{align}
The first inequality follows from using \eqref{e:rhoequadratic} in both integrals, the fact that for all $x\in \R$, 
\[
\big|f(x)\big| \le \frac{\|f \|}{1 + |x|^{2-s}},
\]
by the definition of the norm on $\mathcal X$ in \Cref{d:X}, and \Cref{l:Fbounded} to show that \[\| v \| \le C \|u \|.\]
 The second inequality follows from using the definition of $\mathcal B$ to show that \[|-r - t^2x^{-1}|^{2-s} \ge c |t^2 x^{-1}|^{2-s}\] in the first integral, and  \eqref{e:xbounds} in the second integral. The third inequality follows from using \eqref{e:rbounds!} in the first integral, then integrating directly, and bounding the second integral by $C \| v \|$, then using \Cref{l:Fbounded}.

Similarly, we have on $J_3$ for $x<c$  that,
\begin{align}\label{e:J3bound}
&
\int_{J_3}  \rho_E(r)  v( - r - t^2 x^{-1})
\le 
C (  |x|^{3 - s -  \kappa}  + |x|^2 ) \| u \|.
\end{align}
Recalling the definition of $F$, combining our estimates on $J_1$, $J_2$, and $J_3$ (in \eqref{e:J1bound}, \eqref{e:J2bound}, and \eqref{e:J3bound}), 
 we conclude that for all $x$ such that $|x|<c$, 
\begin{align}
\big| (F v)(x) -
  D_{s}(Fu) \big|
\le C \| u \|  |x|^{s/4}. 
\label{e:thebound}
\end{align}
This implies that \eqref{e:thebound} holds for all $x \in \R$ after increasing $C$ (which is permissible by \eqref{e:Dbounded}, \Cref{l:Fbounded}, and the elementary bound $\| u \| \le \| u \|_\infty$). This is the desired conclusion.
\end{proof}

\subsection{Lower Bound} 
By the definition of the norm on $\mathcal X$, every non-negative function $u \in \mathcal X$ satisfies 
\[
u(x) \le \frac{ \| u \|_\infty}{1 + |x|^{2-s}}.
\]
The following lemma shows that a similar lower bound is obtained after applying $F$ twice. As with the proof of \Cref{l:powerseries2}, the choice of $F^2$ (instead of $F$) allows us to use \Cref{l:powerseries} in the proof. Since this positivity property is needed for applying the Krein--Rutman theorem to obtain the existence of a leading eigenvalue, we mainly consider $F^2$ in what follows.

We define the subset $\mathcal Y \subset\mathcal X$ by 
\be\label{e:mathcalY}
\mathcal Y = \left\{ u \in \mathcal X  : u \ge 0, \int_{-\infty}^\infty u(x) \, dx > 0 \right\}.
\ee 
\begin{lemma}\label{l:Flowerbound}
For every $s \in (0,1)$,  $t>0$, $K>1$, $E\in \R$, and $u\in \mathcal{Y}$, there exists a constant $c(s, t, u, E, K) > 0$ such that for all  $x\in \R$
\be
 F^2 u (x) \ge  \frac {c}{1 + |x|^{2-s}}.
\ee 
\end{lemma}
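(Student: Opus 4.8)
The plan is to set $v := Fu$, so that $F^2 u = F(Fu) = Fv$, and to bound $(Fv)(x)$ separately in two regimes: $|x|$ bounded away from zero, where a direct estimate from \eqref{e:Fdef} suffices, and $|x|$ near zero, where I invoke the expansion of \Cref{l:powerseries2}. I would first record the positivity facts needed. Recall that $u \in \mathcal Y$ (see \eqref{e:mathcalY}) means $u \ge 0$ with $\int u > 0$. By the lower bound in \eqref{e:rhoequadratic}, $\rho_E(w) \ge c(E,K)\,(1+|w+E|^2)^{-1} > 0$ for every $w \in \R$, so the formula \eqref{e:Fdef} gives $v(x) = (Fu)(x) > 0$ for every $x \ne 0$. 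Moreover $v \in \mathcal X$ by \Cref{l:Fbounded}, hence $v$ is not a.e.\ zero and lies in $L^1$; therefore $\beta := \int_{-M}^{M} v(y)\,dy > 0$ for some $M>0$, and, with $D_s$ as in \eqref{e:ds}, $D_s(v) = \int_{-\infty}^{\infty}\rho_E(r)\,v(-r)\,dr > 0$ since the integrand is nonnegative and strictly positive for $r \ne 0$.

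For the regime $|x| \ge \delta_0$ (the value $\delta_0 > 0$ is pinned down in the next step): for such $x$ and $y \in [-M,M]$ one has $\bigl|-y - t^2 x^{-1} + E\bigr| \le M + t^2\delta_0^{-1} + |E| =: A$, so \eqref{e:rhoequadratic} yields $\rho_E(-y - t^2 x^{-1}) \ge c_A := c(E,K)\,(1+A^2)^{-1}$. Restricting the integral in \eqref{e:Fdef} applied to $v$ to the interval $[-M,M]$ gives
\[
(F^2u)(x) \;=\; \frac{t^{2-s}}{|x|^{2-s}}\int_{-\infty}^{\infty}\rho_E\!\left(-y-\frac{t^2}{x}\right)v(y)\,dy
\;\ge\; \frac{c_A\,\beta\,t^{2-s}}{|x|^{2-s}} \;\ge\; \frac{c_A\,\beta\,t^{2-s}}{1+|x|^{2-s}},
\]
the last inequality because $|x|^{2-s} \le 1 + |x|^{2-s}$.

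For the regime $|x| \le \delta_0$, \Cref{l:powerseries2} gives $\bigl|(F^2u)(x) - D_s(v)\bigr| \le C\|u\|\,|x|^{s/4}$ with $C = C(s,t,E,K)$; since $D_s(v) > 0$, fixing $\delta_0$ small enough that $C\|u\|\,\delta_0^{s/4} \le D_s(v)/2$ forces $(F^2u)(x) \ge D_s(v)/2 \ge (D_s(v)/2)\,(1+|x|^{2-s})^{-1}$ there. Combining the two regimes, the lemma holds with $c = \min\bigl(c_A\beta t^{2-s},\, D_s(v)/2\bigr)$, which depends only on $s,t,u,E,K$; the single value $x = 0$, if one insists on including it, is covered because $(F^2u)(0) = D_s(v)$ by \Cref{l:powerseries2}.

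The one conceptually delicate point — and the reason $F^2$ rather than $F$ appears — is the behavior near $x = 0$: the bound on $v(x) = (Fu)(x)$ obtained from $\|u\|$ alone vanishes as $x \to 0$, so the constant-order lower bound for $F^2 u$ near the origin cannot come from a crude estimate on $Fv$. It comes instead from the part of the integral defining $(Fv)(x)$ where $y \approx -t^2/x$ is large, so the argument of $\rho_E$ returns to a bounded region while $v(y)$ is comparatively large; this is exactly the mechanism behind the expansion $(F^2u)(x) = D_s(Fu) + O(|x|^{s/4})$. Thus the substantive work has already been done in \Cref{l:powerseries2}, and what remains here is the bookkeeping above together with the elementary observation that $D_s(Fu) > 0$.
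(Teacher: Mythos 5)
Your proposal is correct, and it handles the regime $|x|$ small exactly as the paper does: both invoke \Cref{l:powerseries2} together with the strict positivity of $D_{s}(Fu)$ (which follows from $u \in \mathcal Y$ and the pointwise lower bound on $\rho_E$ in \eqref{e:rhoequadratic}) to get a constant-order lower bound near the origin; your observation that this is precisely why $F^2$ rather than $F$ appears matches the paper's framing. Where you diverge is the regime $|x|$ bounded away from zero. The paper lower bounds $(Fu)(y)$ for $|y|$ large by $c|y|^{-(2-s)}$ via \Cref{l:powerseries}, restricts the integral defining $(F^2u)(x)$ to $|y| > B$, and then invokes the integral estimate \Cref{l:integrallowerbound} together with the boundedness of $|t^2x^{-1} - E|$ for $|x| \ge c_0$. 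You instead restrict to a fixed compact window $[-M,M]$ carrying positive mass of $Fu$, note that for $|x| \ge \delta_0$ the argument $-y - t^2 x^{-1}$ stays in a bounded set, and bound $\rho_E$ below by a constant there via \eqref{e:rhoequadratic}. Your route is more elementary: it needs only \Cref{l:Fbounded}, \Cref{l:powerseries2}, and the two-sided bound \eqref{e:rhoequadratic}, and dispenses with \Cref{l:powerseries} and \Cref{l:integrallowerbound} entirely for this step, at the mild cost of constants that also depend on the compact window (harmless, since the lemma allows dependence on $u$); the paper's route reuses asymptotic machinery it has already set up and mirrors the structure of its other test-function estimates. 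There is also no circularity in your ordering of constants, since $\delta_0$ is fixed first from the small-$x$ step and only then enters the definition of $A$ and $c_A$.
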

\begin{proof}
Fix $u \in \mathcal X$. All constants in this proof will depend on $s$, $t$, $u$, $E$, and $K$, but we omit this from the notation.
By \Cref{l:powerseries2}, there exists $c_0>0$ such that for all $x\in \R$ with $|x|< c_0$, $F^2 u(x) > c_0$. 
By \Cref{l:powerseries}, 
there exist constants $B, c>0$ such that for all $x\in \R$ with $|x| > B$,
\be
(Fu)(x) \ge \frac{c}{|x|^{2-s}}.
\ee 
Set $J= [ -B, B]^c$. Then, by \eqref{e:rhoequadratic}, we obtain for all $|x| \ge c_0$ that 
\begin{align}
\begin{split}
(F^2 u)(x) &= 
\frac{t^{2-s} }{|x|^{2-s}}
\int_{-\infty}^\infty
\rho_E \left( - y - \frac{t^2}{x} \right) (Fu)(y) \, dy\\
&\ge 
\frac{c  }{|x|^{2-s}}
\int_J 
\frac{1}{( 1 +  (y + t^2 x^{-1} - E )^2 )|y|^{2-s} }  \, dy\\
&\ge \frac{c  }{|x|^{2-s}} \cdot \frac{c}{1 + | t^2 x^{-1} - E |^{2-s}}\\ 
&\ge \frac{c  }{|x|^{2-s}} . 
\end{split}
\end{align}
The third line follows from 
\Cref{l:integrallowerbound}.
In the last line,  we used that $| t^2 x^{-1} - E | \le C$ for $|x| \ge c_0$.
This completes the proof.
\end{proof}

\subsection{Compactness}\label{s:compact}
Next, we show that $F^2$ is compact. 
Let $\mathcal B_1 \subset \mathcal X$ denote the unit ball in $\mathcal X$. We recall that the weight function $w(x) = 1 + |x|^{2-s}$ was defined in \Cref{d:X}. 

\begin{lemma}\label{l:equicontinuous}
For every $s \in (0,1)$,  $t>0$, $K>1$, and $E\in \R$,  the set of functions
\be
\mathcal V = \{ w\cdot F^2 u : u \in \mathcal B_1 \}
\ee 
is uniformly equicontinuous on $\R$. 
\end{lemma}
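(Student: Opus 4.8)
The plan is to establish uniform equicontinuity by producing an explicit modulus-of-continuity bound for $w \cdot F^2 u$ valid uniformly over $u \in \mathcal B_1$, and then splitting into the regime near the origin and the regime away from it, using the asymptotic expansions already proved in \Cref{s:Fasymptotics}. Write $v = Fu$; by \Cref{l:Fbounded} we have $\|v\| \le C\|u\| \le C$ uniformly over $u \in \mathcal B_1$, so it suffices to bound the modulus of continuity of $w \cdot F v$ in terms of $\|v\|$ alone. Fix a small constant $c_0 = c_0(s,t,E,K) > 0$, to be taken below the threshold appearing in \Cref{l:powerseries2}.

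\emph{Regime $|x| \le c_0$.} Here I would use \Cref{l:powerseries2} directly: it gives $|(F^2 u)(x) - D_s(Fu)| \le C\|u\| |x|^{s/4}$, so for $x_1, x_2 \in [-c_0, c_0]$ we get
\[
\big| (F^2 u)(x_1) - (F^2 u)(x_2) \big| \le C \|u\| \big( |x_1|^{s/4} + |x_2|^{s/4} \big) \le C\|u\| |x_1 - x_2|^{s/4}
\]
after noting $|x_i|^{s/4} \le |x_1-x_2|^{s/4} + |x_j|^{s/4}$ and iterating, or more simply by observing that $a \mapsto a^{s/4}$ is itself uniformly continuous on $[0, c_0]$ with an explicit concave modulus; combined with the fact that $w(x) = 1 + |x|^{2-s}$ is Lipschitz on $[-c_0, c_0]$ and $|F^2 u|$ is bounded there (again by \Cref{l:powerseries2}), the product $w \cdot F^2 u$ has a uniform modulus of continuity on this interval, independent of $u \in \mathcal B_1$.

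\emph{Regime $|x| \ge c_0$.} Here the weight $w(x) = 1 + |x|^{2-s}$ is large, so I cannot simply bound $|F^2 u|$; instead I exploit cancellation in the leading term. By \Cref{l:powerseries} applied to $v = Fu$ (legitimate since $v \in \mathcal X$), for $|x| \ge c_0$ we have
\[
(F^2 u)(x) = \frac{t^{2-s} D_s(v)}{|x|^{2-s}} + E(x), \qquad |E(x)| \le \frac{C t^{4-s}\L \|v\|}{|x|^{3-s}},
\]
where $D_s(v)$ is a constant (depending on $u$ but bounded: $|D_s(v)| \le C\|v\| \le C$ by \eqref{e:Dbounded} and \Cref{l:Fbounded}). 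Hence
\[
w(x) (F^2 u)(x) = t^{2-s} D_s(v)\Big( 1 + \tfrac{1}{|x|^{2-s}} \Big) + w(x) E(x).
\]
The first summand is a fixed multiple of $1 + |x|^{-(2-s)}$, which on $\{|x| \ge c_0\}$ is Lipschitz with constant depending only on $s$ and $c_0$; since $|D_s(v)| \le C$ uniformly, this term has a uniform modulus of continuity. For the error term, $|w(x) E(x)| \le C |x|^{-(1-s)}$ (using $w(x) \le C|x|^{2-s}$ for $|x| \ge c_0$ and the bound on $E$), which tends to $0$ as $|x| \to \infty$; moreover $x \mapsto w(x) E(x)$ is continuous on each compact sub-interval, and I would obtain equicontinuity of this family by a more careful application of the same pointwise estimate to the difference $w(x_1)E(x_1) - w(x_2)E(x_2)$ — differencing the integral representations of $(F^2u)(x_i)$ and using Lipschitz continuity of $\rho_E$ (first part of \Cref{l:bapstcollection}) together with the lower bound $|E - t^2/x| \gtrsim 1$ valid for $|x| \ge c_0$. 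Finally, the two regimes are glued at $|x| = c_0$: both pieces are continuous there, so the global modulus of continuity is the maximum of the two (plus a harmless term handling pairs $x_1, x_2$ straddling $\pm c_0$, dealt with by the triangle inequality through the point $\pm c_0$).

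\emph{Main obstacle.} The routine parts are the two asymptotic regimes, which are essentially immediate from \Cref{l:powerseries} and \Cref{l:powerseries2}. The one genuinely delicate point is obtaining equicontinuity (not just boundedness and decay) of the error term $w \cdot E$ on the unbounded set $\{|x| \ge c_0\}$ uniformly in $u$: a pointwise decay estimate does not by itself give equicontinuity, so I would need to difference the defining integrals $\frac{t^{2-s}}{|x|^{2-s}}\int \rho_E(-y - t^2/x) v(y)\,dy$ at two points $x_1, x_2$, bound the difference of the prefactors $|x|^{-(2-s)}$ and the difference of the shifted densities $\rho_E(-y - t^2/x_1) - \rho_E(-y - t^2/x_2)$ (Lipschitz in the shift, hence controlled by $|t^2/x_1 - t^2/x_2| \le C|x_1 - x_2|$ for $|x_i| \ge c_0$), and then integrate against $|v(y)| \le \|v\|(1+|y|^{2-s})^{-1}$ using \Cref{l:32}. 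This produces a bound of the form $C\|v\| |x_1 - x_2|$ times a factor decaying in $\min(|x_1|,|x_2|)$, which is exactly the uniform modulus needed. Assembling these estimates yields that $\mathcal V$ is uniformly equicontinuous on $\R$.
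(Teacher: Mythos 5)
Your argument is essentially the paper's: near the origin you use \Cref{l:powerseries2} (closeness of $F^2u$ to the constant $D_s(Fu)$, uniformly over $u\in\mathcal B_1$), and away from the origin you obtain a uniform Lipschitz bound on $w\cdot F^2u$ by differencing the integral representation, controlling the difference of the prefactors $w(x)|x|^{-(2-s)}$ and the difference of the shifted densities via the Lipschitz continuity of $\rho_E$ (first part of \Cref{l:bapstcollection}) together with $\int|Fu|\le C\|Fu\|\le C$; these are exactly the two estimates the paper records in \eqref{e:febdiff2} and \eqref{e:febdiff3}. Two small remarks. First, your detour through \Cref{l:powerseries} in the outer regime is unnecessary: once you difference the integrals directly (as you do in your ``main obstacle'' paragraph), the leading-term/error-term split plays no role, and the paper dispenses with it. Second, the displayed chain $|(F^2u)(x_1)-(F^2u)(x_2)|\le C\|u\|(|x_1|^{s/4}+|x_2|^{s/4})\le C\|u\||x_1-x_2|^{s/4}$ is not valid: the second inequality fails, e.g., for $x_1=x_2$ near $c_0$, and with $c_0$ fixed independently of $\eps$ the bound $|x_1|^{s/4}+|x_2|^{s/4}$ is only small when both points are near zero, so it is not a modulus of continuity in $|x_1-x_2|$. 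The repair is the one the paper uses: given $\eps>0$, choose the inner threshold $\kappa=\kappa(\eps)$ so small that $|F^2u(x)-D_s(Fu)|\le\eps/3$ for $|x|<2\kappa$ (so the oscillation of $w\cdot F^2u$ there is at most $C\eps$, using $w\approx 1$ near $0$), and take $\delta$ small compared with $\kappa$ and with the Lipschitz constant $L_0(\kappa)$ valid on $\{|x|\ge\kappa\}$; then any pair with $|x_1-x_2|<\delta$ lies entirely in one of the two (overlapping) regions, and no separate gluing at $\pm c_0$ is needed. With that adjustment your proof is correct and coincides with the paper's.
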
 
\begin{proof}
In what follows, we always suppose that $u \in \mathcal B_1$ (so that $\| u \| \le 1$). We also omit the dependence of all constants on the parameters $s$, $t$, $E$, and $K$. 
Set $v = F^2 u$, and let $\kappa >0$ be a parameter to be chosen later. By definition, 
\begin{align}
\begin{split}\label{e:febthediff}
 & w (x) \cdot v(x)  - w(y) \cdot v (y) \\
 &=
 \frac{w(x) t^{2-s} }{|x|^{2-s}}
\int_{-\infty}^\infty
\rho_E \left( - r - \frac{t^2}{x} \right) (Fu)(r) \, dr
-
\frac{w(y) t^{2-s} }{|y|^{2-s}}
\int_{-\infty}^\infty
\rho_E \left( - r - \frac{t^2}{y} \right) (Fu)(r) \, dr
.
\end{split}
\end{align}
Then there exists $C(\kappa) >0$ such that for all $x,y \in \R$ with $|x| > \kappa$ and $|y| \ge \kappa$, 
\be\label{e:febdiff2}
\left|
\frac{w(x) t^{2-s} }{|x|^{2-s}} - \frac{w(y) t^{2-s} }{|y|^{2-s}}
\right| \le C |x -y|.
\ee
Further, the first claim in \Cref{l:bapstcollection} gives
\be\label{e:febdiff3}
\left|
\int_{-\infty}^\infty
\left( \rho_E \left( - r - \frac{t^2}{x} \right) - \rho_E \left( - r - \frac{t^2}{y} \right)  \right) (Fu)(r) \, dr \right| \le C|x-y|.
\ee 
when $|x| \ge \kappa$ and $|y| \ge \kappa$. To obtain \eqref{e:febdiff3}, we also used that 
\be
\int_{-\infty}^\infty \big| (Fu)(r)\big|  \, dr \le C \| F u \| \le C \| u \| \le C,
\ee 
by the definition of the norm for $\mathcal X$, the assumption that $\| u \| \le 1$, and \Cref{l:Fbounded}. 
Bounding the difference in \eqref{e:febthediff} using \eqref{e:febdiff2}, \eqref{e:febdiff3}, and \Cref{l:Fbounded}, and recalling that $\| u \| \le 1$ by assumption, we conclude that $w \cdot v$ is Lipschitz on $|x| > \kappa $ for some Lipschitz constant $L_0(\kappa)>1$. 

To show that $\mathcal V$ is uniformly equicontinuous, we must show that for every $\eps >0$, there exists $\delta >0$ such that for all $v \in \mathcal V$, we have  
\be
\big |v(x) - v(y) \big| \le \eps
\ee 
if $|x-y| < \delta$. Fix $\eps >0$, and choose $\kappa>0$ so that for all $x \in \R$ such that  $|x| < 2\kappa$, 
\be \label{e:orange1} \big |v(x) -  D_{s}(Fu) \big| \le \eps/3.
\ee 
Such a choice of of $\kappa$ is possible by \Cref{l:powerseries2}. Finally, choose $\delta = \kappa/(3 L_0)$. Then uniform equicontinuity is implied by \eqref{e:orange1} and the Lipschitz continuity on $|x| \ge \kappa$ derived above.
\end{proof}

\begin{lemma}\label{l:Fcompact}
For every $s\in (0,1)$,  $t>0$, $K>1$, and $E\in \R$, the operator $F^2 : \mathcal X \rightarrow \mathcal X$ is compact. 
\end{lemma}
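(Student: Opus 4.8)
The plan is to deduce compactness of $F^2$ from the Arzelà--Ascoli theorem, exploiting the fact that $f \mapsto fw$ is a linear isometry of $\mathcal X$ onto a closed subspace of $L^\infty(\R)$ (recall $\| f\| = \| fw\|_\infty$ by \Cref{d:X}). Let $\mathcal B_1\subset\mathcal X$ be the unit ball. Since $F$ is bounded as an operator $\mathcal X\to\mathcal X$ by \Cref{l:Fbounded}, $F^2$ is bounded, so the family $\mathcal V = \{ w\cdot F^2 u : u\in\mathcal B_1\}$ is uniformly bounded in the sup norm, with bound $\|F\|_{\mathcal X\to\mathcal X}^2$. By \Cref{l:equicontinuous}, $\mathcal V$ is uniformly equicontinuous on $\R$; in particular every element of $\mathcal V$ is (uniformly) continuous, so $\mathcal V\subset C_b(\R)$.

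The one remaining ingredient is control of the functions in $\mathcal V$ as $|x|\to\infty$, which I would obtain by applying \Cref{l:powerseries} with $Fu$ in place of $u$. Combining that expansion with the bounds $\|Fu\|\le C\|u\|$ (\Cref{l:Fbounded}) and $|D_s(Fu)|\le C\|Fu\|\le C\|u\|$ (from \eqref{e:Dbounded}), one finds that for all $u\in\mathcal B_1$ and all $|x|\ge 1$,
\[
\bigl| w(x)\,(F^2 u)(x) - t^{2-s} D_s(Fu) \bigr| \;\le\; \frac{C}{|x|},
\]
with $C$ independent of $u$, and moreover $|t^{2-s}D_s(Fu)|\le C$ for all such $u$ (the limit is the same as $x\to+\infty$ and $x\to-\infty$ because \Cref{l:powerseries} is phrased in terms of $|x|$). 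Consequently each $g_u := w\cdot F^2 u$ extends continuously to the one-point compactification $\dot\R = \R\cup\{\infty\}$ by setting $g_u(\infty) = t^{2-s}D_s(Fu)$: the displayed estimate gives equicontinuity of the family $\{g_u : u\in\mathcal B_1\}$ at the point $\infty$, while \Cref{l:equicontinuous} gives it at every finite point, and the family is uniformly bounded on $\dot\R$.

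By Arzelà--Ascoli on the compact metric space $\dot\R$, the family $\{g_u : u\in\mathcal B_1\}$ is precompact in $C(\dot\R)$, hence precompact in $C_b(\R)$ with the sup norm. Since $f\mapsto fw$ is an isometry of $\mathcal X$ onto a closed subspace of $L^\infty(\R)$ containing $\{g_u\}$ (and uniform limits of the continuous functions $g_u$ are continuous), pulling back shows that $\{F^2 u : u\in\mathcal B_1\}$ has compact closure in $\mathcal X$; this is precisely the compactness of $F^2$. I do not expect a genuine obstacle here, since the heavy lifting is already done in \Cref{l:equicontinuous} and \Cref{l:powerseries}; the only point requiring care is that the limiting value $t^{2-s}D_s(Fu)$ at infinity genuinely depends on $u$, so a naive "vanishing at infinity'' form of Arzelà--Ascoli does not apply verbatim. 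Using the one-point compactification (or, equivalently, a diagonal argument on $[-R,R]$ together with the uniform tail bound above) handles this cleanly.
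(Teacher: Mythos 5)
Your proof is correct, and it relies on exactly the same ingredients as the paper's: boundedness of $F$ (\Cref{l:Fbounded}), the uniform equicontinuity of $\{w\cdot F^2u\}$ (\Cref{l:equicontinuous}), and the asymptotic expansion of \Cref{l:powerseries} applied to $Fu$ together with \eqref{e:Dbounded}. The only genuine difference is how compactness is extracted at the end. The paper first passes to a subsequence along which the $D_s$-values converge and then runs a diagonal argument, applying Arzel\`a--Ascoli successively on the expanding windows $|x|\le 2M_k$ while controlling the tails $|x|\ge M_k$ by the power-series expansion, finally checking that the diagonal sequence is Cauchy in $\mathcal X$. You instead observe that the uniform bound $|w(x)(F^2u)(x)-t^{2-s}D_s(Fu)|\le C/|x|$ lets each $w\cdot F^2u$ extend continuously to the one-point compactification $\dot\R$, with the ($u$-dependent) value $t^{2-s}D_s(Fu)$ at $\infty$, so a single application of Arzel\`a--Ascoli on the compact space $\dot\R$ finishes the argument; pulling back through the isometry $f\mapsto fw$ is immediate since any uniform limit $g$ yields $g/w\in\mathcal X$ automatically. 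This packaging buys two things: it removes the need for the diagonal construction and the preliminary extraction of a convergent sequence of $D_s$-values, and it sidesteps a small imprecision in the paper's write-up, where the tail bound \eqref{e:theboundatinfinity} is stated with the limit $\tilde D_s$ of $D_s(v_n)$ uniformly in $n$, whereas the quantity naturally produced by \Cref{l:powerseries} is the $n$-dependent $D_s(Fv_n)$; your formulation keeps the $u$-dependent limit throughout, which is exactly why the "vanishing at infinity" caveat you flag is handled correctly. The paper's route, in exchange, stays entirely within sequences in $\mathcal X$ and avoids introducing the compactified space, but the two arguments are otherwise the same.
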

\begin{proof}
We claim that the image of the unit ball $\mathcal B_1 \subset \mathcal X$ under $F^2$ is relatively compact in $\mathcal X$, which implies the conclusion of the lemma. Let $\{ u_n\}_{n=1}^\infty$ be an infinite sequence of functions such that $f_n \in \mathcal B_1$ for all $n \in \mathbb{N}$. It suffices to exhibit a convergent subsequence of $\{ F^2 u_n\}_{n=1}^\infty$. 

Let $\{ v_n \}_{n=1}^\infty$ denote a subsequence of $\{ u_n\}_{n=1}^\infty$ such that the sequence of real numbers $\{ D_s(v_n) \}_{n=1}^\infty$ converges (to a finite limit).
Such a subsequence exists since $\| u_n \| \le 1 $ for all $n\in \mathbb{N}$ by assumption, so the $D_s(v_n)$  are uniformly bounded (by \eqref{e:Dbounded}). Denote this limit by $\tilde D_s$. 

By \Cref{l:powerseries}, for every  $k>1 $, there exists $M_k > 1$ such that 
\be\label{e:theboundatinfinity}
\sup_{|x| > M_k}   \left| w(x) \cdot  (F^2 v_n )(x)  - w(x) \cdot \frac{t^{2-s} \tilde D_{s}  }{ |x|^{(2-s)} }  \right|  < \frac{1}{k  }.
\ee 
for all $n \in \mathbb{N}$. Note that the functions $w \cdot (F^2 v_n)$ are uniformly bounded, since $F$ is bounded on $\mathcal X$ by \Cref{l:Fbounded} and $\|  v_n \| \le 1$ for all $n \in \mathbb{N}$, and $\| v_n\|_\infty \le \| v_n\|$. These functions are also uniformly equicontinuous by \Cref{l:equicontinuous}.

By the Arzel\'a--Ascoli theorem, for every $k>1 $, there exists a subsequence $\{v^{(k)}_n \}_{n=1}^\infty$ of $\{ v_n \}_{n=1}^\infty$ such that  $\{F^2 v^{(k)}_n \one_{|x| < 2M_k} \}_{n=1}^\infty$ converges in $\mathcal X$. 
By successive applications of the Arzel\'a--Ascoli theorem, we may arrange these subsequences so that $\{v^{(k+1)}_n \one_{|x| < 2M_k} \}_{n=1}^\infty$ is a subsequence of $\{v^{(k)}_n \one_{|x| < 2M_k }\}_{n=1}^\infty$ for every $k \in \mathbb{N}$. We can also require that $\{v^{(k)}_n \one_{|x| < 2M_k }\}_{n=1}^\infty$ lies in a ball of radius $k^{-1}$ (by eliminating elements at the beginning of the subsequence).

By \eqref{e:theboundatinfinity},  the sequence $\{F^2 v^{(k)}_n \one_{|x| \ge  M_k} \}_{n=1}^\infty$ lies in a ball of radius $k^{-1}$ in $\mathcal X$. Then the diagonal sequence $\{F^2 v^{(n)}_n \}_{n=1}^\infty$ is Cauchy in $\mathcal X$, and the theorem is proved. 
\end{proof}

\subsection{Conclusion}\label{s:krconclusion}

We begin by recalling some terminology from the theory of Banach spaces. Given a Banach space $X$, a subset $K \subset X$ is called a \emph{cone} if it is a closed, convex set such that $c \cdot K \subseteq K$ for all $c > 0$ and $K \cap (-K) = \{0\}$. A cone   $K$  with a  nonempty interior  $K^0$ is called a \emph{solid cone}. An operator $S : X \to X$ is  \emph{positive} if $Sx \in K$ for every $x \in K$; it is \emph{strongly positive} if $Sx \in K^0$ for each $x \in S \setminus \{0\}$. 

We require the following corollary of the Krein--Rutman theorem, proved in \cite{chang2005methods}.
\begin{lemma}[{\cite[Theorem 3.6.12]{chang2005methods}}]\label{c:KRchang}
Let $X$ be a Banach space, $K \subset X$ be a solid cone, and $S : X \to X$ be a compact, strictly positive, linear operator. Then the spectral radius $r(S)$ satisfies $r(S)>0$, and $r(S)$ is a simple eigenvalue with eigenvector $v \in K^0$. Further, $S$ has no other eigenvalue with an eigenvector in $K \setminus \{ 0 \}$, and $|\lambda| < r(S)$ for all eigenvalues $\lambda \neq r(S)$. 
\end{lemma}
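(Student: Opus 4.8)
The plan is to obtain \Cref{c:KRchang} by combining the classical (``weak'') Krein--Rutman theorem --- existence of a positive eigenvector for a compact positive operator of positive spectral radius, for both the operator and its adjoint --- with elementary order arguments that exploit the \emph{strong} positivity of $S$ to upgrade this to simplicity and strict spectral domination. The first step is to check $r := r(S) > 0$. Fixing $e \in K^0$, strong positivity gives $Se \in K^0$, so by openness of $K^0$ there is $c > 0$ with $Se - ce \in K$; iterating positivity of $S$ yields $S^n e - c^n e \in K$ for all $n$. Since $0 \notin K^0$ (else $K = X$, contradicting $K \cap (-K) = \{0\}$), Hahn--Banach separation produces $\psi \in X^* \setminus \{0\}$ with $\psi \ge 0$ on $K$ and $\psi(e) > 0$; evaluating $\psi$ on $S^n e - c^n e$ gives $\|S^n\| \ge c^n \psi(e)/(\|\psi\|\,\|e\|)$, hence $r \ge c > 0$.

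Next, a solid cone is total ($\overline{K - K} = X$), so the Krein--Rutman theorem applies to $S$, producing $v \in K \setminus \{0\}$ with $Sv = rv$, and applies to the compact operator $S^*$ (with $r(S^*) = r > 0$ and the dual cone $K^*$), producing $\phi \in K^* \setminus \{0\}$ with $S^*\phi = r\phi$. Strong positivity upgrades these: $v = r^{-1}Sv \in K^0$; and since a nonzero positive functional is automatically strictly positive on $K^0$, for any $x \in K \setminus \{0\}$ we have $Sx \in K^0$ and hence $\phi(x) = r^{-1}\phi(Sx) > 0$, i.e.\ $\phi$ is strictly positive on $K \setminus \{0\}$. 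I normalize $\phi(v) = 1$.

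For simplicity: if $Sw = rw$ with $w$ real (reduce to this by taking real and imaginary parts), then $\beta_\star := \sup\{\beta : v - \beta w \in K\}$ is positive (as $v \in K^0$) and finite (after possibly replacing $w$ by $-w$), and $u := v - \beta_\star w$ lies in $K \setminus K^0$ by maximality; yet $Su = ru$, so $u \ne 0$ would force $u = r^{-1}Su \in K^0$, a contradiction. Hence $u = 0$, so $w \in \mathbb{R}v$ and $\ker(rI - S) = \mathbb{R}v$. For algebraic simplicity: if $(rI - S)y = v$, then applying $\phi$ gives $r\phi(y) - (S^*\phi)(y) = 0$ on the left but $\phi(v) = 1$ on the right, a contradiction; so the ascent of $rI - S$ equals $1$, and by Riesz theory for the compact operator $S$ the eigenvalue $r$ is algebraically simple. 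The same computation with $\phi$ gives the fourth assertion: if $Sw = \mu w$ with $w \in K \setminus \{0\}$, then $w$ is real, $\phi(w) > 0$, and $\mu\phi(w) = (S^*\phi)(w) = r\phi(w)$ forces $\mu = r$.

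The remaining --- and hardest --- step is strict spectral domination: $|\mu| < r$ for every eigenvalue $\mu \ne r$ (recall that, $S$ being compact, its nonzero spectrum consists of eigenvalues). Suppose $|\mu| = r$, $\mu \ne r$, with eigenvector $w$ in the complexification. If $\mu = -r$, then with $\beta_\star = \sup\{\beta > 0 : v \pm \beta w \in K\} \in (0,\infty)$, at $\beta_\star$ one of $v \pm \beta_\star w$ lies in $\partial K$ and is nonzero, but applying $S$ twice (using $Sw = -rw$ and strong positivity) forces it into $K^0$, a contradiction. If $\mu = r e^{\mathrm{i}\theta}$ with $\theta \notin \{0, \pi\}$, then $\Re w, \Im w$ are $\mathbb{R}$-linearly independent (otherwise $\mu$ would be real), so $W := \operatorname{span}_{\mathbb{R}}(\Re w, \Im w)$ is a $2$-dimensional $S$-invariant subspace on which $S$ acts as $r$ times a nontrivial rotation $g$; the convex cone $W \cap K$ is $g$-invariant and pointed (as $K$ is), and a pointed planar convex cone invariant under a nontrivial rotation is trivial, so $W \cap K = \{0\}$. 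Hence $C := (v + W) \cap K$ is a full-dimensional compact convex subset of the plane $v + W$ with $v$ in its relative interior, and $S/r$ restricts on $v + W$ to the affine bijection $v + x \mapsto v + gx$ fixing $v$. Positivity gives $(S/r)(C) \subseteq C$, hence $(S/r)(C) = C$, so $S/r$ maps the relative boundary of $C$ onto itself; but strong positivity forces $(S/r)(C \setminus \{0\}) \subseteq \operatorname{relint} C$, so the relative boundary of $C$ is contained in $\{0\}$ --- impossible, as it is a Jordan curve. This contradiction rules out all $\mu$ with $|\mu| = r$, $\mu \ne r$, and completes the proof. The main obstacle is exactly this last point: proving the Perron--Frobenius ``aperiodicity'' in a general Banach space that carries no lattice structure (so neither a modulus $|w|$ nor a triangle inequality $S|w| \succeq |Sw|$ is available); the device that makes it work is to pass to a $2$-dimensional, rotation-invariant cross-section of the cone, where strong positivity clashes geometrically with the rotation being a bijection of the plane.
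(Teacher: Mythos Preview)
The paper does not prove this lemma at all: it is quoted verbatim as \cite[Theorem 3.6.12]{chang2005methods} and used as a black box in the proof of \Cref{c:KR}. So there is no ``paper's proof'' to compare against.

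Your argument is essentially correct and follows the standard route one finds in proofs of the strong Krein--Rutman theorem: use the classical Krein--Rutman theorem to produce a positive eigenvector and a positive dual eigenvector, then exploit strong positivity to upgrade to interior, simplicity, and strict peripheral domination. Two small points are worth tightening. First, the implication ``$(S/r)(C)\subseteq C$, hence $(S/r)(C)=C$'' is not automatic; it follows because $g$ has complex eigenvalues $e^{\pm i\theta}$ and thus $\det g=1$, so the affine map $v+x\mapsto v+gx$ preserves two-dimensional Lebesgue measure on $v+W$, and two nested compact convex bodies of the same positive area coincide. Second, in the complex case you have $0\notin v+W$ (since $v\in W$ would force $v\in W\cap K=\{0\}$), so $C\setminus\{0\}=C$ and the correct conclusion is that the relative boundary of $C$ is \emph{empty}, not ``contained in $\{0\}$''; this is impossible for a compact convex set with nonempty relative interior in a two-dimensional affine plane. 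With these clarifications the proof goes through.
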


We aim to apply the proceeding corollary to $F^2$. 
Our argument is facilitated by the following lemma.

\begin{lemma}\label{l:Fstronglypositive}
Let $\mathcal{K} = \{f \in \mathcal{X} : f \geq 0\} \subset \mathcal{X}$ denote the cone of nonnegative functions in $\mathcal{X}$. Then for every $s \in (0,1)$,  $t>0$, $K>1$, and $E\in \R$, the operator $F^2$ is strongly positive (with respect to $\mathcal K$).
\end{lemma}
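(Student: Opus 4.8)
The plan is to show that for every nonzero $f \in \mathcal{K}$, the function $F^2 f$ lies in the interior $\mathcal{K}^0$ of the cone of nonnegative functions. First I would identify what $\mathcal{K}^0$ is in the topology of $\mathcal{X}$: since the norm on $\mathcal{X}$ is $\|f\| = \|fw\|_\infty$ with $w(x) = 1 + |x|^{2-s}$, a function $g$ lies in $\mathcal{K}^0$ precisely when $g$ is continuous and there is a constant $c > 0$ with $g(x) \ge c\,w(x)^{-1} = c(1 + |x|^{2-s})^{-1}$ for all $x \in \R$; indeed, such a $g$ has an $\mathcal{X}$-ball of radius $c$ around it consisting of functions that remain pointwise positive, and conversely any element of $\mathcal{K}^0$ must be uniformly bounded below by a multiple of $w^{-1}$. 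So it suffices to prove: (i) $F^2 f$ is continuous on $\R$, and (ii) $F^2 f(x) \ge c(1+|x|^{2-s})^{-1}$ for all $x$, for some $c > 0$ depending on $f$ (and $s,t,K,E$).

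For a nonzero $f \in \mathcal{K}$, note that $\int_{-\infty}^\infty f(x)\,dx > 0$ automatically (a nonnegative element of $\mathcal{X}$ with zero integral is the zero function, since $f$ is dominated by $\|f\| w^{-1}$ and equals zero a.e., hence everywhere after we observe $F^2$ only depends on $f$ through its integral against bounded kernels — actually we should be slightly careful here, so I would simply restrict to the statement as needed: either $f$ is a.e.\ zero, in which case $F^2 f = 0$ and there is nothing nonzero to check, or $\int f > 0$ and $f \in \mathcal{Y}$). Thus $f \in \mathcal{Y}$ as defined in \eqref{e:mathcalY}, and the lower bound (ii) is exactly the content of \Cref{l:Flowerbound}, which gives $F^2 f(x) \ge c(1+|x|^{2-s})^{-1}$ for a constant $c = c(s,t,f,E,K) > 0$. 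For continuity (i), I would invoke \Cref{l:equicontinuous}: the function $w \cdot F^2 f$ is (a scalar multiple of) an element of the equicontinuous family $\mathcal{V}$, hence continuous on $\R$, and since $w$ is continuous and strictly positive, $F^2 f$ itself is continuous. Combining, $F^2 f \in \mathcal{K}^0$, which is the definition of strong positivity.

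The main obstacle is really a bookkeeping one rather than a conceptual one: making sure the identification of $\mathcal{K}^0$ with "$g$ continuous and $g \gtrsim w^{-1}$" is handled cleanly, and handling the degenerate case where $f \in \mathcal{K}$ is the zero element of $\mathcal{X}$ (where "zero in $\mathcal{X}$" means zero $w$-weighted sup norm, hence zero a.e., hence $F^2 f = 0$ since $F$ acts through integration). Once those are pinned down, the substance is entirely imported: positivity of $F^2$ is immediate from the nonnegativity of the kernel $\rho_E$ in \eqref{e:Fdef}, continuity of $F^2 f$ comes from \Cref{l:equicontinuous}, and the uniform lower bound comes from \Cref{l:Flowerbound}. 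No new estimates are needed.

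\begin{proof}
Throughout, fix $s \in (0,1)$, $t > 0$, $K > 1$, and $E \in \R$, and suppress the dependence of constants on these parameters. We first describe the interior $\mathcal{K}^0$ of $\mathcal{K}$. We claim that $g \in \mathcal{K}^0$ if and only if $g$ is continuous on $\R$ and there exists $c > 0$ such that $g(x) \ge c\,w(x)^{-1}$ for all $x \in \R$. Indeed, suppose $g$ is continuous with $g(x) \ge c\, w(x)^{-1}$. If $h \in \mathcal{X}$ satisfies $\|g - h\| < c$, then $|g(x) - h(x)| \le \|g - h\| \, w(x)^{-1} < c\, w(x)^{-1} \le g(x)$ for all $x$, so $h(x) > 0$ for all $x$ and hence $h \in \mathcal{K}$; thus $g \in \mathcal{K}^0$. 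Conversely, if $g \in \mathcal{K}^0$, then $g - c\, w^{-1} \in \mathcal{K}$ for some $c > 0$ (as $w^{-1} \in \mathcal{X}$ with $\|w^{-1}\| = 1$), which gives the stated lower bound; continuity of elements of $\mathcal{K}^0$ is not needed for our argument, so we do not dwell on it.

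Now let $f \in \mathcal{K} \setminus \{0\}$. Here $\{0\}$ refers to the zero element of $\mathcal{X}$, i.e.\ $\|f\| = 0$, which forces $f = 0$ Lebesgue-almost everywhere. If $f = 0$ almost everywhere, then $F^2 f = 0$ by the definition of $F$ in \eqref{e:Fdef}, and there is nothing to prove; so we may assume $f$ is not almost everywhere zero. Since $0 \le f(x) \le \|f\|\, w(x)^{-1}$ for all $x$, the function $f$ is integrable, and as it is nonnegative and not almost everywhere zero we have $\int_{-\infty}^{\infty} f(x)\, dx > 0$. Hence $f \in \mathcal{Y}$, where $\mathcal{Y}$ is defined in \eqref{e:mathcalY}.

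By \Cref{l:Flowerbound} applied to $f \in \mathcal{Y}$, there is a constant $c > 0$ such that
\be
(F^2 f)(x) \ge \frac{c}{1 + |x|^{2-s}} = \frac{c}{w(x)}
\ee
for all $x \in \R$. In particular $F^2 f \in \mathcal{K}$, so $F^2$ is positive. Moreover, by \Cref{l:equicontinuous}, the function $w \cdot F^2 f$ belongs (up to the scalar $\|f\|$, or trivially if $\|f\| = 0$) to the uniformly equicontinuous family $\mathcal{V} = \{ w \cdot F^2 u : u \in \mathcal{B}_1 \}$, and is therefore continuous on $\R$; since $w$ is continuous and strictly positive, $F^2 f$ is continuous on $\R$. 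Combining this with the displayed lower bound and the characterization of $\mathcal{K}^0$ above, we conclude $F^2 f \in \mathcal{K}^0$. As $f \in \mathcal{K} \setminus \{0\}$ was arbitrary, $F^2$ is strongly positive with respect to $\mathcal{K}$.
\end{proof}
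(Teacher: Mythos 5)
Your proof is correct and follows essentially the same route as the paper's: identify nonzero elements of $\mathcal{K}$ with elements of $\mathcal{Y}$, invoke \Cref{l:Flowerbound} to get $(F^2 f)(x) \ge c\,w(x)^{-1}$, and observe that this pointwise lower bound forces an $\mathcal{X}$-ball of radius comparable to $c$ around $F^2 f$ to lie inside $\mathcal{K}$ (the paper additionally cites \Cref{l:Fbounded} for an upper bound, which, like your detour through \Cref{l:equicontinuous} for continuity, is not actually needed for interior membership). The only blemish is the claimed characterization of $\mathcal{K}^0$ requiring continuity (false in general and unused), and the handling of a.e.-zero elements, where your argument sits at the same implicit modulo-null-sets level of rigor as the paper's assertion that $\mathcal{K}\setminus\{0\}=\mathcal{Y}$.
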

\begin{proof}
Note that $\mathcal{K}\setminus \{ 0 \}$ is the same as $\mathcal Y$ (defined in \eqref{e:mathcalY}). 
By \Cref{l:Fbounded} and \Cref{l:Flowerbound}, for every $u \in \mathcal{Y}$, there exist constants $C_u, c_u >0$ such that 
\be
c_u \le (F^2u)(x)  (1 + |x|^{2-s} ) \le C_u
\ee 
for all $x\in \R$. 
This implies that the open ball $\{ v \in \mathcal X : \| v - F^2 u \|  < c_u/2\}$ is contained in $\mathcal K$. This shows that $F^2$ is strongly positive.
\end{proof}
\begin{proof}[Proof of \Cref{c:KR}]
By \Cref{l:Fcompact} and \Cref{l:Fstronglypositive}, the operator $F^2$ satisfies the hypotheses of \Cref{c:KRchang}. Since $F^3 = F(F^2)$ can be written as the composition of a compact operator and a bounded operator, so does $F^3$, and similarly for $F^6$, and \Cref{c:KRchang} applies to these operators as well. Then there exist positive functions $v_2, v_3 : \R \rightarrow \R$, normalized so that $\| v_2 \| = \| v_3 \| =1$, and $\mu_2, \mu_3 \ge 0$ such that $F^2 v_2 = \mu_2 v_2$ and $F^3 v_3 = \mu_3 v_3$. We have 
\[
F^6 v_2  = (F^2)^3 v_2 =  \mu_2^3 v_2, \qquad F^6 v_3 = (F^3)^2 v_3 = \mu_3^2 v_3,
\]
Because $F^6$ has a unique positive eigenvector, we must have $v_2 = v_3$, and there must exist $\lambda >0$ such that $\lambda^2 = \mu_2$ and $\lambda^3 = \mu_3$. Since 
\[
\lambda^3 v_3= F^3 v_3 = F(F^2 v_3) = \lambda^2 (F v_3),
\]
we find that $v_3$ is a positive eigenvector of $F$ with eigenvalue $\lambda$. Moreover, $F$ cannot have any other non-negative eigenvectors, since this would imply that $F^2$ has multiple non-negative eigenvectors, contradicting the conclusion of \Cref{c:KRchang}. Finally, we see that the spectral radius of $F$ is bounded above by $\lambda$, since \Cref{c:KRchang} shows the spectral radius of $F^2$ is $\lambda^2$. It is also bounded below by $\lambda$, since $v_3$ is a eigenvector, so it must equal $\lambda$, as claimed. 
\end{proof}

\section{Eigenvalue Bounds}\label{s:evaluebounds}

The goal of this section is to prove \Cref{l:bulklambda}, which states that the leading eigenvalue $\lambda_{K,t,s,E}$ of the operator $F_{K,t,s,E}$ defined at \eqref{e:Fdef} is approximately $K^{-1}$ near the predicted locations of the mobility edges, up to error terms that decay as $K$ grows large. In \Cref{s:preliminaryiterations}, we recall an elementary lemma that allows $\lambda_{K,t,s,E}$ to be bounded above and below by considering the iterates $F^nu$ for arbitrary (non-negative) test functions $u$. \Cref{s:gooddensitybound} contains \Cref{l:Kuniformbound}, which establishes a bound on the density $\rho_E$ (from \Cref{s:rez}) that is independent of the branching number $K$ (and decays in $|x|$). 
\Cref{s:testvectoranalysis} considers the action of $F$ on $u_1$, $u_2$, and $u_3$, and uses this information to deduce \Cref{l:bulklambda}. 

\subsection{Preliminary Lemma}\label{s:preliminaryiterations} 

We require the following lemma, which is \cite[Proposition 1]{bapst2014large}. 
\begin{lemma}[{\cite[Proposition 1]{bapst2014large}}]\label{p:l1pf}
Let $(X, \|\cdot\|_X)$ be a normed vector space with a partial order $\preceq$ compatible with the multiplication by non-negative real  numbers, meaning $u \preceq v$ and $\lambda \geq 0$ imply $\lambda u \preceq \lambda v$. Suppose also that that for every $u, v \in X$, 
if 
$0 \preceq u \preceq v$, then $\|u\|_X \leq \|v\|_X$.

Let $F:X \rightarrow X $ be a linear operator preserving $\preceq$ (meaning $u \preceq v$ implies $F(u) \preceq F(v)$). Let $u$ and $v$ be non-negative vectors ($0 \preceq u, 0 \preceq v$), different from the zero vector, satisfying 
\[
F(u) \preceq \lambda u, \quad F(v) \succeq \mu v 
\]
for some $\lambda, \mu > 0$. Then for all vectors $p \in X$ such that there exist $a, b > 0$ for which
\[
a v \preceq p \preceq b u, 
\]
we have 
\[
\mu \leq \liminf_{n \to \infty} \|F^n(p)\|_X^{1/n} \leq \limsup_{n \to \infty} \|F^n(p)\|_X^{1/n} \leq \lambda. 
\]
\end{lemma}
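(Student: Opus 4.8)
The plan is to run a two-sided sandwich argument: bound $F^n(p)$ from above by a scalar multiple of $\lambda^n u$ and from below by a scalar multiple of $\mu^n v$, and then invoke the norm-monotonicity hypothesis to transfer these to inequalities for $\|F^n(p)\|_X$.

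First I would prove by induction on $n \ge 0$ that
\[
F^n(u) \preceq \lambda^n u, \qquad F^n(v) \succeq \mu^n v.
\]
The cases $n = 0$ and $n = 1$ are immediate from the hypotheses. For the inductive step, apply $F$ to $F^n(u) \preceq \lambda^n u$; since $F$ preserves $\preceq$ this gives $F^{n+1}(u) \preceq F(\lambda^n u)$, then linearity of $F$ gives $F(\lambda^n u) = \lambda^n F(u)$, and finally the compatibility of $\preceq$ with multiplication by the non-negative scalar $\lambda^n$, together with $F(u) \preceq \lambda u$, gives $\lambda^n F(u) \preceq \lambda^{n+1} u$. Chaining these yields $F^{n+1}(u) \preceq \lambda^{n+1} u$. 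The argument for $v$ is identical with all inequalities reversed (and $\mu^n \ge 0$ in place of $\lambda^n \ge 0$).

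Next I would push these through $p$. From $av \preceq p \preceq bu$, applying the order-preserving linear map $F^n$ gives
\[
a\mu^n v \preceq aF^n(v) \preceq F^n(p) \preceq bF^n(u) \preceq b\lambda^n u.
\]
In particular $0 \preceq F^n(p)$, since $v \succeq 0$ and $a > 0$ force $av \succeq 0$, hence $p \succeq 0$, hence $F^n(p) \succeq F^n(0) = 0$. Thus the norm-monotonicity hypothesis applies to both sandwich pairs: from $0 \preceq F^n(p) \preceq b\lambda^n u$ we get $\|F^n(p)\|_X \le b\lambda^n \|u\|_X$, and from $0 \preceq a\mu^n v \preceq F^n(p)$ we get $a\mu^n \|v\|_X \le \|F^n(p)\|_X$. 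Taking $n$-th roots and using that $\|u\|_X, \|v\|_X > 0$ (because $u, v$ are nonzero), the scalar prefactors $(b\|u\|_X)^{1/n}$ and $(a\|v\|_X)^{1/n}$ converge to $1$, so $\limsup_{n\to\infty}\|F^n(p)\|_X^{1/n} \le \lambda$ and $\liminf_{n\to\infty}\|F^n(p)\|_X^{1/n} \ge \mu$; the remaining middle inequality $\liminf \le \limsup$ is automatic.

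This argument is essentially routine, and I do not expect a genuine obstacle. The only points that demand care are bookkeeping points: (i) tracking exactly which of the three structural assumptions (linearity of $F$, order-preservation of $F$, compatibility of $\preceq$ with non-negative scaling) is invoked at each step of the induction, and (ii) verifying that $F^n(p)$ is itself non-negative, so that the norm-monotonicity property may legitimately be applied to both the upper and the lower sandwich; both are addressed above.
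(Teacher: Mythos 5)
Your proof is correct and is the standard sandwich argument; the paper itself does not reprove this lemma (it is quoted from \cite[Proposition 1]{bapst2014large}), and your induction plus norm-monotonicity route is exactly the expected one, with the two bookkeeping points (non-negativity of $F^n(p)$ and positivity of $\|u\|_X$, $\|v\|_X$ when taking $n$-th roots) handled properly.
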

We will apply this lemma to $F$ considered as an operator on $\mathcal X$, with $\preceq$ defined as $u \preceq v$ if $u(x) \le v(x)$ for almost all $x \in \R$. It is straightforward to see that $F$ preserves this order, since $F$ has a nonnegative kernel.

\subsection{Density Bound}\label{s:gooddensitybound}
As a preliminary step, we prove a bound for the fractional moment of a sum of independent  variables with densities $p_{E_i}$.
In will be used in the proof of the next lemma, in conjunction with Markov's inequality, to estimate the probability that this sum is large. 
 In particular, we state the lemma in a way that allows for these variables to be defined for different energies $E \in \R$. 
\begin{lemma}\label{l:QE1E2bound}
Fix $G> 0$ and suppose that $g \in [0 , G]$. Fix $r \in (0,1)$. 
There exists a constant $C(r,G) > 0$ such that the following holds. For every $E_1, E_2\in \R$ and integer $K > 1$, and $j \in \{1, 2, \dots, K-1\}$, let $X_1, \dots X_{K-1}$ be independent random variables such that $X_i$ has density $p_{E_1}$ for $i \le j$, and $X_i$ has density $p_{E_2}$ otherwise. Set 
\be
Q_{E_1, E_2}(j, K) = \sum_{i=1}^{K-1} X_i.
\ee 
Then for every $A >0$, we have
\be\label{e:QE1E2bound}
\P \big( | Q_{E_1, E_2}(j, K)  | \ge A ) \le \frac{ CK  }{ r A^{ 1- r} }.
\ee
\end{lemma}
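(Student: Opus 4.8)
The plan is to control a fractional moment of $|Q_{E_1,E_2}(j,K)|$ of order $1-r$ and then apply Markov's inequality; note that independence of the $X_i$ will not actually be needed for this argument. First I would bound the individual moments $\E\big[|X_i|^{1-r}\big] = \int_{\R} |x|^{1-r}\, p_{E_i'}(x)\, dx$, where $E_i'$ equals $E_1$ if $i \le j$ and $E_2$ otherwise. Splitting the integral at $|x|=1$: on $[-1,1]$ we use $|x|^{1-r}\le 1$ together with the fact that $p_{E_i'}$ is a probability density, so $\int_{|x|\le 1} |x|^{1-r}\, p_{E_i'}(x)\, dx \le 1$; on $|x|>1$ we invoke the quadratic tail bound \eqref{e:bapsta2}, namely $p_E(x) \le \|\rho\|_\infty x^{-2}$, to get $\int_{|x|>1} |x|^{1-r}\, p_{E_i'}(x)\,dx \le \|\rho\|_\infty \int_{|x|>1} |x|^{-1-r}\,dx = 2\|\rho\|_\infty/r$. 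Since $\|\rho\|_\infty \le \L$ by the first condition of \Cref{d:Lregular}, this yields $\E\big[|X_i|^{1-r}\big] \le 1 + 2\L/r \le (1+2\L)/r$ for every $i$, uniformly in $E_1$, $E_2$, $K$, $j$, and $g$.

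Next I would combine these bounds using the elementary subadditivity inequality $\big(\sum_i |a_i|\big)^{1-r} \le \sum_i |a_i|^{1-r}$, valid because $1-r \in (0,1)$. This gives $\E\big[|Q_{E_1,E_2}(j,K)|^{1-r}\big] \le \sum_{i=1}^{K-1} \E\big[|X_i|^{1-r}\big] \le (K-1)(1+2\L)/r$. Applying Markov's inequality to the nonnegative random variable $|Q_{E_1,E_2}(j,K)|^{1-r}$, for any $A>0$ we get $\P\big(|Q_{E_1,E_2}(j,K)| \ge A\big) = \P\big(|Q_{E_1,E_2}(j,K)|^{1-r} \ge A^{1-r}\big) \le A^{-(1-r)}(K-1)(1+2\L)/r \le CK/(rA^{1-r})$ with $C = 1+2\L$. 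Since $\L$ is fixed throughout, $C$ depends on none of the free parameters, which is consistent with the stated dependence on $(r,G)$.

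I do not anticipate any genuine obstacle. The only two points that require a little care are: first, to take the moment of order $1-r$ rather than $r$, so that the tail integral $\int_{|x|>1}|x|^{-1-r}\,dx$ converges and produces exactly the $1/r$ factor appearing in the statement; and second, to treat the contribution near the origin using only that $p_E$ integrates to one on $[-1,1]$, rather than any pointwise bound such as \eqref{e:bapsta7}, which would introduce spurious $K$-dependence since $K^3 t^2 \asymp K/(\log K)^2$ grows with $K$.
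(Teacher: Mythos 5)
Your proposal is correct and follows essentially the same route as the paper's proof: bound the fractional moment $\E[|X_i|^{q}]$ with $q=1-r$ by splitting at $|x|=1$ and using the quadratic tail bound \eqref{e:bapsta2}, combine via the subadditivity $(a+b)^q\le a^q+b^q$, and finish with Markov's inequality. The only cosmetic difference is that the paper works with a generic exponent $q$ and substitutes $q=1-r$ at the end, whereas you fix the exponent $1-r$ from the start.
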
 
\begin{proof}
Let $q\in(0,1)$ be a parameter, and for all $i$ such that $1 \le i \le K-1$, let $\tilde E_i = E_1$ if $i \le j$ and $\tilde E_i = E_2$ otherwise. 
For all $i$, we have by \eqref{e:bapsta2} that
\begin{align}
\begin{split}
\label{e:Xfractionalmoment}
\E\big[
|X_i|^q 
\big] &= \int_\R |x|^q p_{\tilde E_i} (x) \, dx \\
&\le  \int_{ | x | < 1}  p_{\tilde E_i} (x) \, dx +   \| \rho \|_\infty  \int_{  |x| \ge 1}  |x|^{q- 2} \, dx\\
& \le  (1- q)^{-1} ( 1 + \|\rho \|_\infty).
\end{split}
\end{align}
To bound the first integral in the second line, we used that $p_{\tilde E_i}$ is a probability density. 
For notational convenience, we abbreviate $Q = Q_{E_1, E_2}(j, K) $.
Then \eqref{e:Xfractionalmoment} implies that 
\begin{align}
\E\big[
\left|
Q 
\right|^q
\big]
&\le  \sum_{i=1}^{K-1}
\E\big[ | X_i|^q 
\big]  \le  \frac{C K  }{1 -q },
\end{align}
where the first inequality follows from the elementary inequality $(a+b)^q \le a^q + b^q$ (holding for $a,b \ge 0$). 
Markov's inequality then implies that
\be\label{e:remarkov}
\P \big( |Q | \ge A ) \le \frac{ C  K }{(1-q) A^q  }.
\ee 
Setting $q = 1- r$, we obtain  \eqref{e:QE1E2bound}, as desired.
\end{proof}
We now prove an upper bound on the density $\rho_E$ that is uniform in the branching number $K$ of $\bbT$. 
\begin{lemma}\label{l:Kuniformbound}
Fix $G> 0$ and suppose that $g \in [0 , G]$. There exists a constant $C(G) > 0$ such that the following holds. 
For every $E_1, E_2 \in \R$ and $k, K \in \Zplus$ with $k \le K$, and every $x\in \R$,
\be\label{e:pEuniform}
\int_{x-2}^{x+2} \big(p_{E_1} ^{(k)} * p_{E_2} ^{(K-k-1)}\big)(y) \, dy \le \frac{C}{1 + |x |^{4/3}}.
\ee
Further, for every $E\in \R$ and $K > 1$, 
\be\label{e:rhoEuniform}
\rho_E (x)  \le \frac{C}{1 + |x +E |^{4/3}}.
\ee 
\end{lemma}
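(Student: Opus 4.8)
The plan is to establish \eqref{e:pEuniform} first and then deduce \eqref{e:rhoEuniform} from it by a discrete convolution argument.

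\emph{The bound \eqref{e:pEuniform}.} Abbreviate $f = p_{E_1}^{(k)} * p_{E_2}^{(K-k-1)}$; since $f$ is a probability density, $\int_{x-2}^{x+2} f \le 1$, which already proves \eqref{e:pEuniform} when $|x| \le 4$. For $|x| > 4$ the plan is to bound $\int_{x-2}^{x+2} f$ in two complementary ways and interpolate. First, $f$ is exactly the density of $t^2 Q_{E_1,E_2}(k,K)$ in the notation of \Cref{l:QE1E2bound}, and $[x-2,x+2] \subset \{y : |y| \ge |x|/2\}$, so fixing a small $r \in (0,1)$ and applying \Cref{l:QE1E2bound} with $A = (|x|-2)/t^2$ gives $\int_{x-2}^{x+2} f \le \P\big(|Q_{E_1,E_2}(k,K)| \ge (|x|-2)/t^2\big) \le a_1(K)\,|x|^{-(1-r)}$, where after substituting $t = g(K\log K)^{-1}$ one checks $a_1(K) \le C(G)\,K^{2r-1}$ (the leftover factor $(\log K)^{-2(1-r)}$ being bounded over $K \ge 2$). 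Second, \eqref{e:bapsta} of \Cref{l:bapstcollection} gives $f(y) \le (K-1)^3 t^2 \|\rho\|_\infty\, y^{-2}$, hence $\int_{x-2}^{x+2} f \le a_2(K)\,|x|^{-2}$ with $a_2(K) = 16(K-1)^3 t^2 \|\rho\|_\infty \le C(G)\,K$, again using the formula for $t$ and $\|\rho\|_\infty \le \L$. I would then interpolate through $\min(P_1,P_2) \le P_1^\theta P_2^{1-\theta}$ for $\theta \in (\tfrac12,1)$: the prefactor becomes $a_1(K)^\theta a_2(K)^{1-\theta} \le C(G)\,K^{(2r-1)\theta+(1-\theta)} = C(G)\,K^{1-2\theta(1-r)}$, which is bounded in $K$ once $\theta \ge \tfrac{1}{2(1-r)}$, while the decay exponent of $|x|$ is $(1-r)\theta + 2(1-\theta) = 2-(1+r)\theta$, which exceeds $4/3$ once $\theta \le \tfrac{2}{3(1+r)}$. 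Choosing, say, $r = 1/10$ and $\theta = 5/9$ (so that $1-2\theta(1-r)=0$ and $2-(1+r)\theta = 25/18 > 4/3$) yields $\int_{x-2}^{x+2} f \le C(G)\,|x|^{-25/18} \le C(G)\,(1+|x|^{4/3})^{-1}$ for $|x|>4$, which together with the case $|x|\le 4$ proves \eqref{e:pEuniform}.

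\emph{The bound \eqref{e:rhoEuniform}.} Starting from the convolution identity \eqref{e:bananav3}, write $a = x+E$ and decompose $\R = \bigsqcup_{m\in\mathbb{Z}}[m,m+1)$ to get
\[
\rho_E(x) \le \sum_{m\in\mathbb{Z}}\Big(\int_m^{m+1} p_E^{(K-1)}(y)\,dy\Big)\,\sup_{y\in[m,m+1]}\rho(a+y).
\]
The $\L$-regularity of $\rho$ gives $\sup_{y\in[m,m+1]}\rho(a+y) \le \L\big(1+(|a+m|-1)_+^2\big)^{-1}$, and \eqref{e:pEuniform} (with $E_1=E_2=E$, applied to the interval centered at $m+\tfrac12$) gives $\int_m^{m+1} p_E^{(K-1)}(y)\,dy \le C(G)\,(1+|m|^{4/3})^{-1}$. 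So \eqref{e:rhoEuniform} reduces to the elementary discrete estimate
\[
\sum_{m\in\mathbb{Z}} \frac{1}{\big(1+(|a+m|-1)_+^2\big)\big(1+|m|^{4/3}\big)} \le \frac{C}{1+|a|^{4/3}},
\]
which I would prove by splitting the sum at $|m|=|a|/2$: for $|m|\le|a|/2$ one has $|a+m|\ge|a|/2$, so the first factor is $\le C(1+|a|^{4/3})^{-1}$ and the remaining sum $\sum_m(1+|m|^{4/3})^{-1}$ is finite; for $|m|>|a|/2$ one has $(1+|m|^{4/3})^{-1}\le C(1+|a|^{4/3})^{-1}$ while $\sum_m(1+(|a+m|-1)_+^2)^{-1}\le C$ uniformly in $a$. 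This gives $\rho_E(x)\le C(G)(1+|x+E|^{4/3})^{-1}$.

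\emph{Main difficulty.} The substantive point is the interpolation in the proof of \eqref{e:pEuniform}: the tail bound from \Cref{l:QE1E2bound} is uniform in $K$ but decays only like $|x|^{-(1-r)}$ with $1-r<1<\tfrac43$, whereas the pointwise bound \eqref{e:bapsta} decays like $|x|^{-2}$ but carries a prefactor $(K-1)^3t^2\asymp K/(\log K)^2$ that diverges with $K$. One must check that there is a nonempty window of interpolation exponents $\theta$ for which the $K$-dependence cancels \emph{and} the resulting $|x|$-decay still beats $|x|^{-4/3}$; this is precisely what forces $r$ to be taken small (one needs $\tfrac{1}{2(1-r)}\le\theta\le\tfrac{2}{3(1+r)}$, which is consistent when $r\le 1/7$). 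Everything else, in particular the passage to \eqref{e:rhoEuniform}, is routine.
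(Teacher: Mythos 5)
Your proof is correct, and it rests on the same two ingredients as the paper's: the fractional-moment tail bound of \Cref{l:QE1E2bound} and the pointwise convolution bound \eqref{e:bapsta}, followed by the convolution identity \eqref{e:bananav3} to pass from \eqref{e:pEuniform} to \eqref{e:rhoEuniform}. The difference is only in how the two bounds are combined. For \eqref{e:pEuniform}, the paper splits at the threshold $|x| = K^2$: on $|x| \le K^2$ it converts the factor $K^{-(1-2r)}$ in the tail bound into $|x|^{-(1-2r)/2}$ and chooses $r = 1/12$ so that $3/2 - 2r = 4/3$, while on $|x| \ge K^2$ it uses \eqref{e:bapsta} together with $K \le |x|^{1/2}$. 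You instead take a geometric mean $P_1^{\theta}P_2^{1-\theta}$ of the two bounds, tuning $\theta$ so the $K$-powers cancel exactly and checking that the resulting decay exponent still exceeds $4/3$; this is equivalent in substance, and your explicit window $\tfrac{1}{2(1-r)} \le \theta \le \tfrac{2}{3(1+r)}$ (forcing $r \le 1/7$) makes transparent why the exponent $4/3$ is attainable, something the paper's choice $r=1/12$ leaves implicit. For \eqref{e:rhoEuniform}, the paper partitions into intervals (as in \eqref{e:roughnew1}), applies \eqref{e:pEuniform} on each, and then invokes the integral bound \Cref{l:32}; you do the same discretization but prove the resulting discrete convolution estimate directly by splitting the sum at $|m| = |a|/2$, which is a self-contained and correct substitute for \Cref{l:32}. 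One cosmetic remark: like the paper, you implicitly assume $K \ge 2$ (needed both for \Cref{l:QE1E2bound} and for $\log K > 0$), and your application of \eqref{e:pEuniform} with $E_1 = E_2 = E$ to recover $p_E^{(K-1)}$ is the same reading of the statement the paper itself uses, so neither point is a gap.
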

\begin{proof}
Recalling the notation of \Cref{l:QE1E2bound}, we note that $p_{E_1} ^{(k)} * p_{E_2} ^{(K-k-1)}$ is the density of $t^2Q_{E_1,E_2}(k, K)$. We suppose without loss of generality that $x>0$. For $x>3$, we have 
\begin{align*}
\int_{x-2}^{x+2}
\big(p_{E_1} ^{(k)} * p_{E_2} ^{(K-k-1)}\big)(y) \, dy &\le 
\P\big(
t^2Q_{E_1,E_2}(k, K) \ge  x - 2 
\big)\\
& \le \P\big(
Q_{E_1,E_2}(k, K) \ge  K^2(x - 2)
\big)\\
&\le  \frac{ C_r }{ r  K^{1-2r} | x-2|^{1-r} } \le  \frac{ C_r }{ r  K^{1-2r} | x|^{1-r} }.
\end{align*}
When $|x| \le K^{2}$, the previous bound implies 
\be
\int_{x-2}^{x+2}
\big(p_{E_1} ^{(k)} * p_{E_2} ^{(K-k-1)}\big)(y) \, dy  \le \frac{C_r}{r  |x|^{ 3/2 - 2r}}.
\ee
Choosing $r$ so that $3/2 - 2r = 4/3$, this gives
\be\label{e:jan1}
\int_{x-2}^{x+2} \big(p_{E_1} ^{(k)} * p_{E_2} ^{(K-k-1)}\big)(y) \, dy  \le \frac{C}{  |x|^{ 4/3}}.
\ee 
Next, when $|x|\ge K^2$, we observe that \eqref{e:bapsta} implies that
\be\label{e:jan2}
\int_{x-2}^{x+2} \big(p_{E_1} ^{(k)} * p_{E_2} ^{(K-k-1)}\big)(y) \, dy 
\le  \frac{  \| \rho \|_\infty}{|x|^{3/2}} \le \frac{C}{|x|^{4/3}}.
\ee
Further, we have 
\be\label{e:xless3}
\int_{x-2}^{x+2} \big(p_{E_1} ^{(k)} * p_{E_2} ^{(K-k-1)}\big)(y) \le  
\int_{-\infty}^\infty \big(p_{E_1} ^{(k)} * p_{E_2} ^{(K-k-1)}\big)(y) \,dy \le 1,
\ee
since $p_{E_1} ^{(k)} * p_{E_2} ^{(K-k-1)}$ is a probability density, which justifies \eqref{e:pEuniform} for $|x| \le 3$. 
We conclude from \eqref{e:jan1}, \eqref{e:jan2}, and \eqref{e:xless3} that \eqref{e:pEuniform} holds,

Next, we note that 
\begin{align}
\rho_E(x) &= \int_{-\infty}^\infty \label{e:banana2}
p^{(K-1)}_{E}(y) \rho(x - y +E) \, dy \\
&\le \int_{-\infty}^\infty
p^{(K-1)}_{E}(y) \cdot \frac{1}{1 + | x - y  +E |^2 }  \, dy \\ &\le  \frac{C}{1 + |x+E|^{4/3}},
\end{align}
The first equality is from the definition of $\rho_E$, the second line follows from the assumption that $\rho$ is $\L$-regular, and the third line follows from \Cref{l:32} and \eqref{e:pEuniform}. This establishes \eqref{e:rhoEuniform}. 
\end{proof}
The following lemma is a direct consequence of \Cref{l:Kuniformbound}. 
It will be applied in \Cref{s:testvectoranalysis} to understand the action of $F$ on various test vectors. 
\begin{lemma}\label{l:supbounds}
Fix $G > 0$. There exists a constant $C(G)>0$ such that for all $K \ge 2$,  $g \in [0, G]$, $E \in [ - G, G]$, and $s \in (5/6,1)$,
\be\label{e:supbound1}
\sup_{z \in \R} \sup_{|y| \le 1}
\rho_E(y+z) | z|^{2- s} \le C
\ee
and
\be\label{e:supbound2}
\sup_{z \in \R} \int_{|y| \ge 1} \rho_E(y+ z) \frac{|z|^{2-s}}{|y|^{2-s } } \,dy \le C.
\ee 
\end{lemma}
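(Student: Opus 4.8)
The plan is to deduce both displayed bounds from the $K$- and $E$-uniform density estimate $\rho_E(x) \le C(G)\big(1 + |x + E|^{4/3}\big)^{-1}$ furnished by \Cref{l:Kuniformbound} (this is the reason to invoke \Cref{l:Kuniformbound} rather than \Cref{l:rhoebound}, whose constant depends on $E$ and $K$), combined only with elementary one-variable integral estimates. The arithmetic fact driving the argument is that for $s \in (5/6, 1)$ one has $2/3 < 2 - s < 4/3$ and $10/3 - s > 7/3 > 1$: thus the $|x|^{-4/3}$ tail of $\rho_E$, and the powers $|x|^{-(10/3-s)}$ produced below, always beat the weight $|x|^{-(2-s)}$ by a power bounded away from $0$, uniformly in $s$ near $1$.

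For \eqref{e:supbound1}, fix $z$ and $|y| \le 1$ and split on the size of $|z|$. If $|z| \le 4(G+1)$, bound $|z|^{2-s} \le (4(G+1))^2$ and $\rho_E(y+z) \le C(G)$ (dropping the denominator in the density estimate), so the product is $\le C(G)$. If $|z| > 4(G+1)$, then $|y| \le 1$ and $|E| \le G$ give $|y+z+E| \ge |z| - 1 - G \ge |z|/2$, hence $\rho_E(y+z) \le C(G)(|z|/2)^{-4/3}$ and $\rho_E(y+z)|z|^{2-s} \le C(G)\,2^{4/3}\,|z|^{\,2/3-s} \le C(G)\,2^{4/3}$, using $|z| > 1$ and $2/3 - s < 0$.

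For \eqref{e:supbound2}, after inserting the density estimate it suffices to bound $|z|^{2-s}\,I$ with $I = \int_{|y|\ge 1}\big(1 + |y+z+E|^{4/3}\big)^{-1}|y|^{-(2-s)}\,dy$; write $b = z+E$, so $|z| \le |b| + G$. If $|b| \le 4(G+1)$, then $|z| \le 5(G+1)$ and $I \le C(G)$: on $\{1 \le |y| \le 8(G+1)\}$ estimate the integrand by $|y|^{-(2-s)} \le |y|^{-1} \le 1$, and on $\{|y| \ge 8(G+1)\}$ use $|y+b| \ge |y|/2$ to bound the integrand by $2^{4/3}|y|^{-(10/3-s)}$, whose exponent exceeds $7/3$. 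If $|b| > 4(G+1)$, then $|z| \le 2|b|$, and we split the $y$-integral into $\{1 \le |y| \le |b|/2\}$, $\{|b|/2 \le |y| \le 2|b|\}$, and $\{|y| \ge 2|b|\}$. On the first region $|y+b| \ge |b|/2$ and $|y|^{-(2-s)} \le |y|^{-1}$, so the contribution is $\lesssim |b|^{-4/3}\log|b|$; on the second, $|y|^{-(2-s)} \le (|b|/2)^{-(2-s)}$ and $\int_{\R}(1+|w|^{4/3})^{-1}\,dw$ is a finite constant, giving $\lesssim |b|^{-(2-s)}$; on the third, $|y+b| \ge |y|/2$ gives $\lesssim |b|^{-(7/3-s)}$. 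Multiplying by $|z|^{2-s} \le (2|b|)^{2-s}$ converts these into $\lesssim |b|^{\,2/3-s}\log|b|$, $\lesssim 1$, and $\lesssim |b|^{-1/3}$ respectively, all bounded on $\{|b| \ge 1\}$ by absolute constants (the logarithm is absorbed because $2/3 - s < -1/6$). Summing the three contributions yields $|z|^{2-s}\,I \le C(G)$, which is \eqref{e:supbound2}.

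The one point requiring care, and the step I would double-check, is uniformity in $s$ as $s \uparrow 1$: the crude estimate $\int_{|y|\ge 1}|y|^{-(2-s)}\,dy = 2/(1-s)$ is not uniform, so throughout one must use $|y|^{-(2-s)} \le |y|^{-1}$ on $\{|y|\ge 1\}$ and absorb the resulting logarithmic factors against the genuine polynomial gain coming from the $|x|^{-4/3}$ tail of $\rho_E$ — which is exactly where the inequality $2 - s < 4/3$ (guaranteed by $s > 5/6$) is used.
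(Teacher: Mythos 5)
Your proposal is correct and follows essentially the same route as the paper: both reduce to the $K$- and $E$-uniform tail bound $\rho_E(x)\le C(1+|x+E|^{4/3})^{-1}$ from \Cref{l:Kuniformbound} and then split according to whether $|y|$ (or $|y+z+E|$) is comparable to $|z|$, using $|y|^{-(2-s)}\le|y|^{-1}$ on $|y|\ge 1$ so that the resulting logarithm is absorbed by the polynomial gain coming from $2-s<4/3$ (i.e.\ $s>5/6$). The only difference is cosmetic: you handle the bounded-$|z|$ regime by explicit estimates where the paper appeals to continuity/dominated convergence, and you split the large-$|z|$ integral into three regions around $b=z+E$ rather than the paper's two regions around $|z|/2$.
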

\begin{proof}
To prove \eqref{e:supbound1}, we note that by \Cref{l:Kuniformbound} it suffices to control
\be
\sup_{|E| \le G} \sup_{z \in \R} \sup_{|y| \le 1}
\frac{| z|^{2- s} }{ 1 + | y + z + E |^{4/3 } } \le \sup_{z \in \R} \sup_{|E| \le G+1} \frac{| z|^{2- s} }{ 1 + | z  + E|^{4/3} }.
\ee
We may suppose that $G>10$. There exists a constant $C(G)>0$ such that the above supremum is bounded on $[ - 2G, 2G]$, by continuity. When $|z| \ge 2G$, we have $|z +E | \ge |z|/2$ for $|E| \le G+1$, implying 
\begin{align}
\sup_{z \in \R} \sup_{|E| \le G+1|} \frac{| z|^{2- s} }{ 1 + | z  + E|^{4/3} }
\le \sup_{z \in \R}  \frac{| z|^{2- s} }{ 1 + | z /2 |^{4/3} } \le \sup_{z \in \R}  \frac{| z|^{7/6} }{ 1 + | z /2 |^{4/3} }  \le C,
\end{align}
establishing \eqref{e:supbound1}.

For \eqref{e:supbound2}, we first use \Cref{l:Kuniformbound} to see that
\begin{align}
\begin{split}
\sup_{z \in \R} \int_{|y| \ge 1} \rho_E(y+ z) \frac{|z|^{2-s}}{|y|^{2-s } } \,dy 
& \le C \cdot  \sup_{z \in \R} \int_{|y| \ge 1}   \frac{|z|^{2-s}} {\big( 1 + | y + z + E |^{4/3} \big)|y|^{2-s } }  \,dy .
\end{split}
\end{align}
When $|y| \ge |z|/2$, we have 
\begin{align}
\begin{split}
 \int_{|y| \ge |z|/2 }   \frac{|z|^{2-s}} {\big( 1 + | y + z + E |^{4/3} \big)|y|^{2-s } }  \,dy
 & \le 
 C  \int_{|y| \ge |z|/2}   \frac{dy } { 1 + | y + z + E |^{4/3}}  \\
 &\le  C  \int_{-\infty}^\infty   \frac{dy } { 1 + | y  |^{4/3}} \le C.
 \end{split}
\end{align}
We now suppose that $|y|\le |z|/2$. For brevity, we set $\mathcal S(z) = \{  y \in \R : |y| \ge 1, |y| \le |z|/2\}$. There exists $C(G)>0$ such that 
\begin{align}
\begin{split}
\sup_{|z| \le 10 G} \int_{\mathcal S }  \frac{|z|^{2-s}} {\big( 1 + | y + z + E |^{4/3} \big)|y|^{2-s } }  \,dy \le C,
\end{split}
\end{align}
by the dominated convergence theorem and the continuity of the integrand. For $|z| \ge 10 G$ and $|y| \le |z|/2$, we have $|y + z + E | \ge |z|/4$, implying 
\begin{align}
\begin{split}
\sup_{|z| \ge 10 G } \int_{\mathcal S }  \frac{|z|^{2-s}} {\big( 1 + | y + z + E |^{4/3} \big)|y|^{2-s } } 
&\le
C \cdot \sup_{|z| \ge 10 G } \int_{\mathcal S }  \frac{|z|^{2-s}} { | z |^{4/3} |y| }  \le C |z|^{-1/7},
\end{split} 
\end{align}
where we used the definition of $\mathcal S$ and evaluated the integral directly. This completes the proof. 
\end{proof}

\subsection{Test Vector Analysis}\label{s:testvectoranalysis}
Recall the definition of $\Delta$ from \eqref{e:tKdef}. 
Set 
\be\label{e:theus}
u_1(x) = \Delta^{-1} \one\{ |x| \le  \Delta \},
\quad 
u_2(x) = |x|^{-(2-s)} \one\{   \Delta  \le  |x| \le 1 \},
\quad
u_3(x) = |x|^{-(2-s)} \one\{   1  \le  |x| \}.
\ee
We will bound the leading eigenvalue of $F$ (from \eqref{e:Fdef}) from above using \Cref{p:l1pf} by studying the action of $F^n$ on the function $u_1 + u_2 + u_3$ as $n$ tends to infinity. To do this, the following lemmas study the action of $F$ on each of $u_1$, $u_2$, and $u_3$ individually. We will see that the dominant contribution to the upper bound comes from $Fu_2$. Therefore, to achieve a lower bound using \Cref{p:l1pf}, it suffices to prove a lower bound on $Fu_2$ only.

We begin with the lower bound on $Fu_2$. We note that the calculations for the lemmas in this subsection are similar to those in \cite[Section III]{bapst2014large}.
\begin{lemma}\label{l:lowerbound}
Fix $\eps >0$. There exist constants $\alpha_0(\epsilon) >0$ and $C>0$ such that the following holds for all $\alpha \in (0, \alpha_0] $, $s\in (5/6,1)$, $K>1 $, and $t, E \in \R$. 
For all $x \in \R$ such that $|x|  \ge \Delta$, 
\[
(F u_2)(x) 
\ge 
\frac{t^{2-s}}{ |x|^{2-s}}\left( 2 \big( \rho_E(0) - \eps ) \left( \frac{1 - \Delta^{s-1}}{s-1}  \right) -C \right).
\]
\end{lemma}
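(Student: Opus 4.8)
The plan is to estimate
\[
(F u_2)(x) = \frac{t^{2-s}}{|x|^{2-s}} \int_{\Delta \le |y| \le 1} \rho_E\!\left(-y - \frac{t^2}{x}\right) |y|^{-(2-s)}\, dy
\]
directly, exploiting that the shift in the argument of $\rho_E$ is small: since $|x| \ge \Delta = t^2/\alpha$, we have $|t^2/x| \le \alpha$. First I would invoke the Lipschitz continuity of $\rho_E$ with constant $\L$ (the first part of \Cref{l:bapstcollection}) to get $\rho_E(-y - t^2/x) \ge \rho_E(0) - \L(|y| + \alpha)$ for all $y \in \R$, and hence $\rho_E(-y - t^2/x) \ge \rho_E(0) - \eps$ whenever $|y| \le \delta$, once $\delta$ and $\alpha_0$ are chosen in terms of $\eps$ and $\L$ so that $\L(\delta + \alpha_0) \le \eps$. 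We may also take $\delta \le 1/2$, and assume $\eps \le 1$ without loss of generality.

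Next, in the substantive regime $\Delta \le \delta$ (which is what occurs in applications, where $t$ is of order $(K\log K)^{-1}$ and $\alpha$ is a fixed constant, so $\Delta$ is small), I would restrict the integral to $\{\Delta \le |y| \le \delta\}$, discard the nonnegative remainder, and insert the pointwise bound, obtaining
\[
(Fu_2)(x) \ge \frac{t^{2-s}}{|x|^{2-s}}\big(\rho_E(0) - \eps\big)\int_{\Delta \le |y| \le \delta} |y|^{-(2-s)}\, dy .
\]
Since $\int_{\Delta \le |y| \le 1}|y|^{-(2-s)}\,dy = 2\int_\Delta^1 y^{s-2}\,dy = 2(1 - \Delta^{s-1})/(s-1)$, the inner integral equals $2(1-\Delta^{s-1})/(s-1) - 2\int_\delta^1 y^{s-2}\,dy$, so the remaining task is to absorb $2(\rho_E(0)-\eps)\int_\delta^1 y^{s-2}\,dy$ into a constant $C$ that is uniform over $s \in (5/6,1)$ (as well as over $K$, $t$, $E$, $\alpha$).

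This last point is the only part requiring care, and it is where the hypothesis $s > 5/6$ enters: the closed form $2(1 - \delta^{s-1})/(s-1)$ of the truncated integral carries a factor $(s-1)^{-1}$ that diverges as $s \to 1$, so rather than evaluate it I would bound $\int_\delta^1 y^{s-2}\,dy \le \int_\delta^1 y^{-7/6}\,dy \le 6\,\delta^{-1/6}$, which holds for every $s \ge 5/6$ because $y^{s-2} \le y^{-7/6}$ on $(0,1]$. Combining with $|\rho_E(0)-\eps| \le \|\rho_E\|_\infty + 1 \le \|\rho\|_\infty + 1 \le \L + 1$ (using \eqref{e:rhoEupper} and $\L$-regularity) yields the claim with $C = 12(\L+1)\delta^{-1/6}$, depending only on $\eps$ and $\L$. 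When instead $\Delta \ge \delta$, the truncated region is empty; but then the right-hand side of the asserted inequality is nonpositive up to this same $C$ — apply the same uniform-in-$s$ bound to $\int_\Delta^1 y^{s-2}\,dy$ — while $(Fu_2)(x) \ge 0$, so the inequality is trivial. The genuine obstacle is thus purely bookkeeping: keeping all constants uniform in $s$ near $1$ and in $K$; the analytic content is just the observation that $|t^2/x| \le \alpha$ on the relevant range of $x$, together with the uniform Lipschitz bound on $\rho_E$.
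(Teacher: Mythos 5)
Your argument is correct, and at its core it is the same strategy as the paper's: use $|t^2/x|\le\alpha$ for $|x|\ge\Delta$ together with the $\L$-Lipschitz continuity of $\rho_E$ from \Cref{l:bapstcollection} to replace $\rho_E(-y-t^2/x)$ by $\rho_E(0)-\eps$ up to a controllable error, and then integrate $|y|^{s-2}$ explicitly. The difference is purely in how the error is booked. The paper keeps the full window $\Delta\le|y|\le1$ and subtracts the Lipschitz correction $C|y|$ inside the integrand; since $\int_{\Delta\le|y|\le1}|y|^{s-1}\,dy\le 2/s$, this costs only an additive constant of order $\L$, independent of $\eps$. You instead shrink the window to $\Delta\le|y|\le\delta(\eps)$, where the pointwise bound $\rho_E\ge\rho_E(0)-\eps$ holds, and pay for the discarded piece of the main term via $\int_\delta^1 y^{s-2}\,dy\le 6\delta^{-1/6}$ (your use of $s>5/6$; the paper needs $s$ bounded away from $0$ only to control $2/s$). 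Both computations are valid, but your constant becomes $C=C(\eps,\L)$ through $\delta$, whereas the paper's depends on $\L$ alone; the lemma's phrasing (an explicit $\alpha_0(\eps)$ but a bare $C$) and its downstream use in \Cref{l:rougheigenvector}, whose constant is asserted to be $C(G)$ with no $\eps$-dependence, indicate that the $\eps$-free form is intended. This is harmless in the end, since $\eps$ is ultimately fixed in terms of $\L$, but it is a (cosmetic) weakening you could avoid simply by subtracting the Lipschitz error over the whole interval as the paper does. One further remark: your treatment of the case $\Delta\ge\delta$ is more explicit than the paper's, which tacitly works in the only regime that arises in applications, $\Delta<1$; note, however, that when $\Delta>1$ and $\rho_E(0)<\eps$ the quantity $2(\rho_E(0)-\eps)(1-\Delta^{s-1})/(s-1)$ is nonnegative and can be as large as $2\eps\log\Delta$, so "nonpositive up to the same $C$" genuinely needs $\Delta\le1$ there. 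This is an edge case of the statement's "for all $t\in\R$" that the paper's own proof also does not address, not a defect specific to your argument.
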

\begin{proof}
By the Lipschitz continuity of $\rho_E$ (provided by \Cref{l:bapstcollection}),
\begin{align*}
(F u_2)(x) & =
\frac{t^{2-s}}{ |x|^{2-s}}\left( 
\int_{\Delta\le | y| \le 1 }
\rho_E \left( - y - \frac{t^2}{x} \right) \frac{1}{|y|^{2-s}} \, dy
\right)\\
&\ge 
\frac{t^{2-s}}{ |x|^{2-s}}
\left( 
\int_{\Delta\le | y| \le 1 }
\left( \rho_E \left(  - \frac{t^2}{x} \right)  - C |y| \right) \frac{1}{|y|^{2-s}} \, dy\right)\\
&\ge 
\frac{t^{2-s}}{ |x|^{2-s}}\left( 2 \big( \rho_E(0) - \eps ) \left( \frac{1 - \Delta^{s-1}}{s-1}  \right) -C \right),
\end{align*}
where the last inequality follows from an explicit integration.
\end{proof}

We now provide upper bounds on $Fu_1$, $Fu_2$, and $Fu_3$. 
We start with $Fu_1$. 
\begin{lemma}\label{l:u1upper}
Fix $\eps ,G>0$.  There exist constants $\alpha_0(\epsilon) >0$ and $C(G)>0$ such that the following holds for all $\alpha \in (0, \alpha_0]$, $g \in [0, G]$, $s\in ( 5/6, 1)$, $K\ge 2$,  and $E \in [-G, G]$. 
For all $x \in \R$,
\begin{align}
(F u_1)(x) \le& 
C  \Delta t^{-(2-s)}  u_1(x) 
+  C t^{2-s} \big ( u_2(x) + u_3(x) \big).
\end{align}
\end{lemma}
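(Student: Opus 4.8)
The plan is to prove \Cref{l:u1upper} by inserting the explicit form of $u_1$ from \eqref{e:theus} into the definition \eqref{e:Fdef} of $F$ and estimating the resulting scalar integral against the uniform-in-$K$ bounds on $\rho_E$ from \Cref{l:supbounds}. For $x\ne 0$,
\[
(Fu_1)(x)=\frac{t^{2-s}}{|x|^{2-s}}\cdot\frac1\Delta\int_{|y|\le\Delta}\rho_E\!\left(-y-\frac{t^2}{x}\right)dy,
\]
so the key quantity is the average $A(x):=\Delta^{-1}\int_{|y|\le\Delta}\rho_E(-y-t^2/x)\,dy$, which I will bound differently on $|x|\le\Delta$ and on $|x|\ge\Delta$. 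Throughout I use that $\Delta=t^2/\alpha\le1$ (the regime relevant to the application, in which the splitting \eqref{e:theus} is non-degenerate), so that $\{|y|\le\Delta\}\subset[-1,1]$ and \eqref{e:supbound1} is directly applicable; the $\eps$-dependence of $\alpha_0$ in the statement is only for notational uniformity with the companion lemmas (\Cref{l:lowerbound} and the upper bounds on $Fu_2$, $Fu_3$), and $\eps$ enters nowhere else.

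On $|x|\le\Delta$ the prefactor $t^{2-s}|x|^{-(2-s)}$ is singular at $x=0$, and this is exactly cancelled by the decay of $\rho_E$: applying \eqref{e:supbound1} with $z=-t^2/x$ gives, uniformly in $K$ and in $|y|\le\Delta\le1$,
\[
\rho_E\!\left(-y-\frac{t^2}{x}\right)\le C_0\,\Big|\frac{t^2}{x}\Big|^{-(2-s)}=C_0\,t^{-2(2-s)}|x|^{2-s},
\]
hence $A(x)\le 2C_0\,t^{-2(2-s)}|x|^{2-s}$ and $(Fu_1)(x)\le 2C_0\,t^{2-s}\cdot t^{-2(2-s)}=2C_0\,t^{-(2-s)}$. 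Since $u_1(x)=\Delta^{-1}$ on this range, the right-hand side equals $2C_0\,\Delta t^{-(2-s)}u_1(x)$, the first term of the claim. On $|x|\ge\Delta$ it suffices to use the trivial bound $A(x)\le 2\|\rho_E\|_\infty\le 2\|\rho\|_\infty$ from \eqref{e:rhoEupper}, which gives $(Fu_1)(x)\le 2\|\rho\|_\infty\,t^{2-s}|x|^{-(2-s)}$; since $u_2(x)+u_3(x)=|x|^{-(2-s)}$ exactly when $|x|\ge\Delta$, this is $2\|\rho\|_\infty\,t^{2-s}\bigl(u_2(x)+u_3(x)\bigr)$, the second term. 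Taking $C=\max\{2C_0,\,2\|\rho\|_\infty\}$ (and $\alpha_0$ small enough for \Cref{l:supbounds} to apply) finishes the argument.

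The only delicate point is the region $|x|\le\Delta$, where a naive estimate of $(Fu_1)$ diverges as $x\to0$; the mechanism rescuing it — encoded in \eqref{e:supbound1} — is that $\rho_E$ decays at infinity like $|\cdot|^{-4/3}$ (from \Cref{l:Kuniformbound}), and $4/3>2-s$ precisely when $s>2/3$, so $\rho_E(-t^2/x)$ decays at least like $|t^2/x|^{-(2-s)}\sim|x|^{2-s}$ and exactly offsets the singular prefactor. This is the only use of the standing hypothesis $s>5/6$ (the range for which \Cref{l:supbounds} is stated); all remaining estimates are uniform in every parameter. I expect no genuine obstacle here, since the substantive work has already been carried out in establishing \Cref{l:Kuniformbound} and \Cref{l:supbounds}.
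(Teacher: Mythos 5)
Your argument is correct and is essentially the paper's own proof: the same split at $|x|=\Delta$, the trivial bound via $\|\rho_E\|_\infty\le\|\rho\|_\infty$ from \eqref{e:rhoEupper} on $|x|\ge\Delta$, and \eqref{e:supbound1} with $z=-t^2/x$ to cancel the singular prefactor on $|x|\le\Delta$. Your explicitly stated assumption $\Delta\le 1$ is also implicit in the paper's appeal to \Cref{l:supbounds} (whose supremum runs over $|y|\le 1$), so your treatment matches the published argument.
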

\begin{proof}
We begin with the case $|x| \ge \Delta$. 
By \eqref{e:rhoEupper}, 
\begin{align*}
(F u_1)(x) 
&=
\frac{1}{\Delta}
\frac{ t^{2-s}}{|x|^{2-s}}
\int_{|y| \le \Delta}
\rho_E \left( - y - \frac{t^2}{x} \right)  \, dy \le \frac{C t^{2-s}}{|x|^{2-s}}.
\end{align*}

Next, we consider the case $|x| \le \Delta$. 
We obtain
\begin{align*}
(F u_1)(x) &= 
\frac{1}{\Delta}
\frac{ t^{2-s}}{|x|^{2-s}}
\int_{|y| \le \Delta}
\rho_E \left( - y - \frac{t^2}{x} \right)  \, dy\\
&\le 
\frac{2  t^{2-s}}{|x|^{2-s}}
\left(\sup_{|y| \le \Delta} \rho_E \left( - y - \frac{t^2}{x} \right)  \right)  \\
&= \frac{2  t^{2-s}}{|x|^{2-s}} t^{-2(2-s)} |x|^{2 -s} 
\left(\sup_{|y| \le \Delta}  \rho_E \left( - y - \frac{t^2}{x} \right)  \right)  \left( \frac{t^2}{|x|}\right)^{2-s}\\
&\le C t^{-(2-s)} \le C  \Delta t^{-(2-s)}  u_1(x) .
\end{align*}
The first inequality follows from bounding the integrand by its supremum, and the second inequality follows from  \Cref{l:supbounds} with $z =-  t^2/x$. Combining the two cases completes the proof.
\end{proof}

\begin{lemma}\label{l:u2upper}
Fix $\eps ,G>0$. There exist constants $\alpha_0(\epsilon) >0$ and $C(G)>0$ such that the following holds for all $\alpha \in (0, \alpha_0]$, $s\in (5/6,1)$, $g\in [0, G]$, $K>1 $,  and $E \in [-G,G]$. 
For all $x \in \R$, 
\begin{align*}
(F u_2)(x) 
 \le  &C \Delta t^{-(2-s)}
\left( \frac{1 - \Delta^{s-1}}{s-1}  \right) 
 u_1(x) \\
 & + t^{2-s} \left( 2 \big( \rho_E(0) + \eps ) \left( \frac{1 - \Delta^{s-1}}{s-1}  \right) +C \right) \big( u_2(x) + u_3(x) \big).
\end{align*}
\end{lemma}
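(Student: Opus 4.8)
The plan is to split $\R$ into the two regimes $|x|\le\Delta$ and $|x|\ge\Delta$ and treat them with different tools, exactly as in the proof of \Cref{l:u1upper}. I would set $\alpha_0(\eps)=\eps/\L$, so that $\L\alpha\le\eps$ whenever $\alpha\le\alpha_0$, and assume $\Delta\le 1$ (otherwise $u_2\equiv 0$ and there is nothing to prove). Recalling the definition of $F$, the object to estimate is
\be
(Fu_2)(x)=\frac{t^{2-s}}{|x|^{2-s}}\int_{\Delta\le|y|\le 1}\rho_E\!\Big({-}y-\frac{t^2}{x}\Big)\,\frac{dy}{|y|^{2-s}}.
\ee

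In the regime $|x|\ge\Delta$ the shift is small: $|t^2/x|\le t^2/\Delta=\alpha$. Using the Lipschitz bound $|\rho_E(a)-\rho_E(b)|\le\L|a-b|$ from the first part of \Cref{l:bapstcollection} together with $\L\alpha\le\eps$, I would replace $\rho_E(-y-t^2/x)$ by $\rho_E(0)+\eps+\L|y|$. Then the two explicit integrals $\int_{\Delta\le|y|\le1}|y|^{-(2-s)}\,dy=2\tfrac{1-\Delta^{s-1}}{s-1}$ (the main term) and $\int_{\Delta\le|y|\le1}|y|^{s-1}\,dy=2\tfrac{1-\Delta^{s}}{s}\le C$ (valid since $s>5/6$, absorbed into $C$ along with $\L$) give $(Fu_2)(x)\le\tfrac{t^{2-s}}{|x|^{2-s}}\big(2(\rho_E(0)+\eps)\tfrac{1-\Delta^{s-1}}{s-1}+C\big)$. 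Since $|x|^{-(2-s)}\one\{|x|\ge\Delta\}=u_2(x)+u_3(x)$ (using $\Delta\le 1$), this is precisely the $u_2,u_3$ part of the claimed bound.

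In the regime $|x|\le\Delta$ the prefactor $t^{2-s}/|x|^{2-s}$ blows up as $x\to 0$, so bounding $\rho_E$ by its sup norm is useless; instead I would exploit the tail decay of $\rho_E$ packaged in \Cref{l:supbounds}. Writing $z=-t^2/x$ and using $t^{2-s}/|x|^{2-s}=t^{-(2-s)}|z|^{2-s}$, the integrand becomes $|z|^{2-s}\rho_E(z-y)\,|y|^{-(2-s)}$; since $|{-}y|=|y|\le 1$ on the integration region, \eqref{e:supbound1} gives $|z|^{2-s}\rho_E(z-y)\le C$ uniformly in $z$. Hence $(Fu_2)(x)\le Ct^{-(2-s)}\int_{\Delta\le|y|\le1}|y|^{-(2-s)}\,dy=Ct^{-(2-s)}\cdot 2\tfrac{1-\Delta^{s-1}}{s-1}$, and since $u_1(x)=\Delta^{-1}$ on $|x|\le\Delta$ this equals $C\Delta t^{-(2-s)}\tfrac{1-\Delta^{s-1}}{s-1}u_1(x)$, i.e.\ the $u_1$ part of the claim. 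Adding the two regimes yields the stated inequality.

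I do not expect a serious obstacle here: the argument is routine once the Lipschitz continuity of $\rho_E$ and the $K$-uniform tail bound of \Cref{l:supbounds} are in hand, and it closely parallels \Cref{l:lowerbound} and \Cref{l:u1upper}. The only point requiring care is the $|x|\le\Delta$ regime, where one must resist bounding $\rho_E$ by $\|\rho\|_\infty$ and instead trade the blow-up of $t^{2-s}/|x|^{2-s}$ against the smallness of $\rho_E(-t^2/x)$ for small $x$ — which is exactly what \eqref{e:supbound1} is designed to supply.
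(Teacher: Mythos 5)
Your proof is correct and follows essentially the same route as the paper: the same split into $|x|\le\Delta$ and $|x|\ge\Delta$, the Lipschitz continuity of $\rho_E$ from \Cref{l:bapstcollection} plus $|t^2/x|\le\alpha$ and explicit integration in the large-$x$ regime, and the $K$-uniform bound \eqref{e:supbound1} of \Cref{l:supbounds} with $z=-t^2/x$ in the small-$x$ regime. No gaps.
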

\begin{proof}
First, suppose that $|x|  \le \Delta$. Then  letting $z = -t^2/x$, we obtain
\begin{align*}
(F u_2)(x) &= \frac{t^{2-s}}{ |x|^{2-s}}\left( 
\int_{\Delta\le | y| \le 1 }
\rho_E \left( - y - \frac{t^2}{x} \right) \frac{1}{|y|^{2-s}} \, dy
\right)\\
&\le 
\frac{ t^{2-s}}{|x|^{2-s}}
\left(\sup_{|y| \le 1} \rho_E \left( - y - \frac{t^2}{x} \right)  \right) 
\int_{\Delta \le |y| \le 1}
\frac{1}{|y|^{2-s}} \, dy \\
&= 
\frac{ 2t^{2-s}}{|x|^{2-s}} 
\left( \frac{1 - \Delta^{s-1}}{s-1}  \right)  \cdot
\sup_{|y| \le 1} \rho_E \left( - y - \frac{t^2}{x} \right) \\
&= 2 t^{-(2-s)}
\left( \frac{1 - \Delta^{s-1}}{s-1}  \right) 
\cdot
\sup_{|y| \le 1} \rho_E \left( y + z  \right) |z|^{ 2-s } \\
&\le 
C \Delta t^{-(2-s)}
\left( \frac{1 - \Delta^{s-1}}{s-1}  \right) 
 u_1(x).
\end{align*}
The second equality follows from explicit integration, and the second inequality follows from \Cref{l:supbounds} and the definition of $u_1$. 

Next, suppose that $|x| \ge \Delta$. 
Because $\rho_E$ is Lipschitz (by \Cref{l:bapstcollection}),
\begin{align*}
(F u_2)(x) & =
\frac{t^{2-s}}{ |x|^{2-s}}\left( 
\int_{\Delta\le | y| \le 1 }
\rho_E \left( - y - \frac{t^2}{x} \right) \frac{1}{|y|^{2-s}} \, dy
\right)\\
&\le 
\frac{t^{2-s}}{ |x|^{2-s}}
\left( 
\int_{\Delta\le | y| \le 1 }
\left( \rho_E \left(  - \frac{t^2}{x} \right)   + C |y| \right) \frac{1}{|y|^{2-s}} \, dy\right)\\
&\le 
\frac{t^{2-s}}{ |x|^{2-s}}\left( 2 \big( \rho_E(0) +  \eps ) \left( \frac{1 - \Delta^{s-1}}{s-1}  \right)  + C \right),
\end{align*}
where the last inequality follows by explicit integration. Combining the two cases completes the proof. 
\end{proof}

\begin{lemma}\label{l:u3upper}
Fix $\eps ,G>0$.  There exist constants $\alpha_0(\epsilon) >0$ and $C(G)>0$ such that the following holds for all $\alpha \in (0, \alpha_0]$, $g \in [0,G]$, $s\in (5/6,1)$, $K>1 $,  and $E \in [-G,G]$. 
For all $x \in \R$, 
\begin{align*}
(F u_3)(x) 
 \le &  C \Delta t^{-(2-s)}  u_1(x)  + C t^{2-s} \big ( u_2(x) + u_3(x) \big). 
\end{align*}
\end{lemma}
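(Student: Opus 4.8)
The plan is to split the analysis into the two regimes $|x| \ge \Delta$ and $|x| \le \Delta$, exactly as in the proofs of \Cref{l:u1upper} and \Cref{l:u2upper}, and in each regime to bound the single integral appearing in the definition of $Fu_3$. Since $u_3$ is supported on $\{|y| \ge 1\}$, where $|y|^{-(2-s)} \le 1$, and $\rho_E$ is a probability density, one always has
\[
(F u_3)(x) = \frac{t^{2-s}}{|x|^{2-s}} \int_{|y| \ge 1} \rho_E\Big( -y - \frac{t^2}{x} \Big) \frac{dy}{|y|^{2-s}} \le \frac{t^{2-s}}{|x|^{2-s}},
\]
because $\int \rho_E = 1$. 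When $|x| \ge \Delta$, the right-hand side is at most $C t^{2-s}\big(u_2(x) + u_3(x)\big)$, since $u_2(x) + u_3(x) \ge |x|^{-(2-s)}$ on $\{|x| \ge \Delta\}$; this already gives the asserted bound in that regime.

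For $|x| \le \Delta$, I would instead set $z = -t^2/x$, perform the change of variables $y \mapsto -y$, and invoke the second supremum estimate \eqref{e:supbound2} of \Cref{l:supbounds}, whose hypotheses ($K \ge 2$, $g \in [0,G]$, $E \in [-G,G]$, $s \in (5/6,1)$) are all implied by those of the present lemma. This yields
\[
\int_{|y| \ge 1} \rho_E\Big( -y - \frac{t^2}{x} \Big) \frac{dy}{|y|^{2-s}} = \int_{|y| \ge 1} \rho_E(y + z)\, \frac{dy}{|y|^{2-s}} \le \frac{C}{|z|^{2-s}} = \frac{C |x|^{2-s}}{t^{2(2-s)}},
\]
and hence $(F u_3)(x) \le C t^{-(2-s)}$. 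Since $|x| \le \Delta$ forces $u_1(x) = \Delta^{-1}$, this is precisely $C \Delta t^{-(2-s)} u_1(x)$, the desired bound in this regime. Combining the two cases, and enlarging $C$ if needed, finishes the proof; the constraint $\alpha \in (0,\alpha_0]$ plays no role here and is stated only for uniformity with the neighboring lemmas.

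I do not expect any genuine obstacle: the only inputs are that $\rho_E$ integrates to one and the uniform bound \eqref{e:supbound2}, which is already available from \Cref{l:supbounds}. The one place requiring a little care is the bookkeeping of powers of $t$ — with $z = -t^2/x$ one has $|z|^{-(2-s)} = |x|^{2-s} t^{-2(2-s)}$, so the prefactor $t^{2-s}/|x|^{2-s}$ cancels the $|x|^{2-s}$ and leaves exactly $t^{-(2-s)}$, matching the coefficient of $u_1$ in the statement.
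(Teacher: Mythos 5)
Your proposal is correct and takes essentially the same route as the paper: the same dichotomy at $|x| = \Delta$, and for $|x| \le \Delta$ the same substitution $z = -t^2/x$ followed by the bound \eqref{e:supbound2} of \Cref{l:supbounds}, with the same power counting yielding $C t^{-(2-s)} = C\Delta t^{-(2-s)} u_1(x)$. The only deviation is in the regime $|x| \ge \Delta$, where you replace the paper's use of the decay bound \eqref{e:rhoEuniform} by the trivial estimate that $\rho_E$ integrates to one and $|y|^{-(2-s)} \le 1$ on the support of $u_3$; this is a harmless simplification, and your observation that it renders the restriction $\alpha \in (0,\alpha_0]$ unnecessary for this particular lemma is accurate.
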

\begin{proof}
First, suppose that $|x| \le \Delta$. Letting $ z = -t^2/x$ and using \Cref{l:supbounds}, we obtain
\begin{align*}
(F u_3)(x) & =\frac{t^{2-s}}{ |x|^{2-s}}\left( 
\int_{| y| \ge 1 }
\rho_E \left( - y - \frac{t^2}{x} \right) \frac{1}{|y|^{2-s}} \, dy
\right)\\
&\le 
t^{-(2-s)}
\left( 
\int_{| y| \ge 1 }
\rho_E \left( y+ z  \right) \frac{|z|^{2-s}}{|y|^{2-s}} \, dy
\right)\\
&\le C t^{-(2-s)} 
=C \Delta t^{-(2-s)}  u_1(x).
\end{align*}
Next, suppose that $|x| \ge \Delta$. 
Using \eqref{e:rhoEuniform} and the fact that $|t^2/x| \le \alpha \le 1$, we have 
\begin{align*}
(F u_3)(x) & =
\frac{t^{2-s}}{ |x|^{2-s}}\left( 
\int_{ | y| \ge  1 }
\rho_E \left( - y - \frac{t^2}{x} \right) \frac{1}{|y|^{2-s}} \, dy
\right)\\
&\le  \frac{C t^{2-s}}{ |x|^{2-s}} 
\left( 
\int_{ | y| \ge 1 }
\frac{1}{1 + |y +E |^{4/3}}
 \frac{1 }{|y|^{2-s}} \, dy\right) =
\frac{C t^{2-s}}{ |x|^{2-s}}. 
\end{align*}
Combining the two cases completes the proof. 
\end{proof}

Combining our upper and lower bounds on the $Fu_i$, and using \Cref{p:l1pf}, we can deduce upper and lower bounds on the leading eigenvalue $\lambda_{s,E}$ of $F$ (recall \Cref{c:KR}). 
\begin{lemma}\label{l:rougheigenvector}
Fix $\eps >0$ and $G > 0$. There exist constants $\alpha_0(\epsilon) , C(G)>0$ such that the following holds for all $\alpha \in (0, \alpha_0]$.
There exists $K_0(\alpha)> 1$ such that for all $K \ge K_0$, there exists $s_0(K)\in (0,1)$ such that 
for all $s\in (s_0, 1)$, $g\in[0, G]$,  and $E \in [-G,G]$, 
\be
\left| 
\lambda_{s,E} - 2  t^{2-s} \rho_E(0)  \left( \frac{1 - \Delta^{s-1}}{s-1}  \right) 
\right| 
\le  t^{2-s} \left(  \epsilon \left( \frac{1 - \Delta^{s-1}}{s-1}  \right)  + C  \right).
\ee 
\end{lemma}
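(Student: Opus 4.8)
The plan is to bracket $\lambda_{s,E}$ from above and below by applying the iteration lemma \Cref{p:l1pf} to $F$ on $\mathcal X$ (with $\preceq$ the a.e.\ pointwise order, which $F$ preserves). Since $\lambda_{s,E}$ is the spectral radius of $F$ with positive eigenfunction $v_{s,E}$ (\Cref{c:KR}), I will run \Cref{p:l1pf} with $p=v_{s,E}$, for which $\lim_n\|F^n v_{s,E}\|^{1/n}=\lambda_{s,E}$; it then suffices to produce test vectors $u,v\in\mathcal Y$ and numbers $\lambda,\mu>0$ with $Fu\preceq\lambda u$, $Fv\succeq\mu v$, a sandwich $av\preceq v_{s,E}\preceq bu$, and $\mu\ge t^{2-s}(2\rho_E(0)\Lambda-\eps\Lambda-C)$, $\lambda\le t^{2-s}(2\rho_E(0)\Lambda+\eps\Lambda+C)$, where $\Lambda=\tfrac{1-\Delta^{s-1}}{s-1}$; this is the claim up to renaming $\eps$. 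The sandwich conditions are easy once I record $v_{s,E}(x)\asymp(1+|x|^{2-s})^{-1}$: the upper bound is $v_{s,E}\in\mathcal X$, and the lower bound follows by applying \Cref{l:Flowerbound} to the identity $v_{s,E}=\lambda_{s,E}^{-2}F^2v_{s,E}$.

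For the lower bound I take $v=u_2$. As $u_2$ is supported on $\{\Delta\le|x|\le1\}$ and \Cref{l:lowerbound} gives $(Fu_2)(x)\ge t^{2-s}|x|^{-(2-s)}(2(\rho_E(0)-\eps)\Lambda-C)$ there, we get $Fu_2\succeq\mu u_2$ with $\mu=t^{2-s}(2(\rho_E(0)-\eps)\Lambda-C)$ (if $\mu\le0$ the lower bound on $\lambda_{s,E}$ is vacuous). For the upper bound I will \emph{not} use $u_1+u_2+u_3$ directly, but an inflated test vector $u=Mu_1+u_2+u_3$ with a constant spike height $M\ge1$ to be fixed. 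Summing \Cref{l:u1upper} (applied to $Mu_1$), \Cref{l:u2upper}, and \Cref{l:u3upper}, and using $\Delta t^{-(2-s)}=t^s/\alpha$, all three images are dominated by a combination of $Mu_1$ and $u_2+u_3$: the coefficient on $Mu_1$ is $O\!\big(\tfrac{t^s}{\alpha}(1+\tfrac{\Lambda+1}{M})\big)$ and the coefficient on $u_2+u_3$ is $t^{2-s}\big(O(M)+2(\rho_E(0)+\eps)\Lambda+C\big)$. Thus $Fu\preceq\lambda u$ with $\lambda$ the larger of these two, and the task reduces to choosing $M$ so that this maximum is at most $t^{2-s}(2\rho_E(0)\Lambda+\eps\Lambda+C)$.

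This choice of $M$ is the substantive point, and the main obstacle. The $Mu_1$-coefficient equals $\tfrac{t^{2-s}}{\alpha}\,t^{2s-2}(1+\tfrac{\Lambda+1}{M})$ with $t^{2s-2}\to1$ as $s\to1^-$, so it is $\lesssim\eps t^{2-s}\Lambda$ precisely when $M\gtrsim\tfrac{C}{\eps\alpha}$ and $\Lambda\gtrsim\tfrac{C}{\eps\alpha}$; the tail term $O(M)t^{2-s}$ in the $u_2+u_3$-coefficient is $\lesssim\eps t^{2-s}\Lambda$ precisely when $M\lesssim\eps\Lambda$. These requirements on $M$ are jointly satisfiable exactly when $\Lambda$ is large, say $\Lambda\ge C_0(\eps,\alpha)$, in which case one may take $M=\tfrac{C}{\eps\alpha}$; this is the origin of the hypothesis $K\ge K_0(\alpha)$. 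Indeed, with $\alpha$ a fixed small constant, $\Delta=t^2/\alpha\to0$ as $K\to\infty$, hence $\Lambda\to-\log\Delta=\log(\alpha/t^2)\to\infty$ upon first sending $s\to1^-$ and then $K\to\infty$. Concretely: fix $\eps$; take $\alpha\le\alpha_0(\eps)$, the minimum of the thresholds from \Cref{l:lowerbound}, \Cref{l:u1upper}, \Cref{l:u2upper}, \Cref{l:u3upper} (which enter only through the Lipschitz approximation $\rho_E(-t^2/x)\approx\rho_E(0)\pm\eps$ valid for $|x|\ge\Delta$); then choose $K_0(\alpha)$ large enough, and $s_0(K)\in(5/6,1)$ close enough to $1$, that $\Lambda\ge C_0(\eps,\alpha)$ and $t^{2s-2}\le2$ hold for all $s\in(s_0,1)$.

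With these choices in place, \Cref{p:l1pf} applied to $F$, $u$, $v=u_2$, and $p=v_{s,E}$ yields $\mu\le\lambda_{s,E}\le\lambda$, which is exactly $|\lambda_{s,E}-2t^{2-s}\rho_E(0)\Lambda|\le t^{2-s}(\eps\Lambda+C)$ after renaming $\eps$. I expect the routine work to be bookkeeping the constants in the sum of \Cref{l:u1upper}, \Cref{l:u2upper}, \Cref{l:u3upper} and verifying the straightforward two-sided comparison between $v_{s,E}$ and the test vectors $u_2$ and $Mu_1+u_2+u_3$; the genuinely delicate step is the spike-height balance described above.
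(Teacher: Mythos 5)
Your proposal is correct and follows the same architecture as the paper's proof: both apply \Cref{p:l1pf} with $p=v_{s,E}$ (so that $\|F^n p\|^{1/n}\to\lambda_{s,E}$), obtain the sandwich $c\,u_2\le v_{s,E}\le C(u_1+u_2+u_3)$ from \Cref{c:KR} and \Cref{l:Flowerbound}, use $u_2$ together with \Cref{l:lowerbound} for the lower bound, and sum \Cref{l:u1upper}, \Cref{l:u2upper}, \Cref{l:u3upper} for the upper bound. Where you deviate is the upper test vector: the paper keeps the plain sum $u_1+u_2+u_3$ and disposes of the spike coefficient by choosing $s_0(K)$ so that $C_0\Delta t^{-(2-s)}\le t^{2-s}\rho_E(0)\bigl(\tfrac{1-\Delta^{s-1}}{s-1}\bigr)$, whereas you inflate the spike to $Mu_1+u_2+u_3$ with $M\asymp(\eps\alpha)^{-1}$ and balance $M$ against the extra $CMt^{2-s}$ it feeds into the $u_2+u_3$ coefficient. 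Your extra care is well placed: the $\Lambda$-bearing spike term $C\Delta t^{-(2-s)}\Lambda\,u_1$ coming from $Fu_2$ in \Cref{l:u2upper} has size of order $t^{2-s}\Lambda/\alpha$ on $\{|x|\le\Delta\}$ (with $\Lambda=\tfrac{1-\Delta^{s-1}}{s-1}$), which exceeds the target $t^{2-s}\bigl(2\rho_E(0)+\eps\bigr)\Lambda\,u_1$ by a factor of order $(\alpha\rho_E(0))^{-1}$ once $\alpha$ is small; so the pointwise bound $F(u_1+u_2+u_3)\preceq\lambda(u_1+u_2+u_3)$ with the desired $\lambda$ genuinely fails near $|x|\asymp t^2$, and the paper's displayed condition (which only involves the $\Lambda$-free spike contribution of \Cref{l:u1upper}) does not visibly repair this. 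Your reweighting, with the window $(\eps\alpha)^{-1}\lesssim M\lesssim\eps\Lambda$ nonempty once $K\ge K_0(\alpha)$ and $s$ close to $1$, is the natural fix and delivers exactly the stated two-sided estimate after renaming $\eps$; the only bookkeeping to watch is that the additive constant remains independent of $\alpha$, which your requirement $CM\le\eps\Lambda$ already guarantees.
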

\begin{proof}
 \Cref{c:KR} shows that $v_E(x) > 0$ for all $x \in \R$. Then, using \Cref{l:Flowerbound} and the fact that $v_E$ is an eigenvector, and since $v_E \in \mathcal{X}$ implies that $\| v_E \|_\infty < \infty$, there exist constants $C,c >0$ such that 
\begin{equation}
c u_2(x) \le v_E(x) \le C\big(u_1(x) + u_2(x) + u_3(x)\big)
\end{equation}
for all $x \in \R$. 
The conclusion follows from combining \Cref{p:l1pf}, \Cref{l:lowerbound}, \Cref{l:u1upper}, \Cref{l:u2upper}, and \Cref{l:u3upper}, after recalling that $v_E$ is an eigenvector with eigenvalue $\lambda$.  The conditions $K \ge K_0(\epsilon)$ and $s > s_0(K)$ are necessary to apply \Cref{l:u1upper}. To obtain the desired bound, it suffices to choose $s_0(K)$ large enough so that
\be
C_0 \Delta t^{-(2-s)} \le  t^{2-s} \rho_E(0 ) \left( \frac{1 - \Delta^{s-1}}{s-1}  \right) 
\ee 
where $C_0$ denotes the constant from \Cref{l:u1upper}, where we recall \eqref{e:rhoEupper}, and notice that the difference quotient tends to $| \log \Delta | $ as $s$ goes to $1$ (which depends on $\alpha$). 
\end{proof}

\begin{proof}[Proof of \Cref{l:bulklambda}]
This is a straightforward consequence of \Cref{l:rougheigenvector}.
\end{proof}
We omit the proof of the following corollary, since it is an immediate consequence of \Cref{l:rougheigenvector}, the Lipschitz continuity of $\rho_E$ provided by the first part of \Cref{l:bapstcollection}, and \Cref{l:bapstuniform}. We recall the definitions of $\varpi$ and $I_\star=I_\star(E_0)$ given in \eqref{e:varpi} and \eqref{e:istar}, respectively. 

\begin{corollary}\label{c:roughlambda}
Fix  $\L>1$, $g>0$, and $E_0 \in \R$ satisfying the conditions of \Cref{t:mainlambda}. There exists  $K_0(\L) >0$ such that the following holds. 
For every $K \ge K_0$, there exists $s_0(K)\in(0,1)$ such that for all $E \in I_\star$ and $s \in (s_0, 1)$, we have for $t = g ( K \ln K)^{-1}$ that 
\be
\frac{1}{2K} \le \lambda_{s,E} \le \frac{2}{K}. 
\ee

\end{corollary}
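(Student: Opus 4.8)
The plan is to feed \Cref{l:rougheigenvector} into the scaling $t = g(K\ln K)^{-1}$, using that $\rho(E_0) = (4g)^{-1}$ pins the leading term of $\lambda_{s,E}$ to $K^{-1}$. Since $g < \L$ and $|E_0| \le 2\L$, every $E \in I_\star$ (recall \eqref{e:istar}) satisfies $|E| \le |E_0| + \varpi \le 3\L$, so we may apply \Cref{l:rougheigenvector} with $G = 3\L$ and with a parameter $\epsilon = \epsilon(\L)$ so small that $\epsilon \le (10^3\L)^{-1}$. This fixes constants $\alpha = \alpha(\L) \in (0, \alpha_0(\epsilon)]$ and $C = C(\L)$, and hence fixes $\Delta = t^2/\alpha$ as a function of $K$ alone. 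Writing $q_s = (1 - \Delta^{s-1})/(s-1)$, \Cref{l:rougheigenvector} gives, for all $K \ge K_0(\L)$, $s \in (s_0(K), 1)$, $g \in (0, 3\L]$, and $E \in I_\star$,
\[
\big| \lambda_{s,E} - 2 t^{2-s} \rho_E(0)\, q_s \big| \ \le\ t^{2-s}\big( \epsilon\, q_s + C \big).
\]
It therefore suffices to show, after enlarging $K_0$ and shrinking $1 - s_0(K)$, that the main term $2 t^{2-s}\rho_E(0)\, q_s$ lies in $[3/(4K),\, 5/(4K)]$ and that the right-hand side above is at most $(10K)^{-1}$.

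The main term is controlled in three steps. First, for $K$ large one has $\Delta < 1$, so $q_s = (\Delta^{-(1-s)} - 1)/(1-s)$ decreases to $|\ln\Delta|$ as $s \uparrow 1$ and satisfies $q_s \le |\ln\Delta|\, e^{(1-s)|\ln\Delta|}$, while $t^{2-s} = t\, t^{1-s} \in [\, t\, e^{-(1-s)|\ln t|},\ t\,]$; since $|\ln t|$ and $|\ln\Delta|$ are $O(\ln K)$ (with $\L$-dependent implied constants), taking $s_0(K)$ with $1 - s_0 \le (10^2 \ln K)^{-1}$ forces $t^{2-s} q_s \in [e^{-1/10}\, t|\ln\Delta|,\ e^{1/10}\, t|\ln\Delta|]$ for all $s \in (s_0, 1)$. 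Second, $|\ln\Delta| = -2\ln t + \ln\alpha = 2\ln K + 2\ln\ln K - 2\ln g + \ln\alpha$, so $t|\ln\Delta| = (2g/K)(1 + o_K(1))$, and enlarging $K_0(\L)$ puts $t|\ln\Delta| \in [1.9\,g/K,\ 2.1\,g/K]$. Third, I would apply \Cref{l:bapstuniform} with parameters $\delta = \delta(\L)$ and $\epsilon' = \epsilon'(\L)$ so small that $\L\delta + \epsilon' \le (10^3\L)^{-1}$; combined with the $\L$-Lipschitz continuity of $\rho$ — which bounds $\sup_{|e'|\le\delta}\rho(E+e') - \inf_{|e'|\le\delta}\rho(E+e')$ and gives $|\rho(E) - \rho(E_0)| \le \L|E-E_0| \le \L\varpi$ on $I_\star$ — and the Lipschitz continuity of $\rho_E$ from the first part of \Cref{l:bapstcollection}, this shows that $|\rho_E(0) - (4g)^{-1}| \le (500\L)^{-1}$ once $K \ge K_0(\L)$, and hence, using $g < \L$, that $2\rho_E(0) = (2g)^{-1}\big(1 + O(10^{-2})\big)$. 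Multiplying the three estimates gives
\[
2 t^{2-s}\rho_E(0)\, q_s \ =\ \big(t^{2-s} q_s\big)\cdot 2\rho_E(0) \ =\ \frac{1}{K}\big(1 + O(10^{-1})\big) \ \in\ \Big[\frac{3}{4K},\ \frac{5}{4K}\Big].
\]

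For the error term, $t^{2-s} q_s \le e^{1/10}\, t|\ln\Delta| \le 3\,g/K \le 3\L/K$ combined with $\epsilon \le (10^3\L)^{-1}$ gives $t^{2-s}\epsilon\, q_s \le 3\cdot 10^{-3}/K$, while $t^{2-s} C \le tC = gC/(K\ln K) \le \L C/(K\ln K) \le 10^{-2}/K$ after enlarging $K_0(\L)$ so that $\ln K \ge 10^2\,\L C$. Thus the right-hand side of the displayed bound is at most $(10K)^{-1}$, and the triangle inequality yields $\lambda_{s,E} \in [1/(2K),\ 2/K]$, which is the assertion of \Cref{c:roughlambda}. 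I expect the only delicate point to be the bookkeeping of the nested quantifiers — one must choose $\epsilon$, then $\alpha$ (hence $\Delta$), then $K_0(\L)$, then $s_0(K)$, in that order — together with the verification that the $s$-dependent factors $t^{2-s}$ and $q_s$ remain pinned to their $s\uparrow 1$ limits uniformly over $(s_0,1)$; both are routine given the monotonicity observations above, and all of the numerical constants ($10^3$, $500$, etc.) are merely illustrative.
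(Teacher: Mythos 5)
Your proposal is correct and follows exactly the route the paper intends: the paper omits the proof of \Cref{c:roughlambda}, stating it is an immediate consequence of \Cref{l:rougheigenvector}, the Lipschitz continuity from \Cref{l:bapstcollection}, and \Cref{l:bapstuniform}, which are precisely the ingredients you combine (with the same quantifier ordering $\epsilon \to \alpha \to K_0 \to s_0$ and the same computation pinning $2t^{2-s}\rho_E(0)(1-\Delta^{s-1})/(s-1)$ near $1/K$). The only cosmetic caveat is that your choice $1-s_0 \le (10^2\ln K)^{-1}$ implicitly uses that $|\ln\Delta|,|\ln t| \le 3\ln K$ once $K \ge K_0(\L)$, which is harmless given $g \in (\L^{-1},\L)$ and $\alpha = \alpha(\L)$ fixed.
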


\section{Estimates on the Self-Energy Density}\label{s:selfenergydensityestimates}

In this section, we prove \Cref{l:amoluniform}, which provides important bounds on the density $\rho_E$ defined in \eqref{s:rez}. The first, \eqref{e:Elipschitz}, states that $\rho_E(x)$ is Lipschitz continuous as a function of $E$, with a Lipschitz constant decaying in $|x|$. 
The others, \eqref{e:le11finite} and \eqref{e:le11finite2}, show that the monotonicity of $\rho(x)$ for $x$ near $E$ propagates to the monotonicity of $\rho_{E'}(x)$ for $E'$ near $E$ and $x$ near zero. 
\Cref{s:preliminarydensitybounds} provides some preliminary estimates on quantities related to $\rho_E$. 
\Cref{s:mainlipschitz} contains the proof of \Cref{l:amoluniform}.

 Throughout this section, we fix  $\L>1$, $g>0$, $\rho$, and $E_0 \in \R$ satisfying the conditions of \Cref{t:mainlambda}.
 For any real functions $f$ and $g$ and $k\in \N$, we let $f^{*k}$ denote the $k$-fold convolution of $f$ with itself, and $f*g$ denote the convolution of $f $ with $g$. 
Recall that $p_E$ denotes the probability density for a solution of \eqref{e:rde}. We let $Q_E$ denote the cumulative density function corresponding to $p_E$.
\subsection{Preliminary Density Bounds}\label{s:preliminarydensitybounds}

The  following estimate on $\rho_E$ is the goal of this section. It can be thought of as a kind of uniform control for its derivative in $E$. However, for technical reasons, we do not treat the derivative directly and instead work with finite differences. We prove it below, in \Cref{s:mainlipschitz}, after proving a set of preliminary bounds. 
\begin{proposition}\label{l:amoluniform}
Set $I_\L  =  [E_0 - \L^{-1}, E_0 + \L^{-1}]$. There exists $C,K_0 >0$ such that the following is true for all $K\ge K_0$. 
\begin{enumerate}
\item
We have 
\be\label{e:Elipschitz}
\big| \rho_{E_2}(x) - \rho_{E_1}(x)  \big| \le C |E_1 - E_2| \big(|x|+1\big)^{-\fourthirds } 
\ee
 for all $E_1,E_2 \in I_{2 \L}$ and $x \in \R$. 
\item 
Suppose that 
\be\label{e:ge11}
\rho'(E)  \ge \frac{1}{\L}
\text{ for all $E \in I_\L$ }.
\ee 
Then 
\be\label{e:le11finite}
\rho_{E_2}(x) - \rho_{E_1}(x) \ge \frac{E_2 - E_1}{2\L} 
\ee 
for all  $E_1,E_2 \in I_{2 \L}$,  and $x  \in [ - (4\L)^{-1}  ,(4\L)^{-1}  ]$.
\item Suppose that 
\be\label{e:le11}
\rho'(E)  \le  - \frac{1}{\L} \text{ for all $E \in I_\L$ }.
\ee 
Then
\be\label{e:le11finite2}
\rho_{E_2}(x) - \rho_{E_1} (x)  \le  - \frac{E_2  - E_1}{2\L} .\ee 
for all  $E_1,E_2 \in I_{2 \L}$,  and $x  \in [ - (4\L)^{-1}  ,(4\L)^{-1}  ]$.
 \end{enumerate}
 \end{proposition}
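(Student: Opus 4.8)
The plan is to set up a self-consistent integral identity for the difference $\rho_{E_2} - \rho_{E_1}$ and run a Gronwall-type argument, exploiting that the coupling constant $t^2 = g^2(K\log K)^{-2}$ is small. Starting from the representation $\rho_E(x) = \int p_E^{(K-1)}(y)\,\rho(x+y+E)\,dy$ (equation \eqref{e:bananav3}), and recalling from \eqref{e:loweraux1} that $p_E$ is itself defined through $\rho_E$ via the fixed-point equation, I would first derive an identity expressing $\rho_{E_2} - \rho_{E_1}$ as a sum of two kinds of terms: (i) a ``direct'' term coming from the explicit $E$-dependence in $\rho(x+y+E)$, which is controlled using the Lipschitz bound on $\rho$ and its derivative bound $|\rho'(x)| \le \L(1+|x|)^{-(1-1/\L)}$ from part (3) of \Cref{d:Lregular}; and (ii) an ``indirect'' term coming from the dependence of $p_E^{(K-1)}$ on $E$, which after a telescoping of the convolution $p_{E_2}^{*(K-1)} - p_{E_1}^{*(K-1)}$ into $K-1$ single-factor differences $p_{E_2} - p_{E_1}$, and then unfolding $p_{E_2} - p_{E_1}$ through \eqref{e:loweraux1}, feeds back a term involving $\rho_{E_2} - \rho_{E_1}$ itself. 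The smallness of $t^2$ (and the bound $p_E(x) \le \|\rho\|_\infty/x^2$ from \eqref{e:bapsta2}, together with $p_E(x) \le C(1+K^3 t^2\|\rho\|_\infty)$ near the origin from \eqref{e:bapsta7}, and the convolution bound \eqref{e:bapsta}) makes the self-coupling a contraction: the $K-1$ factors are compensated by the $t^2 \sim (K\log K)^{-2}$ in front.

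For part (1), I would define $\Psi(x) = \sup_{E_1,E_2\in I_{2\L}} |E_1-E_2|^{-1}|\rho_{E_2}(x)-\rho_{E_1}(x)|$ and aim to prove $\Psi(x) \le C(1+|x|)^{-(1-1/\L)}$. The direct term already has this form. For the indirect term, one writes $p_{E_2}(u) - p_{E_1}(u) = u^{-2}\int [p_{E_2}^{(K)}(w)\rho(u^{-1}+w+E_2) - p_{E_1}^{(K)}(w)\rho(u^{-1}+w+E_1)]\,dw$; expanding the difference of products and using the decay bounds on $p_E$ and $p_E^{(K)}$ from \Cref{l:bapstcollection} and \Cref{l:Kuniformbound}, plus $\Psi$ to bound the $\rho_{E_2}-\rho_{E_1}$ contributions recursively, one gets an inequality of the form $\Psi(x) \le C(1+|x|)^{-(1-1/\L)} + \tfrac12 \cdot (\text{average of }\Psi\text{ against a fixed kernel})$, where the factor $\tfrac12$ (for $K$ large) comes from the $t^2$-smallness. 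Choosing the ansatz $\Psi \le A(1+|x|)^{-(1-1/\L)}$ and checking that the kernel-average of $(1+|\cdot|)^{-(1-1/\L)}$ is finite and bounded by a multiple of $(1+|x|)^{-(1-1/\L)}$ (this is where the integral bounds of \Cref{l:32} and the $4/3$-decay of \Cref{l:Kuniformbound} are used — and the $1-1/\L$ exponent is chosen precisely so the convolutions close), one solves for $A$ and concludes. A subtlety: one must first establish that $\Psi(x)$ is finite for each $x$ before the bootstrap; this follows from the a priori regularity of $\rho_E$ ($\L$-Lipschitz uniformly in $E$ by \Cref{l:bapstcollection}(1)) together with the decay from \Cref{l:Kuniformbound}, and a dominated-convergence argument that differentiability in $E$ is legitimate.

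For parts (2) and (3), which are essentially the same statement up to sign, I would refine the above near $x=0$. When $x\in[-(4\L)^{-1},(4\L)^{-1}]$ and $\rho'(E)\ge 1/\L$ on $I_\L$, the direct term $\int p_{E_2}^{(K-1)}(y)[\rho(x+y+E_2)-\rho(x+y+E_1)]\,dy$ is, by the mean value theorem, at least $(E_2-E_1)\int_{|y|\le \delta} p_{E_2}^{(K-1)}(y)\,\tfrac{1}{\L}\,dy$ minus an error from the tail $|y| > \delta$, where I take $\delta$ small enough that $x+y+E$ stays in $I_\L$; the mass $\int_{|y|\le\delta}p_{E_2}^{(K-1)}(y)\,dy$ is bounded below uniformly in $K$ because $p_E^{(K-1)}$ is the density of $t^2\sum_{i=1}^{K-1}\Gamma_i$ and $t^2(K-1) \to 0$, so this sum concentrates near $0$ (quantitatively, $\P(|t^2\sum\Gamma_i|>\delta) \le CK/(rK^{2}\delta^{1-r})\to 0$ by the fractional-moment estimate of \Cref{l:QE1E2bound}), hence the mass is $\ge 1 - o(1) \ge 3/4$ for $K$ large. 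This gives a main term $\ge \tfrac{3}{4\L}(E_2-E_1)$. The error from the indirect term (the $p_E^{(K-1)}$ dependence on $E$) is bounded in absolute value by part (1)'s mechanism — it is $O(t^2 K) = o(1)$ times $(E_2-E_1)$ — and the tail error is also $o(E_2-E_1)$. Subtracting these from $\tfrac{3}{4\L}(E_2-E_1)$ leaves at least $\tfrac{1}{2\L}(E_2-E_1)$ for $K\ge K_0$, which is \eqref{e:le11finite}; part (3) follows by negating.

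The main obstacle I anticipate is making the bootstrap in part (1) genuinely self-contained: one must verify that the feedback kernel — obtained by composing the $u\mapsto u^{-1}$ change of variables in \eqref{e:loweraux1} with the $(K-1)$-fold convolution structure — maps the weight $(1+|x|)^{-(1-1/\L)}$ into a bounded multiple of itself with a constant that is strictly less than $1$ once the $t^2$ prefactor is accounted for, \emph{uniformly in $K$}. This requires carefully tracking how the $K$ factors from the convolution telescoping are each tamed by a $t^2$ (so the net is $(K-1)t^2 \sim (K-1)g^2(K\log K)^{-2} \to 0$), while simultaneously keeping the $x$-decay; the change of variables $u\mapsto 1/u$ is what makes the near-origin and near-infinity regimes interact, and handling the region $u\approx 0$ (where $p_E(u)$ is large, of order $K^3t^2$) against the region $|x|$ large is the delicate balance. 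Everything else is a matter of assembling the integral bounds already recorded in \Cref{l:32}, \Cref{l:integrallowerbound}, \Cref{l:bapstcollection}, \Cref{l:Kuniformbound}, and \Cref{l:QE1E2bound}.
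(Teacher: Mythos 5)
Your overall architecture — start from \eqref{e:bananav3} and \eqref{e:loweraux1}, telescope the $K$-fold convolution difference, close a self-consistent (Gronwall-type) inequality using $Kt^2\to 0$, and prove (2)--(3) by the mean value theorem on the direct term plus concentration of $p_E^{(K-1)}$ near $0$ via \Cref{l:QE1E2bound} with the indirect term absorbed as an $o(1)\cdot|E_1-E_2|$ error — is exactly the paper's strategy, and your treatment of parts (2) and (3) essentially coincides with the paper's Step 4. However, two points where your plan for part (1) is genuinely incomplete are precisely the places where the paper's proof does something you have not supplied. First, the a priori finiteness of your bootstrap quantity $\Psi(x)=\sup|E_1-E_2|^{-1}|\rho_{E_2}(x)-\rho_{E_1}(x)|$ cannot be obtained by ``a dominated-convergence argument that differentiability in $E$ is legitimate'': $p_E$ is an \emph{arbitrarily selected} solution of the recursive distributional equation \eqref{e:rde} (uniqueness is not known a priori; the paper points this out), so there is no representation of $E\mapsto p_E$ that you are entitled to differentiate, and the $x$-Lipschitz bound of \Cref{l:bapstcollection} says nothing about regularity in $E$. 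The paper avoids this circularity by never normalizing by $|E_1-E_2|$ inside the bootstrap: its self-consistent quantity is the weighted integral $F(p)=\int|M(w)|(1+|wt^2-p|)^{-1/\L}\,dw$ of a quantity $M$ built from the \emph{CDF} differences $Q_{E_1}-Q_{E_2}$ convolved with probability densities, which is finite a priori because $|Q_{E_1}-Q_{E_2}|\le 1$ and the weight exponent $1/\L$ is tuned against the heavy tails; the inequality $F(p)\le CK^{1+1/\L}|E_1-E_2|+CK^{-1+1/\L}F(0)$ then closes legitimately.

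Second, the crucial mechanism by which each of the $K$ telescoped terms is ``tamed by a $t^2$'' is asserted but never identified. If you merely ``expand the difference of products and use the decay bounds plus $\Psi$ recursively,'' each telescoped term is of size $O(\|p_{E_2}-p_{E_1}\|_{L^1})=O(|E_1-E_2|)$ with no small factor, and the sum over $K$ terms is $O(K|E_1-E_2|)$, so your kernel-average inequality does not come with the factor $\tfrac12$ and the bootstrap fails to contract. The gain of $t^2$ per term comes from a cancellation — the single-factor differences $p_{E_2}-p_{E_1}$ have zero mean, so only the $w$-variation of the kernel (which carries a $t^2$ through $\rho(x-t^2y+E)$) contributes — and the paper extracts it by integrating by parts twice, converting density differences into $Q_{E_1}-Q_{E_2}$ and letting the derivative land on $\rho$, whose decaying derivative bound from part (3) of \Cref{d:Lregular} is then essential (see \eqref{e:derivative-first}, \eqref{e:derivative-third}, \eqref{e:derivative-fourth}). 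Making this cancellation quantitative in the presence of the $x^{-2}$ tails of $p_E$ and the $u\mapsto 1/u$ change of variables is exactly why the weighted functional $F$ and the CDF bound \eqref{e:largew} are introduced; your sketch names this as ``the main obstacle'' but does not resolve it, and resolving it is the substance of the proposition's proof. So as written the proposal is a correct outline of the strategy with the two central technical ideas (integration by parts onto $\rho$, and a bootstrap quantity that is finite a priori) missing.
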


Our aim is control the difference $\rho_{E_1} - \rho_{E_2}$ using the integral representation \eqref{e:bananav3}.
We see from the form of \eqref{e:bananav3} that this will require control over $p^{(K-1)}_{E_1} - p^{(K-1)}_{E_2}$, so we begin by estimating this difference, starting with $p_{E_1} - p_{E_2}$.
Our next lemma provides a representation for this difference using \eqref{e:loweraux1}. To state it, we set
\[
M_{j} (w) = \big( p_{E_1}^{*(K-j)} *(Q_{E_1} -  Q_{E_2})  * p_{E_2}^{*(j-1)}\big)(w) , \qquad M(w) = 
 \sum_{j=1}^K   M_{j}(w).
\]
\begin{lemma}
For all $K > 1$ and all $x,E_1, E_2 \in \R$, 
\begin{align}
\begin{split}
&p_{E_1} (x)  - p_{E_2}(x)\label{e:derivative-first} \\
& = \frac{1}{x^2}  \int_{-\infty}^{\infty} \left( \rho \left(\frac{1}{x} + E_1 + t^2 y\right) - \rho \left(\frac{1}{x} + E_2 + t^2 y\right) \right) p_{E_1}^{*K} (y) \, dy \\
& \quad - \frac{t^2}{x^2}  \int_{-\infty}^{\infty} \rho' \left(\frac{1}{x} + E_2 + t^2 y\right) M(y) \,  dy. 
\end{split}
\end{align}
\end{lemma}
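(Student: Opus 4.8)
The plan is to start from the representation \eqref{e:loweraux1}, normalize the integration variable, and then split the difference $p_{E_1}(x) - p_{E_2}(x)$ into a term involving only the difference of the potentials (which becomes the first term on the right-hand side of \eqref{e:derivative-first}) and a term involving the difference of the self-energy densities, which an integration by parts converts into the second term. Concretely, since $p_E^{(K)}$ is by \Cref{d:pEdef} the density of $t^2\sum_{i=1}^K\Gamma_i$ with the $\Gamma_i$ i.i.d.\ of density $p_E$, we have $p_E^{(K)}(t^2 w) = t^{-2}\,p_E^{*K}(w)$; substituting $y = t^2 w$ in \eqref{e:loweraux1} thus gives, for all $E\in\R$ and $x\neq 0$,
\[
p_E(x) \;=\; \frac{1}{x^2}\int_{-\infty}^\infty p_E^{*K}(w)\,\rho\!\left(\frac1x + E + t^2 w\right)\,dw.
\]
Writing $\rho_i(w) = \rho(1/x + E_i + t^2 w)$ and adding and subtracting $p_{E_1}^{*K}(w)\,\rho_2(w)$ inside the integral, the difference $p_{E_1}(x)-p_{E_2}(x)$ equals
\[
\frac{1}{x^2}\int_{-\infty}^\infty p_{E_1}^{*K}(w)\,\big(\rho_1(w)-\rho_2(w)\big)\,dw
\;+\;\frac{1}{x^2}\int_{-\infty}^\infty \big(p_{E_1}^{*K}-p_{E_2}^{*K}\big)(w)\,\rho_2(w)\,dw,
\]
and the first summand is already the first term of \eqref{e:derivative-first}.

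For the second summand I would telescope the convolution power, using the algebraic identity (with $p_{E}^{*0}$ interpreted as the Dirac mass at $0$)
\[
p_{E_1}^{*K}-p_{E_2}^{*K} \;=\; \sum_{j=1}^{K} p_{E_1}^{*(K-j)} * \big(p_{E_1}-p_{E_2}\big) * p_{E_2}^{*(j-1)},
\]
and then observe that, since $Q_E' = p_E$ gives $p_{E_1}-p_{E_2} = (Q_{E_1}-Q_{E_2})'$ and differentiation commutes with convolution, the $j$-th summand equals $M_j'$, where $M_j = p_{E_1}^{*(K-j)}*(Q_{E_1}-Q_{E_2})*p_{E_2}^{*(j-1)}$ is exactly the function defined in the statement. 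Hence $p_{E_1}^{*K}-p_{E_2}^{*K} = M'$ with $M=\sum_{j=1}^K M_j$. Substituting this and integrating by parts in $w$ (the chain rule contributing a factor $t^2$) turns the second summand into $-\frac{t^2}{x^2}\int_{-\infty}^\infty M(w)\,\rho'(1/x+E_2+t^2 w)\,dw$, which is the second term of \eqref{e:derivative-first} after renaming $w$ to $y$.

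What remains, and is where essentially all the care lies, is the analytic justification of these manipulations: that $M_j$ is differentiable with $M_j' = p_{E_1}^{*(K-j)}*(p_{E_1}-p_{E_2})*p_{E_2}^{*(j-1)}$, that the boundary terms in the integration by parts vanish, and that the integral defining the second term of \eqref{e:derivative-first} converges absolutely. All of this follows from density bounds already recorded. Indeed, $p_E$ is bounded (by \eqref{e:bapsta2} for $|x|\ge 1$ and by \eqref{e:pEquadratic} for $|x|\le 1$), so $Q_{E_1}-Q_{E_2}$ is Lipschitz; convolving a Lipschitz function with the $L^1$ functions $p_{E_1}^{*(K-j)}*p_{E_2}^{*(j-1)}$ produces a Lipschitz $M_j$ whose a.e.\ derivative is obtained by moving the derivative onto the $Q$-factor inside the convolution. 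Moreover $|1-Q_E(w)| + |Q_E(-w)| \le C|w|^{-1}$ for $|w|\ge 1$ by \eqref{e:bapsta2}, so $|Q_{E_1}(w)-Q_{E_2}(w)| \le C(1+|w|)^{-1}$ and hence $|M(w)| \le C(1+|w|)^{-1}$; combined with $|\rho'(u)| \le \L(1+|u|)^{-\fourthirds}$ from $\L$-regularity, the integrand $M(w)\,\rho'(1/x+E_2+t^2w)$ is integrable and the boundary terms $M(w)\,\rho(1/x+E_2+t^2w)$ tend to $0$ as $w\to\pm\infty$ (using also $\rho(u)\le\L(1+u^2)^{-1}$). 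This yields \eqref{e:derivative-first}.
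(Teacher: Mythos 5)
Your argument is correct and follows essentially the same route as the paper's proof: start from \eqref{e:loweraux1} (with the change of variables $y\mapsto t^2w$ made explicit), add and subtract to isolate the $\rho$-difference term, telescope $p_{E_1}^{*K}-p_{E_2}^{*K}$, and integrate by parts so the derivative lands on $\rho$, producing the $-t^2 x^{-2}\int \rho' M$ term with vanishing boundary contributions. The extra justification you supply (writing the telescoped sum as $M'$, the decay bounds on $M$, and the absolute convergence) is detail the paper leaves implicit, not a different method.
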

\begin{proof}
By \eqref{e:loweraux1}, for any $E_1, E_2 \in \R$, we have 
\begin{align}
\begin{split}
&p_{E_1} (x)  - p_{E_2}(x)\label{e:derivative-first-proof} \\
&= \frac{1}{x^2}  \int_{-\infty}^{\infty} \left( \rho \left(\frac{1}{x} + E_1 + t^2 y\right) - \rho \left(\frac{1}{x} + E_2 + t^2 y\right) \right) p_{E_1}^{*K} (y) \, dy  \\
& \quad + \frac{1}{x^2}  \int_{-\infty}^{\infty} \rho \left(\frac{1}{x} + E_2 + t^2 y\right) \left( \sum_{j=1}^K  p_{E_1}^{*(j-1)} *(p_{E_1} -  p_{E_2})  * p_{E_2}^{*(K-j)} \right) (y)\,  dy.
\end{split}
\end{align}
For the equality in \eqref{e:derivative-first-proof}, we used 
\be
 p_{E_1}^{*K} -  p_{E_2}^{*K}
= 
\sum_{j=1}^K  p_{E_1}^{*(K-j)} *(p_{E_1} -  p_{E_2})  * p_{E_2}^{*(j-1)}.
\ee 
The conclusion now follows from integration by parts in the second integral (using that $\rho$ vanishes at infinity eliminate the boundary terms).
\end{proof}

From the representation in \eqref{e:derivative-first}, we see that it will be essential to control $M$ in order to bound $p_{E_1} - p_{E_2}$. 
We first consider the quantity $Q_{E_1} - Q_{E_2}$ (coming from the definition of $M$), which we analyze using the following lemma. 

\begin{lemma}
For all $K > 1$ and all $w,E_1, E_2 \in \R$, 
\begin{align} \label{e:derivative-third}
\begin{split}
&Q_{E_2} (w)   -  Q_{E_1} (w)   = -  \int_{-\infty}^{\infty} p_{E_1}^{*K} (y) \int_{w^{-1}}^{0}   \big( \rho(x  + E_1 + t^2 y ) - \rho (x + E_2 + t^2 y) \big) \, dx   \, dy\\
& \quad   - t^2  \int_{-\infty}^{\infty}
\left( \rho \left(\frac{1}{w} + E_2 + t^2 y\right) - \rho \left( E_2 + t^2 y\right) \right)M (y) \, dy.
\end{split}
\end{align} 
\end{lemma}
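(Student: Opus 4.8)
The plan is to prove \eqref{e:derivative-third} by the same mechanism that produced \eqref{e:derivative-first}, namely starting from the defining relation \eqref{e:loweraux1} for $p_E$ and integrating it. Recall that $Q_E$ is the cumulative distribution function of $p_E$, so $Q_{E_2}(w) - Q_{E_1}(w) = \int_{-\infty}^w \big(p_{E_2}(x) - p_{E_1}(x)\big)\, dx$. I would substitute the representation \eqref{e:derivative-first} for $p_{E_2}(x) - p_{E_1}(x)$ (with the roles of $E_1$ and $E_2$ interchanged, which flips the overall sign) into this integral, obtaining two terms: one involving a double integral of $\rho(1/x + E_2 + t^2 y) - \rho(1/x + E_1 + t^2 y)$ against $p_{E_2}^{*K}(y)/x^2$, and one involving $\rho'$ against $M(y)/x^2$.

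The key computation is then evaluating the $x$-integral $\int_{-\infty}^w \frac{1}{x^2}(\cdots)\, dx$ in each term by the substitution $x \mapsto 1/x$. For the first term, $\int_{-\infty}^w \frac{1}{x^2}\,\rho(1/x + c)\, dx$ becomes (after $u = 1/x$, $du = -x^{-2}\, dx$) an integral of $\rho(u + c)$ over a range with endpoint $1/w$; carefully tracking the limits of integration and the sign of the Jacobian across $x = 0$ produces the inner integral $\int_{w^{-1}}^0 (\cdots)\, dx$ appearing in \eqref{e:derivative-third}, and after swapping the order of integration with the $y$-integral (justified by Fubini, using the integrability afforded by $\L$-regularity of $\rho$ and the density $p_{E_1}^{*K}$) one recovers the first term on the right of \eqref{e:derivative-third}. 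For the second term, the analogous substitution turns $\int_{-\infty}^w \frac{1}{x^2}\,\rho'(1/x + E_2 + t^2 y)\, dx$ into $\rho(1/w + E_2 + t^2 y) - \rho(E_2 + t^2 y)$ (the boundary value at $x \to \pm\infty$ contributing the $\rho(E_2 + t^2 y)$ term, using that $\rho'$ integrates to the difference of $\rho$'s), which after the Fubini swap yields the second term with its factor of $t^2$ and the sign as written.

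The main obstacle I anticipate is the bookkeeping around the singularity of the substitution $x \mapsto 1/x$ at $x = 0$: the map from $(-\infty, w)$ to its image under $x \mapsto 1/x$ is not monotone across $0$, so one must split the $x$-integral at $0$ (into $(-\infty, 0)$ and $(0, w)$, assuming $w > 0$; the case $w < 0$ is handled symmetrically), track orientations, and verify that the pieces recombine into precisely the $\int_{w^{-1}}^0$ form stated. One must also confirm the relevant integrals converge absolutely — this uses $|\rho(x)| \le \L(1+x^2)^{-1}$ and $|\rho'(x)| \le \L(1+|x|)^{-(1-1/\L)}$ from \Cref{d:Lregular}, together with the fact that $M$ is a finite sum of convolutions of $p_{E_i}$ (which have the quadratic decay from \eqref{e:pEquadratic}) with the bounded-variation-type kernel $Q_{E_1} - Q_{E_2}$ — so that Fubini and integration by parts are legitimate and the boundary terms at infinity vanish. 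Once these justifications are in place, the identity follows by direct manipulation, and I would present it concisely, relegating the convergence checks to a brief remark citing the $\L$-regularity bounds and \Cref{l:rhoebound}.
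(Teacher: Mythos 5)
Your overall route is the same as the paper's: integrate \eqref{e:derivative-first} over $(-\infty, w]$ and convert the $x$-integrals by the substitution $x \mapsto 1/x$ (the paper integrates from $x_1$ to $x_2$ and then sends $x_2 \to -\infty$, which amounts to the same thing). However, the bookkeeping you describe would not land on \eqref{e:derivative-third}. Interchanging $E_1$ and $E_2$ in \eqref{e:derivative-first} does not merely ``flip the overall sign'': it also replaces $p_{E_1}^{*K}$ by $p_{E_2}^{*K}$, replaces the argument $E_2$ of $\rho'$ by $E_1$, and replaces $M$ by $-M$ (the last by commutativity of convolution and the reindexing $j \mapsto K+1-j$ in the sum defining $M$). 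The two terms you then write down mix the conventions: a first term weighted by $p_{E_2}^{*K}$ (the swapped version) together with a second term featuring $\rho'(1/x + E_2 + t^2 y)$ against $M$ (the unswapped version). Carried out literally, this yields an identity with $p_{E_2}^{*K}$ in the first integral, which is not \eqref{e:derivative-third}, and in the mixed form you state it is in general not valid. The correct move is simpler: do not relabel at all; write $Q_{E_2}(w) - Q_{E_1}(w) = -\int_{-\infty}^w \big(p_{E_1}(x) - p_{E_2}(x)\big)\,dx$ and substitute \eqref{e:derivative-first} as it stands, which delivers exactly the stated combination of $p_{E_1}^{*K}$, $\rho(\cdot + E_1 + t^2 y) - \rho(\cdot + E_2 + t^2 y)$, $\rho'(1/x + E_2 + t^2 y)$, and $M$.

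Two further points in your computation need care. The endpoint evaluation has the wrong sign: with $c = E_2 + t^2 y$ one has $\int_{-\infty}^{w} x^{-2}\, \rho'(1/x + c)\,dx = \rho(c) - \rho(1/w + c)$, not $\rho(1/w+c) - \rho(c)$; in the unswapped derivation this factor enters with prefactor $+t^2/x^2$ (after negating \eqref{e:derivative-first}), and the two signs combine to give the term $-t^2\int\big(\rho(1/w + E_2 + t^2 y) - \rho(E_2 + t^2 y)\big)M(y)\,dy$ as stated. Also, for $w > 0$ the substitution on the first term over $(-\infty, w)$, which contains $0$, gives $\int_{\R} f - \int_0^{1/w} f$ with $f(u) = \rho(u + E_1 + t^2 y) - \rho(u + E_2 + t^2 y)$; you need the mass cancellation $\int_{\R} f = 0$ (a difference of two probability densities) to collapse this to $\int_{w^{-1}}^0 f$ --- this is the precise mechanism behind your remark that the pieces ``recombine,'' and it is worth stating explicitly since the $\rho'$ term is handled differently (there the boundary values at $\pm\infty$ vanish). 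With these corrections your argument coincides with the paper's proof, which integrates \eqref{e:derivative-first} from $x_1$ to $x_2$, changes variables, and takes $x_2 = -\infty$.
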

\begin{proof}
 Integrating \eqref{e:derivative-first} from $x_1$ to $x_2$ yields
\begin{align}
\begin{split}
&\big(Q_{E_1} (x_2)  - Q_{E_2}(x_2)\big)  -  \big( Q_{E_1} (x_1)  - Q_{E_2}(x_1)  \big)\label{e:derivative-second} \\
&= \int_{-\infty}^{\infty} p_{E_1}^{*K} (y) \int_{x_1}^{x_2}  \frac{1}{x^2}  \left( \rho \left(\frac{1}{x} + E_1 + t^2 y\right) - \rho \left(\frac{1}{x} + E_2 + t^2 y\right) \right) \, dx   \, dy \\
& \quad  -  t^2  \int_{-\infty}^{\infty}\int_{x_1}^{x_2} \frac{1}{x^2} \rho' \left(\frac{1}{x} + E_2 + t^2 y\right) M(y) \, dx \, dy \\
&= -  \int_{-\infty}^{\infty} p_{E_1}^{*K} (y) \int_{x_1^{-1}}^{x_2^{-1}}   \big( \rho (x  + E_1 + t^2 y) - \rho (x + E_2 + t^2 y) \big) \, dx   \, dy \\
& \quad  -  t^2  \int_{-\infty}^{\infty}
\left( \rho \left(\frac{1}{x_1} + E_2 + t^2 y\right) - \rho\left(\frac{1}{x_2} + E_2 + t^2 y\right) \right) M (y) \, dy 
\end{split}
\end{align}
In the second equality, we made the change of variables $x \mapsto (x')^{-1}$ and performed the integration. 
Taking $x_2 = - \infty$ gives the conclusion.
\end{proof}

\subsection{Self-Consistent Integral Bound}

To control the last term in \eqref{e:derivative-first}, it is natural to attempt to control the integral over $| M(y) |$ over $\R$. However, due to the heavy tails of $M$, it is not clear that this integral exists. Instead, we control the integral of $| M(y) |$ multiplied by a decaying weight function to enforce convergence. 
Specifically, for all $p \in \R$, we define
\be F(p) = \int_{-\infty}^\infty \big| M(w) \big| ( 1 + |wt^2 - p|)^{-1/\L}  \, dw.
\ee  

To bound $F(p)$,  we need the following preliminary technical bound, whose proof is given in \Cref{s:weightedintegralest}.

\begin{lemma}\label{l:weightedintegraldensity} 
Set $I_\L  =  [E_0 - \L^{-1}, E_0 + \L^{-1}]$.
There exist constants $C , K_0> 0$ such that the following holds. 
For every $E_1, E_2 \in I_\L$, $K \ge K_0$,  and $x\in \R$, 
\begin{align}\label{e:integral1}
\int_{-\infty}^{\infty} (1+|w|t^2)^{-1/\L}  p_{E_1}^{*(K-j)}*  p_{E_2}^{*(j-1)} (w-x) \, dw \le (1 + |x|t^2 )^{-1/\L} .
 \end{align}
\end{lemma}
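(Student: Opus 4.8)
The plan is to read \eqref{e:integral1} probabilistically. Write $S = X_1 + \dots + X_{K-1}$ for independent random variables with $X_1, \dots, X_{K-j}$ of density $p_{E_1}$ and $X_{K-j+1}, \dots, X_{K-1}$ of density $p_{E_2}$, so that $p_{E_1}^{*(K-j)} * p_{E_2}^{*(j-1)}$ is the density of $S$; after the substitution $u = w-x$ the left side of \eqref{e:integral1} is exactly $\E\big[(1+t^2|S+x|)^{-1/\L}\big]$, while the right side is this same expression evaluated at $S \equiv 0$. The goal is thus to show that averaging against the law of $S$ does not increase the quantity by more than a bounded factor. The one structural fact driving the argument is that, under $t = g(K\log K)^{-1}$, the variable $t^2|S|$ is overwhelmingly small: \Cref{l:QE1E2bound}, which is phrased for the mixed-energy sum $Q_{E_1,E_2}(j,K)$ precisely so that it applies here, gives $\P(|S| \ge A) \le C_r K A^{-(1-r)}$ for every small $r \in (0,1)$, and $K (t^2)^{1-r} = g^{2(1-r)} K^{2r-1}(\log K)^{2r-2} \to 0$ once $r < 1/2$. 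Throughout I would use only such $K$-uniform estimates (from \Cref{l:QE1E2bound}, \Cref{l:Kuniformbound}, \Cref{l:bapstcollection}); the $K$-dependent constants in \Cref{l:rhoebound} are useless here.

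The estimate itself splits on whether $t^2|S| < \tfrac12(1+t^2|x|)$. On the complementary (bad) event one has $|S| \ge (2t^2)^{-1}(1+t^2|x|)$, so --- bounding the integrand there crudely by $1$ --- that contribution is at most $\P\big(|S| \ge (2t^2)^{-1}(1+t^2|x|)\big) \le C_r K (2t^2)^{1-r}(1+t^2|x|)^{-(1-r)} \le \epsilon_K (1+t^2|x|)^{-1/\L}$, where we used $1-r \ge 1/\L$ and $1+t^2|x| \ge 1$, and $\epsilon_K \to 0$. On the good event, $1 + t^2|S+x| \ge (1+t^2|x|) - t^2|S| \ge \tfrac12(1+t^2|x|) > 0$; writing $r_S = t^2|S|/(1+t^2|x|) \in [0,\tfrac12)$ and using $|S+x| \ge |x|-|S|$ gives $(1+t^2|S+x|)^{-1/\L} \le (1+t^2|x|)^{-1/\L}(1-r_S)^{-1/\L} \le (1+t^2|x|)^{-1/\L}\big(1+\tfrac3\L r_S\big)$, the last step by convexity of $r \mapsto (1-r)^{-1/\L}$ on $[0,\tfrac12]$. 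Taking expectations, the surviving term is a multiple of $(1+t^2|x|)^{-1/\L}\,t^2\,\E\big[|S|\,\one\{t^2|S| < \tfrac12(1+t^2|x|)\}\big]/(1+t^2|x|)$, and the truncated first moment is $\le \E[|S|\wedge M] = \int_0^M \P(|S|>s)\,ds \le C_r K M^r$ with $M = (2t^2)^{-1}(1+t^2|x|)$; inserting this bound, the term is $\le C_r \L^{-1} K (t^2)^{1-r}(1+t^2|x|)^{-1/\L}$, again vanishing relative to the target. Altogether the left side of \eqref{e:integral1} is at most $(1+\epsilon_K)(1+t^2|x|)^{-1/\L}$, which is the claim; one can, if desired, replace the lossy step $|S+x|\ge|x|-|S|$ on the good event by a first-order expansion about $S = 0$ to absorb the residual factor into $1$, the only non-negligible remainder being the drift $t^2\E[S]$, which is $O((\log K)^{-2}K^{-1})$ because the principal-value means $\E[\Gamma^{(E_i)}]$ are uniformly bounded.

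The main obstacle is exactly this truncation. A direct Taylor-type estimate of $\E[(1+t^2|S+x|)^{-1/\L}]$ would want $\E|S|$ finite, which it is not: the densities $p_E$ decay only like $|x|^{-2}$, so $\Gamma$ (hence $S$) has infinite first moment. The fix is to truncate at the scale $M = (2t^2)^{-1}(1+t^2|x|)$ and exploit that the mass beyond $M$ is controlled --- with a constant --- by \Cref{l:QE1E2bound}, and that the scaling $Kt^2 = g^2 K^{-1}(\log K)^{-2} \to 0$ makes every error so produced disappear uniformly in $x$, the slow decay (exponent $1/\L$) of the weight being what allows the comparison $(1+t^2|x|)^{-(1-r)}\le(1+t^2|x|)^{-1/\L}$ to absorb them. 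A secondary, purely bookkeeping, point is carrying the mixed-density structure of $S$ through every step, which is why \Cref{l:QE1E2bound} in its $Q_{E_1,E_2}(j,K)$ form --- rather than a single-energy bound --- is the right input.
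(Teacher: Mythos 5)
Your argument is correct, and it rests on the same two pillars as the paper's proof: a split according to whether the shift $|S|=|w-x|$ is small or large on the scale set by $x$, and the mixed-energy tail bound of \Cref{l:QE1E2bound} combined with the scaling $K t^{2(1-r)}\to 0$ for $r<1/2$. Where you diverge is in the treatment of the "good" region. The paper takes the cruder route: on $|w-x|\le |x|/2$ one has $|w|\ge |x|/2$, so the weight $(1+t^2|w|)^{-1/\L}$ is within a factor $2^{1/\L}$ of $(1+t^2|x|)^{-1/\L}$, and the density integrates to one; the far region $|w-x|\ge|x|/2$ is then handled by \Cref{l:QE1E2bound} with $r=1/4$ and the case distinction $|x|\lessgtr t^{-2}$. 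You instead linearize $(1-r_S)^{-1/\L}\le 1+\tfrac{3}{\L}r_S$ and control the truncated first moment $\E[|S|\wedge M]\le\int_0^M\P(|S|>s)\,ds\le C_r K M^r/r$, which is more work but yields the constant $1+o_K(1)$ rather than an unspecified $C$. That precision is not needed anywhere the lemma is used; note also that the constant-free display in the statement is not literally attained by either argument (the paper's proof produces a factor $C$, and the $C$ announced in the statement is evidently meant to multiply the right-hand side), so your $(1+\epsilon_K)$ is perfectly adequate and in fact sharper.

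Two small remarks. First, your insistence on $K$-uniform inputs (\Cref{l:QE1E2bound}, and not \Cref{l:rhoebound}) is exactly the right instinct and matches the paper. Second, the optional closing aside — replacing $|S+x|\ge|x|-|S|$ by a first-order expansion and invoking boundedness of the "principal-value means" $\E[\Gamma]$ — is not justified by anything in the paper (the densities $p_E$ decay only like $|x|^{-2}$, so $\Gamma$ has no absolute first moment, and no principal-value bound is established); since you flag it as dispensable and the main argument never uses it, this does not affect the validity of the proof.
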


The next lemma bounds $F(p)$. Since $M$ is defined in terms of $Q_{E_1} - Q_{E_2}$, and our expression for this difference in \eqref{e:derivative-third} is in turn given in terms of $M$, our argument is based on bounding $F(p)$ by a multiple of itself. 
\begin{lemma}\label{l:intermediatejanuary}
Set $I_\L  =  [E_0 - \L^{-1}, E_0 + \L^{-1}]$. There exists $C,K_0 >0$ such that  for all $K\ge K_0$, $E\in I_\L$, and $p\in \R$,
\be\label{e:finalMbound}
F(p)  \le C K^{1 + 2/\L}  |E_1 - E_2|.
\ee 
\end{lemma}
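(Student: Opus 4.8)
The plan is to prove \eqref{e:finalMbound} by a self-bootstrapping argument: we express $M$ (hence $F(p)$) in terms of the quantity $Q_{E_2}-Q_{E_1}$, then use the integral representation \eqref{e:derivative-third} for that difference, which reintroduces $M$, and finally close the loop by observing that the $M$-dependent term comes with a small prefactor $t^2$. Concretely, recall $M(w) = \sum_{j=1}^K M_j(w)$ with $M_j = p_{E_1}^{*(K-j)} * (Q_{E_1}-Q_{E_2}) * p_{E_2}^{*(j-1)}$. First I would bound $F(p)$ in terms of $\sup_x |Q_{E_1}(x)-Q_{E_2}(x)| \cdot (1+|x|)^{\text{(something)}}$-type control of the CDF difference, using \Cref{l:weightedintegraldensity} to absorb the convolutions against $p_{E_1}^{*(K-j)}* p_{E_2}^{*(j-1)}$: each such convolution preserves the weight $(1+|w|t^2)^{-1/\L}$ up to translation, so summing over $j\in\unn{1}{K}$ produces exactly the factor $K$ (times a bounded constant), yielding $F(p) \le C K \cdot G(p)$ where $G(p) = \int |Q_{E_1}(w)-Q_{E_2}(w)|(1+|wt^2-p|)^{-1/\L}\,dw$ (or a suitably weighted sup of the CDF difference).

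Next I would estimate $Q_{E_2}-Q_{E_1}$ using \eqref{e:derivative-third}. The first term there is an honest integral of $\rho(x+E_1+t^2y)-\rho(x+E_2+t^2y)$ over $x\in[w^{-1},0]$, integrated against the probability density $p_{E_1}^{*K}$; since $\rho$ is $\L$-Lipschitz (third part of \Cref{d:Lregular}) and $\rho$ is bounded, this inner integral is bounded by $\min(|w^{-1}|, 1)\cdot \L|E_1-E_2|$, which is uniformly $O(|E_1-E_2|)$ and, crucially, decays like $|w|^{-1}$ for large $|w|$; integrating against the probability density contributes no extra $K$. The second term of \eqref{e:derivative-third} is $t^2$ times $\int (\rho(w^{-1}+E_2+t^2y)-\rho(E_2+t^2y)) M(y)\,dy$; using Lipschitz continuity of $\rho$ the difference of $\rho$'s is bounded by $\min(\L |w|^{-1}, 2\|\rho\|_\infty)$, and what remains is $t^2$ times (a bounded multiple of) an integral of $|M(y)|$ against a weight that is comparable, after adjusting by the outer integration variable $w$, to the weight defining $F$. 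This is where the self-consistency enters: plugging this back, one gets schematically $G(p) \le C|E_1-E_2| \cdot (\text{weight}) + C t^2 \sup_{p'} F(p')$, and hence $F(p) \le C K |E_1-E_2| + C K t^2 \sup_{p'} F(p')$. Since $t = g/(K\log K)$, we have $K t^2 = g^2/(K (\log K)^2) \to 0$, so for $K\ge K_0$ the coefficient $CKt^2 < 1/2$, and we can absorb the $F$ term on the right into the left, giving $\sup_p F(p) \le 2CK|E_1-E_2|$. Tracking the powers of $K$ carefully through \Cref{l:weightedintegraldensity} (which may cost a factor like $K^{2/\L}$ from the weight manipulations) yields the stated bound $F(p)\le CK^{1+2/\L}|E_1-E_2|$.

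The main obstacle I anticipate is making the self-consistent closure rigorous, specifically two points. First, one must verify that the weighted integrals defining $F(p)$ are actually finite a priori, so that "absorbing $F$ into itself" is legitimate rather than circular — this likely requires an initial crude bound showing $F(p) < \infty$ (e.g. from the heavy-tail estimate \eqref{e:bapsta} for $p_{E_1}^{*(j-1)} * p_{E_2}^{*(K-j)}$, which gives a $|w|^{-2}$ decay, against which the $(1+|w|t^2)^{-1/\L}$ weight makes the integral converge, since $|Q_{E_1}-Q_{E_2}|\le 1$ trivially bounds the CDF difference). Second, the weight $(1+|wt^2-p|)^{-1/\L}$ appearing inside $F$ does not exactly match the weight that naturally arises after applying \eqref{e:derivative-third} — there the relevant shift is by $1/w$ or by an $E_2 + t^2 y$ argument — so one needs a lemma of the form "$(1+|a-b|)^{-1/\L} \le C (1+|a|)^{-1/\L}(1+|b|)^{1/\L}$" or a direct change-of-variables/convolution estimate to reconcile them, and this is precisely the technical content deferred to \Cref{l:weightedintegraldensity}. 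Once those weight-bookkeeping steps are in hand, the rest is a routine application of Lipschitz bounds on $\rho$, the triangle inequality, and the smallness of $Kt^2$.
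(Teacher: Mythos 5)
Your overall architecture is the same as the paper's: both routes start from the representation \eqref{e:derivative-third} (equivalently its convolved form \eqref{e:derivative-fourth}), use \Cref{l:weightedintegraldensity} to pass the weight through $p_{E_1}^{*(K-j)}*p_{E_2}^{*(j-1)}$, collect the factor $K$ from the sum over $j$, and close by self-consistent absorption using the smallness of $t^2K^{1+1/\L}$; whether you convolve the identity first or first reduce $F(p)$ to a weighted integral of $Q_{E_1}-Q_{E_2}$ is immaterial. Your remark about a priori finiteness of $F$ is a fair one (the paper's absorption step uses it implicitly), and it can indeed be supplied, e.g.\ from \eqref{e:bapsta2}, which gives $|Q_{E_1}(x)-Q_{E_2}(x)|\le C\min(1,|x|^{-1})$ and hence a convergent weighted integral.

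The genuine gap is in your treatment of the second, $M$-dependent term. You bound $|\rho(w^{-1}+E_2+t^2y)-\rho(E_2+t^2y)|$ by $\min(\L|w|^{-1},2\|\rho\|_\infty)$, a function of $w$ alone, and then claim that what remains is $t^2$ times an integral of $|M(y)|$ against a weight comparable to the one defining $F$. With that bound, however, the remaining $y$-integral is the \emph{unweighted} $\int|M(y)|\,dy$, which cannot be dominated by $\sup_{p'}F(p')$ (the weight in $F$ only makes the integral smaller) and may not even be finite --- the heavy tails of $M$ are exactly why the weight was introduced in the first place, so the self-consistent closure does not run. Conversely, if you instead keep the $y$-decay of each $\rho$ factor separately (via part (1) of \Cref{d:Lregular}), you do obtain a weight in $y$ and hence a bound by $Ct^2\sup_{p'}F(p')$, but then you retain no decay in $w$, and the outer integral against $(1+|wt^2-p|)^{-1/\L}$ diverges, since that weight alone is not integrable on $\R$. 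What is actually needed --- and what the paper's Step 3 provides in \eqref{e:ubound}--\eqref{e:cases} --- is a \emph{joint} two-variable bound of the form $(1+|x|)^{-1}(|y|t^2+1)^{-2/\L}$ in the relevant regime, which uses the decay at infinity of $\rho$ and $\rho'$ (parts (1) and (3) of $\L$-regularity, together with $|E_0|\le 2\L$), not merely Lipschitz continuity and boundedness. This joint decay is what simultaneously keeps the $w$-integral of size $O(K^{1/\L})$ and converts the $M$-integral into $F(0)$ so that the absorption closes. Your anticipated obstacle attributes the difficulty to a weight mismatch resolved by \Cref{l:weightedintegraldensity}, but that lemma only handles the convolution step (which you already use correctly); the missing ingredient is the refined bound on the $\rho$-difference itself.
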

\begin{proof}
The proof proceeds in three steps.\\
\paragraph{\textit{Step 1.}} 
In \eqref{e:derivative-third}, taking the convolution with $p_{E_1}^{*(K-j)} * p_{E_2}^{*(j-1)}$ gives 
\begin{align} \label{e:derivative-fourth}
& - M_j(w) = \int_{-\infty}^{\infty}\int_{-\infty}^{\infty}  p_{E_1}^{*K} (y) \big( p_{E_1}^{*(K-j)} * p_{E_2}^{*(j-1)}  \big)(w-x) H(x,y) \, dx \, dy  \\
& \quad  \notag +  t^2  \int_{-\infty}^{\infty} \int_{-\infty}^{\infty} 
\big( p_{E_1}^{*(K-j)} * p_{E_2}^{*(j-1)}  \big)(w-x) 
\left( \rho \left(\frac{1}{x} + E_2 + t^2 y\right) - \rho \left( E_2 + t^2 y\right) \right) \\
& \quad  \quad \quad  \times
M (y)\, dx \, dy \notag,
\end{align}
where we set 
\be\label{e:Hdef}
H(x,y) = \int_{x^{-1}}^{0}   \big( \rho(r + E_1 + t^2 y ) - \rho (r + E_2 + t^2 y) \big) \, dr.
\ee 
\paragraph{\textit{Step 2.}} 
In this step and the next, we bound the function  $F(p)$
by considering the terms resulting from both of the integrals in \eqref{e:derivative-fourth} separately, after integrating against the weight $( 1 + |wt^2 - p |)^{-1/\L} $. In this step, we consider the term from the first integral in \eqref{e:derivative-fourth}. We begin by noting a few useful bounds on $H$.
By a change of variables, we get
 \begin{align}
H(x,y) 
&= 
\int_{x^{-1}}^{0}    \rho(r + E_1 + t^2 y ) \, dr  - \int_{x^{-1} + E_2  - E_1}^{ E_2 - E_1}  \rho (r + E_1 + t^2 y)  \, dr\notag \\
&  =
\int_{x^{-1} }^{x^{-1} + E_2 - E_1}  \rho (r + E_1 + t^2 y)  \, dr
-
 \int_{0}^{E_2 - E_1}    \rho(r + E_1 + t^2 y ) \, dr .\label{e:Hunifbound}
\end{align}
Then, because $\| \rho \|_\infty \le \L$, we obtain from \eqref{e:Hunifbound} that
\be\label{e:Hnearzero}
\big | H(x,y) \big| \le C | E_1 - E_2 |. 
\ee 
Using \eqref{e:Hnearzero},  and considering the region $|x| \le 1$ in the first integral in \eqref{e:derivative-fourth}, we find that 
\begin{align}
&\int_{-\infty}^{\infty} \Bigg| \int_{-\infty}^{\infty}\int_{-1}^1  p_{E_1}^{*K} (y) \big( p_{E_1}^{*(K-j)} * p_{E_2}^{*(j-1)}  \big)(w-x) H(x,y) ( 1 + |wt^2 - p| )^{-1/\L}   \, dx \, dy\, \Bigg|  dw\notag \\
&\qquad \le C| E_1 - E_2|,\label{e:integralbound0}
\end{align}
where we used that $p_{E_1}^{*K} $ and $p_{E_1}^{*(K-j)} * p_{E_2}^{*(j-1)}  $ are probability densities.

Further, note that the mean value theorem the assumption that $ \rho$ is $\L$-regular ( \Cref{d:Lregular}) together imply that 
\begin{align}
\big|\rho(r + E_1 + t^2 y ) - \rho (r + E_2 + t^2 y)
\big| \le C|E_1 - E_2|.
\end{align}
Inserting this in \eqref{e:Hdef} gives 
\begin{equation}\label{e:newHbound}
\big| H(x,y) \big| \le C |E_1 - E_2 | |x|^{-1}.
\end{equation}

Using \eqref{e:integral1} and \eqref{e:newHbound}, and considering the region $|x| \ge 1$ in the first integral in \eqref{e:derivative-fourth}, we have after the change of variables $t^2 \tilde w = t^2 w -p$ that
\begin{align}
&\int_{-\infty}^{\infty} \Bigg| \int_{-\infty}^{\infty}\int_{[-1,1]^c}  p_{E_1}^{*K} (y) \big( p_{E_1}^{*(K-j)} * p_{E_2}^{*(j-1)}  \big)(w-x) H(x,y) ( 1 + | t^2 w  - p |)^{-1/\L}   \, dx \, dy\, \Bigg|  dw \notag \\
&\qquad \le C  \int_{-\infty}^{\infty}\int_{[-1,1]^c}  p_{E_1}^{*K} (y)  \big| H(x,y)  \big| ( 1 + |t^2x - p|)^{-1/\L}   \, dx \, dy \notag  \\
&\qquad \le  C |E_1 -E_2|  \int_{-\infty}^\infty \int_{[-1,1]^c}  p_{E_1}^{*K} (y)  |x|^{-1}  ( 1 +  |t^2x - p|)^{-1/\L}   \, dx \, dy\notag  \\
&\qquad \le C K^{1/\L} | E_1 - E_2|. \label{e:integral1bound}
\end{align}
In the last inequality, we noted that 
\begin{align}
\begin{split}\label{e:janintegral}
 & \int_{|x| \ge 1 }  |x|^{-1}  ( 1 +  |t^2x -p  |)^{-1/\L}    \, dx \\
  &= \int_{|x| \ge t^2}  |x|^{-1}  ( 1 +  |x -p  |)^{-1/\L}    \, dx \\
   & = \int_{ t^2 \le |x| \le 1 }  |x|^{-1}  ( 1 +  |x -p  |)^{-1/\L}    \, dx + \int_{ |x| \ge 1 }  |x|^{-1}  ( 1 +  |x -p  |)^{-1/\L}    \, dx  \\
 &\le C K^{1/\L} + C  \le C K^{1/\L} .
 \end{split}
\end{align}
where the first equality follows from a change of variables. 
Combining \eqref{e:integralbound0} and
\eqref{e:integral1bound}, we find 
\begin{align}
&\Bigg|\int_{-\infty}^{\infty}  \int_{-\infty}^{\infty}\int_{-\infty,}^{\infty}  p_{E_1}^{*K} (y) \big( p_{E_1}^{*(K-j)} * p_{E_2}^{*(j-1)}  \big)(w-x) H(x,y) ( 1 + |t^2w-p |)^{-1/\L}   \, dx \, dy\,  dw\Bigg|  \notag \\
&\le C K^{1/\L} | E_1 - E_2|. \label{e:step2conclusion}
\end{align}

\paragraph{\textit{Step 3.}} 
We now proceed to bound the second integral in \eqref{e:derivative-fourth}. 
We begin by deriving another useful estimate. 
Because $\rho$ is $\L$-regular, we have 
\be\label{e:ubound}
\big |\rho (x^{-1} +E+yt^2) - \rho (E+yt^2) \big| \le C u(x,y),
\ee 
where
\begin{equation}\label{e:cases}
u(x,y)=
    \begin{cases}
        1 & \text{if }|x| \le 100   (|y|t^2+1)^{-1}\\
       (1 + |x|)^{-1} \big(|y|t^2+1\big)^{-2/\L} & \text{if } |x| > 100  (|y|t^2+1)^{-1}.
    \end{cases}
\end{equation}
The found in the first regime of \eqref{e:cases} follows from the assumption that $\| \rho \|_\infty \le \L$. 
When $|x| \le 1$, the second case follows follows from the decay bound
\begin{equation}
 \rho (u+yt^2) \le \frac{C}{1 + | u + yt^2|^2 }, 
\end{equation}
which follows from \Cref{d:Lregular} and the assumption that $|E| \le 2 \L$, after considering the cases $|y|t^2 \le 4 \L$ and  $|y|t^2 \ge 4 \L$ separately. When $|x| >1$, the second case follows from similar reasoning applied to $\rho'$ (using the mean value theorem). 

Using \eqref{e:ubound}, we have 
\begin{align}\label{e:weightedprelim}
&\int_{-\infty}^{\infty} (1+|t^2 x - p| )^{-1/\L} \left|\rho\left(\frac{1}{x}+E+yt^2\right)-\rho(E+yt^2) \right|  \,dx  \\
&\le C \int_{-\infty}^{\infty} (1+|t^2 x - p| )^{-1/\L}  u(x,y) \, dx.
\end{align}
Breaking this integral into two pieces according to the size of $u$, we have 
\begin{align*}
&\int_{|x| \le 100 ( |y| t^2 +1)^{-1} } (1+|t^2 x - p| )^{-1/\L}  u(x,y) \, dx \le C( |y| t^2 +1)^{-1},
\end{align*}
since the integrand is bounded by $1$. Further, we have
\begin{align}
\begin{split}
&\int_{|x| \ge 100 ( |y| t^2 +1)^{-1} } (1+|t^2 x - p| )^{-1/\L}  u(x,y)\, dx  \\ 
&\le \big(|y|t^2+1\big)^{-2/\L}  \int_{|x| \ge 100 ( |y| t^2 +1)^{-1} }(1+|t^2 x - p| )^{-1/\L} |x|^{-1} \,dx \\
&\le C\big(|y|t^2+1\big)^{-2/\L}  \int_{ 100 ( |y| t^2 +1)^{-1} \le |x| \le 1}  |x|^{-1} \,dx \\
& \quad  +  C \big(|y|t^2+1\big)^{-2/\L}  \int_{|x| \ge 1  } |x - t^{-2} p| ^{-1/\L}  |x|^{-1}   \,dx  \\
&\le C K^{1/\L} \big(|y|t^2+1\big)^{-1/\L},
\end{split}
\end{align}
where in the last inequality, we directly evaluated the first integral and used \eqref{e:janintegral} to bound the second. 
Inserting the previous two bounds in \eqref{e:weightedprelim} gives 
\begin{multline}
\int_{-\infty}^{\infty} (1+|t^2 x  -p | )^{-1/\L} \left|\rho\left(\frac{1}{x}+E+yt^2\right)-\rho(E+yt^2) \right|  \,dx\\  \le C K^{1/\L} \big(|y|t^2+1\big)^{-1/\L}. 
\label{e:integral2}
\end{multline}
Inserting \eqref{e:integral1} and \eqref{e:integral2} into the second integral in \eqref{e:derivative-fourth} gives  
\begin{align}
\begin{split}
 &\int_{-\infty}^\infty  \Bigg| \int_{-\infty}^{\infty} \int_{-\infty}^{\infty} 
\big( p_{E_1}^{*(K-j)} * p_{E_2}^{*(j-1)}  \big)(w-x)  
\left( \rho \left(\frac{1}{x} + E_2 + t^2 y\right) - \rho \left( E_2 + t^2 y\right) \right)\\ &\quad \times 
M (y) ( 1 + |wt^2 - p |)^{-1/\L} \, dx \, dy  \Bigg| \, dw 
\\
&\le   \int_{-\infty}^{\infty} \int_{-\infty}^{\infty} 
(1+|t^2 x  -p | )^{-1/\L}  \left| \rho \left(\frac{1}{x} + E_2 + t^2 y\right) - \rho \left( E_2 + t^2 y\right) \right|
\big| M (y) \big|  \, dx \, dy    \\
& \le C K^{1 /\L}  \int_{-\infty}^\infty M(y) \big(|y|t^2+1\big)^{-1/\L} \, dy.\label{e:integral2bound}
\end{split}
\end{align}
Inserting \eqref{e:step2conclusion} and \eqref{e:integral2bound} into \eqref{e:derivative-fourth}, we obtain
\begin{align*}
 &\int_{-\infty}^\infty \big| M_j(w)\big|  ( 1 +  | t^2w -p |)^{-1/\L}  \, dw\\ & \le C K^{1/ \L} |E_ 1 - E_2| + C t^2 K^{ 1/ \L}  \int_{-\infty}^\infty \big| M(w)\big|  ( 1 + t^2 |w|)^{-1/\L}  \, dw .
\end{align*}
Summing over $j$ gives 
\begin{align*}
 &\int_{-\infty}^\infty \big| M(w)\big|  ( 1 + | t^2w -p |)^{-1/\L}  \, dw\\ & 
 \le CK^{1 + 1/\L} |E_ 1 - E_2| + C t^2  K^{ 1 + 1/ \L}  \int_{-\infty}^\infty \big| M(w)\big|  ( 1 + t^2 |w|)^{-1/\L}  \, dw .
\end{align*}
Since $t \le C K^{-1}$, this implies that for all $p\in \R$,  
\be\label{e:Fpbd}
F(p) \le C K^{1 + 1/\L} |E_ 1 - E_2|  + C K^{-1 + 1/\L} F(0).
\ee
Inserting $p=0$ gives $F(0) \le C K^{1 + 1/\L} |E_ 1 - E_2|$, and using this bound in \eqref{e:Fpbd} gives \eqref{e:finalMbound}. 
\end{proof}
We can now use the previous lemma to bound the difference $Q_{E_2}   -  Q_{E_1} $ from \eqref{e:derivative-third}. 

\begin{lemma}
Set $I_\L  =  [E_0 - \L^{-1}, E_0 + \L^{-1}]$. There exists $C,K_0 >0$ such that for all $K\ge K_0$, $w\in \R$, and $E_1, E_2 \in I_\L$, 
\begin{align}\label{e:largew}
 \big| Q_{E_2} (w)   -  Q_{E_1} (w) \big|
 \le \frac{C K^{2/ \L} |E _1 - E_2|}{1 + |w|}.
\end{align}
\end{lemma}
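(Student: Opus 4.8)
The plan is to start from the exact identity \eqref{e:derivative-third} and bound its two terms separately, using the estimate \eqref{e:finalMbound} on $F(p)$ that was just established. Recall that
\be\label{e:planrecall}
Q_{E_2}(w) - Q_{E_1}(w) = - \int_{-\infty}^\infty p_{E_1}^{*K}(y) \int_{w^{-1}}^0 \big( \rho(x + E_1 + t^2 y) - \rho(x + E_2 + t^2 y) \big) \, dx \, dy - t^2 \int_{-\infty}^\infty \Big( \rho\big(\tfrac{1}{w} + E_2 + t^2 y\big) - \rho(E_2 + t^2 y) \Big) M(y) \, dy.
\ee
For the first term, I would split according to whether $|w^{-1}| \le 1$ or $|w^{-1}| > 1$. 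When $|w| \ge 1$, the inner $x$-integral runs over an interval of length $|w|^{-1}$ and the integrand is bounded by $C|E_1 - E_2|$ via the mean value theorem and $\|\rho'\|_\infty \le \L$ (indeed the difference $\rho(x + E_1 + t^2y) - \rho(x + E_2 + t^2y)$ is $\le C|E_1 - E_2|$ pointwise); since $p_{E_1}^{*K}$ is a probability density, the first term is $\le C|E_1 - E_2|/(1+|w|)$. When $|w| \le 1$ one instead uses that $\|\rho\|_\infty \le \L$ so the inner integral is $\le C|E_1 - E_2| \cdot \min(1, |w|^{-1}) \le C|E_1 - E_2|$, which is absorbed into $CK^{2/\L}|E_1 - E_2|/(1+|w|)$ since $1 + |w| \le 2$. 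Combining, the first term is always $\le C|E_1 - E_2|/(1 + |w|)$.

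For the second term, the key point is to relate $\int |M(y)|\,|\rho(w^{-1} + E_2 + t^2 y) - \rho(E_2 + t^2 y)|\,dy$ to $F(p)$ for a suitable $p$. Using \eqref{e:ubound}--\eqref{e:cases} from the proof of \Cref{l:intermediatejanuary} with $x = w$ fixed, the factor $|\rho(w^{-1} + E_2 + t^2 y) - \rho(E_2 + t^2 y)|$ is bounded by $C u(w, y)$, where $u(w,y) \le C(1 + |w|)^{-1}(|y|t^2 + 1)^{-2/\L}$ when $|w| > 100(|y|t^2+1)^{-1}$, and $u(w,y) \le 1$ otherwise. In the first regime one gets a factor $(1+|w|)^{-1}$ directly and then bounds $t^2 \int |M(y)|(|y|t^2+1)^{-2/\L}\,dy \le t^2 F(0) \le C t^2 K^{1 + 2/\L}|E_1 - E_2| \le C K^{-1 + 2/\L}|E_1 - E_2|$ using $t \le CK^{-1}$ and \eqref{e:finalMbound}. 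In the complementary regime $|w| \le 100(|y|t^2+1)^{-1}$, one has $|y| t^2 \ge c|w|^{-1} - C$, so $(|y|t^2 + 1)^{-2/\L} \le C(1+|w|)^{2/\L} \cdot (|y|t^2+1)^{-2/\L} \cdot$ — more cleanly, in that regime $|w|^{-1} \ge c(|y|t^2 + 1)$ forces $(|y|t^2+1) \le C|w|^{-1}$ whenever $|w| \le 1$, while for $|w| \ge 1$ this regime is empty for large $|y|$; so $t^2\int_{\{|w| \le 100(|y|t^2+1)^{-1}\}} |M(y)|\,dy \le t^2 \cdot (1 \vee |w|)^{-1} \cdot$ a bounded-weight integral of $|M|$, again controlled by $t^2 F(0)$. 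Either way the second term is $\le C K^{-1 + 2/\L} |E_1 - E_2|/(1 + |w|) \le C K^{2/\L}|E_1 - E_2|/(1+|w|)$.

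The main obstacle I anticipate is the bookkeeping in the second term: one must carefully track the weight $(1 + |w|)^{-1}$ through the two regimes of $u(w,y)$ and confirm that in the "bad" regime $|w| \le 100(|y|t^2+1)^{-1}$ the restriction on $y$ genuinely buys back a factor of $(1+|w|)^{-1}$ after integrating $|M(y)|$ against the decaying weight $(|y|t^2+1)^{-1/\L}$ that appears in the definition of $F$. This is where \eqref{e:finalMbound} with its uniform-in-$p$ bound is essential: by choosing the shift $p$ appropriately (or simply using $F(0)$ together with the weight comparison $(1 + |y|t^2)^{-1/\L} \ge c$ on the relevant range), the heavy tail of $M$ is tamed. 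Once both terms carry the $1/(1+|w|)$ decay and a constant of size at most $CK^{2/\L}|E_1 - E_2|$, adding them gives \eqref{e:largew}.
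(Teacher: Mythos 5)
Your overall route is the paper's: start from \eqref{e:derivative-third}, bound the first term via the mean value theorem (the inner interval has length $|w|^{-1}$, giving the decay for $|w|\ge 1$, and a uniform $C|E_1-E_2|$ bound for $|w|\le 1$ — note that for the latter you need either the change-of-variables trick of \eqref{e:Hunifbound} or the integrability of $\rho'$, not just $\|\rho\|_\infty$ pointwise), and control the $M$-term through \Cref{l:intermediatejanuary}. Your regime-1 treatment of the second term (the bound $u(w,y)\le C(1+|w|)^{-1}(1+|y|t^2)^{-2/\L}$, then $t^2F(0)$) is fine and in fact makes explicit the mechanism the paper leaves terse for $|w|\ge 1$ ("bounded as before" there really means: mean value theorem plus the decay of $\rho'$ to extract $|w|^{-1}$ and a weight summable against $F(0)$).

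The genuine gap is in your complementary regime $|w|\le 100(1+|y|t^2)^{-1}$ (nonempty only when $|w|\le 100$). There you bound the $\rho$-difference by a constant and propose to control $t^2\int_{\mathrm{regime}}|M(y)|\,dy$ by comparing the constant weight to the weight $(1+|y|t^2)^{-1/\L}$ appearing in $F(0)$. But on that regime the constraint is only the upper bound $1+|y|t^2\le 100|w|^{-1}$ (your displayed inequality $|y|t^2\ge c|w|^{-1}-C$ has the direction reversed), so the weight can be as small as $(|w|/100)^{1/\L}$ and the comparison costs a factor $(100/|w|)^{1/\L}$. Since the only room between $t^2K^{1+2/\L}|E_1-E_2|$ and the target $K^{2/\L}|E_1-E_2|/(1+|w|)$ is a power of $\log K$, this loss is not affordable once $|w|$ is smaller than a negative power of $K$ (roughly $|w|\lesssim K^{-\L}$), whereas the lemma is asserted for all $w\in\R$ and is integrated over a neighborhood of $w=0$ in \eqref{e:pen1}. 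The repair is exactly your parenthetical fallback, and it is what the paper actually does for $|w|\le 1$: do not discard the decay of $\rho$ in that regime, but bound $\rho(w^{-1}+E_2+t^2y)\le C(1+|t^2y+w^{-1}|)^{-1/\L}$ and $\rho(E_2+t^2y)\le C(1+|t^2y|)^{-1/\L}$, so the integral is controlled by $F$ at the shifts $p=\mp w^{-1}$ and $p=0$, and the uniform-in-$p$ bound \eqref{e:finalMbound} finishes; no $(1/|w|)^{1/\L}$ loss appears. With that substitution your argument closes and coincides with the paper's.
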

\begin{proof}

We begin with the case $|w| \le 1$. 
From \eqref{e:derivative-third}, we obtain from the assumption that $\rho$ is $\L$-regular and the assumption $|E_2| \le 2\L$ that 
\begin{align} 
\begin{split} \label{e:qdiff}
& \big| Q_{E_2} (w)   -  Q_{E_1} (w) \big| \\ & \le  \int_{-\infty}^{\infty} p_{E_1}^{*K} (y) \int_{-w^{-1}}^{0}   \big|  \rho(x  + E_1 + t^2 y ) - \rho (x + E_2 + t^2 y) \big| \, dx   \, dy \\
& \quad   + K t^2 \left| \int_{-\infty}^{\infty}
\left( \rho \left(\frac{1}{w} + E_2 + t^2 y\right) - \rho \left( E_2 + t^2 y\right) \right)M (y) \, dy \right|\\
&\le C  | E_1 - E_2 |  + K t^2 \big( F(0) + F( w^{-1})  \big) \le C K^{2/ \L} |E _1 - E_2|.
\end{split}
\end{align} 
To get the second inequality, we used the mean value theorem to show
\begin{align}
\begin{split}
&\int_{-w^{-1}}^{0}   \big|  \rho(x  + E_1 + t^2 y ) - \rho (x + E_2 + t^2 y) \big| \, dx \\ & \le 
| E_ 1 - E_2| \int_{-\infty}^\infty \big|\rho'\big( u(x) \big) \big| \, dx \le  C|E_1 - E_2|. 
\end{split}
\end{align}
in the integral in the second line, letting $u(x)$ denote the value in the interval 
\[
[x + E_ 1 + t^2 y , x + E_ 2 + t^2 y ]
\]
coming from the mean value theorem, and invoking \Cref{d:Lregular} and the fact that $ p_{E_1}^{*K} (y)$ is a probability density.
For the integral in the second line of \eqref{e:qdiff}, we used
\begin{align}  \left| \rho \left(\frac{1}{w} + E_2 + t^2 y\right)  \right| & \le \frac{1}{ 1 + | w^{-1} + E_2 + t^2 y |^{1/\L} } \le \frac{C}{(1 + | w^{-1}  + t^2 y | )^{1/\L} },
\end{align}
using the assumption that $E_2 \in I_\L$. 
We  used \Cref{l:intermediatejanuary} in the third inequality of \eqref{e:qdiff}. 
This completes our analysis of the case $|w| \le 1$. 

For $|w| \ge 1$, we use the mean value theorem. In the first integral in \eqref{e:qdiff}, we have 
\begin{align}
\begin{split}
&\int_{-w^{-1}}^{0}   \big|  \rho(x  + E_1 + t^2 y ) - \rho (x + E_2 + t^2 y) \big| \, dx \\ & \le 
| E_ 1 - E_2| | w|^{-1} \| \rho' \| \le  C |E_1 - E_2 | |w|^{-1},
\end{split}
\end{align}
where the last inequality follows from the third part of \Cref{d:Lregular}. 
The second integral is bounded as before. This completes the proof. 
\end{proof}

\subsection{Weighted Integral Estimate}\label{s:weightedintegralest}
We now prove \Cref{l:weightedintegraldensity}. 
\begin{proof}[Proof of \Cref{l:weightedintegraldensity}]
When $|w-x | \le |x|/2 $, we have $|w| \ge |x|/2$ and $(1+|w|t^2)^{-1/\L}  \le C  (1+|x|t^2)^{-1/\L} $. Hence 
\begin{align*}
&\int_{|w-x | \le  |x|/2   } (1+|w|t^2)^{-1/L}  p_{E_1}^{*(K-j)} *  p_{E_2}^{*(j-1)} (w-x) \, dw 
 \le C (1+|x |t^2)^{-1/\L},
\end{align*}
where we used the fact that $p_{E_1}^{*(K-j)}  * p_{E_2}^{*(j-1)} (w-x)$ is a probability density (as it is the convolution of probability densities). 
Further, when $|w -x  | \ge |x|/2$, we have by a change of variables that
\begin{align*}
\int_{|w-x | \ge  |x|/2   } (1+|w|t^2)^{-1/L}  p_{E_1}^{*(K-j)} * p_{E_2}^{*(j-1)} (w-x) \, dw \le 
 \int_{|u| \ge  |x|/2   }  p_{E_1}^{*(K-j)}  * p_{E_2}^{*(j-1)} (u) \, du,
\end{align*}
since $(1+|w|t^2)^{-1/L}  \le 1$.  
The convolution $p_{E_1}^{*(K-j)}  * p_{E_2}^{*(j-1)} (u)$ is the density of a random variable $\tilde Q$.  \Cref{l:QE1E2bound} gives
\be\label{e:remarkov2}
\P\big( |  \tilde Q | \ge |x|/ 2 \big) \le \frac{C K }{|x|^{3/4}},
\ee  This yields
\begin{align*}
 \int_{|u| \ge  |x|/2   }  p_{E_1}^{*(K-j)}  * p_{E_2}^{*(j-1)} (u) \, du &  = \P\big( |  \tilde Q | \ge |x|/ 2 \big) \\
& \le \min\left( 1, \frac{C K}{|x|^{3/4} } \right)\\
&\le  C ( 1 + t^2 |x|)^{-1/\L},
\end{align*}
where the last inequality comes from considering the cases $|x| \ge t^{-2}$ and $|x| \le t^{-2}$ separately. 
This completes the proof.
\end{proof}

\subsection{Proof of Main Lipschitz Bound}\label{s:mainlipschitz}
We now have all necessary inputs to prove \Cref{l:amoluniform}.
\begin{proof}[Proof of \Cref{l:amoluniform}]
The proof proceeds in four steps. Our starting point is \eqref{e:bananav3}, which gives
\begin{align}
\begin{split}
\rho_{E_2}(x) - 
\rho_{E_1}(x)
&= \int_{-\infty}^\infty \label{e:bananav2}
p^{*(K-1)}_{E_1}(y) \big(\rho(x - t^2y +E_2 )  - \rho(x - t^2 y +E_1 ) \big) \, dy\\
& \quad + 
\int_{-\infty}^\infty 
\big(p^{*(K-1)}_{E_2}(y) - p^{*(K-1)}_{E_1}(y)  \big)  \rho(x - t^2 y +E_2) \, dy. 
\end{split}
\end{align}
\\

\paragraph{\textit{Step 1.}} 
In this step, we bound the first integral in \eqref{e:bananav2}. 
Using that $\rho$ is $\L$-regular (recall \Cref{d:Lregular}) and  the mean value theorem, we have 
\begin{align}
\begin{split}
\label{e:piece_1}
&\left|  \int_{-\infty}^\infty
p^{*(K-1)}_{E_1}(y) \big(\rho(x - t^2y +E_2 )  - \rho(x - t^2 y +E_1 ) \big) \, dy \right| \\
&\le  C |E_1 - E_2| \int_{-\infty}^\infty
( 1 + |x - t^2 y| )^{-\fourthirds}
p^{*(K-1)}_{E_1}(y) \, dy\\
&=  C |E_1 - E_2| \int_{-\infty}^\infty
( 1 + |x -  y| )^{-\fourthirds}
p^{(K-1)}_{E_1}(y) \, dy.
\end{split}
\end{align}
By partitioning the region of integration into intervals of length $4$, and using  \eqref{e:pEuniform} and \eqref{e:integral1}, we obtain 
\begin{align}
\begin{split}\label{e:roughnew1}
&\int_{-\infty}^\infty
( 1 + |x -  y| )^{-\fourthirds}
p^{(K-1)}_{E_1}(y) \, dy\\
&= \sum_{k=-\infty}^\infty \int_{4k}^{4k+4} ( 1 + |x -  y| )^{-\fourthirds}
p^{(K-1)}_{E_1}(y) \, dy\\
&\le C \sum_{k=-\infty}^\infty \int_{4k}^{4k+4}  (1 + |x -  4k | )^{-\fourthirds} (1+ |y|)^{-4/3} \, dy\\
&\le \int_{-\infty}^\infty
( 1 + |x -  y| )^{-\fourthirds}
(1+ |y|)^{-4/3} \, dy \le |y|^{-\fourthirds}.
\end{split}
\end{align}
Since $p^{(K-1)}_{E_1}(y)$ is a probability density, we also have 
\be\label{e:roughnew2}
\int_{-\infty}^\infty
( 1 + |x -  y| )^{-\fourthirds}
p^{(K-1)}_{E_1}(y) \, dy \le 1.
\ee
Inserting \eqref{e:roughnew1} and \eqref{e:roughnew2} into \eqref{e:piece_1}, we obtain 
\be\label{e:step4-1}
\left|  \int_{-\infty}^\infty
p^{*(K-1)}_{E_1}(y) \big(\rho(x - t^2y +E_2 )  - \rho(x - t^2 y +E_1 ) \big) \, dy \right| 
\le  \frac{ C | E_1 - E_2| }{(1 + |x|)^{1 + 1/\L} }.
\ee
\paragraph{\textit{Step 2.}} 
In this step and the next, we consider the second integral  in \eqref{e:bananav2}.  This step is concerned with the regime $|x| \ge t^{-2}$ in \eqref{e:bananav2}. 
From \eqref{e:derivative-first}, 
we have 
\begin{align}\label{e:deltap}
\begin{split}
\big| p_{E_1} (x)  - p_{E_2}(x) \big| &\le  \frac{C |E_1 - E_2| }{x^2}  + \frac{t^2}{x^2} \int_{-\infty}^\infty \left| \rho'\left( \frac{1}{x}  + E_2 + t^2 y\right) M(y)\right| \, dy\\
&\le \frac{C |E_1 - E_2| }{x^2}    + \frac{C t^2}{x^2} 
\int_{-\infty}^\infty ( 1 + | t^2 y + x^{-1}  |)^{-1/\L} \big|  M(y) \big| \, dy\\
&\le \frac{C |E_1 - E_2| }{x^2} .
\end{split}
\end{align}
where we used $| \rho' | \le C$ to get the first term on the right side of the first line, the decay hypothesis on $\rho'$ from \Cref{d:Lregular} in the second line, and  \eqref{e:finalMbound} and $ t^2 K^{1+2/\L} < C$ to deduce the last line. We next note that 
\begin{align*}
\begin{split}
&
\left| \int_{-\infty}^\infty 
\big(p^{*(K-1)}_{E_1}(y) - p^{*(K-1)}_{E_2}(y)  \big)  \rho(x - t^2 y +E_2) \, dy \right|
\\ 
& = 
\left| 
\sum_{j=1}^K  
\int_{-\infty}^\infty  \int_{-\infty}^\infty \int_{-\infty}^\infty p_{E_1}^{*(K-j)}(u - w )  (p_{E_1}(w) -  p_{E_2}(w))  p_{E_2}^{*(j-1)}(y - u)
 \rho(x - t^2 y +E_2) \, dy \,du \, dw \right|\\
 & = 
\left| 
\sum_{j=1}^K  
\int_{-\infty}^\infty  \int_{-\infty}^\infty (p_{E_1}(w) -  p_{E_2}(w))  p_{E_1}^{*(K-j)} * p_{E_2}^{*(j-1)} (y-w)
 \rho(x - t^2 y +E_2) \, dw \, dy   \right|. 
\end{split}
\end{align*}
For all $r\in \R$, set 
\be
 R_w (r) = \frac{1}{K} \sum_{j=1}^K p_{E_1}^{*(K-j)} * p_{E_2}^{*(j-1)}(r) \rho\big(x - t^2 ( r + w) + E_2\big).
\ee 
Then
\begin{align}
\begin{split}\label{e:wregions}
&
\left| \int_{-\infty}^\infty 
\big(p^{*(K-1)}_{E_1}(y) - p^{*(K-1)}_{E_2}(y)  \big)  \rho(x - t^2 y +E_2) \, dy \right|
\\ 
& =  K \left|
\int_{-\infty}^\infty
\int_{-\infty}^\infty 
\big(p_{E_1}(w) - p_{E_2}(w)  \big)  R_w ( y -w ) \, dw \, dy
\right|.
\end{split}
\end{align}
In the last integral, we will consider the regions $|w| > t^{-2}$ and $|w| <t^{-2}$ separately. We begin with $|w|>t^{-2}$. 
Note that by the assumption that $\rho$ is $\L$-regular (see \Cref{d:Lregular}), we have
\begin{align}\label{e:translated}
\begin{split}
\left| \int_{-\infty}^{\infty}  R_w ( y -w )  \, dy \right| 
\le \frac{C}{( 1 + | x   - t^2 w |^{4/3} ) },
\end{split}
\end{align}
where the inequality follows as in \eqref{e:roughnew1} by breaking the integral into intervals of length four, applying \eqref{e:pEuniform}, and resumming.
Then
\begin{align}
\begin{split}\label{e:largewpen00}
&
K \left|
\int_{-\infty}^\infty
\int_{|w| \ge t^{-2}}
\big(p_{E_1}(w) - p_{E_2}(w)  \big)  R_w ( y -w ) \, dw \, dy
\right|
\\ 
&\quad  \le C K \int_{|w| \ge t^{-2}}  \frac{\big| p_{E_1}( w) - p_{E_2}(  w)  \big| }{ 1 + | x  - t^2 w |^{4/3}  }   \, dw .
\end{split}
\end{align}
Using \eqref{e:deltap}, we have 
\begin{align}
\begin{split}\label{e:largewpen0}
K  \int_{|w| \ge t^{-2}} \frac{\big| p_{E_1}( w) - p_{E_2}(  w)  \big| }{ 1 + | x  - t^2 w |^{4/3}  }   \, dw 
&\le 
K  |E_1  - E_2 | \int_{|w| \ge t^{-2}}
\frac{dw }{w^2 ( 1 + | x  - t^2 w |^{4/3} ) }\\
&\le  Kt^2  |E_1  - E_2 |  \int_{|u| > 1 } 
\frac{du }{u^2 ( 1 + | x  - u |^{4/3} ) }\\
&\le \frac{Kt^2 |E_1  - E_2 |}{  1 + |x|^{4/3}   }  .
\end{split}
\end{align}
In the last line, we used the bound
\be
 \int_{|u| > 1   } 
\frac{du }{u^2 ( 1 + | x  - u |^{4/3} ) } \le \int_{-\infty}^\infty \frac{2 du }{(1 + u^2 ) ( 1 + | x  - u |^{4/3} ) } 
\le \frac{ C}{  1 + |x|^{4/3} },
\ee
which follows from \Cref{l:32}. We conclude from \eqref{e:largewpen00} and \eqref{e:largewpen0} that 
\begin{align}
\begin{split}\label{e:largewpen1}
K \left|
\int_{-\infty}^\infty
\int_{|w| \ge t^{-2}}
\big(p_{E_1}(w) - p_{E_2}(w)  \big)  R_w ( y -w ) \, dw \, dy
\right| \le  \frac{Kt^2 |E_1  - E_2 |}{ ( 1 + |x|^{4/3} )  } .
\end{split}
\end{align}

Next, we consider the region $|w|\le t^{-2}$ in \eqref{e:wregions}. 
Using integration by parts, we have
\begin{align}
\begin{split}
&\frac{d}{dw}  \left( \int_{-\infty}^\infty R_w ( y -w )  \, dy \right) \\ & = 
-  \frac{1}{K} \int_{-\infty}^\infty \sum_{j=1}^K \big(p_{E_1}^{*(K-j)} * p_{E_2}^{*(j-1)}(y-w)\big)' \rho\big(x - t^2 y + E_2\big) \,dy \\ 
&= -  \frac{t^2}{K} \int_{-\infty}^\infty \sum_{j=1}^K p_{E_1}^{*(K-j)} * p_{E_2}^{*(j-1)}(y-w) \rho'\big(x - t^2 y + E_2\big) \,dy,
\end{split}
\end{align}
which implies, as in \eqref{e:roughnew1} and  \eqref{e:translated}, that 
\begin{align}
\begin{split}\label{e:rdiff}
\left|\frac{d}{dw}  \left( \int_{-\infty}^\infty R_w ( y -w )  \, dy \right) \right|
\le \frac{Ct^2 }{( 1 + | x   - t^2 w |^{1 + 1/\L} ) }.
\end{split}
\end{align}
 Integrating by parts, we obtain
\begin{align}
\begin{split}\label{e:wpen}
&K  \left|
\int_{|w| \le t^{-2}}
\big(p_{E_1}(w) - p_{E_2}(w)  \big)  \left( \int_{-\infty}^\infty R_w ( y -w )  \, dy \right) \, dw
\right| \\
&\le K  \left|
\int_{|w| \le t^{-2}}
\big(Q_{E_1}(w) - Q_{E_2}(w)  \big)  \left[ \frac{d}{dw} \left( \int_{-\infty}^\infty R_w ( y -w )  \, dy \right) \right] \, dw
\right| \\
&\quad + K \left| \big(Q_{E_1}(t^{-2}) - Q_{E_2}(t^{-2} )  \big) \left( \int_{-\infty}^\infty R_{t^{-2}} ( y - t^{-2} )  \, dy \right) \right|\\
&\quad + K \left| \big(Q_{E_1}(-t^{-2}) - Q_{E_2}(-t^{-2} )  \big) \left( \int_{-\infty}^\infty R_{-t^{-2}} ( y + t^{-2} )  \, dy \right) \right|.
\end{split}
\end{align}
Using \eqref{e:largew} and \eqref{e:rdiff}, we obtain
\begin{align}\label{e:pen1}
\begin{split}
& K \left|
\int_{-t^{-2}}^{t^{-2}}
\big(Q_{E_1}(w) - Q_{E_2}(w)  \big)  \left[ \frac{d}{dw} \left( \int_{-\infty}^\infty R_w ( y -w )  \, dy \right) \right] \, dw
\right| \\
&\le Ct^2 K^{1+ 2/\L} |E _1 - E_2|  \int_{-t^{-2}}^{t^{-2}} \frac{dw}{(1+|w|)( 1 + | x   - t^2 w |^{1 + 1/\L} ) }\\
&\le  \frac{Ct^2 K^{1+ 2/ \L} |E _1 - E_2| } {(1 + | x  |)^{1 + 1/\L} }.
\end{split}
\end{align}
In the last line, we bounded the integral by noting that 
\begin{align*}
\begin{split}
  \int_{-t^{-2}}^{t^{-2}} \frac{dw }{(1+|w|)( 1 + | x   - t^2 w |^{1 + 1/\L} )) }
 & =  \int_{-1}^{1} \frac{t^{-2} du}{(1+|u t^{-2}|)( 1 + | x   - u |^{1 + 1/\L} ) } \le \frac{C}{(1 + |x|)^{1 + 1/\L} }
\end{split}
\end{align*}
and considering the regimes $|u| \le t^2$ and $|u| \ge t^2$ separately (and using our assumption that $|x| \ge t^{-2}$).
 Using \eqref{e:translated} and \eqref{e:largew}, we find
\begin{align}\label{e:pen2}
 K \left| \big(Q_{E_1}(t^{-2}) - Q_{E_2}(t^{-2} )  \big) \left( \int_{-\infty}^\infty R_{t^{-2}} ( y - t^{-2} )  \, dy \right) \right| 
\le \frac{Ct^{2} K^{1+ 3/ \L} |E _1 - E_2| }{ ( 1 + |x|)^{1 + 1/\L} }.
\end{align}
The last term of \eqref{e:wpen} can be handled similarly, and we obtain from \eqref{e:wpen} using \eqref{e:pen1} and \eqref{e:pen2} that 
\begin{align}\label{e:smallwpen}
K  \left|
\int_{|w| \le t^{-2}}
\big(p_{E_1}(w) - p_{E_2}(w)  \big)  \left( \int_{-\infty}^\infty R_w ( y -w )  \, dy \right) \, dw
\right| \le \frac{Ct^{2} K^{1+ 3/ \L} |E _1 - E_2| }{ ( 1 + |x|)^{1 + 1/\L} }.
\end{align}

Using \eqref{e:wregions}, \eqref{e:largewpen1}, and \eqref{e:smallwpen}, we find
\begin{align}
\begin{split}\label{e:wregions2}
\left| \int_{-\infty}^\infty 
\big(p^{*(K-1)}_{E_1}(y) - p^{*(K-1)}_{E_2}(y)  \big)  \rho(x - t^2 y +E_2) \, dy \right|
 \le \frac{Ct^{2} K^{1+ 3/ \L} |E _1 - E_2| }{ ( 1 + |x|)^{1 + 1/\L} }.
\end{split}
\end{align}
This bounds the second integral  in \eqref{e:bananav2} for $|x| \ge t^{-2}$. \\

\emph{Step 3.}  We now consider the case $|x| \le t^2$ in the second integral of \eqref{e:bananav2}.
Using integration by parts and \eqref{e:finalMbound}, we find
\begin{align}\label{e:step6}
\begin{split}
&\left| \int_{-\infty}^\infty 
\big(p^{*(K-1)}_{E_1}(y) - p^{*(K-1)}_{E_2}(y)  \big)  \rho(x - t^2 y +E_2) \, dy  \right| \\
 & = t^2 \left|  \int_{-\infty}^\infty 
M(y)  \rho'(x - t^2 y +E_2) \, dy \right| \le t^2  F( x ) \le C K^{-1 + 2/\L} |E_1 - E_2|.
\end{split}
\end{align}
Inserting this estimate in \eqref{e:bananav2} and using the result \eqref{e:step4-1}, we find 
\be
\big| \rho_{E_2}(x) - 
\rho_{E_1}(x) \big| \le  \frac{ C | E_1 - E_2| }{(1 + |x|)^{1 + 1/\L } }. 
\ee
This proves \eqref{e:Elipschitz}. \\

\emph{Step 4.}  
We now turn to the proof of \eqref{e:le11finite}. We begin by lower bounding the first integral in \eqref{e:bananav2}. We note by the change of variables $y' = t^2 y$, and the definitions of $p^{*(K-1)}_{E_1}(y)$ and $p^{(K-1)}_{E_1}(y)$ (corresponding to two random variables differing by a factor of $t^2$) that 
\begin{align}
\begin{split}\label{e:step4zero}
&\int_{-\infty}^\infty 
p^{*(K-1)}_{E_1}(y) \big(\rho(x - t^2y +E_2 )  - \rho(x - t^2 y +E_1 ) \big) \, dy\\
&= 
\int_{-\infty}^\infty 
p^{(K-1)}_{E_1}(y) \big(\rho(x - y +E_2 )  - \rho(x - y +E_1 ) \big) \, dy.
\end{split}
\end{align}
Set $J_{\L} = [ - \L/100, \L/ 100 ]$. Recalling the notation of \Cref{l:QE1E2bound}, we note that $p_{E_1} ^{(K)} $ is the density of $t^2Q_{E_1,E_1}(K, K)$.  
Using \Cref{l:QE1E2bound} with $r=1/4$, we have 
\begin{align}
\begin{split}\label{e:step40}
 \int_{J^c_\L} p^{(K-1)}_{E_1}(y)  \, dy& =  \P\big(
t^2Q_{E_1,E_1}(K, K) \ge   \L/100
\big)\\
& \le  \P\big(
 Q_{E_1,E_1}(K, K) \ge  K^2 \L/100
\big)\le  \frac{ C }{  K^{1/2} }.
\end{split}
\end{align}

Using the assumption that $\rho$ is Lipschitz, we have by \eqref{e:step40} that
\begin{align}
\begin{split}\label{e:step4one}
\left| \int_{J^c_\L} p^{(K-1)}_{E_1}(y) \big(\rho(x - y +E_2 )  - \rho(x - y +E_1 ) \big) \, dy\right| 
&\le C|E_1 - E_2|  \int_{J^c_\L} p^{(K-1)}_{E_1}(y)  \, du\\ & \le  \frac{C|E_1 - E_2| }{K^{1/2}}.
\end{split}
\end{align}
Further, using the assumption \eqref{e:ge11} and our assumptions on $E_1$, $E_2$, and $x$, we have by the mean value theorem that 
\begin{align}\label{e:step4two}
\begin{split}
& \int_{J_\L} 
p^{(K-1)}_{E_1}(y) \big(\rho(x - y +E_2 )  - \rho(x - y +E_1 ) \big) \, dy\\
&\ge \frac{ E_2 - E_1}{\L} \int_{J_\L}p^{(K-1)}_{E_1}(y)  \, dy \ge \frac{2}{3} \cdot \frac{ E_2 - E_1}{\L},
\end{split}
\end{align}
where in the last inequality, we used 
\[
 \int_{J_\L}p^{(K-1)}_{E_1}(y) = 1 -  \int_{J^c_\L}p^{(K-1)}_{E_1}(y) \,dy
\]
and \eqref{e:step40}. From \eqref{e:step4zero}, \eqref{e:step4one}, and \eqref{e:step4two}, we obtain 
\[
\int_{-\infty}^\infty 
p^{*(K-1)}_{E_1}(y) \big(\rho(x - t^2y +E_2 )  - \rho(x - t^2 y +E_1 ) \big) \, dy \ge \frac{ E_2 - E_1}{2\L}.
\]
Inserting this in \eqref{e:bananav2} and applying \eqref{e:step6}, 
we obtain
\begin{align}
\rho_{E_2}(x) - 
\rho_{E_1}(x)
&\ge \L^{-1}(E_2 - E_1)  -   C t^2 K^{1 + 2/\L}|E_1 - E_2|   \ge (2\L)^{-1}(E_2 - E_1).
\end{align}
This proves \eqref{e:le11finite}. The argument for \eqref{e:le11finite2} is similar and omitted.
This completes the proof. 
\end{proof}

Using \Cref{l:amoluniform}, we have the following analogue of \Cref{l:supbounds}. We omit the proof, since it is nearly identical to the proof of \Cref{l:supbounds} (using \eqref{e:Elipschitz} in place of \eqref{e:rhoEuniform}). 
\begin{lemma}\label{l:supboundscirc}
There exists $C, K_0>0$ such  for all $K\ge K_0$, $E \in I_\L$, and $s \in (1 - (5\L)^{-1} ,1)$,
\be
\sup_{z \in \R} \sup_{|y| \le 1}
 \big| \rho_{E_1} (y+z) -  \rho_{E_2}(y+z) \big| | z|^{2- s} \le C|E_1 - E_2|
\ee
and
\be
\sup_{z \in \R} \int_{|y| \ge 1} \big|\rho_{E_1} (y+z) -  \rho_{E_2}(y+z)  \big| \frac{|z|^{2-s}}{|y|^{2-s } } \,dy \le C|E_1 - E_2|.
\ee 
\end{lemma}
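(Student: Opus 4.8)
The plan is to follow the proof of Lemma~\ref{l:supbounds} essentially verbatim, substituting the Lipschitz-in-$E$ estimate \eqref{e:Elipschitz} from Lemma~\ref{l:amoluniform} wherever that proof invoked the uniform density bound \eqref{e:rhoEuniform}. The point is that \eqref{e:Elipschitz} supplies, for the energies $E_1,E_2$ at hand and $K$ large, the pointwise bound
\[
\big|\rho_{E_1}(x)-\rho_{E_2}(x)\big|\le C|E_1-E_2|\,(1+|x|)^{-4/3},
\]
which has the same decay in $x$ as the bound $\rho_E(x)\le C(1+|x+E|^{4/3})^{-1}$ used in the proof of Lemma~\ref{l:supbounds} (the absence of the translation by $E$ only simplifies matters, and the extra factor $|E_1-E_2|$ is carried along passively). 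Thus each of the two claimed inequalities reduces to a weighted-integral bound already proved there.

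For the first inequality, I would insert the displayed bound into the left-hand side to get
\[
\sup_{z\in\R}\sup_{|y|\le 1}\big|\rho_{E_1}(y+z)-\rho_{E_2}(y+z)\big|\,|z|^{2-s}\le C|E_1-E_2|\,\sup_{z\in\R}\sup_{|y|\le 1}\frac{|z|^{2-s}}{1+|y+z|^{4/3}}.
\]
For $|z|\le 2$ the supremand is $O(1)$; for $|z|>2$ one has $|y+z|\ge|z|/2$, so the supremand is at most $|z|^{2-s}/(1+(|z|/2)^{4/3})$, which is bounded because $2-s<4/3$ under the hypothesis $s\in(1-(5\L)^{-1},1)$ (using $\L>10$). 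This is exactly the case split used for \eqref{e:supbound1}.

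For the second inequality, I would likewise substitute the pointwise bound to obtain
\[
\sup_{z\in\R}\int_{|y|\ge1}\big|\rho_{E_1}(y+z)-\rho_{E_2}(y+z)\big|\frac{|z|^{2-s}}{|y|^{2-s}}\,dy\le C|E_1-E_2|\,\sup_{z\in\R}\int_{|y|\ge1}\frac{|z|^{2-s}}{(1+|y+z|^{4/3})|y|^{2-s}}\,dy,
\]
and the integral on the right is precisely the one bounded in the proof of \eqref{e:supbound2}: split into the region $|y|\ge|z|/2$ (where $|z|^{2-s}/|y|^{2-s}\le C$ and $\int(1+|u|^{4/3})^{-1}\,du<\infty$) and $|y|\le|z|/2$ (handled by continuity of the integrand for bounded $|z|$, and by $|y+z|\ge|z|/4$ together with $2-s<4/3$ and $s<1$ for large $|z|$). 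Each piece is $O(1)$ uniformly in $z$, which gives the claim.

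I do not expect a substantive obstacle: the argument is a transcription. The one point needing (minor) care is verifying that the exponent hypothesis $s\in(1-(5\L)^{-1},1)$ forces $2-s<4/3$, which is the analogue of the role played by $s\in(5/6,1)$ in Lemma~\ref{l:supbounds}; here the admissible interval of $s$ is even narrower, so this is immediate. One should also check the interval bookkeeping, i.e.\ that \eqref{e:Elipschitz} is applicable for $E_1,E_2\in I_\L$, which follows directly from the first part of Lemma~\ref{l:amoluniform}.
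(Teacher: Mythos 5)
Your proposal is exactly the route the paper intends: the paper omits the proof of this lemma, stating it is nearly identical to that of \Cref{l:supbounds} with \eqref{e:Elipschitz} used in place of \eqref{e:rhoEuniform}, and your transcription carries this out correctly. One caveat on the bookkeeping: \eqref{e:Elipschitz} does \emph{not} provide decay $(1+|x|)^{-4/3}$ for the difference $\rho_{E_1}-\rho_{E_2}$ — the decay exponent it actually delivers is only slightly larger than $1$ (the proof of \Cref{l:amoluniform} yields $(1+|x|)^{-(1+1/\L)}$), and this is precisely why the admissible range of $s$ is narrowed to $s>1-(5\L)^{-1}$, guaranteeing $2-s<1+1/(5\L)<1+1/\L$; so the exponent condition you should verify is $2-s<1+1/\L$ rather than $2-s<4/3$, after which your case analysis (both the $|z|$ split for the supremum bound and the $|y|\gtrless|z|/2$ split for the integral) goes through unchanged.
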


\section{Eigenvector Bounds}\label{e:evectorbounds}
 In this section, we study the shape of the leading eigenvector $v_{s,E}$ for the operator $F_E$ (from \eqref{e:Fdef}). 
 We begin in \Cref{s:parameterchoices} by defining various parameters that will be used throughout this section. Then, in \Cref{s:eigenvectorestimates}, we define an explicit approximate eigenvector $\tilde u$ and prove preliminary upper and lower bounds on $v_{s,E}$. Finally, in \Cref{s:approximateeigenvector}, 
 we show in \Cref{l:eigenvectorsandwich} that $F\tilde u$ is comparable to $F v_{s,E}$. 
 
 \subsection{Parameter Choices}\label{s:parameterchoices}
 Throughout this section, we fix  $\L>1$, $g\in (\L^{-1}, \L)$, $\rho$, and $E_0 \in \R$ satisfying the conditions of \Cref{t:mainlambda}. In particular, we have 
 \be\label{e:E0}
 \rho (E_0  )  =\frac{ 1 }{ 4 g}, \qquad |E_0| \le 2 \L
 \ee 
  Recall that by \Cref{l:bapstcollection}, the density $\rho_E$ is $\L$-Lipschitz, and recall the definitions of $\varpi$ and $I_\star$ from \eqref{e:varpi} and \eqref{e:istar}.
  Then using \Cref{l:bapstuniform}, with the choices $G = 3 \L $, $\eps = \varpi $, and $\delta = \varpi$,  there exists a constant $K_0(\L)>1$ (not depending further on $g$, $E_0$, or $\rho$), such that for all $K \ge K_0$, we have  
\be\label{e:peppercorn}
\rho_E(x) \ge \frac{1}{8 \L}
\ee
 for all $E \in I_\star$ and $x \in [- 2 \varpi , 2\varpi]$. 
 
Recall that $\alpha$ and $\Delta$ were introduced in \Cref{s:notation}.
For every $K>1$, let $\mathfrak s (K, \alpha)$ be any number such $\s > 9/10$, and for any $s\in (\mathfrak s ,1)$,  
\be\label{e:weirdlog}
\frac{1}{2} \log \left( \frac{1}{\Delta} \right)  \le \left( \frac{1 - \Delta^{s-1}}{s-1}  \right) \le 2 \log \left( \frac{1}{\Delta} \right).
\ee 
Such an $\mathfrak s $ exists because 
\be
\lim_{s \rightarrow 1^-}
\left( \frac{1 - \Delta^{s-1}}{s-1}  \right)  = \log \left( \frac{1}{\Delta} \right)  .
\ee 
By increasing $\mathfrak s$ if necessary, we have 
\be\label{e:tbounds}
\frac{t}{2} \le t^{2-s} \le 2t, \qquad  \ln K \le \frac{1}{1-s}
\ee 
for all $s \in (\mathfrak s, 1)$. 
Further increasing $\s$, we may suppose that $\s$ is greater than the value $s_0(K)$ given in \Cref{c:roughlambda}. 
In the results below, $\alpha_0$ will always be a constant that does  not depend on $K$ or $s$, and we will consider parameters $\alpha$ such that $\alpha \in (0, \alpha_0]$.

\subsection{Eigenvector Estimates}\label{s:eigenvectorestimates}
We now determine the shape of $v_E$. Define 
\be\label{e:tildeu}
\tilde u(x) 
 = \frac{1}{t^2 \ln K}  \one\{ |x| \le  \Delta \}
+  |x|^{-(2-s)} \one\{   \Delta  \le  |x|  \},
\ee 
and note that the definition of $\tilde u$ implicitly depends on the value of $\alpha$. Our guiding heuristic is that $v_E$ should be qualitatively similar (but not identical) to the vector $\tilde u$ (which is, in turn, similar to the test vector $u_1 + u_2 + u _3$ used in the previous section to provide an almost-optimal upper bound on the size of $\lambda_E$). To that end, \Cref{l:vupperone} and \Cref{l:sumac} provide complementary upper and lower bounds that show $v_E$ is proportional to $|x|^{-2+s}$ for $|x| \ge \varpi^{-1} t^2$. The regime of small $x$ is more subtle, because it is unclear where the exact transition between $t^{-2}$ behavior to $(t^2 \log K)^{-1}$ behavior is. In \Cref{l:sumac2}, we prove that $v_E(x)$ is lower bounded by a multiple of $(t^2 \log K)^{-1} $ for small $x$. For the upper bound at small $x$, we do not bound $v_E$ by $\tilde u$ directly, but instead  show that $(Fv)(x) \le (F\tilde u)(x)$, in \Cref{l:eigenvectorsandwich}. Since our argument establishing the monotonicity of the leading eigenvalue of $F$ in \Cref{s:conclusion} is based on comparing the iterates $F^nv_E$ and $F^n \tilde u$, this bound is sufficient for our purposes.

We recall our convention (stated in \Cref{s:proof}) that constants may depend on the choice of $\L$, and that this dependence is typically omitted from the notation. We begin with the upper bound on $v_E(x)$ for large $x$. 

\begin{lemma}\label{l:vupperone}
There exist 
$K_0 > 0$ and  $C>0$ such that the following holds.
For all $K\ge K_0$,  $E \in I_\star$,  $s \in (\s, 1)$, and $x\in \R$ such that $|x| \ge \varpi^{-1} t^2$, 
\be\label{e:vupperone}
v_E(x) \le \frac{C}{|x|^{2-s}}. 
\ee
\end{lemma}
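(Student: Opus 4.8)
The plan is to read off the estimate from the eigenvector identity $\lambda v_E = F v_E$ of \Cref{c:KR}, where I abbreviate $\lambda=\lambda_{s,E}$. From \Cref{c:roughlambda} we have $\tfrac1{2K}\le\lambda\le\tfrac2K$, so by \eqref{e:tbounds} the prefactor $\lambda^{-1}t^{2-s}$ is at most $C(\log K)^{-1}$. Put $M:=\sup_{|x|\ge\varpi^{-1}t^2}v_E(x)|x|^{2-s}$; this is finite because $v_E\in\mathcal X$, and the goal is to bound it by a constant. For $|x|\ge\varpi^{-1}t^2$, writing $z=-t^2/x$ so $|z|\le\varpi$, the eigenvector identity and the definition \eqref{e:Fdef} of $F$ give
\be
v_E(x)|x|^{2-s}=\lambda^{-1}t^{2-s}\int_{-\infty}^\infty\rho_E(z-y)\,v_E(y)\,dy,
\ee
and I would split the integral over $|y|>1$, over $\varpi^{-1}t^2\le|y|\le1$, and over $|y|<\varpi^{-1}t^2$. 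On $|y|>1$ one uses $v_E(y)\le M|y|^{-(2-s)}$ and $\int\rho_E\le1$ to bound that part by $M$, which after the prefactor is $O(M/\log K)$. On $\varpi^{-1}t^2\le|y|\le1$ one takes the free parameter $\alpha$ equal to $\varpi$ so that this range is exactly the support of $u_2+u_3$ from \eqref{e:theus}; then $v_E(y)\le M|y|^{-(2-s)}$ identifies this contribution with $M|x|^{2-s}(F(u_2+u_3))(x)$, controlled for $|x|>\Delta$ by \Cref{l:u2upper} and \Cref{l:u3upper}. On $|y|<\varpi^{-1}t^2$ one has $|z-y|\le2\varpi$, so $\rho_E(z-y)\le\L$ by \eqref{e:rhoEupper}, and the contribution is $\le\lambda^{-1}t^{2-s}\L P_0$ with $P_0:=\int_{|y|<\varpi^{-1}t^2}v_E$.

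To handle $P_0$ one must avoid the useless bound $\|v_E\|_1=(1-s)^{-1}$. The plan is a short auxiliary step: applying $\lambda v_E=Fv_E$ again and, on $|y|<\varpi^{-1}t^2$ (where $|t^2/y|>\varpi$), using \Cref{l:supbounds} to get $\int\rho_E(-t^2/y-w)v_E(w)\,dw\le C|y|^{2-s}t^{-2(2-s)}\big(\textstyle\int_{|w|\le1}v_E+M\big)$, the singular weight $|y|^{-(2-s)}$ cancels and integrating over an interval of length $\asymp t^2$ produces a factor $t^s$; since $Kt^s\le C/\log K$ this yields $P_0\le C(\log K)^{-1}\big(\int_{|w|\le1}v_E+M\big)$. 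Writing $\int_{|w|\le1}v_E=P_0+\int_{\varpi^{-1}t^2\le|w|\le1}v_E\le P_0+CM\log K$ (the last term bounded by $M$ times a geometric factor that is $O(\log K)$ by \eqref{e:weirdlog}), this closes for $K$ large to give $P_0\le CM$, and hence the $|y|<\varpi^{-1}t^2$ contribution is again $O(M/\log K)$.

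The genuine obstacle is that summing the three pieces only gives $M\le(1+\delta)M+O(M/\log K)$ with $\delta>0$ small: the coefficient of $M$ coming from the middle range is at least $1$, because that estimate essentially reconstructs the eigenvalue relation and the small but nonzero error in the eigenvalue estimate of \Cref{l:rougheigenvector} points the wrong way. To break this I would use \eqref{e:vnormalization} as the one piece of external information. Since, for $|x|,|x'|\ge X_0$ with $X_0$ a fixed constant, the two integrals $\int\rho_E(-y-t^2/x)v_E(y)\,dy$ and $\int\rho_E(-y-t^2/x')v_E(y)\,dy$ differ only by the Lipschitz error of $\rho_E$ at scale $t^2/X_0$ (controlled using \eqref{e:rhoEupper}, \Cref{l:Kuniformbound}, and the $P_0$-bound just obtained), one gets $v_E(x)|x|^{2-s}\ge c\,M''$ for all $|x|\ge X_0$, where $M'':=\sup_{|x|\ge X_0}v_E(x)|x|^{2-s}$. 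Then
\be
(1-s)^{-1}=\|v_E\|_1\ \ge\ \int_{|x|>X_0}v_E(x)\,dx\ \ge\ c\,M''\!\!\int_{|x|>X_0}\!\!|x|^{-(2-s)}\,dx\ \ge\ c'\,M''(1-s)^{-1},
\ee
forcing $M''\le C$; and then feeding the bounded tail together with $P_0\le CM''$ back into the eigenvector identity bounds $v_E(x)|x|^{2-s}$ on the remaining crossover range $\varpi^{-1}t^2\le|x|\le X_0$ in a single step. I expect the delicate point to be making the ``$v_E(x)|x|^{2-s}\ge cM''$ on $|x|\ge X_0$'' step uniform, since the Lipschitz error is multiplied by the full $L^1$-mass $(1-s)^{-1}$ and one must exploit the decay of $\rho_E$ (via \Cref{l:Kuniformbound}) together with the tail bound $v_E(y)\le M|y|^{-(2-s)}$ to see that only an $O(1)$ portion of that mass is relevant.
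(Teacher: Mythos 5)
Your reductions of the tail ($|y|>1$) and the central mass ($P_0\le CM$, via \Cref{l:supbounds} and $Kt^s\lesssim(\log K)^{-1}$) are sound, and you correctly diagnose the central obstruction: bounding the middle range by $M\,|x|^{2-s}(F(u_2+u_3))(x)$ just reconstructs the eigenvalue relation, so the sup bootstrap gives $M\le(1+\delta)M+O(M/\log K)$ and cannot close. The genuine gap is that your proposed fix does not remove this obstruction. Knowing $M''=\sup_{|x|\ge X_0}v_E(x)|x|^{2-s}\le C$ only deletes the $|y|\ge X_0$ part of the kernel integral, which accounts for an $O(\log X_0/\log K)$ fraction of the coefficient; the $\log(1/\Delta)$ divergence that builds $\lambda$ accumulates over \emph{all} dyadic scales $\Delta\lesssim|y|\lesssim 1$, i.e.\ inside the crossover window $[\varpi^{-1}t^2,X_0]$ itself. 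So when you "feed back in a single step" at a point $x$ in that window, the dominant term is again $\lambda^{-1}t^{2-s}\,M_{\mathrm{cross}}\int_{\varpi^{-1}t^2\le|y|\le X_0}\rho_E(-y-t^2/x)|y|^{-(2-s)}dy$, whose coefficient is $\tfrac{\rho_E(0)+\eps}{\rho_E(0)-\eps}+O(1/\log K)\ge 1$ given only the two-sided bounds of \Cref{l:rougheigenvector}; you get $M_{\mathrm{cross}}\le(1+\delta)M_{\mathrm{cross}}+O(1)$, which is vacuous. There is a secondary circularity in the comparability step as well: the Lipschitz and tail errors you must control to prove $v_E(x)|x|^{2-s}\ge cM''$ on $|x|\ge X_0$ scale with $M$ (the sup over the whole region, including the crossover window), not with $M''$, so without prior control of the crossover region you cannot make that step uniform.

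The missing ingredient — and the mechanism of the paper's proof — is to exploit the $K$-uniform decay of the kernel, $\rho_E(-y-t^2/x)\le C/|y|$ for $|y|\ge\varpi$ (a consequence of \Cref{l:Kuniformbound}), and to run the self-consistent estimate on the weighted integral $\int_{|y|>\varpi}v_E(y)/|y|\,dy$ rather than on a sup. The paper first uses the normalization \eqref{e:vnormalization} in a contradiction argument to show $\int_{-\varpi}^{\varpi}v_E\le C\log K$ (this is the same structural role your normalization step plays, but aimed at the central mass instead of the far tail); then the eigenvector identity gives $v_E(x)/|x|\le C|x|^{s-3}(\log K)^{-1}\bigl(\log K+\int_{|y|>\varpi}v_E(y)/|y|\,dy\bigr)$, and since $\int_{|x|>\varpi}|x|^{s-3}dx=O(1)$ (no log divergence, thanks to the extra $1/|y|$), integrating yields a self-bounding inequality with coefficient $O(1/\log K)$, which closes. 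The pointwise bound \eqref{e:vupperone} then follows in one application of the identity. In short: your estimates never use the pointwise decay of $\rho_E$ in the middle range (only $\rho_E\le\L$ or $\int\rho_E\le1$), and without that decay the self-interaction coefficient is pinned at $\ge1$ no matter how the regions are rearranged.
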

\begin{proof}
The proof consists of three steps. \\ 
\paragraph{\textit{Step 1.}} 
 We first claim that there exists $K_0 > 0$ and  $C_0>0$ such that for all $K \ge K_0$, $s\in(\s, 1)$, and $E \in I_\star$, 
\be\label{e:stepA}
\frac{1}{ \log K} \int_{-\varpi}^\varpi v_{K, t , s,E}(x)\,dx \le  C_0.
\ee 
To show \eqref{e:stepA}, we will suppose that it does not hold and derive a contradiction. 
In this case, for every $C_0 > 0$, we  can find an increasing sequence $\{ K_j\}_{j=1}^\infty$  
and sequences $\{E_j\}_{j=1}^\infty$ and $\{s_j\}_{j=1}^\infty$ with  $E_j \in I_\star$ and $s_j \in (\s,1)$ such that 
\be\label{e:towardcontradiction1}
\lim_{j \rightarrow \infty} \frac{1}{\ln K_j} \int_{-\varpi}^\varpi v_{K_j, t, s_j, E_j} (x) > C_0.
\ee 
We henceforth abbreviate $v_{E_j} = v_{K_j, t, s_j, E_j}$.   
Suppose that $|x| \ge \varpi^{-1} t^2$. 
We find
\begin{align*}
\frac{2}{K_j} v_{E_j} (x) & \ge \lambda_{E_j} v_{E_j} (x)\\
& \ge \frac{t}{2|x|^{s-2}} \left(  \int_{-\varpi}^\varpi \rho_{E_j} \left(- y - \frac{t^2}{x}  \right) v_{E_j} (y)\, dy  + \int_{|y| \ge \varpi} \rho_{E_j} \left(- y - \frac{t^2}{x}  \right) v_{E_j}(y)\, dy\right)\\ 
& \ge \frac{t}{2|x|^{s-2}}   \int_{-\varpi}^\varpi \rho_{E_j} \left(- y - \frac{t^2}{x}  \right) v_{E_j} (y)\, dy\\ 
&\ge \frac{c t}{|x|^{s-2} } \int_{-\varpi}^\varpi v_{E_j} (y)\, dy \\
&\ge \frac{4}{K_j |x|^{s-2}}
\end{align*}
for sufficiently large $j$, after choosing $C_0 \ge 4 c^{-1}$.  The first inequality uses \Cref{c:roughlambda}, the second inequality follows from \eqref{e:tbounds}, the third inequality follows from the positivity of the second integral, the fourth inequality follows from the choice of $\varpi$  (see \eqref{e:peppercorn}), and the last inequality follows from taking $j$ sufficiently large in \eqref{e:towardcontradiction1} and using the choice of $t$ in \eqref{e:tKdef}.

Then 
\be\label{e:oregano}
v_{E_j}(x) \ge \frac{2}{|x|^{s-2}}
\ee
for $|x| \ge \varpi^{-1} t^2$. 
 Integrating  $v_E$ over $x \in \R$ and using \eqref{e:oregano} on $|x| \ge 1$ (and lower bounding by zero on  $|x| \le 1$), we obtain $\| v_{E_j} \|_1 \ge 2(1-s)^{-1}$. This is
 a contradiction, because we assumed that $\| v_{E_j} \|_1 = (1-s)^{-1}$ (in \eqref{e:vnormalization}). We conclude that \eqref{e:stepA} holds.\\ 
 
\paragraph{\textit{Step 2.}} 
Next, we claim that there exists a constant $C >0$ such that 
\be\label{e:prelim2}
\int_{|x| > \varpi } \frac{v_E(x)}{|x|} \, dx < C.
\ee
We begin by noting that, for $|x|> \varpi^{-1} t^2$ and $t =g (K \log K)^{-1}$ (from  \eqref{e:tKdef}), we  have $|t^2/ x | \le \omega$ for $K \ge K_0(\L)$. Moreover, since $E\in I_\star$ and $|E_0| \le 2 \L$, we have $|E| \le 3 \L$. Using these bounds in combination with \Cref{l:Kuniformbound}, we deduce the existence of a constant $C(\L) > 0$ such that 
\begin{equation}\label{e:newboundmarch0}
\rho_E (-y-t^2/x)
\le \frac{1}{1 + | E  -y-t^2/x |^{4/3}} \le \frac{C}{|y|},
\end{equation}
where the second inequality follows from considering the cases $|y| \le 4 \L +1$ and $|y| \ge 4\L +1$ separately.
Then for all $x$ such that $|x|>\varpi^{-1} t^2 $, we have by \Cref{c:roughlambda} that 
\begin{align}\label{e:marchintcalc}
\begin{split}
&\frac{1}{2K} v_E(x) \\ &\le \lambda_E v_E (x) \\
&= (Fv_E)(x) \\
&\le  2t|x|^{s-2} \left( \int_{|y|<\varpi} \rho_E (-y-t^2/x) v_E(y) \,dy +  \int_{|y|>\varpi} \rho_E (-y-t^2/x) v_E(y)\,  dy \right)\\
&\le 2 t|x|^{s-2} \left(C \log K   + C \int_{|y|>\varpi} \frac{v_E(y)}{|y|} \,dy\right),
\end{split}
\end{align}
where the second inequality uses \eqref{e:tbounds}, and in the last bound we used \eqref{e:rhoEupper}, \eqref{e:stepA}, \eqref{e:prelim2},  and \eqref{e:newboundmarch0}. 
Since $t = g (K \log K)^{-1}$, it follows that 
\begin{equation}\label{e:1f}
\frac{v_E(x)}{|x|} \le \frac{C|x|^{s-3}}{\log K}  \left(\log K + \int_{|y|>\varpi }  \frac{v_E(y)}{|y|}  \, dy\right).
\end{equation}
Integrating both sides of \eqref{e:1f} over $|x| > \varpi$, 
we obtain that there exists $K_0$ such that for all $K\ge K_0$, 
\begin{align}\label{e:bach0}
\begin{split}
\int_{|y|>\varpi}  \frac{v_E(y)}{|y|}  \, dy  &\le C +\frac{1}{2} \int_{|y|>\varpi}  \frac{v_E(y)}{|y|}  \, dy,
\end{split}
\end{align}
which implies \eqref{e:prelim2}.\\

\paragraph{\textit{Step 3.}} 
Our conclusion, \eqref{e:vupperone}, now follows from inserting \eqref{e:prelim2} into the last line of \eqref{e:marchintcalc}, recalling that $t =g (K \log K)^{-1}$ and taking $K\ge K_0(\L)$.
\end{proof}

\begin{lemma}\label{l:sumac}
There exist  $K_0 > 0$  and  $c>0$ such that the following holds.
For all $K\ge K_0$,  $E \in I_\star$,  $s \in (\s, 1)$, and $x\in \R$ such that $|x| \ge \varpi^{-1} t^2$,
\[
v_E(x) \ge  \frac{c}{|x|^{2-s}}. 
\]
\end{lemma}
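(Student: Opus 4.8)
The plan is to prove the matching lower bound $v_E(x) \ge c|x|^{-(2-s)}$ for $|x|\ge \varpi^{-1}t^2$ by exploiting the eigenvector equation $\lambda_E v_E = F v_E$ together with the already-established facts that $\lambda_E \ge 1/(2K)$ (from \Cref{c:roughlambda}), that $\rho_E$ is bounded below on a neighborhood of $0$ (the bound \eqref{e:peppercorn}), and that the total mass of $v_E$ near $0$ is controlled below. The key observation, which appeared implicitly inside Step 1 of the proof of \Cref{l:vupperone} (see \eqref{e:oregano}), is that for $|x|\ge\varpi^{-1}t^2$ one has $|t^2/x|\le\varpi$, so that for $|y|\le\varpi$ the argument $-y-t^2/x$ lies in $[-2\varpi,2\varpi]$ where $\rho_E\ge 1/(8\L)$. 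Hence
\begin{align*}
\lambda_E v_E(x) = (Fv_E)(x)
&\ge \frac{t^{2-s}}{|x|^{2-s}}\int_{-\varpi}^{\varpi}\rho_E\!\left(-y-\frac{t^2}{x}\right)v_E(y)\,dy\\
&\ge \frac{t^{2-s}}{8\L\,|x|^{2-s}}\int_{-\varpi}^{\varpi}v_E(y)\,dy,
\end{align*}
using nonnegativity of $v_E$ to drop the integral over $|y|>\varpi$, and $t^{2-s}\ge t/2$ from \eqref{e:tbounds}. Combining this with $\lambda_E\le 2/K$ gives $v_E(x)\ge \frac{cKt}{|x|^{2-s}}\int_{-\varpi}^\varpi v_E(y)\,dy$, and since $t=g(K\log K)^{-1}$ this is $\ge \frac{c}{|x|^{2-s}\log K}\int_{-\varpi}^\varpi v_E(y)\,dy$.

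So the crux reduces to a lower bound of the form $\int_{-\varpi}^{\varpi}v_E(y)\,dy \ge c\log K$. I would obtain this from the normalization $\|v_E\|_1 = 1/(1-s)$ (from \eqref{e:vnormalization}) combined with the upper bound already proven in \Cref{l:vupperone}. Indeed, split $\|v_E\|_1 = \int_{|y|\le\varpi}v_E + \int_{\varpi<|y|\le\varpi^{-1}t^2}v_E + \int_{|y|>\varpi^{-1}t^2}v_E$. Wait—$\varpi^{-1}t^2 < \varpi$ for $K$ large, so the middle interval is empty; the decomposition is simply $\int_{|y|\le\varpi}v_E + \int_{|y|>\varpi}v_E$. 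By \Cref{l:vupperone}, $v_E(y)\le C|y|^{-(2-s)}$ for $|y|>\varpi^{-1}t^2$, hence certainly for $|y|>\varpi$, so $\int_{|y|>\varpi}v_E(y)\,dy \le C\int_{|y|>\varpi}|y|^{-(2-s)}\,dy = \frac{2C\varpi^{-(1-s)}}{1-s}$. Since $\varpi^{-(1-s)}\le \varpi^{-1}$ is bounded and $1/(1-s)\ge\log K$ by \eqref{e:tbounds}, for $K$ (hence $\log K$) large enough the tail contributes at most, say, half of $\|v_E\|_1$; therefore $\int_{-\varpi}^{\varpi}v_E(y)\,dy \ge \frac{1}{2(1-s)}\ge \frac{1}{2}\log K$. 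Feeding this back into the displayed chain yields $v_E(x)\ge \frac{c}{|x|^{2-s}}$, as desired. (One should double-check the direction of the inequality $\varpi^{-(1-s)} \le \varpi^{-1}$: since $\varpi<1$ and $1-s<1$, we have $\varpi^{-(1-s)}\le \varpi^{-1}$, which is what is needed.)

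I expect the only mild subtlety — and the place to be careful — is making the constants uniform in $s\in(\mathfrak s,1)$ and $E\in I_\star$ and tracking that $K_0$ depends only on $\L$. All the ingredients are already uniform: \eqref{e:peppercorn} holds for all $E\in I_\star$ once $K\ge K_0(\L)$; \Cref{c:roughlambda} gives $1/(2K)\le\lambda_{s,E}\le 2/K$ uniformly for $s\in(\mathfrak s,1)$, $E\in I_\star$; \eqref{e:tbounds} is uniform in $s\in(\mathfrak s,1)$; and \Cref{l:vupperone} supplies a constant $C$ uniform in the same parameters. The final choice "$K$ large enough that the tail is $\le\frac12\|v_E\|_1$" amounts to requiring $\frac{2C\varpi^{-1}}{1-s}\le\frac{1}{2(1-s)}$, i.e. $4C\varpi^{-1}\le 1$ — which is generally false, so instead one keeps the inequality as $\int_{-\varpi}^\varpi v_E \ge \|v_E\|_1 - \frac{2C\varpi^{-1}}{1-s} = \frac{1}{1-s}\bigl(1 - 2C\varpi^{-1}\bigr)$; if $2C\varpi^{-1}\ge 1$ one must instead use the sharper tail decay $\varpi^{-(1-s)}\to 1$ as $s\to 1$ and absorb the discrepancy by choosing $\mathfrak s$ closer to $1$, or—cleaner—note $\int_{|y|>\varpi} v_E \le C\int_{|y|>\varpi}|y|^{-(2-s)}dy$ and that this integral is bounded by an absolute constant (independent of $s$, since $1-s$ appears in numerator and $\varpi^{-(1-s)}\le 1/\varpi$), while $\|v_E\|_1 = 1/(1-s)\to\infty$; hence for $K$ large (so $1-s<\mathfrak s$ small, $1/(1-s)$ large) the tail is $o(\|v_E\|_1)$ and $\int_{-\varpi}^\varpi v_E \ge \frac12\|v_E\|_1 \ge \frac12\log K$. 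This is the step requiring the most care in the write-up, but it is routine.
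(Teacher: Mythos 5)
Your overall strategy (the eigenvector identity, the lower bound $\rho_E \ge (8\L)^{-1}$ near zero from \eqref{e:peppercorn}, and the reduction to showing $\int_{-\varpi}^{\varpi} v_E \gtrsim \log K$) is the same as the paper's, but the way you prove the reduced claim has a genuine gap. You try to deduce $\int_{-\varpi}^{\varpi} v_E \ge \tfrac12\|v_E\|_1$ from the normalization \eqref{e:vnormalization} together with the pointwise bound $v_E(y)\le C|y|^{-(2-s)}$ of \Cref{l:vupperone}. But the tail integral this produces is
\[
\int_{|y|>\varpi} C|y|^{-(2-s)}\,dy \;=\; \frac{2C\,\varpi^{-(1-s)}}{1-s},
\]
which is of the \emph{same order} as $\|v_E\|_1=\tfrac{1}{1-s}$, with prefactor $2C\varpi^{-(1-s)}\to 2C$ as $s\to 1^-$. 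Since the constant $C$ from \Cref{l:vupperone} is not known to be smaller than $\tfrac12$, the tail bound may exceed the total mass, and no lower bound on the near-zero mass follows. Both of your proposed repairs fail for this reason: using $\varpi^{-(1-s)}\to 1$ still leaves the factor $2C$, and the "cleaner" claim that $\int_{|y|>\varpi}|y|^{-(2-s)}dy$ is bounded by an absolute constant independent of $s$ is simply false (the $1-s$ sits in the denominator, so the integral diverges like $(1-s)^{-1}$ as $s\to1^-$); splitting at $|y|=1$ instead of $|y|=\varpi$ changes nothing.

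The paper avoids this by proving the mass bound \eqref{e:stepA2} by contradiction: assuming $\int_{-1}^{1}v_E < c_0\log K$, it feeds this back through the eigenvector equation, using $\|\rho_E\|_\infty\le\|\rho\|_\infty$ on $|y|<1$ and using \Cref{l:vupperone} only to bound $v_E$ by a constant on $|y|>1$ (so the unknown constant $C$ enters additively, as a harmless $+1$ after integrating the probability density $\rho_E$). This yields the improved tail bound $v_E(x)\le \tfrac14|x|^{s-2}$ in \eqref{e:pepper2}, whose integral over $|x|>1$ is at most $\tfrac{1}{2(1-s)}$; together with the assumed small mass on $[-1,1]$ and $\log K\le (1-s)^{-1}$ from \eqref{e:tbounds}, this contradicts $\|v_E\|_1=(1-s)^{-1}$ once $c_0$ is small. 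In other words, the smallness of the constant multiplying $|x|^{s-2}$ in the tail is not assumed from \Cref{l:vupperone} but \emph{re-derived} under the contradiction hypothesis; that is exactly the ingredient your write-up is missing. The paper then upgrades from $[-1,1]$ to $[-\varpi,\varpi]$ (step \eqref{e:stepA22}) by applying \Cref{l:vupperone} only on the bounded annulus $\varpi<|x|<1$, where the resulting integral is $O(1)$ uniformly in $s$ and hence negligible against $\log K$; this part of your plan is fine, and your final step (plugging the mass bound into the eigenvector equation at $x$, with \Cref{c:roughlambda} and \eqref{e:tbounds}) agrees with the paper's Step 2--3. To fix your proof, replace the "normalization minus tail" computation by a contradiction argument of the above type (or otherwise obtain a tail bound with an explicit constant strictly below $\tfrac12$).
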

\begin{proof}
The proof consists of two steps.\\
\paragraph{\textit{Step 1.}} 
First, we claim that there exist constants $K(g), c_0>0$ such that for all $K \ge K_0$, $s\in(\s, 1)$, and $E \in I_\star$, 
\be\label{e:stepA2}
\frac{1}{ \log K} \int_{-1 }^1  v_E(x)\,dx \ge c_0. 
\ee 
To show \eqref{e:stepA2}, we will suppose that it does not hold and derive a contradiction. 
In this case, for every $c_0 > 0$,  we can find an increasing sequence $\{ K_j\}_{j=1}^\infty$  
and sequences $\{E_j\}_{j=1}^\infty$ and $\{s_j\}_{j=1}^\infty$  with  $E_j \in I_\star$ and $s_j \in (\s,1)$ such that 
\be\label{e:pepper1}
\lim_{j \rightarrow \infty} \frac{1}{\ln K_j} \int_{-1}^1 v_{K_j, t, s_j, E_j} (x) < c_0.
\ee 
In particular, by taking $j$ large enough, 
we have for all $x \in \R$ that 
\begin{align*}
\frac{1}{2K_j } v_{E_j} (x) &\le \lambda_{E_j} v_{E_j} (x) \\
&= Fv_{E_j}(x) \\
&=  2t|x|^{s_j-2} \left( \int_{|y|<1} \rho_{E_j} (-y-t^2/x) v_E(y) \,dy +  \int_{|y|>1} \rho_{E_j} (-y-t^2/x) v_{E_j}(y)\,  dy \right)\\
&\le 
C t|x|^{s_j-2} \left( \int_{|y|<1}  v_{E_j} (y) \,dy +  \int_{|y|>1} \rho_{E_j} (-y-t^2/x)  \,  dy \right)\\
&\le C t|x|^{s_j-2} \left( \int_{|y|<1}  v_{E_j} (y) \,dy + 1 \right).
\end{align*}
The first line follows from \Cref{c:roughlambda}, the second line uses that $v_E$ is an eigenvector $F$, the third line uses the definition of $F$ and \eqref{e:tbounds},  the fourth line uses \eqref{e:rhoEupper} and  \Cref{l:vupperone}, and the last line follows because $\rho_{E_j}$ is a probability density.
Rearranging, we find that
\be\label{e:bluecars}
v_{E_j} (x) \le \frac{C |x|^{s_j-2} }{ \log K_j} \left( \int_{|y|<1}  v_{E_j} (y) \,dy + 1  \right) .
\ee
By taking $j$ sufficiently large and $c_0$ sufficiently small (relative to the constant $C$ in \eqref{e:bluecars}), we have 
\be\label{e:pepper2}
v_{E_j} (x) \le |x|^{s_j-2} \left( \frac{1}{3 }  + \frac{C}{ \ln K_j }\right) \le \frac{|x|^{s_j-2}}{4} .
\ee
Combining \eqref{e:pepper1} and \eqref{e:pepper2} (using the latter for $|x| \ge 1$), we obtain
\be
\| v_{E_j} \|_1 \le \frac{\ln K_j}{4} + \frac{1}{4 ( 1-s_j )} \le  \frac{1}{2( 1-s_j )}.
\ee 
for sufficiently large $j$, by the definition of $\s$. This contradicts our choice of normalization for $v_E$, $\| v_E\|_1 = (1-s)^{-1}$, and establishes \eqref{e:stepA2}.\\
\paragraph{\textit{Step 2.}} 
Using the bound from the previous step, we now complete the proof. 
By  \Cref{l:vupperone}, we find that there
exist constants $K_0, C(\delta)>0$ such that for all $K\ge K_0$ and $s\in (\s, 1)$, 
\begin{equation}
 \int_{\varpi<|x|<1} v_E(x)\, dx \le C.
\end{equation}
Using \eqref{e:stepA2}, we conclude that there exist $K_0, c_0> 0$ such that for all $K \ge K_0$, $s\in(\s, 1)$, and $E \in I_\star$, 
\be\label{e:stepA22}
  \frac{1}{ \log K} \int_{-\varpi}^\varpi v_{E}(x)\,dx \ge  c_0.
\ee 

Finally, for all $x\in \R$ such that  $|x| > \Delta$ and $K\ge K_0(g)$, we have using our choice of $\varpi$ (using \eqref{e:peppercorn}) \begin{align*}
\frac{2}{K} v_E(x) & \ge \lambda_E v_E(x)\\
& \ge \frac{t}{2|x|^{s-2}} \left(  \int_{|y| \le \varpi} \rho_E\left(- y - \frac{t^2}{x}  \right) v_E(y)\, dy\right)\\ 
& \ge \frac{c t}{2|x|^{s-2}} \left(  \int_{|y| \le \varpi}  v_E(y)\, dy\right)\\ 
&\ge \frac{c}{K | x|^{s-2}},
\end{align*}
where we used \eqref{e:tbounds} in the first two inequalities, and \eqref{e:stepA22} in the last inequality.
This implies the desired conclusion. 
\end{proof}

\begin{lemma}\label{l:sumac2}
Fix $\beta > 0$.  There exist  
$K_0(\beta)> 0$ and  $c(\beta)>0$ such that the following holds.
For all $K\ge K_0$,  $E \in I_\star$, $s \in (\s, 1)$,  and $x\in \R$ with $|x| \le  \beta t^2$,
\[
v_E(x) \ge  c(\log K)^{-1} t^{-2} .
\]
\end{lemma}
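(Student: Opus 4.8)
The plan is to run the eigenvalue identity $\lambda_{s,E}\,v_E(x)=(Fv_E)(x)$ together with the upper bound $\lambda_{s,E}\le 2/K$ from \Cref{c:roughlambda}, which gives $v_E(x)\ge\tfrac{K}{2}(Fv_E)(x)$. Since $\tfrac{K}{2t}=\tfrac{K^2\log K}{2g}=\tfrac{g}{2}(\log K)^{-1}t^{-2}\ge\tfrac{1}{2\L}(\log K)^{-1}t^{-2}$ by $t=g(K\log K)^{-1}$ and the hypothesis $g>\L^{-1}$, it suffices to prove $(Fv_E)(x)\ge c_\star(\beta,\L)/t$ for all $|x|\le\beta t^2$. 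To this end I would first change variables in \eqref{e:Fdef}, substituting $w=-y-t^2/x$, to rewrite $(Fv_E)(x)=\tfrac{t^{2-s}}{|x|^{2-s}}\int_{-\infty}^{\infty}\rho_E(w)\,v_E\!\left(-w-\tfrac{t^2}{x}\right)dw$. Restricting the integral to $w\in[-2\varpi,2\varpi]$ and using \eqref{e:peppercorn} (applicable because $E\in I_\star$), which gives $\rho_E(w)\ge\tfrac{1}{8\L}$ there, bounds this below by $\tfrac{t^{2-s}}{8\L|x|^{2-s}}\int_{I}v_E(z)\,dz$, where $I$ is the interval of length $4\varpi$ centered at $-t^2/x$.

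The second step is to bound $v_E$ from below on $I$. Because $|x|\le\beta t^2$ forces $|t^2/x|\ge\beta^{-1}>0$, while $\varpi^{-1}t^2\to 0$ as $K\to\infty$, for $K\ge K_0(\L)$ all of $I$ except a subinterval of length at most $\varpi$ around the origin lies in the set $\{|z|\ge\varpi^{-1}t^2\}$ on which \Cref{l:sumac} gives $v_E(z)\ge c\,|z|^{-(2-s)}$. On this good portion of $I$, which has length at least $3\varpi$, one has $|z|\le|t^2/x|+2\varpi$, hence $v_E(z)\ge c\,(|t^2/x|+2\varpi)^{-(2-s)}$. I would then split into two regimes. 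If $|t^2/x|\ge 2\varpi$ (the genuinely small-$|x|$ case), then $(|t^2/x|+2\varpi)^{-(2-s)}\ge\tfrac14(|x|/t^2)^{2-s}$, and the two powers $|x|^{2-s}$ cancel, leaving $(Fv_E)(x)\ge c''\,t^{-(2-s)}\ge c''/(2t)$ by \eqref{e:tbounds}. If instead $|t^2/x|<2\varpi$ (which can occur only when $\beta>(2\varpi)^{-1}>1$), then on the good portion of $I$ one has $|z|<4\varpi<1$, so monotonicity of $z\mapsto c|z|^{-(2-s)}$ gives $v_E(z)\ge c$; combining this with $\tfrac{t^{2-s}}{|x|^{2-s}}\ge\beta^{-(2-s)}t^{-(2-s)}\ge\beta^{-2}/(2t)$ again yields $(Fv_E)(x)\ge c_\star/t$ with $c_\star$ depending only on $\beta$ and $\L$.

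Assembling these bounds gives $(Fv_E)(x)\ge c_\star/t$ uniformly over $|x|\le\beta t^2$, $E\in I_\star$, $s\in(\mathfrak s,1)$, and $K\ge K_0(\beta)$, and the reduction in the first paragraph then produces $v_E(x)\ge c(\beta)(\log K)^{-1}t^{-2}$ with $c(\beta)=c_\star/(2\L)$. I expect the only real subtlety to be the regime $|x|\ll t^2$: after the change of variables, $\rho_E$ is evaluated at the \emph{large} argument $-t^2/x$, which pushes $I$ far from the origin, so the estimate relies entirely on the polynomial tail lower bound for $v_E$ from \Cref{l:sumac}, and one must check that it exactly compensates the large prefactor $t^{2-s}/|x|^{2-s}$ (the cancellation of the $|x|$-powers being the key point). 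The remaining bookkeeping — keeping all constants uniform in $K$, verifying that the good portion of $I$ has length at least $3\varpi$, and ensuring \eqref{e:tbounds} and \Cref{c:roughlambda} apply — is routine once $K$ is taken large in terms of $\beta$ and $\L$.
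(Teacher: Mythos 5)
Your proposal is correct and follows essentially the same route as the paper's proof: the eigenvector identity combined with the rough bound $\lambda_{s,E}\le 2/K$ from \Cref{c:roughlambda}, restriction of the integral to an $O(1)$ window around $-t^2/x$, the tail lower bound of \Cref{l:sumac} for $v_E$ on that window, and \eqref{e:peppercorn} for $\rho_E$ near zero, with the power $|x|^{s-2}$ cancelling against $|t^2/x|^{s-2}$. The only differences are cosmetic: the paper chooses the window radius $A=\beta^{-1}/2$ adapted to $\beta$ (so no case split is needed), while you take radius $2\varpi$ and split according to whether $|t^2/x|\ge 2\varpi$.
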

\begin{proof}
Suppose that $|x| \le \beta t^2$. 
We compute using the choice of parameter $A=\beta^{-1}/2 $ that 
\begin{align*}
v_E(x) & \ge  \frac{Kt  |x|^{s-2}}{4} \int_{-\infty}^{\infty}  v_E(y) \rho_E (-y-t^2/x)\,  dy\\ 
&\ge \frac{(\log K)^{-1}  |x|^{s-2}}{4} \int_{-t^2/x-A}^{-t^2/x + A} v_E(y) \rho_E (-y-t^2/x) \, dy \\
&\ge c  (\log K)^{-1} |x|^{s-2} \left| \frac{t^2}{x} \right|^{s-2} \int_{-t^2/x-A}^{-t^2/x+A} \rho_E (-y-t^2/x) \, dy \\
&=  c  (\log K)^{-1} |x|^{s-2} \left| \frac{t^2}{x} \right|^{s-2} \int_{-A}^{A} \rho_E (- y) \, dy \\
&\ge  c  (\log K)^{-1} t^{-2}.
\end{align*}
The first inequality uses \eqref{e:tbounds}, \Cref{c:roughlambda}, and the fact that $v_E$ is an eigenvector of $F$, along with the definition of $F$. The second inequality follows from restricting the domain of integration (using that the integrand is positive). The third inequality uses \Cref{l:sumac}, after noting that, in the case where $x >0$, 
\be
- \frac{t^2}{x} +A \le   - \frac{1}{\beta}  + \frac{1}{2\beta} \le - \frac{1}{\beta},
\ee
so that  \Cref{l:sumac} can be applied, and using
\be
\left|- \frac{t^2}{x}  - A \right|  =\frac{t^2}{x}  + A \le \frac{ 2t^2}{x}
\ee
to bound the argument of $v$ when applying \Cref{l:sumac}. Our use of  \Cref{l:sumac} requires $K$ large enough so that $\varpi^{-1} t^2 \le \beta^{-1}$, which gives rise to the requirement that we must have $K \ge K_0(\beta)$.
The $x<0$ case is similar.
The fourth line is a change of variables, and the final inequality uses 
 that our choice of $\varpi$ satisfies \eqref{e:peppercorn} to bound $\rho_E(-y)\ge (8\L)^{-1}$ for $|y| \le \varpi$ (and zero elsewhere). We note that this leads to a dependence of the constant $c$ in the last inequality on $A$, and hence on $\beta$. \end{proof}
 
 \begin{lemma}
  There exists  $\alpha_0 >0$ and $K_0 >0$ such that the following holds.
For all  $\alpha \in (0, \alpha_0]$, 
 there exists a constant $C(\alpha)> 0$ such that the following holds.
 For all $K \ge K_0$, 
 $E \in I_\star$, $s \in (\s, 1)$, 
and  $|x| \le  \Delta $,
\be\label{e:salt}
v_E(x) \le C t^{-2}.
\ee 
 \end{lemma}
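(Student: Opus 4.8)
The plan is to split $\{|x|\le\Delta\}$ into the regimes $\varpi^{-1}t^2\le|x|\le\Delta$ and $|x|\le\varpi^{-1}t^2$; choosing $\alpha_0\le\varpi$ guarantees $\Delta=t^2/\alpha\ge\varpi^{-1}t^2$, so the two regimes cover everything. On $\varpi^{-1}t^2\le|x|\le\Delta$ the bound is immediate from \Cref{l:vupperone}, which gives $v_E(x)\le C|x|^{-(2-s)}\le C(\varpi^{-1}t^2)^{-(2-s)}=C\varpi^{2-s}t^{-2}\cdot t^{-2(1-s)}$; the standing constraint $\ln K\le(1-s)^{-1}$ yields $t^{-2(1-s)}=\exp\big(2(1-s)\log(1/t)\big)\le\exp\big(2\log(1/t)/\ln K\big)\le C$ once $K\ge K_0(\L)$ (since $\log(1/t)=\log K+O(\log\log K)$), so this is $\le Ct^{-2}$.

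For $|x|\le\varpi^{-1}t^2$ I would run a self-consistency argument on $M:=\sup_{|x|\le\varpi^{-1}t^2}v_E(x)$, which is finite since $v_E\in\mathcal X$ gives $\|v_E\|_\infty\le\|v_E\|<\infty$. Because $v_E$ is the leading eigenvector of $F$ (\Cref{c:KR}) and $\lambda_E\ge(2K)^{-1}$ (\Cref{c:roughlambda}), setting $z=-t^2/x$ so that $|z|=t^2/|x|\ge\varpi$ and using the identity $t^{2-s}|x|^{-(2-s)}=t^{-(2-s)}|z|^{2-s}$ gives $v_E(x)\le 2Kt^{-(2-s)}|z|^{2-s}\int_{\R}\rho_E(z-y)v_E(y)\,dy$, where $2Kt^{-(2-s)}=2(Kt)\,t^{s-1}\,t^{-2}=\frac{2g}{\log K}\,t^{s-1}\,t^{-2}\le\frac{C}{\log K}\,t^{-2}$ (using $t=g(K\log K)^{-1}$ and $t^{s-1}=t^{-(1-s)}\le C$).

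Then I would split the $y$-integral into $|y|\le\varpi^{-1}t^2$, $\varpi^{-1}t^2<|y|\le1$, and $|y|>1$. On the first, $v_E(y)\le M$ and $\rho_E(z-y)\le C|z|^{-(2-s)}$ by \eqref{e:supbound1} (applicable since $\varpi^{-1}t^2\le1$ for $K$ large), so its contribution is $\le CMt^2|z|^{-(2-s)}$. On the third, $v_E(y)\le C|y|^{-(2-s)}$ by \Cref{l:vupperone}, and after $y\mapsto-y$, \eqref{e:supbound2} bounds $\int_{|y|>1}\rho_E(z-y)|y|^{-(2-s)}\,dy$ by $C|z|^{-(2-s)}$. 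On the middle region, $v_E(y)\le C|y|^{-(2-s)}$, $\rho_E(z-y)\le C|z|^{-(2-s)}$, and $\int_{\varpi^{-1}t^2<|y|\le1}|y|^{-(2-s)}\,dy=\frac{2}{1-s}\big((\varpi^{-1}t^2)^{-(1-s)}-1\big)\le C\log K$, where the final bound comes from $e^u-1\le u e^u$ with $u=(1-s)\log(\varpi/t^2)$ bounded by the same parameter estimate. Summing the three pieces, $|z|^{2-s}\int_{\R}\rho_E(z-y)v_E(y)\,dy\le C(Mt^2+\log K)$, hence $v_E(x)\le\frac{C}{\log K}t^{-2}\big(Mt^2+\log K\big)=\frac{C'}{\log K}M+C't^{-2}$. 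For $K\ge K_0(\L)$ the coefficient of $M$ is at most $\tfrac12$, so taking the supremum over $|x|\le\varpi^{-1}t^2$ gives $M\le\tfrac12 M+C't^{-2}$, i.e.\ $M\le2C't^{-2}$; combining with the first paragraph proves \eqref{e:salt}.

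The only nonroutine point is the cancellation of $\log K$: the prefactor $2Kt^{-(2-s)}$ carries a factor $1/\log K$, which must be absorbed by the $\log K$ produced by $\int_{\varpi^{-1}t^2<|y|\le1}|y|^{-(2-s)}\,dy$. A crude bound of this integral by $C/(1-s)$ would be useless, since for fixed $K$ one lets $s\to1$ and $1/(1-s)$ is unbounded; it is essential that the Taylor estimate $e^u-1\le ue^u$ replaces $1/(1-s)$ by $\log(\varpi/t^2)$, which is of order $\log K$. All other steps are direct applications of \Cref{l:vupperone}, \Cref{c:roughlambda}, \eqref{e:supbound1}, \eqref{e:supbound2}, and the parameter bounds in \eqref{e:tbounds}.
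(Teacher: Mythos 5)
Your proof is correct, but in the delicate regime $|x|\le\varpi^{-1}t^2$ it runs on a genuinely different engine than the paper's. The paper splits the $y$-integral at $\alpha t^2/|x|$ and, for small $|x|$, exploits that the argument $-y-t^2/x$ of $\rho_E$ is then very large, so the $4/3$-power decay of $\rho_E$ (\Cref{l:Kuniformbound}, \eqref{e:rhoEuniform}) makes the kernel of size $|x/t^2|^{4/3}$; this is combined with the bound $\int_{-\varpi}^{\varpi}v_E\le C\log K$ (the estimate \eqref{e:stepA} established inside the proof of \Cref{l:vupperone}) and with \Cref{l:vupperone} itself, so the troublesome near-zero values of $v_E$ never enter. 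You instead close a self-consistent inequality for $M=\sup_{|x|\le\varpi^{-1}t^2}v_E(x)$ (finite since $v_E\in\mathcal X$): the near-zero region contributes only $O(Mt^2)$ because it has measure $O(t^2)$ and $\rho_E(z-y)\le C|z|^{-(2-s)}$ by \eqref{e:supbound1}, and after the prefactor $\sim t^{-2}/\log K$ this becomes $O(M/\log K)$, absorbable for $K\ge K_0(\L)$; the needed $\log K$ to cancel the prefactor is produced by the explicit integral over $\varpi^{-1}t^2<|y|\le 1$, handled exactly as the paper handles $(1-\Delta^{s-1})/(s-1)$ in \eqref{e:weirdlog}, and the far region is disposed of by \eqref{e:supbound2}. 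What your route buys is that it bypasses both \eqref{e:stepA} and the heavy-tail decay of $\rho_E$ at large arguments, offloading all kernel estimates to \Cref{l:supbounds}; what it costs is the a priori finiteness of $\|v_E\|_\infty$ (immediate from \Cref{d:X}) and the absorption step, and as a bonus your constant, and your $K_0$, are independent of $\alpha$ (the statement only requires $C(\alpha)$). The quantifier structure is respected since all constants entering the absorption depend only on $\L$, and the easy regime $\varpi^{-1}t^2\le|x|\le\Delta$ is indeed immediate from \Cref{l:vupperone} together with $t^{-2(1-s)}\le C$, which in fact follows directly from \eqref{e:tbounds}.
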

 \begin{proof}
For notational convenience, we  let $\alpha>0$ be a parameter that will be decreased at various points throughout the proof, but only a finite number of times. We allow all constants to depend on $\alpha$ unless explicitly noted, and we use $\tilde C$ to denote constants that do not depend on $\alpha$.
Then  
we find that
\begin{align}
 v_E(x) &\le 2  Kt  |x|^{s-2} \int_{-\infty}^{\infty}  v_E(y) \rho_E (-y-t^2/x) \, dy\notag \\
&\le 2 \tilde C  (\log K)^{-1} |x|^{s-2} \left( \int_{|y|>\alpha t^2/ |x| } v_E(y) \rho_E (-y-t^2/x) \, dy + \int_{|y|<\alpha t^2/ |x| } v_E(y) \rho_E (-y-t^2/x) \, dy\right)\notag  \\
&\le \tilde C (\log K)^{-1} |x|^{s-2} \left( |\alpha t^2/ x |^{s-2} \int_{|y|>\alpha t^2/ |x| } \rho_E (-y-t^2/x) \, dy + \int_{|y|<\alpha t^2/ |x| } v_E(y) \rho_E (-y-t^2/x) \, dy\right) \notag \\
&\le \tilde C (\log K)^{-1} |x|^{s-2} \left(\alpha^{s-2} t^{2(s-2)}|x|^{2-s} + \int_{|y|<\alpha t^2/ |x| } v_E(y) \rho_E (-y-t^2/x) \, dy\right)\notag \\
&\le C (\log K)^{-1} t^{-2} + \tilde C (\log K)^{-1} |x|^{s-2} \int_{|y|<\alpha t^2/ |x| } v_E(y) \rho_E (y-t^2/x) \, dy.\label{e:paprika}
\end{align}
The first line follows from \eqref{e:tbounds} and \Cref{c:roughlambda}, the second line is a rewriting of the integral, the third line uses \Cref{l:vupperone} in the first integral, and the fourth line uses that $\rho_E$ is a probability measure. 
We now focus on bounding the integral in \eqref{e:paprika}.

We now consider separately the regimes $\varpi^{-1} \alpha t^2 \le |x|$ and $\varpi^{-1} \alpha t^2 \ge |x|$. We decrease $\alpha$ if necessary so that $\varpi^{-1} \alpha  < \alpha^{-1}/ 2$.

If $\varpi^{-1} \alpha t^2 \le |x|$,  then  $\alpha t^2/ |x| \le  \varpi$, \eqref{e:rhoEupper}, and \eqref{e:stepA} imply that
\be
\int_{|y|<\alpha t^2/ |x| } v_E(y) \rho_E (y-t^2/x) \, dy
\le \| \rho \|_\infty \int_{-\varpi}^{\varpi} v_E(y) \, dy \le \tilde C \ln K.
\ee
Inserting this in \eqref{e:paprika} and using $\varpi^{-1} \alpha t^2 \le |x|$, we obtain (using $s \ge \s$) 
\be
v_E(x) \le C t^{-2}.
\ee
Next, suppose that $ |x| \le\varpi^{-1} \alpha t^2$. Then 
\be
\frac{ t^2 }{x} \ge  \frac{\varpi}{\alpha}.
\ee 
By decreasing $\alpha$ if necessary, we have for all $| y | \le  \alpha t^2 /|x|$ that 
\be\label{e:necessaryforrhoe}
\left| 
 y  - \frac{t^2}{x} +E 
\right| \ge 2.
\ee 
By a change of variables, \Cref{l:vupperone}, and the inequality $|x|^{-1} \le |x|^{-5/6}$ for $|x| \ge 1$,
\begin{align*}
\int_{\varpi \le |y|<\alpha t^2/ |x|  } v_E(y) \, dy
&\le 
\tilde C \int_{\varpi  \le |y|<\alpha t^2/ |x|  } |y|^{-2 + s}  \, dy\\
&\le \tilde C \int_{1 \le |y|<\alpha t^2/ \varpi |x|  } |w|^{-2 + s}  \, dw \\
&\le \tilde C \int_{1 \le |y|<\alpha t^2/ \varpi |x|  } |w|^{-5/6}  \, dw\\
&\le  \tilde C  \left(\frac{\alpha t^2}{ \varpi |x|}\right)^{1/6}.
\end{align*}
Then using \Cref{l:Kuniformbound}, \eqref{e:necessaryforrhoe}, and the previous display,
\begin{align}
 \int_{\varpi  \le |y|<\alpha t^2/ |x| } v_E(y) \rho_E (y-t^2/x) \, dy 
 &\le \tilde C 
\left| \frac{x}{t^2} \right|^{4/3 }  \int_{\varpi \le |y|<\alpha t^2/ |x|  } v_E(y) \, dy\\
& \le \tilde C  \alpha^{1/6} \left| \frac{x}{t^2} \right|^{7/6 } \\
& = \tilde C  \alpha^{1/6} \left| \frac{x}{t^2} \right|^{2-s  } \left| \frac{x}{t^2} \right|^{s -5/6   } \\
&\le \tilde C  \alpha^{1 - s} \left| \frac{x}{t^2} \right|^{2-s} \le  \tilde C   \left| \frac{x}{t^2} \right|^{2-s}  .\label{e:integralp1}
\end{align}
In the last line, we used that our assumption $|x| \le \Delta$ implies that
\be\label{e:useassumption}
\left| \frac{x}{t^2} \right| \le \frac{1}{\alpha}, 
\ee 
Further, by \eqref{e:stepA} and \eqref{e:rhoEuniform}, 
\begin{align}
 \int_{ |y|<\varpi   } v_E(y) \rho_E (y-t^2/x) \, dy 
 &\le \tilde C 
\left| \frac{x}{t^2} \right|^{4/3 }  \int_{ |y|<\varpi  } v_E(y) \, dy\\
&\le \tilde C ( \log K) \left| \frac{x}{t^2} \right|^{4/3 }\\ 
&\le \tilde C ( \log K) \left| \frac{x}{t^2} \right|^{2-s}  \left| \frac{x}{t^2} \right|^{s - 2/3} \\
&\le \tilde C \alpha^{2/3-s}  ( \log K)  \left| \frac{x}{t^2} \right|^{2-s}.\label{e:integralp2}
\end{align}
The last line again used \eqref{e:useassumption}. 

Then \eqref{e:integralp1} and \eqref{e:integralp2} imply, together with  \eqref{e:tbounds}, that there exists $s_0(\alpha, K) \in (0,1)$ and a constant $\tilde C>0$ (not depending on $\alpha$) such that for all $s\in (\s, 1)$
\be
 \int_{ |y|<\alpha t^2/ |x| } v_E(y) \rho_E (y-t^2/x) \, dy  
\le \tilde C \alpha^{2/3 -s } t^{-2} |x|^{2-s} \ln K.
\ee
Inserting the previous bound into \eqref{e:paprika}, we obtain \eqref{e:salt}.
 \end{proof}

\subsection{Approximate Eigenvector}\label{s:approximateeigenvector}

The following bounds determines the shape of $v_E$, as described in the introduction to the previous subsection. We recall that $\tilde u$ was defined in \eqref{e:tildeu}. 
\begin{lemma}\label{l:eigenvectorsandwich}
 There exists  $\alpha_0 >0$ and $K_0 >0$ such that the following holds.
For all  $\alpha \in (0, \alpha_0]$, 
 there exists a constant $C(\alpha)> 0$ such that the following holds.
 For all $K \ge K_0$, 
 $E \in I_\star$, $s \in (\s, 1)$, 
and $x\in \R$,
\be\label{e:sandwichlowerbound}
 \tilde u(x) \le C v_E(x)
\ee
and 
\be\label{e:sandwichupperbound}
(F v_E)(x) \le C (F \tilde u)(x).
\ee 
\end{lemma}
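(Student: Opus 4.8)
\textbf{Proof strategy for \Cref{l:eigenvectorsandwich}.}

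The plan is to assemble \eqref{e:sandwichlowerbound} and \eqref{e:sandwichupperbound} directly from the pointwise estimates on $v_E$ proved earlier in this section, together with the positivity and monotonicity of $F$ (it has a nonnegative kernel, so $u \le u'$ pointwise implies $Fu \le Fu'$). First I would prove \eqref{e:sandwichlowerbound}, i.e. $\tilde u \lesssim v_E$ pointwise. Recall $\tilde u(x) = (t^2 \ln K)^{-1} \one\{|x| \le \Delta\} + |x|^{-(2-s)}\one\{|x| \ge \Delta\}$. For $|x| \le \Delta = t^2/\alpha$ we have $|x| \le \alpha^{-1} t^2 =: \beta t^2$, so \Cref{l:sumac2} (with $\beta = \alpha^{-1}$, noting $K_0$ may then depend on $\alpha$) gives $v_E(x) \ge c(\alpha)(\log K)^{-1} t^{-2}$, which matches $\tilde u(x)$ up to the constant $c(\alpha)$. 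For $\Delta \le |x|$, we split further: if $\Delta \le |x| \le \varpi^{-1} t^2$, both $v_E$ and $|x|^{-(2-s)}$ are comparable to $t^{-2(2-s)}$ up to constants (since $|x| \asymp t^2$ on this range and $v_E(x) \ge c(\log K)^{-1} t^{-2} \ge c' t^{-2(2-s)}$ by \Cref{l:sumac2} extended to $|x| \le \varpi^{-1} t^2$, using \eqref{e:tbounds} to compare $t^{-2}$ and $t^{-2(2-s)}$ up to a $\log K$ factor absorbed via \eqref{e:tbounds}), while if $|x| \ge \varpi^{-1} t^2$, \Cref{l:sumac} gives exactly $v_E(x) \ge c|x|^{-(2-s)} = c\,\tilde u(x)$. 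Combining the three regimes and taking the worst constant yields \eqref{e:sandwichlowerbound}, possibly after further enlarging $K_0$ and $C$.

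Next I would prove the matching upper bound $v_E(x) \le C\tilde u(x)$ for $|x| \ge \Delta$ only (we deliberately do \emph{not} prove it for small $|x|$). For $|x| \ge \varpi^{-1} t^2$, \Cref{l:vupperone} gives $v_E(x) \le C|x|^{-(2-s)} = C\tilde u(x)$ directly. For $\Delta \le |x| \le \varpi^{-1} t^2$, the bound \eqref{e:salt} (which holds for $|x| \le \Delta$; I would first check whether it or an adaptation covers the slightly larger range, or alternatively bound $v_E$ on this annulus by a constant multiple of $t^{-2} \le C|x|^{-(2-s)}$ using $|x| \asymp t^2$ and \Cref{l:vupperone} at $|x| = \varpi^{-1} t^2$ together with monotonicity-type control — more carefully, on $\Delta \le |x| \le \varpi^{-1} t^2$ one has $|x|^{-(2-s)} \ge \varpi^{2-s} t^{-2(2-s)} \ge c\, t^{-2}$, so it suffices to show $v_E(x) \le Ct^{-2}$ there, which follows from an argument parallel to the one for \eqref{e:salt}). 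Thus $v_E(x) \le C\tilde u(x)$ for all $|x| \ge \Delta$. Then, for $|x| \ge \Delta$, I would write $v_E = v_E \one_{|x| \le \Delta} + v_E \one_{|x| \ge \Delta} \le C t^{-2} \one_{|x| \le \Delta} + C\tilde u \one_{|x| \ge \Delta}$ using \eqref{e:salt}; since $\tilde u \one_{|x| \le \Delta} = (t^2 \ln K)^{-1} \one_{|x|\le \Delta}$, we get $t^{-2}\one_{|x|\le\Delta} = (\ln K)\, \tilde u \one_{|x|\le\Delta}$, so $v_E \le C(\ln K)\tilde u$ pointwise. Applying the positive operator $F$ would then give $Fv_E \le C(\ln K) F\tilde u$ — but this has an unwanted $\ln K$ factor, so this crude bound is not quite enough.

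The main obstacle is exactly this: to get \eqref{e:sandwichupperbound} with a constant $C(\alpha)$ \emph{independent of $K$}, the factor $\ln K$ coming from the crossover region $|x| \le \Delta$ must be eliminated when one integrates against the kernel of $F$. The point is that the kernel $\frac{t^{2-s}}{|x|^{2-s}}\rho_E(-y-t^2/x)$, evaluated with $y$ ranging over $|y| \le \Delta$, contributes to $(F\tilde u)(x)$ a term of order $\frac{t^{2-s}}{|x|^{2-s}} \cdot (t^2 \ln K)^{-1} \cdot \Delta \cdot \inf_{|y|\le\Delta}\rho_E(-y-t^2/x)$, and the $\ln K$ in the denominator together with $\Delta = t^2/\alpha$ must be shown to \emph{cancel} against the $\ln K^{-1}$ that appears in $t = g(K\ln K)^{-1}$ — more precisely, one compares the contribution of $v_E$ on $|y|\le\Delta$ to $(Fv_E)(x)$, which by \eqref{e:salt} is bounded by $C t^{-2}$ times the same kernel integral, i.e. is of the same order $\frac{t^{2-s}}{|x|^{2-s}} t^{-2} \Delta \cdot \rho_E(\cdot)$ up to a constant, and crucially this equals a constant multiple (no $\ln K$) of the corresponding contribution of $\tilde u$, because $t^{-2}\cdot\Delta = \alpha^{-1}$ while $(t^2\ln K)^{-1}\cdot\Delta = (\alpha \ln K)^{-1}$ — wait, these differ by $\ln K$, so in fact the contribution of $\tilde u$ on $|y|\le\Delta$ is \emph{smaller} by $\ln K$, which goes the wrong way. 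The resolution must instead be that the contribution to $(F\tilde u)(x)$ from $|y| \asymp \Delta$, where $\tilde u(y) \asymp |y|^{-(2-s)} \asymp t^{-2(2-s)} \asymp t^{-2}/t^{-2(s-1)}$, dominates and is comparable (via \eqref{e:tbounds}, with $t^{-2(s-1)}$ bounded) to $t^{-2}$, absorbing the $|y| \le \Delta$ piece; so one should bound the full integral $\int_{|y|\le\Delta'}\rho_E(-y-t^2/x)v_E(y)\,dy$ for a slightly larger $\Delta' = \varpi^{-1} t^2$ using $v_E(y) \le Ct^{-2}$ there (\eqref{e:salt} plus the annulus bound) and compare it to $\int_{|y|\le\Delta'}\rho_E(-y-t^2/x)\tilde u(y)\,dy \ge c\int_{\Delta \le |y|\le\Delta'}\rho_E(\cdots)|y|^{-(2-s)}\,dy \gtrsim t^{-2}\int_{\Delta\le|y|\le\Delta'}\rho_E(\cdots)\,dy$, and the two integrals $\int\rho_E(\cdots)$ over the comparable ranges $|y|\le\Delta'$ and $\Delta\le|y|\le\Delta'$ differ by at most a constant factor because $\rho_E$ is bounded (\eqref{e:rhoEupper}) and $\Delta'-\Delta \asymp \Delta'$. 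Carrying this comparison out carefully — treating the three ranges $|y| \le \Delta'$, $\Delta' \le |y| \le 1$, $|y| \ge 1$ of the $y$-integral separately and using \Cref{l:vupperone}, \Cref{l:sumac}, \eqref{e:salt} against the matching pieces of $\tilde u$ — is the technical heart of the argument, and I expect it to require the lower bounds on $\rho_E$ from \eqref{e:peppercorn} to keep the denominators under control and \eqref{e:tbounds} repeatedly to trade powers of $t$ for constants.
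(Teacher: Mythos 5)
Your handling of the lower bound \eqref{e:sandwichlowerbound} is essentially the paper's: it follows directly from \Cref{l:sumac} and from \Cref{l:sumac2} with $\beta=\alpha^{-1}$. One correction: the parameters are arranged so that $\alpha\le\varpi$, hence $\Delta=t^2/\alpha\ge\varpi^{-1}t^2$, so for $|x|\ge\Delta$ \Cref{l:sumac} applies directly and the "intermediate regime $\Delta\le|x|\le\varpi^{-1}t^2$" you discuss is empty. With your (reversed) ordering it would in fact be a problem: there $\tilde u(x)=|x|^{-(2-s)}\asymp t^{-2}$ while \Cref{l:sumac2} only gives $v_E(x)\ge c(\log K)^{-1}t^{-2}$, short by a factor $\log K$; the chain $(\log K)^{-1}t^{-2}\ge c't^{-2(2-s)}$ you write is false, since $t^{-2(2-s)}\ge t^{-2}$. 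The same ordering confusion also infects your later "annulus" steps.

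For \eqref{e:sandwichupperbound} there is a genuine gap at the technical heart of your argument. You bound the $|y|\le\Delta$ contribution to $(Fv_E)(x)$ by $Ct^{-2}\int\rho_E(-y-t^2/x)\,dy$ over that window (via \eqref{e:salt}) and propose to absorb it into the contribution of $\tilde u$ from an adjacent window of comparable length where $\tilde u\asymp t^{-2}$, on the grounds that the two $\rho_E$-integrals "differ by at most a constant factor because $\rho_E$ is bounded." Boundedness of $\rho_E$ gives an upper bound on both integrals, not comparability: you need the mass of $\rho_E$ on one window of length $O(t^2)$ near the point $-t^2/x$ to control its mass on the neighboring window. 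When $|x|\ge\Delta$ this is harmless, since $|t^2/x|\le\alpha$ and \eqref{e:peppercorn} bounds $\rho_E$ below there (the paper in fact treats $|x|\ge\Delta$ more simply, via $(Fv_E)(x)=\lambda_E v_E(x)\le (2/K)C|x|^{-(2-s)}$ from \Cref{c:roughlambda} and \Cref{l:vupperone}, against the lower bound $(F\tilde u)(x)\ge c/(K|x|^{2-s})$). But for $|x|\ll t^2$ the argument $-t^2/x$ lies deep in the tail of $\rho_E$, where the only $K$-uniform information is the upper bound \eqref{e:rhoEuniform}; the two-sided bound \eqref{e:rhoequadratic} has constants depending on $K$, so your window-to-window comparison would force the constant in \eqref{e:sandwichupperbound} to depend on $K$, which the lemma forbids. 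The missing idea is to avoid like-for-like window comparisons in this regime and instead use two absolute bounds: lower-bound $(F\tilde u)(x)$ by restricting the $y$-integral to $y\in[-3t^2/2x,-t^2/2x]$, where after a change of variables the argument of $\rho_E$ ranges over an interval containing $[-\alpha/2,\alpha/2]$ so that \eqref{e:peppercorn} applies and $\tilde u(y)\asymp|t^2/x|^{-(2-s)}$, yielding $(F\tilde u)(x)\ge ct^{-1}$; and upper-bound the $|y|\le\Delta$ part of $(Fv_E)(x)$ by $Ct^{-1}$ using \eqref{e:salt} together with the tail decay $\rho_E(-y-t^2/x)\le C|x/t^2|^{4/3}$ from \eqref{e:rhoEuniform} and \eqref{e:tbounds}. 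Without this device (or some other $K$-uniform absolute lower bound on $(F\tilde u)(x)$ for $|x|\le\Delta$), your argument does not close.
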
 
\begin{proof}
The proof consists of three  steps. \\
\paragraph{\textit{Step 1.}} 
 The lower bound \eqref{e:sandwichlowerbound} on $v_E$ holds for all sufficiently small $\alpha$ by 
\Cref{l:sumac} and \Cref{l:sumac2} (after choosing $\beta = \alpha^{-1}$ in \Cref{l:sumac2}). \\

\paragraph{\textit{Step 2.}} 
We now prove 
\be\label{e:peanutprelude}
C (F \tilde u)(x) \ge (Fv_E)(x).
\ee 
 for $|x| \ge \Delta$.
Suppose that $|x| \ge \Delta$. 
By  \eqref{e:peppercorn} and the Lipschitz continuity of $\rho_E$ (\Cref{l:bapstcollection}), we may choose  $\alpha_0(\L) >0$ such that for all $\alpha \in (0, \alpha_0]$, 
\begin{align*}
(F \tilde u )(x) & =
\frac{t^{2-s}}{ |x|^{2-s}}\left( 
\int_{\mathbb{R}}
\rho_E \left( - y - \frac{t^2}{x} \right) \tilde u (y) \, dy
\right)\\
&\ge 
\frac{t^{2-s}}{ |x|^{2-s}}
\left( 
\int_{\Delta\le | y| \le 1 }
\left( \rho_E \left(  - \frac{t^2}{x} \right)  -  C |y| \right) \frac{1}{|y|^{2-s}} \, dy\right)\\
&\ge 
\frac{C t^{}}{ |x|^{2-s}}\left( 2 c \left( \frac{1 - \Delta^{s-1}}{s-1}  \right) - C \right)\\
&\ge \frac{c}{K |x|^{2-s}} \ge \frac{ c v_E(x)}{K}.
\end{align*}
In the fourth line, we also used \eqref{e:weirdlog} and \Cref{l:vupperone}. We conclude using \Cref{c:roughlambda}, and using that $v$ is an eigenvector of $F$,  that \eqref{e:peanutprelude} holds for all $|x| \ge \Delta$.\\

\paragraph{\textit{Step 3.}} 
We now complete the proof by establishing \eqref{e:peanutprelude} for $|x| \le \Delta$.  First, note that
\begin{align*}
(F \tilde u )(x) & =
\frac{t^{2-s}}{ |x|^{2-s}}\left( 
\int_{\mathbb{R}}
\rho_E \left( - y - \frac{t^2}{x} \right) \tilde u (y) \, dy
\right)\\
&\ge \frac{t^{2-s}}{ |x|^{2-s}}\left( 
\int_{|y| \ge \Delta}
\rho_E \left( - y - \frac{t^2}{x} \right) \tilde u (y) \, dy
\right),
\end{align*}
where the inequality uses that the integrand is positive. 
By the definition of $\tilde u$ and \Cref{l:vupperone},
\begin{align}\label{e:peanut1}
\frac{t^{2-s}}{ |x|^{2-s}}\left( 
\int_{|y| \ge \Delta}
\rho_E \left( - y - \frac{t^2}{x} \right) \tilde u (y) \, dy
\right) \ge \frac{c t^{2-s}}{ |x|^{2-s}}\left( 
\int_{|y| \ge \Delta}
\rho_E \left( - y - \frac{t^2}{x} \right) v_E (y) \, dy
\right). 
\end{align}
Further, 
we have for $|x| \le \Delta$ that 
\begin{align}\label{e:beef1}
\begin{split}
& \frac{t^{2-s}}{ |x|^{2-s}}\left( 
\int_{|y| \ge \Delta}
\rho_E \left( - y - \frac{t^2}{x} \right) \tilde u (y) \, dy
\right) \\ & \ge \frac{t^{2-s}}{ |x|^{2-s}}\left( 
\int_{ - 3t^2/2x }^{ -  t^2/2x }
\rho_E \left( - y - \frac{t^2}{x} \right) \tilde u  (y) \, dy
\right) \\
&\ge 
 \frac {c t^{2-s}}{ |x|^{2-s}} \left|  \frac{x^{2-s} }{t^{2(2-s)}} \right|
\left( 
\int_{ - 3t^2/2x }^{ -  t^2/2x }
\rho_E \left( - y - \frac{t^2}{x} \right)  \, dy \right)\\
&\ge c t^{-1}\int_{- t^2/2x  }^{  t^2/2x } 
\rho_E (y) \, dy \\  &\ge  c t^{-1} \int_{- \alpha/2  }^{   \alpha/2 } 
\rho_E (y) \, dy  \ge c t^{-1}.
\end{split}
\end{align}
The first inequality follows from restricting the region of integration. The second inequality uses the definition of $\tilde u$. The third inequality is an algebraic simplification, and the last inequality uses \eqref{e:peppercorn}.

We also have, using $s > 2/3$, that
\begin{align}
\begin{split}
\frac{t^{2-s}}{ |x|^{2-s}}\left( 
\int_{|y| \le \Delta}
\rho_E \left( - y - \frac{t^2}{x} \right) v_E(y) \, dy
\right)& \le
\frac{C }{ t^{s} |x|^{2-s}}  
\int_{|y| \le  \Delta}
\rho_E \left( - y - \frac{t^2}{x} \right)  \, dy\\
&\le C  t^{-1} |x|^{-2 +s} t^2 \left| \frac{x}{t^2} \right|^{4/3} \\% \max_{|y| \le \Delta} \rho_E \left( - y - \frac{t^2}{x} \right) .\\
&\le C  |x|^{-2/3 + s} t^{-5/3}
\\
& \le  C t^{-1 + 2 (s - 1) }  
\le Ct^{-1}. \label{e:beef2}
\end{split}
\end{align}
The first inequality uses \eqref{e:salt}, the second inequality uses $|x| \le \Delta$ and \eqref{e:rhoEuniform}, and the last line follows from algebraic simplifications using $|x| \le \Delta$ and \eqref{e:tbounds}. 

We conclude, combining \eqref{e:beef1} and \eqref{e:beef2}, that
\be\label{e:peanut2}
\frac{Ct^{2-s}}{ |x|^{2-s}}\left( 
\int_{|y| \ge \Delta}
\rho_E \left( - y - \frac{t^2}{x} \right) \tilde u (y) \, dy
\right)
 \ge \frac{t^{2-s}}{ |x|^{2-s}}\left( 
\int_{|y| \le \Delta}
\rho_E \left( - y - \frac{t^2}{x} \right) v_E(y) \, dy
\right).
\ee
Combining \eqref{e:peanut1} and \eqref{e:peanut2}, we obtain that for all $|x| \le \Delta$,
\be\label{e:peanutsequel}
C (F \tilde u)(x) \ge (Fv_E)(x).
\ee 
Together with \eqref{e:peanutprelude}, we see that the same bound holds for all $x\in \R$. This completes the proof.
\end{proof}
  
\section{Test Function Estimates}\label{s:testestimates}

This section is devoted to the proof of \Cref{l:pizza}, which controls the difference $F_{E_1}  -F_{E_2}$ (where $F_E$ was defined in \eqref{e:Fdef}). These bounds will be instrumental in \Cref{s:conclusion}.
We begin in \Cref{s:preliminarytest} by considering the action of $F$ on $\tilde u_1$, $u_2$, and $u_3$ with the choice $t= g (K \log K)^{-1}$. We then present the proof of \Cref{l:pizza} in \Cref{s:differenceF}. 

\subsection{Preliminary Test Vector Bounds} \label{s:preliminarytest}
For entirety of this section, we  fix  $\L>1$, $g>0$, $\rho$, and $E_0 \in \R$ satisfying the conditions of \Cref{t:mainlambda}. 
We recall that the vector $\tilde u$ was defined in \eqref{e:tildeu}. 
We also recall the definitions of $u_2$ and $u_3$ from \eqref{e:theus}, and set 
\be\label{e:tildeu1}
\tilde u_1(x) = \frac{1}{t^2 \ln K}  \one\{ |x| \le \Delta\},
\ee
noting that 
\begin{align*}
\tilde u(x) = \tilde u_1(x) + u_2(x) + u_3(x).
\end{align*}
Further, recall that $\s$ was defined in \Cref{s:parameterchoices}.

The following lemma provides initial bounds on the action of $F$ on $\tilde u_1$, $u_2$, and $u_3$. 
It is a slight refinement of the bounds in \Cref{s:testvectoranalysis}, where we studied $u_1$ instead of $\tilde u_1$. As before, the dominant growth behavior comes from the action of $F$ on $u_2$. The lemma shows that $F\tilde u_1$ and $F u_3$ are asymptotically smaller (in $K$) than $K^{-1} \tilde u$, while we know that the spectral radius of $F$ is order $K^{-1}$ (from \Cref{c:KR} and \Cref{l:rougheigenvector}).
\begin{lemma}\label{l:tildebounds}
Fix $\eps ,G>0$. 
There exists a constant $\alpha_0(\eps) >0$ such that the following holds for all $\alpha \in (0, \alpha_0]$.
There are constants $C(G, \alpha)$ and $K_0(G, \alpha) >0$ such that 
the following holds for all  
 $s\in (\s,1)$, $g\in [0, G]$, $K>1 $, and $E \in [-G,G]$. 
For all $x \in \R$, 
\begin{align}\label{e:tildeu1bound}
(F \tilde u_1 )(x) \le&   C( t \ln K)^{-1}  \one\{ |x| \le \Delta \}   +   \left( \frac{1}{\ln K}\right) \frac{C t }{|x|^{2-s}} \one\big\{|x| \ge \Delta\big\},
\end{align}
\begin{align}\label{e:tildeu2bound}
\begin{split}
(F u_2 )(x) \le&C  t^{-1}
\log  \left( \frac{1}{\Delta} \right)  \one\{ |x| \le \Delta \}  \\
& + \frac{C t}{|x|^{2-s}}
\left(
2
\left( \rho_E(0)  + \eps \right)\log  \left( \frac{1}{\Delta} \right) 
 +  C 
\right)  \one\big\{|x| \ge \Delta\big\},
\end{split}
\end{align}
and
\begin{align}\label{e:tildeu3bound}
(F u_3 )(x) \le&C  t^{-1} \one\{ |x| \le \Delta \}   +\frac{C t}{|x|^{2-s}} \one\big\{|x| \ge \Delta\big\}.
\end{align}
\end{lemma}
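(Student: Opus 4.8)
The plan is to obtain all three estimates directly from \Cref{l:u1upper}, \Cref{l:u2upper}, and \Cref{l:u3upper}, which already bound $Fu_1$, $Fu_2$, and $Fu_3$ in terms of $u_1,u_2,u_3$, by specializing to $t = g(K\ln K)^{-1}$ and rewriting everything in the indicator form of the statement. Invoking those lemmas is legitimate since $s\in(\s,1)\subset(5/6,1)$ and $g,E$ lie in the compact ranges they require; the only additional inputs are the elementary parameter relations collected in \Cref{s:parameterchoices}. From \eqref{e:tbounds} we have $t/2 \le t^{2-s}\le 2t$, hence $t^{-(2-s)}\le 2t^{-1}$, and since $\Delta = t^2/\alpha$ the product $\Delta\,t^{-(2-s)}u_1(x)$ equals $t^{-(2-s)}\one\{|x|\le\Delta\}\le 2t^{-1}\one\{|x|\le\Delta\}$, while $t^{2-s}\big(u_2(x)+u_3(x)\big)\le 2t\,|x|^{-(2-s)}\one\{|x|\ge\Delta\}$. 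Feeding these into \Cref{l:u3upper} yields \eqref{e:tildeu3bound} after absorbing the (finitely many) powers of $\alpha$ and the numerical constants into $C(G,\alpha)$.

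For \eqref{e:tildeu1bound} I would use the identity $\tilde u_1 = (\alpha\ln K)^{-1}u_1$, which is immediate from \eqref{e:tildeu1} and \eqref{e:theus} since $\Delta = t^2/\alpha$. Thus $F\tilde u_1 = (\alpha\ln K)^{-1}Fu_1$, and applying the same substitutions to \Cref{l:u1upper} produces exactly the extra factor $(\ln K)^{-1}$ in front of both the $\one\{|x|\le\Delta\}$ piece and the $|x|^{-(2-s)}\one\{|x|\ge\Delta\}$ piece, as asserted. (This $(\ln K)^{-1}$ gain is precisely the quantitative sense in which $F\tilde u_1$ is smaller than $K^{-1}\tilde u$.)

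The one place needing slight care is the coefficient $2(\rho_E(0)+\eps)$ in \eqref{e:tildeu2bound}. \Cref{l:u2upper} produces the coefficient $t^{2-s}\big(2(\rho_E(0)+\eps_0)\big(\tfrac{1-\Delta^{s-1}}{s-1}\big)+C\big)$ multiplying $u_2+u_3$, and the crude bound $\tfrac{1-\Delta^{s-1}}{s-1}\le 2\log(1/\Delta)$ from \eqref{e:weirdlog}, together with $t^{2-s}\le 2t$, would inflate the leading constant. To keep it sharp I would further increase $\s$ (permissible, since $\s$ is only a running threshold), using $\lim_{s\to1^-}\tfrac{1-\Delta^{s-1}}{s-1}=\log(1/\Delta)$ to arrange $\tfrac{1-\Delta^{s-1}}{s-1}\le(1+\eps')\log(1/\Delta)$ and $t^{2-s}\le(1+\eps')t$ for $s\in(\s,1)$; then, choosing $\eps_0$ and $\eps'$ small in terms of $\eps$ and $\L$ and using $\rho_E(0)\le\|\rho\|_\infty\le\L$ from \eqref{e:rhoEupper}, the effective leading coefficient is at most $2(\rho_E(0)+\eps)\,t\log(1/\Delta)/|x|^{2-s}$, with the remaining contribution $O(t/|x|^{2-s})$; the $\one\{|x|\le\Delta\}$ piece is handled with the crude form of \eqref{e:weirdlog}, absorbing constants into $C$.

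I do not anticipate any genuine obstacle: \Cref{l:tildebounds} is a bookkeeping refinement of the estimates of \Cref{s:testvectoranalysis} under the specific scaling $t=g(K\ln K)^{-1}$, with $\tilde u_1$ replacing $u_1$. The only subtlety is tracking the $\eps$'s and the $\log(1/\Delta)$ constant carefully, so that the coefficient $2(\rho_E(0)+\eps)$ in \eqref{e:tildeu2bound} — the feature that pins the eigenvalue comparison near $\rho_E(0)=(4g)^{-1}$ in \Cref{s:conclusion} — remains sharp.
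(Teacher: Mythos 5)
Your proposal is correct and follows essentially the same route as the paper: all three bounds are read off from \Cref{l:u1upper}, \Cref{l:u2upper}, and \Cref{l:u3upper}, using $\tilde u_1=(\alpha\ln K)^{-1}u_1$ together with the parameter relations \eqref{e:tbounds} and \eqref{e:weirdlog} from \Cref{s:parameterchoices}. The extra care you take with the coefficient in \eqref{e:tildeu2bound} (shrinking $(1-\Delta^{s-1})/(s-1)$ to $(1+\eps')\log(1/\Delta)$ by further increasing $\s$, which would anyway narrow the stated range $s\in(\s,1)$) is unnecessary, since the claimed bound carries an overall factor $C$ in front of the bracket, so the crude estimates $t^{2-s}\le 2t$ and $(1-\Delta^{s-1})/(s-1)\le 2\log(1/\Delta)$ already suffice.
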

\begin{proof}
The bound \eqref{e:tildeu1bound} follows from \Cref{l:u1upper}, after recalling the definition of $u_1$ in \eqref{e:theus} and dividing both sides by $\log K$; we also recall that $\s$ was chosen so that $s \in (\s,1)$ implies \eqref{e:weirdlog} and \eqref{e:tbounds}.
Similarly, the bound \eqref{e:tildeu2bound} follows from \Cref{l:u2upper}, 
and bound \eqref{e:tildeu3bound} follows from \Cref{l:u3upper}. 
\end{proof}

\begin{corollary}\label{c:Ftildeu}
Fix  $G>0$. 
There exists a constant $\alpha_0 >0$ such that the following holds for all $\alpha \in (0, \alpha_0]$.
There are constants $C(G, \alpha)$ and $K_0(G, \alpha) >0$ such that for all $K\ge K_0$, $s \in (\s,1)$, $g \in [0,G]$, and $E \in[-G, G]$, we have for all $x \in \R$ that
\begin{align}
(F \tilde u)(x) \le& C  t^{-1}
\log  \left( \frac{1}{\Delta} \right)  \one\{ |x| \le \Delta \}  \\
& + \frac{C t }{|x|^{2-s}}
  \log  \left( \frac{1}{\Delta} \right)  \one\big\{|x| \ge \Delta\big\}.
\end{align}
\end{corollary}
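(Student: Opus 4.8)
The plan is to prove \Cref{c:Ftildeu} simply by combining the three bounds of \Cref{l:tildebounds}, using the decomposition $\tilde u = \tilde u_1 + u_2 + u_3$ and the linearity of $F$. First I would fix $G > 0$, apply \Cref{l:tildebounds} with $\eps = 1$ (any fixed positive $\eps$ works), obtaining the constant $\alpha_0 = \alpha_0(1)$ and, for each $\alpha \in (0, \alpha_0]$, the constants $C(G,\alpha)$ and $K_0(G,\alpha)$. Since $F$ is linear and $\tilde u = \tilde u_1 + u_2 + u_3$, we have $(F\tilde u)(x) = (F\tilde u_1)(x) + (Fu_2)(x) + (Fu_3)(x)$, so the upper bound on $F\tilde u$ is obtained by summing the right-hand sides of \eqref{e:tildeu1bound}, \eqref{e:tildeu2bound}, and \eqref{e:tildeu3bound}.

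Next I would group terms according to whether $|x| \le \Delta$ or $|x| \ge \Delta$. On $\{|x| \le \Delta\}$, the three contributions are bounded by $C(t\ln K)^{-1}$, $Ct^{-1}\log(1/\Delta)$, and $Ct^{-1}$ respectively. Since $\ln K \ge 1$ (for $K \ge 2$, say, which we may assume by taking $K_0$ large enough) and $\log(1/\Delta) \ge 1$ for $\alpha$ small and $K$ large (recall $\Delta = t^2/\alpha = g^2\alpha^{-1}(K\log K)^{-2}$, which tends to $0$ as $K \to \infty$), the term $Ct^{-1}\log(1/\Delta)$ dominates the other two up to a constant, so the $\{|x|\le\Delta\}$ part is $\le Ct^{-1}\log(1/\Delta)\one\{|x|\le\Delta\}$ after enlarging $C$. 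On $\{|x|\ge\Delta\}$, the three contributions are bounded by $(\ln K)^{-1}Ct|x|^{-(2-s)}$, $Ct|x|^{-(2-s)}(2(\rho_E(0)+\eps)\log(1/\Delta) + C)$, and $Ct|x|^{-(2-s)}$ respectively. Using $\|\rho_E\|_\infty \le \|\rho\|_\infty \le \L$ (from \eqref{e:rhoEupper} and $\L$-regularity), we have $\rho_E(0) + \eps \le \L + 1 \le C$, so the middle term is $\le Ct|x|^{-(2-s)}\log(1/\Delta)$; since $\log(1/\Delta) \ge 1$, this again dominates the other two (the first is even smaller by a factor $(\ln K)^{-1}$). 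Hence the $\{|x|\ge\Delta\}$ part is $\le Ct|x|^{-(2-s)}\log(1/\Delta)\one\{|x|\ge\Delta\}$ after enlarging $C$. Adding the two regimes gives exactly the claimed bound.

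There is essentially no obstacle here: the statement is a bookkeeping consequence of \Cref{l:tildebounds}, and the only points requiring a word of care are (i) absorbing $(\ln K)^{-1}$, $\rho_E(0)$, and lower-order additive constants into the factor $\log(1/\Delta)$, which is legitimate because $\log(1/\Delta) \to \infty$ as $K \to \infty$ (so for $K \ge K_0(G,\alpha)$ large enough it is bounded below by $1$), and (ii) checking that $\s$ from \Cref{s:parameterchoices} already enforces the inequalities \eqref{e:weirdlog} and \eqref{e:tbounds} that were used inside \Cref{l:tildebounds}, which it does by construction. The proof is short enough that I would simply write: \emph{Sum the bounds \eqref{e:tildeu1bound}, \eqref{e:tildeu2bound}, and \eqref{e:tildeu3bound} from \Cref{l:tildebounds}, using $\tilde u = \tilde u_1 + u_2 + u_3$, the estimate $\rho_E(0) \le \|\rho\|_\infty \le \L$ from \eqref{e:rhoEupper}, and the fact that $\log(1/\Delta) \ge 1$ and $\ln K \ge 1$ for $K$ sufficiently large, to absorb all lower-order terms into $C\log(1/\Delta)$.}
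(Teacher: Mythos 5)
Your proposal is correct and follows essentially the same route as the paper, which also obtains \Cref{c:Ftildeu} by summing the three estimates of \Cref{l:tildebounds} with $\eps = 1$ and using $\|\rho_E\|_\infty \le \|\rho\|_\infty$ from \eqref{e:rhoEupper} together with $\L$-regularity to absorb lower-order terms into $C\log(1/\Delta)$. Your write-up merely spells out the bookkeeping (domination of the $u_2$ term in each regime) that the paper leaves implicit.
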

\begin{proof}
This follows by combining the first three estimates in \Cref{l:tildebounds}, after setting $\epsilon = 1$ and using $\| \rho_E \|_\infty \le \| \rho \|_\infty$ from  \eqref{e:rhoEupper} and the assumption that $\rho$ is $\L$-regular.
\end{proof}

\subsection{Estimate on the Difference Operator}\label{s:differenceF}
The next lemma is our main result on the action of $F^{\circ}$. We recall that $\varpi$ and $I_\star$ were defined in \eqref{e:varpi} and \eqref{e:istar}. Define $F^\circ = F^\circ_{K,t,s,E_1, E_2}$ by
\be\label{e:Fcirc}
(F^\circ u)(x) = 
\frac{t^{2-s}}{|x|^{2-s}}
\int_{-\infty}^\infty
\left( \rho_{E_2} \left( - y - \frac{t^2}{x} \right)  - \rho_{E_1} \left( - y - \frac{t^2}{x} \right) \right)u(y) \, dy.
\ee
We will use the shorthand
\be
\rho^\circ(x) = \rho_{E_2} (x)  - \rho_{E_1} (x).
\ee 

We now discuss the meaning of the following lemma in the case that $\rho$ is decreasing near a point $E_0$ such that $\rho(E_0) = (4g)^{-1}$; the case when $\rho$ is increasing is analogous. 
The bound \eqref{e:upperkey1} shows that $F^\circ$ yields a function that is negative for $|x| \ge \Delta$  when applied to some iterate $F^j \tilde u$. The confirms the heuristic that $F^\circ$ should act like a strictly negative operator when \eqref{e:le111} holds (from the end of \Cref{LambdaMonotone}). 
However,  $F^\circ F^j \tilde u$ possesses a small positive part for $|x| \le \Delta$; we estimate this exceptional positive contribution in \eqref{e:smalldeltabound}. Together, these bounds will allow us to analyze the terms $F_{E_2}^{n-j -1} (F^\circ) F_{E_1}^{j}$ arising in our introductory proof sketch (from \eqref{e:Fdiffsumintro}) in the proof of \Cref{l:penultimate} below. 
\begin{lemma}\label{l:pizza}
Set $I_\L  =  [E_0 - \L^{-1}, E_0 + \L^{-1}]$. 
Suppose that either
\be\label{e:ge111}
\rho'(E)  \ge \frac{1}{\L}
\text{ for all $E \in I_\L$ }
\ee 
or
\be\label{e:le111}
\rho'(E)  \le  - \frac{1}{\L} \text{ for all $E \in I_\L$ }
\ee 
There exists a constant $\alpha_0 >0$ such that the following is true for all $\alpha \in (0, \alpha_0]$ and integers $j \ge 0$.
There exist constants 
$K_0( \alpha),C(\alpha) , c(\alpha)>0$ such that the following bounds hold for all  $s\in (\s ,1)$,  $K\ge K_0$, and $E_1, E_2 \in I_\star$ with $E_1 < E_2$. 
\begin{enumerate}
\item
For $|x| \ge \Delta$,  if \eqref{e:ge111} holds, then
\begin{align} \label{e:lowerkey1}
(F^\circ F^j_{E_1} \tilde u)(x) \ge 
c \lambda_{E_1}^{j}  \log \left(\frac{1}{\Delta} \right)  \frac{t }{|x|^{2-s}}| E_1 - E_2|   ,
\end{align}
and if \eqref{e:le111} holds, then
\begin{align}\label{e:upperkey1}
(F^\circ F^j_{E_1} \tilde u)(x) \le - 
c \lambda_{E_1}^{j}  \log \left(\frac{1}{\Delta} \right)  \frac{t }{|x|^{2-s}}
 | E_1 - E_2|.
\end{align}
\item
For $|x| \le \Delta$, 
\be\label{e:smalldeltabound}
\big| (F^\circ F^j_{E_1} \tilde u)(x)  \big| \le C \lambda_{E_1}^{j-1} 
 \ln \left( \frac{1}{\Delta} \right)^2 |E_1 - E_2|.
\ee 

\end{enumerate}
\end{lemma}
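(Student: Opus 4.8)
\textbf{Proof plan for \Cref{l:pizza}.}

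The plan is to write $F^\circ F^j_{E_1}\tilde u = F^\circ\big(F^j_{E_1}\tilde u\big)$ and use the structural control we already have on the iterate $v_j := F^j_{E_1}\tilde u$. First I would record the sandwich bounds: \Cref{l:eigenvectorsandwich} applied repeatedly (and \Cref{c:roughlambda}) gives $c\lambda_{E_1}^{j}\, v_{E_1}(x) \le v_j(x) \le C\lambda_{E_1}^{j}\, v_{E_1}(x)$ for $j \ge 1$, and for $j=0$ we use $\tilde u$ directly; combined with \Cref{l:vupperone}, \Cref{l:sumac}, \Cref{l:sumac2} and \eqref{e:salt} this says $v_j$ is comparable to $\lambda_{E_1}^j$ times the profile that is $(t^2\ln K)^{-1}$ on $|y|\le \Delta$ and $|y|^{-(2-s)}$ on $|y|\ge\Delta$. (A minor bookkeeping point: when $j=0$ the upper bound on $v_0=\tilde u$ is by definition, and the crossover scale is $\Delta$ rather than $\varpi^{-1}t^2$; since $\Delta = t^2/\alpha$ and $\varpi^{-1}t^2$ differ only by a constant factor, adjusting $\alpha_0$ absorbs this, so we may treat all $j\ge 0$ uniformly.) The key input on the kernel is \Cref{l:amoluniform}: part (1), \eqref{e:Elipschitz}, controls $|\rho^\circ(y)| \le C|E_1-E_2|(1+|y|)^{-(1-1/\L)}$ everywhere, while part (2) or (3), \eqref{e:le11finite}/\eqref{e:le11finite2}, gives the \emph{signed} bound $\mp\rho^\circ(y) \ge c|E_1-E_2|$ for $|y|$ in a fixed neighborhood of $0$ of size $(4\L)^{-1}$.

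For part (1), fix $|x|\ge \Delta$; then $|t^2/x|\le\alpha\le 1$, so the argument $-y - t^2/x$ of $\rho^\circ$ in \eqref{e:Fcirc} lies in $[-\varpi,\varpi]$ for $y$ in a shrunk window around $-t^2/x$, hence inside the monotonicity window of \eqref{e:le11finite2}. I would split the $y$-integral defining $(F^\circ v_j)(x)$ into $|y|\le \varpi$ and $|y|>\varpi$. On $|y|\le\varpi$, assuming \eqref{e:le111}, use $\rho^\circ(-y-t^2/x) \le -c|E_1-E_2|$ together with the lower bound $v_j(y)\ge c\lambda_{E_1}^j (t^2\ln K)^{-1}$ for $|y|\le\Delta$ (and $v_j(y)\ge c\lambda_{E_1}^j|y|^{-(2-s)}$ for $\Delta\le|y|\le\varpi$); integrating $|y|^{-(2-s)}$ from $\Delta$ to $\varpi$ produces the factor $\tfrac{1-\Delta^{s-1}}{s-1}$, which by \eqref{e:weirdlog} is comparable to $\log(1/\Delta)$. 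This yields a contribution $\le -c\lambda_{E_1}^j \log(1/\Delta)\, t\,|x|^{-(2-s)}|E_1-E_2|$ after multiplying by the prefactor $t^{2-s}|x|^{-(2-s)}$ and using \eqref{e:tbounds}. On $|y|>\varpi$, bound $|\rho^\circ(-y-t^2/x)|$ crudely by $C|E_1-E_2|(1+|y|)^{-(1-1/\L)}$ from \eqref{e:Elipschitz} and $v_j(y)\le C\lambda_{E_1}^j|y|^{-(2-s)}$ from \Cref{l:vupperone}; the resulting integral $\int_{|y|>\varpi}(1+|y|)^{-(1-1/\L)}|y|^{-(2-s)}dy$ is a \emph{bounded} (i.e.\ $\log(1/\Delta)$-free) constant for $s$ close to $1$, hence this error is $O(\lambda_{E_1}^j\, t\,|x|^{-(2-s)}|E_1-E_2|)$ and is dominated by the negative main term after shrinking $\alpha_0$ so that $\log(1/\Delta)$ is large. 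The signs flip verbatim under \eqref{e:ge111}, giving \eqref{e:lowerkey1}; \eqref{e:upperkey1} is the stated case.

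For part (2), fix $|x|\le\Delta$; now $|t^2/x|\ge\alpha^{-1}\gg 1$, so the kernel argument $-y-t^2/x$ is generically far from $0$ and we have no sign information, only the decay bound \eqref{e:Elipschitz}. Here I would not chase cancellation but simply bound $|(F^\circ v_j)(x)| \le t^{2-s}|x|^{-(2-s)}\int |\rho^\circ(-y-t^2/x)|\, v_j(y)\,dy$ and use $|\rho^\circ|\le C|E_1-E_2|\|\rho_{E_2}-\rho_{E_1}\|$-type control; in fact it is cleaner to observe that $F^\circ v_j = F_{E_2}v_j - F_{E_1}v_j$ and bound each term using \Cref{c:Ftildeu} applied to $v_{j-1}$ (via the sandwich $v_j = F_{E_1}v_{j-1}$ and $v_{j-1}\le C\lambda_{E_1}^{j-1}\tilde u$, plus $F_{E_2}v_{j-1}\le C\lambda_{E_1}^{j-1}F_{E_2}\tilde u$), which gives each $F_{E_i}v_j(x) \le C\lambda_{E_1}^{j-1} t^{-1}\log(1/\Delta)$ for $|x|\le\Delta$. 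That already produces the bound $C\lambda_{E_1}^{j-1}\log(1/\Delta)^2|E_1-E_2|$ provided we can extract the factor $|E_1-E_2|$ — which we do by instead keeping $\rho^\circ$ together and using \eqref{e:Elipschitz} inside the $|y|\le\Delta$ and $|y|>\Delta$ pieces of the $y$-integral: on $|y|\le\Delta$, $v_j(y)\le C\lambda_{E_1}^j(t^2\ln K)^{-1}$ and the $y$-measure is $2\Delta$, contributing $\le C|E_1-E_2|\lambda_{E_1}^j(\ln K)^{-1}\cdot t^{2-s}|x|^{-(2-s)}\cdot(t^2)^{-1}$; on $|y|>\Delta$, $v_j(y)\le C\lambda_{E_1}^j|y|^{-(2-s)}$ and $\int_{|y|>\Delta}(1+|y|)^{-(1-1/\L)}|y|^{-(2-s)}dy$ over the range where $|y|\lesssim|t^2/x|$ contributes the $\log(1/\Delta)$, while the far tail is bounded — after multiplying by $t^{2-s}|x|^{-(2-s)}$ with $|x|\le\Delta$ and simplifying powers of $t$ using \eqref{e:tbounds}, \eqref{e:tKdef}, and $\lambda_{E_1}\asymp K^{-1}$, all terms collapse to $C\lambda_{E_1}^{j-1}\log(1/\Delta)^2|E_1-E_2|$. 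The main obstacle is the bookkeeping in part (1): ensuring that the $\log(1/\Delta)$-free error from the $|y|>\varpi$ tail (and from the region $\Delta\le|y|\le\varpi$ where one must be careful that the lower bound on $v_j$ still holds after the shift by $t^2/x$, using $|t^2/x|\le\alpha\le\varpi/4$ to keep arguments in the monotonicity window) is genuinely of lower order than the negative main term uniformly in $j$ and in $s\to 1^-$; this is exactly where the freedom to take $\alpha_0$ small (making $\log(1/\Delta)$ as large as needed relative to the fixed constants $C,\L$) is used decisively.
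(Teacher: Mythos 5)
Your treatment of part (1) is essentially the paper's argument in light disguise: you split the $y$-integral at a fixed small scale, use the signed estimate \eqref{e:le11finite2} on the window where the argument $-y-t^2/x$ stays in $[-(4\L)^{-1},(4\L)^{-1}]$ together with the lower envelope $F^j_{E_1}\tilde u \ge c\lambda_{E_1}^j\tilde u$ (the paper gets this from \eqref{e:sandwichlowerbound}/\eqref{e:sandwichupperbound}, you get the same through $v_{E_1}$ and \Cref{l:sumac}), and control the tail by \eqref{e:Elipschitz}. One correction there: shrinking $\alpha_0$ does \emph{not} make $\log(1/\Delta)$ large — since $\Delta=t^2/\alpha$, decreasing $\alpha$ decreases $\log(1/\Delta)$. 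The domination of the $\log(1/\Delta)$-free errors by the negative main term comes from taking $K\ge K_0(\alpha)$ (equivalently, in the paper, from $\lambda_{E_1}^{j}t\ge C\lambda_{E_1}^{j-1}t^2\log K$ via \Cref{c:roughlambda} and \eqref{e:tKdef}), which the lemma's quantifiers allow; so this slip is fixable, not fatal. Also note that your claimed upper profile $v_j(y)\le C\lambda_{E_1}^j(t^2\ln K)^{-1}$ on $|y|\le\Delta$ is not available: the only pointwise upper bound at small argument is $Ct^{-2}$ from \eqref{e:salt} (this is exactly the crossover ambiguity the paper flags), and the paper avoids the issue by taking the upper envelope through $C\lambda_{E_1}^{j-1}F_{E_1}\tilde u$ and \Cref{c:Ftildeu}. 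In part (1) this does not matter because you only use lower bounds on $v_j$ there.

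Part (2) has a genuine gap. The bound \eqref{e:smalldeltabound} must hold uniformly for all $|x|\le\Delta$, including $x\to 0$, where the prefactor $t^{2-s}|x|^{-(2-s)}$ in \eqref{e:Fcirc} blows up. Your bookkeeping never cancels this factor: on $|y|\le\Delta$ you bound $|\rho^\circ|$ by $C|E_1-E_2|$ (no decay) and multiply by $t^{2-s}|x|^{-(2-s)}$, and on $\Delta\le|y|\lesssim t^2/|x|$ you insert the decay of \eqref{e:Elipschitz} in the variable $|y|$ rather than in the actual argument $-y-t^2/x$ of $\rho^\circ$; with these inputs the resulting expression is unbounded as $x\to 0$ and cannot "collapse" to $C\lambda_{E_1}^{j-1}\log(1/\Delta)^2|E_1-E_2|$. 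The mechanism that makes part (2) work in the paper is the pairing $t^{2-s}|x|^{-(2-s)}=t^{-(2-s)}|t^2/x|^{2-s}$ together with the weighted supremum bounds of \Cref{l:supboundscirc},
\begin{equation*}
\sup_{z\in\R}\sup_{|y|\le 1}\big|\rho^\circ(y+z)\big|\,|z|^{2-s}\le C|E_1-E_2|,
\qquad
\sup_{z\in\R}\int_{|y|\ge1}\big|\rho^\circ(y+z)\big|\frac{|z|^{2-s}}{|y|^{2-s}}\,dy\le C|E_1-E_2|,
\end{equation*}
applied with $z=-t^2/x$; this uses the decay of $\rho_{E_2}-\rho_{E_1}$ at its (large) argument to absorb the $|x|^{-(2-s)}$ singularity, and it is also where the factor $|E_1-E_2|$ is extracted while keeping the bound $x$-uniform. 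Your alternative via the triangle inequality $F^\circ v_j=F_{E_2}v_j-F_{E_1}v_j$ and \Cref{c:Ftildeu} gives only $C\lambda_{E_1}^{j-1}t^{-1}\log(1/\Delta)$, which is far larger than the target and carries no factor $|E_1-E_2|$; as you yourself note, extracting that factor is the crux, and the step you propose for it does not work. To repair part (2) you should follow the paper's route: bound $F^j_{E_1}\tilde u\le C\lambda_{E_1}^{j-1}F_{E_1}\tilde u$, split $F_{E_1}\tilde u$ at scales $\Delta$ and $1$, and estimate each piece with \Cref{c:Ftildeu} and \Cref{l:supboundscirc} as in \eqref{e:small1}--\eqref{e:small3}.
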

\begin{proof}
We begin with the proof of \eqref{e:upperkey1}, starting with the case $j \ge 1$. The proof of \eqref{e:lowerkey1} is analogous and hence omitted. 
For notational convenience, we define
\begin{align*}
w_1(x) &= \big[(F^j_{E_1} \tilde u)(x) \big] \one\{ |x| \le \Delta\},\\
w_2(x)& = \big[(F^j_{E_1} \tilde u)(x) \big] \one\{\Delta\le  |x| \le 1  \},\\
w_3(x) &= \big[(F^j_{E_1} \tilde u)(x) \big] \one\{1 \le  |x|   \}. 
\end{align*}
By \Cref{l:eigenvectorsandwich} (whose use is permitted by the assumption that $E_1 \in I_\star$), along with the positivity of $F$, we have that for all  $\alpha \in (0, \alpha_0]$, 
 there exists constants $C(\alpha),c(\alpha)> 0$ such that 
 \be\label{e:sandwich20}
c (F_{E_1}^{j} v_{E_1}) (x)  \le (F_{E_1}^{j} \tilde u) (x) \le C    (F^j_{E_1}v_{E_1} )  (x).
\ee 
In the first inequality, we applied $F$ a total of  $j-1$ times to each side of \eqref{e:sandwichupperbound}, 
and in the second inequality, we applied $F$ a total of $j$ times to each side of \eqref{e:sandwichlowerbound}. Using that $v_{E_1}$ is an eigenvector of $F_{E_1}$, we obtain 
\be\label{e:sandwich21}
c \lambda_{E_1}^j  v_{E_1} (x)  \le (F_{E_1}^{j} \tilde u) (x) \le C\lambda_{E_1}^{j-1}    (F_{E_1}v_{E_1} )  (x).
\ee 
Finally, using \eqref{e:sandwichlowerbound}  to lower bound the leftmost quantity in \eqref{e:sandwich21}, and \eqref{e:sandwichupperbound} to upper bound the rightmost quantity in \eqref{e:sandwich21}, we obtain
\be\label{e:sandwich2}
c \lambda^j_{E_1} \tilde u (x)  \le (F_{E_1}^{j} \tilde u) (x) \le C \lambda_{E_1} ^{j-1}   (F_{E_1}\tilde u)  (x).
\ee 

Next, we suppose that $| x| \ge \Delta$ and consider the action of $F^\circ$ on $w_1$, $w_2$, and $w_3$ separately. For the action on $w_1$, we have
\begin{align}
\begin{split}
\left|  \frac{t^{2-s} }{|x|^{2-s}}
\int_{-\infty}^\infty
\rho^\circ  \left(  - y - \frac{t^2}{x} \right) w_1(y)  \, dy \right| 
&\le    
C\lambda_{E_1}^{j-1}
\frac{t^{2-s} }{|x|^{2-s}}
\int_{|y| \le \Delta}
\left|\rho^\circ  \left(  - y - \frac{t^2}{x} \right)  \right|(F_{E_1} \tilde u)(y)  \, dy
\\
&\le 
C| E_1 - E_2|   \lambda_{E_1}^{j-1} \log\left( \frac{1}{\Delta} \right)
\frac{1}{|x|^{2-s}}
\int_{|y| \le \Delta}
1\, dy \\
&\le 
C | E_1 - E_2|   \lambda_{E_1}^{j-1} \log\left( \frac{1}{\Delta} \right)
\frac{t^{2}}{|x|^{2-s}}. \label{e:ingredient1}
\end{split}
\end{align}
The first inequality follows from the definition of $w_1$, \eqref{e:sandwich2}, and the positivity of $F_{E_1}$. The second inequality follows from \eqref{e:Elipschitz}, \eqref{e:tbounds}, and  \Cref{c:Ftildeu}. The third inequality follows from the definition of $\Delta$ and direct integration. 

Set $\kappa = (4\L)^{-1}$. For the action of $F^\circ$ on $w_2$, we consider the regimes $|y| \in [\kappa, 1]$ and $|y| \in [\Delta, \kappa)$ separately.
On the first, we have 
\begin{align}
\begin{split}
\label{e:ingredient3a}
& \left| \frac{t^{2-s}}{|x|^{2-s}}
\left(
\int_{\kappa \le |y| \le 1 }
\rho^\circ\left( - y   - \frac{t^2}{x}  \right) w_2(y) \, dy 
\right) \right| \\
&\le 
C |E_1 - E_2| \lambda_{E_1}^{j-1}  \frac{ t^{2-s}}{|x|^{2-s}}
\left(
\int_{\kappa \le |y| \le 1 }
(F_{E_1} \tilde u)(y) \, dy 
\right)  \\
&\le 
C |E_1 - E_2| \lambda_{E_1}^{j-1}   \log  \left( \frac{1}{\Delta} \right)  \frac{ct^{3-s}}{|x|^{2-s}}
\left(
\int_{\kappa \le |y| \le 1 }
|y|^{-2+s}  \, dy 
\right)  \\
&\le
C |E_1 - E_2| \lambda_{E_1}^{j-1}  \log  \left( \frac{1}{\Delta} \right)   \frac{ t^{2}}{|x|^{2-s}}
\end{split}
\end{align}
The first inequality follows from \eqref{e:Elipschitz}, \eqref{e:sandwich2}, and the positivity of $F_{E_1}$. The second inequality follows from bounding $F_{E_1} \tilde u$ using \Cref{c:Ftildeu}. 
The obtain the last inequality, we used \eqref{e:tbounds} and integrated directly.

Then considering the regime $|y| \in [\Delta, \kappa)$, we find  from \eqref{e:sandwich2} and the definition of $\tilde u$ in \eqref{e:tildeu} that 
\be\label{e:w2lower}
w_2(x) \ge c \lambda^j_{E_1} |x|^{-2 +s} \one\{\Delta\le  |x| \le 1  \}.
\ee 
Then we have 
\begin{align}\label{e:ingredient3b}
\begin{split}
&\frac{t^{2-s}}{|x|^{2-s}}
\left(
\int_{\Delta \le |y| \le \kappa }
\rho^\circ\left( - y   - \frac{t^2}{x}  \right) w_2(y) \, dy 
\right) \\
&\le 
-c|E_1 - E_2|\lambda_{E_1}^{j}  \frac{ t^{2-s}}{|x|^{2-s}}
\left(
\int_{\Delta \le |y| \le \kappa }
| y |^{-2 +s} \, dy 
\right)  \\
&\le 
- c|E_1 - E_2|\lambda_{E_1}^j  \frac{ t^{2-s}}{|x|^{2-s}}
\left(
2\left(\frac{\kappa^{1-s} - \Delta^{1-s}}{s -1 } \right)
\right)  \\
&=
- c|E_1 - E_2|\lambda_{E_1}^j  \frac{ t^{2-s}}{|x|^{2-s}}
\left(
2\left(\frac{1 - \Delta^{1-s}}{s -1 } \right)
-
2\left(\frac{1 - \kappa^{1-s}}{s -1 } \right)
\right)  \\
&\le 
-   c|E_1 - E_2| \lambda_{E_1}^j  \log \left(\frac{1}{\Delta} \right)  \frac{t }{|x|^{2-s}}. 
\end{split}
\end{align}
In the first inequality, we used \eqref{e:w2lower} to bound $w_2$ below. We also used assumption $\eqref{e:le111}$, the restriction $|y| \le \kappa$, the assumption that $|x| \ge \Delta$, and \eqref{e:le11finite2}  to bound $\rho^\circ$ above (by a negative quantity). 
In the second inequality, we integrated directly, and the third line is an algebraic identity.
In the third inequality, we used \eqref{e:weirdlog} and the elementary estimate 
\be
\sup_{s \in \s} \left|\frac{1 - \kappa^{1-s}}{s -1 } \right| \le C.
\ee

Further, we have 
\begin{align}
\left| \frac{t^{2-s} }{|x|^{2-s}}
\int_{-\infty}^\infty
\rho^\circ \left(  - y - \frac{t^2}{x} \right) w_3(y)  \, dy \right|
&\le    
C \lambda_{E_1}^{j-1}
\frac{t^{2-s} }{|x|^{2-s}}
\int_{|y| \ge 1  }
\left| \rho^\circ \left(  - y - \frac{t^2}{x} \right) (F_{E_1} \tilde u)(y) \right|  \, dy \notag
\\
&\le 
C \lambda_{E_1}^{j-1} \log\left( \frac{1}{\Delta} \right)
\frac{t^{2 }}{|x|^{2-s}}
\int_{|y| \ge 1 }
\left| \rho^\circ  \left(  - y - \frac{t^2}{x} \right) \right| \frac{  dy }{|y|^{2 -s } } \notag  \\
&\le 
C|E_1 - E_2|  \lambda_{E_1}^{j-1} \log\left( \frac{1}{\Delta} \right)
\frac{t^{2}}{|x|^{2-s}}.\label{e:ingredient4}
\end{align}
The first inequality uses the definition of $w_3$ and \eqref{e:sandwich2}.
The second inequality uses \eqref{e:weirdlog} and bounds  $F_{E_1} \tilde u$ using \Cref{c:Ftildeu}. 
In the third inequality, we used 
 \begin{align*}
\int_{| y| \ge 1 }
\left|
\rho^\circ \left(  - y - \frac{t^2}{x} \right) \right| \frac{dy}{ |y|^{2-s}}
&\le 
C |E_1 - E_2|
\int_{ |y| \ge 1 }
 \frac{dy}{(1 + |y|)^{1/\L} |y|^{2-s}}
\ \le C|E_1 - E_2| ,
\end{align*}
where the first inequality follows from applying  \eqref{e:Elipschitz} and using 
$|x| \ge \Delta$, $|y| \ge 1$ to show that 
\be
 \left| - y - \frac{t^2}{x} \right| \ge \frac{|y|}{2},
\ee 
and the second inequality is a direct integral estimate.
Combining \eqref{e:ingredient1}, \eqref{e:ingredient3a}, \eqref{e:ingredient3b}, and \eqref{e:ingredient4} gives \eqref{e:upperkey1} for $j \ge 1$, after noting that $\lambda_{E_1}^j t \ge C \lambda_{E_1}^{j-1} t^2 (\log K)$ by \Cref{c:roughlambda} and the definition of $t$ in \eqref{e:tKdef}. The use of \Cref{c:roughlambda} is permissible for all $s \ge \s$ by the choice of $\s$ in \Cref{s:parameterchoices}.

The $j=0$ case of \eqref{e:upperkey1} follows by a similar argument. Namely, in \eqref{e:ingredient1}, \eqref{e:ingredient3a}, \eqref{e:ingredient3b}, and \eqref{e:ingredient4}, $w_1+w_2+w_3$ is replaced by $\tilde{u}$. So, one can replace all uses of the bound \eqref{e:sandwich2} used above  with the explicit definition of $\tilde{u}$ from \eqref{e:tildeu}; we omit further details.

Next, we turn to the proof of \eqref{e:smalldeltabound}, beginning with the case $j \ge 1$. 
Suppose that  $| x| \le  \Delta$. We have
\begin{align}\label{e:small1}
\begin{split}
&\frac{t^{2-s} }{|x|^{2-s}}
\left| \int_{-\infty}^\infty
\rho^{\circ} \left(  - y - \frac{t^2}{x} \right) w_1(y)  \, dy\right|  \\
&\le 
C \lambda_{E_1}^{j-1} 
\frac{t^{2-s}}{|x|^{2-s}}
\int_{|y| \le \Delta}
\left| \rho^{\circ} \left(  - y - \frac{t^2}{x} \right)  \right| (F_{E_1} \tilde u)(y)  \, dy \\
&\le 
C \lambda_{E_1}^{j-1} t^{-1} \log\left( \frac{1}{\Delta} \right)
\frac{t^{2 - s}}{|x|^{2-s}} 
\int_{|y| \le \Delta}
\left| \rho^{\circ} \left(  - y - \frac{t^2}{x} \right) \right|  \, dy \\
&= C \lambda_{E_1}^{j-1} t^{-1}  \log\left( \frac{1}{\Delta} \right)
t^{-(2-s)}
\int_{|y| \le \Delta}
\left| \rho^{\circ} \left(  - y - \frac{t^2}{x} \right) \right|  \left( \frac{t^2}{|x|}\right)^{2-s} \, dy \\
&\le C|E_1 - E_2|   \lambda_{E_1}^{j-1}  \log\left( \frac{1}{\Delta} \right).
\end{split}
\end{align}
The first inequality follows from \eqref{e:sandwich2} and the positivity of $F_{E_1}$. 
The second inequality follows from \Cref{c:Ftildeu}, and the final inequality comes from \eqref{e:tbounds}, \Cref{l:supboundscirc}, and the definition of $\Delta$. 

Next, defining $z =  - t^2/ x$, we have 
\begin{align}
\begin{split}
&\frac{t^{2-s}}{|x|^{2-s}}
\left| 
\int_{-\infty}^\infty
\rho^\circ \left(  - y - \frac{t^2}{x} \right) w_2(y)\, dy  \right| \\
&\le 
C \lambda_{E_1}^{j-1} 
\frac{t^{2-s}}{|x|^{2-s}}
\int_{\Delta \le |y| \le 1 }
\left|
\rho^\circ \left(  - y - \frac{t^2}{x} \right) \right| (F_{E_1} \tilde u)(y) \, dy  \\
&\le 
C \lambda_{E_1}^{j-1}  \ln \left( \frac{1}{\Delta} \right) \frac{t^{2}}{|x|^{2-s}}
\left(
\int_{\Delta \le |y| \le 1 } \frac{dy}{|y|^{2-s}}
\right) \left(\sup_{|y| \le 1 } \left|   \rho^\circ \left( - y  - \frac{t^2}{x}  \right) \right| \right) \\
&\le 
C \lambda_{E_1}^{j-1} 
\frac{t^2 }{|x|^{2-s}}  \ln \left( \frac{1}{\Delta} \right)^2 \cdot \sup_{|y| \le 1} \left|  \rho^\circ \left( - y - \frac{t^2}{x} \right) \right| \\
&=  C \lambda_{E_1}^{j-1}   \ln \left( \frac{1}{\Delta} \right)^2   \cdot \sup_{|y| \le 1 }  \big| \rho^\circ (y+z) \big| | z|^{2-s } \\
 &\le 
C|E_1 - E_2| \lambda_{E_1}^{j-1}  \ln \left( \frac{1}{\Delta} \right)^2 .  \label{e:small2}
\end{split}
\end{align}
The first inequality uses  \eqref{e:sandwich2} and the positivity of $F_{E_1}$. 
The second inequality uses   \Cref{c:Ftildeu}, \eqref{e:tbounds}, and bounds $| \rho^\circ|$ by its supremum on the domain of integration.
The third inequality follows from a direct integration, and  last line follows from  \Cref{l:supboundscirc}.

Finally, again setting  $z =  - t^2/ x$, we have 
\begin{align}
\begin{split}
&\frac{t^{2-s}}{|x|^{2-s}}\left|  
\int_{-\infty}^\infty
\rho^\circ  \left(  - y - \frac{t^2}{x} \right)   w_3 (y)\, dy \right| \\ 
&\le 
C \lambda_{E_1}^{j-1} 
\frac{t^{2-s}}{|x|^{2-s}}
\int_{|y| \ge 1 }
\left| \rho^\circ  \left(  - y - \frac{t^2}{x} \right) \right| (F_{E_1} \tilde u)(y)   \, dy \\
&\le
C \lambda_{E_1}^{j-1} 
\frac{t^{2}}{|x|^{2-s}}
 \ln \left( \frac{1}{\Delta} \right) \int_{|y| \ge 1 }
\left| \rho^\circ \left(  - y - \frac{t^2}{x} \right) \right|  |y|^{-2+s} \, dy \\
&=
C \lambda_{E_1}^{j-1} 
 \ln \left( \frac{1}{\Delta} \right)
\int_{|y| \ge 1 }
\big|
\rho^\circ (  - y  + z ) \big|  \frac{|z|^{2-s} }{|y|^{2-s}}  \, dy \\
&\le
C |E_1 - E_2| \lambda_{E_1}^{j-1}  \ln \left( \frac{1}{\Delta} \right) .\label{e:small3}
\end{split}
\end{align}
The first inequality uses  \eqref{e:sandwich2} and the positivity of $F_{E_1}$. 
The second inequality uses  \Cref{c:Ftildeu} and \eqref{e:tbounds}. 
The last inequality uses \Cref{l:supboundscirc}.
Then \eqref{e:smalldeltabound} follows from combining \eqref{e:small1}, \eqref{e:small2}, and \eqref{e:small3}. 

Finally, we note that the proof of the $j=0$ case of \eqref{e:smalldeltabound} is nearly identical to the $j \ge 1$ case, except we replace every appearance of $w_1+w_2+w_3$ by $\tilde{u}$. So, one can again replace all uses of the bound \eqref{e:sandwich2} used above  with the explicit definition of $\tilde{u}$ from \eqref{e:tildeu}. We omit the details.
\end{proof}

We also prove an analogue of the previous lemma for $v_E$, which will be useful in the next section. 
\begin{lemma}
Set $I_\L  =  [E_0 - \L^{-1}, E_0 + \L^{-1}]$. 
Suppose that either
\be\label{e:ge1111c}
\rho'(E)  \ge \frac{1}{\L}
\text{ for all $E \in I_\L$ }
\ee 
or
\be\label{e:le1111c}
\rho'(E)  \le  - \frac{1}{\L} \text{ for all $E \in I_\L$ }
\ee 
There exists a constant $\alpha_0 >0$ such that the following is true for all $\alpha \in (0, \alpha_0]$. 
There exist constants 
$K_0( \alpha),C(\alpha), c(\alpha) >0$ such that the following bounds hold for all  $s\in (\s ,1)$,  $K\ge K_0$, and $E_1, E_2 \in I_\star$ with $E_1 < E_2$. 
 \begin{enumerate}
\item
For $|x| \le \Delta$, 
\be\label{e:coffee10}
\big|  (F^\circ  v_{E_1} ) (x) \big|
\le 
C t^{-1}  \ln \left( \frac{1}{\Delta} \right) |E_1 - E_2|
\ee 
\item
For $|x| \ge \Delta$,  if \eqref{e:ge1111c} holds, then
\begin{align} 
(F^\circ v_{E_1} ) (x)  \ge  c   |E_1 - E_2|  \ln\left( \frac{1}{\Delta} \right)  \frac{t}{|x|^{2-s}},
\end{align}
and if \eqref{e:le1111c} holds, then
\begin{align}\label{e:coffee2}
(F^\circ v_{E_1} ) (x)  \le - c   |E_1 - E_2|  \ln\left( \frac{1}{\Delta} \right)  \frac{t}{|x|^{2-s}}.
\end{align}
\end{enumerate}
\end{lemma}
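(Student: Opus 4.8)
The plan is to re-run the proof of \Cref{l:pizza} with the eigenvector $v_{E_1}$ playing the role that the iterate $F^j_{E_1}\tilde u$ played there; in fact the argument is strictly simpler, since there is no factor $\lambda_{E_1}^j$ to propagate and $v_{E_1}$ is a genuine eigenvector, so the substitute sandwich bound \eqref{e:sandwich2} is never needed. First I would fix $\alpha_0$ small enough that $\alpha \le \varpi$, which ensures $\{|x| \ge \Delta\} \subseteq \{|x| \ge \varpi^{-1}t^2\}$; then \Cref{l:vupperone} and \Cref{l:sumac} give $c|x|^{-(2-s)} \le v_{E_1}(x) \le C|x|^{-(2-s)}$ for $|x| \ge \Delta$, while \eqref{e:salt} gives $v_{E_1}(x) \le Ct^{-2}$ for $|x| \le \Delta$. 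Then I would decompose $v_{E_1} = v_1 + v_2 + v_3$, where $v_1, v_2, v_3$ are the restrictions of $v_{E_1}$ to $\{|y| \le \Delta\}$, $\{\Delta \le |y| \le 1\}$, and $\{|y| \ge 1\}$ respectively, and estimate $(F^\circ v_i)(x)$ for each $i$, exactly along the lines of \eqref{e:ingredient1}--\eqref{e:ingredient4} and \eqref{e:small1}--\eqref{e:small3}.

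For $|x| \ge \Delta$ under hypothesis \eqref{e:le1111c}, the dominant contribution is the part of $v_2$ supported on $\Delta \le |y| \le \kappa$, where $\kappa$ is a suitable small multiple of $\L^{-1}$: there $v_{E_1}(y) \ge c|y|^{-(2-s)}$ by \Cref{l:sumac} and $\rho^\circ(-y - t^2/x) \le -(E_2 - E_1)/(2\L)$ by \eqref{e:le11finite2}, which is applicable because $|{-y-t^2/x}| \le \kappa + \alpha \le (4\L)^{-1}$ and $E_1, E_2 \in I_\star \subset I_{2\L}$; integrating $|y|^{-(2-s)}$ over $[\Delta,\kappa]$ produces a factor $\asymp \log(1/\Delta)$ via \eqref{e:weirdlog}, and $t^{2-s} \ge t/2$ by \eqref{e:tbounds}, yielding a contribution $\le -c|E_1 - E_2|\log(1/\Delta)\,t|x|^{-(2-s)}$. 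The remaining pieces are error terms: the $v_1$ part is controlled using $v_1 \le Ct^{-2}\one\{|y|\le\Delta\}$, \eqref{e:Elipschitz}, and direct integration; the $v_3$ part and the $\kappa \le |y| \le 1$ part of $v_2$ using $v_{E_1}(y) \le C|y|^{-(2-s)}$, \eqref{e:Elipschitz}, and the elementary inequality $|{-y-t^2/x}| \ge |y|/2$. Each of these is $O\big(|E_1-E_2|\,t^{2-s}|x|^{-(2-s)}\big)$, hence negligible next to the main term for $K$ large, since $\log(1/\Delta) \asymp \log K \to \infty$ while $\alpha$ is fixed. This proves \eqref{e:coffee2}; the case \eqref{e:ge1111c} is identical with all signs reversed, using \eqref{e:le11finite} in place of \eqref{e:le11finite2}.

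For $|x| \le \Delta$ I would mirror \eqref{e:small1}--\eqref{e:small3}. Writing $z = -t^2/x$, one has $|z| \ge \alpha$, and \Cref{l:supboundscirc} supplies $\sup_{|y|\le 1}|\rho^\circ(y+z)|\,|z|^{2-s} \le C|E_1-E_2|$ and $\int_{|y|\ge 1}|\rho^\circ(y+z)|\,|z|^{2-s}|y|^{-(2-s)}\,dy \le C|E_1-E_2|$. Feeding these into the three pieces, together with $v_1 \le Ct^{-2}\one\{|y|\le\Delta\}$, $v_2+v_3 \le C|y|^{-(2-s)}$, the identity $\Delta = t^2/\alpha$, and \eqref{e:weirdlog}, each piece contributes at most $C(\alpha)|E_1-E_2|\,t^{-(2-s)}\log(1/\Delta) \le C(\alpha)|E_1-E_2|\,t^{-1}\log(1/\Delta)$, which is \eqref{e:coffee10}. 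The main obstacle is not conceptual but bookkeeping, as in \Cref{l:pizza}: one must verify that the error pieces in the $|x| \ge \Delta$ regime are genuinely subdominant (this is where $\log(1/\Delta) \asymp \log K$ enters) and that the domain restrictions on the argument of $\rho^\circ$ really do permit the applications of \eqref{e:le11finite2}, \eqref{e:le11finite}, and \Cref{l:supboundscirc}. No new mechanism beyond \Cref{l:pizza} is needed.
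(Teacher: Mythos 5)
Your proposal is correct, but it takes a visibly different route from the paper's own proof. The paper never applies $F^\circ$ to $v_{E_1}$ directly: it first writes $F^\circ v_{E_1} = \lambda_{E_1}^{-1} F^\circ (F_{E_1} v_{E_1})$ as in \eqref{e:prologue}, replaces $F_{E_1} v_{E_1}$ by $F_{E_1}\tilde u$ through the two-sided comparison of \Cref{l:eigenvectorsandwich} (see \eqref{e:sandwich1}), recycles the already-displayed estimates from the proof of \Cref{l:pizza} (\eqref{e:ingredient1}, \eqref{e:ingredient3a}, \eqref{e:ingredient3b}, \eqref{e:ingredient4} for $|x|\ge\Delta$, and the analogues of \eqref{e:small1}--\eqref{e:small3} for $|x|\le\Delta$), and only at the end divides by $\lambda_{E_1}$ using \Cref{c:roughlambda}. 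You instead estimate $F^\circ v_{E_1}$ directly from the pointwise shape bounds on the eigenvector: \Cref{l:vupperone} and \Cref{l:sumac} for $|y|\ge\Delta$ (which is legitimate once $\alpha_0\le\varpi$, so $\Delta\ge\varpi^{-1}t^2$) and the upper bound \eqref{e:salt} for $|y|\le\Delta$, combined with the same density inputs \eqref{e:le11finite}/\eqref{e:le11finite2}, \eqref{e:Elipschitz}, \Cref{l:supboundscirc}, and \eqref{e:weirdlog}--\eqref{e:tbounds}. This works precisely because, for this lemma, a lower bound on $v_{E_1}$ is only needed on $[\Delta,\kappa]$ with $\kappa\asymp\L^{-1}$ (where \Cref{l:sumac} applies), while in the crossover region $|y|\le\Delta$ only the upper bound \eqref{e:salt} is used; so the paper's reason for routing through $F_{E_1}v_{E_1}$ (lack of sharp two-sided control of $v_E$ near $x\sim t^2$) is not an obstruction here. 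What each approach buys: the paper's version is shorter on computation, since it cites the displays of \Cref{l:pizza} wholesale and stays consistent with the announced policy of only ever comparing $F v_E$ with $F\tilde u$; your version avoids \eqref{e:prologue}, the sandwich lemma, and the final division by $\lambda_{E_1}$ (hence one application of \Cref{c:roughlambda}), at the cost of redoing the region-by-region integrals, and it makes the domain checks (that $\kappa+\alpha\le(4\L)^{-1}$ so \eqref{e:le11finite2} applies, and that $I_\star\subset I_{2\L}$) explicit, which you do correctly. The error bookkeeping you sketch is also right: each error piece is $O(|E_1-E_2|\,t\,|x|^{-(2-s)})$ without a $\log(1/\Delta)$ factor, so it is absorbed by the main term once $K\ge K_0(\alpha)$, since $\log(1/\Delta)\asymp\log K$.
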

\begin{proof}
We suppose that \eqref{e:le1111c} holds; the proof for the case where \eqref{e:ge1111c} holds is similar and hence omitted. 
Note that because $\lambda_{E_1}$ is an eigenvalue of $F_{E_1}$, 
\begin{align}\label{e:prologue}
F^\circ v_{E_1} = \lambda_{E_1}^{-1} F^\circ ( F_{E_1} v_{E_1} )
\end{align}
When $|x| \le \Delta$, we obtain from the definition of $F^\circ$ that 
\begin{align}\label{e:coffee1}
\begin{split}
\big|  F^\circ ( F_{E_1} v_{E_1} ) (x) \big| &\le    \frac{t^{2-s} }{|x|^{2-s}}
\int_{-\infty}^\infty 
\left| \rho^\circ  \left(  - y - \frac{t^2}{x} \right)  \right|( F_{E_1} v_{E_1}) (y) \, dy \\
&\le 
  \frac{C t^{2-s} }{|x|^{2-s}}
\int_{-\infty}^\infty 
\left| \rho^\circ  \left(  - y - \frac{t^2}{x} \right)  \right|( F_{E_1} \tilde u ) (y) \, dy \\
&\le C \ln\left( \frac{1}{\Delta} \right)^2 |E_1 - E_2|.
\end{split}
\end{align}
The first inequality follows from the positivity of $F_{E_1}$ and $v_{E_1}$, and the definition of $F^\circ$. The second inequality follows from \eqref{e:sandwichupperbound}. 
The proof of the last inequality is identical to the proof of \eqref{e:smalldeltabound}, so we omit it. Combining this inequality with \eqref{e:prologue} gives that, for $|x| \le \Delta$,
\begin{align}
\big| (F^\circ v_{E_1} ) (x) \big| & \le C \lambda_{E_1}^{-1}  \ln\left( \frac{1}{\Delta} \right)^2 |E_1 - E_2|
\le C t^{-1} \ln\left( \frac{1}{\Delta} \right) |E_1 - E_2|,
\end{align}
where we used \Cref{c:roughlambda} and the definition of $t$ and $\Delta$ in \eqref{e:tKdef} in the last inequality. This shows \eqref{e:coffee10}.

We next consider the case that $|x| \ge \Delta$. By \Cref{l:eigenvectorsandwich},
\be\label{e:sandwich1}
c (F v_{E_1})(x)  \le (F \tilde u) (x) \le C (F v_{E_1})(x)
\ee 
for all $x \in \R$.  
Set $\kappa = (5 \L)^{-1}$ and $J = \{ y \in \R : y \in [\Delta, \kappa] \}$. 
Then 
\begin{align}\label{e:ingredient3b0}
\begin{split}
&\frac{t^{2-s}}{|x|^{2-s}}
\left(
\int_{ J  }
\rho^\circ\left( - y   - \frac{t^2}{x}  \right) (F_{E_1} v_{E_1})(x) \, dy 
\right) \\
&\le 
\frac{c t^{2-s}}{|x|^{2-s}}
\left(
\int_{ J  }
\rho^\circ\left( - y   - \frac{t^2}{x}  \right) (F_{E_1} \tilde u )(x) \, dy 
\right)  \\
&\le 
-   c|E_1 - E_2| \lambda_{E_1}  \log \left(\frac{1}{\Delta} \right)  \frac{t }{|x|^{2-s}}. 
\end{split}
\end{align}
The first inequality follows from bounding $F_{E_1} v_{E_1}$ below using \eqref{e:sandwich1}; we also used assumption \eqref{e:le1111c}, the restriction $|y| \le \kappa$, the assumption that $|x| \ge \Delta$, and \eqref{e:le11finite2}  to bound $\rho^\circ$ above (by a negative quantity).
The second inequality is \eqref{e:ingredient3b}. 

By a similar argument, using the upper bound for $F_{E_1} v_{E_1}$ in \eqref{e:sandwich1}, \eqref{e:ingredient1}, \eqref{e:ingredient3a}, and \eqref{e:ingredient4}, we find that 
\begin{align}\label{e:newingredient}
\frac{t^{2-s}}{|x|^{2-s}}
\left|
\int_{ J  }
\rho^\circ\left( - y   - \frac{t^2}{x}  \right) (F_{E_1} v_{E_1})(x) \, dy 
\right| \le 
C |E_1 - E_2|  \log  \left( \frac{1}{\Delta} \right)   \frac{ t^{2}}{|x|^{2-s}}.
\end{align}
Using the definition of $F_{E_1}$, $(F_{E_1} v_{E_1}) = \lambda_{E_1} v_{E_1}$,  \Cref{c:roughlambda}, \eqref{e:ingredient3b0}, \eqref{e:newingredient}, and the definitions of $t$ and $\Delta$ in \eqref{e:tKdef}, we find that that for $|x| \ge \Delta$, \eqref{e:coffee2} holds. 
\end{proof}

\section{Monotonicity of the Leading Eigenvalue}\label{s:conclusion}
This section establishes \Cref{t:mainlambda}, which states, roughly speaking, that the monotonicity of $\rho$ near an energy $E_0$ implies the monotonicity of $\lambda_{s,E}$ for $E$ near $E_0$. 
 We begin in \Cref{s:provebootstrap} by proving \Cref{l:bootstrap}, which connects the free energy $\phi$ to the eigenvalue $\lambda_{s,E}$ at points $E \in \R$ where $\Im  R_{00}$ has a real boundary value. Then, in \Cref{s:monotonicitysub}, we prove \Cref{t:mainlambda}. 
 Throughout this section, we fix  $\L>1$, $g>0$, $\rho$, and $E_0 \in \R$ satisfying the conditions of \Cref{t:mainlambda}. We also recall the definition of $I_\star$ and choice of $\varpi$ from \eqref{e:istar} and \eqref{e:varpi}.

\subsection{Proof of \Cref{l:bootstrap}}\label{s:provebootstrap}

Set
\be
 \tilde w(x) = p_E \left( - \frac{x}{t^2} \right). \ee 
 The follow lemma allows us to use $\tilde w$ as a test vector for the purpose of computing $\lambda_{s,E}$ through iterated powers of $F_E$. We recall that $\tilde u$ was defined in \eqref{e:tildeu}.

\begin{lemma}\label{l:initialdatabound}
There exists a constant $\alpha_0 >0$ such that the following holds for all $\alpha \in (0, \alpha_0]$. There exists $K_0(\alpha) > 1 $ such that for all $s \in (5/6, 1)$, $K\ge K_0$, and $t,E \in \R$, there exist constants $C(K, E ,t ,s,\alpha), c(K,E,t,s,\alpha) > 0$ such that for all $x \in \R$,
\be\label{e:initialdatabound}
c  \tilde u  (x) \le (F^2 \tilde w)(x)\le C (F^2 \tilde u)(x).
\ee 
\end{lemma}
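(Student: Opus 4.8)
\textbf{Plan for the proof of \Cref{l:initialdatabound}.}
The statement says that after two applications of $F = F_E$, the initial datum $\tilde w(x) = p_E(-x/t^2)$ becomes comparable (from below by a constant, and from above by $F^2 \tilde u$ up to a constant) to the approximate eigenvector $\tilde u$. The natural strategy is to sandwich $\tilde w$ itself between two explicit, manageable functions and then apply $F^2$, which is positivity-preserving. Specifically, I would first establish a two-sided bound of the form
\be
c_0 \cdot \one\{|x| \le c_1 t^2\} \le \tilde w(x) \le C_0\left( \one\{|x| \le C_1 t^2\} + \frac{t^{4}}{|x|^{2}} \one\{|x| \ge C_1 t^2\}\right),
\ee
valid for all $x \in \R$. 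The lower bound comes from \eqref{e:pElow} (equivalently the fifth part of \Cref{l:bapstcollection}, or \eqref{e:pEquadratic}): $p_E$ is bounded below by a positive constant on any fixed compact interval, so $p_E(-x/t^2) \ge c$ when $|x/t^2| \le v$ for a suitable $v$. The upper bound follows from \eqref{e:pEquadratic} (or \eqref{e:bapsta2}): $p_E(-x/t^2) \le C(1 + |x/t^2|^2)^{-1}$, which gives the constant near $x=0$ and the $t^4/|x|^2$ decay for large $|x|$.

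The second step is to translate these pointwise bounds on $\tilde w$ through $F$. For the \emph{lower} bound in \eqref{e:initialdatabound}: since $\one\{|x| \le c_1 t^2\}$ is (up to a constant multiple, noting $t^2 \ln K \ge $ const as $t \le 1$) a lower bound for a multiple of $\tilde u_1 = (t^2 \ln K)^{-1}\one\{|x|\le \Delta\}$ when $c_1 t^2 \ge \Delta$, or alternatively is directly a nonnegative multiple of the indicator $\one\{|x| \le \beta t^2\}$ handled by \Cref{l:sumac2}'s style of argument, I would apply $F^2$ and invoke \Cref{l:Flowerbound} together with the asymptotic \Cref{l:powerseries2}: $F^2$ applied to any nonzero nonnegative function in $\mathcal Y$ is bounded below by $c(1+|x|^{2-s})^{-1}$, which dominates $c'\tilde u(x)$ for $|x| \ge \Delta$. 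For the small-$x$ regime $|x| \le \Delta$ where $\tilde u(x) = (t^2 \ln K)^{-1}$, I would argue as in \Cref{l:sumac2}: using the lower bound $\tilde w \ge c\one\{|x|\le c_1 t^2\}$ and the lower bound \eqref{e:peppercorn} (or \eqref{e:rhoequadratic}) on $\rho_E$ near zero, one application of $F$ already produces a term of order $t^{-2}$ on $|x| \le \Delta$ after the change of variables $z = -t^2/x$, and the second application preserves it; this gives $(F^2\tilde w)(x) \ge c t^{-2} \ge c'(t^2 \ln K)^{-1}\cdot c'' = c''' \tilde u(x)$ once we absorb the $\ln K$ factor (permissible since the constants are allowed to depend on $K$).

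For the \emph{upper} bound in \eqref{e:initialdatabound}: the upper bound on $\tilde w$ above is $\le C_0(\Delta t^{-2}\, u_1(x) + t^4 u_3(x) \cdot t^{-2})$ after rescaling — more precisely, $\one\{|x|\le C_1 t^2\} \le C\Delta\, u_1(x) \cdot (\text{const})$ when $C_1 t^2 \le \Delta$, i.e. $\alpha$ small enough, and $t^4|x|^{-2}\one\{|x|\ge C_1 t^2\} \le C t^{2+s} u_3(x) + (\text{bdd near }C_1 t^2)$ crudely. Thus $\tilde w \le C(u_1 + u_2 + u_3) = C \tilde u$ pointwise, possibly after adjusting constants — actually the cleanest route is to observe directly from \eqref{e:pEquadratic} and the definition of $\tilde u$ in \eqref{e:tildeu} that $\tilde w(x) \le C \tilde u(x)$ for all $x$ (the $t^2\ln K$ in the denominator of $\tilde u_1$ versus $t^{-2}$ order of $\tilde w$ near zero is fine since constants may depend on $K$; and the tail $t^4/|x|^2$ of $\tilde w$ is smaller than $|x|^{-(2-s)}$ for $|x| \ge \Delta$ since $t^4/|x|^2 = t^4 |x|^{-(2-s)}|x|^{-s} \le t^4 \Delta^{-s} |x|^{-(2-s)} \le C|x|^{-(2-s)}$). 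Then applying $F^2$ and using that $F^2$ preserves the order $\preceq$ (it has a nonnegative kernel) immediately yields $(F^2 \tilde w)(x) \le C(F^2 \tilde u)(x)$.

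\textbf{Main obstacle.} The only genuinely delicate point is the lower bound on $(F^2\tilde w)(x)$ in the regime $|x|\le \Delta$, where we need order $t^{-2}$ (equivalently $(t^2\ln K)^{-1}$ up to $K$-dependent constants) rather than merely the $O(1)$ that the generic Krein--Rutman positivity \Cref{l:Flowerbound} provides. Resolving it requires re-running the small-$x$ computation of \Cref{l:sumac2}/\eqref{e:beef1}: one uses that $\tilde w$ has mass $\sim 1$ concentrated on $|x| \lesssim t^2$, feeds this through $F$ with the substitution $z = -t^2/x$ so that $\rho_E(-y - t^2/x)$ is evaluated near $\rho_E$'s bulk (where it is bounded below by \eqref{e:peppercorn} or \eqref{e:rhoequadratic}), picks up the factor $t^{2-s}|x|^{-(2-s)} \cdot |t^2/x|^{s-2}$-type gain, and checks that the second application of $F$ does not destroy this. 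All other steps are routine pointwise comparisons and invocations of the positivity and asymptotic lemmas already proved.
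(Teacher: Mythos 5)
Your upper bound is exactly the paper's argument ($\tilde w \le C\,\tilde u$ pointwise from \eqref{e:pEquadratic}, then positivity of $F^2$), and your lower bound for $|x| \ge \Delta$ via \Cref{l:Flowerbound} applied to $\tilde w \in \mathcal Y$ is a legitimate (slightly different) alternative to the paper's route, which instead uses $\tilde w \ge c\,u_2$ and \Cref{l:lowerbound}; both are fine because the constants may depend on $K,E,t,s,\alpha$. The problem is the step you yourself single out as delicate, the regime $|x|\le\Delta$: the mechanism you describe would fail. You claim that one application of $F$ to the lower bound $\tilde w \ge c\,\one\{|y|\le c_1 t^2\}$ "already produces a term of order $t^{-2}$" there, with $\rho_E(-y-t^2/x)$ "evaluated near $\rho_E$'s bulk". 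But for $|x|\le\Delta$ and $|y|\lesssim t^2$ the kernel argument $-y-t^2/x$ has modulus at least $\alpha$ and blows up as $x\to 0$, so by \eqref{e:rhoequadratic} the kernel is of size $O(\min(1,x^2/t^4))$ and the contribution of the near-zero mass of $\tilde w$ to $(F\tilde w)(x)$ is $O(t^{s})$ uniformly on $|x|\le\Delta$ --- nowhere near $t^{-2}$. The gain $|t^2/x|^{s-2}$ you cite (as in \eqref{e:beef1}) comes from the \emph{input} having a tail $\gtrsim |y|^{-(2-s)}$ at $y\approx -t^2/x$; $\tilde w$ itself only has a $t^4/y^2$ tail there, which is too thin. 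This is why the paper argues in two steps: first $(F\tilde w)(y)\ge c|y|^{-(2-s)}$ for $|y|\ge\Delta$ (via $\tilde w\ge c\,u_2$ and \Cref{l:lowerbound}), and only the \emph{second} application of $F$, evaluated at $|x|\le\Delta$ and restricted to a window of width $\sim\alpha$ around $y=-t^2/x$ where the lower bound \eqref{e:peppercorn} on $\rho_E$ applies, produces the $t^{s-2}$ lower bound. "The second application preserves it" is likewise not automatic, since for $|x|\le\Delta$ the relevant sampling point $-t^2/x$ lies outside $[-\Delta,\Delta]$.

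The gap is easy to close, and in fact your own first step already closes it: since the constants in \eqref{e:initialdatabound} are allowed to depend on $K,E,t,s,\alpha$, and $\tilde u \equiv (t^2\ln K)^{-1}$ is just a parameter-dependent constant on $|x|\le\Delta$, the bound $(F^2\tilde w)(x)\ge c\,(1+|x|^{2-s})^{-1}$ from \Cref{l:Flowerbound} already dominates $c'\,\tilde u(x)$ on all of $\R$ (note $\tilde u\in\mathcal X$ with parameter-dependent norm). Your assertion that the "merely $O(1)$" Krein--Rutman bound is insufficient misreads the required uniformity, which is only in $x$. So either invoke \Cref{l:Flowerbound} globally, or repair the small-$x$ computation along the paper's two-step lines; as written, your small-$x$ argument does not give the claimed bound.
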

\begin{proof}
Using \eqref{e:pEquadratic}, there exists a constant $C(E,K) >0$ such that 
\be
p_E(x) \le C \tilde u(x),
\ee 
and the positivity of $F$ yields the upper bound in \eqref{e:initialdatabound}.
For the lower bound, we first suppose that $|x| \ge \Delta$. Again by \eqref{e:pEquadratic}, we have 
\be\label{e:pElowerproof}
p_E(x) \ge c u_2(x).
\ee 
Then
\be\label{e:Ftildew}
(F \tilde w)(x)  \ge \frac{c}{|x|^{2-s}} \ge u_2(x).
\ee 
The first inequality follows from \Cref{l:lowerbound} and \eqref{e:pElowerproof}, and the second is from the definition of $u_2$ in \eqref{e:theus}.
Then 
\be\label{e:Ftildew2}
(F^2 \tilde w)(x)   \ge c(Fu_2)(x) \ge c u_2(x).
\ee 
where the first inequality uses \eqref{e:Ftildew} and the positivity of $F$, and the second follows from  \Cref{l:lowerbound}. 

Next, we assume that $|x| \le \Delta$ and set $\delta = \alpha/2$. We have 
\begin{align*}
(F^2 \tilde w)(x)  &\ge  \frac{c t^{2-s}}{|x|^{2-s}}
\left(
\int_{ |y| \ge \Delta  }
\rho_E \left( - y  - \frac{t^2}{x}  \right) \frac{dy}{|y|^{2-s}} \right) \\
& \ge 
 \frac{c t^{2-s}}{|x|^{2-s}}
\left(
\int_{- t^2/x - \delta  }^{- t^2/x + \delta }
\rho_E \left( - y  - \frac{t^2}{x}  \right) \frac{dy}{|y|^{2-s}} \right) \\
 & \ge 
 \frac{c t^{2-s}}{|x|^{2-s}}
\left(
\int_{- t^2/x - \delta  }^{- t^2/x + \delta }
 \frac{dy}{|y|^{2-s}} \right) \\
 &\ge  \frac{c t^{2-s}}{|x|^{2-s}} \left| \frac{x}{t^2} \right|^{2-s} \ge t^{-2+s}.
\end{align*}
The first inequality above follows from the first inequality in \eqref{e:Ftildew} and the definition of $F$. 
The second inequality follows from restricting the region of integration; we use that $|x| \le \Delta$ implies $| t^2/x|  \ge \alpha$, and take $K$ large enough so that $\alpha \ge  2 \Delta$.
The third inequality uses that $\varpi$ was chosen so that \eqref{e:peppercorn} holds for all $x \in [ - 2 \varpi, 2\varpi]$. 
In the fourth inequality, we directly evaluated the integral. 
This completes the proof of the lower bound in \eqref{e:initialdatabound}. 
\end{proof}

\begin{proof}[Proof of \Cref{l:bootstrap}]
Throughout this proof, we suppose that $E$ is such that the limit $\varphi (s; E) = \lim_{\eta \rightarrow 0} \varphi (s; E + \mathrm{i} \eta)$ exists. This is true for almost all $E\in \R$, by the sixth part of \Cref{l:aizenman}. 

Recall that $\Upsilon_L$ was defined in \Cref{amoment2}. 
\Cref{l:PhiEA} implies that 
\begin{equation}\label{e:feblim}
\lim_{j \rightarrow \infty} L^{-1} \log \Phi_L( s ; E + \iu \eta_j) = \log K  + L^{-1} \log \Upsilon_L(s ; E ),
\end{equation}
where we used that the $R_{0v}$ summands in $\Phi_L$ are identically distributed. From \Cref{c:transfer},  
\begin{equation}\label{e:feblim0}
\Upsilon_{L}(s ; E ) = S\big( F^{L+1}(\tilde w)\big).
\end{equation}
From \eqref{e:initialdatabound}, \Cref{l:eigenvectorsandwich}, and the positivity of $F$, we get 
\begin{equation}
c S\big( F^{L-1}(v_E )\big)   \le c S\big( F^{L-1}(\tilde u )\big)  \le S\big( F^{L+1}(\tilde w)\big) \le  C S\big( F^{L+1}(\tilde u)\big) \le 
C S\big( F^{L+1}(v_E)\big).
\end{equation}
Since $v_E$ is a positive eigenvalue of $F$, we obtain 
\begin{equation}
c  \lambda_{E}^{L-1} S(v_E)
 \le 
S\big( F^{L+1}(\tilde w)\big) \le  C \lambda_{E}^{L+1} S(v_E).
\end{equation}
Inserting this in \eqref{e:feblim0} and putting the result in \eqref{e:feblim} gives 
\begin{equation}
\lim_{L \rightarrow \infty}
\lim_{j \rightarrow \infty} L^{-1} \log \Phi_L( s ; E + \iu \eta_j)  =\log K +  \log \lambda_{E},
\end{equation}
and hence 
\be
\lim_{j \rightarrow \infty} \phi( s ; E + \iu \eta_j)  =\log K +  \log \lambda_{E},
\ee
where the interchange of limits is justified by the fifth part of \Cref{l:aizenman}. 
The conclusion then follows from \eqref{e:limitr0j2}. 
\end{proof}

\subsection{Monotonicity} \label{s:monotonicitysub}
The following lemma establishes that $ \ln \lambda_{s,E}$ is monotonic in $E$ on intervals where $\rho'$ is bounded away from zero. 
\begin{lemma}\label{l:penultimate}
Set $I_\L  =  [E_0 - \L^{-1}, E_0 + \L^{-1}]$. 
Suppose that either
\be\label{e:ge111b}
\rho'(E)  \ge \frac{1}{\L}
\text{ for all $E \in I_\L$ }
\ee 
or
\be\label{e:le111b}
\rho'(E)  \le  - \frac{1}{\L} \text{ for all $E \in I_\L$ }.
\ee 
 There exists a constant $ K_0(\L)>0$ such that the following holds. For all $K \ge K_0$,  there exists $s_0(K, \L)\in(0,1)$ such that for all $s \in (s_0,1)$, the function $f(e)= \ln \lambda_{K,t,s,e}$ satisfies the following property.  If \eqref{e:ge111b} holds, then
\be\label{1015}
f(e_1)  \le f(e_2)
\ee
for all $e_1, e_2 \in I_\star$ such that $e_1 < e_2$, 
and if \eqref{e:le111b} holds,
\be\label{1016}
f(e_1)  \ge f(e_2)
\ee 
for all 
$e_1, e_2 \in  I_\star $ such that $e_1 < e_2$.
\end{lemma}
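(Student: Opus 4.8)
\textbf{Proof proposal for \Cref{l:penultimate}.}

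The plan is to assume \eqref{e:le111b} (the case of \eqref{e:ge111b} being entirely symmetric, swapping all inequality directions) and show that $\ln\lambda_{s,e}$ is weakly decreasing on $I_\star$ by comparing the iterated operators $F_{e_1}^n$ and $F_{e_2}^n$ applied to the fixed test function $\tilde u$. The starting point is the identity
\be\label{e:plan-lnlambda}
\ln\lambda_{s,e} = \lim_{n\to\infty} \frac{\ln \|F^n_{e}\tilde u\|_1}{n},
\ee
which follows from \Cref{l:eigenvectorsandwich}: since $\tilde u$ is sandwiched between multiples of $v_e$ (the lower bound \eqref{e:sandwichlowerbound}) and, after one application of $F_e$, $F_e\tilde u$ is sandwiched between multiples of $F_e v_e = \lambda_e v_e$ (using both \eqref{e:sandwichlowerbound} and \eqref{e:sandwichupperbound}), the $L^1$ norms $\|F^n_e\tilde u\|_1$ grow like $\lambda_e^n$ up to multiplicative constants uniform in $n$. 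Hence it suffices to show that for all $e_1<e_2$ in $I_\star$ and all large $n$, $\|F^n_{e_2}\tilde u\|_1 \le \|F^n_{e_1}\tilde u\|_1$, or at least that the ratio is bounded by a subexponential factor.

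The second step is the telescoping identity \eqref{e:Fdiffsumintro}:
\be\label{e:plan-telescope}
F^n_{s,E_2} - F^n_{s,E_1} = \sum_{j=0}^{n-1} F_{s,E_2}^{\,n-j-1}\, F^\circ\, F_{s,E_1}^{\,j},
\ee
where $F^\circ = F_{E_2} - F_{E_1}$ as in \eqref{e:Fcirc}. Integrating over $x\in\R$ gives $\|F^n_{E_2}\tilde u\|_1 - \|F^n_{E_1}\tilde u\|_1 = \sum_{j=0}^{n-1}\int (F_{E_2}^{n-j-1} F^\circ F^j_{E_1}\tilde u)(x)\,dx$. For each $j$, I would use \Cref{l:pizza} to control $F^\circ F^j_{E_1}\tilde u$: on $|x|\ge\Delta$ it is negative and bounded above by $-c\lambda_{E_1}^j \log(1/\Delta)\, t|x|^{-(2-s)}|E_1-E_2|$ via \eqref{e:upperkey1}, while on $|x|\le\Delta$ it is positive but small, bounded in absolute value by $C\lambda_{E_1}^{j-1}\log(1/\Delta)^2|E_1-E_2|$ via \eqref{e:smalldeltabound}. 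Applying the (positive, hence order-preserving) operator $F_{E_2}^{n-j-1}$ to these bounds, and using \Cref{l:eigenvectorsandwich} again to convert $F_{E_2}^{n-j-1}$ applied to $|x|^{-(2-s)}$-type functions into multiples of $\lambda_{E_2}^{n-j-1} v_{E_2}$, one finds that the negative contribution from $|x|\ge\Delta$ carries a factor $\lambda_{E_1}^j \lambda_{E_2}^{n-j-1} t\log(1/\Delta)$ times the $L^1$-mass of $v_{E_2}$ restricted to $|x|\ge\Delta$ (which is $\Theta(1/(1-s))$), whereas the positive contribution from $|x|\le\Delta$ carries a factor $\lambda_{E_1}^{j-1}\lambda_{E_2}^{n-j-1}\log(1/\Delta)^2$ times the $L^1$-mass of $v_{E_2}$ on $|x|\le\Delta$ (which is $\Theta(\log K / \Delta \cdot \Delta) = \Theta(\log K)$ by \Cref{l:sumac2} and \Cref{l:vupperone}, i.e.\ the narrow spike contributes only $O(\log K)$). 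Since $t = g(K\log K)^{-1}$ and $\lambda_{E_1}\asymp K^{-1}$ by \Cref{c:roughlambda}, the relative weight $\lambda_{E_1}^{-1} t \asymp \log K$ shows the two contributions are of the same order up to constants; the point is to choose the constant $c$ in \eqref{e:upperkey1} large enough relative to $C$ in \eqref{e:smalldeltabound}—this is exactly what the careful $\log(1/\Delta)$ versus $\log(1/\Delta)^2$ bookkeeping, combined with $\log(1/\Delta) \gg 1$ for $\alpha$ small, delivers. Summing over $j$ then gives $\|F^n_{E_2}\tilde u\|_1 - \|F^n_{E_1}\tilde u\|_1 \le 0$ for $n$ large.

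Finally, feeding this into \eqref{e:plan-lnlambda} yields $\ln\lambda_{s,e_2} \le \ln\lambda_{s,e_1}$, which is \eqref{1016}. The main obstacle I anticipate is the bookkeeping in the telescoped sum: one must apply \Cref{l:eigenvectorsandwich} uniformly in the number of iterates $n-j-1$ to replace $F_{E_2}^{n-j-1}$ acting on the explicit bounds from \Cref{l:pizza} by the clean expressions $\lambda_{E_2}^{n-j-1} v_{E_2}$, and then verify that the resulting $v_{E_2}$-weighted estimate genuinely has the negative $|x|\ge\Delta$ term dominating the positive $|x|\le\Delta$ term \emph{after integrating in $x$}—this is where the precise powers of $\log(1/\Delta)$, $t$, and $\lambda_{E_1}$ must line up. A secondary subtlety is that \eqref{e:upperkey1} is only a bound for $|x|\ge\Delta$ in terms of $t|x|^{-(2-s)}$, so when $F_{E_2}^{n-j-1}$ is applied one needs that this profile is (up to constants) comparable to $F_{E_2}\tilde u$, which is supplied by \Cref{c:Ftildeu} and \Cref{l:eigenvectorsandwich}. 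Once the $L^1$ comparison $\|F^n_{E_2}\tilde u\|_1 \le \|F^n_{E_1}\tilde u\|_1$ is in hand, the weak monotonicity is immediate; the strict monotonicity and the $\liminf_{s\to1^-}$ statements in \Cref{t:mainlambda} (as opposed to \Cref{l:penultimate}) will require a quantitative version of this argument, tracking that the negative term beats the positive one by a definite margin, which is deferred to the refinement \Cref{lambdae120} mentioned in the text.
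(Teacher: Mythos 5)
Your high-level architecture coincides with the paper's: represent $\ln\lambda_{s,e}$ through $\lim_{n}n^{-1}\ln\|F_e^n\tilde u\|_1$, telescope $F_{E_2}^n-F_{E_1}^n=\sum_j F_{E_2}^{n-j-1}F^\circ F_{E_1}^j$, control $F^\circ F_{E_1}^j\tilde u$ via \Cref{l:pizza}, and use positivity of $F_{E_2}$. However, the step you defer as ``bookkeeping'' is exactly where the sketch breaks down. First, your arithmetic for the relative weight is wrong: with $t=g(K\log K)^{-1}$ and $\lambda_{E_1}\asymp K^{-1}$ one has $\lambda_{E_1}^{-1}t\asymp g/\log K$, not $\log K$. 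Second, and more seriously, your proposed conversion of $F_{E_2}^{n-j-1}$ applied to the positive spike bound $C\lambda_{E_1}^{j-1}\log(1/\Delta)^2\one\{|x|\le\Delta\}$ into a multiple of $\lambda_{E_2}^{n-j-1}v_{E_2}$ via \Cref{l:eigenvectorsandwich} is too lossy: the only sandwich comparison available for the spike is $\one\{|x|\le\Delta\}= t^2\ln K\,\tilde u_1\le t^2\ln K\,\tilde u\le Ct^2\ln K\,v_{E_2}$, and feeding this through gives a positive contribution of order $\log(1/\Delta)^2\,t^2\ln K\,\lambda_{E_1}^{j-1}\lambda_{E_2}^{n-j-1}\|v_{E_2}\|_1|E_1-E_2|$, which exceeds the negative contribution $c\,t\log(1/\Delta)\,\lambda_{E_1}^{j}\lambda_{E_2}^{n-j-1}\|v_{E_2}\|_1|E_1-E_2|$ by a factor $\asymp\log(1/\Delta)\,t\ln K\,\lambda_{E_1}^{-1}\asymp g\log K$; i.e.\ the comparison comes out with the wrong sign. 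The paper avoids this by proving the dedicated iteration estimate \eqref{e:baruiterationbound}, $F_{E_2}^ku_1\le C^kt^ku_1+C(\ln K)^{-1}\lambda_{E_2}^kv_{E_2}$, exploiting that each application of $F_{E_2}$ to the spike costs a factor $t\asymp\lambda_{E_2}/\log K$; this produces the crucial extra $(\ln K)^{-1}$ that makes the spike contribution smaller than the negative term by a factor $\asymp 1/\log K$ (see \eqref{e:jbulk}), and it cannot be recovered from the eigenvector sandwich alone. Third, ``choosing the constant $c$ in \eqref{e:upperkey1} large relative to $C$ in \eqref{e:smalldeltabound}'' is not an available move (those constants are fixed by the lemmas), and the $\log(1/\Delta)$ versus $\log(1/\Delta)^2$ comparison actually works against you, since both are $\asymp\log K$ and the positive bound carries the extra power.

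Two further ingredients the paper needs and your sketch omits: the negative part must be handled through $u_2+u_3=\tilde u-\tilde u_1$, so that $-F_{E_2}^k(u_2+u_3)\le -c\lambda_{E_2}^kv_{E_2}$ plus a small $\tilde u_1$ correction; and the residual positive terms not proportional to $v_{E_2}$ (pure powers of $t$, as well as the $j=n-1$ term treated separately in \eqref{e:jequalsnminusone}) are beaten only after taking $s$ close to $1$ so that $\|v_{E_2}\|_1=(1-s)^{-1}$ dominates the fixed constants --- this is where $s_0(K,\L)$ in the statement enters. Without the refined spike iteration bound and the $(1-s)^{-1}$ mechanism, the telescoped sum cannot be shown nonpositive, so the proposal as written has a genuine gap at its central step.
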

\begin{proof}
We prove only \eqref{1016}, since the proof of \eqref{1015} is nearly identical. 
We set $\alpha = \alpha_0$ in the definition of $\Delta$ (see \eqref{e:tKdef}), where $\alpha_0$ is given by the statement of \Cref{l:pizza}.
By the positivity of $F$ and $\tilde u$, 
\be
\| F^n_{E_1} \tilde u \|_1 = \int_{-\infty}^\infty (F^n_{E_1} \tilde u)(x) \, dx,
\ee 
and similarly with $E_1$ replaced by $E_2$. Then
\be\label{e:Fdiffsum0}
\| F^n_{E_2} \tilde u \|_1 - \| F^n_{E_1} \tilde u \|_1 = 
\int_{-\infty}^\infty   \big( (F^n_{E_2}   - F^n_{E_1}) \tilde u\big)(x) \, dx .
\ee 
Further, recalling that $F^\circ$ was defined in \eqref{e:Fcirc}, we have 
\be\label{e:Fdiffsum}
F^n_{E_2} -  F^n_{E_1}    = \sum_{j=0}^{n-1} F_{E_2}^{n-j -1} (F_{E_2} - F_{E_1}) F_{E_1}^{j} = \sum_{j=0}^{n-1} F_{E_2}^{n-j -1} (F^\circ) F_{E_1}^{j}.
\ee 
From \Cref{l:pizza}, we obtain for all integers $j$ such that $j \ge 0$ that 
\begin{align}
\begin{split}
\label{e:circintermediate}
F^\circ (F_{E_1}^{j} \tilde u) (x) \le&
 C| E_1 - E_2|   \lambda_{E_1}^{j-1}  \ln \left( \frac{1}{\Delta}\right)^2 \one \{ |x| \le \Delta \}
 \\
&  - c| E_1 - E_2|  \lambda_{E_1}^{j}\log \left(\frac{1}{\Delta} \right)  \frac{t }{|x|^{2-s}} 
  \one\{ |x| \ge \Delta \} \\ 
=&
 C| E_1 - E_2|   \lambda_{E_1}^{j-1} \Delta \ln \left( \frac{1}{\Delta}\right)^2 u_1(x)
 \\
&  - c| E_1 - E_2|  \lambda_{E_1}^{j}\log \left(\frac{1}{\Delta} \right) t \big( u_2(x) + u_3(x) \big),
\end{split}
\end{align}
where the equality follows from the definitions of $u_1$, $u_2$, and $u_3$ in \eqref{e:theus}.  Then, since $F_{E_2}$ is an integral operator with a positive kernel, 
\begin{align}
\begin{split}
\label{e:circintermediate2}
 &(F_{E_2}^{n-j -1} F^\circ F_{E_1}^{j} \tilde u) (x) \\
 &\le
 C| E_1 - E_2|   \lambda_{E_1}^{j-1} \Delta \ln \left( \frac{1}{\Delta}\right)^2 ( F_{E_2}^{n-j -1}u_1)(x)
 \\
&\quad   - c| E_1 - E_2|  \lambda_{E_1}^{j}\log \left(\frac{1}{\Delta} \right) t \big(  (F_{E_2}^{n-j -1}u_2)(x) + ( F_{E_2}^{n-j -1}u_3)(x) \big).
\end{split}
\end{align}

We begin by analyzing the $u_1$ term in \eqref{e:circintermediate2}. We suppose that $j \neq n-1$; the case $j = n-1$ will be treated later. 
We have
\begin{align*}
(F_{E_2} u_1)(x) &\le C \Delta t^{-1} u_1(x) + Ct \big( u_2(x) + u_3(x) \big)\\
&\le C t u_1(x) + Ct v_{E_2}(x),
\end{align*}
where we used \Cref{l:u1upper} and \eqref{e:tbounds} in the first inequality, then \Cref{l:sumac} and the definition of $\Delta$ in \eqref{e:tKdef} in the second inequality. 
Then, because $v_{E_2}$ is an eigenvector of $F_{E_2}$, 
\begin{align*}
(F^2_{E_2} u_1)(x) &\le C t  (F_{E_2} u_1)(x) + C  t ( F_{E_2} v_{E_2} ) (x)  \\
&\le C ^2  t^2  u_1(x)  + C^2t^{2}  v_{E_2} (x)  + C t  \lambda_{E_2} v_{E_2}(x).
\end{align*}
For all $k \ge 2$, we obtain analogously that 
\be\label{e:artichoke}
(F^k_{E_2} u_1)(x) 
\le C^k t^k  u_1(x) +  \left( \sum_{m=0}^{k-1} C^{ k  -m} t^{ k -m } \lambda_{E_2}^m \right) v_{E_2}(x).
\ee 
Since \eqref{e:tbounds} and \Cref{c:roughlambda} imply that $t \le C \lambda_{E_2} (\log K)^{-1}$ for $s \ge \s$, there exists $K_0 \in \N$ and $s_0(K)$ such that for $K \ge K_0$ and $s \in (s_0,1)$, we have
\be
\sum_{m=0}^{k-1} C^{ k  -m} t^{ k -m } \lambda_{E_2}^m
\le \lambda_{E_2}^{k}
\sum_{m=0}^{k-1} C^{ k  -m} (\log K)^{-(k - m) }  \le (\ln K)^{-1} \lambda_{E_2}^{k}.
\ee 
We conclude from \eqref{e:artichoke} that for any $k \ge 1$, 
\be\label{e:baruiterationbound}
(F^k_{E_2} u_1)(x) \le C^k t^k  u_1(x)  + C  (\ln K)^{-1} \lambda_{E_2}^{k} v_{E_2}(x).
\ee 
This implies, using the positivity of $F_{E_2}$, $u_1$, and $v_{E_2}$, that 
\be\label{e:Fu1}
\| F^k_{E_2} u_1 \|_1 \le 2 C^k  t^{k } + C (\ln K)^{-1}   \lambda_{E_2}^{k}  \| v_{E_2} \|_1.
\ee 
We then find 
\begin{align}\label{e:badpartnearzero}
\begin{split}
&| E_1 - E_2|   \lambda_{E_1}^{j-1} \Delta \ln \left( \frac{1}{\Delta}\right)^2  \| F_{E_2}^k u_1 \|_1 \\
&\le| E_1 - E_2|   \ln \left( \frac{1}{\Delta}\right)^2  \left(    C^{k+1}  t^{k + 2 } \lambda_{E_1}^{j-1}  + C t^{2}    \lambda_{E_1}^{j-1} \lambda_{E_2}^{k}  (\ln K)^{-1} \|v_{E_2} \|_1\right),  
\end{split}
\end{align}
which bounds the integral over $\R$ of  first term in \eqref{e:circintermediate2} for all $j < n-1$.

We now consider the second term in \eqref{e:circintermediate2}. Note that by the definition of $\tilde u$ in \eqref{e:tildeu}, and the definition of $\tilde u_1$ in \eqref{e:tildeu1}, we have 
\begin{align}\label{1028}
- (F^k_{E_2} u_2)(x) - (F_{E_2}^k u_3)(x) = - (F^k_{E_2} \tilde u)(x) +(F^k_{E_2} \tilde u_1)(x) .
\end{align}
Using \eqref{e:Fu1},  the inequality $\tilde u_1 \le C (\ln K)^{-1} u_1$ (which is immediate from the definitions of $\tilde u_1$ and $u_1$ in \eqref{e:tildeu} and \eqref{e:theus}, respectively), and the definition of $\Delta$ in \eqref{e:tKdef}, we have 
\begin{align}
\begin{split}
\label{e:badpartnearzero2}
&| E_1 - E_2|  \lambda_{E_1}^{j}\log \left(\frac{1}{\Delta} \right) t   \|  F^k_{E_2} \tilde u_1 \|_1 \\
 &\le | E_1 - E_2|   \  \left(2 C^k  t^{k +1 }  \lambda_{E_1}^{j} + C   t \lambda_{E_1}^{j}  \lambda_{E_2}^{k}  (\ln K)^{-1} \| v_{E_2} \|_1\right).
 \end{split}
\end{align}

Next, applying $F^{k-1}$ to both sides of \eqref{e:sandwichupperbound}  gives
\be\label{e:Fiteratelower}
(F^k_{E_2} \tilde u)(x) \ge  c (F^k_{E_2} v_{E_2} )(x).
\ee 
Using that $v_{E_2}$ is an eigenvector, this implies 
\be\label{e:goodpart}
- (F^k_{E_2} \tilde u)(x)  \le - c \lambda_{E_2}^k v_{E_2}(x).
\ee
We conclude that for all integers $j$ such that $0 \le  j < n-1$,  
\begin{align}
\begin{split}\label{e:jbulk}
&\int_{-\infty}^\infty \big(F_{E_2}^{n-j -1}  F^\circ  F_{E_1}^{j}\tilde u  \big)(x)\, dx \\
&\le 
 - c  \|v_{E_2} \|_1  | E_1 - E_2  | t  \lambda_{E_1}^{j} \lambda_{E_2}^{n-j -1}  \log \left(\frac{1}{\Delta} \right)  \\
 & \quad +  C | E_1 - E_2|     \left(2 C^{n-j-1}  t^{n-j }  \lambda_{E_1}^{j} + C   t \lambda_{E_1}^{j}  \lambda_{E_2}^{n-j-1} (\ln K)^{-1}  \| v_{E_2} \|_1\right) \\
 &\quad  +C  | E_1 - E_2|   \ln \left( \frac{1}{\Delta}\right)^2  \left(    C^{n-j}  t^{n-j+1 } \lambda_{E_1}^{j-1}  + C t^{2}    \lambda_{E_1}^{j-1} \lambda_{E_2}^{n-j-1}(\ln K)^{-1}   \|v_{E_2} \|_1\right)\\
& \le - c \| v_{E_2} \|_1 |E_1 - E_2| \lambda_{E_1}^j \lambda_{E_2}^{n-j} 
+C (\ln K)^{-1} |E_1 - E_2| \lambda_{E_1}^j \lambda_{E_2}^{n-j}  \\
&\le  - c  \| v_{E_2} \|_1 |E_1 - E_2| \lambda_{E_1}^j \lambda_{E_2}^{n-j}.
\end{split}
\end{align}
The first inequality follows from combining \eqref{e:circintermediate2}, \eqref{e:badpartnearzero}, \eqref{e:badpartnearzero2}, \eqref{1028}, and \eqref{e:goodpart}. 
In the second inequality, we used \Cref{c:roughlambda} to control the factors of $\lambda_{E_1}$ and $\lambda_{E_2}$. We also assumed that $K \ge K_0$ for some $K_0$ depending on the size of the constants $C$ and $c$, and recalled the definitions of $t$ and $\Delta$ in \eqref{e:tKdef}.  In the last line, we chose $s$ large enough so that 
\be
c \| v_{E_2} \|_1 = \frac{c}{1-s} \ge 2 C ,
\ee
and adjusted the value of $c$ in the following line (supposing without loss of generality that $c<1$).

Next, we consider the case $j=n-1$ in \eqref{e:Fdiffsum}.
We have
\begin{align}
\begin{split}
\int_{-\infty}^\infty F^\circ (F_{E_1}^{n-1} \tilde u)(x)  \, dx 
& \le C |E_1 - E_2|   \lambda_{E_1}^n  - c |E_1 - E_2| \lambda_{E_1}^n (1 -s )^{-1}  \\ 
& \le  C |E_1 - E_2|   \lambda_{E_1}^n  - c |E_1 - E_2| \lambda_{E_1}^n   \| v_{E_2} \|_1    \\ 
& \le  - c |E_1 - E_2| \lambda_{E_1}^n   \| v_{E_2} \|_1  
\end{split}  . \label{e:jequalsnminusone}
\end{align}
The first inequality is from the definitions of $t$ and $\Delta$ in \eqref{e:tKdef}, \eqref{e:tbounds},  \Cref{c:roughlambda} and \eqref{e:circintermediate}.  
In the second inequality, we used the normalization for $v_{E_2}$ defined in \eqref{e:vnormalization}. In the third inequality, we  increased the value of $s$ if necessary.
Combining \eqref{e:Fdiffsum0}, \eqref{e:Fdiffsum}, \eqref{e:jbulk} and \eqref{e:jequalsnminusone}, 
and using \Cref{c:roughlambda}, we obtain
\begin{align}\label{e:diff-firstline}
\| F^n_{E_2} \tilde u \|_1 - \| F^n_{E_1} \tilde u \|_1 
& \le  - c  \| v_{E_2} \|_1  \sum_{j=0}^n \lambda_{E_1}^j \lambda_{E_2}^{n-j},
\end{align}
where we took $n$ sufficiently large (in a way that depends only on the constants in \eqref{e:diff-firstline}). Hence,
\begin{align}
\begin{split}
\label{e:logtaylor}
\ln \lambda_{s, E_2}  - \ln \lambda_{s, E_1} &= 
\lim_{n\rightarrow \infty} \frac{ \ln \| F^{n}_{E_2} \tilde u \|_1 - \ln \| F_{E_1}^{n} \tilde u \|_1}{n}  \le 0,
\end{split}
\end{align}
where we used \eqref{e:diff-firstline} in the last inequality. 
We conclude that $\lambda_{s,E_1} \ge \lambda_{s,E_2}$. 
\end{proof}

\begin{lemma}
	
	\label{lambdae120} 
	
Set $I_\L  =  [E_0 - \L^{-1}, E_0 + \L^{-1}]$. 
Suppose that either
\be\label{e:ge1111b}
\rho'(E)  \ge \frac{1}{\L}
\text{ for all $E \in I_\L$ }
\ee 
or
\be\label{e:le1111b}
\rho'(E)  \le  - \frac{1}{\L} \text{ for all $E \in I_\L$ }
\ee 
 There exist constants $\delta, K_0>0$ such that the following holds for all $K \ge K_0$ and $E_1, E_2 \in I_\star$ such that $E_1 < E_2$. 
 If \eqref{e:ge1111b} holds, then 
\be
\liminf_{s\rightarrow 1 } \big( \lambda_{s,E_2 } -  \lambda_{s,E_1} \big) > 0.
\ee
If \eqref{e:le1111b} holds, then 
\be
\liminf_{s\rightarrow1}   \big( \lambda_{s,E_1 } -  \lambda_{s,E_2} \big) > 0.
\ee
\end{lemma}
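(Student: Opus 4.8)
The plan is to obtain \Cref{lambdae120} as a quantitative sharpening of \Cref{l:penultimate}, recycling the key estimate \eqref{e:diff-firstline} already established in that proof rather than redoing any analysis. As in the proof of \Cref{l:penultimate}, it suffices to treat the case \eqref{e:le1111b}, the case \eqref{e:ge1111b} being entirely analogous. I would work with $K \ge K_0$ and $s \in (\s, 1)$, fix $E_1 < E_2$ in $I_\star$, and recall from \eqref{e:diff-firstline} that, for all sufficiently large $n$,
\[
\| F^n_{E_2} \tilde u \|_1 - \| F^n_{E_1} \tilde u \|_1 \le - c\, \| v_{E_2} \|_1 \sum_{j=0}^n \lambda_{s, E_1}^j \lambda_{s, E_2}^{n-j},
\]
where $c>0$ does not depend on $s$. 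The crucial observation is simply that $\| F^n_{E_2} \tilde u \|_1 \ge 0$, since $\tilde u\ge 0$ and $F_{E_2}$ has a nonnegative kernel; hence this inequality upgrades to the lower bound $\| F^n_{E_1} \tilde u \|_1 \ge c\,\| v_{E_2} \|_1 \sum_{j=0}^n \lambda_{s, E_1}^j \lambda_{s, E_2}^{n-j}$.

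Next I would record the matching upper bound $\| F^n_{E_1}\tilde u\|_1 \le C\, \| v_{E_1}\|_1\, \lambda_{s,E_1}^n$, which follows from \eqref{e:sandwichlowerbound} (giving $\tilde u \le C v_{E_1}$), the positivity of $F_{E_1}$, and the fact that $v_{E_1}$ is an eigenvector with eigenvalue $\lambda_{s,E_1}$. Combining the two bounds, cancelling $\lambda_{s,E_1}^n$ and the equal factors $\| v_{E_1}\|_1 = \| v_{E_2}\|_1 = (1-s)^{-1}$ coming from the normalization \eqref{e:vnormalization}, I would arrive at
\[
\sum_{m=0}^n \left( \frac{\lambda_{s,E_2}}{\lambda_{s,E_1}} \right)^m \le \frac{C}{c}
\]
for all large $n$. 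Letting $n \to \infty$, this forces $\lambda_{s,E_2}/\lambda_{s,E_1} < 1$ and in fact $1 - \lambda_{s,E_2}/\lambda_{s,E_1} \ge c/C$, i.e.\ $\lambda_{s,E_1} - \lambda_{s,E_2} \ge (c/C)\,\lambda_{s,E_1}$. Invoking the lower bound $\lambda_{s,E_1} \ge 1/(2K)$ from \Cref{c:roughlambda} then yields $\lambda_{s,E_1} - \lambda_{s,E_2} \ge c/(2CK)$, a bound uniform over $s \in (\s, 1)$; taking $\liminf_{s \to 1}$ gives the conclusion (and even a slightly stronger, $s$-uniform statement).

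I do not expect a serious obstacle here, since essentially all of the analytic content lives in the proof of \Cref{l:penultimate} and in \Cref{l:eigenvectorsandwich}. The only points needing care are bookkeeping ones: one must confirm that the constant $c$ in \eqref{e:diff-firstline} and the constant $C$ in \eqref{e:sandwichlowerbound} do not degenerate as $s \to 1$ — this is visible from the proof of \Cref{l:penultimate}, where the $s$-dependence enters only through the factor $\| v_{E_2}\|_1 = (1-s)^{-1}$, which diverges in a direction already exploited favorably there — and that the threshold "$n$ sufficiently large" in \eqref{e:diff-firstline} can be chosen independently of $s$, which is likewise the case. With these verifications in hand the argument above closes the proof.
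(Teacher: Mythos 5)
Your argument is correct, but it takes a genuinely different route from the paper. The paper proves \Cref{lambdae120} by contradiction: assuming $\liminf_{s\to 1}(\lambda_{s,E_1}-\lambda_{s,E_2})=0$, it uses the eigenvector $v_{E_1}$ itself as a test function, invokes the separate estimates \eqref{e:coffee10} and \eqref{e:coffee2} on $F^\circ v_{E_1}$, and iterates the resulting operator inequality (as in \eqref{e:march50}) $k$ times until the accumulated geometric sum $\sum_j (\lambda_{E_2}/\lambda_{E_1})^{jn}$ overwhelms the comparison between $v_{E_1}$ and $v_{E_2}$, producing a negative value of the manifestly nonnegative function $F_{E_2}^{kn}v_{E_1}$. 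You instead squeeze the already-established inequality \eqref{e:diff-firstline} between the trivial lower bound $\|F_{E_2}^n\tilde u\|_1\ge 0$ and the upper bound $\|F_{E_1}^n\tilde u\|_1\le C\lambda_{s,E_1}^n\|v_{E_1}\|_1$ coming from \eqref{e:sandwichlowerbound}, so that boundedness of $\sum_{m\le n}(\lambda_{s,E_2}/\lambda_{s,E_1})^m$ uniformly in $n$ directly forces the ratio away from $1$. This is shorter, avoids the auxiliary lemma on $F^\circ v_{E_1}$ entirely, and yields an explicit $s$-uniform gap rather than a bare contradiction; its only structural cost is that it quotes an inequality from inside the proof of \Cref{l:penultimate} rather than a stated lemma. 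Your bookkeeping claims check out: the constants in \eqref{e:diff-firstline} and \eqref{e:sandwichlowerbound} are $s$-independent for $s$ close to $1$ (the $s$-dependence enters only through $\|v_{E_2}\|_1=(1-s)^{-1}$, used favorably), the threshold on $n$ is $s$-independent, and the normalizations $\|v_{E_1}\|_1=\|v_{E_2}\|_1$ cancel as you say.

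One small caveat, not a gap: as displayed, \eqref{e:diff-firstline} has silently absorbed the factor $|E_1-E_2|$ that is present in \eqref{e:jbulk} and \eqref{e:jequalsnminusone}; carrying it through, your final bound should read $\lambda_{s,E_1}-\lambda_{s,E_2}\ge c\,|E_1-E_2|/(2CK)$ rather than a bound independent of $|E_1-E_2|$. Since $E_1<E_2$ are fixed in the statement of \Cref{lambdae120}, this factor is a harmless positive constant and your conclusion $\liminf_{s\to 1}(\lambda_{s,E_1}-\lambda_{s,E_2})>0$ stands unchanged.
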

\begin{proof}
We suppose that \eqref{e:le1111b} holds throughout this proof; the argument for \eqref{e:ge1111b} is analogous. 
We begin by noting that for all integers $m\ge 0$,
\begin{align}
\begin{split}\label{e:icecream}
 F_{E_2}^{m} (F^\circ v_{E_1}) (x) 
& \le  C|E_1 - E_2|t   \ln \left( \frac{1}{\Delta} \right) (F_{E_2}^{m}  u_1)(x)\\
&\quad 
- c |E_1 - E_2| t 
\ln \left( \frac{1}{\Delta} \right)
\big( (F_{E_2}^{m}  u_2)(x) + (F_{E_2}^{m}  u_3)(x) \big)\\
& \le  C|E_1 - E_2|t   \ln \left( \frac{1}{\Delta} \right) (F_{E_2}^{m}  u_1)(x)\\
&\quad 
- c |E_1 - E_2| t 
\ln \left( \frac{1}{\Delta} \right)
\big( (F_{E_2}^{m}  \tilde u) (x) \big)\\
&\le   C|E_1 - E_2|t   \ln \left( \frac{1}{\Delta} \right) \left(C^{m} t^{m}  u_1(x)  + C (\ln K)^{-1} \lambda_{E_2}^{m} v_{E_2}(x) \right)\\
&\quad 
 -c  |E_1 - E_2| t \lambda_{E_2}^{m}
\ln \left( \frac{1}{\Delta} \right) v_{E_2}(x).
\end{split}
\end{align}
The first inequality comes \eqref{e:coffee10} and \eqref{e:coffee2}, the positivity of $F$, and the definitions of $u_1$, $u_2$, and $u_3$ in \eqref{e:theus}. The second inequality follows from the definition of  $\tilde u$ in \eqref{e:tildeu}. The third inequality uses \eqref{e:baruiterationbound} and the lower bound in \eqref{e:sandwich21}. 
Then  for all $m \ge 0$, 
\begin{align}
\begin{split}\label{e:march48}
F_{E_2}^m(F^\circ v_{E_1} ) (x) 
&\le C|E_1 - E_2|t   \ln \left( \frac{1}{\Delta} \right) \left(C^{m} t^{m}  u_1(x)  -  c\lambda_{E_2}^{m} v_{E_2}(x) \right)\\
&\le C|E_1 - E_2|t  \lambda_{E_2}^{m}  \ln \left( \frac{1}{\Delta} \right) \left(C^{m} (\ln K)^{-m}  u_1(x)  - c v_{E_2}(x) \right),
\end{split}
\end{align}
where the first inequality follows from \eqref{e:march50}, and the second inequality follows from the  definition of $t$ and \Cref{c:roughlambda}.

Set $m_0 = 2$.
By \Cref{l:sumac2},  there exists $K_0$ depending only on the constants in the previous inequality such that, for all $K \ge K_0$ and $m \ge m_0$, 
\begin{align}\label{e:instep}
F_{E_2}^m(F^\circ v_{E_1} ) (x) \le - c \lambda_{E_2}^{m+1}  |E_1 - E_2| v_{E_2}(x).
\end{align}

Then we have for all $ n \ge m_0$ that 
\begin{align}
\begin{split}
\label{e:march50}
(F^n_{E_2}v_{E_1}) (x)   &= (F^n_{E_1}v_{E_1})(x) +   \sum_{j=0}^{n-1} \big(F_{E_2}^{n-j -1} F^\circ F_{E_1}^{j} v_{E_1} \big)(x) \\
&=  \lambda_{E_1}^nv_{E_1}(x) +   \sum_{j=0}^{n-1}  \lambda_{E_1}^j \big(F_{E_2}^{n-j -1} F^\circ v_{E_1}\big)(x) \\ 
&\le  \lambda_{E_1}^nv_{E_1}(x) +  C |E_1 - E_2|  \sum_{j=0}^{m_0} C^j  \lambda_{E_1}^j  \lambda_{E_2}^{n-j}   u_1(x)   \\
& \quad -  c |E_1 - E_2| \sum_{ j = m_0+1}^n  \lambda_{E_1}^j \lambda_{E_2}^{n-j} v_{E_2}(x)\\
&= \lambda_{E_1}^nv_{E_1}(x) +  C |E_1 - E_2|  \lambda_{E_1}^n \sum_{j=0}^{m_0} C^j   ( \lambda_{E_2}/ \lambda_{E_1})^{n-j}   u_1(x)   \\
& \quad -  c |E_1 - E_2|   \lambda_{E_1}^n  \sum_{ j = m_0+1}^n ( \lambda_{E_2}/ \lambda_{E_1})^{n-j} v_{E_2}(x).
\end{split}
\end{align}
The first equality is  \eqref{e:Fdiffsum}, and the second equality follows since $v_{E_1}$ is an eigenvector.
The inequality follows from using \eqref{e:march48} in the terms with $j \le m_0$ (along with the definitions of $t$ and $\Delta$ in \eqref{e:tKdef}) and \eqref{e:instep} for the terms with $j > m_0$, along with \Cref{c:roughlambda}. 
The last equality rewrites the line before it. 

Recall that $\lambda_{s,E_1} \ge \lambda_{s,E_2}$. Arguing by contradiction, we fix $E_1$ and $E_2$, and we suppose that 
\be\label{e:usefullimit} 
\liminf_{s\rightarrow 1} \big( \lambda_{s,E_1}  - \lambda_{s,E_2} \big) = 0
\ee 
 In this case, by taking $s$ sufficiently close to $1$ and $n$ sufficiently large, each in a way that depends only on $K$, we find using \eqref{e:sandwichlowerbound} and \eqref{e:march50} (to get $u_1 (x) \le  C (\ln K) v_{E_2}(x)$) that
\begin{align}
(F^n_{E_2}v_{E_1}) (x)  \le  \lambda_{E_1}^nv_{E_1}(x)  -  c   \lambda_{E_1}^n |E_1 - E_2|  v_{E_2}(x).
\end{align}
Using the previous inequality twice gives
\begin{align}
(F^{2n}_{E_2}v_{E_1}) (x)  \le  \lambda_{E_1}^{2n}v_{E_1}(x)  - c|E_1 - E_2|  \lambda_{E_1}^{2n}  v_{E_2}(x)  -  c |E_1 - E_2|   \lambda_{E_1}^n  \lambda_{E_2}^n v_{E_2}(x).
\end{align}
In general, multiple iterations yield that for all $k$, we have
\begin{align}
\begin{split}
(F^{k n}_{E_2}v_{E_1}) (x) 
&\le \lambda_{E_1}^{kn} v_{E_1}(x) - c |E_1 - E_2|  \sum_{j=0}^{k-1} \lambda_{E_2}^{jn} \lambda_{E_1}^{(k-j) n}v_{E_2}(x)\\
& =\lambda_{E_1}^{kn} \left( v_{E_1}(x)  - c |E_1 - E_2|  \sum_{j=0}^{k-1} (\lambda_{E_2}/\lambda_{E_1})^{jn} v_{E_2}(x) \right).
\end{split}
\end{align}
Using \eqref{e:salt} and \eqref{e:sandwichlowerbound}, and arguing as after \eqref{e:usefullimit}, we have for $k$ sufficiently large and $s$ sufficiently close to $1$, each in a way that depends only on $|E_1 - E_2|$ and $K$, that 
\begin{align}
 (F^{k n}_{E_2}v_{E_1}) (x)  \le - c \lambda_{E_1}^{kn} v_{E_2}(x).
\end{align}
This is a contradiction, since $F_{E_2}$ sends non-negative functions to non-negative functions, and $v_{E_1}(x) \ge 0$ for all $x \in \R$.
\end{proof}
\begin{proof}[Proof of \Cref{t:mainlambda}]
This follows  immediately 
from \Cref{c:KR} and \Cref{l:penultimate}.
\end{proof}

\appendix 

\section{Continuity of the Free Energy}\label{s:lyapunovcontinuity}

This section is devoted to the proof of \Cref{l:phicontinuity}. 
The arguments in this section follow closely those  in Section 12 of \cite{aggarwal2022mobility}, and we give the details here for completeness. Also, the content at the beginning of this appendix bears some resemblance to the earlier work \cite{germinet2004characterization}. 
We begin in \Cref{s:continuitypreliminary} with a preliminary bound on fractional moments of $\Im R_{00}$, \Cref{l:qsexpectation}, and use it to deduce \Cref{p:imvanish}.
In \Cref{s:continuitylemmaA2}, we prove a lemma on the continuity of the fractional moments of $R_{0v}$, \Cref{l:cutoffclose}, which enables all further continuity arguments in this section. \Cref{s:continuitylemmaA2} also contains a sequence of other preliminary lemmas, addressing the continuity $\phi(s;z)$ in $z$, followed by
the proof of \Cref{l:phicontinuity}.

All constants in this section may depend on the choice of $K$ and $g$ in \Cref{t:main}, but we omit this dependence in the notation. 
\subsection{Preliminary Estimates}
\label{s:continuitypreliminary}

We begin with a fractional moment bound for the resolvent. For later use in a more general context, we state it for the resolvent of $\hamiltonian$ for a general disorder parameter $t \in (0,1)$. 
\begin{lemma}\label{l:qsexpectation}
Fix  $s, \delta ,\eps \in (0,1)$. There exists a constant $C=C(\delta,\eps, s) > 1$  such that the following holds for all $t \in (\eps ,1)$ and any $z = E + \iu \eta \in \bbH$ such that  $\eta \in (0,1)$. If $\varphi (s; z) < -\delta$, then
\bex
\E \left[ \big( \Im R_{00}( z ) \big)^{s/2} \right] \le C \eta^{s/2}.
\eex
\end{lemma}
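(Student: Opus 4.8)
The goal is to bound $\E[(\Im R_{00})^{s/2}]$ by $C\eta^{s/2}$ when the free energy $\varphi(s;z)$ is strictly negative. The key observation is the Ward identity \eqref{sumrvweta}, which expresses $\eta^{-1}\Im R_{vv}^{(\mathcal U)}$ as $\sum_w |R_{vw}^{(\mathcal U)}|^2$. Applied at the root with $\mathcal U = \varnothing$, this gives
\be
\Im R_{00}(z) = \eta \sum_{w \in \bbV} \big| R_{0w}(z) \big|^2.
\ee
Taking the $(s/2)$-th power and using the elementary subadditivity inequality $(\sum_w a_w)^{s/2} \le \sum_w a_w^{s/2}$ (valid since $s/2 \in (0,1)$), we obtain
\be
\big( \Im R_{00}(z) \big)^{s/2} \le \eta^{s/2} \sum_{w \in \bbV} \big| R_{0w}(z) \big|^{s}.
\ee
Taking expectations and grouping the vertices $w$ by their distance $L$ from the root, the right side becomes $\eta^{s/2} \sum_{L \ge 0} \Phi_L(s;z)$, where $\Phi_L$ is the free-energy sum from \eqref{sumsz1} (using that all $R_{0w}$ with $w \in \bbV(L)$ are identically distributed, and that $|\bbV(L)| = K^L$ is already incorporated into the definition of $\Phi_L$).

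\textbf{Summing the geometric series.} It remains to show $\sum_{L \ge 0} \Phi_L(s;z) \le C$ under the hypothesis $\varphi(s;z) < -\delta$. By the third part of \Cref{l:aizenman} applied with the compact interval $I = [-1,1]$ containing $E$ (or, more carefully, we first note the hypothesis $\eta \in (0,1)$ and the bound $|R_{00}| \le \eta^{-1}$ from the second part of \Cref{q12} handle the case $|E|$ large separately — but in fact we may simply take $I$ to be any fixed compact interval and absorb the dependence; since the statement already allows $C$ to depend on $s,\delta,\eps$ only, one observes that $\Phi_0(s;z) = K\,\E[|R_{00}|^s]$ is bounded uniformly using \Cref{l:uniformlyintegrable} with $L=0$, and for $L \ge 1$ one uses the following). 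For $L \ge 1$, write $\Phi_L(s;z) = \exp(L \varphi_L(s;z))$. The third part of \Cref{l:aizenman} gives $|\varphi_L(s;z) - \varphi(s;z)| \le C_0/L$ for a constant $C_0$ depending on $s$ and $I$; hence
\be
\Phi_L(s;z) = \exp\big(L \varphi_L(s;z)\big) \le \exp\big(L \varphi(s;z) + C_0\big) \le e^{C_0} e^{-\delta L},
\ee
using $\varphi(s;z) < -\delta$. Summing over $L \ge 1$ gives a geometric series bounded by $e^{C_0}(1 - e^{-\delta})^{-1} \le e^{C_0} \delta^{-1}$. Combining with the bound on $\Phi_0$ yields $\sum_{L \ge 0}\Phi_L(s;z) \le C(s,\delta,\eps)$, which completes the proof.

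\textbf{Main obstacle.} The one subtlety is the uniformity of the constant $C_0$ in \Cref{l:aizenman}(3) over the energy $E$: that lemma requires a \emph{compact} interval $I \ni E$, so a priori $C_0$ depends on $I$ and hence on $E$. To get a genuinely $E$-uniform bound one must argue separately. The cleanest route is to instead bound $\Phi_L$ directly via \Cref{l:uniformlyintegrable}, which gives $\Phi_L(s;z) = K^L \E[|R_{0v_L}|^s] \le K^L \big(2^s\|\rho\|_\infty^s/(1-s)\big)^{L+1} t^{sL}$; this is geometric in $L$ whenever $K t^s \big(2^s\|\rho\|_\infty^s/(1-s)\big) < 1$, but that smallness need not hold for all $t \in (\eps,1)$. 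So in fact one genuinely needs the free-energy decay, and the correct reading is that the hypothesis "$\varphi(s;z) < -\delta$" together with the convexity/monotonicity structure forces the relevant constant to be controlled — indeed, using that $\varphi_L \to \varphi$ and $\varphi_L$ is the $L$-average of a telescoping sum, one shows $\varphi_L(s;z) \le \varphi(s;z) + C_0/L$ with $C_0$ depending only on $s$ and $\eps$ (not on $E$), because the proof of \Cref{l:aizenman}(3) in \cite{aizenman2013resonant} yields such uniformity once one fixes a lower bound $\eps$ on $t$ and uses $|R_{00}| \le \eta^{-1} \le $ a quantity that is handled by the fractional-moment bound \Cref{l:uniformlyintegrable} at the first step. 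I would spell out this uniformity carefully, as it is the only non-mechanical point; the rest is the Ward identity plus summing a geometric series.
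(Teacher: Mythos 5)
Your argument is essentially the paper's proof: the Ward identity at the root, the subadditivity bound $(\sum_w a_w)^{s/2}\le\sum_w a_w^{s/2}$ to pass to fractional moments, and then summation of $\Phi_L(s;z)\le Ce^{-\delta L}$ via \eqref{e:limitr0j2} from \Cref{l:aizenman} together with the hypothesis $\varphi(s;z)<-\delta$. The $E$-uniformity of the constant in \eqref{e:limitr0j2} that you flag is a genuine subtlety, but the paper's own proof treats it exactly the same way (it simply invokes \eqref{e:limitr0j2}); aside from that discussion and the harmless slip that $\Phi_0(s;z)=\E[|R_{00}|^s]$ (since $|\bbV(0)|=1$, there is no factor of $K$), your write-up follows the same route as the paper.
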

\begin{proof}
		
For every vertex $v \in \mathbb{V}$, set $R_{0v} = R_{0v} (z)$. By the Ward identity \eqref{sumrvweta}, 
		
		\be\label{e:appfirstcentered}
		\Im R_{00}( z ) = \eta \sum_{v \in \mathbb{V}} |R_{0v}|^2.
		\ee
		Using  $s/2 < 1$, we find
		\begin{flalign*}
	\Bigg( \displaystyle\sum_{v \in \mathbb{V}} |R_{0v}|^2 \Bigg)^{s/2} = \Bigg( \displaystyle\sum_{L = 0}^{\infty} \displaystyle\sum_{v \in \mathbb{V} (L)} |R_{0v}|^2 \Bigg)^{s/2} \le \displaystyle\sum_{L = 0}^{\infty} \displaystyle\sum_{v \in \mathbb{V} (L)} |R_{0v}|^{s}.
		\end{flalign*}		
	Taking expectations, recalling \Cref{moment1} and \eqref{e:limitr0j2} and using the assumption that $\varphi (s; z) < -\delta$, we deduce the existence of a constant $C > 1$ such that
		\begin{flalign*}
	 \mathbb{E}  \Bigg[ \displaystyle\sum_{L = 0}^{\infty} \displaystyle\sum_{v \in \mathbb{V} (L)} |R_{0j}|^{s} \Bigg] =   \displaystyle\sum_{L = 0}^{\infty} \Phi_L (s; z) \le C  \displaystyle\sum_{L = 0}^{\infty} e^{-\delta L} \le  \delta^{-1} C.
		\end{flalign*}
This completes the proof after recalling \eqref{e:appfirstcentered}. 
\end{proof}
\begin{proof}[Proof of \Cref{p:imvanish}]
	By \Cref{l:qsexpectation}, we have $\mathbb{E} \big[ (\Imaginary R_{00} (E + \iu\eta_j))^s \big] \le C \eta_j^s$ for all $j\in\Zplus$. Then $(\Imaginary R_{00} (E + \iu \eta_j))^s$ converges to $0$ in expectation as $j$ tends to infinity, which implies it converges to zero in probability.
	\end{proof}
	
\subsection{Continuity Estimates}\label{s:continuitylemmaA2}
We begin with a key technical lemma that underlies the proofs of our continuity estimates. 
\begin{lemma}\label{l:cutoffclose}
Fix $s, \eps \in (0,1)$ and $\delta >0$. There exist constants $C(\delta, \eps, s) > 1$ and $c(\delta, \eps, s) > 0$ 
such that the following holds for all $t\in (\eps,1)$. For  $i=1,2$, fix $\eta_i \in (0,1)$ and $z_i = E_i + \iu \eta_i \in \bbH$. Suppose that $\phi(s;z_2) < - \delta$. Then
\bex
\E
\left[
\sum_{v \in \bbV(L)} \left|   
 R_{0v}(z_1) -  R_{0v}(z_2) 
\right|^s 
\right] 
\le  C^{L+1}|z_1 - z_2|^s (\eta_1 \eta_2)^{-s/2}
\cdot \eta_2^{c } .
\eex
\end{lemma}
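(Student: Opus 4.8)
\textbf{Proof plan for Lemma~\ref{l:cutoffclose}.} The plan is to express the difference $R_{0v}(z_1) - R_{0v}(z_2)$ via the resolvent identity and then iterate down the unique path $\mathfrak p = (v_0, \dots, v_L)$ from the root to $v$, using the product expansion from \Cref{rproduct}. Recall that $R_{0v}(z) = (-1)^L R_{00}(z) \prod_{i=1}^{L} t R^{(v_{i-1})}_{v_i v_i}(z)$. Writing $A_i(z) = R^{(v_{i-1})}_{v_i v_i}(z)$ (with $A_0(z) = R_{00}(z)$), we have a telescoping identity
\be
\prod_{i=0}^L A_i(z_1) - \prod_{i=0}^L A_i(z_2) = \sum_{k=0}^L \Big( \prod_{i<k} A_i(z_1) \Big) \big( A_k(z_1) - A_k(z_2) \big) \Big( \prod_{i>k} A_i(z_2) \Big).
\ee
Taking $s$-th powers and using $|a+b|^s \le |a|^s + |b|^s$ on the sum over $k$, and the deterministic bound $|A_i(z)| \le \eta^{-1}$ from the second part of \Cref{q12}, one reduces matters to controlling, for each $k$, the expectation of $|A_k(z_1) - A_k(z_2)|^s$ times products of $|A_i|^s$ factors. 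The subtree-independence structure of the Bethe lattice (the $A_i$ for indices along the path, conditioned appropriately, factorize into contributions from disjoint subtrees) should let the expectation factor, so that summing over $v \in \bbV(L)$ recovers a product of quantities of the form $\Phi_\ell(s;z_j)$, which are bounded via \eqref{e:limitr0j2} and the hypothesis $\phi(s;z_2) < -\delta$.

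The key single-site estimate is a bound on $\E[|R^{(\mathcal U)}_{vv}(z_1) - R^{(\mathcal U)}_{vv}(z_2)|^s]$. First I would use the Schur complement identity \eqref{qvv} to write $R_{vv}(z) = -(z + \sum_{w \sim v} t^2 R^{(v)}_{ww}(z))^{-1}$, so that $R_{vv}(z_1) - R_{vv}(z_2) = R_{vv}(z_1) R_{vv}(z_2) \big( (z_1 - z_2) + t^2 \sum_w (R^{(v)}_{ww}(z_1) - R^{(v)}_{ww}(z_2)) \big)$. This recursive structure produces the $C^{L+1}$ prefactor (each level contributes a bounded combinatorial/constant factor) and the factor $(\eta_1 \eta_2)^{-s/2}$ from the two resolvent prefactors, estimated through a Cauchy--Schwarz split: $\E[|R_{vv}(z_1) R_{vv}(z_2)|^s \cdots] \le \E[|R_{vv}(z_1)|^{2s}]^{1/2}\E[|R_{vv}(z_2)|^{2s} \cdots]^{1/2}$, combined with the fractional-moment bound $\E[|R_{vv}(z)|^{2s}] \le C$ (which follows from \Cref{l:uniformlyintegrable} applied at a single site, after lowering the exponent, or directly from the density bounds in \Cref{l:bapstcollection} since $R_{00}^{-1}$ has a bounded density). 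The extra decay $\eta_2^c$ comes from the observation that one can always route one explicit factor of $\Im R_{00}(z_2)$ (equivalently $\eta_2 \cdot |R_{00}(z_2)|^2$ from the Ward identity, or $\eta_2 / |R_{00}^{-1}|^2$) into the bound, whose $s/2$-moment is $O(\eta_2^{s/2})$ by \Cref{l:qsexpectation} under the hypothesis $\phi(s;z_2) < -\delta$; thus $c$ can be taken proportional to $s$.

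The main obstacle I anticipate is bookkeeping the dependence structure carefully enough that the expectations genuinely factorize across levels and subtrees — the resolvents $R^{(v_{i-1})}_{v_i v_i}(z)$ for different $i$ are \emph{not} independent as written (they share the potential at overlapping vertices), so one must condition level by level, exactly as in the proof of \Cref{l:expansionz}, peeling off the root and using that $R^{(v)}_{ww}$ for $w$ a child of $v$ has the same law as $R_{00}$ and depends only on the subtree rooted at $w$. Getting the constants to compound to exactly $C^{L+1}$ (rather than, say, $C^{L^2}$) requires that at each level the conditional expectation of the relevant single-site difference is bounded by an absolute constant times $|z_1 - z_2|^s (\eta_1\eta_2)^{-s/2}$ plus a term feeding into the next level's recursion; organizing this as a discrete Gronwall-type inequality along the path, and then summing over the $K^L$ vertices of $\bbV(L)$ against the decaying $\Phi_\ell$ factors, should close the estimate. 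A secondary technical point is justifying that the hypothesis $\phi(s;z_2)<-\delta$ propagates to the truncated resolvents $R^{(\mathcal U)}$ on subtrees; this follows since those resolvents have the same law as resolvents of $\hamiltonian$ on a rooted subtree, for which the same free-energy functional and \Cref{l:aizenman} apply.
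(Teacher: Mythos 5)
Your overall skeleton matches the paper's: a telescoping decomposition of $R_{0v}(z_1)-R_{0v}(z_2)$ into hybrid terms that use $z_2$-resolvents on one side of a distinguished vertex and $z_1$-resolvents on the other (the paper's $\Psi_{L,k}$), the product expansion of \Cref{rproduct}, level-by-level conditioning so that \Cref{l:uniformlyintegrable} controls the path factors with a $C^{L+1}$ prefactor, and the extra $\eta_2^{c}$ extracted from \Cref{l:qsexpectation} under the hypothesis $\phi(s;z_2)<-\delta$. The genuine gap is in your key single-site estimate for $\E\big[|R_{vv}(z_1)-R_{vv}(z_2)|^s\big]$. First, the Schur-complement recursion you write expresses this difference in terms of the \emph{same} kind of differences at the children, so it does not terminate after $L$ steps (it recurses to arbitrary depth in the tree), and you give no mechanism to close it; a Gronwall/fixed-point closure would need the coefficient multiplying the children's differences to be summable, which your crude bounds do not provide. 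Second, the moment bound you invoke, $\E[|R_{vv}(z)|^{2s}]\le C$, is false for $s\ge 1/2$: conditionally on the subtree, $R_{vv}^{-1}$ has a bounded density, so $\P(|R_{vv}|>x)\asymp x^{-1}$ up to the cutoff $\eta^{-1}$, giving $\E[|R_{vv}|^{2s}]\asymp \eta^{-(2s-1)}$ when $2s>1$; neither \Cref{l:uniformlyintegrable} (which requires exponents below $1$) nor the density bounds can rescue a uniform bound, so your Cauchy--Schwarz split fails for general $s\in(0,1)$. Third, your source of the $\eta_2^{c}$ gain is asserted rather than produced: "routing in" a factor of $\Im R_{00}(z_2)$ is exactly the nontrivial point, and with your single-site scheme (which bounds $|R_{vv}(z_2)|$ by moments or by $\eta_2^{-1}$) no such factor appears; moreover, even once it appears, it is correlated with the remaining resolvent factors, so one cannot simply multiply expectations.

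For comparison, the paper avoids all three issues in one stroke: it applies the resolvent identity over the whole tree, $R_{00}(z_1)-R_{00}(z_2)=(z_2-z_1)\sum_{v\in\bbV}R_{0v}(z_1)R_{v0}(z_2)$, then Cauchy--Schwarz and the Ward identity \eqref{sumrvweta} to get the closed-form deterministic bound $|R_{00}(z_1)-R_{00}(z_2)|\le |z_1-z_2|(\eta_1\eta_2)^{-1/2}\big(\Im R_{00}(z_1)\big)^{1/2}\big(\Im R_{00}(z_2)\big)^{1/2}$. This simultaneously yields the $(\eta_1\eta_2)^{-s/2}$ factor and the factor $(\Im R_{00}(z_2))^{s/2}$ needed for the gain; the correlation problem is then handled by splitting on the event $\big\{(\Im R_{00}(z_2))^{s/2}<\eta_2^{\kappa}\big\}$ (with $\kappa=s/4$), using Markov plus \Cref{l:qsexpectation} for the complement and H\"older plus \Cref{l:uniformlyintegrable} on the bad event. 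If you replace your Schur-complement recursion and moment claim by this resolvent-identity/Ward-identity step, and make the decoupling explicit via such an event splitting, your argument closes along essentially the paper's lines.
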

\begin{proof}
For $k \in \unn{0}{L-2}$, we define
\begin{align}\label{PsiDef}
\begin{split}
&\Psi_{L, k} (s;  z_1, z_2) \\
&= 
\E\left[ 
\sum_{w \in \bbV(k+1)}
\sum_{v \in \bbD_{L-k-1}(w)}
\left|
 R_{0 w_-}(z_2)  t  R^{(w_-)}_{w v }(z_1)
-  R_{0 w}(z_2)  t R^{(w)}_{w_+ v } (z_1)
\right|^s
\right].
\end{split}
\end{align}
We also set 
\bex
\Psi_{L, -1} (s;  z_1, z_2)=
\E\left[ 
\sum_{w \in \bbV(1)}
\sum_{v \in \bbD_{L-1} (w)}
\left|
 R_{0 v}(z_1)
- R_{0  0 }(z_2)  t  R^{(0)}_{w v}(z_1 )
\right|^s
\right]
\eex
and
\bex
\Psi_{L, L-1} (s;  z_1, z_2)=
\E\left[ 
\sum_{w \in \bbV(L-1)}
\sum_{v \in \bbD(w)}
\left|
 R_{0  w }(z_2)  t   R^{(w)}_{vv}(z_1)
- 
 R_{0 v}(z_2)
\right|^s
\right].
\eex
By 
the definitions of the $\Psi_{L, j}$, and the elementary inequality that $(a+b)^s \le a^s + b^s$ for all $a,b \ge 0$, 
\be\label{thephisum}
\E
\left[
\sum_{v \in \bbV(L)} \left|   
 R_{0v}(z_1) -  R_{0v}(z_2) 
\right|^s 
\right] \le 
\sum_{j = -1}^{L-1} \Psi_{L, j} (s; z_1, z_2).
\ee
To complete the proof, it suffices to bound the sum on the right side of \eqref{thephisum}.

Considering the terms in the sum defining \eqref{PsiDef},  \Cref{rproduct} gives
\begin{align}\label{e:febsplit}
\begin{split}
&  R_{0 w_-}(z_2) t R^{(w_-)}_{w v }(z_1)
-  R_{0 w}(z_2)  t R^{(w)}_{w_+ v } (z_1)\\
& =  R_{0 w_-}(z_2)  t  R^{(w_-)}_{w w}(z_1) 
 t
 R^{(w)}_{w_+ v} (z_1)\\
&\quad -  R_{0 w_-}(z_2)  t   R^{(w_-)}_{w w}(z_2) 
t 
 R^{(w)}_{w_+ v} (z_1)\\
&=  R_{0 w_-}(z_2)  t  \big(R^{(w_-)}_{w w}(z_1) -   R^{(w_-)}_{w w}(z_2) \big)
t
 R^{(w)}_{w_+ v} (z_1).
 \end{split}
\end{align}
Then using \eqref{e:febsplit} in the definition of $\Psi_{L, k}$ in \eqref{PsiDef} gives
\begin{align*}
&\Psi_{L, k} (s;  z_1, z_2)\notag\\
&= 
\E\left[
\sum_{w \in \bbV(k+1)}
\sum_{v \in \bbD_{L-k-1}(w)}
|t|^{2s}
\left|
R_{0 w_-}(z_2)  \big(R^{(w_-)}_{w w}(z_1) -   R^{(w_-)}_{w w}(z_2) \big)
 R^{(w)}_{w_+ v} (z_1)
\right|^s 
\right].
\end{align*}
We now use \Cref{l:uniformlyintegrable} to bound the expectation of each term in the sum. We apply this lemma after using the law of total expectation and conditioning on all disorder variables $V_{x}$ corresponding to vertices not in the path from $0$ to $w_-$. This gives 
\begin{align}
\Psi_{L, k}& (s;  z_1, z_2)\notag \\
&\le 
 C^{k+1} \E\left[
\sum_{w \in \bbV(k+1)}
\sum_{v \in \bbD_{L-k-1}(w)}
|t|^{2s}  \left|\big(R^{(w_-)}_{ww}(z_1) -   R^{(w_-)}_{ww}(z_2) \big)
 R^{(w)}_{w_+ v} (z_1)
\right|^s 
\right]\notag\\
&\le 
C^{k+1} \E\left[
\sum_{w \in \bbV(1)}
\sum_{v \in \bbD_{L-k-2}(w)}
|t|^{2s}   \left|\big(R_{00}(z_1) -   R_{00}(z_2) \big)
 R^{(0)}_{w v} (z_1)
\right|^s 
\right]\label{phiprev}
\end{align}
for some $C > 1$, where we used that the joint law of $R^{(w_-)}_{ww}(z_1)$, $R^{(w_-)}_{ww}(z_2)$, and $ R^{(w)}_{w_+ v} (z_1)$ in the second line is equivalent to the joint law of $R_{00}(z_1)$, $ R_{00}(z_2)$, and $ R^{(0)}_{w v} (z_1)$ in the third line.
Using the resolvent identity $\bm{A}^{-1} - \bm{B}^{-1} = \bm{A}^{-1} (\bm{B} - \bm{A}) \bm{B}^{-1}$, we write 
\bex
R_{00}(z_1) -  R_{00}(z_2)
= (z_2 - z_1) \sum_{v \in \bbV} R_{0v}(z_1) R_{v0}(z_2).
\eex
 H\"older's inequality and the Ward identity \eqref{sumrvweta} together give the bound
\begin{align}
\left| R_{00}(z_1) -  R_{00}(z_2) \right|
&\le |z_1 - z_2| 
\left(\sum_{v \in \bbV} |R_{0v}(z_1)|^2 \right)^{1/2}
\left(\sum_{v \in \bbV} |R_{0v}(z_2)|^2 \right)^{1/2}\notag
\\
& = |z_1 - z_2| (\eta_1\eta_2) ^{-1/2} 
\big( \Im R_{00}(z_1) \big)^{1/2}
\big( \Im R_{00}(z_2) \big)^{1/2}.\label{holder2}
\end{align}
Inserting \eqref{holder2} into \eqref{phiprev} gives
\begin{align}\notag
&\Psi_{L, k} (s; z_1, z_2)\le  C^{k+1}|z_1 - z_2|^s (\eta_1 \eta_2)^{-s/2}\\
&\times  \E\left[
\sum_{w \in \bbV(1)}
\sum_{v \in \bbD_{L-k-2}(w)}
\big( \Im R_{00}(z_1) \big)^{s/2}
\big( \Im R_{00}(z_2) \big)^{s/2}
\left|
R^{(0)}_{w v}(z_1)
\right|^s  \label{intPhi}
\right].
\end{align}

Let $\kappa = s/4$. By Markov's inequality, the assumption $\phi(s;z_2) < - \delta$, and \Cref{l:qsexpectation}, 
\be\label{simplemarkov}
\P\left( \big( \Im R_{00}(z_2) \big)^{s/2} > \eta_2^\kappa  \right) \le C \eta_2^{s/2 - \kappa}.
\ee
Define the event 
\bex
\mathscr A = \left\{ \big( \Im R_{00}(z_2) \big)^{s/2} < \eta_2^\kappa \right\}.
\eex
Then 
\begin{align}\label{ongoodevent}
\begin{split}
&\E\left[\one_{\mathscr A}
\sum_{w \in \bbV(1)}
\sum_{v \in \bbD_{L-k-2}(w)}
\big( \Im R_{00}(z_1) \big)^{s/2}
\big( \Im R_{00}(z_2) \big)^{s/2}
\left|
R^{(0)}_{w v}(z_1)
\right|^s
\right]
\\
&\le \eta^{\kappa}_2\cdot \E\left[
\sum_{w \in \bbV(1)}
\sum_{v \in \bbD_{L-k-2}(w)}
\big| R_{00}(z_1) \big|^{s/2}
\left|
R^{(0)}_{w v}(z_1)
\right|^s
\right]
\le 
\eta^{\kappa}_2
C^{L-k},\end{split}
\end{align}
where the last inequality follows from two applications of \Cref{l:uniformlyintegrable}.
 We next note that the elementary inequality $ab \le a^2 + b^2$ gives 
\bex
\big( \Im R_{00}(z_1) \big)^{s/2} \big( \Im R_{00}(z_2) \big)^{s/2}  \le\big( \Im R_{00}(z_1) \big)^{s}
 +
\big( \Im R_{00}(z_2) \big)^{s} .
\eex
The previous line implies
\begin{align}
\begin{split}
& \E\left[
\one_{\mathscr A^c}
\sum_{w \in \bbV(1)}
\sum_{v \in \bbD_{L-k-2}(w)}
\big( \Im R_{00}(z_1) \big)^{s/2}
\big( \Im R_{00}(z_2) \big)^{s/2}
\left|
t
R^{(0)}_{w v}(z_1)
\right|^s 
\right]\\
&\le  \E\left[
\one_{\mathscr A^c} \sum_{w \in \bbV(1)}
\sum_{v \in \bbD_{L-k-2}(w)}
\big( \Im R_{00}(z_1) \big)^{s}
\left|
t
R^{(0)}_{w v}(z_1)
\right|^s 
\right] \\
&+
 \E\left[
\one_{\mathscr A^c}\sum_{w \in \bbV(1)}
\sum_{v \in \bbD_{L-k-2}(w)}
\big( \Im R_{00}(z_2) \big)^{s}
\left|
t
R^{(0)}_{w v}(z_1)
\right|^s 
\right]\\
&\le \P(\mathscr A^c)^{c_1}  \cdot  C^{L-k}
= \eta_2^{c_1s } \cdot  C^{L-k}
\end{split}
\label{onbadevent}
\end{align}
for some $c_1(s) > 0$, where we used \eqref{simplemarkov} and 
the following bound in the last line. By H\"older's inequality and  \Cref{l:uniformlyintegrable}, we have, after fixing $\kappa >0$ such $(1+\kappa) s < 1$, that for $i=1,2$,
\begin{align*}
&\E\left[
\one_{\mathscr A^c}\sum_{w\in \mathbb{V}(1)} \sum_{v \in \bbD_{L-1}(w) }
\big( \Im R_{00}(z_i) \big)^{s}
\left|
t
R^{(0)}_{w v}(z_1)
\right|^s
\right]\\
&\le 
\P(\mathscr A^c)^{\kappa /( 1 + \kappa)} 
\E\left[\sum_{w\in \mathbb{V}(1)} \sum_{v \in \bbD_{L-1}(w) }
\left(\big( \Im R_{00}(z_i) \big)^{s} 
\left|
t
R^{(0)}_{w v}(z_1)
\right|^s \right)^{1+\kappa} 
\right]^{1/(1+\kappa)}\\
&\le 
\P(\mathscr A^c)^{\kappa / ( 1 + \kappa)} \cdot C^{1+L}.
\end{align*}
The claimed bound follows after 
inserting \eqref{ongoodevent} and \eqref{onbadevent} into \eqref{intPhi}, and using \eqref{thephisum}.
\end{proof}

The next lemma states a continuity estimate for $\phi(s; E + \iu \eta)$ as $\eta$ is a fixed and $E$ varies.  
\begin{lemma}\label{l:goingsideways}
Fix $s , \eps \in (0,1)$ and $ \delta, \eta \in (0,1)$. Then for every $\mathfrak b >0$, there exists $\mathfrak a (\delta, \eps, \eta,  s, \mathfrak b) > 0$ such that following holds for all $t\in (\eps,1)$. For every $E_1, E_2 \in \bbR$ such that  $|E_1 - E_2| < \mathfrak a$ and  ${\phi(s;E_2 + \iu \eta) < - \delta}$,
we have
\bex
\big|
\phi(s;E_1 + \iu \eta ) - \phi(s;E_2 + \iu \eta)
\big| < \mathfrak b.
\eex
\end{lemma}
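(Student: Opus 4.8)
\textbf{Proof proposal for \Cref{l:goingsideways}.}
The plan is to use \Cref{l:cutoffclose} to control the difference $\phi_L(s; E_1 + \iu\eta) - \phi_L(s; E_2 + \iu\eta)$ at a fixed, suitably large level $L$, and then pass from $\phi_L$ to $\phi$ using the uniform approximation \eqref{e:limitr0j2} from \Cref{l:aizenman}. First I would fix $L$ large enough (depending on $\delta, \mathfrak{b}$, and the constant $C$ in \eqref{e:limitr0j2}) so that $|\phi_L(s; z) - \phi(s; z)| < \mathfrak{b}/4$ for all $z$ with imaginary part in $(0,1)$ and real part in a fixed compact set; this is legitimate because the constant in \eqref{e:limitr0j2} is uniform over compact intervals of $E$. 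With $L$ fixed, it suffices to show $|\phi_L(s; E_1 + \iu\eta) - \phi_L(s; E_2 + \iu\eta)| < \mathfrak{b}/2$ when $|E_1 - E_2|$ is small.

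For this, recall $\Phi_L(s; z) = \sum_{v \in \bbV(L)} \E[|R_{0v}(z)|^s]$ and $\phi_L = L^{-1}\log\Phi_L$. Since $\phi(s; E_2 + \iu\eta) < -\delta$, we may also assume (by enlarging $L$ if needed and shrinking the neighborhood) that $\phi_L(s; E_2 + \iu\eta) < -\delta/2$, so that $\Phi_L(s; E_2 + \iu\eta)$ is bounded below by $e^{-\delta L/2}$, a fixed positive constant. Using the elementary inequality $\big| |a|^s - |b|^s \big| \le |a - b|^s$ together with the triangle inequality, we get
\be
\big| \Phi_L(s; E_1 + \iu\eta) - \Phi_L(s; E_2 + \iu\eta) \big| \le \E\left[ \sum_{v \in \bbV(L)} \big| R_{0v}(E_1 + \iu\eta) - R_{0v}(E_2 + \iu\eta) \big|^s \right].
\ee
Now \Cref{l:cutoffclose} (applied with $z_1 = E_1 + \iu\eta$, $z_2 = E_2 + \iu\eta$, using the hypothesis $\phi(s; z_2) < -\delta$) bounds the right-hand side by $C^{L+1} |E_1 - E_2|^s \eta^{-s} \eta^{c}$, which, for our now-fixed $L$ and $\eta$, is at most a constant times $|E_1 - E_2|^s$. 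Hence choosing $\mathfrak{a}$ small enough makes $|\Phi_L(s; E_1 + \iu\eta) - \Phi_L(s; E_2 + \iu\eta)|$ as small as we like. Combined with the lower bound $\Phi_L(s; E_2 + \iu\eta) \ge e^{-\delta L/2}$, this forces $\Phi_L(s; E_1 + \iu\eta)/\Phi_L(s; E_2 + \iu\eta)$ to be close to $1$, so $|\log \Phi_L(s; E_1 + \iu\eta) - \log \Phi_L(s; E_2 + \iu\eta)|$ is small, and dividing by $L$ gives $|\phi_L(s; E_1 + \iu\eta) - \phi_L(s; E_2 + \iu\eta)| < \mathfrak{b}/2$. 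Adding back the two $\mathfrak{b}/4$ errors from the $\phi_L \to \phi$ comparison at $E_1$ and $E_2$ yields the claim.

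The main obstacle is purely bookkeeping: one must choose the parameters in the right order — first $L$ (to control the $\phi_L$–$\phi$ gap via \eqref{e:limitr0j2}), then verify that $\Phi_L(s; E_2+\iu\eta)$ stays bounded below using $\phi(s; E_2+\iu\eta) < -\delta$ and possibly shrinking the $E$-neighborhood once, and only then choose $\mathfrak{a}$ using \Cref{l:cutoffclose} with $L$ and $\eta$ now frozen. One should also be slightly careful that \Cref{l:cutoffclose} requires its hypothesis at $z_2$, which is exactly $\phi(s; E_2 + \iu\eta) < -\delta$, so no additional input is needed there. All constants are allowed to depend on $K$ and $g$ as noted at the start of the appendix, and on $\eta$, which is fixed throughout, so there is no uniformity issue in $\eta$ to worry about.
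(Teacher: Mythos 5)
Your overall route is the same as the paper's: reduce to a fixed level $L$ via \eqref{e:limitr0j2}, bound $|\Phi_L(s;z_1)-\Phi_L(s;z_2)|$ by \Cref{l:cutoffclose} together with $\big||a|^s-|b|^s\big|\le|a-b|^s$, and then convert a small absolute perturbation of $\Phi_L$ into a small perturbation of $\log\Phi_L$ by dividing by $\Phi_L(s;z_2)$. However, there is a genuine error at the one step where you justify that division: you claim that $\phi_L(s;E_2+\iu\eta)<-\delta/2$ implies $\Phi_L(s;E_2+\iu\eta)\ge e^{-\delta L/2}$. Since $\Phi_L=e^{L\phi_L}$, the inequality $\phi_L<-\delta/2$ gives the \emph{upper} bound $\Phi_L<e^{-\delta L/2}$, not a lower bound. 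The hypothesis $\phi(s;E_2+\iu\eta)<-\delta$ only ever pushes $\Phi_L(s;z_2)$ down; it cannot prevent it from being very small, and indeed $\Phi_L(s;E_2+\iu\eta)$ does become small as $|E_2|\to\infty$, so a quantitative lower bound is genuinely needed and cannot be extracted from the negativity hypothesis at all.

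The missing input is supplied differently in the paper: the lower bound in \eqref{e:crudephi}, namely $\varphi(s;z)\ge s\,\E[\log|R_{00}(z)|]+\log K$, combined with \eqref{e:limitr0j2} and the finiteness of $\E[\log|R_{00}(z)|^s]$ (cited there from \cite[Lemma B.2]{aizenman2006stability}), which yields $\Phi_L(s;z_2)\ge c_1^{L+1}$ for some $c_1\in(0,1)$. With that bound in hand, your choice of $\mathfrak a$ (so that $C^{L+1}|E_1-E_2|^s\eta^{c-s}$ is small compared with $c_1^{L+1}$, and in particular at most half of $\Phi_L(s;z_2)$ so the logarithmic expansion is valid) goes through exactly as you describe, and the rest of your bookkeeping — choosing $L$ first from $\mathfrak b$ via \eqref{e:limitr0j2}, then freezing $\eta$ and $L$ before choosing $\mathfrak a$ — matches the paper. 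One smaller caveat: \eqref{e:limitr0j2} is stated with a constant depending on a compact interval of energies, so your phrase "real part in a fixed compact set" quietly restricts $E_1,E_2$, while the lemma is stated for all of $\R$; the paper glosses the same point, but you should at least note that the constants you invoke are uniform only on compacts (or that the needed uniformity is being assumed).
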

\begin{proof}
Set $z_1 = E_1 + \iu \eta$ and $z_2 = E_2 + \iu \eta$. We may assume that $|E_1 - E_2| < 1/10$. By \Cref{l:aizenman}, there exists $C_1 > 1$ such that for any $L \in \Zplus$,
\be\label{combineme2}
\left|
\big( 
\phi_L(s;z_1) - \phi_L(s;z_2) 
 \big)
 -
 \big( 
\phi(s;z_1) - \phi(s;z_2) 
 \big)
\right|
\le \frac{C_1}{L}.
\ee
We set $L = \lceil 2 C_1 \mathfrak b^{-1} \rceil$.\footnote{From now on, we drop the ceiling functions when doing computations with $L$, for brevity.} Then it remains to bound the difference $\big| 
\phi_L(s;z_1) - \phi_L(s;z_2) 
 \big|$.

Recalling the definition \eqref{sumsz1}, we have
\begin{align}\label{mango1}
\phi_L(s;z_1)
-
\phi_L(s;z_2) 
=
\frac{1}{L}
\log \left(
\frac{\Phi_L(s; z_1)}{\Phi_L(s; z_2)}
\right).
\end{align}
We write
\begin{align}\notag
\left|\log \left(
\frac{\Phi_L(s; z_1)}{\Phi_L(s; z_2)}
\right)\right|
&=
\left|\log \left(
\frac{ \Phi_L(s; z_2) + \big(\Phi_L(s; z_1) - \Phi_L(s; z_2)  \big)}{\Phi_L(s; z_2)}
\right)\right|\\
&\le 
 \frac{ 2 \left|\E
\left[
\sum_{v \in \bbV(L)} \left|   
 R_{0v}(z_1) 
\right|^s
\right]
-
\E
\left[
\sum_{v \in \bbV(L)} \left|   
 R_{0v}(z_2) 
\right|^s
\right]\right|}{\Phi_L(s; z_2)},\label{mango2}
\end{align}
where the inequality is valid under the assumption that 
\be
\left|\E
\left[
\sum_{v \in \bbV(L)} \left|   
 R_{0v}(z_1) 
\right|^s
\right]
-
\E
\left[
\sum_{v \in \bbV(L)} \left|   
 R_{0v}(z_2) 
\right|^s
\right]\right|\le \frac{1}{2}\Phi_L(s; z_2).\label{mango3}
\ee
From the lower bound in \eqref{e:crudephi} and \eqref{e:limitr0j2}, and the fact that $\E [ \log | R_{00}(z) |^s]$ is finite for all $z \in \bbH$ by \cite[Lemma B.2]{aizenman2006stability},  there exists $c_1 \in (0,1)$ such that
\be\notag
\Phi_L(s; z_2) \ge c_1^{L+1}.
\ee

Using \Cref{l:cutoffclose}, and the  elementary inequality $\big| |x|^s  - |y|^s \big| \le | x -y|^s$, there exist constants $C_2>1$ and $c_2>0$ such that
\begin{align}
\begin{split}
&\left|\E
\left[
\sum_{v \in \bbV(L)} \left|   
 R_{0v}(z_1) 
\right|^s
\right]
-
\E
\left[
\sum_{v \in \bbV(L)} \left|   
 R_{0v}(z_2) 
\right|^s
\right]\right|
\\
&\qquad \le    C_2^{L+1}|z_1 - z_2|^s (\eta  \eta)^{-s/2}
\cdot \eta_2^{c_2 }.
\label{crudebound}
\end{split}
\end{align}
We fix $\mathfrak a( \delta, \eta, s, \mathfrak b) > 0$ such that
\be\label{blueberry3}
 C_2^{L+1}  |E_1 - E_2 |^s \eta^{c_2-s}  
\le \frac{1}{4} \cdot c_1^{L+1} \mathfrak b
\ee
when $|E_1 - E_2 | < \mathfrak{a}$, recalling $L$ was fixed below \eqref{combineme2}.
With this choice, \eqref{crudebound} gives 
\begin{align}\notag
\left|
\E
\left[
\sum_{v \in \bbV(L)} \left|   
 R_{0v}(z_1) 
\right|^s
\right]
-
\E
\left[
\sum_{v \in \bbV(L)} \left|   
 R_{0v}(z_2) 
\right|^s
\right]
\right|
\le \frac{1}{4} \cdot c_1^{L+1} \mathfrak{b}.
\end{align}
Combining the previous line with \eqref{mango1}, \eqref{mango2}, and \eqref{mango3},
we get
\be\notag
\big| \phi_L(s;z_1)
-
\phi_L(s;z_2) \big| < \frac{\mathfrak{b}}{2}.
\ee
Combining the previous line with \eqref{combineme2} and the choice of $L$ below \eqref{combineme2} completes the proof.
\end{proof}
The following lemma allows us to extend the negativity of $\phi(s; E + \iu \eta_0)$ toward the real axis, if $\eta_0$ is sufficiently small.
\begin{lemma}\label{l:goingdown}
Fix $s, \eps  \in (0,1)$ and $ \delta \in (0,1)$. Then for every $\mathfrak b \in (0, \delta)$, there exists $\mathfrak a (s, \delta, \eps,  \mathfrak{b}) \in (0,1)$ such that following holds for all $t \in (\eps,1)$. For every $E\in \bbR$ and  $\eta_0 \in (0, \mathfrak a)$ such that ${\phi(s;E + \iu \eta_0)} < - \delta$,
\bex
\limsup_{\eta \rightarrow 0}
\phi(s; E + \iu \eta) < - \mathfrak b.
\eex
\end{lemma}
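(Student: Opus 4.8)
The statement to prove, Lemma~\ref{l:goingdown}, says that if $\phi(s;E+\iu\eta_0)<-\delta$ for some sufficiently small $\eta_0$, then $\limsup_{\eta\to 0}\phi(s;E+\iu\eta)<-\mathfrak b$. The plan is to run a bootstrap (continuation) argument in the imaginary direction, using Lemma~\ref{l:cutoffclose} as the engine, exactly as in the ``going sideways'' lemma but now moving $\eta$ toward $0$ rather than moving $E$. First I would fix the truncation level: by the third part of Lemma~\ref{l:aizenman} (the bound \eqref{e:limitr0j2}), choose $L=L(\mathfrak b)$ large enough that $|\varphi_L(s;z)-\varphi(s;z)|\le C/L$ is smaller than, say, $\mathfrak b/10$ for all relevant $z$ with $\Im z\in(0,1)$; this reduces everything to controlling the finite-volume quantity $\Phi_L(s;E+\iu\eta)$ as $\eta$ decreases, and in particular its ratio $\Phi_L(s;E+\iu\eta)/\Phi_L(s;E+\iu\eta_0)$.

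\textbf{The continuation step.} For $\eta\le \eta_0$ with $\phi(s;E+\iu\eta_0)<-\delta$, write $z_1=E+\iu\eta$, $z_2=E+\iu\eta_0$, and apply Lemma~\ref{l:cutoffclose} (whose hypothesis $\phi(s;z_2)<-\delta$ is exactly what we assumed) to get
\be
\E\Big[\sum_{v\in\bbV(L)}\big|R_{0v}(z_1)-R_{0v}(z_2)\big|^s\Big]\le C^{L+1}|z_1-z_2|^s(\eta\eta_0)^{-s/2}\eta_0^{c}=C^{L+1}|\eta-\eta_0|^s\eta^{-s/2}\eta_0^{c-s/2}.
\ee
Combined with the elementary inequality $\big||x|^s-|y|^s\big|\le|x-y|^s$ and the lower bound $\Phi_L(s;z)\ge c_1^{L+1}$ coming from \eqref{e:crudephi}, \eqref{e:limitr0j2}, and the finiteness of $\E[\log|R_{00}(z)|]$ (via \cite[Lemma B.2]{aizenman2006stability}), this shows $|\varphi_L(s;z_1)-\varphi_L(s;z_2)|$ is small provided $|\eta-\eta_0|^s\eta^{-s/2}$ is small relative to $\eta_0^{s/2-c}$. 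The subtlety is the factor $\eta^{-s/2}$ blowing up as $\eta\to 0$, so one cannot jump directly from $\eta_0$ to an arbitrarily small $\eta$. Instead I would proceed geometrically: set $\eta_k=2^{-k}\eta_0$ and show inductively that $\phi(s;E+\iu\eta_k)<-\delta+\mathfrak b'$ for a small $\mathfrak b'<\mathfrak b$, using at each step Lemma~\ref{l:cutoffclose} with $z_2=E+\iu\eta_k$ (legal since the inductive hypothesis keeps $\phi$ below $-\delta/2<0$, say, after choosing $\mathfrak b'$ small) and $z_1=E+\iu\eta_{k+1}$; here $|\eta_k-\eta_{k+1}|^s\eta_{k+1}^{-s/2}=c\,\eta_k^{s/2}$, which is $\le C\eta_0^{s/2}$, uniformly small once $\eta_0<\mathfrak a$. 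Summing the per-step increments (which form a convergent geometric-type series in $\eta_0^{s/2}$) keeps the total drift below $\mathfrak b$, and this uniform bound along the sequence $\eta_k\to 0$ gives $\limsup_{\eta\to 0}\varphi(s;E+\iu\eta)\le-\delta+\mathfrak b<-\mathfrak b$ after also invoking continuity of $\varphi(s;\cdot)$ within each dyadic interval $[\eta_{k+1},\eta_k]$ by the same Lemma~\ref{l:cutoffclose} estimate (now with both endpoints in that interval, so the $\eta^{-s/2}$ factor is controlled by $\eta_{k+1}^{-s/2}\le C\eta_k^{-s/2}$). Choosing $\mathfrak a=\mathfrak a(s,\delta,\eps,\mathfrak b)$ small enough to make the geometric sum of increments less than $\min(\mathfrak b,\delta-\mathfrak b)$, and $L$ large enough for the truncation error, completes the argument.

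\textbf{Main obstacle.} The essential difficulty is the degenerate weight $(\eta_1\eta_2)^{-s/2}$ in Lemma~\ref{l:cutoffclose}: a single application cannot span from a fixed $\eta_0$ down to $0$, so the whole point is to organize the descent into dyadic scales so that each increment in $\varphi_L$ is comparable to $\eta_k^{s/2}$ (a summable geometric sequence) rather than $\eta^{-s/2}$. One must also be careful that the inductive hypothesis is strong enough to re-invoke Lemma~\ref{l:cutoffclose} at the next scale — i.e. that $\phi$ stays strictly negative (below $-\delta/2$, say) throughout — which forces the total allowed drift $\mathfrak b'$ to be chosen strictly less than $\delta/2$ from the outset, and hence forces the restriction $\mathfrak b<\delta$ stated in the lemma. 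Everything else (the truncation reduction, the lower bound on $\Phi_L$, the $\big||x|^s-|y|^s\big|\le|x-y|^s$ estimate) is routine and parallels the proof of Lemma~\ref{l:goingsideways}.
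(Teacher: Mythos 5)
Your proposal is correct in substance and uses the same engine as the paper — iterate the continuity estimate of \Cref{l:cutoffclose} down a decreasing sequence of $\eta$'s, re-licensing the lemma at each scale by keeping $\varphi$ strictly negative — but your bookkeeping is genuinely different and worth comparing. The paper takes super-geometrically shrinking scales $\nu_{k+1}=\nu_k^{1+c_2/2}$ and a truncation level that grows with the scale, $L=(-\log\eta)^{1/2}$; it converts back to $\varphi$ at every step, so the dominant per-step error is the finite-volume error $C/L\sim c_4^{-k/2}(-\log\nu_1)^{-1/2}$, and the total drift is $C(-\log\mathfrak a)^{-1/2}$, forcing the logarithmic smallness condition on $\mathfrak a$. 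You instead fix $L=L(\mathfrak b)$ once, descend dyadically, and sum increments of $\varphi_L$ at that single fixed $L$, paying the truncation error only a bounded number of times; this works because the hypothesis of \Cref{l:cutoffclose} is re-verified at the level of $\varphi$ (via the inductive bound, at threshold $\delta/2$, which merely changes the constants $C,c$), while the accumulation happens at the level of $\varphi_L$, where the per-step cost $C^{L+1}\eta_k^{c}/(Lc_1^{L+1})$ is geometrically summable and the total is $O_L(\eta_0^{c})$. Your route is arguably simpler and gives a polynomial rather than logarithmic smallness requirement on $\mathfrak a$. Two small corrections: (i) in the per-step estimate you only tracked $|\eta_k-\eta_{k+1}|^s\eta_{k+1}^{-s/2}\sim\eta_k^{s/2}$ and dropped the remaining factor $\eta_k^{c-s/2}$ from \Cref{l:cutoffclose}; since typically $c<s/2$, the correct per-step size is $\sim C^{L+1}\eta_k^{c}$, not $\eta_k^{s/2}$ — still positive-power, geometric in $k$, so nothing breaks, but the exponent governing the choice of $\mathfrak a$ is $c$, not $s/2$; (ii) the budget must be calibrated against $\delta-\mathfrak b$, not $\mathfrak b/10$ (the two applications of the truncation bound plus the total drift must together be $<\delta-\mathfrak b$ to conclude $\varphi<-\mathfrak b$, and $<\delta/2$ minus a margin to sustain the induction); your closing sentence with $\min(\mathfrak b,\delta-\mathfrak b)$ essentially acknowledges this, but the earlier ``$\mathfrak b/10$'' and the final display ``$-\delta+\mathfrak b<-\mathfrak b$'' (which needs $\mathfrak b<\delta/2$) should be adjusted accordingly.
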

\begin{proof}
From the lower bound in \eqref{e:crudephi} and \eqref{e:limitr0j2},  there exists $c_1 \in (0,1)$ such that
\be\label{upcondition}
\Phi_L(s; E + \iu \eta) \ge c_1^{L+1}.
\ee
for all $E\in \bbR$. Let $C_2>1$ and $c_2 > 0$ be the two constants given by \Cref{l:cutoffclose}. 
We begin by considering two points $z_1 = E + \iu \eta_1$ and $z_2 = E + \iu \eta_2$ with $\eta_1, \eta_2 \in (0,1)$ such that $\eta_1 \in [\eta_2^{1+c_2/2}, \eta_2]$, so that $\eta_1 \le \eta_2$. We also suppose that $\phi(s;E + \iu \eta_2) < - \delta/2$. 
We set
\be\label{choices}
L = ( - \log \eta_1)^{1/2}.
\ee

By \Cref{l:cutoffclose} and the  elementary inequality $\big| |x|^s  - |y|^s \big| \le | x -y|^s$, 
\begin{align}
\begin{split}
&\left|\E
\left[
\sum_{v \in \bbV(L)} \left|   
 R_{0v}(z_1) 
\right|^s
\right]
-
\E
\left[
\sum_{v \in \bbV(L)} \left|   
 R_{0v}(z_2) 
\right|^s
\right]\right|
\le   C_2^{L+1}  |\eta_1 - \eta_2 |^s (\eta_1 \eta_2)^{-s/2}
\cdot \eta_2^{c_2 }.
\label{crudebound12}
\end{split}
\end{align}
Then by \eqref{choices} and \eqref{crudebound2}, there exists $\eta_0(s, c_1, c_2, C_2)>0$ such that, if $\eta_2 \in (0 , \eta_0)$, then
\be\label{crudebound2}
\left|\E
\left[
\sum_{v \in \bbV(L)} \left|   
 R_{0v}(z_1) 
\right|^s
\right]
-
\E
\left[
\sum_{v \in \bbV(L)} \left|   
 R_{0v}(z_2) 
\right|^s
\right]\right|
\le \frac{1}{4} \cdot c_1^{L+1}. 
\ee
Using \eqref{upcondition},  \eqref{combineme2}, \eqref{mango1}, and \eqref{mango2}, we find that there exists $C_3 > 1$ such that 
\be \label{phigoingdown}
\big| \phi(s;z_1) - \phi(s;z_2) \big|
\le \frac{C_3}{L} + \frac{1}{4} \cdot c_1^{L+1}
< C_3 (  - \log \eta_2)^{-1/2},
\ee
where we increased the value of $C_3$ in the second inequality.

Now consider a decreasing sequence $\{\nu_k\}_{k=1}^\infty$ of reals such that $\nu_1 < \eta_0$, $\phi(s; E + \iu \nu_1) < -\delta$, and $\nu_{k+1} = \nu_k^{1 + c_2/2}$ for all $k \in \Zplus$. Set $w_k = E + \iu \nu_k$. With $\mathfrak{b}$ chosen as in the statement of this lemma, we will show by induction that there exists $\mathfrak a(\mathfrak b)>0$ such that $\phi(s; w_k) < - \mathfrak b$ for all $k\in \Zplus$ if $\nu_1 < \mathfrak a$.  We may suppose that $\mathfrak b > \delta /2$. 
For brevity, we set $c_4 = 1 + c_2/2$.

We now claim that $\phi(s; w_j) < - \mathfrak b$ for all $j \in \Zplus$; we will prove this claim by induction. 
For the induction hypothesis at step $n \in \Zplus$, suppose that $\phi(s; w_j) < - \mathfrak b$ holds for all $j \le n$.
We will prove the same estimate holds for $j = n+1$. 

The bound \eqref{phigoingdown} gives, for all $k\le n$, that
\begin{align*}
\left|
\phi(s;w_k) - \phi(s;w_{k+1}) 
\right|  &\le C_3 ( - \log \nu_k)^{-1/2}\\
 &= C_3 \left( - \log\left( \nu_1^{c_4^{k}} \right)\right)^{-1/2}
= C_3 c_4^{-k/2} ( - \log \nu_1)^{-1/2}.
\end{align*}
This previous line implies that
\begin{align}\notag
\left|
\phi(s;w_1) - \phi(s;w_{n+1})
\right|
&\le 
\sum_{k=1}^{n} 
\left|
\phi(s;w_k) - \phi(s;w_{k+1})
\right| \\
&\le C_3 ( - \log \nu_1)^{-1/2} \sum_{k=1}^\infty c_4^k\le C_5 ( - \log \nu_1)^{-1/2}\label{returnto}
\end{align}
for some $C_5(C_3, c_4) >1$. We choose $\mathfrak a$ so that 
\bex
C_5 ( - \log \mathfrak a)^{-1/2} < \delta - \mathfrak b.
\eex
Then \eqref{returnto} and the assumption that $\nu_1 < \mathfrak{a}$ implies that 
\be\label{returnto2}
\phi(s;w_{n+1})
\le \phi(s;w_1) + C_5 ( - \log \nu_1)^{-1/2} <  - \delta + (\delta - \mathfrak b) < -\mathfrak b.
\ee
This completes the induction step and shows that $\phi(s; w_j) < - \mathfrak b$ for all $j \in \Zplus$.

We now claim that for any that any $\tilde w = E + \iu \tilde \nu$ with $\tilde \nu \in (0, \mathfrak a)$, we have $
\phi(s; \tilde w) < - \mathfrak b$. To see this, observe that there is a unique index $k \in \Zplus$ such that $\nu_k > \tilde \nu \ge \nu_{k+1}$. Consider the sequence $\{\tilde w_j\}_{j=1}^{k+1}$ defined by $\tilde w_j = w_j$ for $j \neq k+1$, and $\tilde w_{k+1} = E+ \iu \tilde \nu$. Then the same induction argument that gave \eqref{returnto2} also gives 
 \be\label{returnto3}
\phi(s; \tilde w_{k+1})
 < -\mathfrak b.
\ee
This completes the proof.
\end{proof}
The next lemma complements the previous one. It shows that if $\phi(s;E + \iu \eta)$ becomes negative as $z$ approaches the boundary (as $\eta \rightarrow 0$), then this quantity is negative for $\eta$ an entire interval $(0, \mathfrak a)$. We omit the proof, since it is essentially the same as the argument for \Cref{l:goingdown} (done in reverse, choosing $\eta \in [\eta_2, \eta_2^{1-c_2/2}]$ instead of $\eta_1 \in [\eta_2^{1+c_2/2}, \eta_2]$ at each step). Full details of an extremely similar proof are available in \cite[Lemma 12.10]{aggarwal2022mobility}.
\begin{lemma}\label{l:goingup}
Fix $s, \eps \in (0,1)$ and $ \delta \in (0,1)$. Then for every $\mathfrak b \in (0, \delta)$, there exists $\mathfrak a (\delta, \eps, s, \mathfrak b) \in (0,1)$ such that following holds for every $t\in (\eps, 1)$. For every $E\in \bbR$ such that 
\be\label{limsuphypo}
\limsup_{\eta \rightarrow 0}
\phi(s; E + \iu \eta) < - \delta,
\ee
we have
\bex
\sup_{\eta \in (0, \mathfrak{a}]} \varphi (s; E + \mathrm{i} \eta) < -\mathfrak{b}.
\eex
\end{lemma}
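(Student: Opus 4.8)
\textbf{Proof proposal for Lemma~\ref{l:goingup}.}
The plan is to run the same iteration scheme as in the proof of Lemma~\ref{l:goingdown}, but moving \emph{away} from the real axis rather than toward it. Fix $E \in \mathbb{R}$ satisfying \eqref{limsuphypo}. The hypothesis $\limsup_{\eta \to 0} \varphi(s; E + \mathrm{i}\eta) < -\delta$ gives a decreasing sequence $\eta_k \to 0$ with $\varphi(s; E + \mathrm{i}\eta_k) < -\delta$ for all $k$. First, I would select a single such $\eta_* > 0$ small enough that the ``step'' estimate of Lemma~\ref{l:cutoffclose} (together with the crude lower bound $\Phi_L(s; E + \mathrm{i}\eta) \ge c_1^{L+1}$ from \eqref{e:crudephi} and \eqref{e:limitr0j2}) is applicable along a chain of points $w_j = E + \mathrm{i}\nu_j$ with $\nu_1 = \eta_*$ and $\nu_{j+1} = \nu_j^{1 - c_2/2}$, where $c_2$ is the constant from Lemma~\ref{l:cutoffclose}; note $\nu_{j+1} > \nu_j$ since $\nu_j < 1$, so this chain moves upward. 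The ratio $\nu_j/\nu_{j+1} = \nu_j^{c_2/2}$ is exactly the regime in which Lemma~\ref{l:cutoffclose} yields a gain of a positive power of the smaller imaginary part, so the analogue of \eqref{phigoingdown},
\be
\big| \varphi(s; w_j) - \varphi(s; w_{j+1}) \big| \le C_3 (-\log \nu_j)^{-1/2},
\ee
holds, by the same argument that produced \eqref{phigoingdown} (using the choice $L = (-\log \nu_j)^{1/2}$, the bound on $|\Phi_L(s;w_j) - \Phi_L(s;w_{j+1})|$ from Lemma~\ref{l:cutoffclose}, and \eqref{e:limitr0j2}).

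Next I would sum these increments. Writing $\nu_j = \nu_1^{(1-c_2/2)^{j-1}}$ we get $(-\log \nu_j)^{-1/2} = (1-c_2/2)^{(j-1)/2}(-\log \nu_1)^{-1/2}$, and since $1 - c_2/2 < 1$ this is a convergent geometric series. Hence there is $C_5 = C_5(C_3, c_2) > 1$ with
\be
\big| \varphi(s; w_{j}) - \varphi(s; w_1) \big| \le C_5 (-\log \nu_1)^{-1/2}
\ee
for all $j$. Choosing $\mathfrak{a}$ (hence $\eta_* = \nu_1 < \mathfrak{a}$) small enough that $C_5(-\log \mathfrak{a})^{-1/2} < \delta - \mathfrak{b}$, this gives $\varphi(s; w_j) < -\delta + (\delta - \mathfrak{b}) = -\mathfrak{b}$ for every $j$. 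Finally, to handle an \emph{arbitrary} $\eta \in (0, \mathfrak{a}]$ (not just the discrete points $\nu_j$): pick the unique $k$ with $\nu_k \le \eta < \nu_{k+1}$ (using that the $\nu_j$ increase to a limit $\ge \mathfrak{a}$ after adjusting constants; alternatively insert $\eta$ as an extra node in the chain between $\nu_k$ and $\nu_{k+1}$, as done at the end of the proof of Lemma~\ref{l:goingdown}), and run the same telescoping estimate along the modified finite chain ending at $E + \mathrm{i}\eta$ to conclude $\varphi(s; E + \mathrm{i}\eta) < -\mathfrak{b}$. Taking the supremum over $\eta \in (0, \mathfrak{a}]$ gives the claim.

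I expect the bookkeeping around which point is ``smaller'' to be the only real subtlety: in Lemma~\ref{l:goingdown} the imaginary parts decrease along the chain, and Lemma~\ref{l:cutoffclose} is stated so that the negativity hypothesis $\varphi(s; z_2) < -\delta$ is imposed at the point with the \emph{larger} imaginary part (which is where we already have control), while the gain $\eta_2^{c_2}$ is in terms of that larger imaginary part — so one must be careful that, going upward, the negativity is propagated from $w_j$ (just controlled) to $w_{j+1}$ (to be controlled), i.e. the role of $z_1, z_2$ in invoking Lemma~\ref{l:cutoffclose} is with $z_2 = w_j$ the \emph{smaller}, which still works because the gain $\eta_2^{c_2} = \nu_j^{c_2}$ is then balanced against $(\eta_1 \eta_2)^{-s/2} = (\nu_j \nu_{j+1})^{-s/2}$ and $|\nu_j - \nu_{j+1}|^s$, and the exponent arithmetic with $\nu_{j+1} = \nu_j^{1-c_2/2}$ still leaves a net positive power of $\nu_j$; this is exactly why the chain is defined with exponent $1 - c_2/2$ rather than $1 + c_2/2$. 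The rest of the argument is a routine transcription of the proof of Lemma~\ref{l:goingdown}, as the paper itself indicates, and full details of the essentially identical computation appear in \cite[Lemma~12.10]{aggarwal2022mobility}.
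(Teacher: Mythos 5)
Your proposal follows the route the paper itself prescribes: the proof of \Cref{l:goingup} is omitted there precisely because it is the proof of \Cref{l:goingdown} run in reverse, with the chain $\nu_{j+1}=\nu_j^{1-c_2/2}$ moving upward, and you correctly isolate the one real subtlety — that \Cref{l:cutoffclose} must be invoked with the negativity hypothesis at the smaller point $z_2=w_j$, and that the exponent arithmetic $|\nu_{j+1}-\nu_j|^s(\nu_j\nu_{j+1})^{-s/2}\nu_j^{c_2}\le \nu_j^{c_2(1-s/4)}$ still leaves a net positive power of $\nu_j$, so the single-step estimate analogous to \eqref{phigoingdown} goes through.

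One computation in the middle is wrong, although the argument survives it. Since $-\log\nu_j=(1-c_2/2)^{j-1}(-\log\nu_1)$, you in fact have $(-\log\nu_j)^{-1/2}=(1-c_2/2)^{-(j-1)/2}(-\log\nu_1)^{-1/2}$, not $(1-c_2/2)^{(j-1)/2}(-\log\nu_1)^{-1/2}$: going upward the per-step increments $C_3(-\log\nu_j)^{-1/2}$ \emph{grow} geometrically, the infinite series diverges, and your claimed uniform bound $|\varphi(s;w_j)-\varphi(s;w_1)|\le C_5(-\log\nu_1)^{-1/2}$ for all $j$ is false as stated. The correct bookkeeping is that the chain is finite — it is truncated at the target $\eta\le\mathfrak a$ — and summing the geometric series from its largest term gives $|\varphi(s;w_j)-\varphi(s;w_1)|\le C_5(-\log\mathfrak a)^{-1/2}$ for every chain point, because each increment is indexed by a lower endpoint $\nu_j\le\eta\le\mathfrak a$. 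Your choice of $\mathfrak a$ via $C_5(-\log\mathfrak a)^{-1/2}<\delta-\mathfrak b$ is exactly the condition this corrected bound requires; together with taking $\mathfrak a$ below the $E$-independent threshold that validates the single-step estimate, and the reduction to $\mathfrak b>\delta/2$ so that the inductively maintained bound $\varphi(s;w_j)<-\mathfrak b$ serves as the negativity hypothesis for the next application of \Cref{l:cutoffclose} (as in the proof of \Cref{l:goingdown}), the argument is complete.
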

Equipped with the previous lemmas, we are now able to prove \Cref{l:phicontinuity}.
\begin{proof}[Proof of \Cref{l:phicontinuity}]
By \Cref{l:goingdown} and \Cref{l:goingup}, there exists $\delta_1(\kappa, \omega) \in (0, 1) $ such that the following two claims hold for all $E\in I$. 
First, if
\bex
\limsup_{\eta\rightarrow 0}
\phi(s; E + \iu \eta) < -\kappa.
\eex
then for all $\eta \in (0, \delta_1]$,
\be \label{down}
\phi(s; E + \iu \eta) < -\kappa + \frac{\omega}{3}.
\ee
Second, if $\eta \in (0, \delta_1]$ and
\bex
\phi(s; E + \iu \eta) < -\kappa + \frac{2 \omega}{3},
\eex
then
\be\label{up}
\limsup_{\eta\rightarrow 0}
\phi(s; E + \iu \eta) < -\kappa + \omega.
\ee
Next, by \Cref{l:goingsideways}, there exists $\delta_2(\delta_1, \kappa, \omega)>0$ such that, if 
\bex
\phi(s; E_1 + \iu \delta_1 ) < -\kappa + \frac{\omega}{3}
\eex
then
\be\label{sideways}
\big|
\phi(s; E_2 + \iu \delta_1 ) - \phi(s; E_1 + \iu \delta_1 )
\big|
< \omega/3
\ee
for all $E_1, E_2 \in I$ such that $|E_1 - E_2| < \delta_2$. 

Under the assumption that $|E_1 - E_2| < \delta_2$, using \eqref{down}, \eqref{up}, and \eqref{sideways} with the choice $\eta = \delta_1/2$ gives 
\bex
\limsup_{\eta\rightarrow 0}
\phi(s; E_2 + \iu \eta) < -\kappa + \omega,
\eex
as desired.
\end{proof}

\section{Continuity of the Free Energy in Disorder Strength}\label{s:lyapunovcontinuity2}

The goal of this section is to prove \Cref{l:bulkphi}. In \Cref{s:contprelim2}, we provide preliminary continuity estimates for the resolvent entries $R_{vw}$ as the disorder parameter $t$ varies. \Cref{s:disorderbootstrap} contains the proof of \Cref{l:bulkphi}. 

In this section, we modify our notation and write $\bfR(t;z) = \bfR(z)$ for the resolvent of $\hamiltonian$ to emphasize the dependence of $\bfR$ on the parameter $t$ in the definition of $\hamiltonian$. Similarly, we write $\Phi_L(s;t;z) = \Phi_L(s;z)$ and $\phi(s;t;z) = \phi(s;z)$. Further, for this section, we drop our convention that $t = g ( K\ln K)^{-1}$ and instead think of $t\in (0.1)$ as a free parameter. All constants in this section may depend on the choice of $K$ in $\hamiltonian$, and we always omit this dependence in the notation. 

Some aspects of this appendix are analogous to \Cref{s:lyapunovcontinuity}, and consequently, some proofs will be omitted. 

\subsection{Continuity Estimates for the Disorder Parameter}\label{s:contprelim2}
We begin with a lemma that parallels \Cref{l:cutoffclose}, except that is concerns continuity in $t$ instead of $z$. It will similarly form the basis of our bootstrapping argument for the parameter $t$. 
\begin{lemma}\label{l:cutoffcloseT}
Fix $s \in (0,1)$ and $\delta >0$. There exist constants $C(\delta, s) > 1$ and $c(s) > 0$ 
such that the following holds. Fix $ z = E + \iu \eta \in \bbH$ with $\eta \in (0,1)$, and for  $i=1,2$, fix $t_i \in (0,1)$.
 Suppose that $\phi(s; t;_2;z) < - \delta$. Then
\bex
\E
\left[
\sum_{v \in \bbV(L)} \left|   
 R_{0v}(t_1;z) -  R_{0v}(t_2;z) 
\right|^s 
\right] 
\le  C^{L+1}|t_1 - t_2|^s  \eta^{ -s + c}.
\eex
\end{lemma}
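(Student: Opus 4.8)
The plan is to mimic the proof of \Cref{l:cutoffclose}, replacing the resolvent identity in $z$ with a resolvent identity in $t$. Recall that $\hamiltonian = -t\adjacency + \V$, so for two values $t_1, t_2 \in (0,1)$ we have the deterministic identity
\be
\bfR(t_1;z) - \bfR(t_2;z) = (t_1 - t_2) \bfR(t_1;z) \adjacency \bfR(t_2;z),
\ee
by the resolvent formula $\bm A^{-1} - \bm B^{-1} = \bm A^{-1}(\bm B - \bm A)\bm B^{-1}$ applied with $\bm A = \hamiltonian(t_1) - z$ and $\bm B = \hamiltonian(t_2) - z$. Taking the $(0,0)$ entry and expanding $\adjacency$ over edges of $\bbT$, this reads $R_{00}(t_1;z) - R_{00}(t_2;z) = (t_1 - t_2)\sum_{u \sim w} R_{0u}(t_1;z) R_{w0}(t_2;z)$, where the sum is over adjacent pairs. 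First I would bound this by Cauchy--Schwarz: $\sum_{u\sim w} |R_{0u}(t_1;z)||R_{w0}(t_2;z)| \le (K+1)\big(\sum_v |R_{0v}(t_1;z)|^2\big)^{1/2}\big(\sum_v |R_{0v}(t_2;z)|^2\big)^{1/2}$, using that each vertex has at most $K+1$ neighbors; by the Ward identity \eqref{sumrvweta} this equals $(K+1)\eta^{-1}(\Im R_{00}(t_1;z))^{1/2}(\Im R_{00}(t_2;z))^{1/2}$. This is the exact analogue of \eqref{holder2}, with the single factor $\eta^{-1}$ (rather than $(\eta_1\eta_2)^{-1/2}$, which coincides here since $\eta_1 = \eta_2 = \eta$) and with the constant $K+1$ absorbed into the implied constant (permitted since constants may depend on $K$).

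Next I would set up the telescoping decomposition exactly as in \eqref{thephisum}: define quantities $\Psi_{L,k}(s;t_1,t_2)$ for $k \in \unn{-1}{L-1}$ by replacing every $R^{(\cdot)}_{\cdot\cdot}(z_1)$ with $R^{(\cdot)}_{\cdot\cdot}(t_1;z)$ and every $R^{(\cdot)}_{\cdot\cdot}(z_2)$ with $R^{(\cdot)}_{\cdot\cdot}(t_2;z)$, and observe the same bound $\E[\sum_{v\in\bbV(L)}|R_{0v}(t_1;z) - R_{0v}(t_2;z)|^s] \le \sum_{j=-1}^{L-1}\Psi_{L,j}(s;t_1,t_2)$. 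The analogue of the splitting \eqref{e:febsplit} uses \Cref{rproduct} to write the difference of products as a product with a single difference of diagonal entries $R^{(w_-)}_{ww}(t_1;z) - R^{(w_-)}_{ww}(t_2;z)$; here one must be slightly careful that in \Cref{rproduct} the factor $t$ in $tR_{uu}$ is now $t_1$ on one side and $t_2$ on the other, but since $|t_i| \le 1$ these factors only help and can be bounded by $1$. Then \Cref{l:uniformlyintegrable} (applied after conditioning on the disorder outside the relevant path, and valid for all $t\in(0,1)$) reduces $\Psi_{L,k}$ to an expectation involving $|R_{00}(t_1;z) - R_{00}(t_2;z)|^s|R^{(0)}_{wv}(t_1;z)|^s$, just as in \eqref{phiprev}.

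From here the argument is identical to the end of the proof of \Cref{l:cutoffclose}: insert the bound $|R_{00}(t_1;z) - R_{00}(t_2;z)| \le C|t_1-t_2|\eta^{-1}(\Im R_{00}(t_1;z))^{1/2}(\Im R_{00}(t_2;z))^{1/2}$, split on the event $\mathscr A = \{(\Im R_{00}(t_2;z))^{s/2} < \eta^\kappa\}$ with $\kappa = s/4$, use Markov's inequality together with \Cref{l:qsexpectation} (whose hypothesis $\varphi(s;z) < -\delta$ is supplied by $\phi(s;t_2;z) < -\delta$) to get $\P(\mathscr A^c) \le C\eta^{s/2-\kappa}$, control the contribution on $\mathscr A$ by two applications of \Cref{l:uniformlyintegrable}, control the contribution on $\mathscr A^c$ via H\"older's inequality (with an exponent $1+\kappa$ small enough that $(1+\kappa)s < 1$) and \Cref{l:uniformlyintegrable} again, and collect the resulting powers of $\eta$. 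The net effect is a bound $C^{L+1}|t_1-t_2|^s\eta^{-s}\cdot\eta^{c}$ for some $c(s) > 0$, which is the claim. I do not expect any genuine obstacle here: the only substantive change is the swap of the $z$-resolvent identity for the $t$-resolvent identity, and the factor $\adjacency$ is harmless because its entries are bounded and the geometry of $\bbT$ is locally finite; every other ingredient is quoted verbatim from \Cref{s:lyapunovcontinuity}. If anything requires care, it is bookkeeping the edge-sum over $\adjacency$ and the two different $t_i$ factors inside \Cref{rproduct}, but both are absorbed by the $K$-dependent constants and by $|t_i|\le 1$.
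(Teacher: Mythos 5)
Your overall route is the paper's: the same telescoping over hybrid quantities, the same reduction via \Cref{rproduct} and \Cref{l:uniformlyintegrable} after conditioning, the resolvent identity in $t$ combined with the Ward identity, and the same event splitting using Markov's inequality, \Cref{l:qsexpectation}, and H\"older. The one step that does not work as you wrote it is your treatment of the mismatched prefactors in the product expansion. When you expand the hybrid difference with \Cref{rproduct}, the quantity that appears is
\[
R_{0w_-}(t_2;z)\,\bigl(t_1 R^{(w_-)}_{ww}(t_1;z) - t_2 R^{(w_-)}_{ww}(t_2;z)\bigr)\, t_1 R^{(w)}_{w_+v}(t_1;z),
\]
and you cannot reduce $\bigl|t_1 R^{(w_-)}_{ww}(t_1;z) - t_2 R^{(w_-)}_{ww}(t_2;z)\bigr|$ to $\bigl|R^{(w_-)}_{ww}(t_1;z) - R^{(w_-)}_{ww}(t_2;z)\bigr|$ by invoking $|t_i|\le 1$: bounding the prefactors by one is legitimate for a single product, not for a difference of two products carrying different prefactors. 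The paper resolves this by adding and subtracting, as in \eqref{e:febphiT}, writing $t_1R_{00}(t_1;z)-t_2R_{00}(t_2;z) = (t_1-t_2)R_{00}(t_1;z) + t_2\bigl(R_{00}(t_1;z)-R_{00}(t_2;z)\bigr)$. The second term is exactly what your resolvent-identity and Ward-identity argument controls (your Cauchy--Schwarz with the factor $K+1$ is an acceptable substitute for the paper's splitting into parent and child sums, since constants may depend on $K$), while the first term produces an additional contribution that must be bounded separately; it is harmless, since conditioning and \Cref{l:uniformlyintegrable} give a bound of the form $C^{L+1}|t_1-t_2|^s$ with no loss in $\eta$, as in \eqref{e:newtermT}. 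With this one correction the remainder of your argument goes through verbatim and coincides with the paper's proof.
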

\begin{proof}
For $k \in \unn{0}{L-2}$, we define
\begin{align}\label{PsiDefT}
&\Psi_{L, k} (s;  t_1, t_2 ; z)\notag \\
&= 
\E\left[ 
\sum_{w \in \bbV(k+1)}
\sum_{v \in \bbD_{L-k-1}(w)}
\left|
 R_{0 w_-}(t_2; z )  t_1  R^{(w_-)}_{w v }(t_1 ; z)
-  R_{0 w}(t_2;z)  t_1 R^{(w)}_{w_+ v } (t_1 ; z_1)
\right|^s
\right].
\end{align}
We also set 
\bex
\Psi_{L, -1} (s; t_1, t_2 ; z)=
\E\left[ 
\sum_{w \in \bbV(1)}
\sum_{w \in \bbD_{L-1} (w)}
\left|
 R_{0 v}(t_1 ; z )
- R_{0  0 }(t_2; z )  t_1  R^{(0)}_{w v}(t_1 ; z  )
\right|^s
\right]
\eex
and
\bex
\Psi_{L, L-1} (s;  t_1, t_2; z )=
\E\left[ 
\sum_{w \in \bbV(L-1)}
\sum_{v \in \bbD(w)}
\left|
 R_{0  w }(t_2; z )  t_1   R^{(w)}_{vv}(t_1; z )
- 
 R_{0 v}(t_2 ;z )
\right|^s
\right].
\eex
By 
the definitions of the $\Psi_{L, j}$, and the elementary inequality that $(a+b)^s \le a^s + b^s$ for all $a,b \ge 0$, 
\be\label{thephisumT}
\E
\left[
\sum_{v \in \bbV(L)} \left|   
 R_{0v}(t_1;z ) -  R_{0v}(t_2;z ) 
\right|^s 
\right] \le 
\sum_{j = -1}^{L-1} \Psi_{L, j} (s; z_1, z_2).
\ee
To complete the proof, it suffices to bound the sum on the right side of \eqref{thephisumT}.

Considering the terms in the sum defining \eqref{PsiDefT},  \Cref{rproduct} gives
\begin{align}\label{e:febsplitT}
\begin{split}
&   R_{0 w_-}(t_2; z )  t_1  R^{(w_-)}_{w v }(t_1 ; z)
-  R_{0 w}(t_2;z)  t_1 R^{(w)}_{w_+ v } (t_1 ; z_1) \\
& =  R_{0 w_-}(t_2;z)  t_1  R^{(w_-)}_{w w}(t_1; z) 
 t_1
 R^{(w)}_{w_+ v} (t_1; z)
 -  
 R_{0 w_-}(t_2;z)  t_2  R^{(w_-)}_{w w}(t_2; z) 
 t_1
 R^{(w)}_{w_+ v} (t_1; z)\\
&=  R_{0 w_-}(t_2;z) 
 \big( t_1  R^{(w_-)}_{w w}(t_1; z)  -    t_2 R^{(w_-)}_{w w}(t_2; z)  \big)
 t_1
 R^{(w)}_{w_+ v} (t_1; z).
 \end{split}
\end{align}
Then using \eqref{e:febsplit} in the definition of $\Psi_{L, k}$ in \eqref{PsiDef} gives
\begin{align*}
&\Psi_{L, k} (s;  t_1, t_2; z )\notag\\
&\le 
\E\left[
\sum_{w \in \bbV(k+1)}
\sum_{v \in \bbD_{L-k-1}(w)}
\left|
R_{0 w_-}(t_2;z) 
 \big( t_1  R^{(w_-)}_{w w}(t_1; z)  -    t_2 R^{(w_-)}_{w w}(t_2; z)  \big)
 R^{(w)}_{w_+ v} (t_1; z)
\right|^s 
\right].
\end{align*}
To get the inequality, we used the assumption that $t_1 \in (0,1)$. 

We now use \Cref{l:uniformlyintegrable} to bound the expectation of each term in the sum. We apply this lemma after using the law of total expectation and conditioning on all disorder variables $V_{x}$ corresponding to vertices not in the path from $0$ to $w_-$. This gives 
\begin{align}
\Psi_{L, k} &(s;  t_1, t_2; z )\notag \\
&\le 
 C^{k+1} \E\left[
\sum_{w \in \bbV(k+1)}
\sum_{v \in \bbD_{L-k-1}(w)}
 \left| \big( t_1  R^{(w_-)}_{w w}(t_1; z)  -    t_2 R^{(w_-)}_{w w}(t_2; z)  \big)
 R^{(w)}_{w_+ v} (t_1; z) 
\right|^s 
\right]\notag\\
&\le 
C^{k+1} \E\left[
\sum_{w \in \bbV(1)}
\sum_{v \in \bbD_{L-k-2}(w)}
  \left|\big( t_1 R_{00}(t_1;z) -   t_2 R_{00}(t_2;z) \big)
 R^{(0)}_{w v} (t_1;z)
\right|^s 
\right]\label{phiprev2T}
\end{align}
for some $C > 1$, where the last inequality follows from the equivalence of the joint law of $R^{(w_-)}_{w w}(t_1; z)$, $R^{(w_-)}_{w w}(t_2; z) $, and $ R^{(w)}_{w_+ v} (t_1; z) $ in the second line with that of $R_{00}(t_1;z)$,  $R_{00}(t_2;z)$, and $ R^{(0)}_{w v} (t_1;z)$ in the third line. 
Note that 
\begin{equation}\label{e:febphiT}
 t_1 R_{00}(t_1;z) -   t_2 R_{00}(t_2;z)
 =  (t_1 - t_2) R_{00}(t_1;z)  + t_2\big( R_{00}(t_1;z) -  R_{00}(t_2;z)\big).
\end{equation}
We consider the two terms arising from \eqref{phiprev2T} by substituting in the expression \eqref{e:febphiT}. For the first term, we have 
\begin{align}\label{e:newtermT}
\begin{split}
&  \E\left[
\sum_{w \in \bbV(1)}
\sum_{v \in \bbD_{L-k-2}(w)}
  \left|(t_1 - t_2) R_{00}(t_1;z)  
 R^{(0)}_{w v} (t_1;z)
\right|^s 
\right]\le C^{L -k + 1} |t_1 - t_2|^s ,
\end{split}
\end{align}
where we again used  \Cref{l:uniformlyintegrable} to bound the expectation of each term in the sum, and increased the value of $C$ if necessary. 
For the second term, we note that the resolvent identity $\bm{A}^{-1} - \bm{B}^{-1} = \bm{A}^{-1} (\bm{B} - \bm{A}) \bm{B}^{-1}$ implies 
\begin{align}
\begin{split}
&t_2\big( R_{00}(t_1;z) -  R_{00}(t_2;z)\big) \\
&= t_2 \sum_{v,w \in \bbV} R_{0v}(t_1;z) \big( (t_1 - t_2) A_{v w}  \big) R_{w0}(t_2 ; z) \\ 
&= t_2 (t_1 - t_2) \left( \sum_{v \in \bbV} R_{0v}(t_1;z) R_{v_+ 0}(t_2;z)  + \sum_{v \in \bbV} R_{0v_+}(t_1;z) R_{v0}(t_2;z) \right).
\end{split}
\end{align}
By using H\"older's inequality and the Ward identity \eqref{sumrvweta} in the previous line, and using $t_2 \in (0,1)$, we find
\begin{align}
&\left|t_2\big( R_{00}(t_1;z) -  R_{00}(t_2;z)\big) \right| \\
&\le |t_1 - t_2| 
\left(\sum_{v \in \bbV} \big |R_{0v}(t_1;z )\big|^2 \right)^{1/2}
\left(\sum_{v \in \bbV} \big |R_{0v}(t_2;z ) \big|^2 \right)^{1/2}\notag
\\
& = |t_1 - t_2| \eta^{-1} 
\big( \Im R_{00}(t_1; z ) \big)^{1/2}
\big( \Im R_{00}(t_2; z ) \big)^{1/2}.\label{holder2T}
\end{align}
Inserting \eqref{e:newtermT} and  \eqref{holder2T} into \eqref{phiprev2T} gives
\begin{align}
\begin{split}
&\Psi_{L, k} (s; z_1, z_2)\\ &\le  C^{L  + 1} |t_1 - t_2|^s\\
&\quad +  C^{k+1}|t_1 - t_2|^s \eta^{-s} \\
&\quad\quad  \times
  \E\left[
\sum_{w \in \bbV(1)}
\sum_{v \in \bbD_{L-k-2}(w)}
\big( \Im R_{00}(t_1; z ) \big)^{s/2}
\big( \Im R_{00}(t_2; z ) \big)^{s/2}
\left|
R^{(0)}_{w v}(t_1;z)
\right|^s  \label{intPhiT}
\right].
\end{split}
\end{align}

Let $\kappa = s/4$. By Markov's inequality, the assumption $\phi(s;t_2,z) < - \delta$, and \Cref{l:qsexpectation}, 
\be\label{simplemarkovT}
\P\left( \big( \Im R_{00}(t_2;z ) \big)^{s/2} > \eta^\kappa  \right) \le C \eta^{s/2 - \kappa}.
\ee
Define the event 
\bex
\mathscr A = \left\{ \big( \Im R_{00}(t_2;z ) \big)^{s/2} < \eta^\kappa \right\}.
\eex
Then 
\begin{align}\label{ongoodeventT}
\begin{split}
&\E\left[\one_{\mathscr A}
\sum_{w \in \bbV(1)}
\sum_{v \in \bbD_{L-k-2}(w)}
\big( \Im R_{00}(t_1; z ) \big)^{s/2}
\big( \Im R_{00}(t_2; z ) \big)^{s/2}
\left|
R^{(0)}_{w v}(t_1;z)
\right|^s
\right]
\\
&\le \eta^{\kappa} \cdot \E\left[
\sum_{w \in \bbV(1)}
\sum_{v \in \bbD_{L-k-2}(w)}
\big| R_{00}(t_1; z ) \big|^{s/2}
\left|
R^{(0)}_{w v}(t_1;z)
\right|^s
\right]
\le 
\eta^{\kappa}
C^{L-k},\end{split}
\end{align}
where the last inequality follows from two applications of \Cref{l:uniformlyintegrable}.

 We next note that the elementary inequality $ab \le a^2 + b^2$ gives 
\bex
\big( \Im R_{00}(t_1;z) \big)^{s/2} \big( \Im R_{00}(t_2;z) \big)^{s/2}  \le\big( \Im R_{00}(t_1;z) \big)^{s}
 +
\big( \Im R_{00}(t_2;z) \big)^{s} .
\eex
The previous line implies
\begin{align}
\begin{split}
& \E\left[
\one_{\mathscr A^c}
\sum_{w \in \bbV(1)}
\sum_{v \in \bbD_{L-k-2}(w)}
\big( \Im R_{00}(t_1;z) \big)^{s/2}
\big( \Im R_{00}(t_2;z) \big)^{s/2}
\left|
t
R^{(0)}_{w v}(t_1;z)
\right|^s 
\right]\\
&\le  \E\left[
\one_{\mathscr A^c} \sum_{w \in \bbV(1)}
\sum_{v \in \bbD_{L-k-2}(w)}
\big( \Im R_{00}(t_1;z) \big)^{s}
\left|
t
R^{(0)}_{w v}(t_1;z)
\right|^s 
\right] \\
&+
 \E\left[
\one_{\mathscr A^c}\sum_{w \in \bbV(1)}
\sum_{v \in \bbD_{L-k-2}(w)}
\big( \Im R_{00}(t_2;z) \big)^{s}
\left|
t
R^{(0)}_{w v}(t_1;z)
\right|^s 
\right]\\
&\le \P(\mathscr A^c)^{c_1}  \cdot  C^{L-k}
= \eta^{c_1s } \cdot  C^{L-k}
\end{split}
\label{onbadeventT}
\end{align}
for some $c_1(s) > 0$, where we used \eqref{simplemarkovT} and 
the following bound in the last line. By H\"older's inequality and  \Cref{l:uniformlyintegrable}, we have, after fixing $\kappa >0$ such $(1+\kappa) s < 1$, that for $i=1,2$,
\begin{align*}
&\E\left[
\one_{\mathscr A^c}\sum_{w\in \mathbb{V}(1)} \sum_{v \in \bbD_{L-1}(w) }
\big( \Im R_{00}(t_i;z) \big)^{s}
\left|
t
R^{(0)}_{w v}(t_1;z)
\right|^s
\right]\\
&\le 
\P(\mathscr A^c)^{\kappa /( 1 + \kappa)} 
\E\left[\sum_{w\in \mathbb{V}(1)} \sum_{v \in \bbD_{L-1}(w) }
\left(\big( \Im R_{00}(t_i;z) \big)^{s} 
\left|
t
R^{(0)}_{w v}(t_1;z)
\right|^s \right)^{1+\kappa} 
\right]^{1/(1+\kappa)}\\
&\le 
\P(\mathscr A^c)^{\kappa / ( 1 + \kappa)} \cdot C^{1+L},
\end{align*}
The conclusion follows after inserting \eqref{ongoodeventT} and \eqref{onbadeventT} into \eqref{intPhiT}, and using \eqref{thephisumT}.
\end{proof}

The next lemma states a continuity estimate for $\phi(s; t; z)$ as $z$ is fixed and $t$ varies. 
We omit the proof since, given the previous lemma, it is essentially identical to that of \Cref{l:goingsideways}. 
\begin{lemma}\label{l:goingsidewaysT}
Fix $s, \delta, \eps, \eta \in (0,1)$ and $ E \in \R$. Then for every $\mathfrak b >0$, there exists $\mathfrak a (s, \delta, \eta, E, \mathfrak b) > 0$ such that following holds. For every $t_1, t_2\in (\eps, 1)$ such that  $|t_1 - t_2| < \mathfrak a$ and  ${\phi(s;t_2; E + \iu \eta) < - \delta}$,
we have
\bex
\big|
\phi(s;t_1; E + \iu \eta ) - \phi(s;t_2 ; E+ \iu \eta)
\big| < \mathfrak b.
\eex
\end{lemma}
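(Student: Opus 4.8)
The plan is to follow the proof of \Cref{l:goingsideways} essentially verbatim, with the two spectral parameters there replaced by the two disorder strengths $t_1,t_2$ (the spectral parameter $z = E + \iu\eta$ now held fixed), and with \Cref{l:cutoffcloseT} used in place of \Cref{l:cutoffclose}. We may assume $|t_1 - t_2| < 1/10$. First I would invoke \Cref{l:aizenman} to obtain a constant $C_1 > 1$ such that
\[
\big| \big(\phi_L(s;t_1;z) - \phi_L(s;t_2;z)\big) - \big(\phi(s;t_1;z) - \phi(s;t_2;z)\big) \big| \le \frac{C_1}{L}
\]
for every $L \in \Zplus$, and then fix $L = \lceil 2 C_1 \mathfrak{b}^{-1} \rceil$; it then suffices to bound $|\phi_L(s;t_1;z) - \phi_L(s;t_2;z)|$ by $\mathfrak{b}/2$ once $\mathfrak{a}$ has been chosen appropriately small.

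By the definition \eqref{sumsz1}, $\phi_L(s;t_1;z) - \phi_L(s;t_2;z) = L^{-1}\log\big(\Phi_L(s;t_1;z)/\Phi_L(s;t_2;z)\big)$, which I would control with two ingredients. For the denominator, the lower bound in \eqref{e:crudephi}, the rate estimate \eqref{e:limitr0j2}, and the finiteness of $\E\big[\log|R_{00}(z)|^s\big]$ (by \cite[Lemma B.2]{aizenman2006stability}) together furnish a constant $c_1 \in (0,1)$ with $\Phi_L(s;t_2;z) \ge c_1^{L+1}$. For the numerator difference, \Cref{l:cutoffcloseT} combined with the elementary inequality $\big||x|^s - |y|^s\big| \le |x-y|^s$ yields constants $C_2 > 1$, $c_2 > 0$ such that
\[
\Big| \E\Big[\textstyle\sum_{v\in\bbV(L)}|R_{0v}(t_1;z)|^s\Big] - \E\Big[\textstyle\sum_{v\in\bbV(L)}|R_{0v}(t_2;z)|^s\Big]\Big| \le C_2^{L+1}\,|t_1-t_2|^s\, \eta^{-s+c_2}.
\]
Since $L$, $\eta$, and $E$ are now fixed, I may choose $\mathfrak{a}(s,\delta,\eps,\eta,E,\mathfrak{b}) > 0$ small enough that, whenever $|t_1 - t_2| < \mathfrak{a}$, the right-hand side above is at most $\tfrac14 c_1^{L+1}\min(\mathfrak{b},1) \le \tfrac12 \Phi_L(s;t_2;z)$; the same elementary manipulation of $\log\big(\Phi_L(s;t_1;z)/\Phi_L(s;t_2;z)\big)$ as in the proof of \Cref{l:goingsideways} then gives $|\phi_L(s;t_1;z) - \phi_L(s;t_2;z)| < \mathfrak{b}/2$. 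Combined with the $C_1/L$ estimate and the choice of $L$, this yields the claim.

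I do not expect any genuinely new obstacle: the entire analytic work—the resolvent identity $\bm{A}^{-1}-\bm{B}^{-1}=\bm{A}^{-1}(\bm{B}-\bm{A})\bm{B}^{-1}$ applied to $t_1\bm{A}$ versus $t_2\bm{A}$, the Ward-identity/Hölder bookkeeping, and the splitting over the event $\{(\Im R_{00})^{s/2} < \eta^{\kappa}\}$—has already been carried out in \Cref{l:cutoffcloseT}. The only point requiring a moment's care is bookkeeping of parameter dependence: the factors $\eta^{-s+c_2}$ and $C_2^{L+1}$ (with $L$ depending on $\mathfrak{b}$) are harmless precisely because $\mathfrak{a}$ is allowed to depend on $\eta$, $E$, $\delta$, $\eps$, $s$, and $\mathfrak{b}$, and no uniformity in $\eta$ is asserted.
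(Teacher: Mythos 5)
Your proposal is correct and matches the paper's intent exactly: the paper omits the proof of this lemma, stating it is essentially identical to that of \Cref{l:goingsideways} with \Cref{l:cutoffcloseT} substituted for \Cref{l:cutoffclose}, which is precisely the argument you carry out (the $C_1/L$ reduction via \Cref{l:aizenman}, the lower bound $\Phi_L(s;t_2;z)\ge c_1^{L+1}$, and the numerator bound from \Cref{l:cutoffcloseT}). Your remark that $\mathfrak a$ may also depend on $\eps$ is a harmless (and arguably more careful) bookkeeping point.
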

Combining the previous lemmas in this section, along with \Cref{l:goingdown} and \Cref{l:goingup}, we find the following lemma. Again, as its proof is very similar to the proof of \Cref{l:phicontinuity}, we omit it.
\begin{lemma}\label{l:phicontinuityT}
	Fix $s, \eps \in (0,1)$, 
	$\kappa >0$, and a compact interval $I \subset \bbR$. There exists a constant $\delta(s,\eps, \kappa, I) >0$ such that the following holds. For every $t_1, t_2 \in (\eps, 1)$ such that $|t_1 - t_2 | \le \delta$ and 
\bex
\limsup_{\eta \rightarrow 0} \varphi (s; t_1; E + \mathrm{i} \eta) < - \kappa,
\eex
we have	
\bex
\limsup_{\eta \rightarrow 0} \varphi (s; t_2; E + \mathrm{i} \eta) 
< 
-\kappa + \omega.
\eex
\end{lemma}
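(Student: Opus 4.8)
\textbf{Proof proposal for \Cref{l:phicontinuityT}.}

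The plan is to mimic exactly the structure of the proof of \Cref{l:phicontinuity}, substituting the $t$-continuity inputs for the $z$-continuity inputs at each stage. The three ingredients used there were: \Cref{l:goingdown} (pushing negativity of $\phi(s;E+\iu\eta_0)$ at a small fixed $\eta_0$ down toward the real axis), \Cref{l:goingup} (extending $\limsup_{\eta\to 0}\phi(s;E+\iu\eta)<-\delta$ to a bound on an entire interval $(0,\mathfrak a]$), and \Cref{l:goingsideways} (the horizontal continuity, at a fixed $\eta$). Notice that the first two of these concern only the behavior in $\eta$ for a \emph{single} fixed Hamiltonian, so they apply verbatim with $t$ held fixed; it is only the third that must be replaced, and \Cref{l:goingsidewaysT} is precisely its $t$-continuity analogue, built on \Cref{l:cutoffcloseT}.

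Concretely, first I would invoke \Cref{l:goingdown} and \Cref{l:goingup} (with the disorder parameter fixed, say to $t_1$, and then to $t_2$) to produce a constant $\delta_1(s,\eps,\kappa,\omega)\in(0,1)$ such that, for any $t\in(\eps,1)$ and $E\in I$:
\begin{enumerate}
\item if $\limsup_{\eta\to 0}\phi(s;t;E+\iu\eta)<-\kappa$, then $\phi(s;t;E+\iu\eta)<-\kappa+\tfrac{\omega}{3}$ for all $\eta\in(0,\delta_1]$; and
\item if $\eta\in(0,\delta_1]$ and $\phi(s;t;E+\iu\eta)<-\kappa+\tfrac{2\omega}{3}$, then $\limsup_{\eta\to 0}\phi(s;t;E+\iu\eta)<-\kappa+\omega$.
\end{enumerate}
Then I would apply \Cref{l:goingsidewaysT} with $E$ fixed (note: here $E$ is fixed and $t$ varies, which is the reverse of \Cref{l:phicontinuity}, but this actually makes things cleaner — there is no horizontal interval $I$ of energies to worry about), $\eta=\delta_1/2$, $\delta=\kappa-\tfrac{\omega}{3}>0$, and $\mathfrak b=\tfrac{\omega}{3}$, to obtain $\delta_2(s,\eps,\delta_1,\kappa,\omega)>0$ such that, whenever $|t_1-t_2|<\delta_2$ and $\phi(s;t_1;E+\iu\delta_1/2)<-\kappa+\tfrac{\omega}{3}$, one has $|\phi(s;t_2;E+\iu\delta_1/2)-\phi(s;t_1;E+\iu\delta_1/2)|<\tfrac{\omega}{3}$. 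Chaining (i) (applied with $t=t_1$ at $\eta=\delta_1/2$), this sideways estimate, and (ii) (applied with $t=t_2$ at $\eta=\delta_1/2$, using that $\phi(s;t_2;E+\iu\delta_1/2)<-\kappa+\tfrac{\omega}{3}+\tfrac{\omega}{3}=-\kappa+\tfrac{2\omega}{3}$) yields $\limsup_{\eta\to 0}\phi(s;t_2;E+\iu\eta)<-\kappa+\omega$. Setting $\delta=\min(\delta_2,\eps/2)$ (or whatever is needed so $t_1,t_2$ stay in $(\eps,1)$) finishes the argument; the compact interval $I$ in the statement plays no essential role here beyond ensuring the constants can be chosen uniformly, and in fact the dependence listed is $\delta(s,\eps,\kappa,I)$.

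I do not expect any genuine obstacle, since \Cref{l:cutoffcloseT}, \Cref{l:goingsidewaysT}, \Cref{l:goingdown}, and \Cref{l:goingup} are all available (the first two proved in this appendix, the latter two in the previous appendix and applicable with $t$ fixed). The only point requiring a word of care is that \Cref{l:goingdown} and \Cref{l:goingup} are stated for a fixed $t\in(\eps,1)$, so I must simply apply each of them twice — once at $t=t_1$ and once at $t=t_2$ — and the constant $\delta_1$ they produce can be taken to be the minimum of the two (or, since their hypotheses only require $t\in(\eps,1)$, the same $\delta_1$ works for both). A minor bookkeeping issue is that the statement writes $\omega$ in the conclusion without an explicit $\omega$ in the hypothesis list — as in \Cref{l:phicontinuity}, I will treat $\omega>0$ as an implicit parameter on which $\delta$ is allowed to depend. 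No new estimates are needed.
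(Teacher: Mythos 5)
Your proposal is correct and follows exactly the route the paper intends: the paper omits the proof of this lemma, stating only that it combines \Cref{l:goingsidewaysT} with \Cref{l:goingdown} and \Cref{l:goingup} in the same way as the proof of \Cref{l:phicontinuity}, which is precisely your chaining of (i), the sideways estimate at $\eta=\delta_1/2$, and (ii). Your bookkeeping remarks (uniformity of \Cref{l:goingdown}/\Cref{l:goingup} in $t\in(\eps,1)$, the implicit $\omega$, and the inessential role of $I$ since $E$ is fixed here) are all consistent with how the lemma is actually used in the proof of \Cref{l:bulkphi}.
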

\subsection{Disorder Bootstrap}\label{s:disorderbootstrap}
\begin{proof}[Proof of \Cref{l:bulkphi}]
Fix $E \in D_-$. With the value of $g$ given by the lemma statement, we define $t' = g ( K \ln K)^{-1}$ and treat $t \in (0,t')$ as a free parameter in this proof. By \Cref{l:rougheigenvector} and \eqref{e:rhoEupper}, there exists $K_0(E) >0$ such that for all $K \ge K_0$, there exists $s_0(K) \in (0,1)$ such that  we have 
\be\label{e:the200}
\lambda_{K,t,s,E} \le K^{-1}( 1 - \L^{-200})
\ee
 for all  $t \in [0, t']$ and $s \in [s_0(K) , 1 )$. 

We now fix an infinite path $\mathfrak{p} = (v_0, v_1, \ldots )$ of vertices in $\mathbb{V}$, with $v_0 = 0$. By \Cref{l:uniformlyintegrable}, for every $K >1$, there exists $t_0(K) > 0$ and a constant $C>0$ such that that for all $t \in (0, t_0]$ and $z \in \bbH$, we have 
\begin{equation}
\E\Big[ \big|R_{0v_L }(t; z) \big|^{1/2}  \Big]  \le C (Ke)^{-L}.
\end{equation}
From the definition of $\phi(s;z)$ in \eqref{szl}, we obtain that 
\be
\phi(1/2; t ; z ) \le  - 1 
\ee
for all $t \in (0, t_0]$ and $z \in \bbH$. By the first part of \Cref{l:aizenman}, we have 
\be\label{e:phialls}
\phi(s; t ; z ) \le  - 1 
\ee
for all $s \in (1/2, 1)$. 

For all $s \in (0,1)$, set 
\bex
\kappa' = \kappa'(E,g,s) = \inf_{t \in [t_0, t'] } \big( 1 - K  \lambda_{K,t, s,E} \big),
\qquad \kappa = \kappa(E,g,s) = \frac{\min ( \kappa', 1  )}{2}.
\eex
Recalling \eqref{e:the200}, we have $\kappa' >0$ for $s=s_0$. 

Using \Cref{l:phicontinuityT} on the interval $[t_0, t']$ and with $\omega$ equal to $\kappa$, let $\delta$ be the constant given by \Cref{l:phicontinuityT}.  
Let $\{t_i\}_{i=1}^M$ be a set of real numbers such that $t_M = t'$ and 
\bex
t_{i+1} < t_i, \qquad | t_{i+1} - t_i | < \delta
\eex
for all $i \in \unn{0}{M-1}$. 
We claim that for every $i\in \unn{0}{M}$, we have 
\be\label{bobT}
\limsup_{\eta \rightarrow 0} \exp\big(\phi(s_0; t_i ; E+ \iu \eta)\big) \le  K \lambda_{K, t_i, s_0,E} .
\ee
We will show the claim by induction on $i$. The base case $i=0$  follows combining \eqref{e:phialls}, \Cref{p:imvanish}, and \Cref{l:bootstrap}. 
Next, for the induction step, we assume that the induction hypothesis holds for some $i\in \unn{0}{M-1}$, and we will show it holds for $i+1$. Using the induction hypothesis at $i$ and the definition of $\kappa$, we have 
\bex
\limsup_{\eta \rightarrow 0} \exp\big(\phi(s;t_i;  E + \iu \eta)\big)  \le K \lambda_{s,E_i} \le 1 - 2 \kappa.
\eex
By \Cref{l:phicontinuityT} and the definition of $\delta$, 
\begin{align}\label{p19T}
\limsup_{\eta \rightarrow 0} \varphi (s; t_{i+1} ; E + \mathrm{i} \eta)
&< \limsup_{\eta \rightarrow 0} \varphi (s; t_i ; E + \mathrm{i} \eta) + \kappa\le  \log (1 - 2 \kappa)  +  \kappa \le - \kappa,
\end{align}
where the last inequality follows from the elementary bound $\ln(1-x) < -x$. 
Using \Cref{p:imvanish}, we see that \eqref{p19} implies
\be\label{jcvgT}
\lim_{\eta \rightarrow 0} \Im R_{00}(t_{i+1}; E + \iu \eta) = 0.
\ee
Let $\{\eta_j\}_{j=1}^\infty$ be a sequence such that
\be\label{otterT}
\lim_{j \rightarrow \infty}
\exp \left( \phi(s; t_{i+1}; E  + \iu \eta_j) \right)= \limsup_{\eta\rightarrow 0} \exp \left( \phi(s; t_{i+1} ; E+ \iu \eta) \right).
\ee
Then using \eqref{jcvgT} and \Cref{l:bootstrap}, equation \eqref{otterT} implies that 
\bex
\limsup_{\eta \rightarrow 0} \exp \big( \phi(s; E_{i+1} + \iu \eta) \big) = \lim_{j \rightarrow \infty}
\exp \big( \phi(s; E_{i+1} + \iu \eta_j) \big) = K \lambda_{E_{i+1},s}.   
\eex 
This completes the induction step and shows that \eqref{bobT} holds for all $i\in \unn{0}{M}$. In particular, taking $i=M$, we have 
\be
\limsup_{\eta \rightarrow 0} \exp\big(\phi(s; t'; E + \iu \eta)\big) \le  \lambda_{K, t', E,s}.
\ee
By \Cref{p:imvanish}, this implies $ \lim_{\eta \rightarrow 0} \Im R_{00} (t'; E + \iu \eta) = 0$.

Recall that the choice of $E \in D_-$ was arbitrary.
We conclude that for every $E \in D_-$, we have 
\begin{equation}
\lim_{\eta \rightarrow 0} \Im R_{00} (t'; E + \iu \eta) =0
\end{equation}
in probability.  Then by \Cref{l:bootstrap} and \eqref{e:the200}, we have $\phi(1; t' ;  E) < 0$ for every $E$ where the limit (as $\eta \rightarrow 0$) defining $\phi(1;t',E)$ exists; this limit exists for all $E \in D_-$ except for a set of measure zero, by the sixth part of \Cref{l:aizenman}. This concludes the proof.
\end{proof}

\section{Proofs of Auxiliary Results from \Cref{s:proof}} 
This appendix contains the proofs of  \Cref{l:pEexists} and \Cref{l:bigE}.

\subsection{Proof of  \Cref{l:pEexists}}\label{s:pEexists}
We now show \Cref{l:pEexists}, which states that \eqref{e:rde} admits at least one solution. Our proof uses the well-known fixed point theorem of Tychonoff. The formulation here is taken from \cite[II.7]{granas2003fixed}.
\begin{lemma}[{\cite[II.7, Theorem (1.13)]{granas2003fixed}}]\label{l:KRgranas}
Let $V$ be a locally convex topological space, and let $W$ be a compact, convex subset of $V$. Then every continuous function $f : W \rightarrow W$ has a fixed point.
\end{lemma}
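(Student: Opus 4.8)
The plan is to deduce this statement from the Brouwer fixed-point theorem via a finite-dimensional approximation (the Schauder projection scheme). Throughout I would assume, as is standard for Tychonoff's theorem, that $V$ is a Hausdorff locally convex topological vector space; the two topological facts I would use about $V$ are that every neighborhood of $0$ contains a convex, symmetric, open neighborhood of $0$, and that $\bigcap\{\overline{U} : U \text{ a convex neighborhood of } 0\} = \{0\}$ (the latter because a TVS is regular and, in the locally convex case, has a neighborhood base at $0$ of convex sets).

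First I would reduce the problem to producing approximate fixed points. For each convex symmetric open neighborhood $U$ of $0$, set $C_U = \{x \in W : f(x) - x \in \overline{U}\}$. Since $f$ and subtraction are continuous, each $C_U$ is closed in $W$. The family $\{C_U\}$ has the finite intersection property: given $U_1,\dots,U_k$, the intersection $U_1 \cap \cdots \cap U_k$ contains a convex symmetric open neighborhood $U'$ of $0$, and then $C_{U'} \subseteq C_{U_1} \cap \cdots \cap C_{U_k}$. Hence, once I show each $C_U$ is nonempty, compactness of $W$ gives a point $x \in \bigcap_U C_U$, and that point satisfies $f(x) - x \in \bigcap_U \overline{U} = \{0\}$, i.e. $f(x) = x$. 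So it suffices to show: for every such $U$ there is $y_U \in W$ with $f(y_U) - y_U \in U$.

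Next I would construct these approximate fixed points. Fix $U$, let $p_U$ be its Minkowski functional, and use compactness to choose $x_1,\dots,x_n \in W$ with $W \subseteq \bigcup_i (x_i + U)$. Define continuous functions $\psi_i(x) = \max\!\bigl(0,\, 1 - p_U(x - x_i)\bigr)$, so $\psi_i(x) > 0$ precisely when $x - x_i \in U$; the cover condition forces $\sum_j \psi_j(x) > 0$ on $W$, so $\varphi_i := \psi_i / \sum_j \psi_j$ is a partition of unity subordinate to $\{(x_i+U)\cap W\}$. Let
\[
P : W \to W, \qquad P(x) = \sum_{i=1}^{n} \varphi_i(x)\, x_i ,
\]
which is continuous and takes values in the compact convex set $W_0 = \mathrm{conv}(x_1,\dots,x_n) \subseteq W$. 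For every $x$, the vector $P(x) - x = \sum_i \varphi_i(x)(x_i - x)$ is a convex combination of the $x_i - x$ over indices with $\varphi_i(x) > 0$, for which $x - x_i \in U$, hence $x_i - x \in U$ by symmetry; convexity of $U$ gives $P(x) - x \in U$. Now $W_0$ is a compact convex subset of the finite-dimensional subspace spanned by $x_1,\dots,x_n$, hence homeomorphic to a compact convex subset of some $\mathbb{R}^m$, and $P\circ f : W_0 \to W_0$ is continuous, so Brouwer's fixed-point theorem yields $y_U \in W_0$ with $P(f(y_U)) = y_U$. Then $f(y_U) - y_U = f(y_U) - P(f(y_U)) \in U$, as required, which closes the argument.

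The hard part here is not any long calculation but correctly assembling the topological inputs: invoking Brouwer's theorem for compact convex subsets of Euclidean space, justifying that $W_0$ is genuinely finite-dimensional and that $P$ maps into it, and the Minkowski-functional construction of the partition of unity (which is where the local convexity of $V$ is essential and where one must be careful that $U$ is convex, symmetric, and open). The passage from approximate to exact fixed points via the finite intersection property and compactness of $W$ is then routine.
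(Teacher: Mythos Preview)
The paper does not prove this lemma; it simply cites it from Granas--Dugundji as a known result and applies it. Your argument is the standard Schauder--Tychonoff proof via finite-dimensional approximation and Brouwer's theorem, and it is correct. The only minor remark is that the paper (like your proposal) implicitly assumes $V$ is a Hausdorff locally convex topological \emph{vector} space, which you rightly make explicit; without Hausdorff the intersection $\bigcap_U \overline{U}$ need not reduce to $\{0\}$, and without the vector-space structure the Schauder projection does not make sense.
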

Given this lemma, we proceed to the proof of \Cref{l:pEexists}. 
\begin{proof}[Proof of \Cref{l:pEexists}]
Let $\mathcal W$ denote the space of probability measures $\mu$ on $\R$ such that for every $r \ge 1$, we have $\mu([-r,r]^c) \le 2  \L r^{-1}$. 

Next, let $S$ be the map that takes a probability measure $\mu\in \mathcal W$ to the probability measure corresponding to the distribution of 
\begin{equation}\label{e:rdemap}
\frac{1}{ V_0  - E  - t^2 \sum_{i=1}^K X_i},
\end{equation}
where $X_1,\dots, X_K$ are independent, identically distributed random variables with common distribution $\mu$. Since $V_0$ is continuous with density $\rho$, and $\| \rho \|_\infty \le \L$, the denominator of \eqref{e:rdemap} is also continuous with a density bounded by $\L$. If $\nu$ is the probability measure of the corresponding distribution, this implies that 
\begin{equation}\label{e:nueps}
\nu( [-\eps, \eps ]) \le 2  \L \eps,
\end{equation} which implies that $(S\mu)( [-r,r]) \le 2 \L r^{-1}$, so $S\mu \in \mathcal W$. 

The bound \eqref{e:nueps} also implies that the map $S$ is continuous, since if $(\mu_n)_{n=1}^\infty$ converges to $\mu$ in $\mathcal W$, and $f \in C_b(\R)$, we can write
\[
\E_{S\mu_n}[f] = \E_{S\mu_n}[f \one_{|x| \le \eps^{-1}}]  + \E_{S\mu_n}[f\one_{|x| > \eps^{-1}}].
\]
Then first term converges to $\E_{S\mu}[f \one_{|x| \le \eps^{-1}}]$, by the continuous mapping theorem, and the second can be made arbitrarily small using \eqref{e:nueps} and taking $\eps$ sufficiently small. Hence $\lim_{n\rightarrow \infty} \E_{S\mu_n}[f]  = \E_{S\mu}[f] $ for every $f \in C_b(\R)$, showing $S$ is continuous on $\mathcal W$.

Finally, note that $\mathcal W$ is a subset of $\mathcal M$, the space of finite, signed Borel measures on $\R$. We note that $\mathcal M$ is the dual of $C_b(\R)$, the space of bounded, continuous functions on $\R$ endowed with the supremum norm, and equip it with the weak-* topology. Since the weak-* topology is generated by seminorms, $\mathcal M$ is locally convex. 
By Prokhorov's theorem, the set $\mathcal W$ is sequentially compact in this topology. The induced topology on $\mathcal W$ is metrizable (by the Prokhorov metric), so its sequential compactness implies that it is also compact (since these concepts are equivalent for metric spaces). We have shown that $S$ is a continuous self-map of a compact set of locally convex topological vector space, \Cref{l:KRgranas} implies it must have a fixed point, which is the desired solution to \eqref{e:rde}.
\end{proof}

\subsection{Proof of \Cref{l:bigE}} \label{s:bigEproof}

We fix an infinite path $\mathfrak{p} = (v_0, v_1, \ldots ) \in \mathbb{V}$, with $v_0 = 0$.
For every non-negative integer $n$, define
\be\label{e:gammadef}
\gamma_n(s;z) = \E \left[ \big|R^{(v_{n+1})}_{0 v_n}(z) \big|^s \right] .\ee 

The following two bounds for $\gamma_n$ are taken from \cite{aizenman2013resonant}. The first is \cite[Lemma 3.3]{aizenman2013resonant}, and the second is \cite[Lemma 3.4]{aizenman2013resonant}.  We recall that $K$ denotes the branching number of the tree $\bbT$ (defined in \eqref{s:tightbinding}). 
	 \begin{lemma}[{\cite[Lemma 3.3 and Lemma 4.4]{aizenman2013resonant}}] \label{l:aizenman2} 	
	 	Fix $s, \eps \in (0,1)$. \begin{enumerate}
\item 
There exists a constant $C(s,\eps)>0$ such that for all $t \in (\eps, 1)$, $z \in \bbH$, $K >1$, and  non-negative integers $m$ and $n$,
\begin{equation}\label{e:submultiplicative}
\gamma_{m+n}(s;z) \le C \gamma_m(s;z)  \gamma_n(s;z) .
\end{equation}
\item 
Fix $A, K>0$. 
There exists a constant $C_0(s,\eps,A, K)>0$ such that for all $t \in (\eps, 1)$, $z \in \bbH$ with $|z| \le A$, and  non-negative integers $n$,
\begin{equation}\label{e:offdiagonalbound}
 \E \left[ \big|R_{0 v_n}(z) \big|^s \right]  \le C_0 \gamma_n(s;z).
\end{equation}
\end{enumerate}
\end{lemma}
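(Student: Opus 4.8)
The plan is to deduce both bounds from the recursive structure of $\bbT$, using the product expansion of \Cref{rproduct} to factor off‑diagonal resolvent entries along the distinguished path, the uniform fractional moment bound of \Cref{l:uniformlyintegrable} to control the resulting factors, and the two‑sided density information in parts~(1)--(2) of \Cref{d:Lregular} to produce clean multiplicative constants despite the statistical dependence between different portions of the tree (this is the usual ``rank‑two perturbation stability'' of fractional moments under bounded potential densities). Throughout, the mechanism for removing dependence is to condition on the $\sigma$‑algebra generated by the disorder outside an appropriate subtree.

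\textbf{Submultiplicativity \eqref{e:submultiplicative}.} Applying \Cref{rproduct} to the path $(v_0,\dots,v_{m+n})$ with $\mathcal U = \{v_{m+n+1}\}$ gives
\[
R^{(v_{m+n+1})}_{0 v_{m+n}} = (-1)^{m+n}\, R^{(v_{m+n+1})}_{00}\prod_{j=1}^{m+n} t\, R^{(v_{m+n+1},v_{j-1})}_{v_j v_j},
\]
and I would split this product at level $m$. The ``tail'' block $\prod_{j=m+1}^{m+n}$, after relabelling, lives entirely on the subtree rooted at $v_{m+1}$ and has the law of $R^{(v_{n+1})}_{0 v_n}$; the ``head'' block (indices $j\le m$ together with $R^{(v_{m+n+1})}_{00}$) is a version of $R_{0v_m}$ in which the pendant subtree at $v_{m+1}$ has been terminated at $v_{m+n+1}$. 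These two blocks are not independent, because the terminal diagonal entry $R^{(\cdot)}_{v_m v_m}$ of the head block sees into the subtree at $v_{m+1}$. Conditioning on the disorder $\{V_w: v_{m+1}\not\preceq w\}$ makes the tail block a conditional copy of $R^{(v_{n+1})}_{0v_n}$, while the conditional randomness of the head enters only through finitely many one‑site diagonal resolvent entries along $(v_0,\dots,v_m)$ and the self‑energy $t^2\sum_{w\in\bbD(v_m)}R^{(v_m)}_{ww}$; via \Cref{l:uniformlyintegrable} these have conditional $s$‑th moments bounded above by $C(s,\epsilon)$, and via the lower bound $\rho\ge\L^{-1}$ on $[-\L^{-1},\L^{-1}]$ one gets a matching lower bound $c(s,\epsilon)>0$, both uniform in $t\in(\epsilon,1)$, $z\in\bbH$ and $K$. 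Taking conditional and then full expectations and comparing with the factored product $\gamma_m(s;z)\gamma_n(s;z)$ yields \eqref{e:submultiplicative} with a $K$‑independent constant.

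\textbf{The bound \eqref{e:offdiagonalbound}.} Here I would start from the resolvent identity $\bfR = \bfR^{(v_{n+1})} + \bfR\,\big(\hamiltonian^{(v_{n+1})} - \hamiltonian\big)\,\bfR^{(v_{n+1})}$; since $\hamiltonian^{(v_{n+1})}-\hamiltonian$ is supported at the single vertex $v_{n+1}$ and $\bfR^{(v_{n+1})}\delta_{v_{n+1}} = 0$, this collapses to $R_{0v_n} = R^{(v_{n+1})}_{0v_n} + t\,R_{0 v_{n+1}}R^{(v_{n+1})}_{v_n v_n}$. Substituting $R_{0 v_{n+1}} = -t\,R_{0v_n}R^{(v_n)}_{v_{n+1}v_{n+1}}$ from \Cref{rproduct} and solving gives
\[
R_{0v_n} = \frac{R^{(v_{n+1})}_{0 v_n}}{1 + t^2\, R^{(v_n)}_{v_{n+1} v_{n+1}}\, R^{(v_{n+1})}_{v_n v_n}},
\]
hence $\E\big[|R_{0v_n}|^s\big] = \E\big[\,|R^{(v_{n+1})}_{0v_n}|^s\cdot |1 + t^2 R^{(v_n)}_{v_{n+1}v_{n+1}} R^{(v_{n+1})}_{v_nv_n}|^{-s}\,\big]$. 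Conditioning on the disorder outside the subtree at $v_{n+1}$ freezes $R^{(v_{n+1})}_{0v_n}$ and $R^{(v_{n+1})}_{v_nv_n}$, while $W := R^{(v_n)}_{v_{n+1}v_{n+1}}$ has a conditional density bounded by $\L$ on all of $\R$ (its reciprocal is $V_{v_{n+1}}$ shifted by a conditionally independent quantity, and the bound $\rho(x)\le\L(1+x^2)^{-1}$ from \Cref{d:Lregular}(1) propagates to a bounded density for $W$ itself). One then bounds $\E\big[|1 + t^2 W R^{(v_{n+1})}_{v_nv_n}|^{-s}\,\big|\,\mathcal F\big]$ by a constant: the ``resonant'' region where this modulus is $O(1)$ has $W$‑measure $\lesssim \L/(t^2|R^{(v_{n+1})}_{v_nv_n}|)$, and the assumption $|z|\le A$ keeps $R^{(v_{n+1})}_{v_nv_n}$ from being atypically small (while $t>\epsilon$ controls the $t^{-2s}$ factor), so after re‑integrating against $|R^{(v_{n+1})}_{0v_n}|^s$ — using \Cref{l:uniformlyintegrable} once more for the leftover negative power of $|R^{(v_{n+1})}_{v_nv_n}|$ — one arrives at $\E[|R_{0v_n}|^s]\le C_0(s,\epsilon,A,K)\,\gamma_n(s;z)$.

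\textbf{Main obstacle.} In both parts the crux is the lack of independence between the ``head'' and ``tail'' of the tree once truncated Green's functions enter; the fix is always the same conditioning on the disorder outside a subtree, and the genuinely delicate point is keeping the emerging constant finite and uniform — this is precisely where one needs the bounded density of an $\L$‑regular potential, the uniform one‑site moment bound of \Cref{l:uniformlyintegrable}, and, for \eqref{e:offdiagonalbound}, the restrictions $|z|\le A$ and $t>\epsilon$.
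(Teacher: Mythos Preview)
The paper does not give its own proof of this lemma: it is quoted directly from \cite{aizenman2013resonant} (see the sentence preceding the lemma and the Remark following it), so there is nothing to compare your argument against inside the paper. What you have written is a reasonable sketch of the argument that actually appears in \cite{aizenman2013resonant}: for \eqref{e:submultiplicative} one factorizes along the path via \Cref{rproduct}, splits at level $m$, and removes the residual dependence between the two blocks using the bounded density of $\rho$ (the ``decoupling''/rank-two stability you mention); for \eqref{e:offdiagonalbound} one relates $R_{0v_n}$ to $R^{(v_{n+1})}_{0v_n}$ by the edge-removal resolvent identity and controls the correction factor. Your identity for part~(2) is correct up to the sign in the denominator (it should be $1 - t^2 R^{(v_n)}_{v_{n+1}v_{n+1}} R^{(v_{n+1})}_{v_nv_n}$), which is immaterial for moments.

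Two small points where your write-up is imprecise. First, in part~(2) the quantity $W=R^{(v_n)}_{v_{n+1}v_{n+1}}$ is generally complex for $z\in\bbH$, so ``$W$ has a conditional density bounded by $\L$'' needs to be read as a statement about the conditional law of $V_{v_{n+1}}$ after conditioning on \emph{all} other disorder (not just the disorder outside the subtree at $v_{n+1}$); that is how the a priori one-site bound enters in \cite{aizenman2013resonant}. Second, your explanation of why $|z|\le A$ is needed (``keeps $R^{(v_{n+1})}_{v_nv_n}$ from being atypically small'') is not quite the mechanism: the constraint $|z|\le A$ enters when one bounds $\E\big[|R_{vv}^{(\cdot)}|^{-s}\big]$ via the Schur complement $R_{vv}^{-1}=V_v-z-t^2\sum R^{(v)}_{cc}$, where $|z|$ appears explicitly and the self-energy has uniformly bounded fractional moments. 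With those adjustments your outline matches the proof in \cite{aizenman2013resonant}.
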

\begin{remark}
In the stating this lemma, we have taken into account how our definition of $\hamiltonian$ (in \eqref{e:hamiltonian}) differs (by a scaling) from the operator considered in   \cite{aizenman2013resonant}. We also used the fact, noted at \cite[(3.20)]{aizenman2013resonant}, that the constant $C$ in \eqref{e:submultiplicative} can be taken to be independent of the spectral parameter $z$. 
\end{remark}

\begin{proof}[Proof of \Cref{l:bigE}]
Recall the definition of the path $\mathfrak{p}$ from above \eqref{e:gammadef}.
Using \eqref{e:offdiagonalbound}, and iterating \eqref{e:submultiplicative} $L-1$ times (setting $m=1$ in \eqref{e:submultiplicative} each time), we find there exist constants $C(s, t), C_0(s, t, E)>0$ such that
\begin{equation}\label{e:gamma00}
 \E \left[ \big|R_{0 v_L}(z) \big|^s \right]  \le C_0 \gamma_L(s;z)
 \le C_0 C^L \gamma_1^L(s;z). 
\end{equation}
We now aim to bound $\gamma_1(s;z)$. 
From \eqref{e:submultiplicative}, there exists $C(s, t)>0$ such that
\begin{equation}\label{e:gamma0}
\gamma_1(s;z) \le C \gamma_0(s;z) = C \cdot  \E \left[ \big | R_{0 0}^{(v_1)}  \big|^s \right].
\end{equation}
 For any vertices $u,w \in \bbV$, we abbreviate $R_{uw} = R_{uw}(z)$. 
From \Cref{rproduct} and \eqref{qvv}, we have 
\begin{align}\label{e:largeEstep0}
\begin{split}
| R_{0 0}^{(v_1)} |  
&= 
\frac{ t }{\left|z - V_0 - t^2 \sum_{j=1}^{K-1} \Gamma_j \right|} ,
\end{split}
\end{align}
where the $\Gamma_j$ are i.i.d.\ with the same distribution as $R_{00}(z)$. 
Define the event 
\begin{align}
\mathcal F = \left\{  \left| V_0 -  t^2 \sum_{j=1}^{K-1} \Re \Gamma_j \right| < \frac{E}{2} \right\},
\end{align}
and observe from \eqref{e:largeEstep0} that 
\begin{align}\label{e:largeEstep1}
\E\big[\one_{\mathcal F} | R_{0 0}^{(v_1)} |^s \big ] =
 \E\left[ \one_{\mathcal F} \cdot \frac{ t^s  }{\left|z - V_0 - t^2 \sum_{j=1}^{K-1} \Gamma_j \right|^s}  \right] 
 & \le\left( \frac{2t }{|E|} \right)^s. 
\end{align}
Next, using \Cref{l:uniformlyintegrable} and Markov's inequality, we see that there exists $C(s,t) > 0$ such that 
\be
\E \big[ |\Gamma_1|^{1/2} \big] < C, \qquad \P \big( |\Re \Gamma_1| > x\big) \le C |x|^{-1/2}.
\ee 
From a union bound, we obtain that there exists $C(s,t,K)>0$ such that
\begin{equation}\label{e:gammaunion}
\P
\left(
\left|
 t^2  \sum_{j=1}^{K-1} \Re \Gamma_j 
 \right | \ge \frac{|E|}{4} 
\right) \le (K-1) \cdot \P \left(  |\Re \Gamma_1| > \frac{ |E|}{ 4 t^2} \right) \le C|E|^{-1/2}.
\end{equation}
Further, by the assumption that $\rho$ is $\L$-regular (the first part of \Cref{d:Lregular}), we have 
\be\label{e:Vdecay}
\P \left( | V_0|   > \frac{|E|}{4}  \right) \le \frac{C}{|E|^2}.
\ee 
Combining \eqref{e:gammaunion} and \eqref{e:Vdecay},  we find that there exists a constant $C(s,t,K)>0$ such that 
\begin{equation}
\P( \mathcal F^c) \le C |E|^{-1/2}. 
\end{equation}
From H\"older's inequality, we find using the $s=1/2$ case of the previous line that that 
\begin{equation}\label{e:appholder}
\E\big[\one_{\mathcal F^c} | R_{0 0}^{(v_1)} |^{1/2} \big ]  \le \P(\mathcal F^c)^{1/3} \cdot  \E\big[ | R_{0 0}^{(v_1)} |^{3/4} \big ]^{2/3} \le C |E|^{-1/6}.
\end{equation}
In the last inequality, we used \eqref{e:largeEstep0} to show that that there exists $C(t)>0$ such that
\begin{align}\label{e:largeEstep2}
\E\big[ | R_{0 0}^{(v_1)} |^{3/4} \big ] =
 \E\left[ \frac{ t^s  }{\left|z - V_0 - t^2 \sum_{j=1}^{K-1} \Gamma_j \right|^{3/4}}  \right] 
 & \le C,
\end{align}
where in the second inequality we conditioned on all variables but $V_0$, took expectation in $V_0$, and used the bound 
\begin{equation}
\sup_{a\in \R} \E \left[  \frac{1}{|V_0- a |^s} \right] \le \frac{ (2 \| \rho \|_\infty)^s }{1-s}
\end{equation}
from \cite[(A5)]{aizenman2013resonant}, which holds for all $s\in(0,1)$. 
Combining \eqref{e:appholder} with the $s=1/2$ case of \eqref{e:largeEstep1}, we find 
\begin{equation}
\E\big[| R_{0 0}^{(v_1)} |^{1/2} \big ]  \le C |E|^{-1/6}.
\end{equation}
Inserting the previous line into \eqref{e:gamma0} and using \eqref{e:gamma00}, we find
\be
 \E \left[ \big|R_{0 v_L}(z) \big|^{1/2} \right]  \le C_0 C^L |E|^{-L/6}. 
\ee 
Since this bound is independent of $\eta$, we conclude using the definition of $\phi(1/2; E+\iu \eta)$ in \eqref{szl} (and \eqref{e:phiLKLsketch}) 
that for all $\eta \in (0,1)$,
\begin{equation}
\phi(1/2; E+\iu \eta) \le C + \log K - \frac{ \log |E|}{6},
\end{equation}
where $C(s,t) >0$ does not depend on $E$. 
By the first part of \Cref{l:aizenman} and the previous line, we find that 
\begin{equation}\label{e:takeetatozero}
\phi(1; E+\iu \eta)  \le \phi(1/2; E+\iu \eta) \le C +  \log K - \frac{ \log |E|}{6}
\end{equation}
for all $\eta \in (0,1)$. 
We choose $\mathfrak B>0$ so that $|E| \ge \mathfrak B$ implies that  $(\log |E|) /6  \ge  \log K + C + 2$. 
This completes the proof after taking $\eta$ to zero in \eqref{e:takeetatozero}.
\end{proof}

\section{Proofs of Results from \Cref{s:misc}}\label{s:miscproofs}
This appendix provides the proofs of certain assertions from \Cref{s:miscproofs}. 
\Cref{l:32} and  \Cref{l:integrallowerbound} are proved in \Cref{s:integralboundproofs}, while \Cref{s:densityproof} contains the proof of  \Cref{l:rhoebound}.
\subsection{Proofs of Integral Bounds}\label{s:integralboundproofs}
\begin{proof}[Proof of \Cref{l:32}]
Using the bound $1 + |x|^{2-s} \ge 1$ in the definition of $I(A)$, we find that 
\begin{equation}\label{e:firsthorn}
I(A) \le \int_{-\infty}^\infty \frac{dx }{(1 + (x-A)^2)}\le  \int_{-\infty}^\infty \frac{dx }{1 + x^2} \le C. 
\end{equation}
We may now suppose that $|A| \ge 5$, since the case $|A| \le 5$ is covered by \eqref{e:firsthorn}.
We further suppose that $A>0$; the other case is similar.
Set
\begin{align}
\begin{split}
I_1(A) &= \int_{-\infty}^{-A/2} \frac{dx }{(1 + (x-A)^2)(1+|x|^{2-s} )},\\
I_2(A) &= \int_{-A/2 }^{A/2} \frac{dx }{(1 + (x-A)^2)(1+|x|^{2-s} )},\\
I_3(A) &= \int_{A/2 }^\infty \frac{dx }{(1 + (x-A)^2)(1+|x|^{2-s} )}.
\end{split}
\end{align}
Using that $|x | \ge A/2$ implies $|x|^{2-s} + 1 \ge |A/2|^{2-s}$ in the integrand defining $I_1$, we have 
\be\label{e:I1}
I_1(A) \le \int_{-\infty}^{-A/2} \frac{dx }{(1 + (x-A)^2)|x|^{2-s}} 
\le 
\left(\frac{2}{A}\right)^{2-s} \int_{-\infty}^{\infty } \frac{dx }{1 + (x-A)^2}
\le C A^{-(2-s)}.
\ee 
Nearly identical reasoning gives 
\be
I_3 (A) \le\int_{A/2 }^\infty \frac{dx }{(1 + (x-A)^2) |x|^{2-s} } \le 
\left(\frac{2}{A}\right)^{2-s} \int_{-\infty}^{\infty } \frac{dx }{1 + (x-A)^2}
\le C A^{-(2-s)}.
\ee 
For $I_2$, recall that we suppose $|A| \ge 5$. 
Set $J_1 = [-1,1]$ and $J_2 = [-A/2, A/2] \setminus [-1,1]$. 
From the definition of $I_2$, we obtain
\begin{align}
\begin{split}
I_2(A) &= \int_{J_1} \frac{dx }{(1 + (x-A)^2)(1+|x|^{2-s} )}+  \int_{J_2}  \frac{dx }{(1 + (x-A)^2)(1+|x|^{2-s} )}\\
&\le \left( \frac{2}{A} \right)^{2}  \int_{J_1} \frac{dx }{(1+|x|^{2-s} )}  +  \int_{J_2}  \frac{dx }{(x-A)^2 |x|^{2-s} }\\
&\le 2\left( \frac{2}{A} \right)^{2-s}  + \int_{J_2} \frac{dx }{|x-A|^{2-s} |x|^{2} }\\ 
&\le 2\left( \frac{2}{A} \right)^{2-s}  +  \left( \frac{2}{A} \right)^{2-s} \int_{J_2} \frac{dx }{ |x|^{2} }\le C\left( \frac{2}{A} \right)^{2-s}.
\end{split}
\end{align}
In the second line, we used $1 + (x-A)^2 \ge (A/2)$ to bound the denominator in the first integral. 
In the third line, we used that $|x| \le |x-A|$ for $x\in I_2$. The fourth line follows from using $|x-A| \ge A/2$ for $x \in I_2$. 

Using our bounds for $I_1$, $I_2$, and $I_3$ yields 
\be\label{e:secondhorn}
I(A) = I_1(A) + I_2(A) + I_3(A)  \le   C A^{-(2-s)}.
\ee 
Then combining \eqref{e:secondhorn} and \eqref{e:firsthorn} completes the proof. 
\end{proof}

\begin{proof}[Proof of \Cref{l:integrallowerbound}]
We first consider that case where $|A| \le B + 2$. Elementary reasoning shows that there exists a constant $C(B)>0$ such that for every $x \in \R$, 
\begin{equation}
\sup_{|A| \le B+2} \big(1 + (x-A)^2\big)  \le  C ( 1 + x^2). 
\end{equation}
Then for $|A| \le B + 2$, we have by inserting this bound into the integrand in the definition of $I^\circ$ that 
\begin{equation}\label{e:febintlower1}
I^\circ(A,B) \ge c \int_J \frac{dx }{(1 + x^2) |x|^{2-s} } \ge c,
\end{equation}
for some constant $c(B)>0$, where the $B$-dependence enters through the last inequality. 

 We now consider the case $|A| > B + 2$, and suppose without loss of generality that $A>0$. 
  Then
\begin{align}
\int_J \frac{dx }{(1 + (x-A)^2) |x|^{2-s} }
&\ge 
\int_{A-1}^{A} \frac{dx }{(1 + (x-A)^2) |x|^{2-s} }\\
&\ge |A|^{-2 +s } \int_{A-1}^{A} \frac{dx }{(1 + (x-A)^2) }\\
&= |A|^{-2 +s } \int_{-1}^0 \frac{dx }{(1 + x^2) }\\
&\ge c |A|^{-2+s}.\label{e:lowerb1}
\end{align}
The first inequality follows from restricting the region of integration (since the integrand is positive), the second inequality follows from $|x| \ge A$, the third line follows from a change of variables, and the last line follows since the integrand in the third line is bounded below by $1/2$. 
\end{proof}

\subsection{Proof of Density Estimates} \label{s:densityproof}
\begin{proof}[Proof of \Cref{l:rhoebound}]
We begin with the upper bound in \eqref{e:pEquadratic}. 
From \eqref{e:bapsta2} and \eqref{e:bapsta7}, there exists a constant $C(E,K)>0$ such that 
\be\label{e:bapsta440}
p_{E}(x) \le \frac{C}{1 + |x|^2}. 
\ee 
Next, we consider the lower bound. From \eqref{e:pelow}, there exists a constant $c>0$ such that 
\begin{equation}\label{e:crudepelowerproof}
\inf_{|y| \le 2 \L} p_E(y) > c.
\end{equation}
Further, recall from \eqref{e:loweraux1} that 
\be\label{e:loweraux10}
p_E(x) = \frac{1}{x^2} \int_\R
p^{(K)}_E (y) \rho\left( \frac{1}{x} + E + y\right) \,dy .
\ee 
Since $\rho$ is $\L$-regular, we see by \Cref{d:Lregular} that
\be\label{e:rholowerproof2}
\inf_{|x| \ge 2 \L } \inf_{|y +E | \le (2 \L)^{-1}}  \rho\left( \frac{1}{x} + y  + E \right) \ge \L^{-1}. 
\ee 
Additionally, \eqref{e:pelow} implies that there exists $c(E,K)>0$ such that 
\be\label{e:plowerproof2}
\inf_{y \in [-E - (2\L)^{-1} , - E + (2\L)^{-1} ]} p^{(K)}_E (y) > c.
\ee 
Inserting these expressions into the integral representation in \eqref{e:loweraux10} and bounding this integral below by considering just $y\in [- E - (2\L)^{-1} , -E + (2\L)^{-1} ]$, we find for $|x| \ge 2 \L$ that 
\begin{equation}\label{e:asbeforeapp}
p_E(x) \ge  \frac{1}{x^2} \int_{-E - (2\L)^{-1}}^{-E + (2\L)^{-1}} c \, dx \ge \frac{c}{x^2},
\end{equation}
where we decrease the value of $c$ in the final inequality. Combining this estimate with \eqref{e:crudepelowerproof}, we obtain
\be
p_E (x) \ge \frac{c}{1+x^2},
\ee
which completes the proof of the lower bound in \eqref{e:pEquadratic}. Further, noting that this lower bound is a (scale of) standard Cauchy density, and convolutions of Cauchy densities are again Cauchy (and using the definition of $p^{(M)}_E$), we obtain that for every $M \in \Zplus$, there exists $c(M)>0$ such that 
\be\label{e:pMlower}
p^{(M)}_E (x) \ge \frac{c}{1+x^2}.
\ee

 We now consider the upper bound in  \eqref{e:rhoequadratic}. Using \eqref{e:bananav3} (to represent $\rho_E$ as an integral), \eqref{e:bapsta440}, and the assumption that $\rho$ is $\L$-regular, we have 
\begin{align}
\begin{split}
\rho_E(x) &= \int_{-\infty}^\infty \label{e:banana}
p^{(K-1)}_{E}(y) \rho(x + y +E) \, dy \\
&\le C  \int_{-\infty}^\infty 
 \frac{1 }{1 +  |y|^2} \cdot \frac{1}{1 + | x + y  +E |^2 } \,dy \\
&\le \frac{C}{1 +  | x +E |^{2}}.
\end{split}
\end{align}
In the last line, we used \Cref{l:32}. This completes the proof of the upper bound for $\rho_E$.

We now turn to the lower bound in \eqref{e:rhoequadratic}. 
As in \eqref{e:rholowerproof2}, we find
\be\label{e:rholowerproof}
 \inf_{|y +x +  E | \le (2 \L)^{-1}}  \rho\left( x +  y  + E\right) \ge \L^{-1}. 
\ee 
Inserting this bound and \eqref{e:pMlower} into the integral representation in \eqref{e:banana} and bounding the integral below by considering just $y\in [- E - x - (2\L)^{-1} , -E - x + (2\L)^{-1} ]$, we find that
\begin{equation}\label{e:asbeforeapp2}
\rho_E(x) \ge c  \int_{-E -x  - (2\L)^{-1}}^{-E - x + (2\L)^{-1}} \frac{ dy}{1 + y^2}  \ge \frac{c}{1+|x+E|^2},
\end{equation}
where we decrease the value of $c$ in the final inequality.  This completes the proof of the lower bound in \eqref{e:rhoequadratic}.

Finally, the bound $\| \rho_E \|_\infty \le \| \rho \|_\infty$ in \eqref{e:rhoEupper} is an immediate consequence of \eqref{e:banana} and the fact that $p^{(K-1)}_{E}$  integrates to one.
\end{proof}

\printbibliography
\end{document}